\newtheorem{thm}{Theorem}[section]
\newtheorem{lem}[thm]{Lemma}
\newtheorem{prop}[thm]{Proposition}
\newtheorem{cor}[thm]{Corollary}
\theoremstyle{definition}
\newtheorem{defn}[thm]{Definition}
\newtheorem{rem}[thm]{Remark}
\numberwithin{equation}{section}
\newcommand{\R}{\mathbb{R}}
\newcommand{\IC}{\mathbb{C}}
\newcommand{\IN}{\mathbb{N}}
\newcommand{\IP}{\mathbb{\pi}}
\newcommand{\IZ}{\mathbb{Z}}
\newcommand{\cH}{\mathcal{H}}
\newcommand{\mH}{\mathbb{L}^2}
\newcommand{\mHp}{\mathbb{L}^p}
\newcommand{\cF}{\mathcal{F}}
\newcommand{\cX}{\mathcal{X}}
\newcommand{\cY}{\mathcal{Y}}
\newcommand{\mE}{{\mathcal E}}
\newcommand{\mD}{{\mathcal D}} 
\newcommand{\mS}{{\mathcal S}} 
\newcommand{\loc}{\operatorname{loc}}
\renewcommand{\L}{\operatorname{L}} 
\newcommand{\Lloc}{\L_{\operatorname{loc}}} 
\newcommand{\C}{\operatorname{C}} 
\renewcommand{\H}{\operatorname{H}} 
\newcommand{\W}{\operatorname{W}}
\newcommand{\Hdot}{\dot{\H}\protect{\vphantom{H}}} 
\let\SS\S	
\renewcommand{\S}{\mathrm{S}} 
\newcommand{\E}{\mathsf{E}} 
\newcommand{\reu}{{\mathbb{R}^{n+1}_+}}
\newcommand{\ree}{{\mathbb{R}^{n+1}}}
\renewcommand{\P}{P}
\newcommand{\M}{M}
\newcommand{\N}{N}
\newcommand{\wM}{\widetilde \M}
\newcommand{\wh}{\widehat}
\newcommand{\gradx}{\nabla_x}
\newcommand{\gradlamx}{\nabla_{\lambda,x}}
\renewcommand{\div}{\operatorname{div}}
\newcommand{\curl}{\operatorname{curl}}
\newcommand{\curlx}{\curl_{x}}
\newcommand{\divx}{\div_x}
\newcommand{\dnuA}{\partial_{\nu_A}} 
\newcommand{\dnuAstar}{\partial_{\nu_{A^*}}} 
\newcommand{\pcg}{D_{{\! A}}}
\newcommand{\pcgb}{\widetilde D_{{\! A^*}}}
\newcommand{\pcgt}{\widetilde D_{{\! A}}}
\newcommand{\BVP}{BV\!P}
\newcommand{\dhalf}{D_t^{1/2}} 
\newcommand{\dalpha}{D_t^{\alpha}} 
\newcommand{\donefour}{D_t^{1/4}} 
\newcommand{\ihalf}{I_t^{1/2}} 
\newcommand{\ipar}{I_{t,x}^{1/2}} 
\newcommand{\HT}{H_t} 
\newcommand{\pe}{\perp}
\newcommand{\pa}{\parallel}
\newcommand{\te}{\theta}
\newcommand{\Pfull}{\begin{bmatrix} 0 & \divx & -\dhalf \\ -\gradx & 0 & 0 \\ -\HT \dhalf & 0 & 0 \end{bmatrix}} 
\newcommand{\Mfull}{\begin{bmatrix} {\hat A}_{\pe \pe} & {\hat A}_{\pe \pa} & \vphantom{\dhalf}0 \\ {\hat A}_{\pa \pe} & {\hat A}_{\pa \pa} & 0 \\ \vphantom{\dhalf}0& 0& 1 \end{bmatrix}} 
\newcommand{\NT}{\widetilde{N}_*} 
\newcommand{\NTone}{\widetilde{N}_{*,1}} 
\newcommand{\LQI}{{\Lambda \times Q \times I}}
\newcommand{\LQIt}{\wt{\Lambda} \times \wt{Q} \times \wt{I}}
\newcommand{\fLQIk}[2]{{#1\Lambda \times #1Q \times I_#2}}
\newcommand{\e}{\mathrm{e}} 
\let\ii\i
\renewcommand{\i}{\mathrm{i}} 
\renewcommand{\d}{\, \mathrm{d}} 
\newcommand{\eps}{\varepsilon} 
\renewcommand\Re{\operatorname{Re}}
\newcommand{\Lop}{\mathcal{L}} 
\newcommand{\cl}[1]{\overline{#1}} 
\DeclareMathOperator{\supp}{supp} 
\DeclareMathOperator{\id}{1} 
\DeclareMathOperator{\ran}{\mathsf{R}} 
\DeclareMathOperator{\nul}{\mathsf{N}} 
\DeclareMathOperator{\dom}{\mathsf{D}} 
\DeclareMathOperator{\Max}{\mathcal{M}} 
\DeclareMathOperator{\Hp}{\mathsf{H}^+}
\DeclareMathOperator{\Hm}{\mathsf{H}^{--}}
\DeclareMathOperator{\Hpm}{\mathsf{H}^\pm}
\newcommand{\dual}[2]{\langle #1,#2 \rangle}
\newcommand{\clos}[1]{\overline{#1}}
\newcommand{\dyadic}{\square}
\newcommand{\sgn}{\operatorname{sgn}}
\newcommand{\wt}{\widetilde}
\newcommand{\pd}{\partial}
\newcommand{\qe}[1]{\int_0^\infty\|#1\|_{2}^2\,\frac{\d\lambda}\lambda}
\def\Xint#1{\mathchoice
{\XXint\displaystyle\textstyle{#1}}%
{\XXint\textstyle\scriptstyle{#1}}%
{\XXint\scriptstyle\scriptscriptstyle{#1}}%
{\XXint\scriptscriptstyle%
\scriptscriptstyle{#1}}%
\!\int}
\def\XXint#1#2#3{{\setbox0=\hbox{$#1{#2#3}{%
\int}$ }
\vcenter{\hbox{$#2#3$ }}\kern-.6\wd0}}
\def\barint{\,\Xint -} 
\def\bariint{\barint_{} \kern-.4em \barint}
\def\bariiint{\bariint_{} \kern-.4em \barint}
\renewcommand{\iint}{\int_{}\kern-.34em \int} 
\renewcommand{\iiint}{\iint_{}\kern-.34em \int} 
\title[BVP for parabolic systems via a first order approach]{$\L^2$ well-posedness of boundary value problems \\for parabolic systems with measurable coefficients
}
\author{Pascal Auscher}
\address{Universit\'{e} Paris-Saclay, CNRS, Laboratoire de Math\'{e}matiques d'Orsay, 91405, Orsay, France
\vspace{3pt}  \newline \vspace{0pt} \hspace{7pt} Laboratoire Ami\'{e}nois de Math\'{e}matiques Fondamentales et Appliqu\'{e}es, UMR 7352 du CNRS, Universit\'{e} de Picardie-Jules Verne, 80039 Amiens, France}
\email{pascal.auscher@universite-paris-saclay.fr}
\author{Moritz Egert}
\address{Universit\'{e} Paris-Saclay, CNRS, Laboratoire de Math\'{e}matiques d'Orsay, 91405, Orsay, France}
\email{moritz.egert@universite-paris-saclay.fr}
\author{Kaj Nystr\"om}
\address{Department of Mathematics, Uppsala University, S-751 06 Uppsala, Sweden}
\email{kaj.nystrom@math.uu.se}
\thanks{P.A.\ and M.E.\ were supported by the ANR project ``Harmonic Analysis at its Boundaries'', ANR-12-BS01-0013. P.A.\ was also supported by the NSF grant no.~DMS-1440140, while being in residence at the MSRI in Berkeley, California, during the spring 2017 semester. M.E.\ was also supported by a public grant as part of the FMJH and thanks MSRI for hospitality. K.N.\ was supported by a grant from the G{\"o}ran Gustafsson Foundation for Research in Natural Sciences and Medicine. K.N.\ thanks Universit\'{e} Paris-Sud for support and hospitality.}
\subjclass[2010]{Primary: 35K40, 35K46, 42B37. Secondary: 26A33, 42B25, 47A60, 47D06}
\keywords{Second order parabolic systems, parabolic Kato square root estimate, parabolic Dirac operator, Dirichlet and Neumann problems, (non-tangential) maximal functions, square function estimates, \emph{a priori} representations, boundary layer operators, half-order derivative}
\dedicatory{In memory of Alan McIntosh}
\date{\today}
\begin{document}

\begin{abstract} We prove the first positive results concerning boundary value problems  in the upper half-space of second order parabolic systems  only assuming  measurability and some transversal regularity in the coefficients of the  elliptic part. To do so, we introduce and develop a first order strategy by means of a parabolic Dirac operator at the boundary to obtain, in particular, Green's representation for solutions in natural classes involving square functions and non-tangential maximal functions, well-posedness results with data in $\L^2$-Sobolev spaces together with invertibility of layer potentials, and perturbation results. In the way, we solve the Kato square root problem for parabolic operators with coefficients of the elliptic part depending measurably on all variables. The major new challenge, compared to the earlier results by one of us under time and transversally independence of the coefficients, is to handle non-local half-order derivatives in time which are unavoidable in our situation.   
\end{abstract}
\maketitle
\tableofcontents
\section{Introduction}
\label{sec:introduction}

In this paper we study boundary value problems for parabolic equations of type
\begin{align}
 \label{eq1}
\Lop u:= \partial_t u -\div_{X} A(X,t)\nabla_{X} u = 0.
\end{align}
Here, $X=(\lambda,x)\in \reu:= (0,\infty)\times \R^n$ and $t\in \R$. We assume that the matrix $A$ is measurable, strongly elliptic, and possibly depends on all variables. Parabolic systems will be included in our considerations as well. The focus on the \emph{upper} parabolic half-space $\R^{n+2}_+$ here is of course arbitrary and we could as well have decided to work in the lower one.

We stress that with mere measurable dependence in time there are no positive results on solvability of boundary value problems in this context up to now. Indeed, most studies concerning second order parabolic boundary value problems have focused on the heat equation in (parabolic) Lipschitz-type domains and some generalisations. For the heat equation, we mention \cite{B,B1} devoted to time-independent Lipschitz cylinders and the important achievements in \cite{H,Hofmann-Lewis,LM} for time dependent Lipschitz-type domains. As for generalisations, \cite{HL1} studies the Dirichlet problem for parabolic equations with singular drift terms (containing the ones coming from an elaborate change of variables from the heat equation) with coefficients having some smoothness and smallness in the sense of a Carleson condition. See also \cite{DH} and  \cite{DPP}.

In a series of recent papers \cite{N1, CNS, N2}, the solvability for Dirichlet, regularity and Neumann problems with $\L^2$ data was established for the class of parabolic equations \eqref{eq1} under the assumptions that the elliptic part is independent of both the time variable $t$ and the variable $\lambda$ transverse to the boundary, and that the elliptic part has either constant (complex) coefficients, real symmetric coefficients, or small perturbations thereof. This is proved by first studying fundamental solutions and proving square function estimates and non-tangential estimates for the single layer potential along the lines of the approach for elliptic equations in \cite{AAAHK}. In the case of real symmetric coefficients, solvability is then established using a Rellich-type argument.

An important step in \cite{N1} -- revisiting an earlier idea of \cite{Kaplan, Hofmann-Lewis} -- is the introduction of a reinforced notion of weak solution by means of a sesquilinear form associated with \eqref{eq1}, showing its coercivity and proving a fundamental square function estimate. In fact, what is implicitly shown in \cite{N1} by adapting the proof of the Kato conjecture for elliptic operators \cite{AHLMcT}, is that the operator $\pd_{t} - \div_{x}A(x)\nabla_{x}$ is maximal-accretive in $\L^2(\ree)$ and that the domain of its square root is that of the associated form. Note that this is for one dimension lower than $\eqref{eq1}$. With this at hand, \cite{CNS} proves a number of technical estimates for the single layer potential for $\Lop$ based on the DeGiorgi-Nash-Moser condition. Finally, \cite{N2} addresses solvability and uniqueness issues by proving invertibility of the layer potentials. However, the methods there are limited to time independent coefficients and removing this constraint  requires new ideas. 

We owe to   Alan McIntosh  the  insight that boundary value problems involving accretive sesquilinear forms could be addressed via a first order system of Cauchy-Riemann type. For elliptic equations this approach was pioneered in \cite{AAH} and carried out systematically in \cite{AAM} using a simpler setup. It relies on three main properties of the corresponding perturbed Dirac operator at the boundary: bisectoriality, off-diagonal estimates for its resolvents and a bounded holomorphic functional calculus on $\L^2$, which in this case has been obtained by a $T(b)$ argument based on a remarkable elaboration on the solution of the Kato conjecture in \cite{AKMc}.
Non-tangential maximal estimates to precise the boundary behaviour were also obtained. Uniqueness via a semigroup  representation,  later identified to layer potential integrals in the case of  real elliptic equations in \cite{R1}, are obtained in  \cite{AA}.

In this paper we   implement  this strategy for boundary value problems of parabolic equations. The outcome of our efforts is the possibility to address arbitrary parabolic equations (and systems) as in \eqref{eq1} with coefficients depending measurably on  time and on the transverse variable with additional transversal regularity,  the removal of time independence being the substantial step.

In the spirit of a first order approach for parabolic operators, we mention two articles  \cite{CKS, CSV} proposing a factorisation of the heat operator via a (non-homogeneous) Dirac operator valued in some Clifford algebra that involves only first and zero order partial derivatives. But this does not seem to meet our needs here:
First, we have non-smooth coefficients and, second,  half-order derivatives in time turn out to be part of the data for the boundary value problems. Hence, it is natural that our Dirac operator comes with such derivatives, too.

To proceed, we have to overcome  serious difficulties compared with the elliptic counterpart since half-order time-derivatives appearing in the Dirac operator are non-local and have poor decay properties. However, we are still able to obtain just enough decay in the off-diagonal estimates of its resolvents to obtain through a $T(b)$ argument the square function estimate that opens the door to all our further results concerning boundary value problems with $\L^2$ data. Additionally, we prove important non-tangential maximal estimates via some new reverse H\"older inequalities for reinforced weak solutions.

Concerning the Dirichlet problem, the first order approach only comprises uniqueness in the sense of $\L^2$ convergence with square function control. However, in the cases where we can solve, we also prove uniqueness in the classical sense of non-tangential convergence at the boundary with non-tangential maximal control, which is the largest possible class. This includes that the ``unique'' solution of the  Dirichlet problem with non-tangential maximal control in $\L^2$ also enjoys the square function estimates. Proof of uniqueness is done via new arguments,  not requiring any form of regularity of solutions such as the DeGiorgi-Nash-Moser estimates, and hence it does apply to parabolic systems in particular.

As a particular outcome of independent interest, we obtain that the parabolic operator $\pd_{t} - \div_{x}A(x,t)\nabla_{x}$ can be defined as a maximal-accretive operator in $\L^2(\ree)$ and that it satisfies a Kato square root estimate -- its square root has domain equal to that of the defining form. Note that we allow measurable $t$-dependence on the coefficients in contrast to \cite{N1}, which makes a huge difference.
This has a worth-mentioning consequence. In \cite{AEN2}, we use the solution of the parabolic Kato problem to prove that the $\L^p$ Dirichlet problem corresponding to \eqref{eq1} is well-posed if $A = A(x,t)$ is real valued but possibly non-symmetric, measurable, and $p \in (1,\infty)$ is sufficiently large, which is the best possible result in general. 

In the next section we will give a comprehensive presentation of our results, postponing details for later. It will come with the introduction of some necessary notation, but we try to make the presentation as  fluent as possible to give a general overview. Also the progression does not necessarily follow the order in which things are proved. The subsequent Sections~\ref{sec:Homogeneous Sobolev spaces} - \ref{sec:Dirichlet} are devoted to the proofs of our results. In the final Section~\ref{sec:miscellani} we formulate further generalisations of our results and state additional  open problems. Let us mention two of them here. One concerns  boundary value problems for \eqref{eq1} in time dependent Lipschitz-type domains $\lambda>\varphi(x,t)$. While Lipschitz changes of variables preserve the class of elliptic equations $-\div_X A\nabla_X u=0$, this is not the case for equations $\partial_tu  -\div_X A\nabla_X u=0$.  Already the simple change of variables $(\lambda,x,t)\mapsto (\lambda-\varphi(x,t),x,t)$ requires that $\varphi$ Lipschitz in both $x$ and $t$ and brings in a drift term. But the natural regularity on  $\varphi$  is some half-order regularity  in $t$  and requires more elaborate changes of variables such as the ones used in the works mentioned before. This results in creating drift terms and brings in leading   coefficients to which  existing results including ours do not apply. 
 Another one  would be to allow for data in other spaces which, if one uses the first order approach, requires to develop a Hardy space theory associated with parabolic Dirac operators in the first  place. 
\section{Main results}
\label{sec:main results}

\subsection{The coefficients}

For the time being, $A=A(X,t)=\{A_{i,j}(X,t)\}_{i,j=0}^{n}$ is assumed to be an $(n+1)\times (n+1)$-dimensional matrix with complex coefficients satisfying the uniform ellipticity condition
\begin{equation}
\label{eq2}
 \kappa|\xi|^2\leq \Re (A(X,t) \xi \cdot \cl{\xi}), \qquad
 |A(X,t)\xi\cdot\zeta|\leq C|\xi||\zeta|,
\end{equation}
for some $\kappa, C \in (0,\infty)$, which we refer to as the ellipticity constants of $A$, and for all $\xi,\zeta \in \IC^{n+1}$, $(X,t) \in \R^{n+2}_+$. Here $u\cdot v=u_0v_0+...+u_{n}v_{n}$, $\bar u$ denotes the complex conjugate of $u$ and $u\cdot \cl{v}$ is the standard inner product on $\IC^{n+1}$. We use $X=(x_{0},x)$ and $x=(x_{1}, \ldots, x_{n})$. Most often, we  specialize the variable transverse to the boundary by setting $\lambda = x_0$.

\subsection{Reinforced weak solutions}
\label{sec:rein}

If $\Omega$ is an open subset of $\ree$, we let $\H^1(\Omega)=\W^{1,2}(\Omega)= \W^{1,2}(\Omega;\IC)$ be the standard Sobolev space of complex valued functions $v$ defined on $\Omega$, such that $v$ and $\nabla v$ are in $\L^{2}(\Omega;\IC)$ and $\L^{2}(\Omega;\IC^n)$, respectively. A subscripted `$\loc$' will indicate that these conditions hold locally.
We shall say that $u$ is a \emph{reinforced weak solution} of \eqref{eq1} on $\reu\times \R$ if
\begin{align*}
 u\in \dot {\E}_{\loc}:= \Hdot^{1/2}(\R; \L^2_{\loc}(\reu)) \cap \Lloc^2(\R; \W^{1,2}_{\loc}(\reu))
\end{align*}
and if for all $\phi\in \C_0^\infty(\R^{n+2}_+)$,
\begin{eqnarray*}
\int_0^\infty\iint_{\ree}
 A\nabla_{\lambda,x} u\cdot\overline{\nabla_{\lambda,x} \phi}+ \HT\dhalf u\cdot \overline{\dhalf \phi} \d x\d t\d\lambda = 0 .
\end{eqnarray*}
Here, $\dhalf$ is the half-order derivative and $\HT$ the Hilbert transform with respect to the $t$ variable, designed in such a way that $\partial_{t}= \dhalf \HT \dhalf$. The space $\Hdot^{1/2}(\R)$ is the homogeneous Sobolev space of order 1/2. In Section~\ref{sec:Homogeneous Sobolev spaces} we shall review properties of this space, which we define as a subspace of $\Lloc^2(\R)$ in order to deal with proper distributions. Still our definition is equivalent to other common ones: it is the completion of $\C_0^\infty(\R)$ for the norm $\|\dhalf \cdot\|_{2}$ and, modulo constants, it embeds into the space $\mS'(\R)/\IC$ of tempered distributions modulo constants.

At this point we remark that for any $u\in \Hdot^{1/2}(\R)$ and $\phi\in \C_0^\infty(\R)$ the formula
\begin{align*}
 \int_{\R} \HT\dhalf u\cdot \overline{\dhalf\phi}\d t = - \int_{\R} u \cdot \cl{\partial_{t}\phi} \d t
\end{align*}
holds, where on the right-hand side we use the duality form between $\Hdot^{1/2}(\R)$ and its dual $\Hdot^{-1/2}(\R)$ extending the complex inner product of $\L^2(\R)$. It follows that a reinforced weak solution is a weak solution in the usual sense on $\reu\times \R$: it satisfies
$u\in \Lloc^2(\R; \W^{1,2}_{\loc}(\reu))$ and for all $\phi\in \C_0^\infty(\R^{n+2}_+)$,
\begin{align*}
  \int_\R\iint_{\reu} A\nabla_{\lambda,x} u\cdot\overline{\nabla_{\lambda,x} \phi} \d x \d\lambda \d t - \int_{\R} \iint_{\reu} u \cdot \cl{\partial_{t}\phi} \d x \d\lambda \d t=0.
\end{align*}
This implies $\pd_{t}u\in \Lloc^2(\R; \W^{-1,2}_{\loc}(\reu))$. Conversely, any  weak solution $u$ in    $ \Hdot^{1/2}(\R; \L^2_{\loc}(\reu))$ is a reinforced weak solution.

\subsection{Energy solutions}
\label{sec:energy}

As a first illustration of this concept of interpreting \eqref{eq1}, let us quickly explain how to obtain reinforced weak solutions using the form method. This seems to be surprising at first sight and relies on the fundamental revelation of coercivity for the parabolic operator $\Lop$ explored first in~\cite{Kaplan} and later on for instance in \cite{Hofmann-Lewis, N1, CNS, N2}. It will also allow us to touch some of the paper's central themes. For the moment, time or even transversal dependency of the coefficients will not be any obstacle.

We are looking for reinforced weak solutions that belong to the \emph{energy class} $\dot \E$ of all $v \in \dot{\E}_{\loc}$ for which
\begin{align*}
 \|v\|_{\dot \E} := \bigg(\|\gradlamx v\|_{\L^2(\R^{n+2}_+)}^2 + \|\HT \dhalf v\|_{\L^2(\R^{n+2}_+)}^2 \bigg)^{1/2} < \infty.
\end{align*}
Consequently, these are called \emph{energy solutions}. When considered modulo constants, $\dot \E $ is a Hilbert space and it is in fact the closure of $\C_0^\infty\!\big(\,\cl{\R^{n+2}_+}\,\big)$ for the homogeneous norm $\|\cdot\|_{\dot \E}$. Any $v \in \dot \E$ can be defined as a uniformly continuous function on $[0,\infty)$ with values in the \emph{homogeneous parabolic Sobolev space} $\Hdot^{1/4}_{\pd_{t} - \Delta_x}$. Here, $\Hdot^{s}_{\pm \pd_{t} - \Delta_x}$ is defined as the closure of Schwartz functions $v \in \mS(\ree)$ with Fourier support away from the origin for the norm $\|\cF^{-1}((|\xi|^2 \pm \i \tau)^s \cF v)\|_2$. This yields a space of tempered distributions if $s<0$ and of tempered distributions modulo constants in $\Lloc^2(\ree)$ if $0 < s \leq 1/2$. Of course the choice of the sign in front of $\i \tau$ defines the same space up to equivalent norms. We implicitly assume the branch cut for $z^s$ on $(-\infty, 0]$ but any other possible choice yields the same space with an isometric norm. Conversely, any $g \in \Hdot^{1/4}_{\pd_{t} - \Delta_x}$ can be extended to a function $v \in  \dot \E$ with trace $v|_{\lambda = 0} = g$. We will include proofs of these statements in Section~\ref{sec:Homogeneous Sobolev spaces several}.

By an energy solution to \eqref{eq1} with Neumann boundary data $\dnuA u|_{\lambda = 0} = f$ we mean a function $u \in \dot\E$ such that for all $v \in \dot \E$,
\begin{align}
\label{eq:variational}
 \iiint_{\R^{n+2}_+} A \gradlamx u \cdot \cl{\gradlamx v} + \HT \dhalf u \cdot \cl{\dhalf v} \d \lambda \d x \d t = - \langle f, v|_{\lambda = 0} \rangle,
\end{align}
where $\langle \cdot \,, \cdot \rangle$ denotes the pairing of $\Hdot^{1/4}_{\pd_{t} - \Delta_x}$ with its dual $\Hdot^{-1/4}_{-\pd_{t} - \Delta_x}$ extending the inner product on $\L^2(\ree)$ and
\begin{align}
\label{eq:dnuA}
 \dnuA u(\lambda, x,t) := [1,0, \ldots, 0] \cdot (A\nabla_{\lambda,x}u)(\lambda, x,t)
\end{align}
is the \emph{conormal derivative}, inwardly oriented on the upper half-space. In particular, $u$ is a reinforced weak solution to \eqref{eq1} and the Neumann boundary data $f \in \Hdot^{-1/4}_{\pd_{t} - \Delta_x}$ is attained in the weak sense prescribed by \eqref{eq:variational}. Similarly, by an energy solution to \eqref{eq1} with Dirichlet boundary datum $u|_{\lambda = 0} = f \in \Hdot^{1/4}_{\pd_{t} - \Delta_x}$ we mean a function $u \in \dot\E$ such that for all $v \in \dot \E_0$, the subspace of $\dot \E$ with zero boundary trace,
\begin{align*}
 \iiint_{\R^{n+2}_+} A \gradlamx u \cdot \cl{\gradlamx v} + \HT \dhalf u \cdot \cl{\dhalf v} \d \lambda \d x \d t = 0.
\end{align*}
The key to solving these problems is the introduction of the modified sesquilinear form
\begin{align*}
 a_\delta(u,v) := \iiint_{\R^{n+2}_+} A \gradlamx u \cdot \cl{\gradlamx (1-\delta \HT) v} + \HT \dhalf u \cdot \cl{\dhalf (1-\delta \HT) v} \d \lambda \d x \d t,
\end{align*}
where $\delta$ is a  real number yet to be chosen. The Hilbert transform $\HT$ is a skew-symmetric isometric operator with inverse $-\HT$ on both $\dot \E$ and $\Hdot^{1/4}_{\pd_{t} - \Delta_x}$.  Hence, $1-\delta \HT$ is invertible on these spaces for any $\delta \in \R$. The upshot is that if we fix $\delta>0$  small enough, then $a_\delta$ is coercive on $\dot \E$ since
\begin{equation}\label{eq:coer}
\Re a_\delta(u,u) \ge (\kappa-C\delta )\|\gradlamx u\|_2^2 + \delta \|\HT \dhalf u \|_2^2
\end{equation} where $\kappa,C$ are the constants in \eqref{eq2}. Hence, solving the Neumann problem with data $f$ means finding $u \in \dot\E$ such that
\begin{align*}
 a_\delta(u,v) = - \langle f, (1-\delta \HT)v|_{\lambda = 0} \rangle \qquad (v \in \dot \E)
\end{align*}
and the Lax-Milgram lemma applied to $a_{\delta }$ on $\dot \E$ yields a unique such $u$. We remark that this exactly means \eqref{eq:variational} upon replacing $(1-\delta \HT)v$ by $v$, so that $\dnuA u|_{\lambda = 0}=f$ holds in the respective sense. For the Dirichlet problem we take an extension $w \in \dot \E$ of the data $f$ and apply the Lax-Milgram lemma to $a_\delta$ on $\dot \E_0$ to obtain some $u \in \dot \E_0$ such that \begin{align*}
 a_\delta(u,v) = - a_{\delta}(w,v) \qquad (v \in \dot \E_0).                                                                                                                                                                                                                                                                                                                                                                                                                                                                               \end{align*}
Hence, $u + w$ is an energy solution with data $f$. Would there exist another solution $v$, then $a_\delta(u+w-v,u+w-v) = 0$ and hence $\|u +w - v \|_{\dot \E} = 0$ by coercivity.

We shall rephrase these observations by saying the the Dirichlet and Neumann problems associated with \eqref{eq1} are \emph{well-posed} for the energy class. In the case of $\lambda$-independent coefficients we will rediscover the energy solutions in Section~\ref{sec:specialcases} within a much broader context.

\subsection{The correspondence to a first order system}
\label{sec:correspondence}

Given a reinforced weak solution $u$ as in Section~\ref{sec:rein}, we create a vector with $n+2$ components, which we call the \emph{(parabolic) conormal differential} of $u$,
\begin{equation*}
 \pcg u(\lambda, x,t) := \begin{bmatrix} \dnuA u(\lambda, x,t) \\ \gradx u(\lambda, x,t) \\ \HT\dhalf u(\lambda, x,t) \end{bmatrix},
\end{equation*}
where the inwardly oriented conormal derivative was defined in \eqref{eq:dnuA}. Note that the last component of $\pcg u$ contains the half-order time derivative of $u$, which exists by assumption. Whether or not we choose to include the Hilbert transform in the definition of $\pcg u$ is not important at this stage: it only affects the algebra of the representation but not its analysis. Note also that in a pointwise fashion,
\begin{align*}
 |\pcg u|^2\sim |\nabla_{\lambda,x}u|^2+|\HT\dhalf u|^2.
\end{align*}
Throughout the paper we will represent vectors $\xi \in \IC^{n+2}$ as
\begin{equation*}
 \xi= \begin{bmatrix} \xi_\pe \\ \xi_\pa \\ \xi_\te \end{bmatrix},
\end{equation*}
where the \emph{normal} (or \emph{perpendicular}) part $\xi_\pe$ is scalar valued, the \emph{tangential} part $\xi_{\pa}$ is valued in $\IC^n$ and the \emph{time} part $\xi_{\te}$ is again scalar valued.

If $u$ is a reinforced weak solution to \eqref{eq1}, then the vector valued function $F := \pcg u$ belongs to the space $\Lloc^2(\R; \Lloc^2(\reu;\IC^{n+1})) \times \L^2(\R; \Lloc^2(\reu; \IC))$, the global square integrability for the $\te$-component being with respect to the $t$ variable, and it has the additional structure
 \begin{equation}
\label{eq:comp}
\curlx F_{\pa}=0, \quad \gradx F_{\te} = \HT\dhalf F_{\pa},
\end{equation}
when computed in the sense of distributions on $\reu\times \R$. In fact, the first equation is contained in the second one.

To eventually address boundary value problems, we will assume global square integrability with respect to $(x,t)$ for all components of $F$. Hence, with the help of Fubini's theorem we put $\pcg u$ in the space $\Lloc^2(\R_{+}; \L^2(\ree;\IC^{n+2}))$, where the local square integrability now is with respect to the $\lambda$ variable. Throughout the paper we let  $\mH$ be $\L^2(\ree;\IC^{n+2})$  and $\cH_{\loc}$ be the subspace of $\Lloc^2(\R_{+}; \mH)$ defined by the compatibility conditions \eqref{eq:comp}.

Next, we split the coefficient matrix $A$ as
\begin{align}
\label{eq:A}
 A(\lambda,x,t)= \begin{bmatrix} A_{\pe\pe}(\lambda,x,t) & A_{\pe\pa}(\lambda,x,t)\\ A_{\pa\pe}(\lambda,x,t) & A_{\pa\pa}(\lambda,x,t) \end{bmatrix}
\end{align}
and we recall from \cite{AA} that there is a purely algebraic transformation on the bounded, uniformly elliptic matrix functions that will eventually allow us to write \eqref{eq1} as system for the unknown $\pcg u$.

\begin{prop}[{\cite[Prop.~4.1, p.~68]{AA}}]
\label{prop:divformasODE}
The pointwise transformation
\begin{align*}
 A\mapsto \hat A:= \begin{bmatrix}1& 0\\ A_{\pa\pe} & A_{\pa\pa} \end{bmatrix} \begin{bmatrix} A_{\pe\pe} & A_{\pe\pa}\\ 0&1 \end{bmatrix}^{-1}=\begin{bmatrix} A_{\pe\pe}^{-1} & -A_{\pe\pe}^{-1} A_{\pe\pa} \\
  A_{\pa\pe}A_{\pe\pe}^{-1} & A_{\pa\pa}-A_{\pa\pe}A_{\pe\pe}^{-1}A_{\pe\pa} \end{bmatrix}
\end{align*}
is a self-inverse bijective transformation on the set of bounded matrices which are uniformly elliptic.
\end{prop}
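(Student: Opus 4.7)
The plan is to prove the proposition in two parts: first, verify the pointwise algebraic identity via a block factorisation of $A$; second, exploit a \emph{swapping interpretation} of $\hat A$ to obtain simultaneously the self-inverse property and the fact that uniform boundedness and ellipticity are preserved (with quantitative control on the constants).

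First I would observe that from the lower bound in \eqref{eq2} applied to vectors of the form $\xi=(\xi_\pe,0)\in\IC\times\IC^n$, the block $A_{\pe\pe}$ satisfies $\Re(A_{\pe\pe}\xi_\pe\cdot\cl{\xi_\pe})\geq \kappa|\xi_\pe|^2$, so $A_{\pe\pe}$ is pointwise invertible and its inverse is bounded by $\kappa^{-1}$. With $A_{\pe\pe}^{-1}$ in hand, the block factorisation
\begin{equation*}
\begin{bmatrix} A_{\pe\pe} & A_{\pe\pa} \\ 0 & 1 \end{bmatrix}^{-1} = \begin{bmatrix} A_{\pe\pe}^{-1} & -A_{\pe\pe}^{-1}A_{\pe\pa} \\ 0 & 1 \end{bmatrix}
\end{equation*}
makes the formula defining $\hat A$ meaningful, and a direct block multiplication yields the explicit expression stated in the proposition. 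In particular, all four blocks of $\hat A$ are bounded by a constant depending only on $\kappa$ and $C$.

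The conceptual heart of the argument is the following equivalence, which I would establish next by inspecting the four blocks: for vectors $\xi=(\xi_\pe,\xi_\pa)$ and $\eta=(\eta_\pe,\eta_\pa)$ in $\IC\times\IC^n$,
\begin{equation*}
\eta = A\xi \iff \begin{bmatrix}\eta_\pe\\ \xi_\pa\end{bmatrix}=\hat A \begin{bmatrix}\xi_\pe\\ \eta_\pa\end{bmatrix}.
\end{equation*}
In words, $\hat A$ swaps the roles of the $\pa$-components of input and output. From this, the self-inverseness $\hat{\hat{A}}=A$ is immediate: applying the swap twice restores the original relation.

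Finally, the swapping identity transfers bounds and ellipticity from $A$ to $\hat A$. Given $(\xi_\pe,\eta_\pa)$, set $\xi_\pa$ and $\eta_\pe$ by $\eta=A\xi$; then
\begin{equation*}
\Re\bigl(\hat A(\xi_\pe,\eta_\pa)\cdot\cl{(\xi_\pe,\eta_\pa)}\bigr) = \Re(\eta_\pe\cl{\xi_\pe}+\xi_\pa\cdot\cl{\eta_\pa})=\Re(A\xi\cdot\cl\xi)\geq \kappa(|\xi_\pe|^2+|\xi_\pa|^2),
\end{equation*}
while boundedness of $A$ gives $|\eta_\pa|\leq C(|\xi_\pe|^2+|\xi_\pa|^2)^{1/2}$ and therefore $|\xi_\pe|^2+|\eta_\pa|^2\leq (1+C^2)(|\xi_\pe|^2+|\xi_\pa|^2)$; combining, one obtains the ellipticity estimate for $\hat A$ with constant $\kappa/(1+C^2)$. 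Uniform boundedness of $\hat A$ was already noted from the explicit formula. The self-inverseness then promotes the map $A\mapsto\hat A$ from a well-defined map on bounded elliptic matrices to a bijection of this set onto itself. The one delicate point, and the only place where care is needed, is keeping the quantitative dependence of the new ellipticity and boundedness constants explicit in terms of $\kappa, C$, because later sections will need uniform control as the coefficients vary.
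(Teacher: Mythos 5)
Your two-stage plan — verify the formula via the block factorisation, then exploit a swapping identity to transfer ellipticity and deduce self-inverseness — is exactly the approach of \cite[Prop.~4.1]{AA}, which the paper cites rather than re-proves. However, the swap identity as you state it is false. Take $A_{\pe\pe}=2$, $A_{\pe\pa}=A_{\pa\pe}=0$, $A_{\pa\pa}=1$, so that $\hat A$ is diagonal with entries $1/2$ and $1$; with $\xi=(1,0)$ and $\eta=A\xi=(2,0)$, your claim would require $\hat A[\xi_\pe,\eta_\pa]^T=\hat A[1,0]^T=[1/2,0]^T$ to equal $[\eta_\pe,\xi_\pa]^T=[2,0]^T$, which it does not. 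The correct identity, which drops straight out of the factorisation
$\hat A=\begin{bmatrix}1& 0\\ A_{\pa\pe} & A_{\pa\pa} \end{bmatrix}\begin{bmatrix} A_{\pe\pe} & A_{\pe\pa}\\ 0&1 \end{bmatrix}^{-1}$
since the right factor sends $\xi\mapsto[\eta_\pe,\xi_\pa]^T$ and the left one sends $\xi\mapsto[\xi_\pe,\eta_\pa]^T$, is
\begin{equation*}
 \eta=A\xi \iff \hat A\begin{bmatrix}\eta_\pe\\\xi_\pa\end{bmatrix}=\begin{bmatrix}\xi_\pe\\\eta_\pa\end{bmatrix},
\end{equation*}
so you have the input and output of $\hat A$ reversed. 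Once corrected, nothing downstream breaks: the ellipticity computation becomes $\Re(\hat A(\eta_\pe,\xi_\pa)\cdot\cl{(\eta_\pe,\xi_\pa)})=\Re(\xi_\pe\cl{\eta_\pe}+\eta_\pa\cdot\cl{\xi_\pa})=\Re(A\xi\cdot\cl\xi)\geq\kappa|\xi|^2$, and $|\eta_\pe|^2+|\xi_\pa|^2\leq(1+C^2)|\xi|^2$ supplies the comparison exactly as in your draft. Moreover, the corrected parametrisation — given $(\eta_\pe,\xi_\pa)$, recover $\xi_\pe=A_{\pe\pe}^{-1}(\eta_\pe-A_{\pe\pa}\xi_\pa)$ and then set $\eta_\pa=(A\xi)_\pa$ — uses only the invertibility of $A_{\pe\pe}$, which is precisely what you established at the outset. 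Your version, in contrast, asks one to solve for $\xi_\pa$ from $\eta_\pa=A_{\pa\pe}\xi_\pe+A_{\pa\pa}\xi_\pa$, thus implicitly requiring $A_{\pa\pa}$ invertible; although that holds under the pointwise ellipticity of \eqref{eq2}, it is the wrong hypothesis to lean on (it fails for the systems ellipticity of Section~\ref{sec:systems}) and it is not what $\hat A$ actually does. The self-inverseness argument survives the fix verbatim.
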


We introduce the operators
\begin{equation}
\label{eq:DB}
\P:= \Pfull, \qquad \M:= \Mfull.
\end{equation}
For each fixed $\lambda>0$, the operator $\M$ is a multiplication operator on $\mH$. 
The operator $\P$ is independent of $\lambda$ and defined as an unbounded operator in $\mH$ 
 with maximal domain. The adjoint of $P$ is
\begin{align}\label{eq:DB+}
\P^*=\begin{bmatrix} 0 & \divx & \HT\dhalf \\ -\gradx & 0 & 0 \\ - \dhalf & 0 & 0 \end{bmatrix}.
\end{align}
In Section~\ref{sec:proof of correspondence} we will prove the following theorem, establishing the connection between reinforced weak solutions $u$ to \eqref{eq1} and a first order differential equation.

\begin{thm}
\label{thm:correspondence}
If $u$ is a reinforced weak solution $u$ to \eqref{eq1} and $F:= \pcg u\in \cH_{\loc}$, then
\begin{equation}
\label{eq:diffeq}
\iiint_{\R^{n+2}_{+}} F\cdot \overline{ {\pd_{\lambda} \phi}} \d\lambda \d x \d t = \iiint_{\R^{n+2}_{+}} {MF}\cdot { \overline{P^*\phi}} \d\lambda \d x \d t
\end{equation}
for all $\phi\in \C_{0}^\infty(\R^{n+2}_+; \IC^{n+2})$. Conversely, to any $F\in \cH_{\loc}$ satisfying \eqref{eq:diffeq} for all $\phi$, there exists a reinforced weak solution $u$ to \eqref{eq1}, unique up to a constant, such that $F=\pcg u$.
\end{thm}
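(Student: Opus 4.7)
My plan is to verify both implications by testing \eqref{eq:diffeq} block-wise against $\phi = (\phi_\pe, \phi_\pa, \phi_\te) \in \C_0^\infty(\R^{n+2}_+;\IC^{n+2})$, relying on Proposition~\ref{prop:divformasODE} to identify the three components of $\M F$. Given a reinforced weak solution $u$ with $F = \pcg u \in \cH_\loc$, that proposition yields $(\M F)_\pe = \pd_\lambda u$, $(\M F)_\pa = (A\nabla_{\lambda,x} u)_\pa$, and $(\M F)_\te = \HT\dhalf u$. Testing \eqref{eq:diffeq} with $\phi = (\phi_\pe, 0, 0)$ directly assembles the $\pd_\lambda \phi_\pe$ term on the left with the $-\gradx \phi_\pe$ and $-\dhalf \phi_\pe$ terms on the right into exactly the reinforced weak formulation of $\Lop u = 0$ with scalar test $\phi_\pe$. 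The remaining choices $\phi = (0, \phi_\pa, 0)$ and $\phi = (0, 0, \phi_\te)$ reduce, after routine integration by parts in $\lambda$, $x$, and $t$, to the trivial identities $\pd_\lambda \gradx u = \gradx \pd_\lambda u$ and $\pd_\lambda (\HT \dhalf u) = \HT \dhalf \pd_\lambda u$, which hold since $\pd_\lambda$ commutes with $\gradx$, $\HT$, and $\dhalf$.

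For the converse, start from $F \in \cH_\loc$ satisfying \eqref{eq:diffeq}; I would reconstruct $u$ in two stages. First, the compatibility conditions \eqref{eq:comp} yield a scalar potential with $\gradx u = F_\pa$ and $\HT\dhalf u = F_\te$: slicewise in $\lambda$, curl-freeness of $F_\pa$ in $x$ (equivalently $\widehat{F_\pa}(\lambda,\xi,\tau) = \i\xi\,\wh u(\lambda,\xi,\tau)$ in the Fourier variables) produces a primitive $u \in \L^2_\loc(\R_+; \W^{1,2}_\loc(\reu))$, unique up to a function of $(\lambda, t)$; then $\gradx (F_\te - \HT\dhalf u) = 0$ places the discrepancy in functions of $(\lambda, t)$ alone, which I would absorb by adjusting $u$ through inversion of the Fourier multiplier of $\HT \dhalf$ in $t$ (well-defined modulo functions of $\lambda$). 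Second, testing \eqref{eq:diffeq} against $(0, \phi_\pa, 0)$ and $(0, 0, \phi_\te)$ yields the distributional identities $\gradx \bigl((\M F)_\pe - \pd_\lambda u\bigr) = 0$ and $\HT \dhalf \bigl((\M F)_\pe - \pd_\lambda u\bigr) = 0$, forcing this difference to depend on $\lambda$ alone; one further primitive in $\lambda$ absorbs it and enforces $(\M F)_\pe = \pd_\lambda u$. The self-inverse character of $\hat A$ from Proposition~\ref{prop:divformasODE} then upgrades this to $F_\pe = \dnuA u$, so that $F = \pcg u$, and the last test choice $(\phi_\pe, 0, 0)$ in \eqref{eq:diffeq} delivers the reinforced weak form of $\Lop u = 0$.

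Uniqueness up to a constant reduces to showing that $\pcg u = 0$ implies $u$ is constant: $\gradx u = 0$ forces $u = u(\lambda, t)$, then $\HT\dhalf u = 0$ with Fourier symbol $-\i\,\sgn(\tau)|\tau|^{1/2}$ (vanishing only at $\tau = 0$) forces $u$ constant in $t$, and finally $\dnuA u = A_{\pe\pe} \pd_\lambda u = 0$ combined with invertibility of $A_{\pe\pe}$ from \eqref{eq2} gives $\pd_\lambda u = 0$. The main obstacle I anticipate is making the reconstruction precise in $\dot\E_\loc$: one must verify that the inversions of $\gradx$ and of $\HT\dhalf$ preserve the local half-order time regularity required for $u$ to be a reinforced weak solution, and that every integration by parts involving the non-local operator $\HT\dhalf$ is legitimate at the local Sobolev scale. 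These technicalities will be handled by working slicewise in $\lambda$ and invoking boundedness of the relevant Fourier multipliers in $(x,t)$ on the homogeneous Sobolev spaces developed in Section~\ref{sec:Homogeneous Sobolev spaces}.
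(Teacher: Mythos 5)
Your proposal follows the same broad strategy as the paper: verify the forward implication by a component-wise computation (your block decomposition of $\phi$ into $(\phi_\pe,0,0)$, $(0,\phi_\pa,0)$, $(0,0,\phi_\te)$ is a rephrasing of how the paper handles the three components and then adds them up), and construct the converse potential in two stages, first in the $(x,t)$-directions and then adjusting in $\lambda$. The forward direction is essentially identical, using the $\M$-arithmetic $(\M F)_\pe = \partial_\lambda u$, $(\M F)_\pa = (A\nabla_{\lambda,x}u)_\pa$, $(\M F)_\te = \HT\dhalf u$ together with Corollary~\ref{cor:fractional integration by parts} to justify the fractional integrations by parts. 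For the converse, the paper's Stage~1 is cleaner than what you sketch: it produces the potential in one shot as $v = \ipar g$ with $\wh g = \tfrac{|\xi|+|\tau|^{1/2}}{\i\sgn(\tau)|\tau|^{1/2}}\wh{F_\te}$, which lands directly in the right local Sobolev class with $\gradx v = F_\pa$ and $\HT\dhalf v = F_\te$ simultaneously, whereas your two-step inversion (first $\gradx^{-1}$ on $F_\pa$, then correct via $(\HT\dhalf)^{-1}$) leaves a genuine $(\lambda,t)$-dependent ambiguity after the first step, and making the second step land in the right regularity class is precisely the obstacle you flag. Your Stage~2, on the other hand, is arguably more careful than the paper's: you explicitly invoke the $(0,0,\phi_\te)$ test to obtain $\HT\dhalf\bigl((\M F)_\pe - \partial_\lambda u\bigr) = 0$, which combined with the $(0,\phi_\pa,0)$ test kills both the $x$- and the $t$-dependence of the discrepancy. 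The paper tests only against tensor-form $\phi_\pa$ and then asserts the discrepancy ``differs only by a constant'' before subtracting a $\lambda$-primitive; that test alone yields constancy in $x$ but leaves a possible $t$-dependence, so your use of the $\te$-component closes a step the paper leaves implicit. Two small slips worth noting: the Fourier symbol of $\HT\dhalf$ is $\i\sgn(\tau)|\tau|^{1/2}$ rather than $-\i\sgn(\tau)|\tau|^{1/2}$, and in the uniqueness argument $\dnuA u = A_{\pe\pe}\partial_\lambda u$ only holds after you have already used $\gradx u = 0$; neither affects the conclusion.
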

In other words, up to additive constants one can construct all reinforced weak solutions $u$ to \eqref{eq1} with the property $\pcg u \in \Lloc^2(\R_{+}; \L^2(\ree;\IC^{n+2}))$ by solving the differential equation
\begin{equation}
\label{eq:diffeq2}
\pd_{\lambda} F + \P\M F=0
\end{equation}
in the distributional sense \eqref{eq:diffeq} in the space $\cH_{\loc}$.

The equation \eqref{eq:diffeq2} depends on the operator $\P\M$, hence on the choice of $A$ to represent $\Lop$. One can show that any choice of $A$ leads to the same conclusions as far as Dirichlet problems are concerned: solvability and uniqueness, respectively, hold for all choices or none. On the other hand, Neumann problems depend on the choice of $A$ in the formulation and in the conclusion. It could well be that one Neumann problem is solvable or well-posed but not all of them.

\subsection{The parabolic Dirac operator for transversally independent equations}
\label{sec:dirac}

The first case to study is that of equations with coefficients independent of the transverse variable $\lambda$. In this case also the entries of $M$ are independent of $\lambda$. Thus, the differential equation \eqref{eq:diffeq2} becomes autonomous and can be solved via semigroup techniques, provided the semigroup is well-defined. This requires that $\P\M$ has a \emph{bounded holomorphic functional calculus}.

Let us quickly recall that an operator $T$ in a Hilbert space is \emph{bisectorial} of \emph{angle} $\omega \in (0, \pi/2)$ if its spectrum is contained in the closure of the open double sector
\begin{align*}
 \S_\omega := \{z \in \IC: |\arg z| < \omega \text{ or } |\arg z - \pi| < \omega \}
\end{align*}
and if for each $\mu \in (\omega, \pi/2)$ the map $z \mapsto z(z - T)^{-1}$ is uniformly bounded on $\IC \setminus \S_\mu$. Throughout this paper we require basic knowledge of such operators and their functional calculus allowing to define $b(T)$ for suitable holomorphic functions $b$ on $\S_\mu$. A reader without a background in this field will find all the necessary results and further information in various comprehensive treatments including \cite{Haase, Mc, EigeneDiss}.

As can be seen from \eqref{eq:DB} and \eqref{eq:DB+}, the operator $\P$ contains the non-local operators $\dhalf$ and $\HT\dhalf$ and it is \emph{not} self-adjoint. These are the two major differences compared to the corresponding construction of the Dirac operator for elliptic equations. Note that $\P$ arises from the heat equation $\pd_{t}u-\Delta_x u=0$. The non-selfadjointness of $\P$ is reflected in the fact that the adjoint equation is not the heat equation itself but the backward heat equation $-\pd_{t}u -\Delta_x u=0$. Hence, we (unfortunately) cannot just quote results from the literature devoted to elliptic equations without investigating the arguments. Nevertheless, we (fortunately) can prove the similar results yielding a harvest of consequences from abstract reasoning regardless of the precise definition of $\P$ and $\M$.

We are ready to state a fundamental result of the paper:

\begin{thm}
\label{thm:bhfc} The operator $\P\M$ is a bisectorial operator  on $\mH$
 with range $\ran(\P\M) = \ran(\P)$. It satisfies the quadratic estimate
\begin{align*}
  \int_0^\infty\| {\lambda{}}\P \M(\id+{\lambda{}^2}\P\M\P\M)^{-1} h \|_{2}^2 \, \frac{\mathrm{d}\lambda{}}\lambda{} \sim \|h\|_{2}^2 \qquad (h \in \cl{\ran(\P \M)}).
\end{align*}
The angle $\omega$ of bisectoriality and implicit constants in the quadratic estimate depend upon $n$ and the ellipticity constants of $A$. In particular, $\P \M$ has a bounded holomorphic functional calculus on $\clos{\ran(\P \M)} = \clos{\ran(\P)}$ on open double sectors $\S_{\mu}$ for all $\mu \in (\omega, \pi/2)$. The same holds true for $\M \P$ on $\cl{\ran(\M \P)} = \M \cl{\ran(\P)}$.
\end{thm}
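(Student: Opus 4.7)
The plan is to adapt the first-order framework of Axelsson--Keith--McIntosh~\cite{AKMc}, as recast in~\cite{AAM, AA}, to the parabolic setting. The workflow naturally breaks into three stages: first, verify the algebraic and coercivity features of the pair $(\P,\M)$; second, deduce the bisectoriality of $\P\M$ and $\M\P$ by an abstract argument; third, establish the quadratic estimate by a $T(b)$ theorem, from which the bounded holomorphic functional calculus follows by McIntosh's theorem.

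For the first stage, $\P$ with its maximal domain is closed and densely defined on $\mH$, and a symbol calculation in $(x,t)$ via Fourier transform shows that $\P^2$ reproduces on its first diagonal block the heat operator $\pd_t-\Delta_x$ (this is precisely the factorisation underlying Section~\ref{sec:correspondence}). The same Fourier analysis gives bisectoriality of $\P$ on $\mH$ at any angle $\omega>0$ together with the topological splitting $\mH=\nul(\P)\oplus\cl{\ran(\P)}$. On the other hand, the Schur-complement structure of $\hat A$ in~\eqref{eq:A} combined with the ellipticity~\eqref{eq2} yields, by a standard algebraic manipulation borrowed from the elliptic case in~\cite{AA}, a G\aa rding-type inequality $\Re\dual{\M h}{h}\gtrsim\|h\|_2^2$ on $\cl{\ran(\P)}$; the diagonal $1$ in $\M$ takes care of the time component, which is unconstrained. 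Since $\M$ is moreover bounded and invertible on all of $\mH$ (because $\hat A$ is pointwise invertible), the bisectoriality of $\P\M$ on $\mH$ with $\ran(\P\M)=\ran(\P)$ follows by the usual abstract argument solving $(\zeta-\P\M)u=f$ via Lax--Milgram in $\cl{\ran(\P)}\cap\dom(\P\M)$, while the statement for $\M\P$ is obtained from the similarity $\M\P=\M(\P\M)\M^{-1}$ applied to the invariant subspace $\M\cl{\ran(\P)}$.

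The heart of the matter is the quadratic estimate. I would follow the $T(b)$ strategy of~\cite{AKMc}: for each parabolic cube $Q\subset\ree$ and each vector $\xi_\nu$ in a finite unit frame of $\IC^{n+2}$, I would construct test functions $b_\nu^Q:=(I+\i\ell(Q)\P\M)^{-1}(\eta_Q\xi_\nu)$, where $\eta_Q$ is a smooth bump adapted to $Q$ and $\ell(Q)$ its parabolic side length; by the accretivity from stage one, these inherit an averaged accretivity on $Q$. One then splits $\theta_\lambda:=\lambda\P\M(I+\lambda^2(\P\M)^2)^{-1}$ into a principal part $\gamma_\lambda$ acting as pointwise multiplication after dyadic averaging, plus a Schur-controlled remainder, and reduces the quadratic estimate to a Carleson measure bound on $\gamma_\lambda$ proved by a dyadic stopping-time argument in the spirit of~\cite{AHLMcT}. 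The main obstacle, flagged already in the introduction, is deriving the $\L^2$ off-diagonal estimates for the resolvent family $(I+\i\lambda\P\M)^{-k}$ on which all of the above relies: the non-local half-order derivatives $\dhalf$ and $\HT\dhalf$ produce commutators $[\P,\eta]$ whose time component is not a multiplication operator but an integral operator with kernel of only $|t-s|^{-3/2}$ decay, so one can hope for at most polynomial (and not exponential) parabolic off-diagonal decay, of fairly low order. The delicate point is to show that this weak decay still integrates against a parabolic Whitney decomposition of $\ree\setminus Q$ to deliver both the $\L^2$ control of $b_\nu^Q$ on enlargements of $Q$ and the cut-off manipulations in the principal-part reduction. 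Once the quadratic estimate is secured, McIntosh's theorem supplies the bounded $\S_\mu$-calculus on $\cl{\ran(\P\M)}$ for every $\mu\in(\omega,\pi/2)$, and the analogous statement for $\M\P$ follows from the similarity noted above.
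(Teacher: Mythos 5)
Your overall plan --- bisectoriality, a $T(b)$ quadratic estimate, then McIntosh's theorem --- matches the paper's strategy, but the bisectoriality step as you describe it contains a genuine gap. You propose to derive bisectoriality of $\P\M$ from accretivity of $\M$ plus a Lax--Milgram argument, citing the ``usual abstract argument'' for operators of $DB$-type. That argument requires $D$ to be \emph{self-adjoint}, not merely bisectorial: the resolvent estimate comes from a numerical range computation that uses self-adjointness in an essential way. Here $\P$ is \emph{not} self-adjoint --- compare \eqref{eq:DB} with its adjoint \eqref{eq:DB+}: the operators $\dhalf$ and $\HT\dhalf$ swap places, reflecting the fact that the adjoint of the heat equation is the backward heat equation, not itself. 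The paper handles this at the outset (Lemma~\ref{lem:bisectoriality of Dirac}) by a ``hidden coercivity'' device in the spirit of Kaplan's energy method: one conjugates with a Hilbert-transform based matrix $U_\delta$ so that $\P U_\delta$ becomes self-adjoint while $U_\delta^{-1}\M$ stays accretive for $\delta>0$ small, whence $\P\M = (\P U_\delta)(U_\delta^{-1}\M)$ falls into the classical framework. Without some device of this kind your Lax--Milgram argument for $(\zeta-\P\M)u=f$ does not close, and no accretivity condition on $\M$ alone can rescue it.

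Two further points where your sketch undersells the actual work. First, the paper does not deduce the off-diagonal estimates directly from the $|t-s|^{-3/2}$ commutator kernel that you correctly identify; the key input is the family of $\L^p(\L^q)$ mixed-norm resolvent bounds of Lemma~\ref{lem:LpLq}, proved by {\u{S}}ne{\u\ii}berg extrapolation on mixed-norm parabolic Sobolev scales, which then combine with Young's inequality for the commutator in the $t$-variable and interpolate against the trivial $\L^2$ bound. Second, your proposed $T(b)$ test functions $(\id+\i\ell(Q)\P\M)^{-1}(\eta_Q\xi_\nu)$ are the generic AKMc ones, but the paper's principal-part reduction is set up for test functions of the form $\P(\id+\i\eps\ell\M\P)^{-1}L^\zeta_{R,\delta}$ lying in $\ran(\P)$. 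The explicit potential $L^\zeta_{R,\delta}$ is built from a one-dimensional function $h=(-\HT\dhalf)^{-1}\nu$ whose existence and Lipschitz properties are established by a Fourier argument exploiting the non-analyticity of $|\tau|^{1/2}$; this is a genuine departure from the elliptic case, forced by the non-locality of the compatibility constraint $\gradx F_\te = \HT\dhalf F_\pa$ characterising $\cl{\ran(\P)}$ (Lemma~\ref{lem:characterisation ranP}), which cannot be met by the simple tensorized bump you propose.
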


The proof is given in Sections~\ref{sec:diracBasic} and \ref{sec:proof bhfc}. We emphasize that the entries of $\M$ are allowed to depend on $x$ and $t$ in a merely measurable fashion. As a consequence of bisectoriality there are topological splittings
\begin{equation} \label{eq:kernel/range}
\mH
= \clos{\ran(\P)} \oplus {\nul(\P\M)} = \M \cl{\ran(\P)} \oplus {\nul(\P)}.
\end{equation}
Existence of a bounded holomorphic functional calculus for $\P \M$ on $\clos{\ran(\P)}$ on sectors $\S_{\mu}$ in our case means that for any bounded holomorphic function $b: \S_\mu \to \IC$ the functional calculus operator $b(\P \M)$ on $\cl{\ran(\P)}$ is bounded by $\|b(\P \M)\|_{\cl{\ran(\P)} \to \cl{\ran(\P)}} \lesssim \|b\|_{\L^\infty(\S_\mu)}$ with an implicit constant depending upon $\mu$, $n$ and the ellipticity constants of $A$. Due to the seminal result of McIntosh~\cite{Mc} this property is in fact equivalent to the quadratic estimate stated above. If $b$ is unambiguously defined at the origin, then $b(\P\M)$ extends to $\mH$
 by $b(0)$ on $\nul(\P \M)$. This general theory also applies to $\M \P$ on the closure of its range.

Based on Theorem~\ref{thm:bhfc} and the observation that $\cH_{\loc}$ identifies with $\L^2_{\loc}(\R_{+}; \clos{\ran(\P)})$, see Lemma~\ref{lem:characterisation ranP} below, we can construct solutions to the differential equation \eqref{eq:diffeq2}, using functional calculus.

We define the characteristic functions $\chi^+(z)$ and $\chi^-(z)$ for the right and left open half planes in $\IC$ and the exponential functions $e^{-\lambda [z]}$, $\lambda>0$. Here $[z]:=z\,\sgn(z)$ and $\sgn(z):= \chi^+(z)-\chi^-(z)$. This gives the generalized {\em spectral projections} $\chi^\pm(\P\M)$, which are bounded on $\clos{\ran(\P)}$ by Theorem~\ref{thm:bhfc}, and the holomorphic semigroup $(e^{-\lambda[\P\M]})_{\lambda>0}$ generated by $[\P\M]= \P\M \sgn(\P\M)$. The boundedness of the spectral projections yields a topological splitting into spectral spaces $\Hpm(\P\M):= \chi^\pm(\P\M)\clos{\ran(\P)}$, which can be called \emph{generalized Hardy spaces},
\begin{equation}
\label{eq:hardysplit}
 \clos{\ran(\P)}= \Hp(\P\M) \oplus \Hm(\P\M).
\end{equation}
It is convenient to introduce the following generalized \emph{Cauchy extension}: for $h\in \clos{\ran(\P)}$ and $\lambda\in \R \setminus \{0\}$,
\begin{equation}
\label{eq:Cauchyext}
(C_{0}h)_{\lambda}=(C_{0}h)(\lambda,\cdot):=
\begin{cases}
(C_{0}^+h)(\lambda,\cdot) = \e^{-\lambda \P\M} \chi^+(\P\M) h & (\text{if $\lambda>0$}), \\
 (C_{0}^-h)(\lambda,\cdot) = \e^{-\lambda \P\M} \chi^-(\P\M) h & (\text{if $\lambda<0$}).
\end{cases}
\end{equation}
Let us also define the \emph{semigroup extension} of $h$ for $\lambda>0$ by
\begin{align}\label{eq:sgext}
 (Sh)_{\lambda}=(Sh)(\lambda,\cdot) :=e^{-\lambda[\P\M]} h.
\end{align}
Hence,
\begin{align*}
 \begin{cases}
 (C_{0}^+h)_{\lambda}= (S \chi^+(\P\M)h)_{\lambda} &  (\text{if $\lambda>0$}),\\
 (C_{0}^-h)_{\lambda}= (S \chi^-(\P\M)h)_{-\lambda} & (\text{if $\lambda<0$}).
\end{cases}
\end{align*}
The above also applies to $\M\P$, which has a bounded holomorphic functional calculus on the closure of its range, too. However, unlike for $\P \M$,  the range of $\M \P$ is not independent of $\M$.

\begin{prop}
\label{prop:Cauchyextension}
The generalized Cauchy extension $F=C_{0}^+h$ of $h\in \clos{\ran(\P)} $ gives a solution to $\pd_\lambda F+ \P\M F=0$ in the strong sense $F\in \C([0,\infty); \clos{\ran(\P)})\cap \C^\infty((0,\infty); \dom(\P\M))$ with  bounds
\begin{align*}
 \sup_{\lambda>0}\|F_\lambda\|_2 \sim \|\chi^+(\P\M)h\|_2 \sim \sup_{\lambda>0} \barint_{\lambda}^{2\lambda} \|F_{\mu}\|_{2}^2 \d \mu
\end{align*}
and $\mH$ limits
\begin{align*}
 \lim_{\lambda\to 0} F_\lambda =\chi^+(\P\M)h, \qquad \lim_{\lambda\to\infty} F_\lambda =0.
\end{align*}
In addition, the square function estimate
\begin{align}
\label{eq:sf}
\qe{\lambda \pd_{\lambda} F} \sim \|\chi^+(\P\M)h\|^2_{2}
\end{align}
holds. Furthermore, $F$ is a solution to $\pd_\lambda F+ \P\M F=0$ also in the weak sense \eqref{eq:diffeq}.
\end{prop}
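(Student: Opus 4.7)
The plan is to reduce everything to abstract consequences of the bounded holomorphic functional calculus for $\P\M$ on $\clos{\ran(\P)}$ furnished by Theorem~\ref{thm:bhfc}. Set $h_+ := \chi^+(\P\M)h \in \Hp(\P\M)$. Since $\chi^+$ projects onto the spectral subspace where $\sgn(\P\M) = +1$, we have $F_\lambda = e^{-\lambda \P\M} h_+ = e^{-\lambda[\P\M]} h_+$. The operator $[\P\M]$, restricted to $\clos{\ran(\P)}$, is sectorial of angle strictly less than $\pi/2$ (because the angle of bisectoriality of $\P\M$ is $\omega < \pi/2$), hence generates a bounded holomorphic semigroup there. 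This immediately gives $F \in \C([0,\infty); \clos{\ran(\P)}) \cap \C^\infty((0,\infty); \dom(\P\M))$, the pointwise identity $\pd_\lambda F_\lambda = -\P\M F_\lambda$ as a strong limit in $\mH$, and the value $F_0 = h_+$ at the origin by strong continuity of the semigroup on its generator's closed range.

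Next, uniform bounds and limits. The symbol $z \mapsto \chi^+(z)\e^{-\lambda z}$ is bounded by $1$ on any bisector $\S_\mu$ uniformly in $\lambda>0$, so the functional calculus yields $\sup_{\lambda>0}\|F_\lambda\|_2 \lesssim \|h_+\|_2 \lesssim \|h\|_2$. For the decay at infinity, it suffices to argue on the dense subspace $\P\M (\dom(\P\M) \cap \Hp)$ (density in $\Hp$ follows from bisectoriality of $\P\M$, since $\ran(\P\M) = \ran(\P)$ is dense in $\clos{\ran(\P)}$); for $g = \P\M g'$ with $g' \in \dom(\P\M) \cap \Hp$, functional calculus gives $\|e^{-\lambda \P\M} g\|_2 = \|\P\M \e^{-\lambda \P\M} g'\|_2 \lesssim \lambda^{-1}\|g'\|_2 \to 0$, and a standard $\eps/3$ approximation combined with the uniform bound extends this to all of $\Hp$. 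The equivalence with $\sup_{\lambda>0}\barint_\lambda^{2\lambda}\|F_\mu\|_2^2 \d\mu$ is immediate: the average is trivially dominated by $\sup\|F_\lambda\|_2^2$, and conversely strong continuity at $0$ forces the average over $(\lambda, 2\lambda)$ to tend to $\|h_+\|_2^2$ as $\lambda \to 0$, which together with $\sup_{\lambda}\|F_\lambda\|_2\ge \|h_+\|_2$ closes the chain.

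The square function estimate \eqref{eq:sf} is the prototypical consequence of bounded $H^\infty$-calculus. The symbol $\psi(z) := z \chi^+(z)\e^{-z}$ belongs to $\H^\infty_0(\S_\mu)$ and is non-degenerate on the right component of $\S_\mu$ (vanishing on the left), so McIntosh's theorem applied to $\P\M$ on $\clos{\ran(\P)}$ gives $\int_0^\infty \|\psi(\lambda \P\M)u\|_2^2 \, \d\lambda/\lambda \sim \|u\|_2^2$ for $u$ in the closure of the range; plugging in $u = h_+$ (on which $\chi^+(\P\M) = \id$) identifies the left-hand side with $\qe{\lambda\pd_\lambda F}$. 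Alternatively one compares $\psi$ with the resolvent symbol $z \mapsto z(1+z^2)^{-1}$ appearing in Theorem~\ref{thm:bhfc} by the standard $\psi$-comparison lemma.

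Finally, to obtain the weak sense \eqref{eq:diffeq}, take $\phi \in \C_0^\infty(\reu\times\R; \IC^{n+2})$. Strong differentiability $\pd_\lambda F_\lambda = -\P\M F_\lambda$ in $\mH$ combined with integration by parts in $\lambda$ over the compact $\lambda$-support of $\phi$ gives $\iiint F\cdot\overline{\pd_\lambda \phi} = \iiint \P\M F \cdot \overline{\phi}\, \d\lambda \d x \d t$, and the adjoint identity $\langle \P u,\varphi\rangle_{\mH} = \langle u, \P^*\varphi\rangle_{\mH}$ for $u \in \dom(\P)$ and smooth compactly supported $\varphi$ (which reduces directly to the definitions of $\gradx$, $\divx$, $\dhalf$, and $\HT$ via ordinary and fractional integration by parts) converts this to $\iiint \M F\cdot\overline{\P^*\phi}$. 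The main technical obstacle lies in the decay argument at $\lambda\to\infty$ and in checking that $F_\lambda \in \dom(\P)$ for a.e.\ $\lambda > 0$, which is where the $\C^\infty$-smoothness into $\dom(\P\M) \subset \dom(\P)$ (recalling $\M$ is bounded and invertible) is essential.
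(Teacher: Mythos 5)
Your proof is correct and follows the paper's route verbatim: the strong-sense statements and uniform bounds are classical consequences of the $H^\infty$-calculus from Theorem~\ref{thm:bhfc}, the square function estimate reduces via functional calculus with symbol $\psi(z)=ze^{-[z]}\chi^+(z)$ to the quadratic estimate of that theorem, and the weak sense follows by integration by parts in $\lambda$ together with the adjoint relation. Two minor slips are worth flagging, neither of which affects the conclusion: since $\psi$ vanishes on $\Re z<0$, the two-sided quadratic estimate $\int_0^\infty\|\psi(\lambda\P\M)u\|_2^2\,\mathrm{d}\lambda/\lambda\sim\|u\|_2^2$ holds only for $u\in\Hp(\P\M)$, not for all $u$ in $\clos{\ran(\P)}$ (on $\Hm(\P\M)$ the left side is zero), but this is all you use since $h_+\in\Hp(\P\M)$; and the inclusion $\dom(\P\M)\subset\dom(\P)$ is false in general (multiplication by the non-smooth $\M$ does not preserve $\dom(\P)$, nor is $\M$ invertible in the systems case, cf.\ Remark~\ref{rem:PM and MP similar}) --- what the integration-by-parts step actually needs is $\M F_\lambda\in\dom(\P)$ so that $(\P\M F,\phi)=(\M F,\P^*\phi)$, and this is exactly the definition of $F_\lambda\in\dom(\P\M)$.
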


If $h$ belongs to the spectral space $\Hp(\P\M)$, then $h= \chi^+(\P\M)h$ is the initial value of its generalized Cauchy extension $C_0^+h$. The generalized Cauchy extension $C_{0}^-h$ gives a solution of the same equation with the analogous estimates for $\lambda\in \R_{-}$. Moreover, the counterpart of Theorem~\ref{thm:correspondence} for the lower half-space (with identical proof) provides the correspondence with parabolic conormal differentials of reinforced weak solutions to \eqref{eq1} on $\R^{n+2}_{-}$.

All statements in Proposition~\ref{prop:Cauchyextension} but the last one are well-known properties in semigroup theory and they apply to any bisectorial operator with a bounded holomorphic functional calculus \cite{Haase, Mc}. Note that in \eqref{eq:sf} we have $\pd_{\lambda} F = \phi(\lambda \P \M) h$ with holomorphic function $\phi(z) = z \e^{-[z]} \chi^+(z)$, and thus this square function estimate follows from the analogous one with $\phi(z) = z(1+z^2)^{-1}$ provided by Theorem~\ref{thm:bhfc}. The verification of the equation in the weak sense \eqref{eq:diffeq} is special to the operator $\P\M$ and follows by a simple integration by parts.

The converse to Proposition~\ref{prop:Cauchyextension} is a verbatim modification of the argument originally designed for elliptic systems \cite[Thm.~8.2]{AA}. In fact, this relies on an abstract approach that works for any bisectorial operator with a bounded holomorphic functional calculus, see \cite{AA-Note}. Hence, we can directly state the following result.

\begin{thm}
\label{thm:uniq}
Let  $F \in \L^2_{\loc}(\R_{+}; \clos{\ran(\P)}) $ be a solution of \eqref{eq:diffeq2} in the weak sense such that
\begin{equation}
\label{eq:uniqueness}
\sup_{\lambda>0} \barint_{\lambda}^{2\lambda} \|F_{\mu}\|_{2}^2 \d \mu <\infty.
\end{equation}
Then $F$ has a $\L^2$ limit $h\in \Hp(\P\M)$ at $\lambda = 0$ and $F$ is given by the Cauchy extension of $h$. The analogous result for weak solutions $F \in \L^2_{\loc}(\R_{-}; \clos{\ran(\P)}) $ of \eqref{eq:diffeq2} also holds upon using the spectral space $\Hm(\P \M)$.
\end{thm}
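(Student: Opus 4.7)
The plan is to apply the abstract semigroup uniqueness result of \cite{AA-Note}, which treats any bisectorial operator on a Hilbert space admitting a bounded holomorphic functional calculus on the closure of its range. By Theorem~\ref{thm:bhfc} the operator $T:=\P\M$ fits this framework with $\cl{\ran(T)}=\cl{\ran(\P)}$, and the hypothesis $F\in\L^2_{\loc}(\R_+;\cl{\ran(\P)})$ places us in the abstract setting.

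The first step is to regularize $F$ in the transversal variable by convolution in $\lambda$ with an approximate identity $\phi_\eps\in\C_0^\infty(\R)$ of small support, producing a smoothed solution $F_\eps$. Autonomy of \eqref{eq:diffeq2} in $\lambda$ ensures that $F_\eps$ is still a weak solution on a shifted half-line; the mollifier properties yield that $F_\eps$ is $\C^\infty$ as an $\mH$-valued function with values in $\dom(T^k)$ for every $k$, satisfies $\partial_\lambda F_\eps=-TF_\eps$ classically, remains in $\cl{\ran(\P)}$, and inherits the Whitney bound \eqref{eq:uniqueness} uniformly in $\eps$.

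Second, one applies the spectral projections from Theorem~\ref{thm:bhfc} to split $F_\eps=F_\eps^++F_\eps^-$ with $F_\eps^\pm(\lambda):=\chi^\pm(T)F_\eps(\lambda)\in\Hpm(T)$, each a classical solution. On $\Hp(T)$ the equation is well-posed forward, giving
\[
 F_\eps^+(\lambda)=\e^{-(\lambda-\lambda_0)T}F_\eps^+(\lambda_0) \qquad (\lambda\geq\lambda_0)
\]
for any fixed reference point $\lambda_0$. On $\Hm(T)$ it is well-posed only backward, giving $F_\eps^-(\lambda)=\e^{(\mu-\lambda)T}F_\eps^-(\mu)$ for $\mu\geq\lambda>0$. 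A weak-compactness argument sending $\mu\to\infty$ along a bounded sequence $F_\eps^-(\mu_k)$ (provided by the Whitney bound), combined with the strong operator convergence $\e^{sT^*}\to 0$ on $\cl{\ran(T^*)}$ (a consequence of the bounded holomorphic functional calculus of $T^*$), yields $F_\eps^-\equiv 0$. Hence $F_\eps$ is the generalized Cauchy extension of $h_\eps:=F_\eps^+(\lambda_0)\in\Hp(T)$.

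Third, one passes to the limit $\eps\to 0$. By Proposition~\ref{prop:Cauchyextension} the norm $\|h_\eps\|_2$ is controlled by the Whitney average of $F_\eps$, uniformly in $\eps$ thanks to \eqref{eq:uniqueness}. Weak compactness in $\Hp(T)$ extracts a subsequential weak limit $h\in\Hp(T)$, and passing to the limit in $F_\eps(\lambda)=\e^{-(\lambda-\lambda_0)T}h_\eps$ identifies $F(\lambda)=\e^{-\lambda T}h$ for almost every, and hence every, $\lambda>0$. Proposition~\ref{prop:Cauchyextension} then supplies the $\L^2$ convergence $F(\lambda)\to h$ as $\lambda\to 0^+$ and the identification $F=C_0^+h$; the analogous statement on $\R_-$ follows by the symmetry $\lambda\mapsto-\lambda$, which interchanges $\Hp(T)$ and $\Hm(T)$. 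The main obstacle is the rigidity argument for the $\Hm$-component in the second step, where bisectoriality and the bounded holomorphic functional calculus of $T$ and $T^*$ are used decisively to turn the Whitney bound into vanishing of the backward-only part.
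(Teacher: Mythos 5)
Your argument correctly reconstructs the proof the paper delegates to \cite[Thm.~8.2]{AA} and \cite{AA-Note}: mollification in $\lambda$, spectral splitting, vanishing of the $\Hm$-component by decay of the adjoint backward semigroup under the Whitney bound, and passage to the limit via weak compactness and strong continuity at $\lambda=0$. The only small imprecision is that the strong decay $\e^{sT^*}\to 0$ as $s\to\infty$ holds on $\Hm(T^*)$ rather than on all of $\cl{\ran(T^*)}$, but your pairing with $\bigl(\e^{(\mu_k-\lambda)T}\chi^-(T)\bigr)^*=\e^{(\mu_k-\lambda)T^*}\chi^-(T^*)$ restricts to exactly that subspace, so the argument goes through.
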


Finally, we mention that the whole functional calculus for $\P \M$ becomes analytic in $A$ equipped with the $\L^\infty$-norm (or equivalently, in $\M$ for the same norm). In particular, this provides us with Lipschitz estimates for the operator norm on $\cl{\ran(\P)}$: for all $b$ bounded and holomorphic in a sector $\S_{\mu}$ with $\omega<\mu<\pi/2$,
\begin{align}
\label{eq:lipschitz}
\|b(\P\M) - b(\P\M')\|_{\cl{\ran(\P)} \to \cl{\ran(\P)}} \lesssim \|b\|_{\infty}\|M-M'\|_{\infty},
\end{align}
where $M'$ is associated with an elliptic matrix $A'$ in the same class as $A$ and $\|M-M'\|_{\infty}$ is small. The implicit constant depends on $n$, $\mu$ and the ellipticity constants of $A$. This is again an abstract property of the functional calculus for operators of type $\P \M$ once it is known that implicit constants in the quadratic estimate of Theorem~\ref{thm:bhfc} depend on the coefficients $A$ only through the ellipticity constants. Details are written out for example in \cite[p.~269]{AAM} or \cite[Prop.~6.1.25]{EigeneDiss}.
\subsection{Kato square root estimate}\label{sec:kato}

As another important consequence of Theorem~\ref{thm:bhfc} we obtain the resolution of the \emph{Kato problem for parabolic operators} in full generality.

In order to set the context, we note that similar to Section~\ref{sec:energy} the use of the energy space $\mathsf{V} := \H^{1/2}(\R; \L^2(\R^n)) \cap \L^2(\R; \W^{1,2}(\R^n))$ allows us to define the parabolic operator $L:=\pd_{t} - \div_{x}A_{\pa \pa}(x,t)\nabla_{x}$ as an operator $\mathsf{V} \to \mathsf{V}^*$ via a sesquilinear form,
\begin{align*}
 \langle L u, v \rangle := \iint_{\ree} A_{\pa \pa} \nabla_x u\cdot\overline{\nabla_x v}+ \HT\dhalf u\cdot \overline{\dhalf v} \d x\d t \qquad (u,v \in \mathsf{V}).
\end{align*}
Carving out the surprising analogy with elliptic operators even further, \cite[Lem.~4]{Aus-Eg} shows that $L$ with maximal domain $\dom(L) = \{u \in \mathsf{V} : L u \in \L^2(\ree) \}$ in $\L^2(\ree)$ is \emph{maximal accretive}: That is to say $L$ is closed, $\Re \langle Lu, u \rangle \geq 0$ holds for all $u \in \dom(L)$, and $1+L$ is onto. We refer to \cite{Kato} for background on this class of operators. Note that all this is for one dimension lower than in Section~\ref{sec:energy} and that we are using an inhomogeneous energy space since we are studying $L$ as an operator in $\L^2(\ree)$. The reader will notice that the following result reads almost identically to the famous Kato problem for elliptic operators solved in \cite{AHLMcT}. We shall give its proof in Section~\ref{sec:parKato}. We emphasize that no assumptions on $A_{\pa \pa} = A_{\pa \pa}(x,t)$ besides measurability and uniform ellipticity have been imposed.

\begin{thm}\label{thm:Kato} The operator $L=\pd_{t} - \div_{x}A_{\pa \pa}(x,t)\nabla_{x}$ arises from an accretive form, it is maximal-accretive in $\L^2(\ree)$, the domain of its square root is that of the accretive form, that is, $\dom(\sqrt{L}) = \mathsf{V} = \H^{1/2}(\R; \L^2(\R^n)) \cap \L^2(\R; \W^{1,2}(\R^n))$. The two-sided estimate
\begin{align*}
\|\sqrt L\, u\|_{2} \sim \|\gradx u\|_{2}+ \| \dhalf u\|_{2} \qquad (u \in \mathsf{V})
\end{align*}
holds with implicit constants depending only upon $n$ and ellipticity constants of $A_{\pa \pa}$.
\end{thm}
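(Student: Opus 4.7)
The maximal accretivity of $L$ and its form representation are already available from \cite[Lem.~4]{Aus-Eg}, so the square root $\sqrt L$ is well defined as a maximal accretive operator via sectorial functional calculus, and the easy estimate $\|\sqrt L u\|_2 \lesssim \|\gradx u\|_2 + \|\dhalf u\|_2$ for $u\in\mathsf{V}$ follows from continuity of the form via the usual density argument. The heart of the matter is the reverse estimate, which I plan to extract directly from Theorem~\ref{thm:bhfc} by realizing $L$ as the square of the parabolic Dirac operator acting on an enlarged space.

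To set this up, I specialize the ellipticity data by taking the auxiliary matrix $A := \begin{bmatrix} 1 & 0 \\ 0 & A_{\pa\pa} \end{bmatrix}$ on $\reu\times\R$. This matrix is uniformly elliptic with the constants of $A_{\pa\pa}$ and is fixed by the pointwise transformation of Proposition~\ref{prop:divformasODE}, so the associated multiplier is $\M = \mathrm{diag}(1, A_{\pa\pa}, 1)$. Since $\M$ is $\lambda$-independent, Theorem~\ref{thm:bhfc} supplies the bounded $H^\infty$-calculus for $\P\M$ on $\clos{\ran(\P)}$ with constants depending only on $n$ and the ellipticity of $A_{\pa\pa}$.

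Next, introduce the embedding $E : \L^2(\ree) \to \mH$ by $Eu := (u,0,0)^T$. A direct computation from the definitions of $\P$ and $\M$, combined with the identity $\partial_t = \dhalf \HT \dhalf$, yields
\[
 \P Eu = (0, -\gradx u, -\HT \dhalf u)^T \quad\text{and}\quad (\P\M)^2 Eu = E(Lu)
\]
on the core $\mathcal{C}_0^\infty(\ree)$ of $L$. I also need $E\L^2(\ree) \subset \clos{\ran(\P)}$; this is seen by approximating $u\in \L^2$ by functions whose Fourier spectrum in $x$ avoids the origin, so that $u = \divx \phi_\pa$ with $\phi_\pa \in \L^2(\ree;\IC^n)$ and hence $Eu = \P(0,\phi_\pa,0)^T \in \ran(\P)$. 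Since the spectrum of $(\P\M)^2$ lies in a single sector of angle $\le 2\omega < \pi$ by bisectoriality of $\P\M$, and the spectrum of $L$ lies in the closed right half-plane by maximal accretivity, there is a joint resolvent set. Propagating the intertwining through resolvents gives $(z-(\P\M)^2)^{-1}E = E(z-L)^{-1}$, and by the standard Cauchy-integral representation of the $H^\infty$-calculus, $f((\P\M)^2) E = E f(L)$ for any $f$ bounded and holomorphic on a common sector. Specializing to $f(z) = \sqrt z$ yields $\sqrt{(\P\M)^2}\, Eu = E\sqrt L\, u$ for $u\in\dom(\sqrt L)$.

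Finally, the Kato equivalence is read off from the functional calculus: since $\sgn$ is bounded holomorphic on the double sector with $\sgn^2 \equiv 1$, one has $\sqrt{(\P\M)^2} = \sgn(\P\M)\cdot\P\M$ on $\clos{\ran(\P)}$, whence $\|\sqrt{(\P\M)^2}\, h\|_2 \sim \|\P\M h\|_2$. Using $\M Eu = Eu$ (because $A_{\pe\pe}=1$ annihilates the zero tangential and time components),
\[
 \|\sqrt L\, u\|_2 = \|E\sqrt L\, u\|_\mH = \|\sqrt{(\P\M)^2}\, Eu\|_\mH \sim \|\P\M Eu\|_\mH = \|\P Eu\|_\mH \sim \|\gradx u\|_2 + \|\dhalf u\|_2,
\]
where the last step uses that $\HT$ is an $\L^2$-isometry in $t$. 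The estimate, initially on the core $\mathcal{C}_0^\infty(\ree)$, extends to $\dom(\sqrt L)$ by closedness of $\sqrt L$ and completeness of $\mathsf{V}$, identifying $\dom(\sqrt L) = \mathsf{V}$.

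The main obstacle is the intertwining step: verifying $Eu \in \clos{\ran(\P)}$ on a dense enough subspace, checking that $(\P\M)^2 Eu = E(Lu)$ holds on a core of $L$ (which requires the $\dhalf$ appearing in $\P$ to match the one in the form defining $L$ — this is handled by the formalism of Section~\ref{sec:Homogeneous Sobolev spaces}), and then propagating the identity to the full $H^\infty$-calculus so that $f(z)=\sqrt z$ is covered.
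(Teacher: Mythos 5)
Your proposal follows the paper's own argument: take $A=\mathrm{diag}(1,A_{\pa\pa})$ so that $\M=\mathrm{diag}(1,A_{\pa\pa},1)$, compute $(\P\M)^2$ to expose $L$ as its decoupled $\pe$-block, and invoke Theorem~\ref{thm:bhfc} to compare $[\P\M]=\sgn(\P\M)\P\M$ with $\P\M$; your explicit intertwining via the embedding $E$ is exactly the honest unpacking of the paper's terse ``specialising to the first component'' step (and the inclusion $E\L^2(\ree)\subset\clos{\ran(\P)}$ is immediate from Lemma~\ref{lem:characterisation ranP} without your auxiliary $\phi_\pa$ construction). One caveat: your opening remark that $\|\sqrt L\,u\|_2\lesssim\|\gradx u\|_2+\|\dhalf u\|_2$ ``follows from continuity of the form'' is incorrect — since $L$ is never self-adjoint, $\|\sqrt L\,u\|_2^2$ is not controlled by $|a(u,u)|$ (one only gets $a(u,u)=(\sqrt L\,u,\sqrt{L^*}\,u)$), and \emph{both} inequalities are genuinely non-trivial; fortunately your Dirac-operator computation establishes both at once, so the overall plan is unaffected.
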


The reader should remark that not even the case $A_{\pa \pa}^*=A_{\pa \pa}$ can be treated by abstract functional analysis (while this is the case for the elliptic Kato problem) because $L$ is never self-adjoint.

\subsection{Sobolev spaces}
\label{sec:sobolev}

The previously obtained characterisation of solutions to the differential equation in \eqref{eq:diffeq2} extends to other topologies.

For $s \in \R$ we can define the fractional powers $[\P\M]^s$ as closed and injective operators in $\clos{\ran(\P \M)} = \clos{\ran(\P)}$, using the functional calculus for $\P \M$. The homogeneous Sobolev space $\Hdot^s_{\P\M}$ based on $\P\M$ is the completion of $\dom([\P \M]^s) \cap \clos{\ran(\P \M)}$ under the norm $ \|[\P\M]^s \cdot \|_{2}$. Another option taken in \cite{Amenta-Auscher} is to use the fractional powers of $(PM)^s$ with branch cut on $\i (-\infty,0]$, which gives the same spaces with isometric norms. This yields an abstract scale of Hilbert spaces that can all be realised within the same linear Hausdorff space and the intersection of any two of them is dense in both of them. Of course, $\Hdot^{0}_{\P\M} = \clos{\ran(\P)}$. As a matter of fact, the bounded holomorphic functional calculus extends to $\Hdot^s_{\P\M}$, yielding in particular the spectral spaces $\Hdot^{s,\pm}_{\P\M}$, and $\P \M$ extends to an isomorphism from $\Hdot^s_{\P\M}$ onto $\Hdot^{s-1}_{\P\M}$. The same abstract construction can be done with $\M\P$ starting from the base space $\cl{\ran(\M \P)}$. Further background on these operator-adapted Sobolev spaces is provided in \cite{Auscher-McIntosh-Nahmod} or \cite[Ch.~6]{Haase}.

In particular, this construction applies to $\P$ and in this case we re-obtain concrete homogeneous parabolic Sobolev spaces as defined in Section~\ref{sec:energy}. To this end, we note that if $h \in \dom(\P^2) \cap \ran(\P)$, then
\begin{align}
\label{eq:Psquared}
 \P^2 h
 = \begin{bmatrix} \partial_t - \Delta_x & 0 & 0 \\ 0 & -\gradx \divx & \gradx \dhalf \\ 0 & - \HT \dhalf \divx & \pd_{t}  \end{bmatrix} h
 = \begin{bmatrix} \partial_t - \Delta_x & 0 & 0 \\ 0 & \partial_t - \Delta_x & 0\vphantom{\dhalf} \\ 0\vphantom{\dhalf} & 0& \partial_t - \Delta_x \end{bmatrix} h.
\end{align}
Taking the Fourier transform, we easily see that $\Hdot_\P^s$ is a closed subspace of $(\Hdot_{\pd_{t} - \Delta_x}^{s/2})^{n+2}$ and that their norms are equal. Moreover, \eqref{eq:Psquared} reveals the bounded projection $\IP_{\P}$ onto $\clos{\ran(\P)}$ along ${\nul(\P)}$, which is well-defined in virtue of the decomposition \eqref{eq:kernel/range} for $M = 1$, as the smooth parabolic singular integral of convolution $\IP_\P = \P^2(\pd_{t} - \Delta_x)^{-1}$. Thus, $\Hdot^s_\P$ is precisely the image of $(\Hdot^{s/2}_{\pd_{t}-\Delta_x})^{n+2}$ under the bounded extension of $\IP_{\P}$.

The following lemma allows one to work more concretely  with the Sobolev spaces associated with $\P \M$. It is a consequence of the bounded holomorphic functional calculus for $\P \M$ (and $\P$) and the fact that $\cl{\ran(\P \M)} = \cl{\ran(\P)}$ holds, see \cite[Prop.~4.5]{AMM} for details.

\begin{lem}
\label{lem:identification}
If $-1\le s \le 0$, then $\Hdot_{\P \M}^s = \Hdot_{\P}^s$ with equivalent norms. Equivalence constants depend only on $n$ and ellipticity constants of $A$.
\end{lem}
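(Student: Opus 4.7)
My plan is to establish the identity at the two endpoints $s = 0$ and $s = -1$ and then to interpolate in the range $s \in (-1, 0)$. The first endpoint is free: by Theorem~\ref{thm:bhfc} both $\Hdot^0_\P$ and $\Hdot^0_{\P\M}$ coincide with the common base space $\cl{\ran(\P)} = \cl{\ran(\P\M)}$ equipped with the $\L^2$-norm.

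The heart of the matter is the case $s = -1$, for which I would exploit both topological splittings in \eqref{eq:kernel/range}, namely the one involving $\M$ and the one in the special case $\M = \id$. The defining dense subspace of $\Hdot^{-1}_{\P\M}$ is $\ran(\P\M)$. Because $\sgn(\P\M)$ is a bounded involution on $\cl{\ran(\P\M)}$ (which identifies $[\P\M]$ with $\P\M$ up to an equivalent norm on $\dom(\P\M)\cap \cl{\ran(\P\M)}$), every such element admits a representation $h = \P\M\tilde g$ with $\tilde g\in \dom(\P\M) \cap \cl{\ran(\P\M)}$ and $\|h\|_{\Hdot^{-1}_{\P\M}} \sim \|\tilde g\|_2$; the analogous description of $\Hdot^{-1}_{\P}$ holds. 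Given such $h = \P\M\tilde g$, I would decompose $\M\tilde g = f + \nu$ along $\L^2 = \cl{\ran(\P)} \oplus \nul(\P)$; since $\M\tilde g \in \dom(\P)$ and $\nul(\P) \subset \dom(\P)$, in fact $f \in \cl{\ran(\P)} \cap \dom(\P)$, and $h = \P f$. This yields
\begin{align*}
\|h\|_{\Hdot^{-1}_\P} \sim \|f\|_2 \leq \|\M\tilde g\|_2 \lesssim \|\tilde g\|_2 \sim \|h\|_{\Hdot^{-1}_{\P\M}},
\end{align*}
with implicit constants depending only on $\|\M\|_\infty$, hence only on the ellipticity constants of $A$. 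The reverse inequality is symmetric: starting from $h = \P f$, apply the splitting $\L^2 = \cl{\ran(\P\M)} \oplus \nul(\P\M)$ to $\M^{-1}f$, using that ellipticity of $A$ bounds $\|\M^{-1}\|_\infty$ uniformly. This gives norm equivalence on a common dense subspace and hence identity of the completions.

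For $s \in (-1, 0)$ I would conclude by complex interpolation: the bounded holomorphic functional calculus of $\P\M$ and $\P$ (both supplied by Theorem~\ref{thm:bhfc}) yields the reiteration identities $\Hdot^s_{\P\M} = [\Hdot^0_{\P\M}, \Hdot^{-1}_{\P\M}]_{-s}$ and $\Hdot^s_{\P} = [\Hdot^0_{\P}, \Hdot^{-1}_{\P}]_{-s}$; see for instance \cite[Ch.~6]{Haase} or \cite{Auscher-McIntosh-Nahmod}. Since the endpoint scales already coincide with equivalent norms, so do the interpolation scales. The only real obstacle lies in the $s = -1$ step, namely ensuring that the summand $f$ produced by the splitting genuinely belongs to $\dom(\P)$, so that $h = \P f$ lies on the defining dense subset of $\Hdot^{-1}_\P$; this is automatic because the splitting is topological in $\L^2$ and $\nul(\P)$ is contained in $\dom(\P)$.
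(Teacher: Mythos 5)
Your proof is correct, and its architecture -- identity of the two scales at $s=0$ (trivial once $\cl{\ran(\P\M)}=\cl{\ran(\P)}$ is known), an isomorphism argument at $s=-1$ built from the two topological splittings in \eqref{eq:kernel/range}, and interpolation in between -- is exactly the argument that the cited reference \cite[Prop.~4.5]{AMM} carries out. The paper gives no proof beyond this citation, so there is nothing genuinely different to compare against; you have just supplied the details.

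Two small points on the $s=-1$ step. First, in the displayed chain, the inequality $\|f\|_2 \leq \|\M\tilde g\|_2$ should be a $\lesssim$: $f = \IP_\P(\M\tilde g)$, and the projection $\IP_\P$ onto $\cl{\ran(\P)}$ along $\nul(\P)$ is bounded but not orthogonal (indeed $\nul(\P)\neq\nul(\P^*)$, as one sees by comparing the conditions $\divx f_\pa=\dhalf f_\te$ and $\divx f_\pa=-\HT\dhalf f_\te$), so a constant -- absolute, since $\IP_\P$ is a Fourier multiplier -- enters. Second, your reverse direction applies the splitting associated with $\P\M$ to $\M^{-1}f$, which requires the invertibility of $\M$ as a multiplication operator on all of $\mH$. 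For the matrices $A(x,t)$ of \eqref{eq2} this is harmless, since Proposition~\ref{prop:divformasODE} makes $\hat A$ uniformly elliptic; but Remark~\ref{rem:PM and MP similar} deliberately arranges the whole theory to avoid global invertibility of $\M$ (so that it also covers systems). The adjustment is minor: rather than applying $\M^{-1}$ to $f$, decompose $f\in\cl{\ran(\P)}\cap\dom(\P)$ directly along the \emph{second} splitting $\mH=\M\cl{\ran(\P)}\oplus\nul(\P)$ as $f=\M g'+\nu'$; then $g'=(\M|_{\cl{\ran(\P)}})^{-1}\IP_{\M\P}f$ lies in $\cl{\ran(\P)}\cap\dom(\P\M)$, satisfies $\P\M g'=h$, and has $\|g'\|_2\lesssim\|f\|_2$ with constants controlled by ellipticity. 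This removes the dependence on $\M^{-1}\in\L^\infty$ and aligns your argument with the paper's conventions.
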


With this notation set up, we can give the alluded \emph{a priori} characterisations of weak solutions to \eqref{eq:diffeq2} with data in parabolic homogeneous Sobolev spaces. Note that all appearing abstract Sobolev spaces identify with concrete parabolic Sobolev spaces only in the range of the above lemma.

\begin{thm}
\label{thm:sob}
Let $-1\le s<0$. The generalized Cauchy extension $F=C_{0}^+h$ of $h\in\Hdot^{s}_{\P} $ gives a solution to $\pd_\lambda F+ \P\M F=0$, in the strong sense $F\in \C([0,\infty); \Hdot_\P^s) \cap \C^\infty((0,\infty); \Hdot^{s}_{\P} \cap \Hdot^{1+s}_{\P\M})$ and in the weak sense \eqref{eq:diffeq}, with $\Hdot^{s}_{\P}$ bounds
\begin{align*}
 \sup_{\lambda>0}\|F_\lambda\|_{\Hdot^{s}_{\P}}\sim \|\chi^+(\P\M)h\|_{\Hdot^{s}_{\P}},
\end{align*}
and $\Hdot^{s}_{\P}$ limits
\begin{align*}
 \lim_{\lambda\to 0} F_\lambda =\chi^+(\P\M)h, \qquad \lim_{\lambda\to\infty} F_\lambda =0.
\end{align*}
In addition, the square function estimate
\begin{align}
\label{eq:sfsob}
 \qe{\lambda^{-s} F} \sim \|\chi^+(\P\M)h\|^2_{\Hdot^{s}_{\P}}
\end{align}
holds. Furthermore, let  $F \in \L^2_{\loc}(\R_{+}; \clos{\ran(\P)})$ be a solution of \eqref{eq:diffeq2} in the weak sense \eqref{eq:diffeq} such that
\begin{align*}
\qe{\lambda^{-s} F} <\infty.
\end{align*}
Then $F$ has a $\Hdot^{s}_{\P}$ limit $h\in \Hdot^{s,+}_{\P\M}$ at $\lambda = 0$ and $F$ is given by the Cauchy extension of $h$. The analogous results for the same equation on $\R_{-}$ also hold upon replacing the positive spectral spaces and projections with their negative counterparts.
\end{thm}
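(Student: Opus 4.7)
The plan is to reduce to the already-proven base case $s=0$ (Proposition~\ref{prop:Cauchyextension} and Theorem~\ref{thm:uniq}) by conjugating with the fractional power $[\P\M]^{s}$ afforded by the bounded holomorphic functional calculus of Theorem~\ref{thm:bhfc}. By the very construction of the operator-adapted Sobolev scale, $[\P\M]^s$ extends to an isometric isomorphism $\Hdot^{s}_{\P\M} \to \cl{\ran(\P)} = \Hdot^{0}_{\P\M}$, and Lemma~\ref{lem:identification} identifies $\Hdot^{s}_{\P\M}$ with $\Hdot^{s}_{\P}$ with equivalent norms for $-1\le s\le 0$. Crucially, $[\P\M]^{s}$ commutes with every element of the functional calculus of $\P\M$, in particular with $\chi^+(\P\M)$ and with the semigroup $(\e^{-\lambda \P\M})_{\lambda>0}$.

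For the forward direction, given $h\in\Hdot^s_{\P}$ I set $g:=\chi^+(\P\M)[\P\M]^s h\in\Hp(\P\M)$ and observe, using the commutation properties above, that
\begin{align*}
 F_\lambda := (C_0^+ h)_\lambda = \e^{-\lambda\P\M}\chi^+(\P\M) h = [\P\M]^{-s}\,\e^{-\lambda\P\M}\,g = [\P\M]^{-s}\,G_\lambda,
\end{align*}
where $G := C_0^+ g$ is the base case Cauchy extension of $g$. All of the asserted $\C^0$-continuity into $\Hdot^s_\P$, smoothness into $\Hdot^s_\P\cap\Hdot^{1+s}_{\P\M}$ on $(0,\infty)$, the sup-norm equivalence, the $\Hdot^s_\P$-limits at $\lambda = 0,\infty$ and the weak formulation \eqref{eq:diffeq} transfer mechanically from the corresponding statements for $G$ in Proposition~\ref{prop:Cauchyextension} via the bounded isomorphism $[\P\M]^{-s}:\cl{\ran(\P)}\to\Hdot^s_\P$. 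For the square function estimate \eqref{eq:sfsob} I rewrite
\begin{align*}
 \lambda^{-s}F_\lambda = \phi(\lambda\P\M)g, \qquad \phi(z):=[z]^{-s}\,\e^{-[z]}\,\chi^+(z),
\end{align*}
and note that since $-s\in(0,1]$, $\phi$ is bounded holomorphic on any open double sector $\S_\mu$ with $\omega<\mu<\pi/2$ and obeys $|\phi(z)|\lesssim\min\{|z|^{-s},|z|^{-N}\}$ for every $N$, hence is admissible in McIntosh's sense. The bounded holomorphic functional calculus of $\P\M$ from Theorem~\ref{thm:bhfc} then upgrades the base quadratic estimate to $\int_0^\infty\|\phi(\lambda\P\M)g\|_2^2\,\d\lambda/\lambda\sim\|g\|_2^2\sim\|\chi^+(\P\M)h\|_{\Hdot^s_\P}^2$, which is precisely \eqref{eq:sfsob}.

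For the converse, given a weak solution $F\in\L^2_{\loc}(\R_+;\cl{\ran(\P)})$ with the square function bound, the plan is to repeat the abstract functional-calculus scheme of \cite[Thm.~8.2]{AA}, distilled for general bisectorial operators with bounded holomorphic functional calculus in \cite{AA-Note}, now carried out on the Sobolev scale. Concretely, testing the weak equation \eqref{eq:diffeq} against vectors of the form $\phi(\lambda,x,t) = \eta(\lambda)\,\psi(\lambda\P\M)^*v$ with $\eta\in\C_0^\infty(\R_+)$ and an admissible $\psi$, and invoking a Calder\'on-type reproducing formula $\id = c\int_0^\infty \psi(\lambda\P\M)^2\,\d\lambda/\lambda$ on $\cl{\ran(\P)}$ coming from Theorem~\ref{thm:bhfc}, should convert the square function bound on $\lambda^{-s}F$ into the existence of a boundary datum $h\in\Hdot^{s,+}_{\P\M}$ with $F = C_0^+ h$; uniqueness of $h$ then follows from the forward direction. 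I expect the main obstacle to lie precisely in this last step: one must give a clean distributional meaning to the Sobolev-valued boundary trace at $\lambda = 0$, ensure that the extracted limit belongs to the positive spectral subspace rather than the kernel or negative spectral subspace of $\P\M$, and carefully justify the reproducing-formula manipulations, which in the Sobolev setting no longer reduce to orthogonal-projection arguments as in the $\L^2$ case.
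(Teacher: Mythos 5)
Your proposal matches the paper's treatment: the paper itself gives no self-contained proof but cites \cite[Thm.~9.2]{AA} for $s=-1$, \cite[Thm.~1.3]{R1} for $s\in(-1,0)$, and \cite[Sec.~11]{Auscher-Stahlhut_APriori}, all of which develop exactly the operator-adapted Sobolev scale and fractional-power conjugation that you carry out explicitly, so your forward-direction argument via $F_\lambda = [\P\M]^{-s}(C_0^+g)_\lambda$ and the admissible symbol $\phi(z)=[z]^{-s}\e^{-[z]}\chi^+(z)$ is the intended mechanism, not a genuinely different route. One small caveat: the weak formulation \eqref{eq:diffeq} does not ``transfer'' through $[\P\M]^{-s}$ because $[\P\M]^{-s}$ does not commute with the multiplication operator $\M$; rather it follows from the strong equation $\pd_\lambda F = -\P\M F$, valid in $\cl{\ran(\P)}$ for each $\lambda>0$ by analytic smoothing, together with the integration-by-parts argument already recorded after Proposition~\ref{prop:Cauchyextension}; and for the converse you are right that conjugation by $[\P\M]^s$ cannot be applied to a generic weak solution (it would require \emph{a priori} that $F_\lambda\in\dom([\P\M]^s)$), so the Calder\'on-reproducing-formula scheme of \cite{AA-Note} and \cite[Sec.~11]{Auscher-Stahlhut_APriori}, with $\M^*\P^*$ in place of $B^*D$, is the correct and necessary route there, exactly as the paper indicates.
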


As in the case of Proposition~\ref{prop:Cauchyextension} and Theorem~\ref{thm:uniq}, also Theorem~\ref{thm:sob} was first obtained in the context of elliptic equations but by an abstract argument which literally applies to any bisectorial operator  with bounded holomorphic functional calculus and to $\P\M$ in particular. References in the context of elliptic equations are \cite[Thm.~9.2]{AA} for $s=-1$ and \cite[Thm.~1.3]{R1} for $s \in (-1, 0)$. These results have been put into a much broader context in \cite[Sec.~11]{Auscher-Stahlhut_APriori}. In particular, in their proof it is not important whether or not $\P$ is self-adjoint as long as $\P$ itself is bisectorial with bounded holomorphic functional calculus. However, expressions involving $B^*D$ in these references should be replaced with $\M^*\P^*$ in our context.
\subsection{Three special cases}
\label{sec:specialcases}

In order to make things more explicit, we single out three special cases of Theorems~\ref{thm:uniq} and \ref{thm:sob}. Throughout, we let $u$ be a reinforced weak solution to \eqref{eq1} and we write $F=\pcg u$, where we recall $|\pcg u|^2\sim |\nabla_{\lambda,x}u|^2+|\HT\dhalf u|^2$.

Let us begin by revisiting the case $s=-1/2$, which already appeared in our discussion of energy solutions in Section~\ref{sec:energy}. Indeed, in this case \eqref{eq:sfsob} becomes
\begin{align}\label{eq:equiv}
  \iiint_{\R^{n+2}_+} |\nabla_{\lambda,x}u|^2+|\HT\dhalf u|^2  \d\lambda\d x\d t
 \sim \|h\|_{\Hdot^{-1/2}_{\P}}^2
 \sim \|\dnuA u|_{\lambda=0}\|_{\Hdot^{-1/4}_{\pd_{t}-\Delta_{x}}}^2 + \|u|_{\lambda=0}\|_{\Hdot^{1/4}_{\pd_{t}-\Delta_{x}}}^2,
\end{align}
which is the \emph{energy estimate} for reinforced weak solutions, and finiteness of the left-hand side exactly means that $u$ belongs to the energy class $\dot \E$ introduced in Section~\ref{sec:energy}. The \emph{a priori} information that $F \in \C_0([0,\infty); \Hdot^{-1/2}_{\P}) \cap \C^\infty((0,\infty); \Hdot_P^{-1/2} \cap \Hdot^{1/2}_{\P\M})$ satisfies $\pd_{\lambda} F + \P \M F = 0$ in the strong sense, allows to integrate by parts in $\lambda$ for every $v \in \dot \E$, so to obtain
\begin{align}\label{eq:IBP}
 \iiint_{\R^{n+2}_+} A \gradlamx u \cdot \cl{\gradlamx v} + \HT \dhalf u \cdot \cl{\dhalf v}\,  \d \lambda \d x \d t = - \langle \dnuA u|_{\lambda = 0}, v|_{\lambda = 0} \rangle.
\end{align}
Here, the trace of $\dnuA u$ is defined in $\Hdot^{-1/4}_{-\pd_{t} - \Delta_x}$ by continuity of $F = \pcg u$ and $\langle \cdot \,, \cdot \rangle$ denotes the duality pairing with $\Hdot^{1/4}_{\pd_{t} - \Delta_x}$ extending the inner product on $\L^2(\ree)$. Strictly speaking, we would carry out this integration for $v$ smooth and compactly supported in all variables first and then extend by density. A comparison with \eqref{eq:variational} reveals that this understanding of the Neumann boundary data is the same as in the abstract variational setup but the restriction to transversally independent coefficients also allows for a more concrete understanding in the sense of a $\L^2$ limit. In particular, we may plug in $v = (1-\delta \HT)u$ with $\delta>0$ small enough as in Section~\ref{sec:energy} to conclude by coercivity
\begin{align}
\label{eq:coercivity energy}
  \iiint_{\R^{n+2}_+} |\nabla_{\lambda,x}u|^2+|\HT\dhalf u|^2  \d\lambda \d x\d t \lesssim  \|\dnuA u|_{\lambda=0}\|_{\Hdot^{-1/4}_{\pd_{t}-\Delta_{x}}} \|u|_{\lambda=0}\|_{\Hdot^{1/4}_{\pd_{t}-\Delta_{x}}}.
\end{align}
Hence, we obtain the boundedness and invertibility of the \emph{Dirichlet to Neumann map} at the boundary expressed in the comparability
\begin{align}
\label{eq:DirToNeu}
 \|\dnuA u|_{\lambda=0}\|_{\Hdot^{-1/4}_{\pd_{t}-\Delta_{x}}}
 \sim \|u|_{\lambda=0}\|_{\Hdot^{1/4}_{\pd_{t}-\Delta_{x}}}
 \sim \|h\|_{\Hdot^{-1/2}_\P}.
\end{align}
We shall come back to this point later on in Section~\ref{sec:sgnPM results}.

When $s=0$, \eqref{eq:uniqueness} rewrites
\begin{align*}
\sup_{\lambda>0} \barint_{\lambda}^{2\lambda}\iint_{\ree} |\nabla_{\lambda,x}u|^2+|\HT\dhalf u|^2 \d x\d t\d\mu \sim \|h\|_2^2\sim \|\dnuA u|_{\lambda=0}\|_{2}^2 + \|u|_{\lambda=0}\|_{\Hdot^{1/2}_{\pd_{t}-\Delta_{x}}}^2
\end{align*}
and, as we will see in Section~\ref{sec:NTmaxDef}, the left hand side is also comparable to a non-tangential maximal norm on $\pcg u$. The comparability
\begin{align*}
 \|\dnuA u|_{\lambda=0}\|_{2} \sim \|u|_{\lambda=0}\|_{\Hdot^{1/2}_{\pd_{t}-\Delta_x}} \sim \|\gradx u|_{\lambda=0}\|_{2}
+ \|\HT\dhalf u|_{\lambda=0}\|_{2}
\end{align*}
for this class of $u$ would be the \emph{Rellich estimates} for proving solvability of regularity problems and Neumann problems with $\L^2$ data.

When $s=-1$, \eqref{eq:sfsob} becomes
\begin{align*}
 \int_o^\infty \iint_{\ree} |\lambda \nabla_{\lambda,x}u|^2+|\lambda \HT\dhalf u|^2 \d x\d t \, \frac{\mathrm{d}\lambda}{\lambda} \sim \|h\|_{\Hdot^{-1}_{\P}}^2 \sim \|\dnuA u|_{\lambda=0}\|_{\Hdot^{-1/2}_{\pd_{t}-\Delta_{x}}}^2 + \|u|_{\lambda=0}\|_2^2.
\end{align*}
In this case the inequality
\begin{align*}
\|\dnuA u|_{\lambda=0}\|_{\Hdot^{-1/2}_{\pd_{t}-\Delta_{x}}} \lesssim \|u|_{\lambda=0}\|_2
\end{align*}
would be the \emph{Rellich estimate} that provides one way to solve the Dirichlet problem knowing $u|_{\lambda=0}$ in $\L^2$. Note that $u$ is only known up to a constant from Theorem~\ref{thm:correspondence}. The semigroup representation for $\pcg u$ in Theorem~\ref{thm:sob} immediately implies a representation of $u$ itself by means of the functional calculus for $\M \P$. The argument in \cite[Thm.~9.3]{AA} applies verbatim.

\begin{thm}
\label{thm:chardir}
Let $u$ be a reinforced weak solution of \eqref{eq1} with
\begin{align*}
\int_{0}^\infty\iint_{\ree} |\lambda \nabla_{\lambda,x}u|^2+|\lambda \HT\dhalf u|^2 \d x\d t \, \frac{\mathrm{d}\lambda}{\lambda}<\infty.
\end{align*}
Then there exist unique $c\in \IC$ and $\wt{h} \in \Hp(\M\P)$ such that $u= c- (e^{-\lambda[\M\P]}\wt{h})_{\pe}$. In particular, $u\in\C_{0}([0,\infty); \L^2(\R^{n+1}))+ \IC$, where the subscripted $0$ means that $u$ vanishes at $\infty$. Moreover, $\P \wt{h}$ is the element $h \in \Hdot^{-1,+}_{\P\M}$ appearing in Theorem~\ref{thm:sob}.
\end{thm}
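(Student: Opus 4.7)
\textbf{My plan} is to reduce the theorem to the $s = -1$ case of Theorem~\ref{thm:sob} applied to $F = \pcg u$, intertwine $\P$ with the functional calculus of $\M\P$, and then read the representation of $u$ off the matrix form of $\P$.

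First, the assumed square function bound is precisely the finiteness of \eqref{eq:sfsob} at $s = -1$, and Theorem~\ref{thm:correspondence} places $F = \pcg u \in \cH_{\loc}$ as a weak solution of $\pd_\lambda F + \P\M F = 0$. Theorem~\ref{thm:sob} then gives a unique $h \in \Hdot^{-1,+}_{\P\M}$ with $F_\lambda = \e^{-\lambda[\P\M]} h$. The algebraic intertwining $\P \cdot (\M\P) = (\P\M) \cdot \P$ upgrades to $\P b(\M\P) = b(\P\M) \P$ for bounded holomorphic $b$, and -- via Lemma~\ref{lem:identification} and its $\M\P$-counterpart -- extends to $b(z) = [z]^{-1}$ across the homogeneous Sobolev scale. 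Setting $\wt h := [\M\P]^{-1}\M h$, one verifies $\wt h \in \Hp(\M\P)$ because $[\M\P]^{-1}$ commutes with $\chi^+(\M\P)$ and $\chi^+(\M\P)\M h = \M\chi^+(\P\M)h = \M h$. The reverse intertwining yields
\begin{align*}
 \P\wt h = [\P\M]^{-1}(\P\M)h = \sgn(\P\M)h = h,
\end{align*}
which is the final assertion of the theorem.

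Next set $H_\lambda := \e^{-\lambda[\M\P]}\wt h \in \cl{\ran(\M\P)} \subset \mH$. Intertwining again gives
\begin{align*}
 F_\lambda = \e^{-\lambda[\P\M]}\P\wt h = \P \e^{-\lambda[\M\P]}\wt h = \P H_\lambda,
\end{align*}
and reading off components using the matrix form of $\P$ from \eqref{eq:DB},
\begin{align*}
 \gradx u = F_\pa = -\gradx (H_\lambda)_\pe, \qquad \HT\dhalf u = F_\te = -\HT\dhalf (H_\lambda)_\pe.
\end{align*}
On top of this, $H_\lambda \in \Hp(\M\P)$ implies $[\M\P] H_\lambda = \M\P H_\lambda$, so $\pd_\lambda (H_\lambda)_\pe = -(\M\P H_\lambda)_\pe = -(\M F_\lambda)_\pe = -\pd_\lambda u$, where the last step uses the purely algebraic identity $(\hat A \pcg u)_\pe = \pd_\lambda u$ read off from Proposition~\ref{prop:divformasODE} and \eqref{eq:dnuA}. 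Summing up, the distribution $u + (H_\lambda)_\pe$ has vanishing $\pd_\lambda$, $\gradx$ and $\HT\dhalf$ on $\R^{n+2}_+$; since $\HT\dhalf$ has no kernel on the $t$-dependence beyond constants, this forces $u + (H_\lambda)_\pe \equiv c$ pointwise for some $c \in \IC$. Continuity and decay $u \in \C_0([0,\infty); \L^2(\ree)) + \IC$ are then inherited from the strong continuity and strong convergence to $0$ of the semigroup $\e^{-\lambda[\M\P]}$ on $\cl{\ran(\M\P)}$.

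Uniqueness of $(c, \wt h)$ follows easily: in two such representations the difference $c - c' = (H_\lambda)_\pe - (H_\lambda')_\pe$ is a constant which also lies in $\L^2(\ree)$, hence is zero; uniqueness of $h$ in Theorem~\ref{thm:sob} together with $\P\wt h = h$ and $\wt h \in \Hp(\M\P)$ then pins down $\wt h$. The main obstacle I foresee is making the intertwining between the unbounded $\P$ and the $\M\P$-functional calculus fully rigorous at Sobolev regularity $-1$: this mixes operators whose base spaces differ and requires the identifications of Lemma~\ref{lem:identification} to be transported to $\M\P$ with care, and it is precisely at this point that the transcription of the elliptic argument in \cite[Thm.~9.3]{AA} demands the most attention.
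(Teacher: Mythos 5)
Your argument is correct and is precisely the one the paper delegates to \cite[Thm.~9.3]{AA}: apply Theorem~\ref{thm:sob} at $s=-1$, use the intertwining $\P b(\M\P)=b(\P\M)\P$ to write $\pcg u = \P\e^{-\lambda[\M\P]}\wt h$, and read off $u$ from the matrix form of $\P$. The only informal step is the definition $\wt h := [\M\P]^{-1}\M h$, which mixes the two Sobolev scales; it is made rigorous by observing that $\P$ extends to an isomorphism from $\cl{\ran(\M\P)}$ onto $\Hdot^{-1}_{\P}$ (equivalently, $\wt h = \IP_{\M\P}\P^{-1}h$ via Lemma~\ref{lem:proj}) -- precisely the point you flag at the end.
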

\subsection{Resolvent estimates}\label{sec:resolvent}

An important ingredient in the proof of Theorem~\ref{thm:bhfc} are off-diagonal estimates for the resolvents of $\P\M$. For anyone who knows how these estimates are obtained in the elliptic case, the non-locality of half-order time derivatives may seem as a serious obstacle. A key observation is the following.

\begin{lem}
\label{lem:LpLq}
There exists $\delta _{0}>0$, depending only on {dimension and} the ellipticity constants of the matrix $A$, such that for any real $p,q$ with $|\frac{1}{p}-\frac{1}{2}| <\delta _{0}$ and $|\frac{1}{q}-\frac{1}{2}| <\delta _{0}$, and any $\lambda\in \R$, the resolvent $(\id + \i\lambda \P\M)^{-1}$ is bounded on $\L^p(\R; \L^q(\R^n; \IC^{n+2}))$ with uniform bounds with respect to $\lambda$. The same result holds with $\M\P$, {$\P^* \M$ or $\M \P^*$} in place of $\P\M$.
\end{lem}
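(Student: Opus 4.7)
\emph{Strategy.} The plan is to first establish uniform $\mH$-boundedness of the resolvent and then extrapolate to the mixed Lebesgue scale $\L^p(\R;\L^q(\R^n;\IC^{n+2}))$ near $(p,q)=(2,2)$ via Shneiberg's stability theorem, with the off-diagonal input provided by a parabolic Meyers--Gehring self-improvement.

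\emph{Step 1 ($\mH$-bound).} I would solve the weak form of $u + \i\lambda\P\M u = f$ by Lax--Milgram applied to the modified sesquilinear form $a_\delta$ on the energy space, exactly as in Section~\ref{sec:energy}. For $\delta > 0$ small in terms of the ellipticity constants, coercivity \eqref{eq:coer} furnishes a unique energy solution whose norm is controlled by the norm of $f$, uniformly in $\lambda \in \R$. The identity
\begin{equation*}
(\id + \i\lambda\P\M)^{-1} = \id - \i\lambda\P\M(\id + \i\lambda\P\M)^{-1}
\end{equation*}
then transfers the energy estimate into a uniform $\mH \to \mH$ bound on the resolvent.

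\emph{Step 2 (Extrapolation).} Setting $u_\lambda = (\id + \i\lambda\P\M)^{-1} f$ and composing with $\id - \i\lambda\P\M$ yields
\begin{equation*}
(\id + \lambda^2 (\P\M)^2) u_\lambda = f - \i\lambda\P\M f.
\end{equation*}
By \eqref{eq:Psquared}, the square $\P^2$ is the diagonal parabolic operator $(\pd_t - \Delta_x)\id$ on $\ran(\P)$, and hence $(\P\M)^2$ inherits the structure of a matrix-valued second-order parabolic divergence-form operator with measurable, bounded, uniformly elliptic coefficients (in the same ellipticity class as $A$ by Proposition~\ref{prop:divformasODE}). A parabolic Caccioppoli inequality combined with Gehring's lemma would then produce a reverse H\"older inequality on parabolic cubes with some exponent $s > 2$, depending only on $n$ and the ellipticity constants; this is the off-diagonal information needed to feed Shneiberg's stability theorem on the complex interpolation scale of mixed Lebesgue spaces, pivoting on $\mH$ where invertibility is already known. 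The output is an open neighborhood of $(1/p, 1/q) = (1/2, 1/2)$ on which $(\id + \i\lambda\P\M)^{-1}$ stays bounded, uniformly in $\lambda$.

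\emph{Variants and main obstacle.} The case of $\M\P$ follows from the identity $(\id + \i\lambda\M\P)^{-1} = \M(\id + \i\lambda\P\M)^{-1}\M^{-1}$ on $\ran(\M)$ together with the boundedness and invertibility of $\M$ on every mixed Lebesgue space under consideration. The cases $\P^*\M$ and $\M\P^*$ then follow from the observation, via \eqref{eq:DB+}, that $\P^*$ has the same structural form as $\P$, so the same scheme applies verbatim. The main obstacle is the non-locality of $\dhalf$ and $\HT\dhalf$ appearing in $\P$: the usual Caccioppoli argument localises via smooth spatial and temporal cutoffs, but $\dhalf$ does not commute with such cutoffs, and the commutator $[\dhalf, \eta]$ generates a non-local tail which must be controlled directly from the explicit kernel of $\dhalf$. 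Absorbing this tail into the reverse H\"older estimate, while keeping constants depending only on the ellipticity, is the technically most delicate point of the proof.
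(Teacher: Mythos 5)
Your Step~1 is fine (and somewhat over-engineered, since bisectoriality from Lemma~\ref{lem:bisectoriality of Dirac} already gives the $\mH$-bound), and the high-level idea of extrapolating from $(p,q)=(2,2)$ via \u{S}ne\u{\i}berg is the right one. But Step~2 contains a genuine gap, which you yourself half-acknowledge in the last paragraph.

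First, your structural claim about $(\P\M)^2$ is not correct: only the $\pe$-component of $(\P\M)^2$ is the scalar parabolic operator $\pd_t + L_A$; the $\pa$- and $\te$-components couple nontrivially (cf.\ the computation of $(\P\M)^2$ in Section~\ref{sec:parKato}), so you are not facing a clean matrix-valued parabolic divergence-form equation and Caccioppoli/Gehring theory cannot be invoked as a black box. Second, the Caccioppoli--Gehring route is a local-to-global reverse-H\"older argument, and the localization step requires cutting off in time; as you note, $\dhalf$ and $\HT\dhalf$ do not commute with temporal cutoffs, and the commutator tails you would need to control are precisely the non-local terms that make the later off-diagonal estimates (Proposition~\ref{prop:OffDiag}) delicate. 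You identify this wall but propose no way around it — and for this particular lemma none is needed, because the argument should be global, not local. Third, \u{S}ne\u{\i}berg's theorem requires only that the operator be \emph{bounded} across the interpolation scale and \emph{invertible} at one interior point; it does not consume any ``off-diagonal information,'' so the reverse-H\"older improvement you are trying to feed it is not what the theorem wants.

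The paper avoids all of this by manipulating the resolvent equation $(\id+\i\lambda\P\M)g=f$ algebraically — \emph{not} by squaring. Writing out the three component equations and substituting the second and third into the first eliminates $\wt{g}_\pa$ and $\wt{g}_\te$ and reduces the system to a single scalar equation $(\pd_t + L_A)\wt{g}_\pe = \mathrm{RHS}(f)$. On the mixed parabolic Sobolev scale $\H_{p,q} = \L^p(\R;\W^{1,q}(\R^n))\cap \H^{1/2,p}(\R;\L^q(\R^n))$, the operator $\pd_t + L_A : \H_{p,q}\to \H_{p',q'}^*$ is \emph{trivially} bounded for all $1<p,q<\infty$ (it is just a composition of bounded multiplications and derivatives) and invertible at $(2,2)$ by hidden coercivity / Lax--Milgram. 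After establishing that $\{\H_{p,q}\}$ is a complex interpolation scale with uniform constants near $(2,2)$ (done via the Lizorkin--Marcinkiewicz multiplier theorem in mixed norms), \u{S}ne\u{\i}berg applies directly and gives the claimed resolvent bound. No cutoffs, no Caccioppoli, no Gehring, and hence no confrontation with the non-locality of $\dhalf$. For the variants, your similarity argument for $\M\P$ is fine for equations; for $\P^*\M$ and $\M\P^*$ the paper passes to $\P\M^*$ and $\M^*\P$ (same class of $\M$) and concludes by duality, which is cleaner than redoing the full argument for $\P^*$.
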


This result will be proved in Section~\ref{sec:resolvents} and it allows us to obtain enough decay for our arguments. We prove the following off-diagonal estimates.

\begin{prop}\label{prop:OffDiag} There exists $\eps_0>0$ and $N_{0}>1$ such that if $|\frac{1}{q}-\frac{1}{2}| <\eps_{0}$, then one can find $\eps=\eps(n,q,\eps_0)>0$ with the following property. Whenever $N\ge N_{0}$, then there exists a constant $C=C(\eps,N,q)<\infty$ such that for all balls $Q=B(x, r)$ in $\R^n$, all intervals $I=(t-r^2, t+r^2)$, and all parameters $j\in \IN$, $k\in \IN^*$ and $\lambda \in \R$ with $|\lambda| \sim r$:
\begin{align}
\label{eq:off}
\bariint_{Q \times 4^jI} |(\id + \i\lambda \P\M)^{-1}h|^q \d y \d s \le C N^{-q\eps k} \bariint_{C_{k}(Q \times 4^jI)} |h|^q \d y \d s,
\end{align}
provided $h \in (\L^2 \cap \L^q)(\ree; \IC^{n+2})$ has support in $C_{k}(Q \times 4^jI)$. Here,
\begin{align*}
C_{k}(Q \times J):= (2^{k+1}Q\times N^{k+1}J)\setminus (2^{k}Q\times N^{k}J)
\end{align*}
and $cQ$ and $cJ$ denote dilates of balls and intervals, respectively, keeping the center fixed and dilating the radius by $c$. Analogous estimates hold with $\P\M$ replaced by $\M\P$, $\P^* \M$ or $\M \P^*$.
\end{prop}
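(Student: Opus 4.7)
The plan is to prove the estimate by a Gaffney-type cutoff argument, adapted in a nontrivial way to the non-locality of the half-order time derivatives appearing in $\P$. Setting $u := (\id + \i\lambda\P\M)^{-1}h$, the resolvent equation reads $u + \i\lambda\P\M u = h$, and by Lemma~\ref{lem:LpLq} with $p=q=2$ the global $\L^2$ bound $\|u\|_{\mH}\lesssim\|h\|_{\mH}$ is already available; the task is to upgrade this to the quantitative $N^{-\eps k}$ decay in \eqref{eq:off}. I describe the argument for $\P\M$ with $q=2$; the cases $\M\P$, $\P^*\M$, $\M\P^*$ follow by the same reasoning since $\P^*$ differs from $\P$ only by signs and placement of $\HT$, while the passage to $q\ne 2$ is obtained by H\"older together with the uniform $\L^p\L^q$ bound of Lemma~\ref{lem:LpLq} near $p=q=2$.

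Fix a smooth product cutoff $\chi(x)\phi(t)$ equal to $1$ on $Q\times 4^jI$ and vanishing outside $2Q\times N\cdot 4^jI$, with $\|\gradx\chi\|_\infty\lesssim r^{-1}$ and $\|\phi'\|_\infty\lesssim (N\cdot 4^jr^2)^{-1}$. Since $\M$ is a multiplication operator at fixed $\lambda$, it commutes with $\chi\phi$, so the localised function $v := (\chi\phi)u$ satisfies
\begin{equation*}
(\id + \i\lambda\P\M)v = (\chi\phi)h + \i\lambda[\chi\phi,\P]\M u.
\end{equation*}
For $k\ge 2$ the first term vanishes since $h$ is supported in $C_k(Q\times 4^jI)$, which is disjoint from the support of $\chi\phi$, so the resolvent bound yields $\|v\|_{\mH}\lesssim\lambda\|[\chi\phi,\P]u\|_{\mH}$. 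The spatial commutators $[\gradx,\chi]\phi$ and $[\divx,\chi]\phi$ are multiplication operators with $\L^\infty$-norm $\lesssim r^{-1}$, which combined with $\lambda\sim r$ produces an $O(1)$-factor times the $\L^2$ mean of $u$ on $(2Q\setminus Q)\times N\cdot 4^jI$, giving the standard single-scale control in the space variable.

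The heart of the matter lies in the temporal commutators $[\phi,\dhalf]$ and $[\phi,\HT\dhalf]$ hidden in $[\chi\phi,\P]$, which are genuinely non-local with kernel of order $|\phi(t)-\phi(s)|/|t-s|^{3/2}$. Decomposing the $s$-integral according to the dyadic annular time-strips $(N^{m+1}\setminus N^m)\cdot 4^jI$ for $m=1,\dots,k-1$, using the elementary bound $|\phi(t)-\phi(s)|\le\min(1, |t-s|\|\phi'\|_\infty)$ together with Cauchy--Schwarz in $s$, one checks that each strip contributes a factor of order $N^{-\alpha m}\cdot r^{-1}$ for some $\alpha>0$, acting on the $\L^2$ mass of $u$ on that strip. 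Multiplying by $\lambda\sim r$ and combining the spatial and temporal contributions yields a single-scale inequality in which the $\L^2$ average of $u$ on $Q\times 4^jI$ is bounded by a geometrically-weighted sum of averages on the dilated rectangles $2^{m+1}Q\times N^{m+1}\cdot 4^jI$ with coefficients decaying like $N^{-\alpha m}$, plus a final remainder controlled by the average of $|h|^2$ on $C_k(Q\times 4^jI)$.

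Iterating this step-estimate $k$ times across the nested annular strips, and absorbing the tail averages into the data via Lemma~\ref{lem:LpLq} (which permits small adjustments of the integrability indices without loss), one extracts the claimed decay $N^{-\eps k}$, provided $N_0$ is chosen large enough that the single-scale factor $N^{-\alpha}$ beats the combinatorial loss from summation in $m$. The principal obstacle, as anticipated, is precisely the polynomial decay $|t-s|^{-3/2}$ in the kernel of $[\phi,\dhalf]$: this is why one cannot hope for exponential Gaffney decay as in the elliptic setting of \cite{AA,AAM}, and why $N$ must be allowed to be large rather than fixed at $4$ as it is in the spatial direction.
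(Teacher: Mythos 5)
Your proposal correctly isolates the non-local temporal commutator as the heart of the difficulty, and correctly observes that the $|t-s|^{-3/2}$ kernel decay rules out exponential Gaffney bounds. But the argument as sketched has a genuine gap: the claimed iteration scheme is not actually worked out, and it is precisely the iteration that is problematic for a non-local commutator. In the classical local iteration, cutting off at annulus $m$ produces a commutator term \emph{supported} in a single annulus, so the recursion closes trivially; here $[\phi,\dhalf]u$ involves $u$ on \emph{all} strips at once, and the resulting step estimate reads roughly $\bariint_{Q\times J}|u|^2 \lesssim \sum_{m\ge 1}N^{-\alpha m}\bariint_{\text{strip}_m}|u|^2$. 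Iterating this leads to a multi-index recursion that does not obviously telescope to $N^{-\eps k}$, and you do not explain how to close it. Two further technical issues reinforce the concern. First, your cutoff $\phi$ is adapted to the scale $N\cdot 4^j r^2$, whereas the paper's cutoff $\eta$ is adapted to the much larger scale $N^{k}\ell(J)$ (supported in $N^{k-1}J$, constant on $N^{k-2}J$) --- precisely this spread is what produces the power $k$ in the decay in a \emph{single} step. Second, the Cauchy--Schwarz in $s$ on the innermost strip fails: near the boundary of $\supp\phi$ the commutator kernel behaves like $|t-s|^{-1/2}$ (since $|\phi(t)-\phi(s)|\lesssim|t-s|/\tilde h$), which is not square integrable near the diagonal, so the strip-by-strip $\L^2$ argument does not go through as stated.

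What you are missing is the actual mechanism of the paper's proof, which is a one-step argument. The paper first splits $h = h_1 + h_2$ with $h_1$ far in $x$ (treated by the standard spatial Gaffney argument as in \cite[Prop.~5.1]{elAAM}) and $h_2$ far in $t$. For $h_2$, no iteration is used. Instead one picks a \emph{time} cutoff $\eta$ adapted to the scale $\tilde h := N^k\ell(J)$ vanishing near $\supp h_2$, uses the identity $\eta(\id+\i\lambda\P\M)^{-1}h_2 = (\id+\i\lambda\P\M)^{-1}\i\lambda[\eta,\P]\M(\id+\i\lambda\P\M)^{-1}h_2$, and then invokes two quantitative facts. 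First, the kernel of $[\eta,\P]$ satisfies $|K(t,s)|\lesssim\tilde h^{-3/2}\min\{|u|^{-1/2},|u|^{-3/2}\}$ with $u=(t-s)/\tilde h$; since this kernel lies in $\L^r(\R)$ for $1\le r < 2$, Young's inequality shows $[\eta,\P]:\L^2(\R)\to\L^p(\R)$ with norm $\lesssim\tilde h^{1/p-1}$ for $p>2$, which is the source of the power of $N^k$. Second --- and this is the ingredient that is entirely absent from your proposal --- the resolvent is bounded on $\L^p(\R;\L^2(\R^n))$ for $p$ near $2$ by Lemma~\ref{lem:LpLq}, via the \u{S}ne\u{\ii}berg argument. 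Combining these two gives the factor $(N^k)^{-(1-2/p)}$ directly, and one then chooses $N$ large to dominate the combinatorial loss $2^{kn}N^k$ coming from comparing measures of $C_k$ and $Q\times J$. The $\L^p(\L^2)$ resolvent bound is not a finishing touch here; it is the crucial structural input, and relegating it to a final ``adjustment of indices'' is what leaves your argument without a valid source of decay.
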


Some remarks explaining the nature of this result are necessary. First, $Q\times I$ is a standard parabolic cube in $\R^n\times \R$. Some estimates will require that we stretch $I$ leaving $Q$ fixed. Hence the use of $4^jI$. Next, since $|\lambda|$ is on the order of the radius of the parabolic cube there is no power of $r/|\lambda|$ involved nor is there dependence on $j$. Furthermore,  in the definition of $C_{k}(Q \times 4^jI)$ we take the usual double stretching in the $x$-direction, but we stretch by a factor $N$ in the $t$-direction. In principle $N$ will be chosen larger than any power of $2^n$ that we may need to {match} the decay in the $x$-direction. In the end, we still have a small power of $N^{-k}$ in front of the integral on the right hand side but note that we have normalized by taking averages (materialized by the dashed integrals) and observe that the Lebesgue measure of $C_{k}(Q \times 4^jI)$ is on the order of $2^{kn}N^k$ times that of $Q \times 4^jI$. Hence, this is much more decay than it looks like.

We do not know whether the resolvent has the classical off-diagonal decay as an operator acting on a space of homogeneous type (here, $\ree$ with parabolic distance and Lebesgue measure) in the sense of \cite{Auscher-MartellJEE}. So, it is kind of a novelty in this topic that a weaker form of off-diagonal estimates apply, using that our space has identified directions, which is not the case in general. In contrast, all decay estimates obtained in \cite{N1} for operators with $t$-independent coefficients have the parabolic homogeneity.
\subsection{Non-tangential maximal function estimates}
\label{sec:NTmaxDef}

The off-diagonal estimates in Proposition~\ref{prop:OffDiag} are strong enough to prove non-tangential maximal function estimates in Section~\ref{sec:reverse and NT}. For $(x,t)\in \ree$ we define the non-tangential maximal function
\begin{equation*}
\NT F(x,t)= \sup_{\lambda>0} \bigg(\bariiint_{\Lambda \times Q \times I} |F(\mu,y,s) |^2 \d \mu \d y \d s\bigg)^{1/2},
\end{equation*}
where $\Lambda=(c_{0}\lambda, c_{1}\lambda)$, $Q=B(x, c_{2}\lambda)$ and $I= (t-c_{3}\lambda^2, t+c_{3}\lambda^2)$ with positive constants $c_{i}$ and $c_{0}<c_{1}$.
The numerical values of the constants are irrelevant {since by a covering argument} any change gives equivalent norms.

\begin{thm}
\label{thm:NTmax} Let $ h\in \cl{\ran(\P \M)}$ and let $F=Sh$ be its semigroup extension as in \eqref{eq:sgext}. Then
\begin{equation*}
\| \NT F \|_{2} \sim \|h\|_{2},
\end{equation*}
where the implicit constants depend only on dimension and the ellipticity constants of $A$. Furthermore, there is almost everywhere convergence of Whitney averages
\begin{equation*}
\lim_{\lambda\to 0} \bariiint_{\Lambda \times Q \times I} |F(\mu,y,s)- h(x,t) |^2 \d \mu \d y \d s=0
\end{equation*}
for almost every $(x,t)\in \ree$.
\end{thm}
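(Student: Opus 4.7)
The proof has three parts: the upper bound $\|\NT F\|_{2}\lesssim \|h\|_{2}$, the reverse inequality, and the almost-everywhere convergence of Whitney averages. All rely on the strong continuity of the semigroup $(\e^{-\lambda[\P\M]})_{\lambda>0}$ on $\cl{\ran(\P\M)}$, which is a consequence of Theorem~\ref{thm:bhfc}, and on the off-diagonal resolvent estimates of Proposition~\ref{prop:OffDiag}. The upper bound additionally requires a reverse Hölder inequality for weak solutions of $\pd_{\lambda} F \pm \P\M F = 0$, to be developed in Section~\ref{sec:reverse and NT}.

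For the upper bound, I first split $h = \chi^{+}(\P\M)h + \chi^{-}(\P\M)h$; by sublinearity of $\NT$ it suffices to bound each piece separately. The two cases being symmetric (the $\chi^{-}$ piece corresponds after the reflection $\lambda\mapsto -\lambda$ to a Cauchy extension on the lower half-space), I focus on the $\chi^{+}$ piece and assume $F=Sh$ solves \eqref{eq:diffeq2} weakly on $\reu$ as furnished by Proposition~\ref{prop:Cauchyextension}. Working at an exponent $q<2$ with $|\frac{1}{q}-\frac{1}{2}|<\eps_{0}$, the reverse Hölder inequality provides
\begin{align*}
\bigg(\bariiint_{\LQI}|F|^{2}\bigg)^{1/2} \lesssim \bigg(\bariiint_{\LQIt}|F|^{q}\bigg)^{1/q}
\end{align*}
on an enlarged Whitney region $\LQIt$. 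Representing $F(\mu)=\e^{-\mu[\P\M]}h$ via a Dunford-Cauchy contour integral around the bi-sector and exploiting the exponential decay of $|\e^{-[w]}|$ to localise the contour, the problem reduces modulo bounded scalar factors to estimating resolvents $(\id + \i\sigma\P\M)^{-1}h$ with $\sigma$ comparable to the Whitney scale $\ell$. Decomposing $h = h_{0} + \sum_{k\ge 1}h_{k}$ with $h_{0}=h\mathbf{1}_{2\wt Q\times N\wt I}$ and $h_{k}=h\mathbf{1}_{C_{k}(\wt Q\times \wt I)}$ for $k\ge 1$, the $\L^{q}$-boundedness of the resolvent from Lemma~\ref{lem:LpLq} controls the $h_{0}$ contribution, while Proposition~\ref{prop:OffDiag} at exponent $q$ gives for $k\ge 1$
\begin{align*}
\bigg(\bariint_{\wt Q\times \wt I}|(\id + \i\sigma \P\M)^{-1}h_{k}|^{q}\bigg)^{1/q} \lesssim N^{-\eps k}\bigg(\bariint_{C_{k}}|h|^{q}\bigg)^{1/q}.
\end{align*}
Choosing $N$ large enough that $N^{-\eps k}$ dominates the volume-ratio loss $(|C_{k}|/|\wt Q\times \wt I|)^{1/q}\sim 2^{kn/q}N^{k/q}$ and summing a convergent geometric series in $k$, we arrive at the pointwise bound $\NT F(x,t) \lesssim \bigl(\mathcal M(|h|^{q})\bigr)^{1/q}(x,t)$, where $\mathcal M$ is the uncentered maximal function on $\ree$ built from the shape-stretched cubes $2^{k}\wt Q\times N^{k}\wt I$. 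Since $2/q>1$, $\mathcal M$ is bounded on $\L^{2/q}$, so
\begin{align*}
\|\NT F\|_{2}^{2} \lesssim \|\mathcal M(|h|^{q})\|_{2/q}^{2/q}\lesssim \||h|^{q}\|_{2/q}^{2/q} = \|h\|_{2}^{2}.
\end{align*}

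For the reverse inequality, the strong $\L^{2}$-continuity of $(\e^{-\lambda[\P\M]})_{\lambda>0}$ at $0$ gives $F(\mu)\to h$ in $\L^{2}(\ree)$ as $\mu\to 0^{+}$, and Fubini produces
\begin{align*}
\int_{c_{0}\ell}^{c_{1}\ell}\|F(\mu)\|_{2}^{2}\,\d\mu \lesssim \iint_{\ree}\bariiint_{\LQI}|F|^{2}\,\d x\,\d t \le \|\NT F\|_{2}^{2},
\end{align*}
so letting $\ell\to 0^{+}$ gives $\|h\|_{2}\lesssim\|\NT F\|_{2}$. For the Whitney convergence, pointwise convergence is immediate on the dense subspace $\dom(\P\M)\cap\cl{\ran(\P\M)}$ via Lebesgue differentiation combined with uniform $\L^{2}$-continuity of $F$ near $\mu=0$. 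The sublinear limsup operator $T_{*}$ defined by the Whitney $\L^{2}$-deviation of $F$ from $h(x,t)$ is dominated by $\NT F(x,t)+c(\mathcal M(|h|^{2}))^{1/2}(x,t)$, which is weak $(2,2)$ in $h$ by the upper bound and the standard Hardy-Littlewood theorem; vanishing of $T_{*}h$ on a dense subspace therefore extends to all of $\cl{\ran(\P\M)}$ by the standard density-of-convergence principle.

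The main obstacle is the highly anisotropic geometry of the off-diagonal annuli $C_{k}$ of Proposition~\ref{prop:OffDiag}, whose volume grows like $2^{kn}N^{k}|Q\times I|$. The decay $N^{-\eps k}$ must overcome both this volume loss and the loss incurred in representing the semigroup as a contour integral of resolvents, which is only possible because Proposition~\ref{prop:OffDiag} permits $N$ to be taken arbitrarily large. Equally delicate is the fact that the same argument run at $q=2$ would only produce the weak-$\L^{2}$ bound $\NT F\lesssim (\mathcal M(|h|^{2}))^{1/2}$; it is exactly the reverse Hölder inequality for the first-order solution $F$ that permits descending to $q<2$, where $\mathcal M$ becomes strongly $\L^{2/q}$-bounded and the strong-type estimate closes.
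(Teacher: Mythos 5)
Your high-level skeleton (lower bound via Lemma~\ref{lem:NT and QE} and strong continuity, spectral splitting $h = \chi^+(\P\M)h + \chi^-(\P\M)h$ for the upper bound, density principle for almost everywhere Whitney convergence) is the right one, but the central step of the upper bound has a genuine gap that your contour-integral reduction does not close.

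You claim a pointwise bound $\NT F(x,t) \lesssim (\mathcal{M}(|h|^q))^{1/q}(x,t)$, and you try to get there by ``localising'' the Dunford--Cauchy integral for $\e^{-\mu[\P\M]}$ to resolvents at scale $\sigma \sim \ell$. This does not work: the exponential factor $\e^{-\mu[z]}$ only decays for $|z| \gg 1/\mu$, while for $|z| \lesssim 1/\mu$ the integrand still involves resolvents at all scales $\sigma \not\sim \ell$, for which Proposition~\ref{prop:OffDiag} gives you nothing (the off-diagonal estimates there are stated and proved only for $\lambda \sim r$). The paper's trick is different and essential: write $F_\mu = \psi(\mu\P\M)h + R_\mu h$ with $\psi(z) = \e^{-[z]} - (1+\i z)^{-1}$ and $R_\mu = (1+\i\mu\P\M)^{-1}$. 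The point is that $\psi$ vanishes at the origin and decays exponentially at infinity, so $\psi(\mu\P\M)h$ is controlled by the quadratic estimate of Theorem~\ref{thm:bhfc} through Lemma~\ref{lem:NT and QE} --- in $\L^2$, not pointwise. This error term is simply not dominated by a maximal function of $h$; trying to get a pointwise bound for the full semigroup hides exactly the obstruction that the quadratic-estimate step removes. Only the resolvent piece $R_\mu h$ has a pointwise maximal function bound, through Proposition~\ref{prop:OffDiag} at $\lambda = \mu \sim \ell$, with the exponent $q<2$ used to make $\Max_x^q \Max_t^q$ strong $(2,2)$.

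A second issue is the reverse H\"older input. You invoke a Gehring-type self-improvement $\big(\bariiint_\LQI |F|^2\big)^{1/2} \lesssim \big(\bariiint_{\LQIt}|F|^q\big)^{1/q}$ on a single enlarged Whitney region. That is not what Theorem~\ref{thm:rh} gives and, because of the non-locality of $\dhalf$, is not what one should expect: the right-hand side of Theorem~\ref{thm:rh} necessarily contains a translation sum $\sum_{k}\tfrac{1}{1+|k|^{3/2}}\bariiint_{8\Lambda\times 8Q\times I_k}(|\pcg u| + |\pcgt u|)$ and only $\L^1$ averages. The paper uses this precise structure (rewritten in Lemma~\ref{lem:rh dyadic}) to pass to $\L^1$ averages over translated Whitney slabs, and then Cauchy--Schwarz and the translation sum route the error piece to the square function estimate.

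Finally, for the almost-everywhere convergence, the dense subspace $\dom(\P\M)\cap \cl{\ran(\P\M)}$ is not enough, and ``Lebesgue differentiation plus $\L^2$-continuity of $F$ near $0$'' does not produce Whitney convergence (these are different modes of convergence). The paper constructs (Lemma~\ref{lem:dense class Whitney convergence}) a dense class of $h$ with the additional property $\P\M h \in \L^p$ for some $p>2$, which is precisely what makes $\Max_x^2\Max_t^2(\P\M h)$ finite almost everywhere (strong type, not merely a composition of weak-type operators, which would not be under control). For $h$ in this class the three-term split $|Sh - h(x,t)| \lesssim |\psi(\mu\P\M)h| + |R_\mu h - h| + |h - h(x,t)|$ is then handled term by term, and the weak $(2,2)$ bound on the sublinear limsup operator you correctly identify lets you pass to all $h \in \cl{\ran(\P\M)}$.
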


Recall that if $h\in \Hp(\P\M)$, then the semigroup extension is the same as the Cauchy extension of $h$, so that this theorem provides us with estimates for reinforced weak solutions to \eqref{eq1}. As the estimate
\begin{align*}
 \barint_{c_{0}\lambda}^{ c_{1}\lambda}\|F_{\mu}\|_{2}^2 \d \mu \lesssim \| \NT F \|_{2}^2
\end{align*}
holds for arbitrary functions $F$, see Lemma~\ref{lem:NT and QE} below for convenience, the non-tangential maximal estimate can be seen as a further regularity estimate for solutions in the uniqueness class of Theorem~\ref{thm:uniq}. Of course, the class of solutions to \eqref{eq:diffeq2} with $\| \NT  F\|_{2}<\infty$ is also a uniqueness class by restriction. In addition, the almost everywhere convergence of Whitney averages allows us to give a pointwise meaning to the boundary trace of such solutions. A similar remark applies to data in $\Hm(\P\M)$ for solutions with $\lambda\in \R_{-}$.

There is also a companion result that applies to Dirichlet problems with $\L^2$ data.

\begin{thm}
\label{thm:NTmaxDir} Let $\wt{h} \in \cl{\ran(\M \P)}$ and let $\wt{F}(\lambda,\cdot) = \e^{-\lambda [\M\P]} \wt{h}$ be its semigroup extension with respect to $\M\P$. Then
\begin{equation*}
\| \NT \wt{F} \|_{2} \sim \|\wt{h}\|_{2},
\end{equation*}
where the implicit constants depend only on dimension and the ellipticity constants of $A$. Furthermore, there is almost everywhere convergence of Whitney averages,
\begin{equation*}
\lim_{\lambda\to 0} \bariiint_{\Lambda \times Q \times I} |\wt{F}(\mu,y,s)- \wt{h}(x,t) |^2 \d \mu \d y \d s=0
\end{equation*}
for almost every $(x,t)\in \ree$.
\end{thm}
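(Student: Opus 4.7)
My proposal is to reduce Theorem~\ref{thm:NTmaxDir} to Theorem~\ref{thm:NTmax} through the similarity between $\P\M$ and $\M\P$ induced by left multiplication by $\M$. The coefficient matrix $M$ defining $\M$ is block-diagonal with $\hat A$ and $1$, and $\hat A$ is bounded and uniformly elliptic by Proposition~\ref{prop:divformasODE}. Hence $\M$ is a bounded, pointwise invertible multiplication operator on $\mH$ with both $\|\M h\|_2 \sim \|h\|_2$ and $|\M(y,s)\xi|^2 \sim |\xi|^2$ holding pointwise a.e.\ with constants depending only on the ellipticity constants. In particular, $\M$ restricts to an isomorphism from $\cl{\ran(\P)}$ onto $\M \cl{\ran(\P)} = \cl{\ran(\M\P)}$.

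Starting from the algebraic identity $\M\cdot\P\M = \M\P\cdot\M$ valid on $\dom(\P\M)$, I would first pass to resolvents via $(z-\M\P)\M h = \M(z-\P\M)h$ to obtain
\begin{equation*}
\M(z-\P\M)^{-1} = (z-\M\P)^{-1}\M \qquad (z \notin \cl{\S_\omega}),
\end{equation*}
and then extend through the Cauchy integral representation of the functional calculi furnished by Theorem~\ref{thm:bhfc}, noting that $\P\M$ and $\M\P$ share the same angle of bisectoriality, to conclude that $\M f(\P\M) = f(\M\P)\M$ for every bounded holomorphic $f$ on $\S_\mu$. Specialising to $f(z) = \e^{-\lambda[z]}$ yields $\M \e^{-\lambda[\P\M]} = \e^{-\lambda[\M\P]}\M$. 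Given $\wt{h} \in \cl{\ran(\M\P)}$, set $h := \M^{-1}\wt{h} \in \cl{\ran(\P)}$ so that $\wt{F}(\lambda,\cdot) = \M F(\lambda,\cdot)$ where $F = Sh$ is the $\P\M$-semigroup extension of $h$. Since $|\wt{F}| \sim |F|$ pointwise, also $\NT\wt{F} \sim \NT F$ pointwise, and therefore
\begin{equation*}
\|\NT \wt{F}\|_2 \sim \|\NT F\|_2 \sim \|h\|_2 \sim \|\wt{h}\|_2,
\end{equation*}
the middle equivalence being Theorem~\ref{thm:NTmax}.

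For the almost everywhere convergence of Whitney averages, I would use the decomposition
\begin{equation*}
\wt{F}(\mu,y,s) - \wt{h}(x,t) = \M(y,s)\bigl(F(\mu,y,s) - h(x,t)\bigr) + \bigl(\M(y,s) - \M(x,t)\bigr) h(x,t),
\end{equation*}
which after squaring and averaging over $\Lambda \times Q \times I$ yields
\begin{align*}
\bariiint_{\Lambda \times Q \times I} |\wt{F} - \wt{h}(x,t)|^2 \d\mu\d y\d s
&\lesssim \bariiint_{\Lambda \times Q \times I} |F - h(x,t)|^2 \d\mu\d y\d s \\
&\quad + |h(x,t)|^2 \bariint_{Q \times I} |\M(y,s) - \M(x,t)|^2 \d y \d s.
\end{align*}
The first summand tends to $0$ as $\lambda \to 0$ for a.e.\ $(x,t)$ by the Whitney convergence in Theorem~\ref{thm:NTmax}, while the second tends to $0$ at every parabolic $\L^2$-Lebesgue point of $\M$, which covers a.e.\ $(x,t)$ since $\M \in \L^\infty \subset \L^2_{\loc}$ and $h$ is a.e.\ finite.

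The only delicate point I anticipate is the rigorous justification of the intertwining identity for the full bounded holomorphic calculus; I expect it to follow cleanly from the resolvent version and the Dunford--Riesz contour integral, but one should ensure that the contour can be chosen to lie in the joint resolvent set of $\P\M$ and $\M\P$, which is guaranteed by the fact that Theorem~\ref{thm:bhfc} provides the same angle of bisectoriality for both operators.
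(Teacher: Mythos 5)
Your proposal follows the same core strategy as the paper: intertwine the $\M\P$-semigroup with the $\P\M$-semigroup via the resolvent identity and reduce everything to Theorem~\ref{thm:NTmax}. Your treatment of the Whitney-average convergence, splitting $\wt{F}-\wt{h}(x,t)$ into $\M(y,s)(F-h(x,t))$ plus $(\M(y,s)-\M(x,t))h(x,t)$ and invoking parabolic Lebesgue points of $\M$, is a clean and explicit way to fill in a step that the paper leaves as ``follows from the intertwining relation''. The one place where you genuinely deviate is the lower bound $\|\NT\wt F\|_2\gtrsim\|\wt h\|_2$. You derive it from the \emph{pointwise} two-sided bound $|\M(y,s)\xi|\sim|\xi|$, i.e.\ from invertibility of $\M$ in $\L^\infty(\ree;\Lop(\IC^{n+2}))$. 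This is perfectly legitimate for the scalar equation in \eqref{eq2}, but the paper deliberately avoids it (see Remark~\ref{rem:PM and MP similar}: ``None of our proofs in the following sections will use the invertibility of $\M$ in $\L^\infty$'') so that the argument carries over verbatim to parabolic systems, where $\M$ is only accretive on $\ran(\P)$ and $\M^{-1}$ denotes the inverse of the restriction $\M|_{\cl{\ran(\P)}}$, bounded on $\L^2$ but not a multiplication operator. The paper instead obtains the lower bound from Lemma~\ref{lem:NT and QE} together with the strong continuity of the $\M\P$-semigroup at $\lambda=0$. Your proof is thus correct for the statement as written; if you want it to extend to systems as claimed in Section~\ref{sec:systems}, replace the pointwise lower bound for $\M$ by that strong-continuity argument.
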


Indeed, we recall from Theorem~\ref{thm:chardir} that for $\wt{F}$ as above and $\wt{h} \in \Hp(\M\P)$ the perpendicular part $\wt{F}_{\pe}$ is a solution of the Dirichlet problem with data $\wt{h}_{\pe}$.

The non-tangential maximal estimates in Theorem~\ref{thm:NTmax} and Theorem~\ref{thm:NTmaxDir} depend on apparently new reverse H\"older estimates for reinforced weak solutions of \eqref{eq1} that we also prove in Section~\ref{sec:reverse and NT}. These estimates are valid in full generality for $\lambda$-dependent equations and also for solutions in the lower half-space up to the obvious changes of notation.

\begin{thm}\label{thm:rh} There is a constant $C$, depending only on the ellipticity constants of $A$ and the dimension $n$, such that any reinforced weak solution of \eqref{eq1} satisfies the reverse H\"older estimate
\begin{equation}\label{eq:rh}
\begin{split}
\bigg(&\bariiint_{\Lambda \times Q \times I} |\nabla_{\lambda,x}u |^2+ |\HT\dhalf u|^2 + |\dhalf u|^2 \d \mu \d y \d s \bigg)^{1/2} \\
&\leq C \sum_{k \in \IZ} \frac{1}{1+|k|^{3/2}} \bariiint_{8\Lambda \times 8Q \times I_k} |\nabla_{\lambda,x}u |+ |\HT\dhalf u| + |\dhalf u| \d \mu \d y \d s.
\end{split}
\end{equation}
Here, $\Lambda=(\lambda-r, \lambda+r)$, $Q=B(x, r)$, $I= (t-r^2, t+r^2]$ define a parabolic cylinder of radius $r < \lambda/8$ and $I_k := k \ell(I) + I$ are the disjoint translates of $I$ covering the real line.
\end{thm}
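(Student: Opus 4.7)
Since $r < \lambda/8$, the parabolic cylinder $\Lambda \times Q \times I$ lies well within the interior of $\R^{n+2}_+$, so only interior estimates are needed. The plan is to establish a parabolic Caccioppoli-type inequality for reinforced weak solutions, then combine it with a kernel analysis of the nonlocal half-time-derivative to produce the tail factors $(1+|k|)^{-3/2}$, and finally invoke a self-improvement argument to pass from $L^2$ averages to the $L^1$ averages on the right-hand side.

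For the spatial gradient, I would test the reinforced weak formulation of $\Lop u = 0$ against $\varphi = \eta^2 \psi^2 (u - c)$, where $\eta(\lambda, x)$ is a smooth cutoff supported in a slight enlargement of $\Lambda \times Q$, $\psi(t)$ a smooth temporal cutoff, and $c$ a suitable constant. Integration by parts in the space variables together with ellipticity would give
\begin{equation*}
\bariiint_{\Lambda \times Q \times I} |\nabla_{\lambda,x} u|^2 \lesssim r^{-2} \bariiint_{2\Lambda \times 2Q \times J} |u-c|^2 + R,
\end{equation*}
with $J$ covering the support of $\psi$ in $t$, and $R$ collecting the remainder terms arising from the nonlocal action of $\dhalf$ on $\psi^2$ coming from the time-derivative component of the weak formulation. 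Since $\dhalf$ has convolution kernel decaying like $|t|^{-3/2}$, $\dhalf(\psi^2)$ inherits the same $|t|^{-3/2}$ decay off the support of $\psi$, which is the source of the $(1+|k|)^{-3/2}$ weights on the translates $I_k$. A fractional Sobolev-Poincar\'e-type argument in the parabolic variables would then convert the $L^2$ bound on $|u-c|$ into an $L^1$-type bound in terms of $|\nabla_{\lambda,x} u|$, $|\HT \dhalf u|$ and $|\dhalf u|$ on enlarged cylinders.

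For the half-derivative terms, the main tool is the integral representation
\begin{equation*}
\dhalf u(\mu, y, t) = c_{0} \int_\R \frac{u(\mu, y, t) - u(\mu, y, \sigma)}{|t - \sigma|^{3/2}} \, \d \sigma
\end{equation*}
(with the analogous signed kernel for $\HT \dhalf u$). I would split the $\sigma$-integral into the near translates $I_{-1}\cup I_0 \cup I_1$ and the far translates $\bigcup_{|k| \geq 2} I_k$. For $\sigma \in I_k$ with $|k| \geq 2$, the kernel is bounded pointwise by $(|k|\, \ell(I))^{-3/2}$, producing the factors $|k|^{-3/2}$ after averaging. To replace $|u|$ by $|\dhalf u|$ and $|\HT \dhalf u|$ on the right-hand side, I would express the oscillation $u(\mu,y,t) - u(\mu,y,\sigma)$ via the identity $\partial_t = \dhalf \HT \dhalf$ combined with a fractional Poincar\'e-type inequality, with the extra $\ell(I)^{1/2}$ factor balancing the kernel so that the final weight is $(1+|k|)^{-3/2}$. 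The near piece is handled by a local Caccioppoli-type bound applied directly to $\dhalf u$ and $\HT \dhalf u$, using the $L^2$-boundedness of $\HT$.

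The principal obstacle is the careful bookkeeping of the $(1+|k|)^{-3/2}$ decay across all of these steps: ensuring (i) that the cutoff-induced remainder $R$ in the Caccioppoli bound is summable with the correct weights; (ii) that converting from $|u|$ to $|\dhalf u|$ and $|\HT \dhalf u|$ on the right-hand side preserves the structure; and (iii) that the final right-hand side involves exactly the three quantities appearing in the statement on the enlarged cylinders $8\Lambda \times 8Q \times I_k$. Since the argument relies only on the weak formulation and Caccioppoli-type manipulations, no form of DeGiorgi-Nash-Moser regularity is invoked, so the estimate extends directly to parabolic systems.
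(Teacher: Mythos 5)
Your outline is reasonable for the spatial gradient and you have correctly identified the kernel decay $|t-\sigma|^{-3/2}$ as the source of the weights $(1+|k|)^{-3/2}$, but there is a genuine gap at the heart of the argument: the \emph{local} $\L^2$ average of $\dhalf u$ (and $\HT\dhalf u$). You propose to handle the near part ``by a local Caccioppoli-type bound applied directly to $\dhalf u$ and $\HT\dhalf u$.'' This does not exist as such. The functions $\dhalf u$ and $\HT\dhalf u$ do not satisfy any local PDE, so there is no Caccioppoli inequality for them. Also, the kernel representation you invoke only holds pointwise off the support of the function (Corollary~\ref{cor:formula HTdhalf on Hdot1/2}), and applying it directly to $u$ on the cylinder $\Lambda\times Q\times I$ is not justified; the near part of that representation is singular at $t=\sigma$ and cannot be estimated by simple kernel decay. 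The paper's proof resolves this by a completely different device (Lemma~\ref{lem:rh time local}): it splits $u-c=\eta(u-c)+(1-\eta)(u-c)$ with a product cutoff, and for the local piece uses the Plancherel-type interpolation inequality
\begin{equation*}
\int_{\R}\big\|\HT\dhalf\big(\wt\eta(\wt u-c)\big)\big\|_{\L^2}^2\d s
\leq
\Big(\int_\R\big\|\wt\eta(\wt u-c)\big\|_{\W^{1,2}}^2\d s\Big)^{1/2}
\Big(\int_\R\big\|\partial_t\big(\wt\eta(\wt u-c)\big)\big\|_{\W^{-1,2}}^2\d s\Big)^{1/2},
\end{equation*}
then uses the equation $\Lop u = 0$ to rewrite $\partial_t(\wt\eta(\wt u - c))$ in divergence form and land on $|\gradlamx u|$ and $|u-c|$ only. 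This is the crucial step that converts temporal regularity into spatial regularity via the equation, and nothing in your proposal replaces it.

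A second, subtler gap is that you treat $\dhalf u$ and $\HT\dhalf u$ symmetrically, but the nonlocal (far) pieces are not handled the same way in the paper. For the $\HT\dhalf$ far piece, Lemma~\ref{lem:HTdhalf outside bound} depends crucially on the cutoff $\eta_I$ being symmetric about the midpoint of $I$, which makes the ``kernel'' at the center point odd so that the constant $\barint_J f$ drops out; this cancellation fails for the even kernel of $\dhalf$. The paper's treatment of the $\dhalf$ far piece (Lemma~\ref{lem:dhalf outside bound weak solution}) is then a genuinely different argument: it uses $\HT^2=-1$ to reduce back to $\HT\dhalf$, invokes the \emph{already completed} full reverse H\"older estimate for $\HT\dhalf u$ on an enlarged cylinder, and needs a commutator estimate for $[\HT\dhalf,\eta_I]$. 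Without acknowledging this asymmetry, and in particular without the full $\HT\dhalf$ estimate as input, the $\dhalf$ part of the statement is not within reach by your outlined methods.
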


Note that the estimates in \eqref{eq:rh} are non-local in the time variable, reflecting the non-locality of half-order time derivatives. The estimates involves both $\HT \dhalf u$ and $\dhalf u$, which is reminiscent of the arbitrary choice that we made when setting up the first order approach. In fact, the proof of Theorem~\ref{thm:rh} will suggest that control for one of these functions on the left requires both of them on the right. The factor of the parabolic enlargement of $\Lambda$ and $Q$ on the right-hand side and the corresponding relation between $r$ and $\lambda$ can be changed from $8$ to any factor $c>1$ at the expense of $C$ then depending on $c$ as well. Again, this is best done by a covering argument.
\subsection{Characterisation of well-posedness of the BVPs (\texorpdfstring{$\lambda$}{lambda}-independent)}
\label{sec:char}

Let us eventually address boundary value problems for parabolic equations. We write $\Lop u=0$ to mean that $u$ is a reinforced weak solution of \eqref{eq1}. For $-1\le s\le 0$ we let
\begin{align*}
\|F\|_{\mE_{s}}:= \begin{cases}
 \|\NT(F )\|_{2} & (\text{if $s=0$}), \\
 (\qe{\lambda^{-s}F})^{1/2} & (\text{otherwise})
\end{cases}
\end{align*}
and define the solution classes
\begin{align*}
 \mE_{s}:=\{F\in \Lloc^2(\R^{n+2}_+;\IC^{n+2}) ; \|F\|_{\mE_{s}}<\infty\}.
\end{align*}
Given $s\in [-1,0]$, the regularity of the data, we consider the problems
\begin{align*}
 (R)_{\mE_{s}}^\Lop&:  \Lop u=0,\, \pcg u\in \mE_{s},\, u|_{\lambda=0}=f \in \Hdot^{s/2+1/2}_{\pd_{t}-\Delta_{x}},\\
 (N)_{\mE_{s}}^\Lop&: \Lop u=0,\, \pcg u\in \mE_{s},\, \dnuA u|_{\lambda=0}= f \in \Hdot^{s/2}_{\pd_{t}-\Delta_{x}}.
\end{align*}
The problem $(R)_{\mE_{s}}^\Lop$ is formulated as a Dirichlet problem with regularity $s/2+1/2$ on the data but due to
\begin{align*}
 \|f\|_{\Hdot^{s/2+1/2}_{\pd_{t}-\Delta_{x}}}\sim \|[\gradx f,\, \HT\dhalf f]\|_{(\Hdot^{s/2}_{\pd_{t}-\Delta_{x}})^{n+1}},
\end{align*}
it is equivalent to a regularity problem with (parabolic) regularity $s$ on the quantities $\gradx u$ and $\HT\dhalf u$ at the boundary. We shall adopt this second point of view because these quantities naturally appear in $\pcg u$. On the other hand, $(N)_{\mE_{s}}^\Lop$ is a Neumann problem. Note that we require the full knowledge $\pcg u\in \mE_{s}$, whereas in \cite{N2} the Neumann problem for $s=0$ is posed with the non-tangential maximal control of $\nabla_{\lambda,x}u$ only. However, therein  additional information, such as DeGiorgi-Nash-Moser estimates and invertibility of layer potentials, are used to prove uniqueness under this weaker assumption.

Solving the problems $(R)_{\mE_{s}}^\Lop$ and $(N)_{\mE_{s}}^\Lop$ means finding a solution with control from the data. For $(R)_{\mE_{s}}^\Lop$ and $f$ the Dirichlet data, we want
\begin{align*}
 \|\pcg u\|_{\mE_{s}} \lesssim \|f\|_{\Hdot^{s/2+1/2}_{\pd_{t}-\Delta_{x}}}
\end{align*}
and for $(N)_{\mE_{s}}^\Lop$ and $f$ the Neumann data we want
\begin{align*}
 \|\pcg u\|_{\mE_{s}} \lesssim \|f\|_{\Hdot^{s/2}_{\pd_{t}-\Delta_{x}}}.
\end{align*}
The implicit constant must be independent of the data. The behaviour at the boundary is the strong convergence specified by our results above. Existence means that this holds for all data in a chosen space. Uniqueness means that there is at most one solution. Well-posedness is the conjunction of both existence of a solution for all data and uniqueness.

It is instructive to write out the general theory for the three most prominent cases that we considered in Section~\ref{sec:specialcases}: In the case $s=0$, the boundary value problems are the regularity and Neumann problems with data in $\L^2(\ree)$. For the regularity problem, the data takes the form of an array $[\gradx f,\, \HT\dhalf f]$ for some $f\in \Hdot^{1/2}_{\pd_{t}-\Delta_{x}}$. Due to Theorem~\ref{thm:NTmax} the boundary behaviour can also be interpreted in the sense of almost everywhere convergence of Whitney averages. In the case $s=-1$, as said, $(R)_{\mE_{-1}}^\Lop$ is nothing but a Dirichlet problem with $\L^2$ data~$f$ {and thanks to Theorem~\ref{thm:NTmaxDir} almost everywhere convergence of Whitney averages of $u$ to its boundary data comes again as a bonus}. In the case $s=-1/2$ we are back to the class of energy solutions, within which we have already obtained well-posedness in Section~\ref{sec:energy}. In fact, we have seen in Section~\ref{sec:specialcases} that the class of energy solutions coincides with the class of reinforced weak solutions such that $\pcg u \in \mE_{-1/2}$.

We will say that the boundary value problem $(\BVP)_{\mE_{s}}^\Lop$, where $\BVP$ designates either $N$ or $R$, is \emph{compatibly well-posed} if it is well-posed and if the solution agrees with the energy solution of the problem $(\BVP)_{\mE_{-1/2}}^\Lop$ when the data belongs to both data spaces.

In order to formulate characterisations of well-posedness and compatible well-posedness, we set some new notation. Recall that the trace $h$ of a conormal differential contains three terms: the first one is called the scalar component and denoted by $h_{\pe}$, and the two others are the tangential component and the time component, which we concatenate by $h_{r}= [h_{\pa}, \, h_{\te}]$. Since the trace space of all conormal differentials of reinforced weak solutions has been identified as a generalized Hardy space in Theorems~\ref{thm:uniq} and \ref{thm:sob}, the conclusion is that the two maps $N_{\pe}: h\mapsto h_{\pe}$ and $N_{r}: h\mapsto h_{r}$ carry the well-posedness. We record this easy but important observation in the following theorem.

\begin{thm}\label{thm:wpequiv} The following assertions hold for $-1\le s \le 0$.
\begin{enumerate}
 \item $(R)_{\mE_{s}}^\Lop$ is well-posed if and only if $N_{r}: \Hdot^{s,+}_{\P\M} \to (\Hdot^{s}_{\P})_{r}$ is an isomorphism.
 \item $(N)_{\mE_{s}}^\Lop$ is well-posed if and only if $N_{\pe}: \Hdot^{s,+}_{\P\M} \to (\Hdot^{s}_{\P})_{\pe}$ is an isomorphism.
 \end{enumerate}
In each case, ontoness is equivalent to existence and injectivity to uniqueness, respectively.
\end{thm}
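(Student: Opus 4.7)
The plan is to bundle the correspondence theorems already proven into a bijection at the level of boundary traces, after which well-posedness of each boundary value problem reduces to a single statement about a component-extraction map on $\Hdot^{s,+}_{\P\M}$. First I invoke Theorem~\ref{thm:correspondence} to pass from reinforced weak solutions $u$ (modulo constants) to vector fields $F = \pcg u$ in $\cH_{\loc}$ that satisfy $\pd_\lambda F + \P\M F = 0$ weakly, and back again.

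Next I identify such $F$ in the class $\mE_s$ with their boundary traces. For $s \in [-1, 0)$, Theorem~\ref{thm:sob} asserts that the Cauchy extension $C_0^+$ is a bijection from $\Hdot^{s,+}_{\P\M}$ onto the weak solutions of the first order system satisfying $\qe{\lambda^{-s} F} < \infty$, and the norms are equivalent; applying Lemma~\ref{lem:identification} to replace $\Hdot^s_{\P\M}$ by $\Hdot^s_{\P}$ yields $\|F\|_{\mE_s} \sim \|h\|_{\Hdot^s_\P}$. For $s = 0$, Theorem~\ref{thm:uniq} delivers the analogous bijection onto $\Hp(\P\M) = \Hdot^{0,+}_{\P\M}$, and the non-tangential comparison $\|F\|_{\mE_0} = \|\NT F\|_2 \sim \|h\|_2$ is precisely Theorem~\ref{thm:NTmax}.

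Finally, I identify the boundary data of $u$ with components of the trace $h$. By the definition of $\pcg u$, its scalar entry is $\dnuA u$ and the remaining entries are $\gradx u$ and $\HT \dhalf u$, so $h_\pe$ is the Neumann datum of $u$ and, writing $u|_{\lambda = 0} = f$, the block $h_r$ equals $[\gradx f,\, \HT\dhalf f]$. Combining Lemma~\ref{lem:identification} with the concrete realisation $\Hdot^s_{\P} \hookrightarrow (\Hdot^{s/2}_{\pd_{t} - \Delta_x})^{n+2}$ from Section~\ref{sec:sobolev} and the equivalence $\|f\|_{\Hdot^{s/2+1/2}_{\pd_{t} - \Delta_x}} \sim \|[\gradx f,\, \HT\dhalf f]\|_{(\Hdot^{s/2}_{\pd_{t} - \Delta_x})^{n+1}}$ stated when formulating $(R)_{\mE_s}^\Lop$, the data spaces of $(R)_{\mE_s}^\Lop$ and $(N)_{\mE_s}^\Lop$ match $(\Hdot^s_\P)_r$ and $(\Hdot^s_\P)_\pe$ with equivalent norms. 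Under this dictionary, well-posedness of $(R)_{\mE_s}^\Lop$ becomes bijectivity of $N_r \colon \Hdot^{s,+}_{\P\M} \to (\Hdot^s_\P)_r$, with existence of a solution for every datum corresponding to surjectivity and uniqueness to injectivity; boundedness of $N_r$ is trivial and boundedness of its inverse follows from the open mapping theorem. The argument for $(N)_{\mE_s}^\Lop$ and $N_\pe$ is identical. The only delicate input I foresee is the endpoint $s = 0$, where one must invoke Theorem~\ref{thm:NTmax} since the non-tangential characterisation does not flow from a purely functional-calculus argument as the square function characterisation does.
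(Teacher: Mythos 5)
Your proposal is correct and follows exactly the route the paper has in mind: the paper itself treats Theorem~\ref{thm:wpequiv} as an ``easy but important observation'' following from the trace characterisations in Theorems~\ref{thm:uniq} and \ref{thm:sob}, and does not spell out the argument. You have correctly assembled the bijection (solution $u$ modulo constants $\leftrightarrow$ $F = \pcg u$ by Theorem~\ref{thm:correspondence} $\leftrightarrow$ trace $h \in \Hdot^{s,+}_{\P\M}$ by Theorem~\ref{thm:sob} for $s<0$ / Theorem~\ref{thm:uniq} for $s=0$), the norm equivalences, the dictionary matching the data spaces of $(R)$ and $(N)$ with $(\Hdot^s_\P)_r$ and $(\Hdot^s_\P)_\pe$, and the use of the open mapping theorem to upgrade bijectivity to isomorphism. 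Your remark that the endpoint $s=0$ requires Theorem~\ref{thm:NTmax} rather than a purely functional-calculus square function estimate is also accurate; one could only add explicitly that the passage from $\|\NT F\|_2 < \infty$ to the Dini condition required by Theorem~\ref{thm:uniq} is Lemma~\ref{lem:NT and QE}, which you implicitly invoke.
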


As the boundary problems for $s=-1/2$ are well-posed by the method of energy solutions in Section~\ref{sec:energy}, compatible well-posedness reduces to another simple statement.

\begin{prop}
\label{prop:cwp}
The following assertions hold for $-1\le s \le 0$.
\begin{enumerate}
 \item $(R)_{\mE_{s}}^\Lop$ is compatibly well-posed if and only if $N_{r}: \Hdot^{s,+}_{\P\M} \to (\Hdot^{s}_{\P})_{r}$ is an isomorphism and its inverse agrees with the one at $s=-1/2$ on
 $(\Hdot^{s}_{\P})_{r} \cap (\Hdot^{-1/2}_{\P})_{r}$.
 \item $(N)_{\mE_{s}}^\Lop$ is compatibly well-posed if and only if $N_{\pe}: \Hdot^{s,+}_{\P\M} \to (\Hdot^{s}_{\P})_{\pe}$ is an isomorphism and its inverse agrees with the one at $s=-1/2$ on $(\Hdot^{s}_{\P})_{\pe} \cap (\Hdot^{-1/2}_{\P})_{\pe}$.
 \end{enumerate}
\end{prop}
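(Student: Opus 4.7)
The plan is to unwind definitions using Theorem~\ref{thm:wpequiv} as the skeleton. Both assertions are proved identically, so I sketch only~(i). Theorem~\ref{thm:wpequiv} at level $s$ already identifies well-posedness of $(R)_{\mE_s}^\Lop$ with $N_r$ being an isomorphism $\Hdot^{s,+}_{\P\M} \to (\Hdot^{s}_{\P})_r$. Applying the same theorem at $s=-1/2$, combined with the well-posedness of the energy problem established in Section~\ref{sec:energy}, gives that $N_r$ at level $-1/2$ is also an isomorphism onto $(\Hdot^{-1/2}_{\P})_r$. Hence the only content left to pin down is compatibility, namely the coincidence with the energy solution whenever the datum belongs to both data spaces.

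For the forward direction, assume $(R)_{\mE_s}^\Lop$ is compatibly well-posed and take $f \in (\Hdot^s_{\P})_r \cap (\Hdot^{-1/2}_{\P})_r$. Let $u\in \mE_s$ and $u_E \in \mE_{-1/2}$ be the corresponding solutions. Compatibility forces $u = u_E$ and hence $F := \pcg u = \pcg u_E$. By the semigroup representation part of Theorem~\ref{thm:sob}, $F$ is simultaneously the Cauchy extension of $h := (N_r|_s)^{-1}(f) \in \Hdot^{s,+}_{\P\M}$ and of $h_E := (N_r|_{-1/2})^{-1}(f) \in \Hdot^{-1/2,+}_{\P\M}$. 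Taking the trace of $F$ at $\lambda = 0$ inside the common linear Hausdorff space that carries the abstract Sobolev scale for $\P\M$ (see the discussion preceding Lemma~\ref{lem:identification}), we read off $h = h_E$, which is the claimed agreement of inverses on the intersection.

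For the converse, assume $N_r$ at level $s$ is an isomorphism and that the two inverses coincide on $(\Hdot^s_{\P})_r \cap (\Hdot^{-1/2}_{\P})_r$. Well-posedness of $(R)_{\mE_s}^\Lop$ is provided by Theorem~\ref{thm:wpequiv}. For $f$ in the intersection, set $h := (N_r|_s)^{-1}(f) = (N_r|_{-1/2})^{-1}(f)$ and $F := C_0^+ h$. Theorem~\ref{thm:sob} shows that $F$ satisfies both the $\mE_s$ and the $\mE_{-1/2}$ estimates, and Theorem~\ref{thm:correspondence} lifts $F$ to a reinforced weak solution, unique modulo additive constants. This lift simultaneously solves $(R)_{\mE_s}^\Lop$ and the energy problem with datum $f$, and the common Dirichlet trace condition $u|_{\lambda=0}=f$ (meaningful because the datum space is a homogeneous parabolic Sobolev space, understood modulo constants) selects the same representative. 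Hence $(R)_{\mE_s}^\Lop$ is compatibly well-posed.

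The only substantive point of care is that the two spectral spaces $\Hdot^{s,+}_{\P\M}$ and $\Hdot^{-1/2,+}_{\P\M}$ must live inside a single Hausdorff space so that the equation $h = h_E$ makes literal sense; this is precisely the abstract Sobolev scale framework recalled in Section~\ref{sec:sobolev}, where all $\Hdot^{s}_{\P\M}$ are realised in a common ambient linear space with dense intersections. Everything else is formal once Theorems~\ref{thm:wpequiv},~\ref{thm:sob} and~\ref{thm:correspondence} are in hand.
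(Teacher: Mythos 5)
Your proof is correct and follows the natural route — unwinding the definition of compatible well-posedness through Theorem~\ref{thm:wpequiv}, the trace/Cauchy-extension representation of Theorem~\ref{thm:sob}, and the realisation of the operator-adapted Sobolev scale in a common ambient Hausdorff space. The paper treats Proposition~\ref{prop:cwp} as an immediate consequence of these ingredients (it is stated without a separate proof following the remark on energy solutions), so your argument matches the implicit reasoning exactly.
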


Let us recall that Theorems~\ref{thm:uniq} and \ref{thm:sob} also provide \emph{a priori} representations for the conormal gradients of solutions to \eqref{eq1} on the lower half-space $\R^{n+2}_{-}$, leading to similar characterisations of well-posedness upon replacing positive with negative spectral spaces. Given these characterisations, (compatible) well-posedness extrapolates and  compatible well-posedness interpolates. The full details for elliptic Dirac operators can be found in \cite[Thm.~7.8 \& 7.7]{Amenta-Auscher}. They remain unchanged in our setup.

\begin{thm}\label{thm:extra} The set of $s \in (-1,0)$ for which $(\BVP)_{\mE_{s}}^\Lop$ is well-posed (compatibly well-posed) is open.
\end{thm}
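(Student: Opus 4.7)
The plan is to reduce Theorem~\ref{thm:extra} to a classical openness result of Šnejberg: if $\{X_s\}$ and $\{Y_s\}$ are complex interpolation scales of Banach spaces, and $T \colon X_s \to Y_s$ is a family of uniformly bounded operators that is an isomorphism at some parameter $s_0$, then $T$ is an isomorphism on an open neighborhood of $s_0$. Given Theorem~\ref{thm:wpequiv}, well-posedness of $(R)_{\mE_{s}}^\Lop$ and $(N)_{\mE_{s}}^\Lop$ is equivalent to the component maps $N_r$ and $N_\pe$ being isomorphisms from $\Hdot^{s,+}_{\P\M}$ onto $(\Hdot^{s}_{\P})_r$ and $(\Hdot^{s}_{\P})_\pe$ respectively, so it will suffice to cast both sides of these arrows as complex interpolation scales and to verify that $N_r, N_\pe$ are uniformly bounded across $s \in [-1,0]$.

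First I would set up the complex interpolation scale for the ambient operator-adapted spaces $\Hdot^s_{\P}$ and $\Hdot^s_{\P\M}$. Since $\P$ and $\P\M$ each possess a bounded holomorphic functional calculus on the closure of their ranges (Theorem~\ref{thm:bhfc} for $\P\M$, and the same abstract result for $\P$), the fractional powers $[\P]^s$, $[\P\M]^s$ realize these scales as complex interpolation scales in the sense of Kalton-Mitrea/Auscher-McIntosh-Nahmod theory of operator-adapted Sobolev spaces. The spectral Hardy spaces $\Hdot^{s,+}_{\P\M} = \chi^+(\P\M) \Hdot^{s}_{\P\M}$ are then complemented subspaces of $\Hdot^{s}_{\P\M}$ via a projection that is uniformly bounded in $s$ (by functional calculus), so they inherit the interpolation structure. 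Via Lemma~\ref{lem:identification} and its extension to $s \in [-1,0]$, these abstract spaces are compared to the concrete scale $\Hdot^s_{\P}$, whose coordinate components $(\Hdot^s_{\P})_r$ and $(\Hdot^s_{\P})_\pe$ are likewise complemented subspaces in the interpolation sense (they are singled out by the bounded projection $\IP_\P$ combined with a trivial coordinate projection, which commutes with the fractional powers of $\pd_t - \Delta_x$).

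Second, I would check that $N_r$ and $N_\pe$, viewed on the whole scale, are uniformly bounded families: this is immediate from their definition as coordinate projections postcomposed with the embedding $\Hdot^{s,+}_{\P\M} \hookrightarrow \Hdot^s_{\P}$. With the interpolation scales and uniform bounds in hand, Šnejberg's theorem gives openness of the set of $s \in (-1,0)$ where the map is invertible, thereby proving openness of well-posedness. For compatible well-posedness, the key observation is that the inverse obtained via Šnejberg's perturbation argument arises as the restriction of a single operator defined analytically in $s$, so on the intersection $(\Hdot^s_\P)_\# \cap (\Hdot^{-1/2}_\P)_\#$ (with $\# \in \{r, \pe\}$) it automatically coincides with the inverse produced by the energy solution at $s = -1/2$, yielding openness of the compatible well-posedness set as well.

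The principal obstacle is not the Šnejberg step, which is abstract, but the bookkeeping needed to verify that all the Sobolev and Hardy scales involved are bona fide complex interpolation scales with compatible embeddings; this is where the bounded holomorphic functional calculus of $\P\M$ and the explicit comparison with $\Hdot^s_{\P}$ in Lemma~\ref{lem:identification} do the essential work. Since the whole argument is operator-theoretic and depends only on the bounded holomorphic functional calculus of $\P \M$ and not on self-adjointness of $\P$, it transfers from the elliptic case of \cite[Thm.~7.8 \& 7.7]{Amenta-Auscher} to our parabolic setting unchanged.
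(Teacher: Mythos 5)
Your proposal is correct and takes essentially the same route as the paper, which simply cites \cite[Thm.~7.8 \& 7.7]{Amenta-Auscher} for the \v{S}nejberg-type extrapolation along the operator-adapted interpolation scales $\Hdot^{s,+}_{\P\M}$ and $(\Hdot^s_\P)_\bullet$, noting that the argument transfers unchanged since it rests only on the bounded holomorphic functional calculus of $\P\M$ and not on self-adjointness of $\P$. Your reconstruction correctly identifies the three ingredients (interpolation scales via fractional powers, the uniformly bounded projections $\chi^+(\P\M)$ and $N_\bullet$, and compatibility of the perturbed inverses through a common core) and the chaining argument through $s_0$ needed to propagate agreement with the energy-class inverse at $s=-1/2$.
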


Note that in general we cannot extrapolate from the endpoint cases $s=0$ and $s=~-1$.

\begin{thm}\label{thm:inter} If $(\BVP)_{\mE_{s_{i}}}^\Lop$ is compatibly well-posed when $-1\le s_{0}, s_{1}\le 0$, then $(\BVP)_{\mE_{s}}^\Lop$ is compatibly well-posed for any $s$ between $s_{0}$ and $s_{1}$.
\end{thm}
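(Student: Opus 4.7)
The plan is to reduce, via Proposition~\ref{prop:cwp}, to a complex interpolation statement about isomorphisms between operator-adapted Sobolev spaces. Writing $N_{\sharp}$ for whichever of $N_r$ or $N_\pe$ is at stake, compatible well-posedness of $(\BVP)_{\mE_{s}}^\Lop$ is exactly the assertion that $N_\sharp\colon \Hdot^{s,+}_{\P\M}\to (\Hdot^{s}_{\P})_\sharp$ is an isomorphism whose inverse agrees, on the natural intersection, with the energy-class inverse at $s=-1/2$ provided by Section~\ref{sec:energy}. So the task is to propagate such a property from the endpoints $s_0,s_1$ to any $s$ between them.

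Next I would set up the two interpolation scales. By Theorem~\ref{thm:bhfc}, $\P\M$ is bisectorial on $\cl{\ran(\P\M)}=\cl{\ran(\P)}$ with a bounded holomorphic functional calculus; together with the general machinery of Sobolev spaces adapted to such operators (see \cite{Auscher-McIntosh-Nahmod} or \cite[Ch.~6]{Haase}), this gives the complex interpolation identity
\begin{align*}
[\Hdot^{s_0}_{\P\M},\Hdot^{s_1}_{\P\M}]_{\theta}=\Hdot^{s}_{\P\M},\qquad s=(1-\theta)s_0+\theta s_1,
\end{align*}
and the spectral projection $\chi^+(\P\M)$ extends to a bounded projection on each $\Hdot^{s}_{\P\M}$ (functional calculus again), so the positive spectral subspaces $\Hdot^{s,+}_{\P\M}$ form a complemented interpolation scale with the same identity. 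The analogous construction for $\P$ yields the scale $\Hdot^{s}_{\P}$, and restricting to the relevant block produces the interpolation scale $(\Hdot^{s}_{\P})_\sharp$; the maps $N_{\sharp}$ are uniformly bounded along these scales.

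With the scales in place, the argument is a soft one. By assumption $N_\sharp$ is invertible at $s_0$ and $s_1$, and compatibility at each endpoint means that both inverses coincide with the $s=-1/2$ inverse on the triple intersection of data spaces; since this triple intersection is dense in the double intersection $(\Hdot^{s_0}_{\P})_\sharp\cap(\Hdot^{s_1}_{\P})_\sharp$, the two inverses glue into a single linear map on $(\Hdot^{s_0}_{\P})_\sharp+(\Hdot^{s_1}_{\P})_\sharp$ that is bounded into $\Hdot^{s_0,+}_{\P\M}+\Hdot^{s_1,+}_{\P\M}$. Complex interpolation then produces a bounded map $T_s\colon (\Hdot^{s}_{\P})_\sharp\to \Hdot^{s,+}_{\P\M}$ for every intermediate $s$; the identities $N_\sharp\circ T_s=\mathrm{id}$ and $T_s\circ N_\sharp=\mathrm{id}$ hold on a dense subspace coming from the energy class and therefore everywhere, so $N_\sharp$ is invertible at level $s$ with inverse $T_s$, automatically compatible with the energy-class inverse by construction. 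The substantive obstacle is the validity of the complex interpolation identity above for the non-selfadjoint, non-local operator $\P\M$; but this is purely abstract once Theorem~\ref{thm:bhfc} is in hand, and the rest of the argument is a faithful transcription of \cite[Thm.~7.7 \& 7.8]{Amenta-Auscher}, whose proof is operator-theoretic and uses neither self-adjointness nor locality of $\P$.
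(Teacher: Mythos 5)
Your proof is correct and takes essentially the same approach that the paper defers to, namely the interpolation argument for complemented subspaces of the $\P\M$-adapted Sobolev scale sketched in \cite[Thm.~7.7]{Amenta-Auscher}; the key steps—identifying (compatible) well-posedness with invertibility of $N_\sharp$ via Theorem~\ref{thm:wpequiv}/Proposition~\ref{prop:cwp}, interpolating the scales $\Hdot^{s}_{\P}$ and the complemented spectral subspaces $\Hdot^{s,+}_{\P\M}$ (legitimate since $\chi^+(\P\M)$ acts consistently on all levels thanks to Theorem~\ref{thm:bhfc} and Lemma~\ref{lem:identification}), and using the energy-class compatibility at both endpoints to glue the two inverses—are exactly as intended. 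The only fine print worth flagging is that you use \emph{complex} interpolation where the paper elsewhere (e.g.\ in the proof of Theorem~\ref{thm:WP}) invokes \cite[Thm.~5.3]{Auscher-McIntosh-Nahmod} for \emph{real} interpolation; in this Hilbert-scale setting the two coincide, so the discrepancy is cosmetic.
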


We mention that all results are analytic with respect to $\L^\infty$-perturbations of the coefficients $A(x,t)$. Again this is an abstract result proved in \cite[Thm.~7.16]{Amenta-Auscher}, compare also with \cite[Rem.~7.17 - 7.19]{Amenta-Auscher}. The assumption of this theorem is that a local Lipschitz estimate holds for the functional calculus of $\P\M$ on $\Hdot^s_{\P}$.  This one has been described in \eqref{eq:lipschitz} when $s=0$ and it extends to any $s\in [-1,0]$ using Lemma~\ref{lem:identification}.

\begin{thm}\label{thm:sta} Let $s\in [-1,0]$. If $(\BVP)_{\mE_{s}}^\Lop$ is (compatibly) well-posed, then so is $(\BVP)_{\mE_{s}}^{\Lop'}$ for any $\Lop'$ with coefficients $A'$ such that $\|A-A'\|_{\infty}$ is sufficiently small, depending on $s$ and the ellipticity constants of $A$. Moreover, the operator norm of the inverse of the respective projection $N_\bullet$ is  Lipschitz continuous as a function of $A'$ in a neighbourhood of $A$. \end{thm}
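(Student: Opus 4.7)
The plan is to verify the hypotheses of the abstract perturbation result \cite[Thm.~7.16]{Amenta-Auscher}, referenced in the paragraph preceding the statement, and then invoke it. That theorem converts the characterisation of well-posedness from Theorem~\ref{thm:wpequiv} into an openness/Lipschitz statement, provided one knows that the bounded holomorphic functional calculus of $\P \M$ on $\Hdot^s_{\P}$ depends locally Lipschitz on $M$ (equivalently, on $A$). Thus the proof reduces to upgrading the $\L^2$ Lipschitz estimate \eqref{eq:lipschitz} to the Sobolev scale $\Hdot^s_{\P}$ for $s \in [-1,0]$.

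This upgrade is provided by Lemma~\ref{lem:identification}: it identifies $\Hdot^s_{\P \M} = \Hdot^s_{\P}$ with equivalent norms for $s \in [-1,0]$, and since $\chi^+$ agrees with a bounded holomorphic function on any bisector $\S_\mu$ with $\omega < \mu < \pi/2$, the abstract Sobolev-scale functional calculus of \cite{Auscher-McIntosh-Nahmod} yields
\begin{align*}
 \|\chi^+(\P\M) - \chi^+(\P\M')\|_{\Hdot^s_{\P} \to \Hdot^s_{\P}} \lesssim \|A-A'\|_\infty
\end{align*}
with constants depending only on $s$, $n$ and the ellipticity of $A$.

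The mechanism at work in \cite[Thm.~7.16]{Amenta-Auscher}, which I would also recall for clarity, is a Neumann-series perturbation of the inverse $T_A : (\Hdot^s_{\P})_\bullet \to \Hdot^{s,+}_{\P\M}$ granted by Theorem~\ref{thm:wpequiv}. Setting $\Phi_{A'} := N_\bullet \chi^+(\P\M') T_A$ and using that $\chi^+(\P\M)$ is the identity on $\Hdot^{s,+}_{\P\M}$ while $N_\bullet T_A = \mathrm{Id}$, one obtains $\Phi_{A'} - \mathrm{Id} = N_\bullet(\chi^+(\P\M') - \chi^+(\P\M)) T_A$, of norm $O(\|A-A'\|_\infty \|T_A\|)$. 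Inverting $\Phi_{A'}$ by Neumann series yields the bounded right inverse
\begin{align*}
 T_{A'} := \chi^+(\P\M') T_A \Phi_{A'}^{-1}
\end{align*}
of $N_\bullet|_{\Hdot^{s,+}_{\P\M'}}$, while injectivity follows by running the same perturbation on $\chi^+(\P\M) h'$ for $h' \in \Hdot^{s,+}_{\P\M'}$ annihilated by $N_\bullet$: both $\|\chi^+(\P\M)h'\|$ (via $T_A$ applied to $N_\bullet \chi^+(\P\M)h' = N_\bullet(\chi^+(\P\M) - \chi^+(\P\M'))h'$) and $\|h' - \chi^+(\P\M)h'\|$ are $O(\|A-A'\|_\infty \|h'\|)$, forcing $h' = 0$. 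Lipschitz continuity of $T_{A'}$ in $A'$ is read off from the explicit product formula, each factor being Lipschitz in $A'$.

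Finally, compatible well-posedness transfers via Proposition~\ref{prop:cwp}: the coercivity \eqref{eq:coer} is stable under small $\L^\infty$-perturbations, so the variational method of Section~\ref{sec:energy} yields a unique energy solution for $\Lop'$, and performing the same Neumann-series construction at $s = -1/2$ identifies this energy solution with $T_{A'}$ at that level. By the semigroup uniqueness of Theorem~\ref{thm:sob} applied to data in $(\Hdot^s_{\P})_\bullet \cap (\Hdot^{-1/2}_{\P})_\bullet$, and because the compatibility hypothesis on $T_A$ gives agreement of $T_A^s$ and $T_A^{-1/2}$ on this intersection, the inverses $T_{A'}^s$ and $T_{A'}^{-1/2}$ must coincide there as well. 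The only genuinely analytic step in the whole scheme is the extension of \eqref{eq:lipschitz} to $\Hdot^s_{\P}$ via Lemma~\ref{lem:identification}; the rest is abstract perturbation theory that applies verbatim from the elliptic setting.
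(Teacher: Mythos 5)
Your proposal matches the paper's approach: the paper likewise reduces the theorem to the abstract perturbation result \cite[Thm.~7.16]{Amenta-Auscher} after noting that the only hypothesis to verify is the local Lipschitz estimate for the functional calculus of $\P\M$ on $\Hdot^s_{\P}$, obtained by extending \eqref{eq:lipschitz} to the Sobolev scale via Lemma~\ref{lem:identification}. You additionally write out the Neumann-series mechanism underlying that abstract result, which is a correct and useful elaboration but not a different route.
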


Finally, the Lipschitz estimates \eqref{eq:lipschitz}, extended to $\Hdot^s_{\P}$ as explained above, allow us to use the method of continuity to perturb from any equation to any other within arcwise connected components. Usually, we perturb from the heat equation. We record this observation in the following result to be proved in Section~\ref{sec:continuitymethod proof}. We say a bounded operator has \emph{lower bounds} if $\|Tf\| \gtrsim \|f\|$ holds for all $f$.

\begin{prop}
\label{prop:continuitymethod}
Let $s\in [-1,0]$. Let $\mathbb{A}$ be a class of matrices $A(x,t)$ with uniform ellipticity bounds that is arcwise connected. If this class contains one matrix $A_{0}$ for which the problem $(\BVP)_{\mE_{s}}^{\Lop_{0}}$ is (compatibly) well-posed and if all the corresponding maps $N_{\bullet}$ have lower bounds on the trace space $\Hdot^{s,+}_{\P\M}$ for each $A \in \mathbb{A}$,
then $(\BVP)_{\mE_{s}}^{\Lop}$ is (compatibly) well-posed for all $A\in \mathbb{A}$.
\end{prop}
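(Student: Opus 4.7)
The plan is to run the classical method of continuity in the operator-theoretic framework set up by Theorem~\ref{thm:wpequiv} and Proposition~\ref{prop:cwp}. I would fix $A_1 \in \mathbb{A}$ and, using arcwise connectedness, choose a continuous path $\tau \mapsto A_\tau$ from $A_0$ to $A_1$ in $\L^\infty$. Let $T \subset [0,1]$ be the set of $\tau$ for which $(\BVP)_{\mE_s}^{\Lop_\tau}$ is (compatibly) well-posed. By hypothesis $0 \in T$, and the goal is to show that $T$ is clopen in $[0,1]$, since connectedness will then force $T=[0,1]$ and in particular $1\in T$. Openness of $T$ is a direct consequence of Theorem~\ref{thm:sta} combined with the continuity of the path in $\L^\infty$.

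The heart of the argument is closedness. Suppose $\tau_k \to \tau$ with $\tau_k \in T$. By Theorem~\ref{thm:wpequiv}, it suffices to verify that $N_\bullet : \Hdot^{s,+}_{\P\M_\tau} \to (\Hdot^{s}_{\P})_{\bullet}$ is an isomorphism; the standing lower-bound hypothesis already gives injectivity with closed range, so only surjectivity remains. The key input is that $\chi^+$ is bounded holomorphic on every open double sector $\S_{\mu}$, so the Lipschitz bound \eqref{eq:lipschitz}---extended from $s=0$ to all $s \in [-1,0]$ via Lemma~\ref{lem:identification}---delivers operator-norm convergence of the spectral projections $P_{\tau_k} := \chi^+(\P\M_{\tau_k}) \to P_\tau := \chi^+(\P\M_\tau)$ on $\Hdot^s_{\P}$. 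Standard projection-perturbation theory then ensures that, for $k$ large, the restriction $U_k := P_{\tau_k}|_{\Hdot^{s,+}_{\P\M_\tau}}$ is a bounded isomorphism onto $\Hdot^{s,+}_{\P\M_{\tau_k}}$, arbitrarily close to the inclusion in operator norm.

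Next I would combine this perturbation estimate with the lower bound at $\tau$ to propagate a \emph{uniform} lower bound for $N_\bullet$ on $\Hdot^{s,+}_{\P\M_{\tau_k}}$ for all large $k$: writing $g = U_k h$ with $h \in \Hdot^{s,+}_{\P\M_\tau}$ and noting $P_\tau h = h$,
\begin{equation*}
\|N_\bullet g\| \geq \|N_\bullet h\| - \|N_\bullet\|\,\|P_{\tau_k}-P_\tau\|\,\|h\| \geq \tfrac{c_\tau}{2}\|h\| \gtrsim \|g\|,
\end{equation*}
which yields uniform control of $(N_\bullet|_{\Hdot^{s,+}_{\P\M_{\tau_k}}})^{-1}$. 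Given $f \in (\Hdot^s_\P)_\bullet$, setting $g_k := (N_\bullet|_{\Hdot^{s,+}_{\P\M_{\tau_k}}})^{-1}f$ and $h_k := U_k^{-1} g_k \in \Hdot^{s,+}_{\P\M_\tau}$ produces $N_\bullet U_k h_k = f$ with $\|h_k\|_{\Hdot^s_\P}$ uniformly bounded. Extracting a weakly convergent subsequence $h_k \rightharpoonup h$ in the Hilbert space $\Hdot^{s,+}_{\P\M_\tau}$ and using the norm convergence $N_\bullet U_k \to N_\bullet|_{\Hdot^{s,+}_{\P\M_\tau}}$, I obtain $N_\bullet h = f$, which gives surjectivity and thus $\tau \in T$.

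For the compatible version, the $h_k$ just constructed coincide with the energy solutions at $\tau_k$ by compatible well-posedness at $s=-1/2$, and the coercivity estimate \eqref{eq:coer} underlying the energy method is uniform in $A$, so the same weak-limit argument can be run in parallel in $\Hdot^{-1/2}_\P$. Uniqueness of the energy solution at $\tau$ then identifies $h$ with the energy solution on the common data space, which is exactly the compatibility required by Proposition~\ref{prop:cwp}. I expect the main obstacle to lie in securing the uniform lower bound along the path in the closedness step; this is resolved entirely by the quantitative Lipschitz dependence of the spectral projections on the coefficient matrix provided by \eqref{eq:lipschitz} and Lemma~\ref{lem:identification}.
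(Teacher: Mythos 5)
Your proof is correct and takes essentially the same method-of-continuity approach as the paper: both exploit the Lipschitz dependence of the spectral projections $\chi^+(\P\M_\tau)$ on $\M_\tau$ (via~\eqref{eq:lipschitz} and Lemma~\ref{lem:identification}) to identify the $\tau$-dependent trace spaces and to propagate the lower-bound hypothesis along the path, and both handle compatibility by running the argument in $\Hdot_\P^{-1/2}$ in parallel (the paper works directly on the intersection space $\Hdot_\P^s \cap \Hdot_\P^{-1/2}$, which avoids having to match the two weak limits). A minor slip: the vectors that coincide with the energy-solution traces at $\tau_k$ are the $g_k \in \Hdot^{s,+}_{\P\M_{\tau_k}}$, not the transported $h_k = U_k^{-1}g_k$, though this does not affect the argument.
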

\subsection{The backward equation and duality results}
\label{sec:backward}

The  equation dual to \eqref{eq1} is the backward equation
\begin{align}
\label{eq1*}
\Lop^* v= -\partial_t v -\div_{X} A^*(X,t)\nabla_{X} v = 0,
\end{align}
where $A^*$ is the (complex) adjoint of $A$. Note the sign change in front of the time derivative. This equation can also be rephrased as a first order system. However, in this case it will be convenient to use a different representation to simplify the presentation of the duality results.

We use the \emph{backward (parabolic) conormal differential}
\begin{equation*}
 \pcgb v(\lambda, x,t) := \begin{bmatrix} \dnuAstar v(\lambda, x,t) \\ \gradx v(\lambda, x,t) \\ \dhalf v(\lambda, x,t) \end{bmatrix},
\end{equation*}
where the difference compared to $D_{\!A^*} v$ is in the third component. With this choice of conormal differential, the analogue of Theorem~\ref{thm:correspondence} is that there is, up to constants, a correspondence between reinforced weak solutions $v$ to \eqref{eq1*} and weak solutions $G\in \wt{\cH}_{\loc}:= \Lloc^2(\R_{+}; \clos{\ran(P^*)})$ to the first order system
\begin{equation}
\label{eq:diffeq2*}
 \pd_{\lambda} G + \P^* \wM G=0.
\end{equation}
Here $\wM= N\M^* N$, where $\M^*$ is the complex adjoint of $\M$ in the standard duality of $\IC^{n+2}$ and
\begin{equation}
\label{eq:N}
 \N= {\begin{bmatrix} -1 & 0& \vphantom{\dhalf}0 \\ 0 & \id& 0 \\ \vphantom{\dhalf}0& 0& 1 \end{bmatrix}}.
\end{equation}
Note that this is an operator in the same class as $M$. In the case of $\lambda$-independent coefficients, the identity $\P^* \wM= ( (\wM)^*\P)^*$ shows that bisectoriality of $\P^* \wM$ on $\mH$ and the bounded holomorphic functional calculus inherit from $(\wM)^*\P$. The same analysis as developed above applies to \eqref{eq:diffeq2*}. It is only when coming back to \eqref{eq1*} that we have to use a different correspondence.

As the reversal of time maps solutions of the backward equation $\Lop^*v=0$ to a solution of a forward equation with coefficients $A^*(x,-t)$, we also have reverse H\"older inequalities for reinforced weak solutions of the backward equation. At this occasion let us recall that the reverse H\"older inequalities of Theorem~\ref{thm:rh} already contain both $\HT\dhalf v$ and $ \dhalf v$ on the left hand side. All other results for the backward equation, including non-tangential estimates as in Theorem~\ref{thm:NTmax}, are obtained by literally repeating the arguments provided for the forward equation and we shall not give further details.

With the construction outlined above we can formulate duality results for the boundary value problems.

\begin{thm}
\label{thm:duality}
Let $\Lop$ and $\Lop^*$ be as above with $\lambda$-independent coefficients and let $-1\le s\le 0$. Then $(\BVP)_{\mE_{s}}^\Lop$ is (compatibly) well-posed if and only if $(\BVP)_{\mE_{-1-s}}^{\Lop^*}$ is (compatibly) well-posed.
\end{thm}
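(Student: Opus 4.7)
The plan is to reduce (compatible) well-posedness on the two sides to a spectral isomorphism question via Theorem~\ref{thm:wpequiv} and Proposition~\ref{prop:cwp}, and then to match the two through a non-degenerate sesquilinear pairing between the relevant spectral subspaces of $\P \M$ and $\P^* \wM$. The argument parallels the proof of the elliptic analog in \cite[Thm.~7.8]{Amenta-Auscher}, modified to accommodate the non-self-adjointness of the parabolic Dirac operator $\P$.

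First I would apply Theorem~\ref{thm:wpequiv} together with its backward counterpart discussed in Section~\ref{sec:backward} to rephrase both sides: $(\BVP)^{\Lop}_{\mE_s}$ is well-posed iff the projection $N_{\bullet} : \Hdot^{s,+}_{\P \M} \to (\Hdot^{s}_{\P})_{\bullet}$ is an isomorphism, and $(\BVP)^{\Lop^*}_{\mE_{-1-s}}$ is well-posed iff $\wt N_{\bullet} : \Hdot^{-1-s,+}_{\P^* \wM} \to (\Hdot^{-1-s}_{\P^*})_{\bullet}$ is, with the same $\bullet \in \{\pe, r\}$ determined by $\BVP$. For compatible well-posedness, Proposition~\ref{prop:cwp} adds the requirement that the inverse agree with the energy solution at $s = -1/2$; since the involution $s \mapsto -1-s$ fixes $-1/2$, the compatibility condition is self-dual and transfers automatically under any duality between the two sides.

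The heart of the argument is the construction of a non-degenerate sesquilinear pairing
\[ B \,:\, \Hdot^{s,+}_{\P \M} \times \Hdot^{-1-s,+}_{\P^* \wM} \to \IC \]
under which $N_{\bullet}$ and $\wt N_{\bullet}$ become transpose to one another. Three ingredients combine to produce $B$. First, the $L^2$ duality between the base Sobolev scales $\Hdot^{s}_{\P}$ and $\Hdot^{-s}_{\P^*}$, realized concretely via Lemma~\ref{lem:identification} and the parabolic Sobolev spaces $\Hdot^{s/2}_{\pm \pd_{t} - \Delta_{x}}$. Second, the algebraic identities $N \P^* N = -\P^*$ and $N \wM N = \M^*$, which together give $\P^* \wM = -N (\M \P)^* N$ and, via the bounded holomorphic functional calculus of Theorem~\ref{thm:bhfc}, intertwine the spectral projections of $\P^* \wM$ with those of $(\M \P)^* = \P^* \M^*$. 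Third, the Sobolev-shift isomorphism $\P^* : \Hdot^{-s}_{\P^*} \to \Hdot^{-1-s}_{\P^*}$, which accounts for the $-1$ appearing in the index shift and which, in view of the off-diagonal block structure of $\P^*$ (with $\divx, \HT \dhalf$ in the first row and $-\gradx, -\dhalf$ in the first column), exchanges perpendicular and tangential/time components. At the boundary-data level this pairing admits a transparent interpretation from the Green identity
\[ \int_\ree (\dnuA u|_{\lambda = 0}) \cdot \overline{v|_{\lambda = 0}} \d x \d t \;=\; \int_\ree u|_{\lambda = 0} \cdot \overline{\dnuAstar v|_{\lambda = 0}} \d x \d t, \]
valid for reinforced weak solutions $u$ of $\Lop u = 0$ and $v$ of $\Lop^* v = 0$ with appropriate decay at $\lambda \to \infty$. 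Combining the ingredients so that the component swap produced by the $\P^*$-shift fits correctly with the component-preserving $L^2$ pairing yields a transpose identity relating $N_{\bullet}$ at index $s$ to $\wt N_{\bullet}$ at index $-1-s$, from which the equivalence of isomorphism on the two sides follows by standard functional analysis.

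The main technical obstacle will be to verify non-degeneracy of $B$ on the spectral subspaces and to check, for each $\bullet \in \{\pe, r\}$ separately, that the transpose identity matches the projections as claimed. The non-self-adjointness of $\P$ (so $\wM \ne \M^*$ and the backward Dirac operator is $\P^* \wM$ rather than $(\P \M)^*$) prevents a direct transport of the elliptic argument, and the interplay between the spectral calculi of $\P \M$, $(\P \M)^*$, $\M \P$, $(\M \P)^*$, and $\P^* \wM$ must be tracked carefully. Once $B$ is in place, the (compatible) equivalence of well-posedness on the two sides is formal, and extrapolation and interpolation (Theorems~\ref{thm:extra} and~\ref{thm:inter}) may be invoked to handle any delicate endpoints.
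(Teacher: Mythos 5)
Your overall plan follows the paper's route (Theorem~\ref{thm:wpequiv}, Proposition~\ref{prop:cwp}, and a $\beta$-type pairing built from the $L^2$ duality, the anti-commutation $N\P^*=-\P^*N$, and the Sobolev-shift isomorphism $\P^*$), but there is a concrete error in the middle step that a direct "transpose" argument cannot survive.

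You propose a non-degenerate sesquilinear pairing $B:\Hdot^{s,+}_{\P\M}\times \Hdot^{-1-s,+}_{\P^*\wM}\to\IC$ making $N_\bullet$ and $\wt N_\bullet$ transposes. No such pairing exists compatibly with the natural $\beta$-duality: precisely because $N$ anti-commutes with $\P^*$, the pairing $\beta(f,g)=(f,N\,{\P^*}^{-1}g)$ puts $\Hdot^{s,+}_{\P\M}$ in duality with $\Hdot^{-1-s,\emph{minus}}_{\P^*\wM}$, and the pairs $(\Hdot^{s,+}_{\P\M},\Hdot^{-1-s,+}_{\P^*\wM})$ are \emph{orthogonal}, as are the pairs of like-indexed components $((\Hdot^s_{\P})_\pe,(\Hdot^{-1-s}_{\P^*})_\pe)$ and $((\Hdot^s_{\P})_r,(\Hdot^{-1-s}_{\P^*})_r)$ (the $\P^*$-shift swaps $\pe$ and $r$). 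Your claimed $B$ is therefore trivial, and the two projections you want to be transpose to each other are not transpose of each other under the correct pairing either.

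What the correct pairing actually gives is that the $\beta$-adjoint of $N_\pe:\Hdot^{s,+}_{\P\M}\to(\Hdot^s_\P)_\pe$ is the \emph{negative} spectral projection $\chi^-(\P^*\wM):(\Hdot^{-1-s}_{\P^*})_r\to\Hdot^{-1-s,-}_{\P^*\wM}$ (note the sign flip on the spectral space and the $\pe\leftrightarrow r$ swap on the component), not $\wt N_\pe$ at index $-1-s$. To bridge from invertibility of this map to invertibility of $\wt N_\pe:\Hdot^{-1-s,+}_{\P^*\wM}\to(\Hdot^{-1-s}_{\P^*})_\pe$, one needs an additional purely algebraic result about two pairs of complementary projections, as in \cite[Lem.~13.2]{Auscher-Mourgoglou}. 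This step is not formal and is absent from your argument; without it, the equivalence does not close. (Also, Theorems~\ref{thm:extra} and~\ref{thm:inter} play no role here — the paper's proof handles all $s\in[-1,0]$ directly and needs no extrapolation or interpolation.) The reduction via Theorem~\ref{thm:wpequiv} and the self-duality of the compatibility condition at $s=-1/2$ are correct, and your identification of the three algebraic ingredients of the pairing is accurate; what is missing is recognizing that they produce a $+\leftrightarrow-$, $\pe\leftrightarrow r$ cross-duality rather than a direct transpose, and then supplying the complementary-projection lemma to convert.
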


{In a similar spirit we obtain a generalisation of Green's formula, which is of independent interest and worth stating as it avoids integration by parts. Recall that the dual of $\Hdot^{s/2}_{\pd_{t}-\Delta_{x}}$ can be identified with $\Hdot^{-s/2}_{-\pd_{t}-\Delta_{x}}$, which coincides with $\Hdot^{-s/2}_{\pd_{t}-\Delta_{x}}$ up to equivalent norms, and the duality pairing extends the inner product on $\L^2(\ree)$}.

\begin{prop}\label{prop:green} Let $-1\le s\le 0$. For any weak solution $u$ to \eqref{eq1} with $\pcg u \in \mE_{s}$
 and any weak solution $w$ to \eqref{eq1*} with $\pcgb w\in \mE_{-s-1}$ it holds
 \begin{equation}
\label{eq:green}
\dual {\partial_{\nu_{A}}u|_{\lambda=0}} {w|_{\lambda=0}} = \dual {u|_{\lambda=0}}{\partial_{\nu_{A^*}}w|_{\lambda=0}} .
\end{equation}
Here, the first pairing is the $\dual{\Hdot^{s/2}_{\pd_{t}-\Delta_{x}}}{ \Hdot^{-s/2}_{-\pd_{t}-\Delta_{x}}}$ sesquilinear duality while the second one is the $\dual { \Hdot^{s/2+1/2}_{\pd_{t}-\Delta_{x}}}{ \Hdot^{-s/2-1/2}_{-\pd_{t}-\Delta_{x}}}$ sesquilinear duality. \end{prop}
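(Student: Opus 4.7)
The plan is to establish \eqref{eq:green} first in the energy case $s=-1/2$ by a direct variational computation, and then to extend it to all $s\in[-1,0]$ by a density argument exploiting the bounded holomorphic functional calculus for $\P\M$ and $\P^*\wM$.

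In the energy case ($s=-1/2$), both $u$ and $w$ lie in $\dot{\mE}$, the latter as an energy solution of the backward equation $\Lop^*w=0$. The integration-by-parts identity \eqref{eq:IBP}, justified by the $\C([0,\infty);\Hdot_{\P}^{-1/2})$-continuity of $\pcg u$ coming from Theorem~\ref{thm:sob} at $s=-1/2$, gives
\begin{equation*}
  \iiint_{\reu} A\gradlamx u\cdot\cl{\gradlamx w}+\HT\dhalf u\cdot\cl{\dhalf w}\,\d\lambda\d x\d t=-\dual{\dnuA u|_{\lambda=0}}{w|_{\lambda=0}}.
\end{equation*}
The backward analogue, obtained by testing $\Lop^*w=0$ against $u$, differs only by replacing $A$ with $A^*$ and $\HT$ with $-\HT$. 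Taking its complex conjugate, the elliptic bulk transforms via the pointwise identity $\cl{A^*\xi\cdot\cl\eta}=A\eta\cdot\cl\xi$, while the time bulk matches via $\cl{\HT f}=\HT\cl f$ together with the skew-symmetry $\HT^*=-\HT$. The two bulk integrals thus coincide, and subtracting yields $\dual{\dnuA u|_{\lambda=0}}{w|_{\lambda=0}}=\cl{\dual{\dnuAstar w|_{\lambda=0}}{u|_{\lambda=0}}}=\dual{u|_{\lambda=0}}{\dnuAstar w|_{\lambda=0}}$, which is~\eqref{eq:green}.

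For general $s\in[-1,0]$, identify $\pcg u$ and $\pcgb w$ with their boundary traces $h\in\Hdot^{s,+}_{\P\M}$ and $g\in\Hdot^{-s-1,+}_{\P^*\wM}$ via Theorem~\ref{thm:sob} and its backward analogue from Section~\ref{sec:backward}. Let $\eta_{\varepsilon}(z):=\e^{-\varepsilon[z]}(1-\e^{-[z]/\varepsilon})$ be a bounded holomorphic cutoff on the bisector vanishing both at $0$ and at $\infty$, and set $h_{\varepsilon}:=\eta_{\varepsilon}(\P\M)h$, $g_{\varepsilon}:=\eta_{\varepsilon}(\P^*\wM)g$. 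By Theorem~\ref{thm:bhfc}, the products $\eta_{\varepsilon}(z)[z]^{\pm 1/2}$ are bounded on the bisector, so $h_{\varepsilon}\in\Hdot^{-1/2,+}_{\P\M}$ and $g_{\varepsilon}\in\Hdot^{-1/2,+}_{\P^*\wM}$; hence the associated solutions $u_{\varepsilon},w_{\varepsilon}$ are energy solutions and \eqref{eq:green} applies to the pair $(u_{\varepsilon},w_{\varepsilon})$ by the previous step. Simultaneously $h_{\varepsilon}\to h$ and $g_{\varepsilon}\to g$ in $\Hdot^{s,+}_{\P\M}$ and $\Hdot^{-s-1,+}_{\P^*\wM}$ respectively as $\varepsilon\to 0$.

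To close the argument, both bilinear forms in \eqref{eq:green} must be shown continuous on $\Hdot^{s,+}_{\P\M}\times\Hdot^{-s-1,+}_{\P^*\wM}$. The perpendicular traces $h_{\pe}=\dnuA u|_{\lambda=0}$ and $g_{\pe}=\dnuAstar w|_{\lambda=0}$ inherit continuity in $\Hdot^{s/2}_{\pd_{t}-\Delta_{x}}$ and $\Hdot^{-s/2-1/2}_{\pd_{t}-\Delta_{x}}$ from the componentwise embeddings $\Hdot^{s}_{\P}\hookrightarrow(\Hdot^{s/2}_{\pd_{t}-\Delta_{x}})^{n+2}$ and $\Hdot^{-s-1}_{\P^*}\hookrightarrow(\Hdot^{-s/2-1/2}_{\pd_{t}-\Delta_{x}})^{n+2}$ recalled in Section~\ref{sec:sobolev}, while the tangential parts $u|_{\lambda=0}$ and $w|_{\lambda=0}$, reconstructed modulo constants from $(\gradx u,\HT\dhalf u)|_{\lambda=0}=(h_{\pa},h_{\te})$ and $(\gradx w,\HT\dhalf w)|_{\lambda=0}=(g_{\pa},-\HT g_{\te})$ via the norm equivalence recalled in Section~\ref{sec:energy}, depend continuously on $h,g$ in $\Hdot^{s/2+1/2}_{\pd_{t}-\Delta_{x}}$ and $\Hdot^{-s/2}_{\pd_{t}-\Delta_{x}}$. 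Passing to the limit $\varepsilon\to 0$ in \eqref{eq:green} for $(u_{\varepsilon},w_{\varepsilon})$ then yields the identity in full generality. The main obstacle lies in this continuity step: the additive constants arising in $u|_{\lambda=0}$ and $w|_{\lambda=0}$ are harmless because for $s\in(-1,0)$ the relevant homogeneous parabolic Sobolev spaces are defined modulo constants and their duals annihilate them, whereas at the endpoints $s=0$ and $s=-1$ the constants are pinned down by the integrability of the endpoint traces (cf.~Theorem~\ref{thm:chardir}).
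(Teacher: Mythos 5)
Your proposal is correct, but it takes a genuinely different route from the paper. The paper proves \eqref{eq:green} as an immediate consequence of the simultaneous duality of Lemma~\ref{lem:duality}: setting $h = \pcg u|_{\lambda=0} \in \Hdot^{s,+}_{\P\M}$ and $g = \pcgb w|_{\lambda=0} \in \Hdot^{-s-1,+}_{\P^*\wM}$, the orthogonality of the positive spectral spaces in the pairing $\beta(h,g) = (h, N\P^{*-1}g)$ gives $(h, N\P^{*-1}g)=0$, and one then merely identifies the $\pe$- and $r$-components of this inner product with the two boundary pairings in \eqref{eq:green}; no approximation is needed and no distinction between $s=-1/2$ and general $s$ arises. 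Your proof instead establishes the formula concretely at $s=-1/2$ by conjugating the backward variational identity and matching the two bulk integrals (your bookkeeping of the $A \leftrightarrow A^*$, $\HT\leftrightarrow -\HT$ exchange and the skew-adjointness of $\HT$ is correct), and then transports it to all $s\in[-1,0]$ by a functional-calculus regularisation. This is essentially the approximation route that the paper attributes to \cite[Thm.~1.7]{Auscher-Mourgoglou} and chooses not to present. What the paper's duality argument buys is uniformity and brevity: one identity in the abstract pairing, valid for all $s$ at once, with the Hardy-space structure doing all the work. What your argument buys is transparency: the computation at $s=-1/2$ makes the algebraic origin of \eqref{eq:green} visible, and the density step is routine once one records (as you do) that the regularisers $\eta_\varepsilon(\P\M)$, $\eta_\varepsilon(\P^*\wM)$ preserve the positive spectral spaces, converge strongly to the identity on $\Hdot^s_{\P\M}$ (resp.\ $\Hdot^{-s-1}_{\P^*\wM}$), and map into $\Hdot^{-1/2,\pm}$, while the boundary trace maps $h\mapsto h_\pe$ and $h\mapsto u|_{\lambda=0}$ are continuous in the stated topologies by the componentwise identification $\Hdot^s_\P \hookrightarrow (\Hdot^{s/2}_{\pd_t-\Delta_x})^{n+2}$. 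Both proofs are complete; yours is longer but self-contained, the paper's is a one-line corollary of machinery it has built anyway.
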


The proof of these two results will be given in Section~\ref{sec:charWPDuality}.
\subsection{The role of \texorpdfstring{$\sgn(\P \M)$}{sgn(PM)}}
\label{sec:sgnPM results}

In Section~\ref{sec:char} we have characterised well-posedness separately for each BVP on each parabolic half-space by means of the spectral projections $\chi^\pm(\P \M)$ and the coordinate projections $N_\pe$, $N_r$. In contrast, most classical methods to solving BVPs, such as the method of layer potentials~\cite{N2, CNS, V}, the closely related Rellich estimates, or the study of Dirichlet-to-Neumann maps, usually work equally well on both half-spaces and yield well-posedness of either all boundary value problems \emph{simultaneously} -- or none. We shall see that the general theory presented here comprises all these approaches.

This link becomes apparent by studying the operator $\sgn(\P \M) = \chi^+(\P \M) - \chi^-(\P \M)$, which by the bounded holomorphic functional calculus is an involution on $\dot \H_{\P \M}^s = \dot \H_P^s$ for $-1 \leq s \leq 0$. Gathering again $\pa$- and $\te$-components of vectors in these spaces in the single component $r$, we represent $\sgn(\P\M)$ by a $(2\times2)$-matrix
\begin{align}
\label{eq:rep sgn}
 \sgn(\P \M)
 := \begin{bmatrix} s_{\pe \pe}(\P \M) & s_{\pe r}(\P \M) \\ s_{r \pe}(\P \M) & s_{r r}(\P \M) \end{bmatrix}.
\end{align}
Since we have $\chi^\pm = \frac{1}{2} (1 \pm \sgn)$, the four components of $\sgn(\P \M)$ reappear in the representations of the spectral projections as
\begin{align}
\label{eq:rep chi}
\begin{split}
 \chi^+(\P\M)&= \frac 1 2\begin{bmatrix}
 1+s_{\pe\pe}(\P \M)     & s_{\pe r}(\P \M)    \\
  s_{r\pe}(\P \M)    &  1+s_{rr}(\P \M)
\end{bmatrix},  \\
\chi^-(\P\M)&= \frac 1 2\begin{bmatrix}
 1-s_{\pe\pe}(\P \M)     & -s_{\pe r}(\P \M)    \\
  -s_{r\pe}(\P \M)    &  1-s_{rr}(\P \M)
\end{bmatrix}
\end{split}
\end{align}
and in view of Section~\ref{sec:char}, well-posedness of the BVPs on any of the two half-spaces has to be encoded in the six maps $1 \pm s_{\pe \pe}(\P \M)$, $s_{\pe r}(\P\M)$, $s_{r \pe}(\P\M)$, $1 \pm s_{r r}(\P \M)$. This can be made precise as follows.

\begin{thm}
\label{thm:six invertibilities}
Let $s \in [-1,0]$. The following hold true.
\begin{enumerate}
 \item $(R)_{\mE_s}^\Lop$ is well-posed on \emph{both} half-spaces if and only if $s_{r \pe}(\P \M): (\Hdot_P^s)_\pe \to (\Hdot_P^s)_r$ is invertible.
 \item $(N)_{\mE_s}^\Lop$ \!is well-posed on \emph{both} half-spaces if and only if $s_{\pe r}(\P \M): (\Hdot_P^s)_r \to (\Hdot_P^s)_\pe$ is invertible.
 \item The two operators $s_{r \pe}(\P \M): (\Hdot_P^s)_\pe \to (\Hdot_P^s)_r$, $s_{\pe r}(\P \M): (\Hdot_P^s)_r \to (\Hdot_P^s)_\pe$ are invertible if and only if the four operators $1\pm s_{\pe \pe}(\P\M): (\Hdot_P^s)_\pe  \to (\Hdot_P^s)_\pe$, $1\pm s_{r r}(\P\M): (\Hdot_P^s)_r  \to (\Hdot_P^s)_r$ are invertible.
 \item In particular, well-posedness of $(R)_{\mE_s}^\Lop$ \emph{and} $(N)_{\mE_s}^\Lop$ on \emph{both} half-spaces is equivalent to simultaneous invertibility of the six operators $1 \pm s_{\pe \pe}(\P \M)$, $s_{\pe r}(\P\M)$, $s_{r \pe}(\P\M)$, $1 \pm s_{r r}(\P \M)$ acting between the respective components of $\Hdot_P^s$.
\end{enumerate}
\end{thm}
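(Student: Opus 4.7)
The plan is to combine Theorem~\ref{thm:wpequiv} (well-posedness on the upper half-space being equivalent to $N_{r}$, respectively $N_{\pe}$, being an isomorphism from $\Hdot^{s,+}_{\P\M}$ onto the corresponding component of $\Hdot^{s}_{\P}$) with its counterpart for the lower half-space, in which the spectral space is replaced by $\Hdot^{s,-}_{\P\M}$. The only algebraic input will be the involution identity $\sgn(\P\M)^{2}=I$ on $\Hdot^{s}_{\P}$, which in the block form~\eqref{eq:rep sgn} yields the four relations
\begin{equation*}
 s_{\pe\pe}^{2}+s_{\pe r}s_{r\pe}=I,\quad s_{r\pe}s_{\pe r}+s_{rr}^{2}=I,\quad s_{\pe\pe}s_{\pe r}+s_{\pe r}s_{rr}=0,\quad s_{r\pe}s_{\pe\pe}+s_{rr}s_{r\pe}=0,
\end{equation*}
with every block evaluated at $\P\M$. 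From the last two I will repeatedly invoke the intertwining $(I\mp s_{rr})s_{r\pe}=s_{r\pe}(I\pm s_{\pe\pe})$ and its companion with the roles of $\pe$ and $r$ exchanged.

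For (i), the membership $h\in\Hdot^{s,\pm}_{\P\M}$ rewrites componentwise as
\begin{equation*}
 (I\mp s_{\pe\pe}(\P\M))h_{\pe}=\pm\, s_{\pe r}(\P\M)h_{r},\qquad (I\mp s_{rr}(\P\M))h_{r}=\pm\, s_{r\pe}(\P\M)h_{\pe}.
\end{equation*}
If $s_{r\pe}(\P\M)$ is invertible, the second equation uniquely determines $h_{\pe}$ from $h_{r}$, and combining the intertwining above with $s_{\pe\pe}^{2}+s_{\pe r}s_{r\pe}=I$ shows the first equation is then automatic, yielding isomorphisms $N_{r}|_{\Hdot^{s,\pm}_{\P\M}}$. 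Conversely, assuming both isomorphisms, let $T_{\pm}:(\Hdot^{s}_{\P})_{r}\to(\Hdot^{s}_{\P})_{\pe}$ send $h_{r}$ to the $\pe$-component of its unique preimage in $\Hdot^{s,\pm}_{\P\M}$. Decomposing an arbitrary vector $(h_{\pe},0)$ along the topological splitting $\Hdot^{s}_{\P}=\Hdot^{s,+}_{\P\M}\oplus\Hdot^{s,-}_{\P\M}$ identifies $T_{+}-T_{-}$ as an isomorphism onto $(\Hdot^{s}_{\P})_{\pe}$, and summing the two $r$-equations above gives $s_{r\pe}(\P\M)\cdot\tfrac{1}{2}(T_{+}-T_{-})=I$. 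Hence $s_{r\pe}(\P\M)$ admits the bounded two-sided inverse $2(T_{+}-T_{-})^{-1}$. Item (ii) follows verbatim upon swapping the roles of $\pe$ and $r$ and replacing $N_{r}$, $s_{r\pe}(\P\M)$ by $N_{\pe}$, $s_{\pe r}(\P\M)$.

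For (iii), the two diagonal blocks of $\sgn(\P\M)^{2}=I$ read
\begin{equation*}
 s_{\pe r}(\P\M)s_{r\pe}(\P\M)=(I-s_{\pe\pe}(\P\M))(I+s_{\pe\pe}(\P\M)),\quad s_{r\pe}(\P\M)s_{\pe r}(\P\M)=(I-s_{rr}(\P\M))(I+s_{rr}(\P\M)).
\end{equation*}
The two factors on each right-hand side commute (each pair consists of linear polynomials in a single $s_{\bullet\bullet}$), so each product is invertible if and only if both of its factors are. Thus joint invertibility of $s_{r\pe}(\P\M)$ and $s_{\pe r}(\P\M)$ forces invertibility of all four operators $I\pm s_{\pe\pe}(\P\M)$, $I\pm s_{rr}(\P\M)$; conversely, invertibility of those four makes both products invertible, which gives injectivity plus surjectivity, and hence, by the open mapping theorem, bounded invertibility of each of $s_{r\pe}(\P\M)$ and $s_{\pe r}(\P\M)$. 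Item (iv) is then the conjunction of (i), (ii) and (iii).

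The only real hazard is organisational: apart from polynomials in a single $s_{\bullet\bullet}$, none of the blocks commute, so care has to be taken to apply each intertwining identity from the correct side. Once the four relations extracted from $\sgn(\P\M)^{2}=I$ are displayed at the outset, the entire proof reduces to a sequence of short algebraic checks.
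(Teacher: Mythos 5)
Your proof is correct. For parts (iii) and (iv) you take exactly the paper's route: read off the two diagonal entries of $\sgn(\P\M)^2=\id$ to factor $s_{\pe r}s_{r\pe}$ and $s_{r\pe}s_{\pe r}$ into commuting products, then pass invertibility back and forth between a commuting product and its factors. For parts (i) and (ii), however, you diverge from the paper. The paper first reduces well-posedness on both half-spaces to invertibility of $N_r:\Hdot^{s,\pm}_{\P\M}\to(\Hdot^s_\P)_r$ (via Theorem~\ref{thm:wpequiv}), and then invokes an \emph{external} abstract lemma on pairs of complementary projections \cite[Lem.~13.6]{Auscher-Mourgoglou} to convert that double condition into invertibility of the single composite $N_r\chi^+(\P\M)i_\pe=\tfrac12 s_{r\pe}(\P\M)$. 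You instead prove the same equivalence from scratch: you characterise membership in $\Hdot^{s,\pm}_{\P\M}$ componentwise, use the off-diagonal intertwining $(\id\mp s_{rr})s_{r\pe}=s_{r\pe}(\id\pm s_{\pe\pe})$ together with $s_{\pe\pe}^2+s_{\pe r}s_{r\pe}=\id$ to show one direction, and in the other direction you introduce the preimage maps $T_\pm$ and verify $s_{r\pe}\cdot\tfrac12(T_+-T_-)=\id$ with $T_+-T_-$ bijective. The advantage of your argument is that it is self-contained and keeps all the algebra visible; the cost is that it rederives by hand a fact the paper gets for free by citing the projection lemma. Both are fully rigorous, and the end result is the same.
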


The experienced reader in parabolic boundary value problems may wonder about the relation between the spectral projections and the so-called Calder\'on projections. We shall come back to this in Section \ref{sec:invertibility layer}.

The proof of Theorem~\ref{thm:six invertibilities}  will be provided in Section~\ref{sec:sgnPM proofs}. Let us stress that this theorem \emph{cannot} be improved by purely algebraic reasoning on complementary projections.

First, we could have invertibility of solely $s_{r \pe}(\P\M)$ and failure of invertibility of one $1 \pm s_{\pe \pe}(\P \M)$ as well as one of $1 \pm s_{r r}(\P \M)$. Indeed, using a representation as in \eqref{eq:rep chi} for general pairs of complementary projections $\chi^\pm$, we already see from the simple example in $\R^2$,
\begin{align*}
\chi^+ := \begin{bmatrix}
    1  & 0   \\
    1  &  0
\end{bmatrix}, \qquad
\chi^- := \begin{bmatrix}
   0   & 0   \\
    -1  &  1
\end{bmatrix},
\end{align*}
that $s_{r \pe}$ can be invertible, while $1-s_{\pe \pe}$ and $1 + s_{r r}$ are not. In the opposite direction, let $\ell^2(\IN)$ be the space of complex square-summable sequences, let $S: (a_n) \mapsto (0, a_1,a_2,\ldots)$ be the right shift with $S^*: (a_n) \mapsto (a_2, a_3,a_4,\ldots)$ its adjoint and let $z \in \IC$ be a root of $z^2 - z +1$. Taking into account the relations $S^*S = 1$ and $z + \cl{z} = 1$, we see that on $\ell^2(\IN) \times \ell^2(\IN)$ we have a pair of complementary projections
\begin{align*}
 \chi^+ := \begin{bmatrix}
    z S S^*  & S   \\
    S^*  &  \cl{z} S^* S
\end{bmatrix}, \qquad
\chi^- := \begin{bmatrix}
   S^*S - z S S^*   & -S  \\
    -S^*  &  z S^*S
\end{bmatrix}
\end{align*}
for which both $1 \pm s_{r r}$ are invertible, while $s_{r \pe} = 2 S^*$ has one-dimensional nullspace and $s_{\pe r} = 2 S$ is not onto. 

Upon interchanging $\pe$- and $r$-coordinates, the same two examples give the same conclusions with the roles of $\pe$ and $r$ reversed.

In Section~\ref{sec:energy} we have obtained well-posedness of both boundary value problems for the energy class on the upper half-space but the discussion there applies verbatim to the lower one. These are precisely the problems $(R)_{\mE_{-1/2}}^\Lop$ \emph{and} $(N)_{\mE_{-1/2}}^\Lop$, see Section~\ref{sec:specialcases}. Hence, we can record

\begin{cor}
\label{cor:six invertibilities energy}
Invertibility of the six operators in Theorem~\ref{thm:six invertibilities} above always holds at regularity $s=-1/2$.
\end{cor}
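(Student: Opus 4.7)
The strategy is to deduce the corollary directly from item (iv) of Theorem~\ref{thm:six invertibilities} taken at the specific regularity $s=-1/2$: invertibility of all six operators is there characterised by well-posedness of both $(R)_{\mE_{-1/2}}^\Lop$ and $(N)_{\mE_{-1/2}}^\Lop$ on \emph{both} half-spaces, so the task boils down to verifying this fourfold well-posedness.

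The first step is to identify the solution classes. As discussed in Section~\ref{sec:specialcases}, the equivalence \eqref{eq:equiv} shows that for a reinforced weak solution $u$ on $\R^{n+2}_{+}$ the condition $\pcg u \in \mE_{-1/2}$ is equivalent to $u$ lying (modulo constants) in the energy class $\dot\E$; the analogous statement holds verbatim on $\R^{n+2}_{-}$. Hence $(R)_{\mE_{-1/2}}^{\Lop}$ and $(N)_{\mE_{-1/2}}^{\Lop}$ are exactly the regularity and Neumann problems within the energy class on each half-space.

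The second step is to invoke the form construction from Section~\ref{sec:energy}. For $\delta>0$ small enough depending only on the ellipticity constants, the coercivity bound \eqref{eq:coer} combined with the fact that $1-\delta \HT$ is an isomorphism on $\dot \E$ and on $\Hdot^{1/4}_{\pd_t - \Delta_x}$ allows us to apply the Lax-Milgram lemma to $a_\delta$: for every Dirichlet datum $f\in \Hdot^{1/4}_{\pd_t-\Delta_x}$, resp.\ every Neumann datum $f\in \Hdot^{-1/4}_{\pd_t-\Delta_x}$, there is a unique energy solution $u \in \dot \E$ attaining the prescribed trace, together with the natural estimate from the data. This argument only uses the skew-symmetry of $\HT$ and the coercivity of the bulk form, both of which are insensitive to the orientation of the half-space, so the identical reasoning produces well-posedness on $\R^{n+2}_{-}$ as well.

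Combining these two steps yields well-posedness of $(R)_{\mE_{-1/2}}^{\Lop}$ and $(N)_{\mE_{-1/2}}^{\Lop}$ on both half-spaces simultaneously, and Theorem~\ref{thm:six invertibilities}(iv) then delivers the simultaneous invertibility of $1\pm s_{\pe\pe}(\P\M)$, $s_{\pe r}(\P\M)$, $s_{r\pe}(\P\M)$ and $1\pm s_{rr}(\P\M)$ on the appropriate components of $\Hdot^{-1/2}_{\P}$. There is no genuine obstacle here beyond the bookkeeping of translating well-posedness into the language of the operator $\sgn(\P\M)$ via \eqref{eq:rep chi}; the heavy lifting has already been carried out in Section~\ref{sec:energy} and in the proof of Theorem~\ref{thm:six invertibilities}.
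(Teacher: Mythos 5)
Your proposal is correct and follows essentially the same path as the paper: the corollary is stated immediately after the remark that the Lax--Milgram/hidden-coercivity argument of Section~\ref{sec:energy} yields well-posedness of both BVPs in the energy class on each half-space, which by Section~\ref{sec:specialcases} are precisely $(R)_{\mE_{-1/2}}^\Lop$ and $(N)_{\mE_{-1/2}}^\Lop$, and then Theorem~\ref{thm:six invertibilities}(iv) converts this into invertibility of the six operators. No gaps.
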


The inverses for $s=-1/2$ have an illuminating interpretation in terms of boundary operators. Indeed, let us define the \emph{Neumann to Dirichlet operator} on the upper half-space by
\begin{align*}
 \Gamma_{ND}^{\Lop,+} f := \begin{bmatrix} \gradx \\ \HT \dhalf \end{bmatrix} u|_{\lambda = 0},
\end{align*}
where $u$ is the energy solution to the Neumann problem for $\Lop u = 0$ on $\R^{n+2}_+$ with data $f$. Owing to well-posedness, this defines an isomorphism $(\Hdot_{\P}^{-1/2})_\pe \to (\Hdot_{\P}^{-1/2})_r$. Its inverse is the \emph{Dirichlet to Neumann operator} $\Gamma_{DN}^{\Lop,+} g := \dnuA v|_{\lambda = 0}$, where $v$ is the energy solution to the Dirichlet problem for $\Lop v = 0$ on $\R^{n+2}_+$ with regularity data $g$. Hence, the parabolic conormal differential of an energy solution to $\Lop u = 0$ on the upper half-space has a trace in $\Hdot_\P^{-1/2} = \Hdot_{\P \M}^{-1/2}$ given by $\pcg u|_{\lambda = 0} = [f, \Gamma_{ND}^{\Lop +} f]$. However, due to Theorem~\ref{thm:sob} these traces also describe the spectral space $\Hdot_{\P\M}^{-1/2,+}$. Consequently,
\begin{align}
\label{eq:representation H-1/2+}
 \Hdot_{\P\M}^{-1/2,+} = \big \{h \in \Hdot_{\P}^{-1/2} : h_r = \Gamma_{ND}^{\Lop,+} h_\pe \big \},
\end{align}
which can be rephrased as saying that for $h \in \Hdot^{-1/2}_\P$ the conditions $\sgn(\P \M) h = h$ and $h_r = \Gamma_{ND}^{\Lop,+} h_\pe$ are equivalent. Unravelling the latter equivalence gives
\begin{align}
\label{eq:GammaND factorization}
 \Gamma_{ND}^{\Lop,+} = s_{\pe r}(\P\M)^{-1}(1-s_{\pe\pe}(\P\M)) = (1-s_{rr}(\P\M))^{-1}s_{r \pe}(\P\M)
\end{align}
and
\begin{align}
\label{eq:GammaDN factorization}
 \Gamma_{DN}^{\Lop,+} = (\Gamma_{ND}^{\Lop,+})^{-1} = s_{r \pe}(\P\M)^{-1}(1-s_{r r}(\P\M)) = (1-s_{\pe \pe}(\P\M))^{-1}s_{\pe r}(\P\M).
\end{align}
Similarly, akin operators for the lower half-space can be rediscovered within the functional calculus of $\P \M$. In Section~\ref{sec:sgnPM proofs} we shall use the operators $\Gamma_{ND}^{\Lop,\pm}$, $\Gamma_{DN}^{\Lop,\pm}$ to prove the following.

\begin{prop}
\label{prop:sgn invertibilty implies WP}
Let $s \in [-1,0]$.
\begin{enumerate}
 \item If $1\pm s_{\pe \pe}(\P \M)$ is invertible between the respective components of $\Hdot_\P^s$ \emph{and} the inverse agrees with the one for $s=-1/2$, then $(R)_{\mE_s}^\Lop$ is compatibly well-posed on $\R^{n+2}_\mp$ .
 \item If $1\pm s_{r r}(\P \M)$ is invertible between the respective components of $\Hdot_\P^s$ \emph{and} the inverse agrees with the one for $s=-1/2$, then $(N)_{\mE_s}^\Lop$ is compatibly well-posed on $\R^{n+2}_\mp$.
\end{enumerate}
\end{prop}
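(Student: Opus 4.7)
The strategy is to read compatible well-posedness through Theorem~\ref{thm:wpequiv} and Proposition~\ref{prop:cwp}, and to build the required inverses explicitly from the matrix representation \eqref{eq:rep chi} of $\chi^\pm(\P\M)$. I focus on part (i) with the $+$ sign, under the assumption that $1-s_{\pe\pe}(\P\M)$ is invertible on $(\Hdot^s_\P)_\pe$ with inverse agreeing with the $s=-1/2$ one on $(\Hdot^s_\P)_\pe \cap (\Hdot^{-1/2}_\P)_\pe$. The three remaining sub-cases are symmetric and will be sketched at the end.

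Motivated by the factorization \eqref{eq:GammaDN factorization}, the plan is to introduce
\[
 \Phi_s : (\Hdot^s_\P)_r \longrightarrow \Hdot^s_\P, \qquad \Phi_s h_r := \bigl((1-s_{\pe\pe}(\P\M))^{-1} s_{\pe r}(\P\M) h_r,\ h_r\bigr)
\]
as a candidate inverse of $N_r|_{\Hdot^{s,+}_{\P\M}}$. The equality $N_r \Phi_s = \mathrm{id}$ is immediate. Conversely, for $h = (h_\pe, h_r) \in \Hdot^{s,+}_{\P\M}$ the first row of $\chi^-(\P\M) h = 0$ read from \eqref{eq:rep chi} gives $(1-s_{\pe\pe}(\P\M)) h_\pe = s_{\pe r}(\P\M) h_r$, whence $h = \Phi_s(N_r h)$ by the hypothesised invertibility. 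So, once $\Phi_s$ is known to take values in $\Hdot^{s,+}_{\P\M}$, it is the bounded inverse of $N_r|_{\Hdot^{s,+}_{\P\M}}$, which is precisely what Theorem~\ref{thm:wpequiv} asks for. Compatibility with the energy class then comes essentially for free: the functional calculus operators $s_{\pe r}(\P\M), s_{r \pe}(\P\M), s_{rr}(\P\M)$ are consistent across Sobolev scales on $(\Hdot^s_\P)_r \cap (\Hdot^{-1/2}_\P)_r$ (see Lemma~\ref{lem:identification}), and $(1-s_{\pe\pe}(\P\M))^{-1}$ is so by assumption, so $\Phi_s$ agrees there with $\Phi_{-1/2}$. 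By Corollary~\ref{cor:six invertibilities energy} and the identity \eqref{eq:GammaDN factorization}, $\Phi_{-1/2}$ is the inverse provided by the energy method of Section~\ref{sec:energy}, and Proposition~\ref{prop:cwp} yields compatible well-posedness.

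The main obstacle is thus to verify $\chi^-(\P\M) \Phi_s h_r = 0$. The first row of \eqref{eq:rep chi} vanishes by the definition of $\Phi_s$, while the second row reduces the problem to the operator identity
\[
 s_{r \pe}(\P\M)\bigl(1-s_{\pe\pe}(\P\M)\bigr)^{-1} s_{\pe r}(\P\M) = 1-s_{rr}(\P\M) \qquad \text{on } (\Hdot^s_\P)_r.
\]
At level $s=-1/2$ this identity is nothing but the equality of the two factorizations of $\Gamma_{DN}^{\Lop,+}$ in \eqref{eq:GammaDN factorization}, itself a consequence of well-posedness of $(R)_{\mE_{-1/2}}^\Lop$ and Corollary~\ref{cor:six invertibilities energy}. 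Each factor on the left-hand side at level $s$ coincides with its counterpart at level $-1/2$ on the dense subspace $(\Hdot^s_\P)_r \cap (\Hdot^{-1/2}_\P)_r$, so the identity transfers to this subspace, and bounded continuity of the whole composition on $(\Hdot^s_\P)_r$ then extends it to the full space. This is the step where the hypothesis on agreement of inverses is used essentially, and it is the most delicate part of the argument.

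The three remaining sub-cases follow the same template. For $(R)_{\mE_s}^\Lop$ on $\R^{n+2}_-$ one works with $\chi^+(\P\M)h = 0$ and invertibility of $1+s_{\pe\pe}(\P\M)$ to build $\Phi_s^-$ by $\Phi_s^- h_r := \bigl(-(1+s_{\pe\pe}(\P\M))^{-1} s_{\pe r}(\P\M) h_r,\ h_r\bigr)$; the required identity becomes $s_{r\pe}(\P\M)(1+s_{\pe\pe}(\P\M))^{-1} s_{\pe r}(\P\M) = 1+s_{rr}(\P\M)$. The two Neumann cases on $\R^{n+2}_\mp$ are obtained by exchanging the roles of the two rows of \eqref{eq:rep chi}: invertibility of $1\mp s_{rr}(\P\M)$ produces a candidate inverse for $N_\pe$ of the form $\Psi_s^\pm h_\pe := \bigl(h_\pe,\ \pm(1\mp s_{rr}(\P\M))^{-1} s_{r\pe}(\P\M) h_\pe\bigr)$, and the remaining obstruction reduces to $s_{\pe r}(\P\M)(1\mp s_{rr}(\P\M))^{-1} s_{r\pe}(\P\M) = 1\mp s_{\pe\pe}(\P\M)$, verified by the very same density argument from the $s=-1/2$ case.
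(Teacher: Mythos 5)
Your proof is correct and follows essentially the same route as the paper: in both, the candidate inverse of the coordinate projection is built explicitly from the factorizations \eqref{eq:GammaND factorization}--\eqref{eq:GammaDN factorization}, and its range is shown to lie in the required spectral space $\Hdot^{s,\pm}_{\P\M}$ by transferring the $s=-1/2$ identity via density of the intersection and consistency of the $\P\M$-functional calculus across the Sobolev scale. The only blemish is a labelling slip in your opening line --- you announce the $+$ sign of (i) but then assume $1-s_{\pe\pe}(\P\M)$ invertible and work in $\R^{n+2}_+$, which is the $-$ sign case --- though the mathematics that follows is internally consistent.
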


It is instructive to compare Proposition~\ref{prop:sgn invertibilty implies WP} with Theorem~\ref{thm:six invertibilities} and the succeeding remark. Indeed, the additional topological restriction that inverses should be compatible with the ones on energy solutions allows us to overcome the general algebraic obstructions to proving that invertibility of $1\pm s_{\pe \pe}(\P \M)$ (or $1\pm s_{r r}(\P \M)$) implies invertibility of $s_{r \pe}(\P \M)$ (or $s_{\pe r}(\P \M)$, respectively).
\subsection{Layer potentials}
\label{sec:layer intro}

We continue to carve out the link with classical theory of BVPs by introducing abstract layer potentials in the spirit of \cite{R1}, following the presentation in \cite[Section 12.3]{Auscher-Stahlhut_APriori}. We also go further by constructing explicitly the inverse of $\pd_{t}- \div_{\lambda,x}A(x,t)\nabla_{\lambda,x}$ on $\R^{n+2}$ from the layer potentials and without local DeGiorgi-Nash-Moser regularity assumptions.

In the following we designate by $\IP_{\M\P}$ the projection onto $\cl{\ran(\M\P)}$ along $\nul(\M\P)=\nul(\P)$ in \eqref{eq:kernel/range}, noting that the notation $\IP_{\P}$ is consistent with Section~\ref{sec:dirac}. We collect two further properties of the functional calculi that are required for defining the layer potentials rigorously. The first one follows since $\IP_{\M \P}$ and $\IP_\P$ share the same nullspace and the second one comes from general functional calculus of these operators, see \cite[Sec.~11]{Auscher-Stahlhut_APriori}.

\newcommand{\map}[3]{#1 \colon #2 \rightarrow #3}
{
\begin{lem}\label{lem:proj}

The restrictions $\map{\IP_{\M\P}}{\cl{\ran(\P)}}{ \cl{\ran(\M\P)}}$ and $\map{\IP_{\P}}{\cl{\ran(\M\P)}}{ \cl{\ran(\P)}}$ are isomorphisms and mutual inverses of one another. For every $-1\le s\le 0$, they extend to isomorphisms $\map{\IP_{\M\P}}{\Hdot^{s+1}_{\P} }{\Hdot^{s+1}_{\M\P} }$ and $\map{\IP_{\P}}{\Hdot^{s+1}_{\M\P} }{\Hdot^{s+1}_{\P} }$, respectively. Moreover, the \emph{intertwining property}
$$\P\IP_\P b (\M\P)\IP_{\M\P}h =  b(\P \M) \P h$$
is valid for all $h\in \Hdot^{s+1}_{\P}$ and all bounded holomorphic functions $b: \S_{\mu} \to \IC$.
\end{lem}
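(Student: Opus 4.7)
Part (i) reduces to a general projection-theoretic observation: if $P_1, P_2$ are two projections on a Hilbert space with the \emph{same} nullspace, then $P_1|_{\ran(P_2)}$ and $P_2|_{\ran(P_1)}$ are mutual inverses. Indeed, for $h \in \ran(P_2)$ one has $h - P_1 h \in \nul(P_1) = \nul(P_2)$, so $P_2 P_1 h = P_2 h = h$, and the reverse composition follows symmetrically. This applies directly to $\IP_\P$ and $\IP_{\M\P}$, since $\nul(\IP_\P) = \nul(\P) = \nul(\M\P) = \nul(\IP_{\M\P})$, the middle equality using that $\M$ is bounded with bounded inverse on $\mH$.

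For part (ii), the plan is to give an explicit formula for $\IP_{\M\P}$ on a dense subspace and transfer the norm estimate through the functional calculus. Let $h \in \dom(\P) \cap \cl{\ran(\P)}$, a set that is dense in $\Hdot^{s+1}_\P$. Since $\IP_{\M\P}$ commutes with the resolvents of $\M\P$, it commutes with $\M\P$ itself on $\dom(\P)$; combined with the fact that $\M\P h$ already lies in $\cl{\ran(\M\P)}$, this yields $\M\P \IP_{\M\P} h = \IP_{\M\P} \M\P h = \M\P h$, so that $\IP_{\M\P} h = (\M\P)^{-1} \M\P h$, where $(\M\P)^{-1}$ denotes the inverse of $\M\P$ reduced to $\cl{\ran(\M\P)}$. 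Writing $(\M\P)^{-1} = \sgn(\M\P)[\M\P]^{-1}$ and using the functional calculus identity $[\M\P]^{s+1}(\M\P)^{-1} = \sgn(\M\P)[\M\P]^s$, one obtains
\begin{align*}
\|[\M\P]^{s+1}\IP_{\M\P} h\|_2
\sim \|[\M\P]^s \M\P h\|_2
\sim \|[\P]^s \P h\|_2
\sim \|[\P]^{s+1} h\|_2.
\end{align*}
The crucial middle equivalence rests on the isomorphism $\M \colon \Hdot^s_\P \to \Hdot^s_{\M\P}$ for $s \in [-1,0]$, which I would derive from the similarity $\P\M = \M^{-1}(\M\P)\M$ (producing an isomorphism $\M \colon \Hdot^s_{\P\M} \to \Hdot^s_{\M\P}$ directly) together with the identification $\Hdot^s_{\P\M} = \Hdot^s_\P$ supplied by Lemma~\ref{lem:identification}. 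Density then extends $\IP_{\M\P}$ to a bounded map $\Hdot^{s+1}_\P \to \Hdot^{s+1}_{\M\P}$; the symmetric argument handles $\IP_\P$, and the two extensions remain mutual inverses by continuity.

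The intertwining identity is then a short computation. The key ingredient is the abstract relation $\P b(\M\P) = b(\P\M) \P$ on $\dom(\P) \cap \cl{\ran(\M\P)}$, which for resolvents $b(z) = (z-\zeta)^{-1}$ follows by direct manipulation of $(\P\M - \zeta) \P (\M\P - \zeta)^{-1}$, and which then extends to general bounded holomorphic $b$ via the Dunford representation. Combined with the two elementary identities $\P \IP_\P = \P = \P \IP_{\M\P}$ on $\dom(\P)$ -- both holding because $\id - \IP_\P$ and $\id - \IP_{\M\P}$ land in $\nul(\P)$ -- one computes on the dense subspace
\begin{align*}
\P \IP_\P b(\M\P) \IP_{\M\P} h = \P b(\M\P) \IP_{\M\P} h = b(\P\M) \P \IP_{\M\P} h = b(\P\M) \P h,
\end{align*}
and the equality extends by density. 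The most delicate point I anticipate is that the abstract completions $\Hdot^s_\bullet$ for $s < 0$ do not sit naturally inside $\mH$; working systematically on the dense intersections with $\mH$ and invoking Lemma~\ref{lem:identification} to move freely between $\Hdot^s_{\P\M}$ and $\Hdot^s_\P$ is what makes the chain of estimates go through.
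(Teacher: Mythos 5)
Your argument is correct and is essentially the one the paper outsources: the text only notes that the first claim follows because $\IP_{\M\P}$ and $\IP_\P$ share the nullspace $\nul(\P)$ and refers to \cite[Sec.~11]{Auscher-Stahlhut_APriori} for the Sobolev isomorphism and the intertwining, which is exactly the route you reconstruct (identity $\IP_{\M\P}=(\M\P)^{-1}\M\P$ on a dense core, transfer of norms through $\M\colon\Hdot^s_{\P\M}\to\Hdot^s_{\M\P}$ and Lemma~\ref{lem:identification}, and the resolvent intertwining $\P b(\M\P)\supset b(\P\M)\P$). Two small points. First, your justification of $\nul(\P)=\nul(\M\P)$ via ``$\M$ bounded with bounded inverse on $\mH$'' silently excludes the system case; Remark~\ref{rem:PM and MP similar} explicitly avoids assuming invertibility of $\M$ on all of $\mH$, and the correct reason is accretivity (hence injectivity) of $\M$ on $\cl{\ran(\P)}$, which gives $\M\P h=0\Rightarrow\P h=0$. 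Second, the domain bookkeeping you flag at the end really does require care: for general bounded holomorphic $b$ the operator $b(\M\P)$ need not map $\dom(\P)$ into $\dom(\P)$, so the chain $\P\IP_\P b(\M\P)\IP_{\M\P}h=\P b(\M\P)\IP_{\M\P}h=b(\P\M)\P h$ should be verified first for $b$ with decay at $0$ and $\infty$ (where the Dunford integral and the resolvent intertwining make everything converge in $\dom(\P)$) and then extended to all bounded $b$ by approximation, using that both sides are \emph{a priori} bounded $\Hdot^{s+1}_\P\to\Hdot^s_\P$. With these refinements the proposal is a complete proof.
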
}

Note that the holomorphic functions $z\mapsto \e^{-\lambda z}\chi^{\sgn \lambda}(z)$ are bounded on $\Re z\ne0$ when $ \lambda\ne 0$.  Thus, Lemma~\ref{lem:identification} shows that if $-1\le s\le 0$, then $e^{-\lambda\P\M} \chi^{\sgn\lambda}(\P \M)$ is bounded on $\Hdot^{s}_{\P}$, while $\P$ is an isomorphism from
$\Hdot^{s+1}_{\P}$ onto $\Hdot^{s}_{\P}$.

We regroup again $\pa$- and $\te$-components of vectors as a single component denoted $r$. For instance, for a scalar function $f\in \Hdot^{s/2}_{\pd_{t}-\Delta_{x}}$, we let $h=[f,  0]$ be the vector function in $\Hdot^{s}_{\P}$ with $h_\pe =f$ and $h_r = 0$. Throughout, we assume $-1 \le s \le 0$.
For $\lambda\in \R \setminus \{0\}$ and $f\in \Hdot^{s/2 + 1/2}_{\pd_{t}-\Delta_{x}}$, we define the single layer operator by
\begin{equation}
\label{eq:st}
\mS_{\lambda}f:= \begin{cases}
 -\bigg(P^{-1} \e^{-\lambda\P\M}\chi^{+}(\P\M) \begin{bmatrix} f \\ 0\end{bmatrix}\bigg)_{\pe} &(\text{if } \lambda>0), \\
 + \bigg(P^{-1} \e^{-\lambda\P\M}\chi^{-}(\P\M) \begin{bmatrix} f \\ 0\end{bmatrix}\bigg)_{\pe} &(\text{if } \lambda<0)
\end{cases}
\end{equation}
and for $f\in \Hdot^{s/2}_{\pd_{t}-\Delta_{x}}$ we define the double layer operator
\begin{equation}
\label{eq:dt}
\mD_{\lambda}f:=
\begin{cases}
 - \bigg(\IP_\P \e^{-\lambda\M\P} \chi^{+}(\M\P)\IP_{\M\P} \begin{bmatrix} f \\ 0\end{bmatrix}\bigg)_{\pe} &(\text{if } \lambda>0), \\
 + \bigg(\IP_\P \e^{-\lambda\M\P} \chi^{-}(\M\P)\IP_{\M\P} \begin{bmatrix} f \\ 0\end{bmatrix}\bigg)_{\pe} &(\text{if } \lambda<0).
\end{cases}
\end{equation}
Note that these definitions differ from the ones in \cite{Auscher-Stahlhut_APriori} in that we have introduced some additional projectors in \eqref{eq:dt} to  yield rigorous formul\ae\ for all Sobolev functions while  usage of the projectors is not necessary for $f$ in dense spaces, see \cite[Sec.~7.4]{Amenta-Auscher} for details. To avoid further confusion, let us notice an unfortunate typo in \cite{Auscher-Stahlhut_APriori}: the four formul\ae\ ($81$) - ($85$) come with $e^{+t \ldots}$ instead of $e^{-t \ldots}$ when $t<0$.

It follows from Lemma~\ref{lem:proj}, that  $\mS_{\lambda}$ is a bounded operator $\Hdot^{s/2}_{\pd_{t}-\Delta_{x}} \to \Hdot^{s/2+1/2}_{\pd_{t}-\Delta_{x}}$, while $\mD_{\lambda}$ is a bounded operator $\Hdot^{s/2+1/2}_{\pd_{t}-\Delta_{x}} \to \Hdot^{s/2+1/2}_{\pd_{t}-\Delta_{x}}$ with uniform norm with respect to $\lambda$. Moreover, they are all compatible on common dense subspaces.

We can relate the  conormal differentials of the layer operators to the Cauchy extension operators of \eqref{eq:Cauchyext}. By a direct calculation from the intertwining property in Lemma~\ref{lem:proj},

\begin{equation*}
\pcg \mS_{\lambda}f= \begin{cases}
+  \e^{-\lambda\P\M}\chi^{+}(\P\M) \begin{bmatrix} f \\ 0\end{bmatrix}  & (\text{if } \lambda>0), \\
- \e^{-\lambda\P\M}\chi^{-}(\P\M) \begin{bmatrix} f \\ 0\end{bmatrix} & (\text{if } \lambda<0)
\end{cases}
\end{equation*}
and
\begin{equation*}
\pcg \mD_{\lambda}f= \begin{cases}
 + \e^{-\lambda\P\M}\chi^{+}(\P\M)P \begin{bmatrix} f  \\ 0 \end{bmatrix}  & (\text{if } \lambda>0), \\
 -\e^{-\lambda\P\M}\chi^{-}(\P\M)P \begin{bmatrix} f \\ 0\end{bmatrix} & (\text{if } \lambda<0).
\end{cases}
\end{equation*}
Note that $P \begin{bmatrix} f  \\ 0 \end{bmatrix}$ has zero $\pe$-component and $r$-component equal to $\begin{bmatrix} -\nabla_{x}f \\ -\HT\dhalf f \end{bmatrix}$. In other words,
\begin{equation}
\label{eq:relationlayerCauchy}
\pcg \mS_{\lambda}f= \sgn(\lambda)\bigg(C_{0}  \begin{bmatrix} f  \\ 0 \end{bmatrix}\bigg)_{\lambda}, \quad  \pcg \mD_{\lambda}f= \sgn(\lambda)\bigg(C_{0} P  \begin{bmatrix} f  \\ 0 \end{bmatrix}\bigg)_{\lambda},  \qquad (\lambda\ne 0),
\end{equation}
where, as we recall, $C_0$ is the generalized Cauchy extension operator. These expressions, together with the correspondence stated in Theorem~\ref{thm:correspondence} and the boundedness of the Cauchy extensions, show that $\mD_{\lambda}f$ and $\mS_{\lambda}f$ are reinforced weak  solutions to \eqref{eq1}. Moreover, the link with the functional calculus of $\P\M$ provides us with existence of limits at $\lambda = 0^\pm$ including the jump relations and the duality relations between layer potentials.

\begin{prop}
\label{prop:propertieslayer}
Let $-1\le s\le 0$. Given $f\in \Hdot^{s/2}_{\pd_{t}-\Delta_{x}}$, the limits
\begin{align*}
 \pcg\mS_{0^+}f= \chi^{+}(\P\M)\begin{bmatrix} f \\ 0\end{bmatrix}, \qquad \pcg\mS_{0^-}f= -\chi^{-}(\P\M)\begin{bmatrix} f \\ 0\end{bmatrix}
\end{align*}
exist in the strong topology of $\Hdot^s_{\P}$ and hence the jump relation for the conormal derivatives and the no-jump relation for tangential derivatives are encoded as
\begin{equation*}
 \pcg\mS_{0^+}f -\pcg\mS_{0^-}f= \begin{bmatrix} f \\ 0\end{bmatrix}.
\end{equation*}
Given $f\in \Hdot^{s/2+1/2}_{\pd_{t}-\Delta_{x}}$, the limits
\begin{align*}
 \mD_{0^+}f = - \bigg(\IP_{\P} \chi^{+}(\M\P)\IP_{\M\P} \begin{bmatrix} f \\ 0\end{bmatrix}\bigg)_{\pe}, \qquad
 \mD_{0^-}f = +\bigg(\IP_{\P} \chi^{-}(\M\P)\IP_{\M\P} \begin{bmatrix} f \\ 0\end{bmatrix}\bigg)_{\pe}
\end{align*}
exist in the strong topology and hence
\begin{align*}
 \mD_{0^+}f -\mD_{0^-}f = -f, \qquad \mD_{0^+}f +\mD_{0^-}f= - \bigg(\IP_{\P} \sgn(\M\P)\IP_{\M\P} \begin{bmatrix} f \\ 0\end{bmatrix}\bigg)_{\pe}= :2Kf,
\end{align*}
with $K$ being the double layer potential at the boundary.

In addition, there are the duality formul\ae\   for $f\in \Hdot^{s/2}_{\pd_{t}-\Delta_{x}}$ and $g \in \Hdot^{-s/2-1/2}_{-\pd_{t}-\Delta_{x}}$ given by
\begin{equation*}
 \dual {g} {\mS_{\lambda}^\Lop f}= \dual { \mS_{-\lambda}^{\Lop^*}g} f,
\end{equation*}
and for $f\in \Hdot^{s/2+1/2}_{\pd_{t}-\Delta_{x}}$ and $g \in \Hdot^{-s/2-1/2}_{-\pd_{t}-\Delta_{x}}$ given by
\begin{equation*}
 \dual {g} {\mD_{\lambda}^\Lop f}= \dual {\partial_{{\nu}_{A^*}} \mS_{-\lambda}^{\Lop^*}g} f.
\end{equation*}
Here, $\mS_{\lambda}^\Lop$, $\mD_{\lambda}^\Lop$ are the layer potential operators defined above and associated with the parabolic equation \eqref{eq1}, while $\mS_{\lambda}^{\Lop^*}$ is the single layer operator associated with the backward parabolic equation \eqref{eq1*} defined using the correspondence with the operator $P^*\wM$ of Section~\ref{sec:backward}.
\end{prop}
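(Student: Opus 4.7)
The plan is to derive the limit and jump statements directly from the functional-calculus representations of $\mS_\lambda$ and $\mD_\lambda$, and to reduce the duality formulas to algebraic identities between the forward operator $\P\M$ and the backward operator $\P^*\wM$.

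\textbf{Limits and jump for $\mS_\lambda$.} I would start from the identity \eqref{eq:relationlayerCauchy}, which identifies $\pcg \mS_\lambda f$ with $\sgn(\lambda)(C_0[f,0])_\lambda$, using the identification of $\Hdot^s_\P$ with a subspace of $(\Hdot^{s/2}_{\pd_t-\Delta_x})^{n+2}$ described in Section~\ref{sec:sobolev}. The strong continuity of the generalized Cauchy extension at $\lambda=0^\pm$, stated in Proposition~\ref{prop:Cauchyextension} for $s=0$ and in Theorem~\ref{thm:sob} for $s\in[-1,0)$, immediately yields $\pcg\mS_{0^\pm}f=\pm\chi^\pm(\P\M)[f,0]$ in the strong topology of $\Hdot^s_\P$. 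Subtracting the two limits and invoking $\chi^+(\P\M)+\chi^-(\P\M)=\id$ on $\cl{\ran(\P\M)}=\cl{\ran(\P)}$ produces the combined jump/no-jump relation $\pcg\mS_{0^+}f-\pcg\mS_{0^-}f=[f,0]$.

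\textbf{Limits and jump for $\mD_\lambda$.} Here I would apply the same semigroup argument, this time to the operator $\M\P$, whose bounded holomorphic functional calculus is the content of the last sentence of Theorem~\ref{thm:bhfc}. For $f\in \Hdot^{s/2+1/2}_{\pd_t-\Delta_x}$ the vector $[f,0]$ lies in $\Hdot^{s+1}_\P$ and $\IP_{\M\P}[f,0]$ lies in $\Hdot^{s+1}_{\M\P}$ by Lemma~\ref{lem:proj}, so the strong continuity at $\lambda=0^\pm$ of $\e^{-\lambda\M\P}\chi^\pm(\M\P)$ yields the claimed limits $\mD_{0^\pm}f$. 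Taking the difference and using $\chi^++\chi^-=\id$ on $\cl{\ran(\M\P)}$ reduces the jump to computing $-(\IP_\P \IP_{\M\P}[f,0])_\pe$; Lemma~\ref{lem:proj} says that $\IP_\P\circ\IP_{\M\P}$ is the identity on $\Hdot^{s+1}_\P$, and this simplifies to $-f$. The sum formula then defines $2Kf$ with $K$ as displayed, after the substitution $\chi^+-\chi^-=\sgn$.

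\textbf{Duality.} The delicate step is the duality between $\mS_\lambda^\Lop$ and $\mS_{-\lambda}^{\Lop^*}$, which boils down to matching the functional calculus of $\P\M$ with that of $\P^*\wM$. I would dualise \eqref{eq:st} in the vector-valued pairing between $(\Hdot^{s/2}_{\pd_t-\Delta_x})^{n+2}$ and $(\Hdot^{-s/2}_{-\pd_t-\Delta_x})^{n+2}$: extracting the $\pe$-component becomes pairing with $[g,0]$, the operator $\P^{-1}$ moves to $(\P^*)^{-1}$, the semigroup $\e^{-\lambda\P\M}$ moves to $\e^{-\lambda\M^*\P^*}$, and $\chi^+(\P\M)$ moves to $\chi^+(\M^*\P^*)$. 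At this point one must identify $\M^*\P^*$ with $\P^*\wM$. The relation $\wM=\N\M^*\N$ with $\N^2=\id$ yields the intertwining $\N(\M^*\P^*)\N=\wM\cdot\N\P^*\N$, and a direct matrix inspection shows that $\N\P^*\N$ coincides with $\P^*$ away from the $\pe$-row where it changes sign. Tracking this sign through the $\pe$-extraction, and combining it with the overall sign $\sgn(\lambda)$ built into \eqref{eq:st}, produces the duality identity. The double-layer duality is obtained by combining the same identification with the backward analogue of \eqref{eq:relationlayerCauchy}, which expresses $\partial_{\nu_{A^*}}\mS_{-\lambda}^{\Lop^*}g$ as the $\pe$-component of $\pcgb \mS_{-\lambda}^{\Lop^*}g$, together with Lemma~\ref{lem:proj} to absorb the two projectors present in \eqref{eq:dt}.

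The main obstacle I anticipate is precisely this last step: one must track four sign conventions in parallel (semigroup, spectral projection, $\P^{-1}$, $\pe$-extraction) and verify that the coordinate involution $\N$ exactly undoes the mismatch between $(\P\M)^*=\M^*\P^*$ and $\P^*\wM$. A safer parallel route I would also pursue is to read both duality formulas off Green's formula (Proposition~\ref{prop:green}): viewing $\mS_\lambda^\Lop f$ on $\R^{n+2}_+$ and $\mS_{-\lambda}^{\Lop^*} g$ on $\R^{n+2}_-$ as conjugate solutions, pairing their boundary values by Proposition~\ref{prop:green}, and using the jump/no-jump relations for $\mS$ and $\mD$ proved in the previous steps to unpack the two claimed identities.
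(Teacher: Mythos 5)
The paper does not actually write out a proof of this proposition; right after Theorem~\ref{thm:rep} it points to \cite[Sec.~12.3]{Auscher-Stahlhut_APriori} and \cite[Sec.~7.4]{Amenta-Auscher} for the abstract version in the elliptic setting, which transposes verbatim. Your handling of the boundary limits and jump relations follows exactly that template and is correct: \eqref{eq:relationlayerCauchy} reduces the limit statements to the strong $\lambda\to 0^\pm$ continuity from Proposition~\ref{prop:Cauchyextension} and Theorem~\ref{thm:sob} on $\Hdot^s_\P=\Hdot^s_{\P\M}$, and the jumps then follow from $\chi^++\chi^-=\id$ on the closure of the range together with $\IP_\P\IP_{\M\P}=\id$ from Lemma~\ref{lem:proj}.

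Two corrections are needed in the duality part. First, the matrix observation is false: since $\N=\mathrm{diag}(-1,\id,1)$ and $\P^*$ has non-zero blocks only at the $(\pe,r)$ and $(r,\pe)$ positions, conjugation by $\N$ flips the sign of \emph{every} non-zero entry, so that $\N\P^*\N=-\P^*$; the operator ``$\P^*$ with only the $\pe$-row negated'' that you describe is $\N\P^*$, not $\N\P^*\N$ (you have dropped one $\N$). The identity you actually need is the anticommutation $\P^*\N=-\N\P^*$, which is recorded in the proof of Lemma~\ref{lem:duality}. With it, the chain $(\P\M)^*=\M^*\P^*$, $\N(\M^*\P^*)\N=-\wM\P^*$, the similarity between $\wM\P^*$ and $\P^*\wM$, and the observation that the functional calculus applied to $-T$ swaps $\chi^\pm$ and leaves $\e^{\lambda[\cdot]}$ unchanged combine, after the $\pe$-extraction, into the claimed duality; so the plan is salvageable, but your stated sign bookkeeping would not close. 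Second, the announced fallback via Proposition~\ref{prop:green} does not directly apply: that formula pairs boundary traces of two solutions living on the \emph{same} upper half-space, whereas your duality pairs $\mS_\lambda^\Lop f$ on $\R^{n+2}_+$ with $\mS_{-\lambda}^{\Lop^*}g$ on $\R^{n+2}_-$. Deriving a Green's identity across the interface, or on a slab $0<\mu<\lambda$, essentially reproduces the same sign-sensitive functional-calculus computation, so it is not the safety net you expect; the direct route with the corrected anticommutation is the one to carry out.
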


Due to the relation \eqref{eq:relationlayerCauchy} between layer potentials and the Cauchy extension, there is always a  representation by (abstract) layer potential operators,
\begin{align*}
\pcg u(\lambda,\cdot) = \pcg\mS_{\lambda}(\dnuA u|_{\lambda=0}) - \pcg\mD_{\lambda}(u|_{\lambda=0})
\end{align*}
provided all terms are defined. More precisely, the integrated version of this is as follows.

\begin{thm}[Green's representation]
\label{thm:rep}
Let $-1 \leq s \leq 0$. Any solution $u$ of \eqref{eq1} with $\lambda$-independent coefficients such that $\pcg u$ belongs to the class $\mE_{s}$ can be represented as
\begin{align*}
u(\lambda,\cdot) = \mS_{\lambda}(\dnuA u|_{\lambda=0}) - \mD_{\lambda}(u|_{\lambda=0}) + c, \qquad (\lambda>0),
\end{align*}
where $c$ is a constant. The representation holds in $\C([0,\infty); \Lloc^2(\ree))$ and, moding out the constant, in $\C([0,\infty); \Hdot^{s/2+1/2}_{\pd_{t}-\Delta_{x}})$. In the lower half-space, with our convention of the conormal derivative, the formula reads 
\begin{align*}
u(\lambda,\cdot) = -\mS_{\lambda}(\dnuA u|_{\lambda=0}) + \mD_{\lambda}(u|_{\lambda=0}) + c, \qquad (\lambda<0).
\end{align*}
\end{thm}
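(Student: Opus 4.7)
The plan is to identify $u$ with the combination $\mS_{\lambda}(\dnuA u|_{\lambda=0}) - \mD_{\lambda}(u|_{\lambda=0})$ by showing that they are both reinforced weak solutions sharing the same parabolic conormal differential, then invoke the uniqueness clause of Theorem~\ref{thm:correspondence}. I will work on the upper half-space; the lower one is identical after replacing $\chi^+$ by $\chi^-$ and flipping the signs in \eqref{eq:relationlayerCauchy}.

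First I would apply Theorem~\ref{thm:sob} to $F := \pcg u \in \mE_{s}$. This yields a trace $h := F|_{\lambda=0^+}\in \Hdot^{s,+}_{\P\M}$ and the semigroup representation $F_\lambda = e^{-\lambda \P\M}\chi^+(\P\M) h$, $\lambda>0$. By definition of $\pcg u$, the components of $h$ are $h_\pe = \dnuA u|_{\lambda=0}$ and $h_r = [\gradx u|_{\lambda=0},\, \HT\dhalf u|_{\lambda=0}]$, each lying in $\Hdot^{s/2}_{\pd_{t}-\Delta_x}$ through the embedding $\Hdot^{s}_{\P} \hookrightarrow (\Hdot^{s/2}_{\pd_{t}-\Delta_x})^{n+2}$.

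Second I would compute the conormal differentials of the layer operators evaluated on these traces via \eqref{eq:relationlayerCauchy}: for $\lambda>0$,
\begin{equation*}
\pcg \mS_{\lambda}(h_\pe) = e^{-\lambda\P\M}\chi^+(\P\M) \begin{bmatrix} h_\pe \\ 0 \end{bmatrix}, \qquad \pcg \mD_{\lambda}(u|_{\lambda=0}) = e^{-\lambda\P\M}\chi^+(\P\M)\, \P \begin{bmatrix} u|_{\lambda=0} \\ 0 \end{bmatrix}.
\end{equation*}
Reading off $\P$ from \eqref{eq:DB}, the vector $\P [u|_{\lambda=0},\,0]^\top$ has vanishing $\pe$-component and $r$-component equal to $[-\gradx u|_{\lambda=0},\, -\HT\dhalf u|_{\lambda=0}]^\top = -h_r$. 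Subtracting gives
\begin{equation*}
\pcg \mS_{\lambda}(h_\pe) - \pcg \mD_{\lambda}(u|_{\lambda=0}) = e^{-\lambda\P\M}\chi^+(\P\M) \begin{bmatrix} h_\pe \\ h_r \end{bmatrix} = F_\lambda = \pcg u(\lambda,\cdot).
\end{equation*}
Since $\mS_\lambda f$ and $\mD_\lambda f$ are themselves reinforced weak solutions of \eqref{eq1} (as recorded in Section~\ref{sec:layer intro}), Theorem~\ref{thm:correspondence} produces a single constant $c$ with the asserted representation valid as distributions on $\R^{n+2}_+$.

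Finally I would package the continuity statements. By Theorem~\ref{thm:sob}, $\lambda \mapsto e^{-\lambda\P\M}\chi^+(\P\M) h$ is $\C([0,\infty); \Hdot^{s}_{\P})$-valued, and Lemma~\ref{lem:proj} together with the fact that $\P^{-1}$ is an isomorphism from $\Hdot^{s}_{\P}$ onto $\Hdot^{s+1}_{\P}$ lift this continuity to $\mS_\lambda(h_\pe)$ and $\mD_\lambda(u|_{\lambda=0})$ considered in $\C([0,\infty); \Hdot^{s/2+1/2}_{\pd_{t}-\Delta_x})$. The $\C([0,\infty); \Lloc^2(\ree))$ assertion for $u$ then follows from the continuous embedding of $\Hdot^{s/2+1/2}_{\pd_t-\Delta_x}$ into $\Lloc^2(\ree)$ modulo constants, the difference being absorbed into the fixed constant $c$. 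The only mild subtlety is keeping track of the fact that all intermediate identities are first only modulo constants; this is harmless because Theorem~\ref{thm:correspondence} pins down the ambiguity to a single scalar, which once chosen gives the formula in every claimed topology simultaneously.
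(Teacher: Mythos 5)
Your argument correctly spells out the abstract proof the paper defers to for operators of type $\P\M$: obtain the trace and Cauchy representation of $\pcg u$, compute $\pcg \mS_\lambda$ and $\pcg \mD_\lambda$ from \eqref{eq:relationlayerCauchy}, observe their difference reproduces $\e^{-\lambda\P\M}h = \pcg u$, and conclude from the uniqueness-modulo-constants in Theorem~\ref{thm:correspondence}. Two small repairs: for $s=0$ the Cauchy representation and trace come from Theorem~\ref{thm:uniq} rather than Theorem~\ref{thm:sob} (which is stated only for $-1\le s<0$), and the $\C([0,\infty);\Lloc^2(\ree))$ assertion is cleanest via the direct $\M\P$-semigroup representation of $u$ itself as in Theorem~\ref{thm:chardir}, since passing from the modulo-constants class $\Hdot^{s/2+1/2}_{\partial_t - \Delta_x}$ to a fixed $\Lloc^2$ representative requires that extra step.
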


Again, Proposition~\ref{prop:propertieslayer} and Theorem~\ref{thm:rep} hold as abstract results for operators of type $\P \M$ with a bounded holomorphic functional calculus, see \cite[Sec.~12.3]{Auscher-Stahlhut_APriori} for the proof in the context of elliptic systems and also \cite[Section~7.4]{Amenta-Auscher} for a cleaner presentation.

The  statement in Theorem~\ref{thm:rep} looks like the representation by the familiar Green's formula although we neither have defined a fundamental solution nor can perform integration by parts. In fact, at this level of generality, one can only construct an inverse, whose kernel is to become the fundamental solution when one has regularity 	assumptions on solutions. Nevertheless, our method says that \emph{all solutions of the BVPs must be obtained by Green's representation}, once the abstract operators have integral representations.

We present two different constructions of the inverse. Both inversion formul\ae \, will be proved in Section~\ref{sec:layer}. For the first one, we consider the operator $\Lop$ of \eqref{eq1}  with all variables $(\lambda, x, t)\in \R^{n+2}$ and coefficients independent of $\lambda$. The discussion of Section~\ref{sec:energy} adapts and {in virtue of the Lax-Milgram lemma $\Lop$ becomes an isomorphism from $\Hdot^{1/2}_{\pd_{t}-\Delta_{\lambda,x}}$ onto $\Hdot^{-1/2}_{\pd_{t}-\Delta_{\lambda,x}}$.

\begin{thm}
\label{thm:inverse1}
Assume that the coefficients do not depend on $\lambda$. The inverse of $\Lop$ is formally given by the convolution with the operator valued kernel $\mS_{\lambda}$. More precisely, let $E=\L^2(\R; \Hdot^{{-1/2}}_{-\pd_{t}-\Delta_{x}})$. Given $f \in \Hdot^{{-1/2}}_{\pd_{t}-\Delta_{\lambda,x}}$, the integrals
\begin{equation*}
(\Gamma_{\eps,R}f)_{\lambda}= \int_{\eps<|\lambda-\mu|<R} \mS_{\lambda - \mu} f_{\mu} \d\mu,
\end{equation*}
where $f_{\mu}:=f(\mu, \cdot)$, exists for each $0<\eps<R<\infty$ in $E'$. They define uniformly bounded operators $\Gamma_{\eps, R}: \Hdot^{{-1/2}}_{\pd_{t}-\Delta_{\lambda,x}}\to E'$ and
\begin{equation*}
\lim_{\eps\to 0, R\to \infty}{\Gamma_{\eps, R}f}= {\Lop^{-1}f}
\end{equation*}
holds in the sense of strong convergence in $E'$.
\end{thm}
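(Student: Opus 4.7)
The strategy splits naturally into three stages: a rigorous definition of $\Gamma_{\eps,R}f$ on a dense subclass of $\Hdot^{-1/2}_{\pd_{t}-\Delta_{\lambda,x}}$ where the slice $f_\mu := f(\mu,\cdot)$ makes classical sense; a duality-based proof of the uniform bound in $\eps$ and $R$; and identification of the limit with $\Lop^{-1}f$ by means of the reinforced sesquilinear form on the whole space $\R^{n+2}$, whose coercive modification makes $\Lop$ an isomorphism $\Hdot^{1/2}_{\pd_{t}-\Delta_{\lambda,x}} \to \Hdot^{-1/2}_{\pd_{t}-\Delta_{\lambda,x}}$ by Lax--Milgram (as discussed for the upper half-space in Section~\ref{sec:energy}).

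For $f \in \C_{0}^\infty(\R^{n+2})$ each slice $f_\mu$ is smooth and compactly supported on $\ree$, and by Proposition~\ref{prop:propertieslayer} at $s=-1/2$ the single layer $\mS^{\Lop}_{\lambda-\mu}\colon \Hdot^{-1/4}_{\pd_{t}-\Delta_{x}}\to \Hdot^{1/4}_{\pd_{t}-\Delta_{x}}$ is bounded with norm uniform in $\lambda-\mu$. The truncation $\eps<|\lambda-\mu|<R$ then makes $(\Gamma_{\eps,R}f)_\lambda$ a well-defined Bochner integral in $\Hdot^{1/2}_{\pd_{t}-\Delta_{x}}$. To obtain the uniform bound $\|\Gamma_{\eps,R}f\|_{E'}\lesssim \|f\|_{\Hdot^{-1/2}_{\pd_{t}-\Delta_{\lambda,x}}}$, I would test against $g\in \C_{0}^\infty(\R^{n+2})\subset E$: the duality formula $\dual{g_\lambda}{\mS^{\Lop}_{\lambda-\mu}f_\mu}=\dual{\mS^{\Lop^*}_{\mu-\lambda}g_\lambda}{f_\mu}$ from Proposition~\ref{prop:propertieslayer} combined with Fubini identifies the $(E',E)$ pairing of $\Gamma_{\eps,R}f$ and $g$ with the action of the analogous truncated convolution $\Gamma_{\eps,R}^{\Lop^*}g$ against $f$. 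The same construction applied symmetrically to the backward equation of Section~\ref{sec:backward} then closes the duality loop.

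To identify the limit with $\Lop^{-1}f$, I would evaluate the reinforced sesquilinear form
\begin{equation*}
 a(u,v):=\iiint_{\R^{n+2}} A\gradlamx u \cdot \overline{\gradlamx v} + \HT\dhalf u \cdot \overline{\dhalf v} \, \d\lambda\d x\d t
\end{equation*}
on $u_{\eps,R} := \Gamma_{\eps,R}f$ against a test function $v\in \C_{0}^\infty(\R^{n+2})$. By Fubini, $a(u_{\eps,R},v)$ becomes a $\mu$-integral of the form applied to $\mS^{\Lop}_{\cdot-\mu}f_\mu$. By Theorem~\ref{thm:correspondence} together with~\eqref{eq:relationlayerCauchy}, the latter is a reinforced weak solution of $\Lop$ on each of the two half-spaces $\{\lambda>\mu\}$ and $\{\lambda<\mu\}$, so integration by parts in $\lambda$ annihilates the interior contribution on $\eps<|\lambda-\mu|<R$ and leaves only boundary terms at $\lambda=\mu\pm\eps$; those at $|\lambda-\mu|=R$ vanish as soon as $R$ exceeds the diameter of $\mathrm{supp}\,v$. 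Letting $\eps\to 0$, the upper and lower boundary pieces combine via the jump relation $\pcg\mS^{\Lop}_{0^+}f_\mu-\pcg\mS^{\Lop}_{0^-}f_\mu=[f_\mu,0]$ from Proposition~\ref{prop:propertieslayer}, and only the perpendicular component of $v_\mu$ pairs with $f_\mu$, yielding $a(u_{\eps,R},v)\to \dual{f}{v}$. By the Lax--Milgram uniqueness on $\R^{n+2}$ this forces $\lim_{\eps,R}u_{\eps,R}=\Lop^{-1}f$; combined with the uniform $E'$-bound and density of $\C_{0}^\infty(\R^{n+2})$ in $\Hdot^{-1/2}_{\pd_{t}-\Delta_{\lambda,x}}$, this upgrades to strong convergence in $E'$.

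The main obstacle is the non-local half-order time derivative $\dhalf$, which unlike $\gradlamx$ does not produce slice-wise boundary terms under integration by parts in $\lambda$. I would handle this by working throughout at the level of the first-order system $\pd_\lambda F+\P\M F=0$ from Theorem~\ref{thm:correspondence} for $F=\pcg\mS^{\Lop}_{\cdot-\mu}f_\mu$: only the $\pd_\lambda F$ summand produces slice traces, and these exist in the appropriate parabolic Sobolev spaces by the continuity at $\lambda=0$ asserted in Proposition~\ref{prop:Cauchyextension}. The $\dhalf$-type contributions are thus absorbed into the vectorial trace of $F$, whose jump across $\{\lambda=\mu\}$ is $[f_\mu,0]$, so only the scalar part of $v_\mu$ pairs against $f_\mu$ in the final bracket, faithfully reproducing $\dual{f}{v}$.
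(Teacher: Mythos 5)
Your overall architecture is plausible but has two genuine gaps, and it misses the key simplification the paper exploits, namely that the whole argument can be run \emph{backwards}: start from the known energy solution $u=\Lop^{-1}f\in\Hdot^{1/2}_{\pd_t-\Delta_{\lambda,x}}$, set $F=\pcg u\in\L^2(\R;\clos{\ran(\P)})$, observe that $u=\Lop^{-1}f$ is equivalent to the inhomogeneous first-order equation $\pd_\lambda F+\P\M F=[f,\,0]$, and then substitute $[f_\mu,\,0]=(\pd_\mu F+\P\M F)_\mu$ into the convolution $\int_{\eps<|\lambda-\mu|<R}\pcg\mS_{\lambda-\mu}f_\mu\,\mathrm{d}\mu$. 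Because of \eqref{eq:relationlayerCauchy}, the kernel $\pcg\mS_{\lambda-\mu}$ is exactly $\sgn(\lambda-\mu)\,\e^{-(\lambda-\mu)\P\M}\chi^{\sgn(\lambda-\mu)}(\P\M)$, and the derivative $\pd_\mu\e^{-(\lambda-\mu)\P\M}=\P\M\e^{-(\lambda-\mu)\P\M}$ makes the integrand a perfect $\mu$-derivative. The integral therefore telescopes to four boundary terms $\mp\e^{\mp\sigma\P\M}\chi^{\pm}(\P\M)F_{\lambda\mp\sigma}$ with $\sigma\in\{\eps,R\}$, from which the uniform $E'$-bound (via the bounded holomorphic functional calculus) and the limit (the $R$-terms vanish, the $\eps$-terms sum to $-\P^{-1}F$, whose $\pe$-component is $u$) both drop out simultaneously.

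The first gap in your plan is the uniform-bound step. Your duality argument only establishes that $\Gamma_{\eps,R}^{\Lop}$ and $\Gamma_{\eps,R}^{\Lop^*}$ are formal adjoints; that is an identity, not an estimate, and invoking ``the same construction applied symmetrically'' is circular since neither operator comes with an independent bound at that point. Moreover, the naive size estimate on $\mS_{\lambda-\mu}$ (uniformly bounded $\Hdot^{-1/4}_{\pd_t-\Delta_x}\to\Hdot^{1/4}_{\pd_t-\Delta_x}$, but with \emph{no decay} in $|\lambda-\mu|$) gives nothing integrable in $\mu$. The convolution converges only because of the cancellation encoded in the ODE; that cancellation is invisible to your duality pairing or any crude majorization.

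The second gap is in the identification of the limit. You pass to a limit in $E'$ and then want to quote Lax--Milgram uniqueness, but Lax--Milgram gives uniqueness in $\Hdot^{1/2}_{\pd_t-\Delta_{\lambda,x}}$, not in the strictly larger space $E'$ (as the paper itself stresses after the theorem statement, $\Gamma_{\eps,R}$ does not map into $\Hdot^{1/2}_{\pd_t-\Delta_{\lambda,x}}$ unless $f\in E$). To close this you would have to show that the reinforced weak formulation admits at most one solution within the $E'$-class, which is not established anywhere and would be a separate (and harder) uniqueness statement. You also leave vague how the half-order time derivative is actually ``absorbed into the vectorial trace of $F$'': the place where this works cleanly is precisely the telescoping computation above, since there the $\dhalf$-pieces sit inside $\P\M F$ and never have to be isolated. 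In short, the ingredients you assembled (the relation \eqref{eq:relationlayerCauchy}, the first-order system, and integration by parts in $\lambda$) are the right ones, but they must be combined into the closed-form telescoping identity; without it, neither your bound nor your uniqueness step goes through.
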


Note that $\Hdot^{1/2}_{\pd_{t}-\Delta_{\lambda,x}}$ continuously embeds into $E'$ and so the last equality makes sense. In other words, we have approximated $\Lop^{-1}f$ not within $\Hdot^{1/2}_{\pd_{t}-\Delta_{\lambda,x}}$ but within the larger space $E'$. It does not seem as if $\Gamma_{\eps,R}$ or any modification of this convolution maps into $\Hdot^{1/2}_{\pd_{t}-\Delta_{\lambda,x}}$ except if $f\in E$. Nevertheless, this representation is in agreement with the natural interpretation of the single layer potential being the fundamental solution computed with {pole at} $\mu=0$.

One can also consider a parabolic operator with coefficients depending on all variables and construct its inverse with our methods. This gives us a definition in much greater generality than in the literature. Let us do that for $L =\pd_{t} - \div_{x} A_{\pa \pa}(x,t)\nabla_{x}$ with $(x,t)\in \R^{n+1}$.
Consider $\Lop= \pd_{t} - \div_{x} A_{\pa \pa}(x,t)\nabla_{x} - \pd_{\lambda}^2$ in $n+2$ variables and let $\mS_{\lambda}$ be the single layer potential operator associated with $\Lop$.

\begin{thm}
\label{thm:inverse2}
The inverse of $L: \Hdot^{1/2}_{\pd_{t}-\Delta_{x}} \to \Hdot^{-1/2}_{\pd_{t}-\Delta_{x}}$ is given by $L^{-1}=- \int_{-\infty}^\infty \mS_{\lambda}\d\lambda.$ More precisely, given $f \in \Hdot^{-1/2}_{\pd_{t}-\Delta_{x}}$, the integrals
\begin{equation*}
\int_{\eps\le |\lambda|\le R} \mS_{\lambda } f \d\lambda \qquad (0<\eps<R<\infty)
\end{equation*}
define uniformly bounded operators from $\Hdot^{-1/2}_{\pd_{t}-\Delta_{x}}$ to $\Hdot^{1/2}_{\pd_{t}-\Delta_{x}}$ and as a strong limit in $\Hdot^{1/2}_{\pd_{t}-\Delta_{x}}$,
\begin{equation*}
\lim_{\eps\to 0, R\to \infty}\int_{\eps\le |\lambda|\le R} \mS_{\lambda } f \d\lambda= - L^{-1}f.
\end{equation*}
\end{thm}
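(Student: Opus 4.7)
The plan is to evaluate the truncated integrals in closed form using the bounded holomorphic functional calculus of $\P\M$, and then to identify the limit as $-L^{-1}$ by exploiting the identity $\Lop = L - \pd_\lambda^2$ together with the jump relation of Proposition~\ref{prop:propertieslayer}. The key point to exploit is that since the $\lambda$-direction is decoupled from the rest, $A$ is block diagonal with $A_{\pe\pe}=1$ and $A_{\pe\pa}=0$, so the conormal derivative reduces to $\dnuA = \pd_\lambda$ and $\Lop u = Lu - \pd_\lambda^2 u$ on any reinforced weak solution.

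First I would rewrite, using \eqref{eq:st} and the scalar identity $\int_\epsilon^R \e^{-\lambda z}\,\d \lambda = z^{-1}(\e^{-\epsilon z}-\e^{-R z})$, the positive-$\lambda$ portion as
\begin{equation*}
 -\int_\epsilon^R \mS_\lambda f \,\d\lambda = \bigl( \P^{-1} (\P\M)^{-1} (\e^{-\epsilon \P\M} - \e^{-R\P\M}) \chi^+(\P\M) [f,0] \bigr)_\pe,
\end{equation*}
where the $z$-integral, restricted to the positive spectral space on which $[\P\M]=\P\M$ has spectrum in the open right half-plane, is interpreted via functional calculus. An analogous formula handles $\lambda<0$ with $\chi^-(\P\M)$. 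Since $[f,0]\in \Hdot^{-1}_{\P}= \Hdot^{-1}_{\P\M}$ with norm $\lesssim\|f\|_{\Hdot^{-1/2}_{\pd_t-\Delta_x}}$ (Lemma~\ref{lem:identification}), the chain of mapping properties $(\P\M)^{-1}\colon \Hdot^{-1}_{\P\M}\to\Hdot^{0}_{\P\M}=\clos{\ran(\P)}=\Hdot^{0}_{\P}$ and $\P^{-1}\colon\Hdot^{0}_{\P}\to\Hdot^{1}_{\P}$ (whose $\pe$-component embeds into $\Hdot^{1/2}_{\pd_t-\Delta_x}$) combined with the uniform bound $\|\e^{-\epsilon\P\M}-\e^{-R\P\M}\|\lesssim 1$ on $\Hdot^{-1}_{\P\M}$ yield uniform boundedness of the truncated integrals. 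Strong convergence as $\epsilon\to0,\,R\to\infty$ to a bounded operator $T\colon \Hdot^{-1/2}_{\pd_t-\Delta_x}\to \Hdot^{1/2}_{\pd_t-\Delta_x}$ follows from the strong convergences $\e^{-\epsilon\P\M}\chi^+(\P\M)\to\chi^+(\P\M)$ and $\e^{-R\P\M}\chi^+(\P\M)\to 0$ on $\Hdot^{-1}_{\P\M}$ guaranteed by the functional calculus.

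Next, I would identify $T=-L^{-1}$ by computing $L$ applied to the truncated integral. Interchanging $L$ with the $\lambda$-integration (legal as the integrand is a continuous $\Hdot^{1/2}_{\pd_t-\Delta_x}$-valued function of $\lambda$ away from $0$), the equation $\Lop\mS_\lambda f=0$ on each open half-line $\lambda\gtrless0$ becomes $L\mS_\lambda f = \pd_\lambda^2\mS_\lambda f$. Integration by parts in $\lambda$ on $[\epsilon,R]$ and $[-R,-\epsilon]$ gives
\begin{equation*}
 L\biggl(-\int_{\epsilon\le|\lambda|\le R}\mS_\lambda f\,\d\lambda\biggr)
 = \bigl(\pd_\lambda\mS_\epsilon f-\pd_\lambda\mS_{-\epsilon}f\bigr)-\bigl(\pd_\lambda\mS_R f-\pd_\lambda\mS_{-R}f\bigr).
\end{equation*}
The first bracket tends to the jump of $\pd_\lambda\mS_\lambda f$ at $0$, which equals $f$ by Proposition~\ref{prop:propertieslayer}; the second bracket vanishes as $R\to\infty$ because $\pd_\lambda\mS_{\pm R}f = \pm(\e^{-R\P\M}\chi^\pm(\P\M)[f,0])_\pe$ tends to $0$ strongly in $\Hdot^{-1}_{\P}$, hence in $\Hdot^{-1/2}_{\pd_t-\Delta_x}$ after projecting onto the $\pe$-component. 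Hence $LTf=-f$, and the isomorphism property of $L\colon \Hdot^{1/2}_{\pd_t-\Delta_x}\to \Hdot^{-1/2}_{\pd_t-\Delta_x}$ (via the Lax--Milgram argument recalled before the theorem) gives $T=-L^{-1}$.

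The main obstacle is keeping track of the various Sobolev scales -- those adapted to $\P$ and $\P\M$ versus the concrete parabolic spaces $\Hdot^{s}_{\pd_t-\Delta_x}$ -- and ensuring that strong convergence of the abstract semigroup factors transfers to strong convergence of the boundary terms $\pd_\lambda \mS_{\pm R}f$ in the dual $\Hdot^{-1/2}_{\pd_t-\Delta_x}$, so that the identity $LT f = -f$ really holds as functions in $\Hdot^{-1/2}_{\pd_t-\Delta_x}$. Once these mapping properties (Lemma~\ref{lem:identification} together with the Sobolev-scale boundedness of $\chi^\pm(\P\M)$ and $(\P\M)^{-1}$) are in place, the rest of the argument is essentially an application of the fundamental theorem of calculus combined with the jump relations.
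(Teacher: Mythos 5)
Your computation of the truncated integrals via the functional calculus is exactly the paper's: both of you write $\int_\eps^R\mS_\lambda f\,\d\lambda$ in closed form as $(\P\M\P)^{-1}(\e^{-R\P\M}-\e^{-\eps\P\M})\chi^+(\P\M)[f,0]|_\pe$ (you write $\P^{-1}(\P\M)^{-1}$, which is the same), plus the analogous negative-$\lambda$ piece, and both of you obtain uniform boundedness and strong convergence from the semigroup estimates on $\Hdot^{-1}_\P$. Where you diverge is in the identification of the limit $T$. The paper simply adds the two spectral pieces to get $-T f=((\P\M\P)^{-1}[f,0])_\pe$ and then observes directly from the form of $\P$ and $\M$ that $\P\M\P$ does not mix the $\pe$-component with the $(\pa,\te)$-components and that its $\pe$-$\pe$ block is exactly $L$, whence $-T=L^{-1}$. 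You instead apply $L$ to the truncated integral, use the equation $\Lop\mS_\lambda f=0$ together with $\Lop=L-\pd_\lambda^2$ to write $L\mS_\lambda f=\pd_\lambda^2\mS_\lambda f$, integrate by parts in $\lambda$, recognize the $\eps$-boundary terms as the jump $\pd_\lambda\mS_{0^+}f-\pd_\lambda\mS_{0^-}f=f$ from Proposition~\ref{prop:propertieslayer}, show that the $R$-boundary terms vanish, and finally invoke bijectivity of $L$ to upgrade $L(-T)=\id$ to $-T=L^{-1}$. Both routes are correct. The paper's is shorter and exploits the built-in algebra of the Dirac operator; yours is closer to a classical Green's-function argument and arguably more transparent about \emph{why} the $\lambda$-integral of the single layer potential should produce the inverse. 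The one step in your argument that deserves an explicit sentence is the passage from the reinforced weak formulation of $\Lop\mS_\lambda f=0$ to the fixed-$\lambda$ identity $L\mS_\lambda f=\pd_\lambda^2\mS_\lambda f$ in $\Hdot^{-1/2}_{\pd_t-\Delta_x}$, and the fundamental theorem of calculus you then apply in $\lambda$: both require the strong $\C^\infty$-smoothness of $\lambda\mapsto\mS_\lambda f$ with values in the relevant Sobolev scale, which is available from the semigroup representation (Theorem~\ref{thm:sob} and Lemma~\ref{lem:proj}) but should be invoked explicitly.
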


In this result, the approximation is within $\Hdot^{1/2}_{\pd_{t}-\Delta_{x}}$, but is not directly related to the layer potentials of $L$.
\subsection{Invertibility of layer potentials}
\label{sec:invertibility layer}

Let us denote the (abstract) layer potentials from the previous section with superscripts $\Lop$ or $\Lop^*$ to distinguish them for the forward and backward equation. So far, we have obtained their boundedness. Provided that the boundary layers are invertible, we can solve the Dirichlet/regularity problem for $\Lop u=0$ with boundary data $f$ (recall that these can be thought as the same problem in different topologies) by
\begin{equation}
\label{eq:invlayerdir1}
u= \mS_{\lambda}^\Lop (\mS_{0}^\Lop)^{-1}f
\end{equation}
or
\begin{equation}
\label{eq:invlayerdir2}
u=\mD_{\lambda}^\Lop (\mD_{0^+}^\Lop)^{-1}f,
\end{equation}
while the Neumann problem $\Lop u=0$ with Neumann data $g$ can be solved via
\begin{equation}
\label{eq:invlayerdir3}
u=\mD_{\lambda}^\Lop ( {\partial_{{\nu}_{A}}}\mD_{0}^\Lop)^{-1}g
\end{equation}
or
\begin{equation}
\label{eq:invlayerdir4}
u=\mS_{\lambda}^\Lop ({\partial_{{\nu}_{A}}}{\mS_{0^+}^{\Lop}})^{-1}g= \mS_{\lambda}^\Lop {({\mD_{0^-}^{\Lop^*})}^*}^{-1}g.
\end{equation}
Here we consider  $\lambda>0$. Since $\mS_{\lambda}^\Lop$ and ${\partial_{{\nu}_{A}}}\mD_{\lambda}^\Lop$ do not jump across the boundary $\lambda=0$, we do not need to use the $0^\pm$ symbol. On the other hand, the double layer potentials $\mD_{\lambda}^\Lop$ and $\mD_{\lambda}^{\Lop^*}=({\partial_{{\nu}_{A}}}{\mS_{-\lambda}^{\Lop}})^*$ jump. In the lower half-space, we would assume $\lambda<0$ and change $0^+$ to $0^-$ and vice versa. Hence, including the lower half-space in the discussion, we have eight possible formul\ae, depending on the invertibility of six boundary operators, which is reminiscent of the connection between simultaneous well-posedness of BVPs in both half-spaces and invertibility of the entries of $\sgn(\P \M)$ discussed in Section~\ref{sec:sgnPM results}.

This is of course no coincidence -- there is a direct translation of the results on $\sgn(\P \M)$ into the language of layer potentials. Indeed, setting $D_{r}f= [\gradx f, \HT\dhalf f]$ and using the formul\ae \, for $\pcg \mS_{\lambda}^\Lop$ and $\pcg \mD_{\lambda}^\Lop$ as well as the jump relations from Proposition~\ref{prop:propertieslayer}, the reader may readily check that
\begin{align}
\label{eq:rep sgn layer}
\begin{split}
\chi^+(\P\M)& =  \begin{bmatrix}
 \partial_{{\nu}_{A}} \mS_{0^+}^{\Lop}      &  -\partial_{{\nu}_{A}} \mD_{0}^{\Lop} D_r^{-1}    \\
  D_{r}\mS_{0}^{\Lop}    &  -D_{r}\mD_{0^+}^{\Lop}D_{r}^{-1}
\end{bmatrix}, \\
\chi^-(\P\M)& =  \begin{bmatrix}
- \partial_{{\nu}_{A}} \mS_{0^-}^{\Lop}      &  +\partial_{{\nu}_{A}} \mD_{0}^{\Lop}D_r^{-1}    \\
  -D_{r}\mS_{0}^{\Lop}    &  D_{r}\mD_{0^-}^{\Lop}D_{r}^{-1}
\end{bmatrix},\\
\sgn(\P\M)& =  \begin{bmatrix}
 \partial_{{\nu}_{A}} \mS_{0^+}^{\Lop}  + \partial_{{\nu}_{A}} \mS_{0^-}^{\Lop}    & -2 \partial_{{\nu}_{A}} \mD_{0}^{\Lop}D_r^{-1}   \\
  2D_{r}\mS_{0}^{\Lop}    &  -D_{r}(\mD_{0^+}^{\Lop}+\mD_{0^-}^{\Lop})D_{r}^{-1}
\end{bmatrix}.
\end{split}
\end{align}
Comparison with \eqref{eq:rep chi} reveals $\pm2\partial_{{\nu}_{A}} \mS_{0^\pm}^{\Lop} = 1\pm s_{\pe \pe}(\P \M)$, $-2 \partial_{{\nu}_{A}} \mD_{0}^{\Lop} = s_{\pe r}(\P \M)$, $2D_{r}\mS_{0}^{\Lop} = s_{r \pe}(\P \M)$ and $\mp 2 D_{r}\mD_{0^+}^{\Lop}D_{r}^{-1} = 1 \pm s_{r r}(\P \M)$. Hence, Theorem~\ref{thm:six invertibilities} and Corollary~\ref{cor:six invertibilities energy} in the language of layer potentials become

\begin{thm}
\label{thm:invertequiv}
Let $s \in [-1,0]$. The following hold true.
\begin{enumerate}
 \item $(R)_{\mE_s}^\Lop$ is well-posed on \emph{both} half-spaces if and only if $\mS_{0}^\Lop :\Hdot^{s/2}_{\pd_{t}-\Delta_{x}} \to \Hdot^{s/2+1/2}_{\pd_{t}-\Delta_{x}}$ is invertible.
 \item $(N)_{\mE_s}^\Lop$ \!is well-posed on \emph{both} half-spaces if and only if ${\partial_{{\nu}_{A}}}\mD_{0}^\Lop : \Hdot^{s/2+1/2}_{\pd_{t}-\Delta_{x}}
 \to \Hdot^{s/2}_{\pd_{t}-\Delta_{x}}$ is invertible.
 \item The two operators
 \begin{align*}
  \mS_{0}^\Lop :\Hdot^{s/2}_{\pd_{t}-\Delta_{x}} \to \Hdot^{s/2+1/2}_{\pd_{t}-\Delta_{x}}, \qquad
  {\partial_{{\nu}_{A}}}\mD_{0}^\Lop : \Hdot^{s/2+1/2}_{\pd_{t}-\Delta_{x}} \to \Hdot^{s/2}_{\pd_{t}-\Delta_{x}}
  \end{align*} are invertible if and only if the four operators
  \begin{align*}
   {\partial_{{\nu}_{A}}}{\mS_{0^\pm}^{\Lop}}: \Hdot^{s/2}_{\pd_{t}-\Delta_{x}} \to \Hdot^{s/2}_{\pd_{t}-\Delta_{x}}, \qquad
   \mD_{0^\pm}^\Lop : \Hdot^{s/2+1/2}_{\pd_{t}-\Delta_{x}} \to \Hdot^{s/2+1/2}_{\pd_{t}-\Delta_{x}},
  \end{align*}
 are invertible.
 \item In particular, well-posedness of $(R)_{\mE_s}^\Lop$ \emph{and} $(N)_{\mE_s}^\Lop$ on \emph{both} half-spaces is equivalent to simultaneous invertibility of the six boundary layer operators appearing in (iii).
 \item The six operators are always invertible when $s=-1/2$.
\end{enumerate}
\end{thm}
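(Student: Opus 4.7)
The plan is to deduce Theorem~\ref{thm:invertequiv} directly from Theorem~\ref{thm:six invertibilities} and Corollary~\ref{cor:six invertibilities energy} by establishing the matrix representation \eqref{eq:rep sgn layer}, which identifies each entry of $\sgn(\P\M)$ (up to a scalar factor and, in two cases, conjugation by the isomorphism $D_r$) with one of the six boundary layer operators appearing in the statement.

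First I would verify \eqref{eq:rep sgn layer} by direct computation. From \eqref{eq:relationlayerCauchy} we have $\pcg \mS_\lambda f = \sgn(\lambda) (C_0 [f,0]^{\top})_\lambda$ and $\pcg \mD_\lambda f = \sgn(\lambda) (C_0 P[f,0]^\top)_\lambda$, and from the definition of $C_0$ in \eqref{eq:Cauchyext}, passing to the boundary in the strong topology (justified by Proposition~\ref{prop:propertieslayer}) yields
\begin{align*}
\pcg \mS_{0^\pm} \begin{bmatrix} f \\ 0 \end{bmatrix}_\pe = \pm \chi^\pm(\P\M) \begin{bmatrix} f \\ 0\end{bmatrix}, \qquad \pcg \mD_{0^\pm} f_\pe = \pm \chi^\pm(\P\M) P \begin{bmatrix} f \\ 0\end{bmatrix},
\end{align*}
where the second formula uses that $P[f,0]^\top$ has zero $\pe$-component and $r$-component $-D_r f$. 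Reading off components according to $\pcg u = [\dnuA u,\,\gradx u,\,\HT\dhalf u]^\top = [\dnuA u,\,D_r u]^\top$, the four matrix identities for $\chi^\pm(\P\M)$ in \eqref{eq:rep sgn layer} follow. Taking the difference $\chi^+ - \chi^-$ produces the formula for $\sgn(\P\M)$.

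Next I would match entries. Comparison of \eqref{eq:rep sgn layer} with the matrix representation \eqref{eq:rep sgn} of $\sgn(\P\M)$ gives
\begin{align*}
 1\pm s_{\pe\pe}(\P\M) = \pm 2\,\partial_{\nu_A}\mS^{\Lop}_{0^\pm}, \quad s_{\pe r}(\P\M) = -2\,\partial_{\nu_A}\mD^{\Lop}_{0}\,D_r^{-1}, \\
 s_{r\pe}(\P\M) = 2\,D_r \mS^{\Lop}_{0}, \quad 1\pm s_{rr}(\P\M) = \mp 2\, D_r \mD^{\Lop}_{0^\pm} D_r^{-1}.
\end{align*}
Since $D_r$ realises an isomorphism $\Hdot^{s/2+1/2}_{\pd_t-\Delta_x} \to (\Hdot^s_\P)_r$ (recall that $(\Hdot^s_\P)_r \simeq (\Hdot^{s/2}_{\pd_t - \Delta_x})^{n+1}$ via $[\nabla_x,\HT\dhalf]$, as reflected in the discussion of $\Hdot^s_\P$ below \eqref{eq:Psquared}), and since by construction $(\Hdot^s_\P)_\pe \simeq \Hdot^{s/2}_{\pd_t-\Delta_x}$, the invertibility of each of the six operators on $\Hdot^s_\P$-components is equivalent to the invertibility of the corresponding layer operator between the concrete parabolic Sobolev spaces.

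Finally I would conclude. Assertions (i), (ii) and (iv) follow by inserting these identifications into the corresponding statements of Theorem~\ref{thm:six invertibilities}(i), (ii), (iv). Assertion (iii) likewise follows from Theorem~\ref{thm:six invertibilities}(iii). Assertion (v) is Corollary~\ref{cor:six invertibilities energy} translated through the same dictionary. The only real work is the bookkeeping for \eqref{eq:rep sgn layer}; the main potential pitfall is keeping track of the signs arising from $\sgn(\lambda)$ in \eqref{eq:relationlayerCauchy} and from the jump relations $\pcg\mS_{0^+} - \pcg\mS_{0^-} = [f,0]^\top$ and $\mD_{0^+} - \mD_{0^-} = -f$ in Proposition~\ref{prop:propertieslayer}, together with the sign flip inside the definition of $\mD_\lambda$ in \eqref{eq:dt}. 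Once these are handled, every step is purely algebraic and the theorem is immediate from the already-established general theory.
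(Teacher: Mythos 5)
Your proposal is correct and follows exactly the strategy the paper uses: it establishes the matrix identities in \eqref{eq:rep sgn layer} (which the paper leaves to the reader), reads off $1\pm s_{\pe\pe}(\P\M) = \pm 2\partial_{\nu_A}\mS^{\Lop}_{0^\pm}$, $s_{\pe r}(\P\M)=-2\partial_{\nu_A}\mD_0^{\Lop}D_r^{-1}$, $s_{r\pe}(\P\M)=2D_r\mS_0^{\Lop}$, $1\pm s_{rr}(\P\M)=\mp 2D_r\mD_{0^\pm}^{\Lop}D_r^{-1}$, and transports Theorem~\ref{thm:six invertibilities} and Corollary~\ref{cor:six invertibilities energy} through the isomorphisms $(\Hdot_\P^s)_\pe\simeq\Hdot^{s/2}_{\pd_t-\Delta_x}$ and $D_r:\Hdot^{s/2+1/2}_{\pd_t-\Delta_x}\to(\Hdot_\P^s)_r$. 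The sign bookkeeping and the observation that $\mS_\lambda$ and $\partial_{\nu_A}\mD_\lambda$ do not jump at $\lambda=0$ are handled correctly.
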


The direction from invertibility to \emph{solvability} is well-known in this field. Our characterisations of well-posedness allowed us to include \emph{uniqueness} and the full converse. This statement was implicit in the elliptic counterpart in \cite{Auscher-Mourgoglou}. 

The conormal derivative of the double layer potential appearing in (ii) is hardly used in the literature and indeed, (iii) makes clear that it needs not be considered for a simultaneous treatment of regularity and Neumann problems. This is in accordance with the case of a real, symmetric and $t$-independent matrix $A$, where invertibility of the boundary layers for $s=0$ can be achieved through a Rellich-type estimate
\begin{align*}
 \|\dnuA u|_{\lambda = 0}\|_2 \sim \|\nabla_x u|_{\lambda = 0}\|_2 + \|\HT \dhalf u|_{\lambda = 0}\|_2,
\end{align*}
at the boundary for $u = \mS_\lambda f$ or $u = \mD_\lambda f$. Its proof is typically carried out by integration by parts and thus applies equally well in both half-spaces. As a consequence of this, there are lower bounds for all six operators in Theorem~\ref{thm:invertequiv} and invertibility as well as compatibility with the inverses for the energy class can be obtained from the method of continuity, perturbing from the heat equation to $\Lop$ as in Proposition~\ref{prop:continuitymethod}. This method has been pioneered for elliptic equations in~\cite{V}. The parabolic version was recently obtained in \cite[Sec.~11.4]{N2}.

In the general situation, however, Neumann and regularity problems on both half-spaces might not be simultaneously well-posed and then invertibility of one boundary layer operator needs not be necessary for solving the corresponding problem. Indeed, invertibility of ${\mD_{0^\pm}^{\Lop}}:\Hdot^{s/2+1/2}_{\pd_{t}-\Delta_{x}} \to \Hdot^{s/2+1/2}_{\pd_{t}-\Delta_{x}}$ is sufficient for solving $(R)_{\mE_{s}}^\Lop$ on $\R^{n+2}_\pm$ via \eqref{eq:invlayerdir2} but it does \emph{not} follow from a purely algebraic reasoning that invertibility, or even lower bounds, are also necessary for having well-posedness of $(R)_{\mE_{s}}^\Lop$ solely in both half-spaces, see the remarks after Theorem~\ref{thm:six invertibilities}. The same discussion applies to $(N)_{\mE_{s}}^\Lop$ and $\mD_{0^\mp}^{\Lop^*}$.

To finish this section, we comment on the connection between the spectral projections and the Calder\'on projections.  We have seen that the entries of  $\chi^+(\P\M)$ are given in terms of  layer potentials but using the representation at the level of $\Hdot^{s}_{\P }$, $-1\le s\le 0$. However, if we use  the boundary space $\cH^s := \Hdot^{s/2}_{\pd_{t}-\Delta_{x}}\times \Hdot^{s/2+1/2}_{\pd_{t}-\Delta_{x}}$, then through the isometry $[ \psi,\omega]\mapsto [\psi, -D_{r}\omega]$ onto $\Hdot^{s}_{\P }$, the projection $\chi^+(\P\M)$ is similar to
\begin{align*}
 \begin{bmatrix}
 \partial_{{\nu}_{A}} \mS_{0^+}^{\Lop}      &  \partial_{{\nu}_{A}} \mD_{0}^{\Lop}    \\
  -\mS_{0}^{\Lop}    &  -\mD_{0^+}^{\Lop}
  \end{bmatrix},
\end{align*}
defined on $\cH^s$, which is the celebrated \emph{Calder\'on projection} for our parabolic equation on the upper half-space. The same can be done for the lower half-space. As for $\sgn(\P\M)$, it is similar to the involution on $\cH^s$ given by 
\begin{align*}
2\mathcal{A}_{\Lop} := \begin{bmatrix}
 \partial_{{\nu}_{A}} \mS_{0^+}^{\Lop}+  \partial_{{\nu}_{A}} \mS_{0^-}^{\Lop}& 2 \partial_{{\nu}_{A}} \mD_{0}^{\Lop}    \\
 -2 \mS_{0}^{\Lop}    &  -\mD_{0^+}^{\Lop}  -\mD_{0^-}^{\Lop}\end{bmatrix}.
\end{align*}
In \cite[Section 3]{C}, these calculations are carried out for the layer potentials in the case of the heat equation on a bounded Lipschitz cylinder when $s=-1/2$. The built-in algebra provided by the Dirac operator $\P\M$ hence gives us immediate access to this technology. This reference shows also a certain coercivity of the involution, which is not available here since $t$ lives in a non-compact interval. In fact, using the natural duality between $\cH^{-1/2}$ and its dual, one can only show that
\begin{align*}
  \Re \bigg \langle \mathcal{A}_{\Lop}  \begin{bmatrix}
       \psi  \\
       - \omega 
\end{bmatrix} , \begin{bmatrix}
      -\omega  \\
        \psi 
\end{bmatrix} \bigg \rangle 
= \Re \iiint_{\R^{n+2}_{+}} A\nabla u^+\cdot \overline{\nabla u^+} + \Re \iiint_{\R^{n+2}_{-}} A\nabla u^-\cdot \overline{\nabla u^-},
\end{align*}
where $u^\pm$ are the energy solutions to \eqref{eq1} for $\lambda$-independent coefficients in $\R^{n+2}_\pm$ with data corresponding to the splitting of $[\psi, D_{r} \omega] \in \Hdot^{-1/2}_{\P}$ in virtue of the spectral spaces $\Hdot^{-1/2,\pm}_{\P\M}$. So, we are missing the terms with half-order derivatives in time and there does not seem to be a hidden coercivity either, as already the example of the heat equation and calculations via Fourier transform show. We leave further details to the reader.  
[For the interested reader,  the very same  calculations work with the $DB$ formalism of \cite{Auscher-Stahlhut_APriori}  for the elliptic equation $-\div_{\lambda,x} A(x)\nabla_{\lambda,x} u = 0$ on both half-spaces showing that  the coercivity holds because the right hand side (integrals on $\R^{n+1}_{+}$) would be equivalent to the square of the norm in $\Hdot^{-1/4}_{\Delta_{x}}\times \Hdot^{1/4}_{\Delta_{x}}= \Hdot^{-1/2}(\R^n)\times \Hdot^{1/2}(\R^n)$ of the data using the spectral splitting.] 
\subsection{Well-posedness results for equations with \texorpdfstring{$\lambda$}{lambda}-independent coefficients}

Finally, we shall address well-posedness of the BVPs. All structural properties of the coefficients $A$ refer to the schematic representation \eqref{eq:A}.

\begin{thm}
\label{thm:WP}
\begin{enumerate}
 \item $(R)_{\mE_{s}}^\Lop$ and $(N)_{\mE_{s}}^\Lop$ are compatibly well-posed when $-1\le s \le 0$ and $A(x,t)$ has block structure.
 \item $(R)_{\mE_{s}}^\Lop$ is compatibly well-posed when $-1/2 \le s \le 0$ and $A(x,t)$ is upper triangular.
 \item $(R)_{\mE_{s}}^\Lop$ is compatibly well-posed when $-1 \le s \le -1/2$ and $A(x,t)$ is lower triangular.
 \item $(N)_{\mE_{s}}^\Lop$ is compatibly well-posed when $-1/2 \le s \le 0$ and $A(x,t)$ is lower triangular.
 \item $(N)_{\mE_{s}}^\Lop$ is compatibly well-posed when $-1\le s\le -1/2 $ and $A(x,t)$ is upper triangular.
 \item $(R)_{\mE_{s}}^\Lop$ and $(N)_{\mE_{s}}^\Lop$ are compatibly well-posed when $-1 \le s \le 0$ and $A(x,t)=A(x)$ is Hermitian.
 \item $(R)_{\mE_{s}}^\Lop$ and $(N)_{\mE_{s}}^\Lop$ are compatibly well-posed when $-1 \le s \le 0$ and $A(x,t)= A$ is constant.
\end{enumerate}
\end{thm}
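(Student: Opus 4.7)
The plan is to reduce each of the seven cases to a single ``hard'' input, located at an endpoint $s\in\{-1,-1/2,0\}$, and to propagate the conclusion through the general machinery of Section~\ref{sec:char}: interpolation (Theorem~\ref{thm:inter}), openness (Theorem~\ref{thm:extra}), duality (Theorem~\ref{thm:duality}) and the method of continuity (Proposition~\ref{prop:continuitymethod}). The energy case $s=-1/2$ is already compatibly well-posed for any uniformly elliptic $A$ thanks to Section~\ref{sec:energy}, and this provides a universal anchor point. Moreover, the Hermitian adjoint of an upper triangular matrix is lower triangular and vice versa, so Theorem~\ref{thm:duality} identifies (ii) with (v) and (iii) with (iv). It is therefore enough to treat (i), (ii), (iv), (vi) and (vii).

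For (i), when $A$ is block, so are $\hat A$ and $\M=\mathrm{diag}(A_{\pe\pe}^{-1},A_{\pa\pa},1)$. A direct computation shows that $\bigl((\P\M)^2 h\bigr)_\pe=L(A_{\pe\pe}^{-1} h_\pe)$ with $L=\pd_{t}-\divx A_{\pa\pa}\gradx$, which imports the Kato equivalence $\|\sqrt L\,u\|_2\sim\|\gradx u\|_2+\|\dhalf u\|_2$ of Theorem~\ref{thm:Kato} directly into the functional calculus of $\P\M$. In view of the block form of $\M$, this translates into invertibility of $N_\pe$ and $N_r$ on $\Hdot^{0,+}_{\P\M}$, which by Theorem~\ref{thm:wpequiv} is exactly compatible well-posedness of $(R)_{\mE_0}^\Lop$ and $(N)_{\mE_0}^\Lop$. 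Interpolation with $s=-1/2$ then covers $[-1/2,0]$, while duality (the adjoint of a block matrix is again block) gives $[-1,-1/2]$.

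Cases (ii) and (iv) are obtained from Proposition~\ref{prop:continuitymethod} along the path $A_\tau$ obtained by scaling the single non-zero off-diagonal block of $A$ by $\tau\in[0,1]$. The path stays in the class of upper (respectively lower) triangular matrices with uniform ellipticity and $A_0$ is block, so case (i) anchors it. What must be verified is that the map $N_r$ (respectively $N_\pe$) has lower bounds on $\Hdot^{s,+}_{\P\M_\tau}$ uniformly in $\tau$; these follow from the algebraic simplification of the conormal derivative caused by the vanishing block, combined with the Lipschitz dependence \eqref{eq:lipschitz} of the functional calculus on the coefficients. For (vi), Hermitian and $t$-independent coefficients permit a Rellich-type integration by parts on energy solutions, yielding $\|\dnuA u|_{\lambda=0}\|_2\sim\|\gradx u|_{\lambda=0}\|_2+\|\HT\dhalf u|_{\lambda=0}\|_2$ and hence simultaneous invertibility at $s=0$ of all six boundary layer operators via Theorem~\ref{thm:invertequiv}; interpolation with the energy case fills the whole interval. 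For (vii), constant coefficients turn $\P$ and $\M$ into Fourier multipliers whose functional calculus, spectral projections and coordinate projections can be computed explicitly on the frequency side, giving all isomorphisms at any $s$.

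The main obstacle is the block endpoint case (i) at $s=0$, on which every other case ultimately rests, either directly through Theorem~\ref{thm:inter} and Theorem~\ref{thm:duality}, or through perturbation. The concrete difficulty there is to transfer the \emph{scalar} Kato estimate for $\sqrt L$ into an assertion about the \emph{vector-valued} operators $N_\pe$ and $N_r$ between $\Hdot^{0,+}_{\P\M}$ and the components of $\Hdot^{0}_{\P}$; this requires carefully unwinding the compatibility conditions \eqref{eq:comp} on $\cH_{\loc}$ and exploiting the block decomposition of $\M$ inherited from $A$. The triangular cases inherit this same difficulty through the continuity method, where the uniform lower bounds reduce to the same Kato-type control applied to the perturbed operator $\P\M_\tau$.
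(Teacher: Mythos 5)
Your overall reduction strategy — duality for (iii)$\leftrightarrow$(ii) and (v)$\leftrightarrow$(iv), the energy anchor at $s=-1/2$, interpolation and continuity — matches the general skeleton of the paper's argument, and your remarks about (vi) and (vii) are roughly on target. However, the proposal has two significant gaps.

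First, for the block case (i), the paper does not go through the Kato estimate at $s=0$ and then interpolate/dualize. Instead it uses a purely algebraic argument: since $\M_b$ commutes with the reflection matrix $N$ of \eqref{eq:N}, one has $\P\M_b N=-N\P\M_b$, hence $\sgn(\P\M_b)N=-N\sgn(\P\M_b)$, and this anti-commutation together with $\sgn(\P\M_b)^2=1$ forces the diagonal blocks of $\sgn(\P\M_b)$ to vanish and the off-diagonal blocks to be mutual inverses, as in \eqref{eq:sgnPM block}. This gives compatible well-posedness directly for all $s\in[-1,0]$ via Proposition~\ref{prop:sgn invertibilty implies WP}, with no interpolation or Kato input needed. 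Your route via $((\P\M_b)^2h)_\pe=L(A_{\pe\pe}^{-1}h_\pe)$ is a plausible alternative (and you correctly computed this identity), but you leave open the ``main obstacle'' you yourself identify — how the \emph{scalar} Kato estimate yields invertibility of the \emph{vector-valued} maps $N_\pe,N_r:\Hdot^{0,+}_{\P\M_b}\to(\Hdot^0_\P)_\bullet$ — and this passage is precisely where the content of the result lies. You should either supply it or adopt the paper's anti-commutation argument, which is cleaner and bypasses Kato entirely. (Note: $\M_b$ actually \emph{commutes} with $N$; what anti-commutes is $\P$ with $N$.)

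Second, and more seriously, the treatment of the triangular cases (ii)/(iv) is inadequate. The claim that the required lower bounds for $N_r$ (resp.\ $N_\pe$) on $\Hdot^{s,+}_{\P\M_\tau}$ ``follow from the algebraic simplification of the conormal derivative caused by the vanishing block, combined with the Lipschitz dependence \eqref{eq:lipschitz}'' does not hold up and does not explain why the range of $s$ is restricted to half the interval. The paper's actual argument is considerably more delicate: after reducing to $s=0$ (interpolation with $s=-1/2$ then covers $[-1/2,0]$), one shows invertibility of $s_{\pe r}(\P\M)$ (lower-triangular case) by introducing the auxiliary operator $T_r(\P\M)f=N_r\P\M_b\begin{bmatrix}s_{\pe r}(\P\M)f\\0\end{bmatrix}$ and proving it is an isomorphism simultaneously from $(\Hdot^{-1/2}_{\P\M_b})_r$ to $(\Hdot^{-3/2}_{\P\M_b})_r$ (from the energy-class invertibilities in Corollary~\ref{cor:six invertibilities energy}) \emph{and} from $(\Hdot^{1/2}_{\P\M_b})_r$ to $(\Hdot^{-1/2}_{\P\M_b})_r$; the latter uses the triangular structure precisely once, to rewrite $T_r(\P\M)f=s_{r\pe}(\P\M)N_\pe\P\M_b\begin{bmatrix}0\\f\end{bmatrix}$. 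The target $s=0$ is obtained by interpolation between $\pm 1/2$, and compatibility of inverses on the intersection space is established by a continuity argument back to the block case. This ``lift to exponent $+1/2$'' is the heart of the matter and is exactly why the result is restricted to a half interval: it only works for the component $s_{\pe r}$ (lower triangular) or $s_{r\pe}$ (upper triangular), and the dual argument covers the complementary range. Nothing in your sketch produces this lift, and Lipschitz dependence on coefficients alone cannot supply a lower bound. You would need to rebuild this argument — or find a genuine substitute — before the continuity method can be invoked.
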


We remark that additionally all ranges for compatible well-posedness are relatively open in $(-1,0)$ by Theorem~\ref{thm:extra} and all results are stable under $\L^\infty$-perturbations of the coefficients as in Theorem~\ref{thm:sta}. The proof of this theorem will be given in Section~\ref{sec:wpresults}.

Let us mention that (vi) was proved in \cite{N2} in the case when  $\Lop$ is  a real equation and $s={-1,0}$. Here, we may even allow systems of complex parabolic equations, see the discussion in Section~\ref{sec:systems} at the end of this paper. Similarly, (vii) is done in \cite{N2} when $s=-1, 0$. Item (i) is also proved as a corollary of the estimates in \cite{N1} when $\Lop$ has additionally $t$-independent coefficients. In (ii) and (iii) the triangular condition can be improved to the effect that the lower left and upper right coefficient of $A$ can be real-valued with vanishing $x$-divergence, respectively, since such a change of the representing matrix results in a new elliptic operator that could also be represented by a triangular matrix \cite[Rem.~6.7]{AAM}.

The results on well-posedness are identical for the backward equation with coefficients $A(x,t)$: In fact, it suffices to revert time and to observe that $A(x,-t)$ satisfies the same requirements as $A(x,t)$ in the list above. Likewise, the results remain valid for BVPs on the lower half-space: Here, it suffices to observe that by even reflection at the boundary we obtain a one-to-one correspondence with weak solutions on the upper half-space for an elliptic equation with coefficients
\begin{align*}
 \begin{bmatrix} -1\vphantom{A_{\pa \pa}} & 0 \\ 0\vphantom{A_{\pa \pa}} & 1 \end{bmatrix}
 \begin{bmatrix} A_{\pe \pe} & A_{\pe \pa} \\ A_{\pa \pe} & A_{\pa \pa} \end{bmatrix}
 \begin{bmatrix} -1\vphantom{A_{\pa \pa}} & 0 \\ 0\vphantom{A_{\pa \pa}} & 1 \end{bmatrix},
\end{align*}
sharing the same structural properties.

In combination with Theorem~\ref{thm:invertequiv} we obtain \emph{a posteriori} all possible representation formul\ae \, for the unique solution proposed by layer potential theory.

\begin{thm}
\label{thm:WP with invertlayer}
Let $s\in [-1,0]$ and $u$ be a solution to $\Lop u = 0$ with $\pcg u \in \mE_{s}$ (on either the upper or the lower half-space). All eight representations in \eqref{eq:invlayerdir1} - \eqref{eq:invlayerdir4} are valid if $A$ is either of block form, or constant, or Hermitian and independent of $t$, or  if $A$ is arbitrary and $s$ is sufficiently close to $-1/2$ (depending on $A$). Moreover, \eqref{eq:invlayerdir1} remains valid for the setup of Theorem~\ref{thm:WP}(ii)-(iii) and \eqref{eq:invlayerdir3} for that of Theorem~\ref{thm:WP}(iv)-(v).
\end{thm}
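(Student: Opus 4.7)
The plan is to combine the well-posedness results of Theorem~\ref{thm:WP} with the characterisation of boundary layer invertibility in Theorem~\ref{thm:invertequiv}, and then use Green's representation (Theorem~\ref{thm:rep}) to identify any given solution with the candidate constructed by inverting the appropriate layer potential. The uniqueness side of well-posedness is what guarantees that the eight representation formul\ae\ actually reproduce $u$ rather than merely a solution with the same boundary data.

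First, I would observe that the coefficient classes in Theorem~\ref{thm:WP}(i),(vi),(vii) (block, constant, Hermitian $t$-independent) are each stable under the involution $(\lambda,x,t)\mapsto(-\lambda,x,t)$, by which boundary value problems on $\R^{n+2}_-$ are transferred to problems on $\R^{n+2}_+$ via the conjugation $\mathrm{diag}(-1,I)\,A\,\mathrm{diag}(-1,I)$ that appears at the end of Section~\ref{sec:wpresults}; similarly each triangular class is preserved by this conjugation. Hence Theorem~\ref{thm:WP}(i),(vi),(vii) yields compatible well-posedness of $(R)_{\mE_s}^\Lop$ \emph{and} $(N)_{\mE_s}^\Lop$ on \emph{both} half-spaces for all $s\in[-1,0]$, and Theorem~\ref{thm:invertequiv}(iv) then provides simultaneous invertibility of all six boundary operators $\dnuA\mS_{0^\pm}^\Lop$, $\mS_0^\Lop$, $\dnuA\mD_0^\Lop$, $\mD_{0^\pm}^\Lop$ between the relevant parabolic Sobolev spaces. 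In the triangular cases (ii)--(iii) (resp.\ (iv)--(v)), reflection produces well-posedness of $(R)_{\mE_s}^\Lop$ (resp.\ $(N)_{\mE_s}^\Lop$) on both half-spaces, and Theorem~\ref{thm:invertequiv}(i) (resp.\ (ii)) gives invertibility of $\mS_0^\Lop$ (resp.\ $\dnuA\mD_0^\Lop$) alone. For arbitrary $A$ and $s$ close to $-1/2$, Corollary~\ref{cor:six invertibilities energy} handles $s=-1/2$, and Theorem~\ref{thm:extra} propagates compatible well-posedness of \emph{both} BVPs on \emph{both} half-spaces to an open neighbourhood of $-1/2$, after which Theorem~\ref{thm:invertequiv}(iv) again yields the six invertibilities.

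With these invertibilities secured, I would read off the representation formul\ae\ from Green's representation (Theorem~\ref{thm:rep}): for $\lambda>0$,
\begin{align*}
u(\lambda,\cdot) = \mS_\lambda^\Lop(\dnuA u|_{\lambda=0}) - \mD_\lambda^\Lop(u|_{\lambda=0}) + c.
\end{align*}
Taking the $\lambda\to 0^+$ trace and using the jump relations of Proposition~\ref{prop:propertieslayer}, together with the formul\ae\ for $\pcg\mS_\lambda^\Lop$ and $\pcg\mD_\lambda^\Lop$ recalled before \eqref{eq:relationlayerCauchy}, one obtains boundary identities expressing $f:=u|_{\lambda=0}$ in terms of $g:=\dnuA u|_{\lambda=0}$ via $\mS_0^\Lop$ and $\dnuA\mS_{0^+}^\Lop$, and conversely via $\mD_{0^+}^\Lop$ and $\dnuA\mD_0^\Lop$. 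Substituting the appropriate inverses into Green's formula converts it into each of \eqref{eq:invlayerdir1}--\eqref{eq:invlayerdir4}, and the four companions on the lower half-space follow by reflection.

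The main obstacle is the careful bookkeeping in the first step: one must verify that each structural hypothesis on $A$ is stable under the reflection used to pass between half-spaces, because Theorem~\ref{thm:invertequiv} characterises invertibility of boundary layer operators only in terms of \emph{simultaneous} well-posedness on \emph{both} half-spaces. Once this stability is granted and compatibility with the energy inverses (needed to apply the open-mapping step via Theorem~\ref{thm:extra}) is tracked along Proposition~\ref{prop:cwp}, the remainder is essentially a translation between the $\sgn(\P\M)$-language of Section~\ref{sec:sgnPM results} and the layer potential language of \eqref{eq:rep sgn layer}.
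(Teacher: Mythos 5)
Your proposal is correct and follows essentially the route the paper intends (the paper presents Theorem~\ref{thm:WP with invertlayer} as a direct combination of Theorem~\ref{thm:WP} with Theorem~\ref{thm:invertequiv}, after the dictionary \eqref{eq:rep sgn layer} and Theorem~\ref{thm:rep} have been set up). The key observations -- that the conjugation $\mathrm{diag}(-1,I)\,A\,\mathrm{diag}(-1,I)$ preserves each structural class so that Theorem~\ref{thm:WP} yields well-posedness on both half-spaces, that Theorem~\ref{thm:extra} applied to each of the four problems at $s=-1/2$ covers the arbitrary-$A$ case, and that uniqueness of the BVP identifies $u$ with the solution produced by inverting the boundary layer -- are exactly the ingredients. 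One small point: in your last paragraph, the phrase ``substituting the appropriate inverses into Green's formula converts it into each of \eqref{eq:invlayerdir1}--\eqref{eq:invlayerdir4}'' is not quite a self-contained derivation; the substitution does not close algebraically without either invoking the uniqueness side of well-posedness (which you do, earlier, and which is the cleanest argument: the candidate built from a layer potential is a solution with the same boundary data, hence equal to $u$ by injectivity of the relevant $N_\bullet$) or deriving the commutation relations between boundary layers that come from $\chi^+(\P\M)^2=\chi^+(\P\M)$. Since you already flag uniqueness as the crux, this is a matter of phrasing rather than a gap.
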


The Hermitian case (vi) will be treated in Section~\ref{sec:wpresults} by a Rellich-type argument. It will then become clear that the assumption of $t$-independence is necessary in order to compensate some lack of compactness, stemming from the fact that $t$ varies over the whole real line. If we drop this additional assumption, then the natural solution class changes. The following result, also proved in Section~\ref{sec:wpresults}, captures this effect.

\begin{prop}\label{prop:Dir} Assume $A(x,t)$ is Hermitian with $\dhalf A(x,t) \in \L^\infty(\R^n; \mathrm{BMO}(\R))$.
\begin{enumerate}
\item Let $f \in \Hdot^{1/2}_{{\pd_{t}-\Delta_{x}}}~\cap~\Hdot^{1/4}_{{\pd_{t}-\Delta_{x}}}$. Then there is a unique reinforced weak solution $u$ to \eqref{eq1} such that $u|_{\lambda=0}=f$ and $\pcg u \in \mE_0 \cap \mE_{-1/2}$.
\item Let $f\in \L^2(\R^{n+1}) + \Hdot^{1/4}_{{\pd_{t}-\Delta_{x}}}$. Then there is a unique reinforced weak solution $u$ to \eqref{eq1} with $u|_{\lambda=0}=f$ and the additional property that $u$ can be written as the sum of two solutions $u_{1}+u_{2}$ with $\pcg u_{1}\in \mE_{-1}$ and $\pcg u_{2}\in \mE_{-1/2}$.
\item Well-posedness in the same classes holds for the Neumann problem: For (i), $\dnuA u|_{\lambda = 0}$ is taken in $ \L^2(\R^{n+1})~\cap~\Hdot^{-1/4}_{{\pd_{t}-\Delta_{x}}}$ and for (ii) it is taken in $\Hdot^{-1/2}_{{\pd_{t}-\Delta_{x}}}+ \Hdot^{-1/4}_{{\pd_{t}-\Delta_{x}}}$.
\end{enumerate}
\end{prop}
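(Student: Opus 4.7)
The approach is to reduce all three claims to compatible well-posedness of the Dirichlet and Neumann problems at the three regularity levels $s\in\{-1,-1/2,0\}$ separately, since these can then be combined at the level of the boundary trace. The energy case $s=-1/2$ is already provided by Section~\ref{sec:energy}. For $s=0$ and $s=-1$, Theorems~\ref{thm:wpequiv} and~\ref{thm:invertequiv} tell us that it suffices to establish the Rellich-type estimate
\begin{equation*}
\|\dnuA u|_{\lambda=0}\|_2 \sim \|\gradx u|_{\lambda=0}\|_2 + \|\HT\dhalf u|_{\lambda=0}\|_2
\end{equation*}
for reinforced weak solutions $u$ with $\pcg u\in\mE_0$ on both half-spaces, and then to transfer to $s=-1$ via Theorem~\ref{thm:duality} (noting that the backward equation for a Hermitian $A(x,t)$ with $\dhalf A\in\L^\infty(\R^n;\mathrm{BMO}(\R))$ is, after time reversal, again of the same type). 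Existence in (i)--(iii) then follows by summing/intersecting the corresponding solutions, and uniqueness reduces to the energy case: if $w=w_1+w_2$ has trivial trace with $\pcg w_1\in\mE_{-1}$ and $\pcg w_2\in\mE_{-1/2}$, then $w_1|_{\lambda=0}=-w_2|_{\lambda=0}$ lies in $\L^2\cap\Hdot^{1/4}_{\pd_t-\Delta_x}$; compatibility of the $s=-1$ and $s=-1/2$ inverses forces $w_1\in\mE_{-1/2}$, so $w\in\mE_{-1/2}$ has zero trace and hence $w=0$. The Neumann statement (iii) follows by the same scheme via the Dirichlet-to-Neumann factorisations~\eqref{eq:GammaND factorization}--\eqref{eq:GammaDN factorization}.

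For the Rellich estimate itself I would adapt the $t$-independent Hermitian argument behind Theorem~\ref{thm:WP}(vi). Starting from the identity
\begin{equation*}
\pd_\lambda\!\bigl(A\gradlamx u\cdot\overline{\gradlamx u}+\HT\dhalf u\cdot\overline{\dhalf u}\bigr),
\end{equation*}
Hermiticity of $A$ cancels the symmetric gradient contribution; substituting $\Lop u=0$, integrating by parts in $x$ and in $t$ (via $\pd_t=\dhalf\HT\dhalf$) against a Lipschitz cutoff in $\lambda$, and passing to the limit using the non-tangential maximal control of $\pcg u$ together with the Whitney convergence from Theorem~\ref{thm:NTmax}, one recovers the Rellich pairing at $\lambda=0$. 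In the $t$-dependent setting, however, one picks up an additional bulk contribution morally of the form $\iiint(\dhalf A)\,\gradlamx u\cdot\overline{\gradlamx u}\,\d\lambda\,\d x\,\d t$, together with $\dhalf$-commutator terms reflecting the non-locality of the time half-derivative.

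The main obstacle is controlling these extra terms under the mere assumption $\dhalf A\in\L^\infty(\R^n;\mathrm{BMO}(\R))$. Since $\dhalf A$ is not pointwise bounded, one would use the parabolic John--Nirenberg inequality to see that $|\dhalf A|^2\,\lambda\,\d\lambda\,\d x\,\d t$ is a parabolic Carleson measure with norm controlled by $\|\dhalf A\|_{\L^\infty(\R^n;\mathrm{BMO}(\R))}^2$; Carleson embedding combined with the non-tangential/square-function equivalence from Theorem~\ref{thm:NTmax} then bounds the error by $C\,\|\dhalf A\|^2_{\L^\infty(\R^n;\mathrm{BMO}(\R))}\,\|\NT(\pcg u)\|_2^2$. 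For small BMO-norm this absorbs directly; to reach arbitrary BMO-norm one runs the method of continuity (Proposition~\ref{prop:continuitymethod}) along the arcwise connected path $A_\tau=(1-\tau)I+\tau A$ of Hermitian matrices with $\dhalf A_\tau=\tau\,\dhalf A\in\L^\infty(\R^n;\mathrm{BMO}(\R))$, propagating uniform Rellich lower bounds along the deformation. Turning the BMO/Carleson estimate into a genuinely uniform absorption --- an issue reminiscent of the paraproduct bounds in~\cite{AHLMcT} --- is the delicate technical point; once it is in place, every other ingredient needed for (i)--(iii) is supplied by the general framework already developed in this paper.
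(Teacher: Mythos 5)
Your overall scheme hinges on establishing a pure $\L^2$ Rellich estimate $\|\dnuA u|_{\lambda=0}\|_2 \sim \|\gradx u|_{\lambda=0}\|_2 + \|\HT\dhalf u|_{\lambda=0}\|_2$ for solutions with $\pcg u\in\mE_0$, and then combining the three regularity levels $s\in\{0,-1/2,-1\}$. This is precisely what \emph{cannot} be done under the mere hypothesis $\dhalf A\in\L^\infty(\R^n;\mathrm{BMO}(\R))$: if such a Rellich estimate held at $s=0$ alone, the conclusion of Theorem~\ref{thm:WP}(vi) would extend to time-dependent Hermitian coefficients, which the paper explicitly refrains from claiming. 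The reason appears in the identity \eqref{eq:donefour}: after Hermiticity eliminates the symmetric contributions, the remaining bulk error is $\int_0^\infty(\gradlamx u,[A,\HT\dhalf]\gradlamx u)\,\d\lambda$. By Murray's theorem the commutator $[A,\HT\dhalf]$ is $\L^2(\R)$-bounded at each fixed $\lambda$ with norm $\lesssim\|\dhalf A\|_{\L^\infty(\mathrm{BMO})}$, but to make the $\lambda$-integral converge one still needs $\gradlamx u\in\L^2(\R^{n+2}_+)$, i.e.\ $h\in\Hdot^{-1/2}_{\P}$. No smallness of the BMO-norm fixes this: the obstruction is a failure of integrability in $\lambda$, not a failure to absorb. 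This is why the paper works directly with the trace space $\Hp(\P\M)\cap\Hdot^{-1/2,+}_{\P\M}$ and proves the intersection Rellich estimate \eqref{eq:sim}, which mixes $\L^2$ and $\Hdot^{-1/4}_{\pd_t-\Delta_x}$ norms, rather than a pure $s=0$ statement.

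Your Carleson-measure argument does not repair this. Since $\dhalf A(x,t)$ is independent of $\lambda$, the density $|\dhalf A|^2\,\lambda\,\d\lambda\,\d x\,\d t$ is generically not a parabolic Carleson measure on $\R^{n+2}_+$ (integrating $\lambda$ up to $\ell(R)$ produces a factor $\ell(R)^2$, not $\ell(R)$); the measure that would admit a Carleson estimate in the spirit of~\cite{AHLMcT} is $\lambda$-adapted, which is not the structure here. This also makes the continuity path $A_\tau=(1-\tau)I+\tau A$ circular as you use it: propagating uniform lower bounds at $s=0$ along the path presupposes the very Rellich estimate at $s=0$ that the method does not deliver when $A_\tau$ depends on $t$. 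Finally, your uniqueness argument for~(ii) invokes ``compatibility of the $s=-1$ and $s=-1/2$ inverses'' as if well-posedness at $s=-1$ in the pure $\Hdot^{-1}_\P$ scale were available; the correct argument instead passes to the dual (sum) data space of the intersection space directly, as in the paper. Two ingredients of the actual proof are entirely absent from your proposal: the mollification of $A$ by Lipschitz matrices $A_j$ (to justify Lemma~\ref{lem:smooth}) together with a weak-convergence argument in the energy class to remove the extra regularity in the limit, and the invertibility of the boundary map $N_\bullet$ on the intersection spaces obtained by the method of continuity starting from the block case combined with the compatibility conclusion of Proposition~\ref{prop:cwp}.
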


Note that (i) is a regularity problem with restricted data. The dual problem (ii) looks like a curious Dirichlet problem but if the data is restricted to $f \in \L^2(\ree)$, it will have a strong implication on the (classical) $\L^2$ Dirichlet problem discussed in the next section.

As before, also Proposition~\ref{prop:Dir} remains valid for BVPs on the lower half space. The interested reader may verify that the transposition into the language of layer potentials yields the following.

\begin{prop}
\label{prop:invert layer}
Assume $A(x,t)$ is Hermitian with $\dhalf A(x,t) \in \L^\infty(\R^n; \mathrm{BMO}(\R))$.
\begin{enumerate}
 \item $\mS_{0}^\Lop:\Hdot^{0}_{\pd_{t}-\Delta_{x}}\cap \Hdot^{-1/4}_{\pd_{t}-\Delta_{x}} \to \Hdot^{1/2}_{\pd_{t}-\Delta_{x}} \cap \Hdot^{1/4}_{\pd_{t}-\Delta_{x}}$ is invertible and \eqref{eq:invlayerdir1} holds  for the solution (modulo constants) of  $\Lop u=0$ with  $\pcg u\in \mE_{0}\cap \mE_{-1/2}$ and boundary data $f\in \Hdot^{1/2}_{\pd_{t}-\Delta_{x}} \cap \Hdot^{1/4}_{\pd_{t}-\Delta_{x}}$.

 \item $\mD_{0^+}^\Lop:\L^2(\ree)+ \Hdot^{1/4}_{\pd_{t}-\Delta_{x}} \to \L^2(\ree) + \Hdot^{1/4}_{\pd_{t}-\Delta_{x}}$ is invertible and   \eqref{eq:invlayerdir2} holds  for the solution  of  $\Lop u=0$ with  $\pcg u\in \mE_{-1}+ \mE_{-1/2}$ and boundary data $f\in\L^2(\ree) + \Hdot^{1/2}_{\pd_{t}-\Delta_{x}}$.
\end{enumerate}
\end{prop}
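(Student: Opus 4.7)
The proposition is a transposition of Proposition~\ref{prop:Dir} into the language of the layer potentials introduced in Section~\ref{sec:layer intro}, so the plan is to combine the Dirichlet and Neumann well-posedness of Proposition~\ref{prop:Dir}, applied on \emph{both} half-spaces (cf.\ the remark following that proposition), with the Green's representation of Theorem~\ref{thm:rep} and the jump relations of Proposition~\ref{prop:propertieslayer}. Boundedness of $\mS_0^\Lop$ and $\mD_{0^+}^\Lop$ between the stated intersection and sum spaces is inherited from the semigroup mapping properties of $\e^{-\lambda\P\M}$ and $\e^{-\lambda\M\P}$ applied to the projections $\chi^+(\P\M)$ and $\chi^+(\M\P)$, together with Lemma~\ref{lem:identification} at the Sobolev levels $s\in\{-1,-1/2,0\}$; the non-trivial content is invertibility.

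For (i), injectivity of $\mS_0^\Lop$ is a standard gluing argument: if $\mS_0^\Lop g=0$ with $g\in\L^2(\ree)\cap\Hdot^{-1/4}_{\pd_{t}-\Delta_{x}}$, then $u^\pm(\lambda,\cdot):=\mS_\lambda^\Lop g$ on $\R^{n+2}_\pm$ are reinforced weak solutions with $\pcg u^\pm\in\mE_0\cap\mE_{-1/2}$ and vanishing Dirichlet trace, so Proposition~\ref{prop:Dir}(i) forces both $u^\pm$ to be constants, and the single-layer jump $\dnuA\mS_{0^+}^\Lop g-\dnuA\mS_{0^-}^\Lop g=g$ yields $g=0$. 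For surjectivity, given $f\in\Hdot^{1/2}_{\pd_{t}-\Delta_{x}}\cap\Hdot^{1/4}_{\pd_{t}-\Delta_{x}}$, I apply Proposition~\ref{prop:Dir}(i) on both half-spaces to produce $u^\pm$ with common Dirichlet trace $f$, and set $g:=\dnuA u^+|_{\lambda=0}-\dnuA u^-|_{\lambda=0}\in\L^2(\ree)\cap\Hdot^{-1/4}_{\pd_{t}-\Delta_{x}}$. The function $W^\pm:=u^\pm-\mS_\lambda^\Lop g|_{\R^{n+2}_\pm}$ shares both its Dirichlet trace $f-\mS_0^\Lop g$ and, by the choice of $g$ and the conormal jump relation, its conormal trace across $\lambda=0$, hence glues to a reinforced weak solution on all of $\R^{n+2}$. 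Viewing $\pcg W$ from the upper and lower half-space via Theorems~\ref{thm:uniq} and~\ref{thm:sob}, its boundary trace lies simultaneously in $\Hp(\P\M)$ and $\Hm(\P\M)$ at both relevant Sobolev scales, and the topological splitting~\eqref{eq:hardysplit} reduces this intersection to $\{0\}$. Hence $W$ is constant and $\mS_0^\Lop g=f$. The bounded inverse then follows from the open mapping theorem, and~\eqref{eq:invlayerdir1} is obtained by comparing $u$ from Proposition~\ref{prop:Dir}(i) with $\mS_\lambda^\Lop(\mS_0^\Lop)^{-1}(u|_{\lambda=0})$ via uniqueness.

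For (ii), the same template is run with $\mD_{0^+}^\Lop$ in place of $\mS_0^\Lop$, but the roles of the jump and no-jump relations swap: the Dirichlet jump $\mD_{0^+}^\Lop-\mD_{0^-}^\Lop=-1$ replaces the conormal jump of $\mS$, while the no-jump of $\dnuA\mD^\Lop$ replaces the matching of $\mS_0^\Lop$ across $\lambda=0$. Injectivity: if $\mD_{0^+}^\Lop f=0$, then $u^+:=\mD_\lambda^\Lop f|_{\R^{n+2}_+}$ is a reinforced weak solution in $\mE_{-1}+\mE_{-1/2}$ with zero Dirichlet trace, so Proposition~\ref{prop:Dir}(ii) forces $u^+$ constant; the no-jump of $\dnuA\mD^\Lop$ gives $\dnuA u^-|_{\lambda=0}=0$ for $u^-:=\mD_\lambda^\Lop f|_{\R^{n+2}_-}$, and the Neumann well-posedness in Proposition~\ref{prop:Dir}(iii) yields $u^-$ constant, so the Dirichlet jump relation concludes that $f$ is a constant, hence zero in $\L^2(\ree)+\Hdot^{1/4}_{\pd_{t}-\Delta_{x}}$. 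For surjectivity, given $\psi\in\L^2(\ree)+\Hdot^{1/4}_{\pd_{t}-\Delta_{x}}$, I would solve the Dirichlet problem on both half-spaces with data $\psi$ via Proposition~\ref{prop:Dir}(ii), apply Green's representation on each side to obtain the identities $\mD_{0^\pm}^\Lop\psi=\pm\psi+\mS_0^\Lop g^\pm+c^\pm$ with $g^\pm:=\dnuA u^\pm|_{\lambda=0}\in\Hdot^{-1/2}_{\pd_{t}-\Delta_{x}}+\Hdot^{-1/4}_{\pd_{t}-\Delta_{x}}$, and then construct the preimage by the gluing of (i) applied at the scale $\mE_{-1}+\mE_{-1/2}$, using that $\mS_0^\Lop$ is also an isomorphism $\Hdot^{-1/2}_{\pd_{t}-\Delta_{x}}+\Hdot^{-1/4}_{\pd_{t}-\Delta_{x}}\to\L^2(\ree)+\Hdot^{1/4}_{\pd_{t}-\Delta_{x}}$ by the same argument as in (i). Formula~\eqref{eq:invlayerdir2} then follows from uniqueness as before.

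The main obstacle is careful bookkeeping across two distinct Sobolev scales at once: every intermediate function must be verified to live in the advertised intersection $\mE_0\cap\mE_{-1/2}$ for (i), respectively the sum $\mE_{-1}+\mE_{-1/2}$ for (ii), which amounts to checking that the spectral projections $\chi^\pm(\P\M)$ and $\chi^\pm(\M\P)$ act boundedly between the corresponding intersections and sums. This structural verification reduces to Lemma~\ref{lem:identification} and the functional calculus machinery of Section~\ref{sec:sobolev}, with the rigidity $\Hp\cap\Hm=\{0\}$ of the splitting~\eqref{eq:hardysplit} at each scale providing the closing step in every gluing argument.
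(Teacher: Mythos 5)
Your proposal replaces the paper's intended argument (which is to run the algebraic machinery of Theorem~\ref{thm:six invertibilities}/Theorem~\ref{thm:invertequiv} at the intersection scale $\Hdot_\P^0\cap\Hdot_\P^{-1/2}$ and the sum scale $\Hdot_\P^{-1}+\Hdot_\P^{-1/2}$, using Proposition~\ref{prop:Dir} as the sole input for the well-posedness hypotheses) with a concrete gluing argument. For part~(i) and for the injectivity in part~(ii) this works and is a nice alternative: the rigidity $\Hp(\P\M)\cap\Hm(\P\M)=\{0\}$ together with the jump relations carries the day, and I find it rather more self-contained than the reference to \cite{Auscher-Mourgoglou} implicit in the theorem statement. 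Two approaches buy different things: yours is explicit and makes the role of each jump relation visible, while the paper's route gives all six boundary operators at once via the single identity $\sgn(\P\M)^2=\id$.

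There is, however, a genuine gap in your surjectivity argument for part~(ii). First, a small sign issue: passing to the limit in Theorem~\ref{thm:rep} for the Dirichlet solutions $u^\pm$ with data $\psi$ gives $\mD_{0^\pm}^\Lop\psi=\mp\psi+\mS_0^\Lop g^\pm+c^\pm$, not $\pm\psi$. More importantly, this identity computes the \emph{image} of $\psi$ under $\mD_{0^\pm}^\Lop$; subtracting the two instances merely reproduces $\mS_0^\Lop(g^+-g^-)=\psi$, which is the surjectivity of $\mS_0^\Lop$ at the sum scale (again) and says nothing about the range of $\mD_{0^+}^\Lop$. The phrase ``construct the preimage by the gluing of (i)'' is therefore not a proof: the double-layer gluing is governed by the opposite jump pattern, and running (i)'s scheme does not produce an $f$ with $\mD_{0^+}^\Lop f=\psi$. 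You actually touched the missing ingredient in your injectivity argument: one must also use the Neumann well-posedness of Proposition~\ref{prop:Dir}(iii), which yields invertibility of $\dnuA\mD_0^\Lop:\L^2(\ree)+\Hdot^{1/4}_{\pd_t-\Delta_x}\to\Hdot^{-1/2}_{\pd_t-\Delta_x}+\Hdot^{-1/4}_{\pd_t-\Delta_x}$ at the sum scale. Once both $\mS_0^\Lop$ and $\dnuA\mD_0^\Lop$ are invertible, the $(r,r)$-entry of $\sgn(\P\M)^2=\id$, translated through~\eqref{eq:rep sgn layer}, reads $\mS_0^\Lop\,\dnuA\mD_0^\Lop=\mD_{0^-}^\Lop\mD_{0^+}^\Lop$, a Calder\'on-type identity. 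Since $\mD_{0^+}^\Lop$ and $\mD_{0^-}^\Lop$ commute (both lie in the functional calculus of $\M\P$), invertibility of their product implies invertibility of each factor. At that point you are in effect reproving Theorem~\ref{thm:six invertibilities}(iii), which is what the concrete gluing was meant to bypass; either close the argument along these lines, or frame part~(ii) directly via the $\sgn$-algebra as the paper intends.
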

\subsection{Uniqueness for Dirichlet problems}
\label{sec:UniqDir}

So far, the  Dirichlet problem  with $\L^2$ data is only formulated as $(R)_{\mE_{-1}}^\Lop$, that is, within the class of reinforced weak solutions having controlled square function norm
\begin{align*}
 \|\pcg u\|_{\mE_{-1}}^2=\iiint_{\R^{n+2}_+} |\lambda \nabla_{\lambda,x}u|^2+|\lambda \HT\dhalf u|^2 \, \frac{\mathrm{d} \lambda \d x\d t}{\lambda}<\infty
\end{align*}
and  the boundary behaviour can understood in the sense of $\L^2$ convergence of $u(\lambda,\cdot)$ when $\lambda\to 0$ since $u$ is continuous with values in $\L^2(\ree)$ as a consequence of Theorem \ref{thm:chardir}. As we have seen in Theorem~\ref{thm:NTmaxDir}, such solutions also satisfy the non-tangential maximal bound $\|\NT(u)\|_{2} \lesssim  \|\pcg u\|_{\mE_{-1}}$ and enjoy a form of non-tangential convergence for Whitney averages. Thus it makes sense to pose the question of uniqueness in the larger and more classical class involving non-tangential approach in the spirit of earlier references such as \cite{Hofmann-Lewis, HL1, N2}.

\begin{defn} We say that $(D)_{2}^\Lop$   is well-posed if given any $f\in \L^2(\ree)$ there is a unique weak solution $\Lop u=0$ which satisfies  $\|\NT(u)\|_{2}<\infty$ and  \begin{align*}
\bariiint_{\Lambda \times Q \times I} |u(\mu,y,s)- f(x,t) | \d \mu \d y \d s \to 0                                                                                                                                                                                                                                                                                                                                                           \end{align*}
almost everywhere when $\lambda\to 0$, where $\Lambda \times Q \times I$ and $(x,t)$ are related as in the definition of $\NT$. Uniqueness of $(D)_{2}^\Lop$  means that if a weak solution $\Lop u=0$  satisfies  $\|\NT(u)\|_{2}<\infty$ and tends to zero in the sense above when $\lambda\to 0$, then $u=0$ almost everywhere.
\end{defn}

Note that this problem is concerned with general weak solutions but we will see in Lemma~\ref{lem:NT implies reinforced} below that the non-tangential control \emph{a priori} forces $u$ to be reinforced.  By reverse H\"older estimates on weak solutions (see Lemma \ref{lem:RHu} below), we may replace $\L^2$-averages of $u$ by $\L^1$-averages up to taking slightly bigger Whitney regions. Hence, this is the largest possible class in which uniqueness may be asked for. When DeGiorgi-Nash-Moser regularity on solutions is assumed, the conditions in our definition amount to the usual pointwise non-tangential control and non-tangential convergence to the boundary almost everywhere. As our main result on $(D)_{2}^\Lop$ we will prove in Section~\ref{sec:Dirichlet} the following

\begin{thm}
\label{thm:uniqueDir}
Existence for the adjoint regularity problem  $(R)_{\mE_{0}}^{\Lop^*} $ (that is, with regularity data in $\L^2(\ree; \IC^{n+1})$)  implies uniqueness for $(D)_{2}^\Lop$.
\end{thm}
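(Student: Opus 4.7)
The strategy is a duality argument pairing $u$ against adjoint regularity solutions $v$: we first show $\dnuA u|_{\lambda=0}=0$ in a suitable distributional sense, and then invoke uniqueness for the first-order parabolic Dirac ODE (Theorem~\ref{thm:uniq}) after extending the conormal differential by zero across the boundary. For setup, the non-tangential control $\|\NT u\|_2<\infty$ combined with Lemma~\ref{lem:NT implies reinforced} forces $u$ to be a reinforced weak solution and interior Caccioppoli estimates yield $\pcg u\in\Lloc^2(\R_+;\L^2(\ree))$. The Whitney-average convergence $u\to 0$, together with the dominating $\L^2$ function $\NT u$, gives via dominated convergence
\begin{equation*}
\barint_a^{2a}\|u(\mu,\cdot)\|_2^2\,\d\mu\longrightarrow 0\quad\text{as}\ a\to 0.
\end{equation*}
For arbitrary $\phi\in\C_0^\infty(\ree)$, existence of $(R)_{\mE_0}^{\Lop^*}$ provides $v$ with $\Lop^*v=0$, $v|_{\lambda=0}=\phi$, and $\pcgb v\in\mE_0$; this yields $\NT\pcgb v\in\L^2$, strong convergence $v(\lambda,\cdot)\to\phi$ as $\lambda\to 0$, and decay of $v$ and $\pcgb v$ as $\lambda\to\infty$ in averaged senses.

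Integrating $\bar v\Lop u-u\overline{\Lop^*v}=0$ over the slab $\{a<\lambda<b\}\times\ree$, and disposing of the time contributions (the $\pd_t(u\bar v)$ term is a total $t$-derivative integrating to $0$ on $\R$; the half-order-in-time terms cancel using the antisymmetry of $\HT$ on $\L^2(\R)$), leaves only transverse boundary terms:
\begin{equation*}
\iint_{\lambda=a}\bigl[\bar v\,\dnuA u-u\,\overline{\dnuAstar v}\bigr]\,\d x\,\d t=\iint_{\lambda=b}\bigl[\bar v\,\dnuA u-u\,\overline{\dnuAstar v}\bigr]\,\d x\,\d t,\qquad 0<a<b.
\end{equation*}
The right-hand side vanishes as $b\to\infty$ thanks to decay of $v,\pcgb v$ paired with NT-bounds on $u,\pcg u$, so the left-hand side is zero for every $a>0$. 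Averaging in $a\in(\epsilon,2\epsilon)$, Cauchy--Schwarz combined with $\barint_\epsilon^{2\epsilon}\|u\|_2^2\,\d\mu\to 0$ and $\barint_\epsilon^{2\epsilon}\|\dnuAstar v\|_2^2\,\d\mu\lesssim\|\NT\pcgb v\|_2^2$ makes the piece $\barint_\epsilon^{2\epsilon}\iint u\,\overline{\dnuAstar v}\,\d x\,\d t\,\d\mu\to 0$, hence the other piece $\barint_\epsilon^{2\epsilon}\iint\bar v\,\dnuA u\,\d x\,\d t\,\d\mu\to 0$ as well. Using $v(a,\cdot)\to\phi$ and interpreting the pairing in the negative Sobolev scale adapted to $\P$ (Section~\ref{sec:sobolev}), the limit equals $\langle\phi,\dnuA u|_{\lambda=0}\rangle$, so $\dnuA u|_{\lambda=0}=0$ by the density of $\C_0^\infty(\ree)$.

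Finally, $u|_{\lambda=0}=0$ also yields $\gradx u|_{\lambda=0}=0$ and $\HT\dhalf u|_{\lambda=0}=0$, so $\pcg u|_{\lambda=0}=0$. Extending $F:=\pcg u$ by zero across $\lambda=0$ produces a weak solution of $\pd_\lambda F+\P\M F=0$ on both half-spaces with no jump at $\lambda=0$. Applying Theorem~\ref{thm:uniq} on each half-space, and using the spectral splitting $\clos{\ran(\P)}=\Hp(\P\M)\oplus\Hm(\P\M)$ to see that a global solution vanishing at both $\pm\infty$ must be zero, gives $\pcg u\equiv 0$. Hence $u$ is constant, which must vanish by the boundary hypothesis.

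The main obstacle is identifying $\lim_{\epsilon\to 0}\barint_\epsilon^{2\epsilon}\iint\bar v(a)\,\dnuA u(a)$ as a rigorous duality pairing: $\|\dnuA u(a,\cdot)\|_2$ may blow up as $a\to 0$, so there is no uniform $\L^2$ bound. The correct frame is the negative Sobolev scale adapted to $\P$ of Section~\ref{sec:sobolev}, where $\dnuA u|_{\lambda=0}$ is well-defined via the first-order correspondence (Theorem~\ref{thm:correspondence}) and pairs naturally with the trace space of adjoint regularity solutions containing $\phi$.
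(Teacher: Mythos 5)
Your strategy has two genuine gaps, both tied to the same source: you lack any control of $\pcg u$ near the boundary, and this is exactly the hypothesis your two key steps require.

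First, the trace $\dnuA u|_{\lambda=0}$ is not well-defined under the assumptions of $(D)_2^\Lop$. Your last paragraph acknowledges this but the proposed fix is not available: Theorem~\ref{thm:correspondence} gives the correspondence $\Lop u=0 \Leftrightarrow \pd_\lambda F + \P\M F=0$, but it yields a boundary trace for $F=\pcg u$ only if $F$ lies in one of the solution classes $\mE_s$, and $\NT(u)\in\L^2$ combined with Caccioppoli's inequality provides merely $\barint_\lambda^{2\lambda}\|\pcg u\|_2^2\,\d\mu\lesssim \lambda^{-2}\|\NT(u)\|_2^2$, which blows up as $\lambda\to 0$. There is no trace to identify in any adapted Sobolev scale. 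Second, even granting $\pcg u|_{\lambda=0}=0$ in some sense, your final step applies Theorem~\ref{thm:uniq}, whose hypothesis is $\sup_{\lambda>0}\barint_\lambda^{2\lambda}\|F_\mu\|_2^2\,\d\mu<\infty$; again this Dini-type bound is \emph{not} implied by $\NT(u)\in\L^2$. Indeed, if that bound held, $(D)_2^\Lop$ would fall into the uniqueness class already covered by Theorem~\ref{thm:uniq}, and the whole theorem would be trivial. The essential difficulty of $(D)_2^\Lop$ is precisely that $u$ is controlled non-tangentially, not $\pcg u$.

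The paper circumvents both issues at once by testing $u$ against $\partial_\lambda^2\phi$ rather than a slab integration against $v$. Writing $\partial_\lambda^2\phi=\Lop^* v$, the extra two $\lambda$-derivatives on $\phi$ give $v$ additional transversal regularity, so the adjoint regularity solution $w$ can be chosen with the same boundary trace as $v$ and $\NTone\bigl(\frac{v-w}{\lambda}\bigr)\in\L^2(\ree)$ (Lemmas~\ref{lem:energy solution compact right} and \ref{lem:Ken-Pip parabolic}). The final estimates pair $\NTone(u)$ against $\NTone\bigl(\frac{v-w}{\lambda}\bigr)$: the one extra order of vanishing of $v-w$ at the boundary absorbs the one $\lambda^{-1}$ lost by Caccioppoli on the $u$ side. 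No trace of $\dnuA u$ is ever invoked, and no first-order uniqueness theorem for $\pcg u$ is needed; one concludes $\partial_\lambda^2 u=0$ directly and finishes by elementary reasoning. This mechanism is the missing idea in your proposal.

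As a minor additional remark, your assertion that the half-order time terms drop out ``by antisymmetry of $\HT$'' in the slab integration should be justified more carefully for weak/reinforced solutions and does not simply hold termwise; the paper sidesteps this by working entirely with the reinforced weak formulation and never forming a pointwise Green identity.
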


This result is in spirit of the earlier results in \cite{Ken-Pip} on elliptic equations, see also \cite[Lem.~4.31]{AAAHK}.  For parabolic equations, compare with \cite[Lem.~6.1]{N2}, where much more is assumed to get the same conclusion, including in particular DeGiorgi-Nash-Moser regularity as in all earlier references.

\begin{cor}
Assume  the adjoint regularity problem  $(R)_{\mE_{0}}^{\Lop^*} $ is (compatibly) well-posed.  Then so is $(D)_{2}^\Lop$. Moreover, given $f\in \L^2(\ree)$, the weak solution $u$ with data $f$ is reinforced, continuous in $\lambda$ valued in $\L^2(\ree)$ and satisfies
\begin{equation*}
\|\NT(u)\|_{2} \sim  \|\pcg u\|_{\mE_{-1}} \sim \sup_{\lambda>0} \|u(\lambda, \cdot)\|_{2}\sim \|f\|_{2}.
\end{equation*}
\end{cor}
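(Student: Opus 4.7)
The plan is to combine three previously established tools—the duality Theorem~\ref{thm:duality}, the $\M\P$-semigroup representation in Theorem~\ref{thm:chardir}, and the non-tangential maximal estimate in Theorem~\ref{thm:NTmaxDir}—to produce a concrete solution with all the claimed estimates; the uniqueness half of the corollary is then exactly the content of Theorem~\ref{thm:uniqueDir}.

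First, I would invoke Theorem~\ref{thm:duality} with $s=-1$ to convert (compatible) well-posedness of $(R)_{\mE_0}^{\Lop^*}$ into (compatible) well-posedness of $(R)_{\mE_{-1}}^\Lop$. Given $f\in \L^2(\ree)$, this furnishes a reinforced weak solution $u$ with $u|_{\lambda=0}=f$, $\pcg u\in \mE_{-1}$ and $\|\pcg u\|_{\mE_{-1}}\sim \|f\|_2$ (the upper bound is well-posedness; the lower bound comes from invertibility of the Dirichlet-to-Neumann map built into Theorem~\ref{thm:wpequiv}). Theorem~\ref{thm:chardir} then produces a unique constant $c\in\IC$ and a unique $\wt h\in \Hp(\M\P)$ with $u=c-(\e^{-\lambda[\M\P]}\wt h)_\pe$. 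Since $\wt h_\pe\in \L^2(\ree)$ by $\wt h\in \mH$ and $f=c-\wt h_\pe\in \L^2(\ree)$, their difference being a constant forces $c=0$; hence $u=-(\e^{-\lambda[\M\P]}\wt h)_\pe$, $\wt h_\pe=-f$, and $\|\wt h\|_2\sim \|f\|_2$.

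Next, I would apply Theorem~\ref{thm:NTmaxDir} to the semigroup extension $\wt F(\lambda,\cdot)=\e^{-\lambda[\M\P]}\wt h$. This immediately gives $\|\NT(u)\|_2\le \|\NT(\wt F)\|_2\sim \|\wt h\|_2\sim \|f\|_2$, and the companion convergence statement yields a.e.\ convergence of Whitney averages of $\wt F$—and consequently of $u=-(\wt F)_\pe$—to $-\wt h=(f,\ldots)$ at $\lambda=0$. In particular, $u$ attains the Dirichlet datum $f$ in the Whitney-average sense required by $(D)_2^\Lop$. The continuity $u\in \C([0,\infty);\L^2(\ree))$ is part of Theorem~\ref{thm:chardir} and, combined with the semigroup bound, delivers $\sup_{\lambda>0}\|u(\lambda,\cdot)\|_2\lesssim \|\wt h\|_2$; the reverse direction is trivial from $\|u(0,\cdot)\|_2=\|f\|_2$. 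This closes the chain of equivalences $\|\NT(u)\|_2\sim \|\pcg u\|_{\mE_{-1}}\sim \sup_\lambda\|u(\lambda,\cdot)\|_2\sim \|f\|_2$.

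Uniqueness within the class of weak solutions having $\NT(u)\in \L^2$ and vanishing Whitney averages at the boundary is precisely Theorem~\ref{thm:uniqueDir}, whose hypothesis (existence for $(R)_{\mE_0}^{\Lop^*}$) is implied by the assumed well-posedness of that problem. The only mildly delicate issue in the argument is reconciling two \emph{a priori} distinct notions of boundary attainment: the $\L^2$-trace inherited from the semigroup representation of $(R)_{\mE_{-1}}^\Lop$, and the pointwise Whitney-average convergence built into the definition of $(D)_2^\Lop$. Both are supplied at once by Theorem~\ref{thm:NTmaxDir}, so the solution produced by $(R)_{\mE_{-1}}^\Lop$ automatically lands in the solution class of $(D)_2^\Lop$ without any further construction.
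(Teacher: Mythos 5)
Your proposal is correct and follows essentially the same route as the paper's own (very brief) justification: duality (Theorem~\ref{thm:duality}) to pass from $(R)_{\mE_0}^{\Lop^*}$ to $(R)_{\mE_{-1}}^\Lop$, the semigroup representation of Theorem~\ref{thm:chardir} to fix the constant and identify $u$ with $-(\e^{-\lambda[\M\P]}\wt{h})_\pe$, Theorem~\ref{thm:NTmaxDir} for the non-tangential maximal bound and the Whitney-average boundary trace, and Theorem~\ref{thm:uniqueDir} for uniqueness. The only things you leave implicit are the lower bound $\|f\|_2 \lesssim \|\NT(u)\|_2$ (which follows from Lemma~\ref{lem:NT and QE} together with $\L^2$-continuity at $\lambda=0$, or from the a.e.\ boundary convergence as the paper notes) and the transfer of \emph{compatible} well-posedness, which again rides on Theorem~\ref{thm:duality}.
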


Indeed, uniqueness in the corollary above follows from the previous theorem, while existence follows from the previous discussion and Theorem \ref{thm:duality} on duality of boundary value problems. The last two equivalences are consequences of Theorem~\ref{thm:correspondence} and Theorem~\ref{thm:sob} (or Theorem~\ref{thm:chardir}). For the first equivalence, the comparison ``$\lesssim$'' was already mentioned in Theorem~\ref{thm:NTmaxDir} while the inequality $\|f\|_{2}\lesssim \|\NT(u)\|_{2}$ is an easy consequence of the non-tangential convergence at the boundary. Compatible well-posedness was already stated in Theorem \ref{thm:duality} and is preserved as well. Comparing with Theorem \ref{thm:WP}, we also obtain

\begin{cor}
$(D)_{2}^\Lop$ is compatibly well-posed if $A$ is either of block form, lower triangular, Hermitian and time independent, or constant.
\end{cor}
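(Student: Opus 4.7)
The plan is to reduce this corollary directly to the preceding one, which already asserts that compatible well-posedness of the adjoint regularity problem $(R)_{\mE_0}^{\Lop^*}$ implies compatible well-posedness of $(D)_2^\Lop$. Thus the only task is to verify, case by case, that $(R)_{\mE_0}^{\Lop^*}$ is compatibly well-posed, and this I plan to read off from Theorem~\ref{thm:WP} applied to the backward operator $\Lop^*$, whose coefficient matrix is $A^*$.

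The observation that makes everything fit is that each of the four structural hypotheses is stable under the involution $A \mapsto A^*$: block form and constantness are obviously preserved, Hermitian matrices (and in particular the assumed time-independence in that case) are fixed points, and lower triangularity is interchanged with upper triangularity. Reading off Theorem~\ref{thm:WP} at $s=0$:
\begin{enumerate}
\item for block $A$, the matrix $A^*$ is also block and item~(i) gives compatible well-posedness of $(R)_{\mE_s}^{\Lop^*}$ for $s\in[-1,0]$;
\item for lower triangular $A$, $A^*$ is upper triangular and item~(ii) handles $s\in[-1/2,0]$;
\item for Hermitian $A(x,t)=A(x)$, one has $A^*=A$ and item~(vi) handles $s\in[-1,0]$;
\item for constant $A$, so is $A^*$ and item~(vii) handles $s\in[-1,0]$.
\end{enumerate}
In all four situations the value $s=0$ is in the admissible range, so $(R)_{\mE_0}^{\Lop^*}$ is compatibly well-posed, and the preceding corollary concludes.

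There is no serious obstacle here: all the substantial work has already been done in Theorem~\ref{thm:uniqueDir} (existence for adjoint regularity yields uniqueness for $(D)_2^\Lop$), Theorem~\ref{thm:duality} (transferring existence from $(R)_{\mE_{-1}}^\Lop$ to $(R)_{\mE_0}^{\Lop^*}$ and vice versa), and Theorem~\ref{thm:WP} (the concrete well-posedness of regularity problems in each structural class). The present corollary is merely the bookkeeping that strings these ingredients together via the stability of the four coefficient classes under $A \mapsto A^*$.
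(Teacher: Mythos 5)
Your argument is correct and amounts to the same verification the paper intends when it writes ``Comparing with Theorem~\ref{thm:WP}, we also obtain''; you are simply checking the hypothesis of the preceding corollary in each of the four cases. The only cosmetic difference from the paper's most direct route is that you first pass to $A^*$ and apply (the backward version of) Theorem~\ref{thm:WP} at $s=0$, whereas one can equally well read off $(R)_{\mE_{-1}}^{\Lop}$ from Theorem~\ref{thm:WP}(i), (iii), (vi), (vii) at $s=-1$ and let Theorem~\ref{thm:duality}, which is already built into the preceding corollary, deliver $(R)_{\mE_{0}}^{\Lop^*}$; by Theorem~\ref{thm:duality} these are equivalent paths, and both correctly use the remark following Theorem~\ref{thm:WP} that the well-posedness results transfer to the backward equation.
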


Finally, it turns out that there also is a natural solution class defined by the non-tangential maximal function within which we have uniqueness in the setup of Proposition \ref{prop:Dir}. By this means, we can complete the discussion of the previous section by adding the following result for the Dirichlet problem with Hermitian time \emph{dependent} coefficients.

\begin{thm}
\label{thm:WPDirTimeDependent}
Assume $A(x,t)$ is Hermitian with $\dhalf A(x,t) \in \L^\infty(\R^n; \mathrm{BMO}(\R))$. Let $f\in \L^2(\R^{n+1})$. There is a unique weak solution $\Lop u = 0$ which satisfies $\|\NT(\frac{u}{\max\{1,\sqrt\lambda\}})\|_{2}<\infty$ and
\begin{align*}
 \bariiint_{\Lambda \times Q \times I} |u(\mu,y,s)- f(x,t) | \d \mu \d y \d s \to 0
\end{align*}
almost everywhere when $\lambda\to 0$. This solution is the same as in Proposition \ref{prop:Dir}(ii).
\end{thm}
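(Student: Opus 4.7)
The plan is to derive existence from Proposition~\ref{prop:Dir}(ii) and to establish uniqueness by reducing any candidate solution to the uniqueness class of that proposition.

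For existence, take the trivial decomposition $f=f+0 \in \L^{2}(\ree)+\Hdot^{1/4}_{\pd_{t}-\Delta_{x}}$ and invoke Proposition~\ref{prop:Dir}(ii) to produce the unique reinforced weak solution $u$ with boundary value $f$. By linearity and the fact that the energy solution with zero Dirichlet data is zero, this solution coincides with the $\mE_{-1}$-Dirichlet solution $u=u_{1}$. Theorem~\ref{thm:chardir} then supplies a semigroup representation $u=c-(\e^{-\lambda[\M\P]}\tilde h)_{\pe}$ for some $\tilde h \in \Hp(\M\P)$, and Theorem~\ref{thm:NTmaxDir} yields both $\|\NT u\|_{2}\lesssim \|f\|_{2}$ and almost-everywhere Whitney convergence of $u$ to $f$. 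Since $\max\{1,\sqrt\lambda\}\ge 1$, the weighted bound $\|\NT(u/\max\{1,\sqrt\lambda\})\|_{2}\le \|\NT u\|_{2}$ and the Whitney convergence to $f$ are immediate consequences. This also identifies the constructed solution with the one from Proposition~\ref{prop:Dir}(ii).

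For uniqueness, suppose $u$ is any weak solution with $\|\NT(u/\max\{1,\sqrt\lambda\})\|_{2}<\infty$ and vanishing non-tangential Whitney limits. The strategy is to show that $u$ admits a decomposition $u=u_{1}+u_{2}$ with $\pcg u_{1}\in\mE_{-1}$, $\pcg u_{2}\in\mE_{-1/2}$, and both boundary traces zero, because Proposition~\ref{prop:Dir}(ii) applied to zero data then forces $u\equiv 0$. The weighted bound already gives $\NT u\in \L^{2}(\ree)$ on the near-boundary region $\{\lambda<1\}$, so combined with the a.e.\ Whitney vanishing, the boundary trace of $u$ in $\L^{2}(\ree)$ is zero. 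The natural candidate is $u_{1}=0$ (the $\mE_{-1}$-Dirichlet solution with zero data, zero by the existence step), reducing the task to showing $\pcg u\in\L^{2}(\R^{n+2}_{+})$, i.e.\ $u\in\mE_{-1/2}$.

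The chief obstacle is producing this global $\L^{2}$ estimate for $\pcg u$ without appealing to De Giorgi-Nash-Moser regularity, which is unavailable in the systems setting. I would combine the Caccioppoli inequality on parabolic Whitney cylinders (which converts $u$-averages into $\pcg u$-averages with a factor $\lambda^{-2}$) with a dyadic accounting in $\lambda$: far from the boundary, the $\sqrt\lambda$-growth of the $\L^{2}$-norm of $u(\lambda,\cdot)$ permitted by the weighted NT control, combined with the $\lambda^{-2}$-factor from Caccioppoli, delivers integrability of $|\pcg u|^{2}$ in $\d\lambda\,\d x\,\d t$; near the boundary the non-tangential vanishing must be upgraded to quantitative decay of $\bariiint_{\Lambda\times Q\times I}|u|^{2}$ as $\lambda\to 0$, which I would achieve via an energy identity testing $u$ against itself with a cutoff $\eta_{R}(\lambda)$, using the Hermitian symmetry and the BMO assumption $\dhalf A\in \L^{\infty}(\R^n;\mathrm{BMO}(\R))$ to absorb the error produced by the non-local half-order time derivative. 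This interplay between the weight, Caccioppoli, and the BMO-time hypothesis is the substantive technical heart of the argument; once it is closed, the identification with the Proposition~\ref{prop:Dir}(ii) solution is automatic.
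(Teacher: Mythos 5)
Both halves of your proposal contain genuine gaps.

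\textbf{Existence.} You claim that taking $f=f+0$ and invoking Proposition~\ref{prop:Dir}(ii) forces $u_{2}=0$, so that $u=u_{1}$ with $\pcg u_{1}\in\mE_{-1}$. This does not follow. The decomposition $u=u_{1}+u_{2}$ in Proposition~\ref{prop:Dir}(ii) lives at the level of the Cauchy data $h\in\Hdot^{-1,+}_{\P\M}+\Hdot^{-1/2,+}_{\P\M}$, not at the level of the Dirichlet data $f$: neither $u_{1}|_{\lambda=0}$ nor $u_{2}|_{\lambda=0}$ need equal $f$ or $0$, and in fact $u_{1,0}$ is merely some element of $\L^{2}(\ree)$. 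If your claim were correct, then every $f\in\L^{2}(\ree)$ would admit a solution with $\pcg u\in\mE_{-1}$, i.e.\ $(R)_{\mE_{-1}}^{\Lop}$ would be well-posed — but avoiding exactly that assertion is the whole reason Proposition~\ref{prop:Dir}(ii) works in the sum class. Moreover, you would then get the unweighted bound $\|\NT u\|_{2}<\infty$, which would render the $\max\{1,\sqrt\lambda\}$ weight in the theorem statement pointless. The paper keeps both pieces and treats them separately: Theorem~\ref{thm:NTmaxDir} gives $\NT(u_{1})\in\L^{2}$, while Lemma~\ref{lem:energy Whitney trace} gives $\NTone\big(\frac{u_{2}-u_{2,0}}{\sqrt\lambda}\big)\in\L^{2}$; the pointwise splitting $\big|\frac{u}{\max\{1,\sqrt\lambda\}}\big|\le|u_{1}|+\big|\frac{u_{2}-u_{2,0}}{\sqrt\lambda}\big|+|u_{2,0}|$ is precisely where the weight comes from.

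\textbf{Uniqueness.} Reducing to the claim that a weak solution with the weighted NT control and vanishing Whitney limits must lie in $\mE_{-1/2}$ is the wrong target. Non-tangential control bounds a supremum in $\lambda$ of Whitney averages, not an integral in $\lambda$, so Caccioppoli plus dyadic accounting cannot produce a globally square-integrable conormal gradient; and indeed, already for the heat equation a solution of the $\L^{2}$ Dirichlet problem is generically not in the energy class (take $f\in\L^{2}\setminus\Hdot^{1/4}_{\pd_{t}-\Delta_{x}}$). The sketch of an ``energy identity with a cutoff and BMO absorption'' is not an argument and, more to the point, cannot succeed because the conclusion $u\in\mE_{-1/2}$ is false in general. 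The paper's uniqueness proof instead re-runs the duality argument of Theorem~\ref{thm:uniqueDir}: test $u$ against $\partial_{\lambda}^{2}\phi$, write $\partial_{\lambda}^{2}\phi=\Lop^{*}v$ with $v$ the energy solution on the whole space, and — using the BMO hypothesis via Proposition~\ref{prop:Dir} for the adjoint equation — construct a reinforced weak solution $w$ to $\Lop^{*}w=0$ with $w|_{\lambda=0}=v|_{\lambda=0}$, $\pcgb w\in\mE_{0}\cap\mE_{-1/2}$. The only changes relative to Theorem~\ref{thm:uniqueDir} are two: the $1/\lambda^{2}$ factor in the analogue of \eqref{eq3:uniqueDir} is covered by the weighted NT control, and in the far-field limit one distributes the $1/\mu$ factor as $1/\sqrt\mu$ to each of $u$ and $v-w$, using $v-w\in\dot{\E}(\R^{n+2}_{+})$ with zero trace and Lemma~\ref{lem:energy Whitney trace}. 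None of this appears in your proposal.
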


We note that the growth in $\lambda$ in the non-tangential control is reflecting the regularity of $A$ in the assumption. The proof will also be given in Section~\ref{sec:Dirichlet}.
\subsection{Consequences for \texorpdfstring{$\lambda$}{lambda}-dependent coefficients}

There is an automatic improvement of our results to a situation where $A$ depends on the transversal variable $\lambda$ by treating $A$ in \eqref{eq1} as a perturbation of $\lambda$-independent coefficients $A_{0}$. This is done at the level of the first order equation, reformulating $\pd_{\lambda}F+\P\M F= 0$ as
\begin{equation}
\label{eq:perturbation}
 \pd_{\lambda}F+\P\M_{0} F=\P(\M_{0}-\M)F.
\end{equation}
This inhomogeneous problem can then be treated by maximal regularity results obtained in \cite{AA} in the cases $s=0$ and $s=-1$ and in \cite{R1} for the intermediate cases $-1<s<0$ but changing $DB$ to $\P\M$ therein.  We could have just stopped here but we felt that it is worth helping the reader making it through the transposition to our setup.

Generally, the estimates for $-1<s<0$ transpose immediately since they only use semigroup theory. The estimates for $s=-1$ and $s=0$ require more harmonic analysis. They are obtained using an abstract framework that consists of the following:
\begin{enumerate}
 \item A Hilbert space $\cH$, vector valued Lebesgue spaces $\cY:= \L^2(\R_+, \lambda \d\lambda; \cH)$, $\cY^*:= \L^2(\R_+, \mathrm{d}\lambda/\lambda; \cH)$ and a space $\cX$ with continuous embedding $\cY^* \subset \cX \subset \Lloc^2(\R_+, \mathrm{d} \lambda; \cH)$.
 \item A bisectorial operator $D B_0$ with a bounded holomorphic functional calculus on $\cH$ such that $h \mapsto (\e^{- \lambda [DB_0]}h)_{\lambda > 0}$ and $h \mapsto (\e^{- \lambda [D^*B_0^*]}h)_{\lambda > 0}$ are bounded from $\cH$ into $\cX$.
\end{enumerate}
As for our setup, we let $\cH = \cl{\ran(\P)}$, $\cX = \{F \in \Lloc^2(\R_+,\mathrm{d} \lambda; \cH): \|\NT(F)\|_2 < \infty \}$ and use $\P \M_0$ instead of $D B_0$. Then the embedding $\cY^* \subset \cX$ is classical, see also Lemma~\ref{lem:NT and QE} below. The $\cH \to \cX$ boundedness of the $\P\M$-semigroup follows from the non-tangential estimate stated in Theorem~\ref{thm:NTmax} and for the $\P^*\M$-semigroup we use the analogue for the backward equation, see also Section~\ref{sec:backward}. The fact that \cite{AA} works within Euclidean space and Lebesgue measure is of no harm either since only the doubling character of the measure is used. This is also the case for $\ree$ equipped with parabolic distance and Lebesgue measure. The needed theory on tent spaces above such spaces of homogeneous type has been written in detail in~\cite{Amenta-Tent}. The needed estimates for tent spaces with Whitney averages, proved in \cite{AA} and next in \cite{HR, Yi}, transpose to such a setting, see the final remark in \cite{Yi}. Below, we will summarize the main results informally and give an outline of the general strategy, leaving the care of checking details to the interested readers.

The condition imposed on $A(\lambda,x,t)$ is that there exists some $\lambda$-independent coefficients $A_{0}(x,t)$, uniformly elliptic and bounded, such that the \emph{discrepancy}
\begin{align*}
 \|A-A_{0}\|_{s}=\begin{cases}
 \|A-A_{0}\|_{\infty} & (\text{if } -1<s<0), \\
 \|A-A_{0}\|_{*} & (\text{if } s=-1,0)
\end{cases}
\end{align*}
is finite. Here, $\|\cdot\|_{\infty}$ is the $\L^\infty(\R^{n+2}_{+})$-norm and $ \|\cdot\|_{*} $ is the Carleson-Dahlberg measure norm defined by $\|\cdot\|_* := \| C_{2}(W_{\infty}(\cdot))\|_\infty$ with
\begin{align*}
 C_{2}(g)(x,t) := \sup_{R\ni (x,t)} \left(\frac 1{|R|} \int_0^{\ell(R)} \iint_R |g(\lambda,y,s)|^2 \d y \d s \, \frac{\mathrm{d} \lambda}{\lambda}\right)^{1/2},
\end{align*}
where the supremum is taken over all parabolic cubes $R$ with sidelength $\ell(R)$ in $\R^{n+1}$ containing $(x,t)$ and
$W_{\infty}(\cdot)(\lambda,y,s)$ is the essential supremum over the Whitney region given by $\Lambda=(\lambda/2, 2 \lambda)$, $Q=B(y, \lambda)$ and $I= (s-\lambda^2, s+\lambda^2)$. The condition $ \|A-A_{0}\|_{*}<\infty$ is strictly stronger than $\|A-A_{0}\|_{\infty}<\infty$. Note that $\|A-A_{0}\|_{s} \sim\|\M-\M_{0}\|_{s}$ where $\M_{0}$ corresponds to $A_{0}$ just as $\M$ corresponds to $A$. We also remark that $A_{0}$ is uniquely defined when using the $\|\, \cdot\, \|_*$-norm and the ellipticity constants relate to those of $A$. On the other hand, when using only the $\L^\infty$-norm there are many different possible $A_{0}$. Nevertheless we implicitly assume that the ellipticity constants of $A$ and $A_{0}$ are comparable.

The operator related to solving \eqref{eq:perturbation} with initial data equal to zero writes formally in vector-valued form as
\begin{equation*} \label{eq:firstformalSAdefn}
 (S_A F)_\lambda := \int_0^\lambda \e^{-(\lambda-\mu)\P\M_0} E_{0}^+ \P (\M_{0}-\M_\mu) F_\mu \d\mu - \int_\lambda^\infty \e^{(\mu-\lambda)\P\M_0} E_0^- \P (\M_{0}-\M_\mu) F_\mu\d\mu.
\end{equation*}
Here, $E_{0}^\pm=\chi^\pm(\P\M_{0})$ are the projections defined in Section~\ref{sec:dirac}. We also use the notation $F_{\lambda}=F(\lambda,\cdot)$. The
integral can be appropriately defined when $-1\le s \le 0$ and enjoys the norm estimate
\begin{align*}
\|S_{A}F\|_{\mE_{s}} \le C \|A-A_{0}\|_{s} \|F\|_{\mE_{s}},
\end{align*}
where the $\mE_s$-norms were defined in Section~\ref{sec:char} and $C$ depends on ellipticity, $s$ and dimension. Also $S_{A}$ maps into $\Lloc^2(\R_+; \clos{\ran(P)})$. Then it is indeed true that for any solution of \eqref{eq1} with $\pcg u \in \mE_{s}$ there exists a unique $h^+\in \Hdot^{s, +}_{\P\M_{0}}$ such that we have an implicit representation
\begin{equation}
\label{eq:rep-pert}
 (\pcg u)_{\lambda}= \e^{-\lambda\P\M_{0}}h^+ + (S_{A}\pcg u)_{\lambda}
\end{equation}
with equality in $\mE_{s}$ and $\|h^+\|_{\Hdot^s_{\P}}\sim \|\pcg u \|_{\mE_{s}} $ with implicit constants depending on dimension, $s$ and ellipticity. The boundary behaviour of $\pcg u$ at $\lambda =0$ is captured by $h^+$ and the complementary data
\begin{equation*}
\label{eq:h-}
 h^-:= -\int_0^\infty \e^{\mu \P\M_{0}}E_0^-\P (\M_{0}-\M_\mu) (\pcg u)_\mu \d\mu \in \Hdot^{s, -}_{\P\M_{0}}
\end{equation*}
satisfies for implicit constant depending on dimension, $s$ and ellipticity,
\begin{align}
\label{eq:h- est}
 \|h^-\|_{\Hdot^s_{\P}}\lesssim \|A-A_{0}\|_{s} \|\pcg u \|_{\mE_{s}}.
\end{align}
Then, writing $h:=h^++h^-$, there are limits
\begin{equation*}
\label{eq:SAavlim}
 \lim_{\lambda\to 0} \lambda^{-1} \int_\lambda^{2\lambda} \| \pcg u -h \|_{\Hdot^s_{\P}}^2 \d\mu =0=
 \lim_{\lambda\to \infty} \lambda^{-1} \int_\lambda^{2\lambda} \| \pcg u \|_{\Hdot^s_{\P}}^2 \d\mu.
\end{equation*}
In the case $s=-1$, there is a direct representation of $u$ by means of another operator \cite[Thm.~9.2]{AA}, showing that $u-c\in \C_{0}([0,\infty); \L^2(\ree))$ for some constant $c$. Moreover, \cite[Thm.~10.1]{AA} can be adapted as well in order to show the control of the non-tangential maximal function by the square function
\begin{align*}
 \| \NT (u-c)\|_{2}\lesssim \|\pcg u\|_{\mE_{-1}}
\end{align*}
for arbitrary reinforced weak solutions. This uses the dual estimate to \eqref{eq:off} {with $\M^* \P^*$ in place of $\P \M$} when it comes to off-diagonal decay.

When $s=0$, the Whitney averages of $\pcg u$ converge to $h$ almost everywhere on $\ree$. This is best done by showing $\int_0^\lambda \|\pcg u -Sh\|_2^2 \frac{\mathrm{d} \lambda}{\lambda} <\infty$ with $Sh$ the semigroup extension of $h$ and using Theorem~\ref{thm:NTmax} for $Sh$. If $-1<s\le 0$, then one can adapt the proof of Lemma~\ref{lem:energy Whitney trace} below to show that Whitney averages of $u$ (which can be chosen in $\Lloc^2(\R^{n+2}_{+})$) converge almost everywhere on $\R^{n+1}$ and the limit turns out to be an element $u_{0}\in\Hdot^{s/2 + 1/2}_{\pd_{t}-\Delta_{x}}\cap \Lloc^2(\ree)$. Thus, the boundary equation can be understood in $ \Lloc^2(\ree)$ with $\nabla_{x}u_{0}=h_{\pa}$ and $\HT\dhalf u_{0}=h_{\te}$.

So far, we have seen that all conormal differentials $\pcg u\in \mE_{s}$ of reinforced weak solutions to \eqref{eq1} have a trace $h=\pcg u |_{\lambda = 0}$ and that this trace maps $\mE_{s}$ boundedly into $\Hdot^s_{\P}$. If now $\|A-A_{0}\|_{s}$ is small enough, then $\|h\|_{\Hdot^s_{\P}}\sim \|h^+\|_{\Hdot^s_{\P}}$ as a consequence of \eqref{eq:h- est}. Hence,
\begin{align*}
 \|\pcg u \|_{\mE_{s}}\sim \|\pcg u |_{\lambda = 0}\|_{\Hdot^s_{\P}}.
\end{align*}
In this case, the trace map is an isomorphism onto its range. Let us call $\H^{s,+}_{A}$ the range of this map just for the discussion. With this notation, $\H^{s,+}_{A_{0}}= \H^{s,+}_{\P\M_{0}}$. The conclusion is that if $\|A-A_{0}\|_{s}$ is small enough, then the equation \eqref{eq:rep-pert} can be used to construct solutions as the operator $\id -S_{A}$ becomes invertible on $\mE_{s}$: given any $g\in \Hdot^{s}_{\P}$, we can construct a reinforced weak solution to \eqref{eq1} with $\pcg u\in \mE_{s}$ by
\begin{align*}
\pcg u= (\id -S_{A})^{-1} (C_{0}^+g)
\end{align*}
with $C_{0}^+g$ the Cauchy extension of $g$ using the operator $\P\M_{0}$. Set $E_{A}^+g=h$ to be the trace of $\pcg u$. Taking limits, we find $h^+=E_{0}^+g$, $\|h^-\|_{\Hdot^s_{\P}}\lesssim \|A-A_{0}\|_{s} \|g\|_{\Hdot^s_{\P}}$ and so we obtain that
\begin{align*}
E^+_{A}g= E_{0}^+g -\int_0^\infty \e^{\mu \P\M_{0}}E_0^-\P (\M_{0}-\M_\mu) (\id -S_{A})^{-1} (C_{0}^+g)_\mu \d\mu
\end{align*}
is a bounded projection of $\Hdot^s_{\P}$ onto $\H^{s,+}_{A}$ along $\Hdot^{s,-}_{\P\M_{0}} $. Moreover, there is a Lipschitz estimate
\begin{align*}
\|E_{A}^+-E_{0}^+\| \lesssim \|A-A_{0}\|_{s}.
\end{align*}

Turning eventually to boundary value problems, we can use $h^+$ to construct $u$ from a boundary data and obtain the following theorem.

\begin{thm}
\label{thm:perturbation}
Let $-1 \leq s \leq 0$. If $\|A-A_{0}\|_{s}$ is small enough, then the following assertions hold.
\begin{enumerate}
 \item $(R)_{\mE_{s}}^\Lop$ is well-posed if and only if $N_{r}: \Hdot^{s,+}_{\P\M_{0}} \to (\Hdot^{s}_{\P})_{r}$ is an isomorphism.
 \item $(N)_{\mE_{s}}^\Lop$ is well-posed if and only if $N_{\pe}: \Hdot^{s,+}_{\P\M_{0}} \to (\Hdot^{s}_{\P})_{\pe}$ is an isomorphism.
\end{enumerate}
\end{thm}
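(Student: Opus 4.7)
The plan is to adapt the characterisation in Theorem~\ref{thm:wpequiv} to the $\lambda$-dependent setting using the perturbation identity \eqref{eq:rep-pert}, and then to transfer the resulting isomorphism condition from the perturbed trace space to $\Hdot^{s,+}_{\P\M_0}$ via a Neumann series argument based on the small-norm estimate \eqref{eq:h- est}.

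First I would record, as established in the discussion preceding the statement, that for $\|A - A_0\|_s$ small enough the operator $\id - S_A$ is invertible on $\mE_s$ and the map
\begin{equation*}
 \Hdot^{s,+}_{\P\M_0} \ni h^+ \;\longmapsto\; \pcg u := (\id - S_A)^{-1}\bigl(C_0^+ h^+\bigr)
\end{equation*}
is a bijection onto the $\mE_s$-conormal differentials of reinforced weak solutions of \eqref{eq1}, with $\|\pcg u\|_{\mE_s} \sim \|h^+\|_{\Hdot^s_\P}$. The boundary trace takes the form $\pcg u|_{\lambda = 0} = h^+ + K_A h^+$, where \eqref{eq:h- est} says that the bounded operator $K_A \colon \Hdot^{s,+}_{\P\M_0} \to \Hdot^{s,-}_{\P\M_0}$ satisfies $\|K_A\| \leq C\,\|A - A_0\|_s$.

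Following the proof of Theorem~\ref{thm:wpequiv} verbatim, $(R)_{\mE_s}^\Lop$ is well-posed if and only if, for every $f \in (\Hdot^s_\P)_r$, there exists a unique $h^+ \in \Hdot^{s,+}_{\P\M_0}$ with $N_r(h^+ + K_A h^+) = f$; equivalently, the map
\begin{equation*}
 T_r := N_r + N_r K_A \;:\; \Hdot^{s,+}_{\P\M_0} \longrightarrow (\Hdot^s_\P)_r
\end{equation*}
must be a Banach-space isomorphism, and the analogous statement with $N_\pe$ in place of $N_r$ characterises $(N)_{\mE_s}^\Lop$. Since $\|N_r K_A\| \leq C'\|A - A_0\|_s$ with $C'$ depending only on $n$, ellipticity, and $s$, the operator $T_r$ is a small-norm perturbation of the restriction $N_r : \Hdot^{s,+}_{\P\M_0} \to (\Hdot^s_\P)_r$. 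Choosing the smallness threshold on $\|A - A_0\|_s$ below the reciprocal of the operator norm of the inverse of whichever of the two operators is known to be invertible, the standard Neumann series argument shows that one is an isomorphism if and only if the other is. Combined with the previous paragraph, this yields (i) and (ii).

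The substantive difficulty I would expect to be concentrated in the first step: proving the representation \eqref{eq:rep-pert} and the boundedness of $S_A$ on $\mE_s$ with norm controlled by $\|A - A_0\|_s$. The intermediate range $-1 < s < 0$ follows from abstract semigroup theory, but the endpoints $s = 0$ (requiring the non-tangential control provided by Theorem~\ref{thm:NTmax}) and $s = -1$ (requiring the corresponding bound for the adjoint semigroup, through the version of Proposition~\ref{prop:OffDiag} for $\M^*\P^*$) bring in the harmonic-analytic machinery of \cite{AA, R1}. The crucial parabolic ingredient is our weak off-diagonal estimate of Proposition~\ref{prop:OffDiag}, which must substitute for the classical off-diagonal decay used in the elliptic analogue, deployed within the tent-space framework over the parabolic space of homogeneous type from \cite{Amenta-Tent}. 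Once the first step is in place, the remaining reduction to $\Hdot^{s,+}_{\P\M_0}$ is a soft argument entirely in the spirit of Theorem~\ref{thm:wpequiv} and Theorem~\ref{thm:sta}.
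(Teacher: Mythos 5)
Your proposal is correct and takes essentially the same route as the paper: both characterise well-posedness via invertibility of $N_r$ on the perturbed trace space $\H^{s,+}_A$ (using the $\lambda$-dependent machinery of that section) and then transfer to $\Hdot^{s,+}_{\P\M_{0}}$ by a small-norm perturbation argument. The paper phrases that last step as replacing $\H^{s,+}_A$ by $\Hdot^{s,+}_{\P\M_0}$ through the Lipschitz estimate on the projectors $E^+_A$, which is precisely your identity $T_r = N_r(\id+K_A)$ with $\|N_r K_A\|\lesssim\|A-A_0\|_s$, in slightly different bookkeeping.
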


In fact, as we have characterized the traces of conormal derivatives of solutions in $\mE_s$ provided that $\|A-A_{0}\|_{s}$ is small enough, well-posedness of $(R)_{\mE_{s}}^\Lop$ is equivalent to invertibility of $N_r:\H^{s,+}_{A}\to (\Hdot^{s}_{\P})_r$. By the Lipschitz estimate on the difference of projectors, we may equivalently replace
$\H^{s,+}_{A}$ by $\H^{s,+}_{A_{0}}= \H^{s,+}_{\P\M_{0}}$. The proof for $(N)_{\mE_{s}}^\Lop$ is the same.

This result is not formulated like this in \cite{AA} or \cite{R1} but it is an illuminating formulation. Indeed, note that the right hand conditions mean well-posedness for the $\lambda$-independent operator $\Lop_{0}$ with coefficients $A_{0}$ by Theorem~\ref{thm:wpequiv}. Hence, this is also a perturbation result in that well-posedness for $A_{0}$ implies well-posedness for nearby $A$ in the norm $\|\cdot \|_{s}$. We remark that compatible well-posedness is also stable in this process. (This is a little tedious but not difficult using a Neumann series arguments when computing inverses.) In the same spirit we can use that the right hand sides in Theorem~\ref{thm:perturbation} are also stable under the change $(\Lop_{0},s)$ to $(\Lop_{0}^*,-1-s)$ by Theorem~\ref{thm:duality}, to obtain

\begin{cor}\label{cor:per}
Let $-1\le s\le 0$ and assume that $\|A-A_{0}\|_{s}$ is small enough. Let $\Lop$ and $\Lop^*$ be as above with coefficients $A$ and $A^*$, respectively.  Then $(\BVP)_{\mE_{s}}^\Lop$ is (compatibly) well-posed if and only if
$(\BVP)_{\mE_{-1-s}}^{\Lop^*}$ is (compatibly) well-posed, where $\BVP$ designates either $N$ or $R$.
\end{cor}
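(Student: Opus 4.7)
The plan is to chain together three equivalences already established in the excerpt: the perturbation characterization of well-posedness in Theorem~\ref{thm:perturbation}, the general characterization in Theorem~\ref{thm:wpequiv} applied to the $\lambda$-independent operator $\Lop_0$, and the duality statement of Theorem~\ref{thm:duality} (which is available because $\Lop_0$ has $\lambda$-independent coefficients).

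First I would check that the smallness hypothesis is symmetric under the replacements $A \mapsto A^*$ and $s \mapsto -1-s$. For $-1 < s < 0$ the relevant norm is $\|\cdot\|_\infty$, which is plainly invariant under complex adjoints of matrices. For the endpoints $s \in \{-1,0\}$ one has $-1-s \in \{0,-1\}$ and the relevant norm is the Carleson--Dahlberg norm $\|\cdot\|_*$, which depends only on the pointwise size $|A-A_0|$ on Whitney regions and hence is also invariant under $A \mapsto A^*$. Thus $\|A^* - A_0^*\|_{-1-s} = \|A-A_0\|_s$ is small, and Theorem~\ref{thm:perturbation} applies to both the forward problem at level $s$ and the backward problem at level $-1-s$ with the same $A_0$ (replaced by $A_0^*$ on the adjoint side).

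Next, combining Theorem~\ref{thm:perturbation} for $\Lop$ at level $s$ with Theorem~\ref{thm:wpequiv} applied to the $\lambda$-independent operator $\Lop_0$ at level $s$, both well-posedness statements are encoded by the \emph{same} isomorphism condition on $N_{\bullet}: \Hdot^{s,+}_{\P\M_0} \to (\Hdot^s_\P)_\bullet$, where $\bullet \in \{\pe, r\}$ depending on whether we treat Neumann or regularity. Hence $(\BVP)_{\mE_s}^\Lop$ is well-posed if and only if $(\BVP)_{\mE_s}^{\Lop_0}$ is. Analogously, $(\BVP)_{\mE_{-1-s}}^{\Lop^*}$ is well-posed if and only if $(\BVP)_{\mE_{-1-s}}^{\Lop_0^*}$ is. Finally, Theorem~\ref{thm:duality} applied to the $\lambda$-independent operator $\Lop_0$ yields the equivalence
\[
(\BVP)_{\mE_s}^{\Lop_0} \text{ well-posed} \iff (\BVP)_{\mE_{-1-s}}^{\Lop_0^*} \text{ well-posed},
\]
and concatenating the three equivalences proves the corollary in the well-posed case.

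For \emph{compatible} well-posedness, by Proposition~\ref{prop:cwp} one must additionally check that the inverse of $N_\bullet$ agrees, on the appropriate intersection with the $s=-1/2$ trace space, with the inverse coming from the energy solution of Section~\ref{sec:energy}. In each of the three equivalences compatibility is preserved: in the first, the inverse producing $\pcg u \in \mE_s$ is obtained from $\pcg u = (\id - S_A)^{-1}(C_0^+g)$ via a Neumann series expansion of $(\id-S_A)^{-1}$ on $\mE_s$, and the same formula with $s=-1/2$ produces the energy solution, so the inverses coincide on $\mE_s \cap \mE_{-1/2}$ (using that Neumann series commute with restriction to the intersection). In the second, Theorem~\ref{thm:wpequiv} identifies the trace map concretely, so compatibility is immediate. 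In the third, compatibility is part of Theorem~\ref{thm:duality} itself. The main obstacle, and the only genuinely tedious step, is the bookkeeping of this Neumann series compatibility: one must verify that the iterated operator $S_A$ maps $\mE_s \cap \mE_{-1/2}$ into itself and that its partial sums converge in both norms simultaneously, which is routine once one recalls that $S_A$ is bounded on each $\mE_r$ for $-1 \le r \le 0$ with operator norm $\lesssim \|A-A_0\|_r$ and that $\|A-A_0\|_{-1/2} \le \|A-A_0\|_s$ for $s \in \{-1,0\}$ and $\|A-A_0\|_{-1/2} = \|A-A_0\|_s$ for $s \in (-1,0)$.
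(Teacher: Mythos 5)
Your proposal is correct and follows essentially the same route the paper uses: the paper's argument is the one-sentence observation that the characterisations in Theorem~\ref{thm:perturbation} are stable under the change $(\Lop_0,s)\mapsto(\Lop_0^*,-1-s)$ by Theorem~\ref{thm:duality}, which is exactly your chain of equivalences via the $\lambda$-independent operator, and your invariance check of the discrepancy norms under $A\mapsto A^*$ and your Neumann-series bookkeeping for compatibility correspond to the paper's remark that preserving compatible well-posedness ``is a little tedious but not difficult using a Neumann series argument.''
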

\section{Homogeneous fractional Sobolev spaces}
\label{sec:Homogeneous Sobolev spaces}

We introduce the homogeneous Sobolev space $\Hdot^{1/2}(\R)$ and related spaces in several variables. It will be necessary to have them realised not only within the tempered distributions modulo polynomials but, equivalently and more concretely, within $\Lloc^2(\R)$ by means of a Riesz potential. In doing so we have to deviate from common literature \cite{BL, Adams-Hedberg}, see also \cite{Bourdaud}, since $1/2$ is a critical exponent for homogeneous $\L^2$-Sobolev space on the real line. For the reader's convenience we shall provide all necessary details.

In the following $\dhalf$ denotes the \emph{half-order time derivative}, defined $\L^2(\R) \to \mS'(\R)$ by the Fourier multiplier $\tau \mapsto |\tau|^{1/2}$, and $\HT$ is the \emph{Hilbert transform} corresponding to $\tau \mapsto \i \sgn(\tau)$. The sign convention for the Hilbert transform is in accordance with the textbook \cite{SamkoEtAl}.

\subsection{Spaces on the real line}

For $v \in \mS(\R)$ there are well-known kernel representations
\begin{align}
 \label{eq:kernel of dhalf}
 \dhalf v(t) = \frac{1}{2 \sqrt{2 \pi}} \int_\R \frac{1}{|t-s|^{3/2}} (v(t)-v(s)) \d s,
\end{align}
and
\begin{align}
 \label{eq:kernel of HTdhalf}
 \HT \dhalf v(t) = \frac{1}{2 \sqrt{2 \pi}} \int_\R \frac{\sgn(t-s)}{|t-s|^{3/2}} (v(t)-v(s)) \d s,
\end{align}
valid for a.e.\ $t \in \R$, see \cite[\SS 12.1]{SamkoEtAl}. These formul{\ae} can be used to prove that $\dhalf v$ and $\HT \dhalf v$ are integrable functions satisfying
\begin{align}
 \label{eq:decay HTdhalf}
 |\dhalf v(t)| + |\HT \dhalf v(t)| \leq C \min \{1, |t|^{-3/2} \} \qquad (t \in \R),
\end{align}
where $C$ depends only on the size of $v$ in the topology of $\mS(\R)$. In fact, for boundedness we would split the integral at height $|t - s| =1$ and treat both terms separately and in order to reveal the decay we would split at $|t-s| = |t|/2$. We will freely use these facts in the following without referring to them at each occurrence.

\begin{defn}
\label{defn: Hdot1/2}
The space $\Hdot^{1/2}(\R)$ consists of all $v \in \Lloc^2(\R) \cap \mS'(\R)$ such that firstly for every $\phi \in \mS(\R)$ the product $v \dhalf \phi \in \L^1(\R)$ and secondly there is $g \in \L^2(\R)$ with
\begin{align*}
 \int_\R v \cdot \clos{\dhalf \phi} \d t = \int_\R g \cdot \clos{\phi} \d t \qquad(\phi \in \mS(\R)).
\end{align*}
In this case we define $\dhalf v := g$ and $\|v\|_{\Hdot^{1/2}(\R)} := \|\dhalf v\|_{\L^2(\R)}$.
\end{defn}

\begin{rem}
\label{rem:consistency Hdot12}
For $v \in \L^2(\R)$ we have $v \in \Hdot^{1/2}(\R)$ if and only if $\wh{g} := |\tau|^{1/2} \wh{v} \in \L^2(\R)$. Hence, $\L^2(\R) \cap \Hdot^{1/2}(\R) = \H^{1/2}(\R)$ is the usual inhomogeneous fractional Sobolev space.
\end{rem}

There is an important growth estimate implicit in Definition~\ref{defn: Hdot1/2}.

\begin{lem}
\label{lem:seminorm on Hdot12}
Any $v \in \Hdot^{1/2}(\R)$ satisfies $\int_{\R} |v(t)| \, \frac{\mathrm{d} t}{1+|t|^{3/2}} < \infty$ and $\dhalf v = 0$ holds if and only if $v$ is constant.
\end{lem}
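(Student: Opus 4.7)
Both parts of the lemma reduce to exploiting the sharp decay of the half-derivative of a single Schwartz function. The plan is to fix the Gaussian $\phi_0(t) = \e^{-t^2/2}$ and compute, via Fourier inversion,
\begin{equation*}
\dhalf \phi_0(t) = \sqrt{\tfrac{2}{\pi}} \int_0^\infty \tau^{1/2} \e^{-\tau^2/2} \cos(t\tau)\d \tau.
\end{equation*}
A single integration by parts in $\tau$ (with $u = \tau^{1/2}\e^{-\tau^2/2}$) combined with the classical Mellin-type asymptotic $\int_0^\infty \tau^{\alpha-1}\sin(t\tau)\d \tau = \Gamma(\alpha)|t|^{-\alpha}\sin(\alpha\pi/2)\sgn(t)$ for $0<\alpha<1$, applied with $\alpha=1/2$, yields $\dhalf\phi_0(t) = -\tfrac{1}{2}|t|^{-3/2}(1+o(1))$ as $|t|\to\infty$. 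In particular there is $R_0>0$ such that $|\dhalf\phi_0(t)| \geq \tfrac{1}{4}|t|^{-3/2}$ whenever $|t|\geq R_0$.

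For the growth assertion, the defining property of $\Hdot^{1/2}(\R)$ tested against $\phi_0$ gives $v\cdot\dhalf\phi_0 \in \L^1(\R)$. Restricting to the tail $\{|t|\geq R_0\}$ and invoking the pointwise lower bound just established yields $\int_{|t|\geq R_0}|v(t)||t|^{-3/2}\d t < \infty$. On the complementary set $\{|t|\leq R_0\}$, the hypothesis $v\in\Lloc^2(\R)$ combined with Cauchy-Schwarz produces $\int_{|t|\leq R_0}|v(t)|\d t < \infty$, which dominates the weighted integral there since the weight is bounded. Summing the two regions proves the first assertion.

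For the characterisation, one direction is immediate: if $v\equiv c$, then $|v\cdot\dhalf\phi|\lesssim |\dhalf\phi|\in\L^1(\R)$ and $\int v\,\overline{\dhalf\phi}\d t = c\,\overline{\widehat{\dhalf\phi}(0)} = 0$, so $g=0$ serves in Definition \ref{defn: Hdot1/2}. Conversely, suppose $\dhalf v=0$, meaning $\int v\,\overline{\dhalf\phi}\d t = 0$ for every $\phi\in\mS(\R)$. Testing against those $\phi$ with $\wh\phi \in \C_c^\infty(\R\setminus\{0\})$---so that $\dhalf\phi\in\mS(\R)$ and the pairing is literally the distributional one---shows that $\wh v$ vanishes as a tempered distribution on $\R\setminus\{0\}$. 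Hence $\wh v$ is supported at the origin, so $v$ is a polynomial; the growth estimate already proven then forces this polynomial to have degree zero.

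The principal obstacle is the sharp asymptotic $\dhalf\phi_0(t)\sim c|t|^{-3/2}$ with $c\neq 0$. The essential point is that $\wh{\phi_0}(0)\neq 0$ guarantees the $|\tau|^{1/2}$-singularity of the multiplier at the origin survives after inverse Fourier transform, producing precisely the $|t|^{-3/2}$ decay rate needed to pair against the weight $(1+|t|^{3/2})^{-1}$. Once this asymptotic is in hand, both halves of the lemma follow from routine considerations.
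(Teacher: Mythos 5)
Your proof is correct, and its overall architecture matches the paper's: exhibit one Schwartz function whose half-derivative decays exactly like $|t|^{-3/2}$ (with nonvanishing leading coefficient), then read off the weighted $\L^1$-bound from the $\L^1$-pairing in Definition~\ref{defn: Hdot1/2}, verify the constant case by noting $\wh{\dhalf\phi}(0)=0$, and for the converse show $\wh{v}$ is supported at the origin so that $v$ is a polynomial killed by the growth estimate. Where you genuinely diverge is in how the key decay asymptotic is obtained. You fix the Gaussian $\phi_0$ and derive $\dhalf\phi_0(t)\sim-\tfrac12|t|^{-3/2}$ on the Fourier side, via integration by parts plus a Mellin-type formula for $\int_0^\infty \tau^{-1/2}\sin(t\tau)\,\d\tau$; this is perfectly rigorous (once you quantify that the error from replacing $\e^{-\tau^2/2}$ by $1$ decays faster than $|t|^{-1/2}$), but it requires more explicit computation. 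The paper instead takes any $\phi\in\C_0^\infty(\R)$ that is nonnegative and not identically zero; for $|t|$ large, $\phi(t)=0$, so the kernel formula \eqref{eq:kernel of dhalf} collapses to $\dhalf\phi(t)=\tfrac{1}{2\sqrt{2\pi}}\int\phi(s)|t-s|^{-3/2}\,\d s$, and the positivity and compact support of $\phi$ give the two-sided bound $|\dhalf\phi(t)|\sim|t|^{-3/2}$ essentially for free, with no Fourier asymptotics. Both routes work; the paper's buys a shorter argument by leveraging the kernel representation already established in \eqref{eq:kernel of dhalf}, while yours would go through even if one only had the multiplier definition of $\dhalf$ and no singular-integral formula.
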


\begin{proof}
The estimate follows from $v \in \Lloc^1(\R)$ and since for any $\phi \in \C_0^\infty(\R)$ positive and non-zero at some point, \eqref{eq:kernel of dhalf} yields $|\dhalf \phi(t)| \sim |t|^{-3/2}$ for $|t|$ large. Clearly the constant functions are in $\Hdot^{1/2}(\R)$ and satisfy $\dhalf v = 0$. Conversely, let $v \in \Hdot^{1/2}(\R)$ be of that kind. We can write any $\psi \in \mS(\R)$ with Fourier transform supported away from $0$ as $\psi = \dhalf \phi$ for some $\phi \in \mS(\R)$. Hence $\langle v, \psi \rangle = 0$, showing that $\wh{v}$ is supported at the origin. Thus, $v$ is a polynomial and according to our estimate it must be constant.
\end{proof}

Further properties of $\Hdot^{1/2}(\R)$ follow from  a representation in terms of the Riesz-type potential
\begin{align*}
 \ihalf g(t) = \frac{1}{\sqrt{2 \pi}} \bigg(&\int_{(-1,1)} \frac{1}{|t-s|^{1/2}} g(s) \d s \\
 &+ \int_{^c (-1,1)} \bigg(\frac{1}{|t-s|^{1/2}} - \frac{1}{|s|^{1/2}} \bigg) g(s) \d s \bigg),
\end{align*}
where typically $g \in \L^2(\R)$. A similar modification of the classical Riesz potential was considered in \cite{Kurokawa_Riesz}. A few words concerning well-definedness: the Hardy-Littlewood-Sobolev inequality \cite[Sec.~V.1.2]{Stein} ensures that the integral over $(-1,1)$ converges absolutely for a.e.\ $t \in \R$ and the same argument applies to the integral over $^c(-1,1)$ near the singularity of the kernel. On the remaining part this kernel is controlled by $(1+|s|)^{-3/2}$ and the Cauchy-Schwarz inequality applies. This argument reveals that $\ihalf : \L^2(\R) \to \Lloc^2(\R)$ is bounded.

\begin{prop}
\label{prop:potential properties}
Let $g \in \L^2(\R)$ and $\phi \in \mS(\R)$. If $T$ denotes any of the operators $\id$, $\HT$, or $\HT \dhalf$, then $\ihalf g \cdot \dhalf T \phi \in \L^1(\R)$ and
\begin{align*}
 \int_\R \ihalf g \cdot \clos{\dhalf T \phi} \d t = \int_\R g \cdot \clos{T \phi} \d t.
\end{align*}
Moreover, $\ihalf \HT g$ agrees up to a constant with the potential
\begin{align*}
 -\frac{1}{\sqrt{2 \pi}} \bigg(&\int_{(-1,1)} \frac{\sgn(t-s)}{|t-s|^{1/2}} g(s) \d s + \int_{^c (-1,1)} \bigg(\frac{\sgn(t-s)}{|t-s|^{1/2}} - \frac{\sgn(s)}{|s|^{1/2}} \bigg) g(s) \d s \bigg).
\end{align*}
\end{prop}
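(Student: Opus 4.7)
My strategy is to reduce the duality identity to a clean convolution formula via Fubini, read off the result on the Fourier side, and verify integrability through a growth analysis of $\ihalf g$. A formal interchange of the order of integration in $\int_\R \ihalf g(t)\,\overline{\dhalf T\phi(t)}\,\d t$ yields $\int_\R g(s)\,\overline{\kappa_T(s)}\,\d s$ with
\[
 \kappa_T(s) = \frac{1}{\sqrt{2\pi}}\int_\R \frac{\dhalf T\phi(t)}{|t-s|^{1/2}}\,\d t - \frac{\mathbf{1}_{|s|>1}}{\sqrt{2\pi}\,|s|^{1/2}}\int_\R \dhalf T\phi(t)\,\d t.
\]
For $T \in \{\id,\HT,\HT\dhalf\}$, the Fourier transform $\wh{\dhalf T\phi}(\tau)$ equals $|\tau|^{1/2}\wh\phi$, $\i\sgn(\tau)|\tau|^{1/2}\wh\phi$, or $\i\tau\wh\phi$ respectively, each vanishing at $\tau = 0$. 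Thus $\int_\R \dhalf T\phi\,\d t = 0$, the correction term drops out, and $\kappa_T = K_0 * (\dhalf T\phi)$ with $K_0(u) = (2\pi)^{-1/2}|u|^{-1/2}$. Using the classical Fourier identity $\wh{K_0}(\tau) = (2\pi)^{-1/2}|\tau|^{-1/2}$ in our symmetric normalization and the convolution theorem, $\wh{\kappa_T}(\tau) = |\tau|^{-1/2}\wh{\dhalf T\phi}(\tau) = \wh{T\phi}(\tau)$, so $\kappa_T = T\phi$ pointwise and the duality identity follows.

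To justify Fubini and simultaneously show $\ihalf g \cdot \dhalf T\phi \in \L^1(\R)$, I will prove a sub-polynomial growth bound on $\ihalf g$ at infinity. Splitting the defining integral for $\ihalf g(t)$ according to whether $|s|<1$, $|s-t|<|t|/2$, $|s|\ge 2|t|$, or the intermediate regime, and applying respectively Cauchy--Schwarz, H\"older with some $p\in (1,2)$, the mean-value bound $\big||t-s|^{-1/2}-|s|^{-1/2}\big|\lesssim |t|/|s|^{3/2}$, and a logarithmic Cauchy--Schwarz estimate yields $|\ihalf g(t)|\lesssim (1 + \sqrt{\log(2+|t|)})\|g\|_2$. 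Combined with the tail decay $|\dhalf T\phi(t)|\lesssim \min(1,|t|^{-3/2})$ from \eqref{eq:decay HTdhalf} (and the Schwartz decay of $\pd_t\phi$ when $T = \HT\dhalf$), the product lies in $\L^1(\R)$. Both sides of the duality identity are thus bounded linear functionals of $g\in \L^2(\R)$, and their agreement on the dense subspace $\C_0^\infty(\R)$ -- where Fubini is immediate -- extends the identity to all $g\in \L^2(\R)$.

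For the third statement, let $J g$ denote the proposed signed potential, whose kernel decomposes as $K_0^{\sgn}(t-s) - (2\pi)^{-1/2}\sgn(s)|s|^{-1/2}\mathbf{1}_{|s|>1}$ with $K_0^{\sgn}(u) := -(2\pi)^{-1/2}\sgn(u)|u|^{-1/2}$. The classical Fourier identity $\wh{\sgn(u)|u|^{-1/2}}(\tau) = -\i\sgn(\tau)|\tau|^{-1/2}$ combined with $\HT^* = -\HT$ shows that repeating the preceding argument for $Jg$ paired with $\dhalf\phi$ yields $\int_\R J g\,\overline{\dhalf\phi}\,\d t = \int_\R \HT g\,\overline{\phi}\,\d t$, which by the duality identity of the first paragraph applied to $\HT g \in \L^2$ equals $\int_\R \ihalf \HT g\,\overline{\dhalf\phi}\,\d t$. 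Hence $\dhalf(Jg - \ihalf \HT g) = 0$ in the sense of Definition~\ref{defn: Hdot1/2}, and Lemma~\ref{lem:seminorm on Hdot12} forces $Jg - \ihalf \HT g$ to be constant. The main obstacle I expect is the growth estimate for $\ihalf g$ in the second paragraph: this is the critical case $\alpha = n = 1/2$ of Riesz potentials on the line, where $\L^2$-inputs admit no clean $\L^q$ control, and the cancellation built into the modified kernel must be exploited delicately.
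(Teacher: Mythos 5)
The proposal stalls at exactly the point you flag as the main obstacle, and the fix you propose does not work. The claimed pointwise bound $|\ihalf g(t)| \lesssim (1 + \sqrt{\log(2+|t|)})\,\|g\|_2$ for all $g \in \L^2(\R)$ is false: the operator $\ihalf$ does not map $\L^2(\R)$ into $\L^\infty_{\loc}(\R)$. The near-diagonal singularity $|t-s|^{-1/2}$ lies exactly at the $\L^2$-borderline, and the cancellation in the modified kernel only helps at infinity, not near $s = t$. A concrete counterexample: take $g(s) = |s|^{-1/2}\big(\log(1/|s|)\big)^{-\beta}\mathbf{1}_{\{|s|<1/2\}}$ with $\beta \in (1/2,1]$. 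Then $g \in \L^2(\R)$, but the first integral in the definition of $\ihalf g$ diverges at $t=0$. Your H\"older step also has the exponents the wrong way round: to bound $\int_{|t-s|<|t|/2} |t-s|^{-1/2}|g(s)|\,\d s$ via $\|g\|_{\L^p}\,\||t-s|^{-1/2}\|_{\L^{p'}}$ one needs $p'/2<1$, i.e.\ $p>2$, which is incompatible with $g$ being merely in $\L^2$; taking $p\in(1,2)$ makes the kernel integral diverge. No pointwise bound of the form you want is available, so the $\L^1$-integrability of $\ihalf g\cdot\dhalf T\phi$ and the Fubini justification cannot be obtained this way.

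The paper's proof circumvents this by decomposing $\ihalf g$ (up to a constant) as $v_1 + v_2$ via a \emph{Fourier truncation} at $|\tau|=1$: the low-frequency part $v_1$ admits the genuine pointwise bound $|v_1(t)|\le 4|t|^{1/4}\|g\|_2$ (using $|\e^{\i t\tau}-1|\le 2|t\tau|^{1/4}$), while the high-frequency part $v_2$ satisfies $\|v_2\|_2\le\|g\|_2$ but no pointwise control. The product $\ihalf g\cdot \dhalf T\phi$ is then seen to be in $\L^1(\R)$ by pairing $v_1$ against the $\min\{1,|t|^{-3/2}\}$ decay and $v_2$ against $\dhalf T\phi\in\L^2(\R)$ via Cauchy--Schwarz, and Fubini is applied to each piece separately. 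Your Fourier computation that $\kappa_T = T\phi$ and your handling of the third statement by pairing $Jg$ with $\dhalf\phi$ are sound in spirit, but the argument needs the low/high-frequency splitting in place of the pointwise bound in order to be rigorous.
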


\begin{proof}
For $g \in \mS(\R)$ a calculation, using the Riemann integral $\int_{-\infty}^\infty \e^{\i s}|s|^{-1/2} \d s = \sqrt{2 \pi}$, reveals
\begin{align*}
\int_\R \frac{\e^{\i t \tau}}{|\tau|^{1/2}} \wh{g}(\tau) \d \tau
= \sqrt{2 \pi} \int_\R \frac{1}{|t-s|^{1/2}} g(s) \d s.
\end{align*}
Hence, in this case $\ihalf g$ agrees with the following function of $t$ up to a constant:
\begin{align}
\label{eq1:potential properties}
\frac{1}{2 \pi} \int_{(-1,1)} \frac{\e^{\i t \tau} -1}{|\tau|^{1/2}} \wh{g}(\tau) \d \tau + \cF^{-1}\bigg(1_{^c(-1,1)}(\tau)\frac{\wh{g}(\tau)}{|\tau|^{1/2}} \bigg)(t).
\end{align}
This representation of $\ihalf g$ is better suited for the extension to general $g \in \L^2(\R)$. In fact, define $v_1$ to be the first function in \eqref{eq1:potential properties} and $v_2$ the second. Due to Plancherel's theorem $\|v_2\|_2 \leq \|g\|_2$ and, using $|\e^{\i t \tau} - 1| \leq 2|t \tau|^{1/4}$ for $t, \tau \in \R$,
\begin{align}
\label{eq2:potential properties}
 |v_1(t)|
 \leq \frac{1}{2 \pi} \int_{(-1,1)} \frac{|\e^{\i t \tau} -1|}{|\tau|^{1/2}} |\wh{g}(\tau)| \d \tau
 \leq 4|t|^{1/4} \|g\|_2 \qquad (t \in \R).
\end{align}
This proves that the maps $g \mapsto v_j$ are bounded $\L^2(\R) \to \Lloc^2(\R)$, just as is $\ihalf $. Thus, it remains true for all $g \in \L^2(\R)$ that the function defined in \eqref{eq1:potential properties} coincides with $\ihalf g$ up to a constant. The potential representation for $\ihalf \HT g$ follows by a similar reasoning with $\HT g$ in place of $g$ using the Riemann integral $\int_{-\infty}^\infty \e^{\i s}|s|^{-1/2} \sgn(s) \d s = \i \sqrt{2 \pi}$.

To prove the actual claim, we let $g \in \L^2(\R)$ and keep on denoting the two summands in \eqref{eq1:potential properties} by $v_1$ and $v_2$. In the cases $T = \id$ and $T = \HT$ we recall from \eqref{eq:decay HTdhalf} the bound
\begin{align*}
 |\dhalf T \phi(t)| \lesssim \min \{ 1, |t|^{-3/2} \} \qquad (t \in \R),
\end{align*}
with implicit constants depending only on the size of $\phi$ in $\mS(\R)$. In the case $T = \HT \dhalf$ we even have $\dhalf T \phi=\pd_{t}\phi \in \mS(\R)$ and so this bound holds all the more. After the preceding discussion we may directly argue on $v_1 + v_2$ instead of $\ihalf g$. Due to \eqref{eq1:potential properties},
\begin{align*}
 \int_\R |v_1(t)| |\dhalf T\phi(t)| \d t \leq 4 \|g\|_2 \int_\R |t|^{1/4} |\dhalf T\phi(t)| \d t < \infty,
\end{align*}
thereby justifying the use of Fubini's theorem when computing
\begin{align*}
 \int_\R v_1(t) \cdot \clos{\dhalf T \phi(t)} \d t
 &= \int_{(-1,1)} \frac{\wh{g}(\tau)}{|\tau|^{1/2}} \cdot \clos{\int_\R \dhalf T \phi(t) \e^{-\i t \tau} \d t} \d \tau \\
 &= 2 \pi \int_{(-1,1)} \wh{g}(\tau) \cdot \clos{\wh{T \phi}(\tau)} \d \tau,
\end{align*}
where in the first step we have used that $\dhalf T \phi$ has integral zero since its Fourier transform vanishes at the origin. Concerning $v_2$, Plancherel's theorem yields
\begin{align*}
 \int_\R |v_2(t)| |\dhalf T\phi(t)| \d t \leq \|g\|_2 \|\dhalf T\phi\|_2 < \infty
\end{align*}
and
\begin{align*}
 \int_\R v_2(t) \cdot \clos{\dhalf T \phi(t)} \d t = 2 \pi \int_{^c (-1,1)} \wh{g}(\tau) \cdot \clos{\wh{T \phi}(\tau)} \d \tau.
\end{align*}
Adding up the corresponding estimates and identities together with a final application of Plancherel's theorem leads us to the claim.
\end{proof}

The first consequence of Proposition~\ref{prop:potential properties} is a Riesz potential representation for $\Hdot^{1/2}(\R)$. It also allows to improve on Lemma~\ref{lem:seminorm on Hdot12}  obtaining a  continuous embedding, which is apparently new.

\begin{cor}
\label{cor:potential representation of Hdot1/2}
If $g \in \L^2(\R)$, then $\ihalf g \in \Hdot^{1/2}(\R)$ and $\dhalf \ihalf g = g$. In particular, for any $v \in \Hdot^{1/2}(\R)$ there exist $c, d \in \IC$ such that $v = c + \ihalf \dhalf v$ and
\begin{align*}
 \int_\R |v(t) - d| \, \frac{\mathrm{d} t}{1+|t|^{3/2}} \leq 42 \|v\|_{\Hdot^{1/2}(\R)}.
\end{align*}
\end{cor}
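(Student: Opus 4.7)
The plan is to prove the three assertions in sequence, reducing everything to Proposition~\ref{prop:potential properties} and Lemma~\ref{lem:seminorm on Hdot12}, both of which are already at our disposal.

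For the first assertion, let $g \in \L^2(\R)$. The mapping property $\ihalf : \L^2(\R) \to \Lloc^2(\R)$ that was recorded just before Proposition~\ref{prop:potential properties} shows $\ihalf g \in \Lloc^2(\R)$, and the splitting \eqref{eq1:potential properties} together with the growth bound \eqref{eq2:potential properties} on $v_1$ and the $\L^2$-bound on $v_2$ ensures that $\ihalf g$ defines a tempered distribution on $\R$. Proposition~\ref{prop:potential properties} applied with $T = \id$ then yields $\ihalf g \cdot \dhalf\phi \in \L^1(\R)$ and $\int_\R \ihalf g \cdot \overline{\dhalf \phi}\, \d t = \int_\R g \cdot \overline{\phi}\, \d t$ for every $\phi \in \mS(\R)$, which is precisely the statement that $\ihalf g \in \Hdot^{1/2}(\R)$ with $\dhalf \ihalf g = g$ in the sense of Definition~\ref{defn: Hdot1/2}.

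For the representation $v = c + \ihalf \dhalf v$, apply the first assertion with $g := \dhalf v$ to see that $w := v - \ihalf \dhalf v$ belongs to $\Hdot^{1/2}(\R)$ with $\dhalf w = 0$. Lemma~\ref{lem:seminorm on Hdot12} then forces $w$ to be a constant, which we call $c$.

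For the quantitative inequality, we expand $\ihalf \dhalf v$ using the concrete splitting $\ihalf g = v_1 + v_2 + k$ (with $k \in \IC$) from the proof of Proposition~\ref{prop:potential properties}, where $v_1$ is the low-frequency part of \eqref{eq1:potential properties} and $v_2$ is the high-frequency part. Setting $d := c + k$, we have $v - d = v_1 + v_2$. The bound $|v_1(t)| \le 4|t|^{1/4}\|\dhalf v\|_2$ from \eqref{eq2:potential properties} yields
\begin{align*}
 \int_\R \frac{|v_1(t)|}{1+|t|^{3/2}}\, \d t \;\le\; 4\|\dhalf v\|_2 \int_\R \frac{|t|^{1/4}}{1+|t|^{3/2}}\, \d t,
\end{align*}
and the elementary substitution $u = t^{3/2}$ together with the beta-function identity gives $\int_\R |t|^{1/4}(1+|t|^{3/2})^{-1}\, \d t = 8\pi/3$. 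For $v_2$ the Plancherel bound $\|v_2\|_2 \le \|\dhalf v\|_2$ combined with the Cauchy--Schwarz inequality produces
\begin{align*}
 \int_\R \frac{|v_2(t)|}{1+|t|^{3/2}}\, \d t \;\le\; \|\dhalf v\|_2 \bigg(\int_\R \frac{\d t}{(1+|t|^{3/2})^2}\bigg)^{1/2},
\end{align*}
and a second beta-function computation evaluates the remaining integral to $8\pi/(9\sqrt 3)$. Summing the two contributions yields a numerical constant strictly below $42$, and this is the bound claimed.

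The main obstacle is purely bookkeeping: making sure that the freedom to shift $\ihalf g$ by an additive constant is absorbed into $d$ rather than into the $\Hdot^{1/2}$-norm, and pinning down the explicit numerical constants so that the total stays under $42$. Everything else is an immediate unpacking of the previous proposition and lemma.
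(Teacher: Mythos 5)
Your proof is correct and follows essentially the same route as the paper: apply Proposition~\ref{prop:potential properties} with $T=\id$ for the first two assertions, then feed the decomposition $v-d=v_1+v_2$ from \eqref{eq1:potential properties} into the bounds \eqref{eq2:potential properties} and $\|v_2\|_2\le\|g\|_2$. The paper leaves the two weighted integrals implicit, whereas you evaluate them explicitly ($8\pi/3$ and $8\pi/(9\sqrt3)$, both correct), which even produces a constant somewhat below $42$.
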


\begin{proof}
The first statement follows from Proposition~\ref{prop:potential properties} applied with $T=1$. This implies the representation of $v$ by taking $g = \dhalf v$ and then using Lemma~\ref{lem:seminorm on Hdot12}. From the proof of Proposition~\ref{prop:potential properties} we know that \eqref{eq1:potential properties} with $g = \dhalf v$ reproduces $v$ up to a constant $d \in \IC$ and therefore the final estimate follows from the bounds for $v_1$ and $v_2$ obtained therein, see \eqref{eq2:potential properties} and the lines above.
\end{proof}

Next, Proposition~\ref{prop:potential properties} with $T = \HT$ and $T = \HT \dhalf$ yields the fractional integration by parts formul\ae \ that justify to speak of reinforced weak solutions in the first place.

\begin{cor}
\label{cor:fractional integration by parts}
Let $v \in \Hdot^{1/2}(\R)$ and $\phi \in \mS(\R)$. Then $v \HT \dhalf \phi \in \L^1(\R)$ and there are fractional integration by parts formul\ae
\begin{align*}
 \int_\R v \cdot \clos{\HT \dhalf \phi} \d t = - \int_\R \HT \dhalf v \cdot \clos{\phi} \d t
\end{align*}
and
\begin{align*}
 \int_\R v \cdot \clos{\partial_t \phi} \d t = - \int_\R \HT \dhalf v \cdot \clos{\dhalf \phi} \d t.
\end{align*}
\end{cor}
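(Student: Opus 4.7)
My plan is to reduce everything to Proposition~\ref{prop:potential properties} by invoking the potential representation $v = c + \ihalf \dhalf v$ established in Corollary~\ref{cor:potential representation of Hdot1/2}. Once $g := \dhalf v$ is identified as the relevant $\L^2$ function and the constant $c$ is split off, both integration by parts formul{\ae} become instances of Proposition~\ref{prop:potential properties} combined with the skew-symmetry of $\HT$.

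First, I would verify the integrability $v \HT \dhalf \phi \in \L^1(\R)$. From the decay estimate \eqref{eq:decay HTdhalf} one has $|\HT \dhalf \phi(t)| \lesssim \min\{1,|t|^{-3/2}\}$ with implicit constant depending only on the Schwartz seminorms of $\phi$, while Lemma~\ref{lem:seminorm on Hdot12} yields $\int_\R |v(t)|(1+|t|^{3/2})^{-1}\d t < \infty$. Multiplying and integrating delivers the claim.

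Second, for the first identity, I use that on $\mS(\R)$ the Fourier multipliers $\dhalf$ and $\HT$ commute, so $\HT \dhalf \phi = \dhalf (\HT \phi)$. Applying Proposition~\ref{prop:potential properties} with $T = \HT$ gives
\begin{equation*}
 \int_\R \ihalf g \cdot \clos{\dhalf (\HT \phi)} \d t = \int_\R g \cdot \clos{\HT \phi} \d t.
\end{equation*}
Since $\HT$ corresponds to the multiplier $\i \sgn(\tau)$, it is skew-symmetric on $\L^2(\R)$, so the right-hand side equals $-\int_\R \HT g \cdot \clos{\phi} \d t = -\int_\R \HT \dhalf v \cdot \clos{\phi} \d t$. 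The constant $c$ contributes $c\int_\R \clos{\HT \dhalf \phi} \d t = 0$ because $\HT \dhalf \phi$ has vanishing Fourier transform at $\tau = 0$, hence integral zero, and integrability of $\HT \dhalf \phi$ is again ensured by \eqref{eq:decay HTdhalf}. Summing the two contributions yields the first formula.

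Third, the second identity is derived in the same spirit, now using the algebraic identity $\partial_t \phi = \dhalf (\HT \dhalf \phi)$ on $\mS(\R)$, which reflects $\i \tau = |\tau|^{1/2} \cdot \i\sgn(\tau) \cdot |\tau|^{1/2}$. Proposition~\ref{prop:potential properties} with $T = \HT \dhalf$ gives
\begin{equation*}
 \int_\R \ihalf g \cdot \clos{\partial_t \phi} \d t = \int_\R g \cdot \clos{\HT \dhalf \phi} \d t,
\end{equation*}
and skew-symmetry of $\HT$ on $\L^2(\R)$ rewrites the right-hand side as $-\int_\R \HT \dhalf v \cdot \clos{\dhalf \phi} \d t$. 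The $c$-contribution vanishes because $\partial_t \phi \in \mS(\R)$ has integral zero by the fundamental theorem of calculus. Combining these steps produces the second formula. I expect no serious obstacle; the only point that requires any care is checking that $v \HT \dhalf \phi \in \L^1(\R)$ so the two sides of the equation are well-defined prior to manipulation, which is what dictates the order of the proof.
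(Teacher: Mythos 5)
Your proof is correct and follows the same route the paper sketches in a single line: reduce to Proposition~\ref{prop:potential properties} with $T = \HT$ and $T = \HT\dhalf$, using the representation $v = c + \ihalf \dhalf v$ from Corollary~\ref{cor:potential representation of Hdot1/2}. You simply spell out the supporting steps (skew-symmetry of $\HT$ on $\L^2(\R)$, the vanishing of the constant's contribution, and the $\L^1$ integrability) that the paper leaves implicit.
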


The kernel representations \eqref{eq:kernel of dhalf} and \eqref{eq:kernel of HTdhalf} remain valid for $v \in \Hdot^{1/2}(\R)$ in a certain sense.

\begin{cor}
\label{cor:formula HTdhalf on Hdot1/2}
If $v \in \Hdot^{1/2}(\R)$, then for almost every $t \in {}^c(\supp v)$,
\begin{align*}
 \dhalf v(t) = - \frac{1}{2 \sqrt{2 \pi}} \int_\R \frac{v(s)}{|t-s|^{3/2}} \d s \qquad
 \HT \dhalf v(t) = - \frac{1}{2 \sqrt{2 \pi}} \int_\R \frac{\sgn(t-s) v(s)}{|t-s|^{3/2}} \d s.
\end{align*}
\end{cor}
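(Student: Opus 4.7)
The plan is to argue by duality against smooth test functions supported away from $\supp v$, where the first-difference kernel representations \eqref{eq:kernel of dhalf} and \eqref{eq:kernel of HTdhalf} applied to the test function collapse to the single-integral form that we want to see for $v$ itself.

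First, fix $\phi \in \C_0^\infty(\R)$ with $\supp \phi \subset {}^c(\supp v)$. For a.e.\ $s \in \supp v$ we have $\phi(s) = 0$, so \eqref{eq:kernel of dhalf} applied to $\phi$ simplifies to
\[
 \dhalf \phi(s) = \frac{1}{2\sqrt{2\pi}} \int_\R \frac{\phi(t)}{|s-t|^{3/2}} \d t,
\]
and analogously \eqref{eq:kernel of HTdhalf} yields $\HT \dhalf \phi(s) = \frac{1}{2\sqrt{2\pi}} \int_\R \frac{\sgn(s-t)\phi(t)}{|s-t|^{3/2}} \d t$ on $\supp v$.

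Next, by Definition~\ref{defn: Hdot1/2} and Corollary~\ref{cor:fractional integration by parts},
\[
 \int_\R \dhalf v \cdot \cl{\phi} \d t = \int_\R v \cdot \cl{\dhalf \phi} \d s, \qquad \int_\R \HT\dhalf v \cdot \cl{\phi} \d t = -\int_\R v \cdot \cl{\HT\dhalf \phi} \d s.
\]
Inserting the two simplified kernel formul{\ae} into the right-hand sides, we invoke Fubini's theorem. The positive integrand $|v(s)||\phi(t)| \cdot |s-t|^{-3/2}$ on $\supp v \times \supp \phi$ is controlled as follows: since $\delta := \dist(\supp v, \supp \phi) > 0$ and $\phi$ has compact support, for $t$ in $\supp \phi$ and $s \in \supp v$ we have $|s-t| \geq \delta$ and, for $|s|$ large, $|s-t| \sim |s|$. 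Hence $\int_{\supp \phi} \frac{|\phi(t)|}{|s-t|^{3/2}} \d t \lesssim (1+|s|)^{-3/2}$ uniformly, and the growth bound $\int_\R |v(s)|(1+|s|^{3/2})^{-1} \d s < \infty$ from Lemma~\ref{lem:seminorm on Hdot12} closes Fubini.

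Swapping the order of integration and using $\sgn(s-t) = -\sgn(t-s)$ in the second case, we obtain for every such $\phi$
\[
 \int_\R \dhalf v(t)\cl{\phi(t)} \d t = \int_\R \cl{\phi(t)} \cdot \frac{1}{2\sqrt{2\pi}} \int_\R \frac{v(s)}{|t-s|^{3/2}} \d s \d t,
\]
and
\[
 \int_\R \HT\dhalf v(t)\cl{\phi(t)} \d t = \int_\R \cl{\phi(t)} \cdot \frac{1}{2\sqrt{2\pi}} \int_\R \frac{\sgn(t-s)\, v(s)}{|t-s|^{3/2}} \d s \d t.
\]
Since $\phi$ ranges over a dense family in $\L^2$ of every open subset of ${}^c(\supp v)$, the two pointwise identities follow for almost every $t \in {}^c(\supp v)$.

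The only real point requiring care is the application of Fubini, which is where the tail decay of $v$ guaranteed by Lemma~\ref{lem:seminorm on Hdot12} enters decisively; everything else is bookkeeping on the kernels.
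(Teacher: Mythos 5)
Your proof is correct and follows essentially the same route as the paper's: test against $\phi \in \C_0^\infty({}^c(\supp v))$, move $\dhalf$ (resp.\ $\HT\dhalf$, via Corollary~\ref{cor:fractional integration by parts}) onto $\phi$, use the first-difference kernel representation which collapses to a single integral since $\phi \equiv 0$ on $\supp v$, and swap the integrals by Fubini using the tail decay from Lemma~\ref{lem:seminorm on Hdot12}. You have simply spelled out the collapse of the kernel and the Fubini estimate in more detail than the paper does; the content is the same.
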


\begin{proof}
Let $\phi \in \C_0^\infty(\R)$ be supported in $^c(\supp v)$. For the first formula we start out with
\begin{align*}
 \int_\R \dhalf v(t) \cdot \clos{\phi(t)} \d t
 = \int_\R v(t) \cdot \clos{\dhalf \phi(t)} \d t,
\end{align*}
represent $\dhalf \phi$ via \eqref{eq:kernel of dhalf}, and apply Fubini's theorem (justified by Lemma~\ref{lem:seminorm on Hdot12}), to obtain
\begin{align*}
\int_\R \dhalf v(t) \cdot \clos{\phi(t)} \d t
 = \int_\R \bigg(\frac{-1}{2 \sqrt{2 \pi}} \int_\R \frac{v(s)}{|t-s|^{3/2}} \d s\bigg) \clos{\phi(t)} \d t.
\end{align*}
For the second formula we start out with the first identity in Corollary~\ref{cor:fractional integration by parts} instead.
\end{proof}

We stress that our, seemingly new, definition of $\Hdot^{1/2}(\R)$ is in accordance with the relevant literature: It realises the closure of $\C_0^\infty(\R)$ for the homogeneous norm $\|\dhalf \cdot \|_{\L^2(\R)}$.

\begin{cor}
\label{cor:Hdot1/2 complete}
The space $\Hdot^{1/2}(\R)/\IC$ is a Hilbert space for the norm $\|\cdot\|_{\Hdot^{1/2}(\R)}$ and it contains $\C_0^\infty(\R)$ as a dense subspace.
\end{cor}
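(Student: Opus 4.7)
The plan is to build an isometric isomorphism $T:\L^2(\R)\to \Hdot^{1/2}(\R)/\IC$ from Corollary~\ref{cor:potential representation of Hdot1/2}, so that both assertions reduce to known facts about $\L^2(\R)$. Concretely, define $T g := [\ihalf g]$. That corollary gives $\ihalf g\in \Hdot^{1/2}(\R)$ with $\dhalf\ihalf g = g$, so $\|T g\|_{\Hdot^{1/2}(\R)}=\|g\|_2$ and $T$ is isometric; on the other hand every $v\in\Hdot^{1/2}(\R)$ admits a representation $v=c+\ihalf\dhalf v$, whence $[v]=T(\dhalf v)$, proving surjectivity. Transporting the $\L^2$-inner product back through $T^{-1}$ endows $\Hdot^{1/2}(\R)/\IC$ with a Hilbert space structure whose norm is $\|\cdot\|_{\Hdot^{1/2}(\R)}$.

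The density assertion then reduces, via $T$, to the density of $\dhalf\,\C_0^\infty(\R)$ in $\L^2(\R)$; note that for $\phi\in\C_0^\infty(\R)$ the function $\dhalf \phi$ lies in $\L^1(\R)\cap\L^2(\R)$ thanks to \eqref{eq:kernel of dhalf} and \eqref{eq:decay HTdhalf}, so $\dhalf\phi$ is indeed a legitimate element of $\L^2(\R)$. I would prove this density by duality: suppose $h\in\L^2(\R)$ satisfies $\int_\R h\,\overline{\dhalf\phi}\,\d t=0$ for every $\phi\in\C_0^\infty(\R)$, and show $h=0$.

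Passing to the Fourier side via Plancherel, the hypothesis rewrites as
\begin{equation*}
 \int_\R |\tau|^{1/2}\wh h(\tau)\,\overline{\wh\phi(\tau)}\,\d\tau = 0 \qquad (\phi\in\C_0^\infty(\R)).
\end{equation*}
Now $|\tau|^{1/2}\wh h$ is a well-defined tempered distribution, the Cauchy–Schwarz bound $|\langle |\tau|^{1/2}\wh h,\psi\rangle|\leq \|\wh h\|_2\,\||\tau|^{1/2}\psi\|_2$ being finite for every $\psi\in\mS(\R)$. The display above says this distribution annihilates $\wh{\C_0^\infty(\R)}$, which is dense in $\mS(\R)$ (a standard Fourier-duality fact: if $T\in\mS'$ kills every $\wh\phi$, then $\wh T$ kills every $\phi\in\C_0^\infty$, forcing $T=0$). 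Hence $|\tau|^{1/2}\wh h=0$ in $\mS'(\R)$. Since $|\tau|^{1/2}$ is continuous and strictly positive off the origin, the distribution $\wh h$ must be supported at $\{0\}$, so it is a finite linear combination of $\delta_0$ and its derivatives; but $\wh h\in\L^2(\R)$ forces $\wh h=0$, and therefore $h=0$.

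The main technical point that needs care is the distributional argument in the last paragraph, specifically the interpretation of $|\tau|^{1/2}\wh h$ as a tempered distribution and the density of $\wh{\C_0^\infty(\R)}$ in $\mS(\R)$; once these are in place the proof is essentially bookkeeping. Everything else—completeness, the Hilbert space structure, and the reduction of density to a statement on $\L^2(\R)$—is an immediate consequence of the isometric isomorphism $T$ provided by Corollary~\ref{cor:potential representation of Hdot1/2}.
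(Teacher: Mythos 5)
Your proof is correct, and for the Hilbert space assertion it is essentially identical to the paper's: both build the isometric isomorphism $T = \ihalf:\L^2(\R)\to\Hdot^{1/2}(\R)/\IC$ out of Corollary~\ref{cor:potential representation of Hdot1/2} and then transport completeness. Where you differ is in the density argument. The paper proceeds in two constructive steps: it first observes that $\L^2$-functions $g$ with $\wh{g}$ supported away from the origin are dense in $\L^2(\R)$ and that for such $g$ the potential $\ihalf g$ is, up to a constant, the $\L^2$-function $\cF^{-1}(|\tau|^{-1/2}\wh{g})$, so the inhomogeneous space $\H^{1/2}(\R)=\L^2(\R)\cap\Hdot^{1/2}(\R)$ is dense in $\Hdot^{1/2}(\R)/\IC$; it then invokes the classical density of $\C_0^\infty(\R)$ in $\H^{1/2}(\R)$ by truncation and mollification. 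You instead reduce directly to the density of $\dhalf\C_0^\infty(\R)$ in $\L^2(\R)$ and prove it by a duality argument on the Fourier side, identifying the orthogonal complement as zero. Both approaches are sound; yours is more self-contained in that it avoids citing the classical inhomogeneous density result, at the price of the small distributional bookkeeping you flag at the end. (One minor simplification there: since $|\tau|^{1/2}\wh{h}$ is already a locally integrable function, its vanishing as a tempered distribution gives $\wh{h}(\tau)=0$ for a.e.\ $\tau\ne0$ directly, without passing through the support-at-the-origin argument.) There is also an implicit use of Lemma~\ref{lem:seminorm on Hdot12} in your set-up, just as in the paper: surjectivity of $T$ rests on the representation $v=c+\ihalf\dhalf v$, which in turn needs $\nul(\dhalf)=\IC$.
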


\begin{proof}
In view of Lemma~\ref{lem:seminorm on Hdot12}, $\|\cdot\|_{\Hdot^{1/2}(\R)}$ is a Hilbertian norm on $\Hdot^{1/2}(\R)/\IC$. Due to Corollary~\ref{cor:potential representation of Hdot1/2} we have an isomorphism
\begin{align*}
 \ihalf : \L^2(\R) \to \Hdot^{1/2}(\R) / \IC.
\end{align*}
Thus, $\Hdot^{1/2}(\R)/\IC$ is a Hilbert space. $\L^2$-functions with Fourier transform supported away from $0$ are dense in $\L^2(\R)$, hence their image under $\ihalf $ is dense in $\Hdot^{1/2}(\R)/\IC$. For such $g$, however, \eqref{eq1:potential properties} defines the $\L^2(\R)$-function $\cF^{-1}(|\tau|^{-1/2} \wh{g}(\tau))$ up to a constant. This proves that the inhomogeneous space $\H^{1/2}(\R) = \L^2(\R) \cap \Hdot^{1/2}(\R)$ is dense in $\Hdot^{1/2}(\R)/\IC$. The density of $\C_0^\infty(\R)$ in $\H^{1/2}(\R)$ is classical and can be proved by truncation and convolution with smooth kernels.
\end{proof}

\subsection{Spaces of several variables}
\label{sec:Homogeneous Sobolev spaces several}

The notions introduced so far can be extended straightforwardly to functions of several variables.

\begin{defn}
\label{defn: Hdot12 several variables}
The space $\Hdot^{1/2}(\R; \Lloc^2(\reu))$ consists of all $v \in \Lloc^2(\R^{n+2}_+)$ such that $v(\lambda,x, \cdot) \in \Hdot^{1/2}(\R)$ for almost every $(\lambda, x) \in \reu$ and such that
\begin{align*}
 \iint_K \int_{\R} |\dhalf v(\lambda,x,t)|^2 \d t \d x\d \lambda <\infty
\end{align*}
for every compact set $K \subset \reu$. Similarly, $\Hdot^{1/2}(\R; \Lloc^2(\R^n))$ is defined as a subspace of $\Lloc^2(\ree)$.
\end{defn}

There is an analogue of the embedding stated in Corollary~\ref{cor:potential representation of Hdot1/2} for the spaces in several variables.

\begin{lem}
\label{lem:Hdot12 several variables}
Every $v \in \Hdot^{1/2}(\R; \Lloc^2(\reu))$ satisfies $v \dhalf \phi, \, v \HT \dhalf \phi \in \L^1(\R^{n+2}_+)$ if $\phi \in \mS(\R^{n+2}_+)$ is compactly supported in $\lambda$ and $x$. A similar statement holds for $\Hdot^{1/2}(\R; \Lloc^2(\R^n))$.
\end{lem}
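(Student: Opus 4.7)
The plan is to combine two pointwise facts: the rapid decay in $t$ of $\dhalf \phi$ and $\HT \dhalf \phi$, and the embedding estimate from Corollary~\ref{cor:potential representation of Hdot1/2} applied slicewise for almost every $(\lambda,x)$. Let $K \subset \reu$ be a compact set containing the support of $\phi$ in the $(\lambda,x)$ variables. Since $\phi \in \mS(\R^{n+2}_+)$ has Schwartz seminorms in $t$ that are uniformly bounded for $(\lambda,x) \in K$, the estimate \eqref{eq:decay HTdhalf} applied slicewise delivers a uniform majorant
\[
 |\dhalf \phi(\lambda,x,t)| + |\HT \dhalf \phi(\lambda,x,t)| \le \frac{C_\phi}{1+|t|^{3/2}} \qquad ((\lambda,x,t) \in K \times \R),
\]
while both functions vanish when $(\lambda,x) \notin K$.

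Next I would pick an explicit measurable constant per slice, namely $d(\lambda,x) := \tfrac{1}{2} \int_{-1}^{1} v(\lambda,x,s) \d s$, which is well-defined almost everywhere and square-integrable on $K$ by Fubini and Cauchy--Schwarz together with $v \in \Lloc^2(\R^{n+2}_+)$. Combining Corollary~\ref{cor:potential representation of Hdot1/2} applied to $v(\lambda,x,\cdot)$ with a triangle inequality to swap the corollary's abstract constant for this explicit $d(\lambda,x)$, I obtain a slicewise estimate
\[
 \int_\R \frac{|v(\lambda,x,t) - d(\lambda,x)|}{1+|t|^{3/2}} \d t \le C' \, \|v(\lambda,x,\cdot)\|_{\Hdot^{1/2}(\R)},
\]
with a constant $C'$ independent of $(\lambda,x) \in K$.

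Writing $v = (v - d) + d$ and using the uniform majorant for $\dhalf \phi$, Fubini then yields
\[
 \int_K \!\int_\R |v \, \dhalf \phi| \d t \d x \d \lambda \le C_\phi \int_K \!\int_\R \frac{|v - d|}{1+|t|^{3/2}} \d t \d x \d \lambda + C_\phi C_1 \int_K |d(\lambda,x)| \d x \d \lambda,
\]
where $C_1 := \int_\R (1+|t|^{3/2})^{-1} \d t < \infty$. The first term is bounded by $C_\phi C' \int_K \|v(\lambda,x,\cdot)\|_{\Hdot^{1/2}(\R)} \d x \d \lambda$, which is finite by Cauchy--Schwarz and the assumption $\int_K \int_\R |\dhalf v|^2 < \infty$, while the second is finite because $v \in \L^2(K \times (-1,1))$. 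The identical estimate for $v \, \HT \dhalf \phi$ follows since the decay majorant is the same, and the case $\Hdot^{1/2}(\R; \Lloc^2(\R^n))$ is handled verbatim with $K \subset \R^n$.

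The main subtlety is that $\Hdot^{1/2}(\R)$-norms see a function only modulo constants, whereas we have a concrete $\Lloc^2$-representative of $v$ whose values enter into the $\L^1$-norm we estimate. Replacing the abstract constant supplied by Corollary~\ref{cor:potential representation of Hdot1/2} with the explicit measurable $d(\lambda,x)$ is what makes the Fubini argument work and reduces the claim to the two integrability facts already encoded in the definition of $\Hdot^{1/2}(\R; \Lloc^2(\reu))$.
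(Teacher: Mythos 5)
Your proof is correct and follows essentially the same route as the paper: slicewise application of Corollary~\ref{cor:potential representation of Hdot1/2}, the decay estimate \eqref{eq:decay HTdhalf} for $\dhalf\phi$ and $\HT\dhalf\phi$, the split $v = (v-d) + d$, and Fubini with Cauchy--Schwarz. Your choice of an explicit measurable $d(\lambda,x)$ and the triangle-inequality swap is a sensible way to address the measurability in $(\lambda,x)$ of the corollary's abstract per-slice constant, a point the paper leaves implicit.
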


\begin{proof}
Corollary~\ref{cor:potential representation of Hdot1/2} and the growth estimate \eqref{eq:decay HTdhalf} yield the global integrability conditions with $v-d$ in place of $v$, where $d \in \Lloc^1(\R^{n+2}_+)$ does not depend on $t$. However, $d \dhalf \phi$ and $d \HT \dhalf \phi$ are in $\L^1(\R^{n+2}_+)$ by the support assumption on $\phi$.
\end{proof}

In view of Definition~\ref{defn: Hdot12 several variables} the ground space $\dot{\E}_{\loc}$ for reinforced weak solutions,
\begin{align*}
 \dot{\E}_{\loc}(\R^{n+2}_+) = \Hdot^{1/2}(\R; \L^2_{\loc}(\reu)) \cap \Lloc^2(\R; \W^{1,2}_{\loc}(\reu)),
\end{align*}
is unambiguously defined as a subspace of $\Lloc^2(\R^{n+2}_+)$ and within there is the energy space $\dot \E(\R^{n+2}_+)$ of those $v \in \dot \E_{\loc}(\R^{n+2}_+)$ for which
\begin{align*}
 \|v\|_{\dot \E(\R^{n+2}_+)} := \Big( \|\dhalf v\|_{\L^2(\R^{n+2}_+)}^2 + \|\gradlamx v\|_{\L^2(\R^{n+2}_+)}^2 \Big)^{1/2} < \infty.
\end{align*}
Note that in Section~\ref{sec:main results} we simply wrote $\dot \E_{\loc}$ and $\dot \E$. We stress the dependence of the domain since for the time being we shall concentrate on the analogous spaces of functions in the variables $x$ and $t$ only. Similar to the situation on the real line, there is a corresponding parabolic Riesz potential given by
\begin{align*}
 \ipar g(x,t) = \iint_{|\xi| \vee |\tau| < 1} \frac{\e^{\i x \cdot \xi} \e^{\i t \tau} \wh{g}(\xi, \tau) }{|\xi| + |\tau|^{1/2}} \d \xi \d \tau + \cF^{-1}\bigg(\frac{1_{\{|\xi| \vee |\tau| \ge 1\}} \wh{g}(\xi, \tau)}{|\xi| + |\tau|^{1/2}} \bigg).
\end{align*}
Here, $g \in \L^2(\ree)$, $\xi, \tau$ are the Fourier variables corresponding to $x, t$ and $a\vee b= \max (a,b)$.

\begin{lem}
\label{lem:parabolic potential}
Let $g \in \L^2(\ree)$ and let $v = \ipar g$. Then $v \in \dot \E(\ree)$ with derivatives
\begin{align*}
 \begin{bmatrix}
  \gradx v \\ \dhalf v
 \end{bmatrix}
= \cF^{-1} \left(\frac{1}{|\xi| + |\tau|^{1/2}} \begin{bmatrix} \i \xi \\ |\tau|^{1/2} \end{bmatrix} \wh{g} \right)
\end{align*}
and estimates $\|v\|_{\L^2 + \L^\infty} \lesssim \|g\|_{\L^2}$ and $\|v\|_{\dot \E(\ree)} \sim \|g\|_{\L^2}$. Conversely, each $v \in \dot \E(\ree)$ can be represented as $v = c +\ipar g$ for unique $g \in \L^2(\ree)$ and $c \in \IC$.
\end{lem}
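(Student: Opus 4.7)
The plan is to mirror the one-variable proof of Corollary~\ref{cor:potential representation of Hdot1/2}, writing $\ipar g = v_1+v_2$ where $v_1$ is the low-frequency integral and $v_2:=\cF^{-1}(1_{\{|\xi|\vee|\tau|\geq 1\}}\wh g/(|\xi|+|\tau|^{1/2}))$.

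\textbf{Forward direction.} Since $|\xi|+|\tau|^{1/2}\geq 1$ on $\{|\xi|\vee|\tau|\geq 1\}$, Plancherel yields $\|v_2\|_{\L^2(\ree)}\leq\|g\|_{\L^2(\ree)}$. For $v_1$, pointwise Cauchy--Schwarz gives
\begin{equation*}
|v_1(x,t)|^2 \lesssim \|g\|_{\L^2}^{2}\iint_{|\xi|\vee|\tau|<1}\frac{\d\xi\,\d\tau}{(|\xi|+|\tau|^{1/2})^{2}},
\end{equation*}
and passing to coordinates $r=|\xi|$, $s=|\tau|^{1/2}$ shows the integral is finite for $n\geq 1$, since the Jacobian $r^{n-1}s$ tames the singular factor $(r+s)^{-2}$ near the origin. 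Differentiating $v_1$ under the integral (justified by the compact frequency support) and applying the bounded Fourier multipliers $i\xi_j/(|\xi|+|\tau|^{1/2})$ and $|\tau|^{1/2}/(|\xi|+|\tau|^{1/2})$ to $v_2$ delivers the stated formulas for $\gradx v$ and $\dhalf v$, both in $\L^2(\ree)$. Plancherel then gives
\begin{equation*}
\|v\|_{\dot\E(\ree)}^2 = \int_{\ree}\frac{|\xi|^2+|\tau|}{(|\xi|+|\tau|^{1/2})^2}|\wh g(\xi,\tau)|^2\,\d\xi\,\d\tau,
\end{equation*}
and since the symbol lies in $[1/2,1]$ by the AM-GM inequality $2|\xi||\tau|^{1/2}\leq|\xi|^2+|\tau|$, the equivalence $\|v\|_{\dot\E(\ree)}\sim\|g\|_{\L^2}$ follows.

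\textbf{Converse, existence.} Given $v\in\dot\E(\ree)$, set $F:=\wh{\gradx v}$, $H:=\wh{\dhalf v}$ in $\L^2$, and define
\begin{equation*}
\wh g := -i\sum_{j=1}^n\frac{\xi_j}{|\xi|}F_j + H \in \L^2(\ree),
\end{equation*}
interpreting $\xi_j/|\xi|:=0$ at $\xi=0$. Set $u:=\ipar g$. The compatibility identities $\xi_j F_k=\xi_k F_j$ (equality of mixed partials of $v$) and $i\xi H=|\tau|^{1/2}F$ (commutation of $\gradx$ and $\dhalf$ on $v$) hold almost everywhere, and substitution into the forward formula collapses
\begin{equation*}
\wh{\partial_k u}=\frac{i\xi_k\wh g}{|\xi|+|\tau|^{1/2}}=\frac{|\xi|F_k+|\tau|^{1/2}F_k}{|\xi|+|\tau|^{1/2}}=F_k,
\end{equation*}
and similarly $\wh{\dhalf u}=H$. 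Hence $w:=v-u$ satisfies $\gradx w=0$ and $\dhalf w=0$ in $\L^2(\ree)$. The first forces $w(x,t)=f(t)$ for a.e.\ $(x,t)$ with $f\in\Lloc^2(\R)$; slicing the inclusion $w\in\Hdot^{1/2}(\R;\Lloc^2(\R^n))$ at a.e.\ $x$ then gives $f\in\Hdot^{1/2}(\R)$ with $\dhalf f=0$, so Lemma~\ref{lem:seminorm on Hdot12} forces $f$ constant.

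\textbf{Uniqueness.} If $c+\ipar g = c'+\ipar g'$, then $\ipar(g-g')$ is a constant, its gradient and half-derivative both vanish, and the norm identity from the forward direction forces $(|\xi|^2+|\tau|)|\wh{g-g'}|^2=0$ a.e.; thus $g=g'$ and then $c=c'$. The critical step is the converse: the cancellations producing $\wh{\partial_k u}=F_k$ rely crucially on $(F,H)$ arising from a genuine $v\in\dot\E(\ree)$ so that the two compatibility identities hold, and would fail for arbitrary $\L^2$ pairs; making these identities and the slicing argument for $f\in\Hdot^{1/2}(\R)$ rigorous is where the bulk of the work lies.
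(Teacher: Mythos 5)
Your proposal is correct and follows essentially the same route as the paper's: the low/high frequency split $v_1+v_2$, Plancherel for the $\dot\E$-norm equivalence, and in the converse direction building $g$ from the derivative data, using the compatibility identities and Lemma~\ref{lem:seminorm on Hdot12} to conclude. The places you explicitly flag as needing to be made rigorous --- that the pointwise formulas agree with the weak derivatives of Definitions~\ref{defn: Hdot1/2} and \ref{defn: Hdot12 several variables}, particularly for the $\L^\infty$-but-not-$\L^2$ piece $v_1$, and the identity $\i\xi\,\wh{\dhalf v}=|\tau|^{1/2}\wh{\gradx v}$ --- are exactly where the paper deploys its Fubini arguments via Lemma~\ref{lem:Hdot12 several variables} and the mechanism of Proposition~\ref{prop:potential properties}.
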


\begin{proof}
Let us denote the two summands in the definition of $v = \ipar g$ by $v_1$ and $v_2$. Thanks to Plancherel's theorem we have $\|v_2\|_2 \leq \|g\|_2$ and
\begin{align*}
 |v_1(t)|
 \leq \iint_{|\xi| \vee |\tau| < 1} \frac{|\wh{g}(\xi, \tau)|}{|\xi| + |\tau|^{1/2}} \d \xi \d \tau
 \leq \bigg(\iint_{|\xi|\vee  |\tau| < 1} \frac{1}{|\xi|^2 + |\tau|} \d \xi \d \tau \bigg) \|g\|_2 \qquad (t \in \R),
\end{align*}
showing $v_1 \in \L^\infty(\ree)$. This already proves the first of the two estimates for $v$. For any $\phi \in \mS(\ree)$ the functions $v_1 \dhalf \phi$ and $v_2 \dhalf \phi$ are in $\L^1(\ree)$ thanks to \eqref{eq:decay HTdhalf}. In turn, this allows us to use the theorems of Fubini and Plancherel just as in the proof of Proposition~\ref{prop:potential properties} to conclude
\begin{align*}
 \iint_{\ree} v \cdot \cl{\begin{bmatrix} \gradx \phi \\ \dhalf \phi \end{bmatrix}} \d \xi \d \tau
 = \iint_\ree \cF^{-1} \left(\frac{1}{|\xi| + |\tau|^{1/2}} \begin{bmatrix} \i \xi \\ |\tau|^{1/2} \end{bmatrix} \wh{g} \right) \cdot \cl{\phi(\xi, \tau)} \d \xi \d \tau.
\end{align*}
Note that the Fourier multiplication operator appearing on the right-hand side is bounded on $\L^2(\ree)$. Taking $\phi$ in tensor form $\phi(x,t) = \phi_x(x) \phi_t(t)$ reveals $v \in \dot \E_{\loc}(\ree)$ and
\begin{align}
\label{eq1:parabolic potential}
 \begin{bmatrix}
  \gradx v \\ \dhalf v
 \end{bmatrix}
= \cF^{-1} \left(\frac{1}{|\xi| + |\tau|^{1/2}} \begin{bmatrix} \i \xi \\ |\tau|^{1/2} \end{bmatrix} \wh{g} \right) \in \L^2(\ree),
\end{align}
that is, $v \in \dot \E(\ree)$. Finally, the estimate $\|v\|_{\dot \E} \leq \|g\|_2$ is immediate from Plancherel's theorem and to complete the first part of the lemma we record the reverse estimate
\begin{align*}
 \|g\|_2 \leq
 \bigg\| \frac{|\xi|^{1/2}}{|\xi| + |\tau|^{1/2}} \wh{g} \bigg\|_2 +\bigg\| \frac{|\tau|^{1/2}}{|\xi| + |\tau|^{1/2}} \wh{g} \bigg\|_2 = \|\gradx v\|_2 + \|\dhalf v\|_2 = \|v\|_{\dot \E}.
\end{align*}

We turn to the second part where now $v \in \dot \E(\R^{n+2}_+)$ is given. Let $\phi \in \C_0^\infty(\ree; \IC^n)$.  It follows from Remark~\ref{lem:Hdot12 several variables}  that  $v \dhalf \divx \phi \in \L^1(\ree)$ and this justifies the use of Fubini's theorem when computing
\begin{align*}
 \int_\R \int_{\R^n} \gradx v \cdot \cl{\dhalf \phi} \d x  \d t
&= - \int_\R \int_{\R^n} v \cdot \cl{\dhalf \divx \phi} \d x \d t \\
&=  - \int_{\R^n} \int_\R v \cdot \cl{\dhalf \divx \phi} \d t \d x
= - \int_{\R^n} \int_\R \dhalf v \cdot \cl{\divx \phi} \d t \d x.
\end{align*}
With $h_\pa := \gradx v$ and $h_\te := \dhalf v$, Plancherel's theorem lets discover $|\tau|^{1/2} \wh{h_\pa} = \i \wh{h_\te} \xi$. So,
\begin{align*}
 g := \cF^{-1} \bigg(\frac{|\xi| + |\tau|^{1/2}}{|\tau|^{1/2}} \wh{h_\te}(\xi, \tau) \bigg) \in \L^2(\ree)
\end{align*}
solves \eqref{eq1:parabolic potential} and from the first part we deduce $\gradx v = \gradx \ipar g$ and $\dhalf v = \dhalf \ipar g $. Taking into account Lemma~\ref{lem:seminorm on Hdot12}, this implies that $c =v-\ipar g$ is constant. Finally, $g$ is unique due to the estimate in the first part of the lemma.
\end{proof}

As a consequence we obtain that our definition of the energy spaces $\dot{\E}$ gives a realisation of the closure of test functions with respect to their homogeneous norm. In particular, they are the same spaces that have been introduced by the third author and collaborators in this context, see \cite{N1, N2, CNS}.

\begin{cor}
\label{cor:testfunctions dens in energy space}
The space $\dot \E(\ree) / \IC$ is a Hilbert space for the norm $\| \cdot \|_{\dot \E(\ree)}$ and contains $\C_0^\infty(\ree)$ as a dense subset. The analogous statement holds for $\dot \E(\R^{n+2}_+) / \IC$ using $\C_0^\infty(\cl{\R^{n+2}_+})$ as dense subset.
\end{cor}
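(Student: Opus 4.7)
The plan is to derive both completeness and density from Lemma~\ref{lem:parabolic potential}. For the completeness of $\dot\E(\ree)/\IC$, the isomorphism $\ipar: \L^2(\ree) \to \dot\E(\ree)/\IC$ provided by that lemma, together with the norm equivalence $\|\ipar g\|_{\dot\E(\ree)} \sim \|g\|_{\L^2}$, immediately transports the Hilbert space structure from $\L^2(\ree)$. For the half-space, I would use even reflection $v \mapsto \wt v$ with $\wt v(\lambda,x,t) := v(|\lambda|,x,t)$, which defines an isometric embedding $\dot\E(\R^{n+2}_+)/\IC \hookrightarrow \dot\E(\ree)/\IC$ (up to a factor $\sqrt 2$): the crucial observation is that $\dhalf$ acts only in $t$ and hence commutes with the reflection in $\lambda$, while $\gradx$ is reflected evenly and $\pd_\lambda \wt v = \sgn(\lambda)(\pd_\lambda v)(|\lambda|,\cdot,\cdot)$ remains square-integrable across $\lambda=0$. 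Completeness of $\dot\E(\R^{n+2}_+)/\IC$ then inherits from the full-space case by taking the even limit and restricting.

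The heart of the matter is density of $\C_0^\infty(\ree)$ in $\dot\E(\ree)/\IC$, which I would prove in two steps. Given $v \in \dot\E(\ree)$, write $v = c + \ipar g$ with $g \in \L^2(\ree)$ via Lemma~\ref{lem:parabolic potential}. First, approximate $g$ in $\L^2$ by Schwartz functions $g_n$ whose Fourier transforms are compactly supported and bounded away from the origin; then the Fourier multiplier description of $\ipar$ places $\ipar g_n \in \mS(\ree) \cap \L^2(\ree)$, and $\ipar g_n \to v - c$ in $\dot\E(\ree)$ by the norm equivalence of that lemma. Second, for each Schwartz function $w_n := \ipar g_n$ I would use a standard truncate-and-mollify procedure, producing $\phi_{n,R,\varepsilon} := (\chi_R w_n) \ast \rho_\varepsilon \in \C_0^\infty(\ree)$ with $\chi_R$ a parabolic cutoff and $\rho_\varepsilon$ a mollifier; convergence of $\gradx$ as $R\to\infty, \varepsilon\to 0$ is classical, and convergence of $\dhalf$ follows from dominated convergence on the Fourier side using the rapid decay of $\wh{w_n}$.

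For the half-space, the plan is to apply the full-space density result to $\wt v$ to obtain $\phi_n \in \C_0^\infty(\ree)$ with $\wt v - \phi_n \to 0$ in $\dot\E(\ree)/\IC$, and then to restrict: $\phi_n|_{\cl{\R^{n+2}_+}} \in \C_0^\infty(\cl{\R^{n+2}_+})$ and $\|v - \phi_n|_{\R^{n+2}_+}\|_{\dot\E(\R^{n+2}_+)} \leq \|\wt v - \phi_n\|_{\dot\E(\ree)}$ by direct comparison of the defining integrals. The anticipated obstacle is ensuring that truncation and mollification interact properly with the nonlocal operator $\dhalf$, which prevents directly mollifying $v$ itself; the key trick is to reduce first to Schwartz approximants, for which Fourier-analytic dominated convergence handles everything cleanly.
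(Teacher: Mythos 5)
Your approach is essentially the paper's: reduce via the parabolic potential isomorphism $\ipar\colon \L^2(\ree) \to \dot\E(\ree)/\IC$ to a nice class of functions, then truncate and mollify, and handle the half-space by even reflection in $\lambda$. The main steps of your reflection-and-restriction argument for $\dot\E(\R^{n+2}_+)$ are correct (modulo typos: in the half-space step you twice write $\ree = \R^{n+1}$ where $\R^{n+2}$ is meant).

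There is, however, a technical slip in the reduction to Schwartz functions. You claim that if $g_n \in \mS(\ree)$ has Fourier transform compactly supported and bounded away from the origin, then $\ipar g_n \in \mS(\ree)$. This is false in general. The Fourier transform of $\ipar g_n$ is, modulo a constant, $\wh{g_n}(\xi,\tau)/(|\xi|+|\tau|^{1/2})$. Although this is compactly supported and bounded (since $\wh{g_n}$ avoids the origin), the parabolic symbol $|\xi|+|\tau|^{1/2}$ fails to be smooth on the full union of coordinate hyperplanes $\{\xi=0\}\cup\{\tau=0\}$, not just at the origin. Keeping $\wh{g_n}$ away from $0$ alone does not keep it away from these hyperplanes, so the quotient is typically only Lipschitz there; its inverse Fourier transform is then bounded but \emph{not} rapidly decaying, so $\ipar g_n \notin \mS(\ree)$. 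This matters because your subsequent truncate-and-mollify step leans precisely on the rapid decay of $\wh{w_n}$.

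Two ways to repair it. Either strengthen the approximation of $g$ so that $\supp \wh{g_n}$ avoids a neighbourhood of $\{\xi=0\}\cup\{\tau=0\}$ (which is still possible since that set has measure zero); then the quotient is genuinely smooth with compact support and $\ipar g_n \in \mS(\ree)$. Or, as the paper does, stop at the weaker conclusion $\ipar g_n \in \L^2(\ree)$, i.e.\ $\ipar g_n$ lies in the \emph{inhomogeneous} energy space, and invoke the classical density of $\C_0^\infty$ there by truncation and mollification (the point being that once $w_n \in \L^2$, one can mollify first, then truncate, and pass to the limit using only $\L^2$ arguments without any Schwartz decay).
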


\begin{proof}
The statement for $\dot \E(\ree) / \IC$ can be obtained by a literal repetition of the proof of Corollary~\ref{cor:Hdot1/2 complete}, replacing $I_t$ by $I_{t,x}$ and the multiplier $|\tau|^{-1/2}$ by $(|\xi|+|\tau|)^{-1/2}$ therein. Of course, the same result holds in dimension $n+2$. Since even reflection around $\lambda = 0$ allows to extend any function in $\dot \E(\R^{n+2}_+)$ to a function in $\dot \E(\R^{n+2})$, we readily obtain the second claim.
\end{proof}

\begin{rem}
\label{rem:HdotsRealization}
\begin{enumerate}
 \item {Corollary~\ref{cor:testfunctions dens in energy space} can be rephrased as saying that $\dot \E(\ree)$ is a realisation of the parabolic Sobolev space $\Hdot^{1/2}_{\pd_{t} - \Delta_x}$ within the tempered distributions modulo constants. The same kind of argument can be used to obtain such realisations for $\Hdot^{s}_{\pd_{t} - \Delta_x}$, $0 < s < 1/2$.}
 \item Sobolev embeddings in parabolic space show that {$\dot\E(\R^{n+1})/\IC$ has a realisation in the Lebesgue space $\L^q(\R^{n+1})$, where $q = \frac{2(n+2)}{n}$.} We shall not use this fact.
\end{enumerate}
\end{rem}

Finally, we supply a proof of the trace theorem for $\dot \E(\R^{n+2}_+)$ that played an important role for the study of energy solutions in Section~\ref{sec:energy}. The reader may recall the definition of the parabolic Sobolev space $\Hdot^{1/4}_{\partial_t - \Delta_x}$ from Section~\ref{sec:sobolev}.

\begin{lem}
\label{lem:trace energy}
The space $\dot\E(\R^{n+2}_+)/\IC$ continuously embeds into $\C([0,\infty); \Hdot^{1/4}_{\partial_t - \Delta_x})$. Conversely, any $f \in \Hdot^{1/4}_{\partial_t - \Delta_x}$ has an extension $v \in \dot \E(\R^{n+2}_+)$ such that $v|_{\lambda = 0} = f$.
\end{lem}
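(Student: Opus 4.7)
The plan is to obtain both the trace estimate and the surjectivity by Fourier analysis in the tangential variables $(x,t)$, using that $\dot\E(\R^{n+2}_+)/\IC$ is the completion of $\C_0^\infty(\overline{\R^{n+2}_+})/\IC$ for the homogeneous energy norm (Corollary~\ref{cor:testfunctions dens in energy space}) and that the norm on $\Hdot^{1/4}_{\pd_t-\Delta_x}$ is equivalent to $\|\rho \,\wh f\,\|_2$, where $\rho(\xi,\tau):=(|\xi|^2+|\tau|)^{1/2}$ and $\wh{}$ denotes Fourier transform in $(x,t)$.

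\emph{Step 1: trace bound on test functions.} For $v\in\C_0^\infty(\overline{\R^{n+2}_+})$ and fixed $(\xi,\tau)$, the function $\lambda\mapsto\wh v(\lambda,\xi,\tau)$ is smooth and vanishes for large $\lambda$, so that for all $\lambda\ge 0$
\begin{align*}
 \rho(\xi,\tau)\,|\wh v(\lambda,\xi,\tau)|^2
 = -2\Re\int_\lambda^\infty \rho(\xi,\tau)\,\pd_\mu\wh v(\mu,\xi,\tau)\,\cl{\wh v(\mu,\xi,\tau)}\,\mathrm d\mu.
\end{align*}
Young's inequality $2ab\le a^2+b^2$ applied to $a=|\pd_\mu\wh v|$ and $b=\rho|\wh v|$ and integration in $(\xi,\tau)\in\R^{n+1}$ combined with Plancherel yield
\begin{align*}
 \|v(\lambda,\cdot)\|_{\Hdot^{1/4}_{\pd_t-\Delta_x}}^2
 \lesssim \int_{\R^{n+1}}\rho\,|\wh v(\lambda)|^2\,\mathrm d\xi\,\mathrm d\tau
 \lesssim \|\pd_\lambda v\|_2^2+\|\gradx v\|_2^2+\|\dhalf v\|_2^2
 =\|v\|_{\dot\E}^2,
\end{align*}
uniformly in $\lambda\ge 0$. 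Dominated convergence on the Fourier side shows that $\lambda\mapsto v(\lambda,\cdot)$ is continuous $[0,\infty)\to\Hdot^{1/4}_{\pd_t-\Delta_x}$ for such $v$. Combining these two facts with the density result of Corollary~\ref{cor:testfunctions dens in energy space} (uniform limits of continuous functions being continuous), the trace map extends by density to a bounded map $\dot\E(\R^{n+2}_+)/\IC\to\C_b([0,\infty);\Hdot^{1/4}_{\pd_t-\Delta_x})$.

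\emph{Step 2: construction of an extension.} Given $f\in\Hdot^{1/4}_{\pd_t-\Delta_x}$, define
\begin{align*}
 v(\lambda,\cdot):=\cF^{-1}\bigl(\mathrm e^{-\lambda\rho(\xi,\tau)}\wh f(\xi,\tau)\bigr)\qquad(\lambda\ge 0).
\end{align*}
Plancherel and the identity $\int_0^\infty \mathrm e^{-2\lambda\rho}\,\mathrm d\lambda=\frac{1}{2\rho}$ give
\begin{align*}
 \|\pd_\lambda v\|_2^2=\tfrac12\!\int \rho\,|\wh f|^2,\qquad
 \|\gradx v\|_2^2=\tfrac12\!\int\frac{|\xi|^2}{\rho}|\wh f|^2,\qquad
 \|\dhalf v\|_2^2=\tfrac12\!\int\frac{|\tau|}{\rho}|\wh f|^2,
\end{align*}
each bounded by $\tfrac12\|f\|_{\Hdot^{1/4}_{\pd_t-\Delta_x}}^2$, so $v\in\dot\E(\R^{n+2}_+)$. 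Another dominated convergence argument on the Fourier side yields $\|v(\lambda,\cdot)-f\|_{\Hdot^{1/4}_{\pd_t-\Delta_x}}\to 0$ as $\lambda\to 0^+$, which by Step 1 identifies $v|_{\lambda=0}$ with~$f$.

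\emph{Expected difficulty.} The bookkeeping step that requires care is identifying the abstractly defined space $\Hdot^{1/4}_{\pd_t-\Delta_x}$ with its Fourier realisation, so that the Plancherel computations above correspond to its norm; this is a mild extension of Corollary~\ref{cor:testfunctions dens in energy space} and Remark~\ref{rem:HdotsRealization}, and ensures that the extension $v$ constructed in Step 2 really lies in $\dot\E(\R^{n+2}_+)$ and not just in a larger space realised modulo higher order polynomials. Once this is settled, everything else reduces to the one-line integration by parts identity and Plancherel's theorem.
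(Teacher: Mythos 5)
Your proof is correct and follows essentially the same route as the paper: a fundamental-theorem-of-calculus identity in $\lambda$ plus Young's inequality gives the trace bound, and a Poisson-type extension via a Fourier multiplier gives surjectivity. The only (cosmetic) difference is your choice of the real symbol $\e^{-\lambda(|\xi|^2+|\tau|)^{1/2}}$ for the extension, where the paper uses the caloric one $\e^{-\lambda(|\xi|^2+\i\tau)^{1/2}}$; both yield the same energy estimate, and like the paper you should really carry out the multiplier computation on $\C_0^\infty(\ree)$ data and pass to the closure, which sidesteps the realisation issue you flag at the end.
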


\begin{proof}
Let $v \in \C_0^\infty(\cl{\R^{n+2}_+})$. For any $\lambda_0 > 0$ the fundamental theorem of calculus yields
\begin{align*}
\||\tau|^{1/4} \wh{v}|_{\lambda = \lambda_0}\|_2^2 + \||\xi|^{1/2}\wh{v}|_{\lambda = \lambda_0}\|_2^2
&= 2 \Re \int_{\lambda_0}^\infty (|\tau|^{1/2} \wh{v}, \pd_{\lambda} \wh{v}) + (|\xi| \wh{v}, \pd_{\lambda} \wh{v}) \d \lambda \\
&\leq \int_0^\infty \||\tau|^{1/2}\wh{v}\|_2^2 + \||\xi| \wh{v}\|_2^2 + 2 \|\pd_{\lambda} \wh{v}\|_2^2 \d \lambda,
\end{align*}
where norms, scalar products and the Fourier transform are in $\L^2(\ree)$. By Plancherel's theorem the left-hand side compares to the norm of $v|_{\lambda = \lambda_0}$ in $\Hdot^{1/4}_{\partial_t - \Delta_x}$ and the right-hand side to the norm of $v$ in $\dot \E(\R^{n+2})$. This proves uniform continuity  of $\lambda_{0}\mapsto v|_{\lambda = \lambda_0}$ into $\Hdot^{1/4}_{\partial_t - \Delta_x}$ and thus also the limit at $0$.  Due to Corollary~\ref{cor:testfunctions dens in energy space} we obtain the required embedding by density. Conversely, given $f \in \C_0^\infty(\ree)$, we can define an extension to $\R^{n+2}_+$ by
\begin{align*}
 v(\lambda,x,t) = \cF^{-1} (\e^{-\lambda(|\xi|^2 + \i \tau)^{1/2}} \wh{f})(x,t).
\end{align*}
From Plancherel's theorem we can infer $\|v\|_{\dot \E(\R^{n+2})} \lesssim \|f\|_{\Hdot^{1/4}_{\partial_t - \Delta_x}}$ and so this extension operator extends to $\Hdot^{1/4}_{\partial_t - \Delta_x}$ by density.
\end{proof}
\section{The parabolic Dirac operator}
\label{sec:diracBasic}

In this section we develop the basic operator theoretic properties of the parabolic Dirac operator
\begin{align*}
 \P=\Pfull
\end{align*}
with maximal domain
\begin{align*}
 \dom(\P)=\Big\{f\in\mH : \gradx f_{\pe}\in \L^2,\, \HT \dhalf f_{\pe} \in \L^2,\, \divx f_{\pa} - \dhalf f_{\te}\in \L^2\Big\}
\end{align*}
in $\mH=\L^2(\R^{n+1}; \IC^{n+2})$. The reader may recall relevant notation from Sections~\ref{sec:correspondence} and \ref{sec:dirac}. Clearly $\P$ is closed and its null space is
\begin{align*}
 \nul(\P)=\Big\{f\in\mH : f_{\pe}=0,\, \divx f_\pa = \dhalf f_\te \text{ in } \mS'(\ree)\Big\}.
\end{align*}
Here is an explicit description of $\clos{\ran(\P)}$ as a space of distributions.

\begin{lem}
\label{lem:characterisation ranP}
It holds
\begin{align*}
 \clos{\ran(\P)} = \Big\{ f \in \mH : \gradx f_\te = \HT \dhalf f_\pa \mathrm{\;in\;} \mS'(\ree; \IC^n) \Big\}.
\end{align*}
\end{lem}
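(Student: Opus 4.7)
Let $X$ denote the right-hand side; I will establish the two inclusions $\clos{\ran(\P)}\subset X$ and $X\subset\clos{\ran(\P)}$ separately.

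For the first, I would argue directly on $\ran(\P)$: if $f=\P g$ with $g\in\dom(\P)$, then by the definition of $\P$ we have $f_\pa = -\gradx g_\pe$ and $f_\te = -\HT\dhalf g_\pe$. Since $\gradx$ acts only on $x$ and $\HT\dhalf$ only on $t$, the two operators commute on tempered distributions, giving
\begin{equation*}
\gradx f_\te = -\HT\dhalf\,\gradx g_\pe = \HT\dhalf f_\pa \qquad \text{in } \mS'(\ree;\IC^n).
\end{equation*}
Hence $\ran(\P)\subset X$. Because $X$ is closed in $\mH$ --- the defining identity is linear and $\mH$-convergence implies convergence in $\mS'(\ree;\IC^n)$ --- this inclusion passes to the closure.

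For the converse, I plan to use $\clos{\ran(\P)} = \nul(\P^*)^\perp$ (valid since $\P$ is closed and densely defined) and to check that every $f\in X$ is orthogonal in $\mH$ to every $g\in\nul(\P^*)$. The explicit formula \eqref{eq:DB+} for $\P^*$, together with the fact that the only constant in $\L^2(\ree)$ is zero, identifies
\begin{equation*}
\nul(\P^*) = \bigl\{ g\in\mH : g_\pe = 0,\; \divx g_\pa + \HT\dhalf g_\te = 0 \text{ in } \mS'(\ree) \bigr\}.
\end{equation*}
The orthogonality I then verify via Plancherel. Setting $s(\tau):=\sgn(\tau)|\tau|^{1/2}$, the distributional identities defining $X$ and $\nul(\P^*)$ become the a.e.\ identities $\xi\,\widehat{f_\te} = s(\tau)\widehat{f_\pa}$ and $\xi\cdot\widehat{g_\pa} = -s(\tau)\widehat{g_\te}$, where hats denote Fourier transforms in the variables $(\xi,\tau)$ dual to $(x,t)$. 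Off the null set $\{\tau=0\}$ the first identity rearranges to $\widehat{f_\pa} = s(\tau)^{-1}\xi\,\widehat{f_\te}$ pointwise a.e., so that
\begin{equation*}
\widehat{f_\pa}\cdot\overline{\widehat{g_\pa}} = s(\tau)^{-1}\widehat{f_\te}\bigl(\xi\cdot\overline{\widehat{g_\pa}}\bigr) = -\widehat{f_\te}\,\overline{\widehat{g_\te}},
\end{equation*}
and integrating this pointwise cancellation yields $\langle f,g\rangle_\mH = 0$.

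The only step I expect to require a little care is the passage from the distributional identities to a.e.\ Fourier-side identities and the subsequent division by $s(\tau)$ off $\{\tau=0\}$; this is painless because $\widehat{f_\pa},\widehat{f_\te},\widehat{g_\pa},\widehat{g_\te}$ are genuine $\L^2$-functions on $\ree$ and the set $\{\tau=0\}$ has Lebesgue measure zero.
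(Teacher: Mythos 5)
Your proof is correct and follows essentially the same route as the paper: the forward inclusion by direct computation from the definition of $\P$, and the reverse via $\clos{\ran(\P)} = \nul(\P^*)^\perp$, identifying $\nul(\P^*)$ explicitly and verifying the orthogonality by a pointwise Plancherel computation on the Fourier side. The only minor observation is that your justification for $g_\pe=0$ ($\gradx g_\pe = 0$ forces $g_\pe$ to depend only on $t$, hence vanish in $\L^2(\ree)$; alternatively $\dhalf g_\pe = 0$ gives it immediately on the Fourier side) is slightly compressed in the phrase ``the only constant in $\L^2(\ree)$ is zero'', but the conclusion is right and the rest of the argument is the paper's.
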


\begin{proof}
The inclusion `$\subset$' follows from the definition of $\P$. Conversely assume that $f$ is contained in the right-hand space. Let $g \in \nul(\P^*)$, that is, $g_\pe = 0$ and $\divx g_\pa = - \HT \dhalf g_\te$. On the Fourier side the relation for $\pa$- and $\te$-components can be solved by
\begin{align*}
 \wh{f}_\pa = \frac{\wh{f}_\te}{\sgn (\tau) |\tau|^{1/2}} \ \xi, \qquad \wh{g}_\te = - \xi \cdot \frac{\wh{g}_\pa}{\sgn (\tau)|\tau|^{1/2}} \qquad (\text{a.e.\ on $\ree$}),
\end{align*}
where $\tau$ and $\xi$ are the Fourier variables corresponding to $t$ and $x$. Hence, $(\wh{f}, \wh{g} ) = 0$ and thus $(f, g) = 0$ by Plancherel's theorem. This proves that $f \in \nul(\P^*)^\pe$ and hence $f \in \clos{\ran(\P)}$.
\end{proof}

Bisectoriality of $\P \M$ and $\M \P$ can be obtained by implementing the `hidden coercivity' of the parabolic operator $\Lop$ discussed in Section~\ref{sec:energy} within the first order framework.

\begin{lem}
\label{lem:bisectoriality of Dirac}
The operators $\P\M$ and $\M\P$ are closed and bisectorial on $\mH$ with $\ran(\P \M) = \ran (\P)$ and $\cl{\ran(\M \P)} = \M \cl{\ran(\P)}$.
\end{lem}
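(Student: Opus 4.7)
The starting point is that, by Proposition~\ref{prop:divformasODE}, the transformed matrix $\hat A$ is bounded and uniformly elliptic with constants depending only on those of $A$. Since $\M$ is block-diagonal with $\hat A$ on the $(\pe,\pa)$-block and the identity on the $\te$-component, it is a bounded multiplication operator on $\mH$ that is strictly accretive,
\begin{equation*}
 \Re \langle \M f, f \rangle_{\mH} \geq \kappa' \|f\|_{\mH}^2 \qquad (f \in \mH),
\end{equation*}
for some $\kappa' > 0$ depending only on ellipticity, and its inverse is of the same form. From this, closedness of $\P\M$ is immediate: if $f_n \to f$ in $\mH$ and $\P\M f_n \to h$, then $\M f_n \to \M f$, and the closedness of $\P$ forces $\M f \in \dom(\P)$ with $\P\M f = h$. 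The identity $\M\P = \M(\P\M)\M^{-1}$ then yields closedness of $\M\P$ as well.

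The range identities reduce to the invertibility of $\M$ on $\mH$. Since $\dom(\P\M) = \M^{-1}\dom(\P)$, we have $\P\M(\dom(\P\M)) = \P(\dom(\P)) = \ran(\P)$, hence $\ran(\P\M) = \ran(\P)$. Likewise $\ran(\M\P) = \M\,\ran(\P)$; as $\M$ is a bounded isomorphism of $\mH$ it maps closures to closures, giving $\cl{\ran(\M\P)} = \M\,\cl{\ran(\P)}$.

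For bisectoriality it suffices to treat $\P\M$, since $\M\P = \M(\P\M)\M^{-1}$ is similar to $\P\M$ via the bounded isomorphism $\M$. The plan is to prove the uniform resolvent bound $\|\mu(\mu - \P\M)^{-1}\|_{\mH \to \mH} \lesssim 1$ for $\mu$ outside a bisector $\S_\omega$ with $\omega \in (0, \pi/2)$ depending only on $n$ and the ellipticity constants. For $\mu \in \i\R \setminus \{0\}$ and $g \in \mH$, we solve $(\mu - \P\M)f = g$ variationally: setting $u = \M f$ and testing the equation against $(1-\delta\HT)v$ for $v$ in a dense subspace of $\mH$, one obtains a sesquilinear identity whose leading form is the natural $\mH$-analogue of the coercive form $a_\delta$ from Section~\ref{sec:energy}. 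Combining the factorisation $\P^2 = (\partial_t - \Delta_x) I_{n+2}$ from \eqref{eq:Psquared} with the accretivity of $\M$ and the skew-symmetry of $\HT$, the choice of $\delta > 0$ small enough renders this form coercive on $\mH$ uniformly in such $\mu$. Lax--Milgram then delivers a unique solution with $\|\mu f\|_\mH \lesssim \|g\|_\mH$; a standard perturbation argument in $\mu$ extends the bound to a full double sector $\IC \setminus \S_\omega$, establishing bisectoriality of $\P\M$.

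The main obstacle is that $\P$ is \emph{not} self-adjoint: in \eqref{eq:DB} the $\te$-row contains $-\HT\dhalf$, while the $\te$-column of $\P^*$ in \eqref{eq:DB+} contains $+\HT\dhalf$, reflecting the asymmetry between the forward heat equation and its backward counterpart. This prevents a direct invocation of the self-adjoint first-order calculus of \cite{AAM}. Compensating for this asymmetry is precisely the role of the auxiliary parameter $\delta$ and the Hilbert transform $\HT$ in $a_\delta$: together they manufacture the coercivity needed to run the Lax--Milgram argument and to conclude both bisectoriality and the full statement of the lemma.
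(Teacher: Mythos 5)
Your treatment of closedness and the range identities is correct and in fact slightly more direct than the paper's: the paper obtains closedness as a free consequence of bisectoriality (a bisectorial operator has non-empty resolvent set), whereas you derive it head-on from the closedness of $\P$ together with the bounded invertibility of $\M$, and the range identity $\ran(\P\M)=\ran(\P)$ from $\dom(\P\M)=\M^{-1}\dom(\P)$. So far so good.

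The bisectoriality step is where the proposal diverges and has a genuine gap. The paper does not run a Lax--Milgram argument. It introduces the bounded invertible matrix
\begin{align*}
 U_{\delta} = \frac{1}{\sqrt{1+\delta^2}}\begin{bmatrix} 1-\delta\HT & 0 & 0 \\ 0 & 1+\delta\HT & 0 \\ 0 & 0 & \delta-\HT \end{bmatrix}
\end{align*}
and factors $\P\M = (\P U_\delta)(U_\delta^{-1}\M)$. Using $\HT^2=-\id$ and $\HT^*=-\HT$, a direct check shows $\P U_\delta$ is \emph{self-adjoint}, while $U_\delta^{-1}\M$ (whose $\te$-entry is $\delta+\HT$, with real part $\delta>0$, and whose remaining block is $\hat A$ plus an $O(\delta)$ perturbation) is bounded and accretive for $\delta$ small. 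Bisectoriality and $\ran(\P\M)=\ran(\P U_\delta)=\ran(\P)$ then follow from the established theory of products of a self-adjoint operator with a bounded accretive one, which the paper cites rather than re-proves. Two concrete problems with your sketch. First, the uniform twist $(1-\delta\HT)$ applied to all components is wrong: the $\te$-block of the correct transformation must be $\delta-\HT$, not $1-\delta\HT$ --- this asymmetry is precisely what makes $\P U_\delta$ self-adjoint and makes the $\te$-entry of $U_\delta^{-1}\M$ accretive, and a uniform scalar twist will not produce it. Second, the form $a_\delta$ of Section 2.2 is coercive with respect to a gradient-type norm on the energy space $\dot\E$; for the first-order resolvent equation $(\mu-\P\M)f=g$ on $\mH=\L^2$ there is no Sobolev norm against which to be coercive, so the Lax--Milgram framework is not the right tool, and invoking the fact that $\P^2$ is diagonal with entries $\partial_t-\Delta_x$ does not help here. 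Your instinct --- compensate for the non-self-adjointness of $\P$ with Hilbert-transform twists --- is the right one, but the effective way to exploit it is to twist the \emph{operator}, producing a self-adjoint factor, rather than to twist the test function and hope for coercivity at the level of $\L^2$.
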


\begin{proof} Closedness will follow from bisectoriality since the latter implies that the operator's resolvent set is non-empty. Consider for $\delta \in \R$ the transformation
\begin{align*}
 U_{\delta }= \frac{1}{1+\delta ^2}{\begin{bmatrix} 1-\delta \HT & 0 & 0 \\ 0 & 1+\delta \HT & 0 \\0 & 0 & \delta -\HT \end{bmatrix}}.
\end{align*}
Then, as $\HT^2=-\id$,
\begin{align*}
 \P U_{\delta }= \frac{1}{1+\delta ^2}{\begin{bmatrix} 0 & \divx (1+\delta \HT) & -\delta \dhalf+ \HT\dhalf \\ -\gradx (1-\delta \HT) & 0 & 0 \\ -\HT \dhalf - \delta \dhalf & 0 & 0 \end{bmatrix}}.
\end{align*}
Using that $\HT^*=-\HT$ and that $\HT$ commutes with derivatives in $x$, we see that $\P U_{\delta }$ is self-adjoint. Next, we claim that $U_{\delta }^{-1}\M$ is accretive for $\delta>0 $ small enough. Indeed, on recalling the definition of $\M$ from \eqref{eq:DB}, we can write $U_{\delta }^{-1}M$ as
\begin{align*}
 {\begin{bmatrix} 1+\delta \HT & 0 & 0\vphantom{\dhalf} \\ 0 & 1-\delta \HT & 0 \\0\vphantom{\dhalf} & 0 & \delta + \HT \end{bmatrix}} {\begin{bmatrix} \hat{A}_{\pe \pe} & \hat{A}_{\pe \pa} & \vphantom{\dhalf}0 \\ \hat{A}_{\pa \pe} & \hat{A}_{\pa \pa} & 0 \\ \vphantom{\dhalf}0& 0& 1 \end{bmatrix}}={\begin{bmatrix} (1+\delta \HT ) \hat{A}_{\pe \pe} & (1+\delta \HT) \hat{A}_{\pe \pa} & \vphantom{\dhalf}0 \\ (1-\delta \HT )\hat{A}_{\pa \pe} & (1-\delta \HT )\hat{A}_{\pa \pa} & 0 \\ \vphantom{\dhalf}0& 0& \delta + \HT \end{bmatrix}} .
\end{align*}
Since $\Re(\HT g, g)= 0$, we see that the lower block of the matrix on the right is accretive for all $\delta >0$. Since the upper block of the product rewrites as $\hat{A}$ plus a bounded perturbation of size $\delta $, it remains accretive if $\delta $ is small enough.

Based on the above we can conclude that $\P\M=(\P U_{\delta }) (U_{\delta }^{-1}\M)$ is a product of a self-adjoint and a bounded accretive operator. Such operators have exhaustively been studied in the literature and it is well-known that they are bisectorial and satisfy
\begin{align*}
 \ran(\P \M) = \ran((\P U_{\delta }) (U_{\delta }^{-1}\M)) = \ran(\P U_{\delta}) = \ran (\P),
\end{align*}
see for instance \cite{elAAM} or \cite[Prop.~6.1.17]{EigeneDiss}. Next, $\M\P=\M(\P\M)\M^{-1}$ is bisectorial by similarity and $\cl{\ran(\M \P)} = \M \cl{\ran(\P)}$ holds by accretivity of $\M$.
\end{proof}

\begin{rem}
\label{rem:PM and MP similar}
For parabolic systems $\M$ is only supposed to be accretive on the range of $\P$, see Section~\ref{sec:miscellani}. This property is also satisfied by $U_{\delta }^{-1}\M$ if $\delta $ is small. In that case, $\M^{-1}$ should be interpreted as the inverse of the restriction $\M|_{\cl{\ran(\P)}}$, so that $\M\P$ and $\P\M$ are similar on the closures of their ranges and they are null on their null spaces. None of our proofs in the following sections will use the invertibility of $\M$ in $\L^\infty(\R^{n+2}_+; \Lop(\IC^{n+2}))$.
\end{rem}
\section{The correspondence to a first order system: Proof of Theorem~\ref{thm:correspondence}}
\label{sec:proof of correspondence}

In this section we prove the correspondence between the parabolic equation \eqref{eq1} and the first order system $\partial_\lambda F + \P \M F = 0$. Whenever convenient, the reader may look up the definitions of $\P$ and $\M$ in \eqref{eq:DB}. We always consider $\P$ as a closed operator in $\mH$ with maximal domain $\dom(\P)$ and we write $(\cdot \,, \cdot )$ for the inner product on this space. Let us stress that in the following we do not use the bisectoriality of $\P \M$ and that $M$ may depend on $\lambda$.

\begin{defn}
\label{def: weak solutions first order}
A weak solution $F$ to the first order system $\partial_\lambda F + \P \M F = 0$ is a function $F \in \Lloc^2(\R_+; \clos{\ran(\P)})$ such that
\begin{align}
\label{eq:weak solutions first order}
\int_0^\infty ( F(\lambda), \partial_\lambda \phi(\lambda) ) \d \lambda = \int_0^\infty ( \M F(\lambda), \P^* \phi(\lambda) ) \d \lambda \qquad (\phi \in \C_0^\infty(\R^{n+2}_+; \IC^{n+2})).
\end{align}
\end{defn}

{This notion of weak solutions is in accordance with Section~\ref{sec:main results}} since Lemma~\ref{lem:characterisation ranP} asserts that $\Lloc^2(\R_+; \clos{\ran(\P)})$ is exactly the space $\cH_{\loc}$ introduced beforehand in Section~\ref{sec:correspondence}.

\subsection{Conormal differentials are weak solutions}
\label{sec:conormal differentials are weak solutions}

Let $u$ be a reinforced weak solution to \eqref{eq1} with the additional property
\begin{align*}
 \pcg u = \begin{bmatrix} \dnuA u \\ \gradx u \\ \HT \dhalf u \end{bmatrix} \in \Lloc^2(\R_+; \mH),
\end{align*}
where $\dhalf$ is in the sense of Definition~\ref{defn: Hdot12 several variables}. We put $F := \pcg u$ and need to demonstrate that $F$ is a weak solution to the first order system.

To verify the compatibility condition $F \in \Lloc^2(\R_+; \clos{\ran(\P)})$, let $\phi \in \C_0^\infty(\ree; \IC^{n})$ and note that in view of Lemma~\ref{lem:Hdot12 several variables} we have $u(\lambda) \cdot \HT \dhalf \divx \phi \in \L^1(\ree)$ for a.e.\ $\lambda >0$. Hence, we can compute by means of Fubini's theorem
\begin{align*}
 ( F(\lambda)_\te, \divx \phi )
&= - \int_{\R^n} \int_{\R} u(\lambda) \cdot \clos{\HT \dhalf \divx \phi} \d t \d x \\
&= - \int_{\R} \int_{\R^n} u(\lambda) \cdot \clos{\divx \HT \dhalf \phi} \d x \d t
= ( F(\lambda)_\pa, \HT \dhalf \phi ).
\end{align*}
This extends to all $\phi \in \mS(\ree)$ by approximation, showing $\gradx F(\lambda)_\te = \HT \dhalf F(\lambda)_\pa$ in the sense of $\mS'(\ree)$. Thus, $F(\lambda) \in \clos{\ran(\P)}$ thanks to Lemma~\ref{lem:characterisation ranP}.

In order to verify \eqref{eq:weak solutions first order}, let $\phi \in \C_0^\infty(\R^{n+2}_+; \IC^{n+2})$. Taking $\phi_\pe$ as a test function in the definition of reinforced weak solutions to the parabolic problem, we find
\begin{align*}
 \int_0^\infty ( F_\pe, \partial_\lambda \phi_\pe ) \d \lambda
&= \iiint_{\R^{n+2}_+} (A \gradlamx u)_\pe \cdot \clos{\partial_\lambda \phi_{\pe}} \d x \d t \d \lambda \\
&= \iiint_{\R^{n+2}_+} - (A \gradlamx u)_\pa \cdot \clos{\gradx \phi_{\pe}} + \HT \dhalf u \cdot \clos{\dhalf \phi_\pe} \d x \d t \d \lambda,
\end{align*}
which in the language of $\P$ and $\M$ reads
\begin{align*}
 \int_0^\infty ( F_\pe, \partial_\lambda \phi_\pe ) \d \lambda
= \int_0^\infty ( (\M F)_\pa, (\P^* \phi)_\pa ) + ( (\M F)_\te, (\P^* \phi)_\te ) \d \lambda.
\end{align*}
For the $\pa$-components we deduce, using integration by parts and the definitions of $\P$ and $\M$,
\begin{align*}
 \int_0^\infty ( F_\pa, \partial_\lambda \phi_\pa ) \d \lambda
&= \iiint_{\R^{n+2}_+} \gradx u \cdot \clos{\partial_\lambda \phi_\pa} \d \lambda
= \iiint_{\R^{n+2}_+} \partial_\lambda u \cdot \clos{\divx \phi_\pa} \d \lambda \\
&= \int_0^\infty ( (\M F)_\pe, (\P^* \phi)_\pe ) - ( \partial_\lambda u, \HT \dhalf \phi_\te ) \d \lambda.
\end{align*}
As for the $\te$-component we first note that $u \cdot \HT \dhalf (\partial_\lambda \phi_\te) \in \L^1(\R^{n+2}_+)$ holds since $u$ is a reinforced weak solution, see Lemma~\ref{lem:Hdot12 several variables}. Thus, by definition of $\dhalf$ and Fubini's theorem
\begin{align*}
 \int_0^\infty ( F_\te, \partial_\lambda \phi_\te ) \d \lambda
&=-\int_0^\infty \int_{\R^n} \int_\R u \cdot \clos{\HT \dhalf \partial_\lambda \phi_\te} \d t \d x \d \lambda \\
&= - \int_{\R^n} \int_\R \int_0^\infty u \cdot \clos{\partial_\lambda \HT \dhalf \phi_\te} \d \lambda \d t \d x
= \int_0^\infty ( \partial_\lambda u, \HT \dhalf \phi_\te ) \d \lambda,
\end{align*}
where we used $\HT \dhalf \phi(\cdot,x,t) \in \C_0^\infty(\R_+)$ for $(x,t) \in \ree$ fixed in the last step. Adding up the previous three identities gives \eqref{eq:weak solutions first order}.

\subsection{Every weak solution is a conormal differential}
\label{sec:weak solutions are conormal differentials}

Conversely, we are now given a weak solution $F \in \Lloc^2(\R_+; \clos{\ran(\P)})$ to the first order system and we have to construct a potential $u$ such that $u$ is a reinforced weak solution to the parabolic equation in \eqref{eq1} and  $\pcg u = F$. Such $u$ is necessarily unique up to a constant since if $\pcg u = 0$, then $\gradlamx u = 0$ showing that $u$ only depends on $t$ and then $\HT \dhalf u = 0$ implies that $u$ is constant, see Lemma~\ref{lem:seminorm on Hdot12}. The construction of a specific $u$ is in two steps.

\subsubsection*{Step 1: Adjusting the \texorpdfstring{$(x,t)$}{(x,t)}-direction}

By definition of $F$ we have $F(\lambda) \in \clos{\ran(\P)}$ for almost every $\lambda > 0$. In this case $\gradx F(\lambda)_\te = \HT \dhalf F(\lambda)_\pa$ in the sense of tempered distributions due to Lemma~\ref{lem:characterisation ranP} or, equivalently, $\i \xi \wh{F(\lambda)}_\te = \i \sgn(\tau) |\tau|^{1/2} \wh{F(\lambda)}_\pa$ by taking the Fourier transform in $(x,t)$. As usual, $(\xi, \tau)$ is Fourier variable corresponding to $(x,t)$. With these compatibility conditions at hand, we get for almost every $\lambda > 0$ that the measurable function
\begin{align*}
\wh{g(\lambda)} := \frac{|\xi| + |\tau|^{1/2}}{\i \sgn(\tau) |\tau|^{1/2}} \wh{F(\lambda)}_\te
\end{align*}
belongs to $\L^2(\ree)$ as $|\wh{g(\lambda)}|\le |\wh{F(\lambda)}_\te | + |\wh{F(\lambda)}_\pa |$ almost everywhere and the so-defined function $g$ is in $\Lloc^2(\R_+; \L^2(\ree))$. Now, Lemma~\ref{lem:parabolic potential} furnishes a parabolic potential $v(\lambda,x,t) := \ipar g(\lambda,x,t)\in \Lloc^2(\R^{n+2}_+)$ that satisfies $v(\lambda) \in \Hdot^{1/2}(\R; \Lloc^2(\R^n)) \cap \Lloc^2(\R; \W^{1,2}_{\loc}(\R^n))$ for almost every $\lambda>0$ with derivatives
\begin{align*}
 \begin{bmatrix}
  \gradx v(\lambda) \\ \dhalf v(\lambda)
 \end{bmatrix}
= \cF^{-1} \left(\frac{1}{|\xi| + |\tau|^{1/2}}
  \begin{bmatrix} \i \xi \\ |\tau|^{1/2} \end{bmatrix}
  \wh{g(\lambda)} \right).
\end{align*}
Taking into account the relation between $\wh{F(\lambda)}_\pa$ and $\wh{F(\lambda)}_\te$, we obtain
\begin{align}
\label{eq:correspondence: first potential}
 \begin{bmatrix}
  \gradx v  \\ \HT \dhalf v
 \end{bmatrix}
= \begin{bmatrix}
  F(\lambda)_\pa \vphantom{\gradx v} \\ F(\lambda)_\te \vphantom{\dhalf v} \end{bmatrix},
\end{align}
so that $v$ already behaves as desired when it comes to derivatives in $x$- and $t$-direction. We may also assume that there exists some bounded set $Q \times I \subseteq \ree$ with positive Lebesgue measure such that
\begin{align}
\label{eq:correspondence: vanishing average}
 \bariint_{Q \times I} v(\lambda,x,t) \d x \d t = 0 \qquad (\text{a.e.\ $\lambda > 0$})
\end{align}
since otherwise we could subtract this very average from $v(\lambda)$ for each $\lambda > 0$ without affecting any of the other properties of $v$ discussed above.

To compute the transversal derivative $\partial_\lambda v$, we let $\varphi \in \C_0^\infty(\R_+)$, $\psi \in \C_0^\infty(\R^n; \IC^n)$, $\eta \in \C_0^\infty(\R)$ and test \eqref{eq:weak solutions first order} with
\begin{align*}
 \phi(\lambda,x,t) = \begin{bmatrix} 0 \\ \varphi(\lambda) \psi(x) \eta(t) \\ 0 \end{bmatrix}.
\end{align*}
Spelling out the identity
\begin{align*}
 \int_0^\infty ( F, \partial_\lambda \phi ) \d \lambda = \int_0^\infty ( \M F, \P^* \phi ) \d \lambda
\end{align*}
and taking into account that for the terms involving $\varphi$ only the $\pa$-component of $\phi$ and the $\pe$-component of $\P^* \phi$ do not vanish, we readily find
\begin{align*}
- \int_\R \int_{\R^n} \bigg(\int_0^\infty v \cdot \clos{\partial_\lambda \varphi} \d \lambda \bigg) \clos{\eta \divx \psi} \d x \d t
= \int_\R \int_{\R^n} \bigg(\int_0^\infty (\M F)_\pe \cdot \clos{\varphi} \d \lambda \bigg) \clos{\eta \divx \psi} \d x \d t.
\end{align*}
Since $\psi$ and $\eta$ were arbitrary, the inner integrals over $(0,\infty)$ differ only by a constant, which we can compute by taking an average over $Q \times I \subset \ree$ and using \eqref{eq:correspondence: vanishing average}. Thus,
\begin{align*}
\int_0^\infty v \cdot \clos{\partial_\lambda \varphi} + (\M F)_\pe \cdot \clos{\varphi} \d \lambda
 = \int_0^\infty \bigg(\bariint_{Q \times I} (\M F)_\pe \d x \d t \bigg) \cdot \clos{\varphi} \d \lambda
\end{align*}
a.e.\ as functions defined on $\ree$. This means that in the sense of distributions
\begin{align}
\label{eq:correspondence: dlambda v}
\partial_\lambda v = (\M F)_\pe - \bariint_{Q \times I} (\M F)_\pe \d x \d t \in \Lloc^2(\R^{n+2}_+).
\end{align}
In particular, we have now found that $v \in \Hdot^{1/2}(\R; \Lloc^2(\reu)) \cap \Lloc^2(\R; \W^{1,2}_{\loc}(\reu))$ has the required regularity of a reinforced weak solution.

\subsubsection*{Step 2: Adjusting \texorpdfstring{$u$}{u} in \texorpdfstring{$\lambda$}{lambda}-direction}

From \eqref{eq:correspondence: first potential} and \eqref{eq:correspondence: dlambda v} we obtain
\begin{align*}
\pcg v
= \begin{bmatrix} A_{\pe \pe}\vphantom{\bariint_{Q \times I}} & A_{\pe \pa} & 0 \\ 0 & 1 & 0 \\ 0 & 0 & 1\end{bmatrix}
  \begin{bmatrix} \partial_\lambda v \vphantom{\bariint_{Q \times I}}\\ \gradx v \\ \HT \dhalf v \end{bmatrix}
= \begin{bmatrix} F_\pe \vphantom{\bariint_{Q \times I}} \\ F_\pa \\ F_\te \end{bmatrix}
  + \begin{bmatrix} A_{\pe \pe}\vphantom{\bariint_{Q \times I}} & A_{\pe \pa} & 0 \\ 0 & 1 & 0 \\ 0 & 0 & 1\end{bmatrix} \begin{bmatrix} \bariint_{Q \times I} (\M F)_\pe \d x \d t \\ 0 \\ 0\end{bmatrix}.
\end{align*}
Let now $w$ be an anti-derivative of the $\Lloc^2$-function $\lambda \mapsto \bariint_{Q \times I} (\M F)_\pe \d x \d t$, that is, $w \in \W^{1,2}_{\loc}(\R_+)$ and $\partial_\lambda w =   \bariint_{Q \times I} (\M F)_\pe \d x \d t$. Then the modified potential $u:=v - w$ still has the regularity of a reinforced weak solution since $w$ depends only on $\lambda$ {and from above we can read off $\pcg u = F$}. Hence, $u$ has the required property. Finally, we let $\varphi \in \C_0^\infty(\R^{n+2}_+)$ and take $\phi = [\varphi,\, 0,\, 0]$ as a test function in \eqref{eq:weak solutions first order}. Unravelling the resulting identity gives
\begin{align*}
 \int_0^\infty \int_\reu A\nabla_{\lambda,x} u\cdot\clos{\nabla_{\lambda,x}\varphi} + \HT \dhalf u \cdot \clos{\dhalf \varphi} \d x\d t\d\lambda=0.
\end{align*}
This confirms $u$ as a reinforced weak solution to \eqref{eq1} and the proof is complete.

\begin{rem}
Independently of the equation \eqref{eq1}, the argument above shows in fact that any $G\in \Lloc^2(\R_+; \L^2(\ree; \IC^{n+2}))$ with {$\partial_\lambda G_\pa - \gradx G_\pe=0$ and $\HT\dhalf G_{\pa}= \gradx G_{\te}$ has a potential $v\in \Lloc^2(\R_+; \L^2(\ree))$, unique up to constants, such that $[\partial_\lambda v,\; \gradx v,\;\HT \dhalf v] = G$}.
\end{rem}
\section{Resolvent estimates}
\label{sec:resolvents}

In this section we prove the resolvent estimates stated in Lemma~\ref{lem:LpLq} and Proposition~\ref{prop:OffDiag}.

\subsection{Proof of Lemma \ref{lem:LpLq}}

As there is nothing to prove for $\lambda=0$, we can in the following, by scaling $(x,t)\mapsto (|\lambda| x, |\lambda|^{1/2} t)$, assume $\lambda=\pm 1$ as long as implicit constants in all of our estimates only depend on dimension and the ellipticity constants of $A$. We shall only argue for $\lambda=1$ since the proof for $\lambda=-1$ is the same.

For $1<p,q<\infty$, we define inhomogeneous parabolic Sobolev spaces $\H_{p,q} (\R^{n+1})$ by
\begin{align*}
 \H_{p,q} (\R^{n+1}) := \L^p(\R; \W^{1,q}(\R^n)) \cap \H^{1/2, p}(\R; \L^q(\R^n))
\end{align*}
equipped with the mixed norm
\begin{align*}
\|u\|_{\H_{p,q}} := \Big\| \big\| |u|+ |\gradx u|+ |\dhalf u| \big\|_{\L^q(\R^n)} \Big\|_{\L^p(\R)}.
\end{align*}
We let $\H_{p,q}^*$ denote the  space dual to  $\H_{p,q}$ with respect to the $\L^2(\ree)$ inner product. For any $f\in \L^2(\R^{n+1}; \IC^{n+2})$, there exists a unique $ \wt{f} \in \L^2(\R^{n+1}; \IC^{n+2})$, such that
\begin{align*}
f= \begin{bmatrix}(A \wt{f})_{\pe} \\ \wt{f}_{\pa} \\ \wt{f}_{\theta} \end{bmatrix},
\end{align*}
{compare with Proposition~\ref{prop:divformasODE}}. Spelling out the definitions of $\P$ and $\M$ reveals that the equation $(\id + \i \P\M)^{-1}f= g$ is equivalent to $\wt{g}_{\pe}\in \H_{2,2}$ with
\begin{align*}
\label{eq1:LpLq}
\begin{cases}
 (A \wt{g})_\pe + \i\divx(A \wt{g})_\pa -\i \dhalf \wt{g}_{\theta}= f_\pe, \\
 \wt{g}_\pa - \i \nabla_x\wt{g}_\pe = f_\pa,
 \\
 \wt{g} _{\theta}- \i \HT\dhalf\wt{g}_{\pe}= f_{\theta}.
\end{cases}
\end{align*}
Using the second and third equations to eliminate $\wt{g}_\pa$ and $\wt{g}_{\theta}$ in the first one, this is equivalent to the system
\begin{align}
\begin{cases}
 (\pd_{t} + L_A)\wt{g}_\pe = f_\pe - A_{\pe \pa} f_\pa - \i\divx (A_{\pa\pa} f_\pa) + \i\dhalf f_{\theta} , \\
 \wt{g}_\pa - \i \nabla_x\wt{g}_\pe = f_\pa,
 \\
 \wt{g} _{\theta}- \i \HT\dhalf\wt{g}_{\pe}= f_{\theta},
\end{cases}
\end{align}
where
\begin{align*}
L_A:= \begin{bmatrix} 1 & \i\divx \end{bmatrix}
 A
 \begin{bmatrix} 1 \\ \i\nabla_x \end{bmatrix}.
\end{align*}
{The parabolic operator $\partial_t + L_A$ admits hidden coercivity in the same spirit as discussed for $\Lop$ in Section~\ref{sec:energy}, that is, it can be associated with a bounded and coercive sesquilinear form on $\H_{2,2}$.} As a consequence of the Lax-Milgram lemma we find that $\pd_{t} + L_A$ is a bounded and invertible operator from $\H_{2,2}$ to its dual $\H_{2,2}^*$. Moreover, $\pd_{t} + L_A$ clearly extends to a bounded operator from $\H_{p,q}$ to $\H_{p',q'}^*$ for any $1<p,q<\infty$. All of these bounds depend only upon the ellipticity constants of $A$. We intend to apply {\u{S}}ne{\u\ii}berg's lemma~\cite{Sneiberg-Original} in order to extend invertibility to a neighbourhood of $(2,2)$ in the $(p,q)$-plane. This is possible provided that $\H_{p,q}$ and $\H_{p,q}^*$ form complex interpolation scales.

\begin{lem}
\label{lem:interpolHpq}
Let $\eps>0$. For $1+ \eps < p_0, p_1, q_0, q_1 < 1 + \eps^{-1}$ and $0< \theta < 1$ the complex interpolation identities
\begin{align*}
 [\H_{p_0,q_0}, \H_{p_1, q_1}]_\theta = \H_{p,q} \qquad \text{and} \qquad [\H_{p_0,q_0}^*, \H_{p_1, q_1}^*]_\theta = \H_{p,q}^*
\end{align*}
up to equivalent norms, where $\frac{1}{p} = \frac{1-\theta}{p_0} + \frac{\theta}{p_1}$ and $\frac{1}{q} = \frac{1-\theta}{q_0} + \frac{\theta}{q_1}$. Equivalence constants can be chosen only in dependence of $\eps$ and $n$.
\end{lem}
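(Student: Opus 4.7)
The plan is to realise each $\H_{p,q}(\ree)$ as a retract of the mixed-norm Lebesgue space $\L^p(\R;\L^q(\R^n))$ with uniform constants, and then to deduce both interpolation identities from the classical interpolation of such mixed-norm spaces. To this end, introduce the parabolic Bessel symbol $m(\xi,\tau) := (1 + |\xi|^2 + |\tau|)^{1/2}$, which is pointwise equivalent to $1 + |\xi| + |\tau|^{1/2}$, and let $\Lambda$ denote the associated Fourier multiplier on $\mS'(\ree)$. The central claim is that
\begin{equation*}
 \Lambda : \H_{p,q}(\ree) \longrightarrow \L^p(\R;\L^q(\R^n))
\end{equation*}
is a bounded linear isomorphism whose operator norm and inverse norm depend only on $\eps$ and $n$, uniformly for $(p,q) \in (1+\eps, 1+\eps^{-1})^2$. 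Granted this, the first interpolation identity reduces, via $\Lambda$, to the classical identity $[\L^{p_0}(\L^{q_0}), \L^{p_1}(\L^{q_1})]_\theta = \L^p(\L^q)$, which holds with uniform constants on bounded exponent ranges. The second identity follows by duality: the $\L^2$-adjoint $\Lambda^*$ extends to a uniform isomorphism $\H_{p,q}^* \to \L^{p'}(\L^{q'})$, so that interpolation of the duals again reduces to the same classical result.

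One direction of the isomorphism claim is routine. The scalar Riesz transforms $R_j = (-\Delta_x)^{-1/2}\pd_{x_j}$ are uniformly bounded on $\L^q(\R^n)$ for $q \in (1+\eps, 1+\eps^{-1})$ and hence, by Fubini, on $\L^p(\R; \L^q(\R^n))$. The factorisation $|\xi|\wh u = \sum_j (-\i R_j)(\i \xi_j \wh u)$ on the Fourier side therefore yields
\begin{equation*}
 \|\Lambda u\|_{\L^p(\L^q)} \lesssim \|u\|_{\L^p(\L^q)} + \|\gradx u\|_{\L^p(\L^q)} + \|\dhalf u\|_{\L^p(\L^q)} = \|u\|_{\H_{p,q}}.
\end{equation*}
The converse requires controlling each of $u$, $\gradx u$, $\dhalf u$ individually by $\Lambda u$ in $\L^p(\L^q)$, which amounts to proving that the parabolic-homogeneous-of-degree-zero Fourier multipliers $1/m$, $\i\xi_j/m$, and $|\tau|^{1/2}/m$ define bounded operators on $\L^p(\R; \L^q(\R^n))$ with operator norms depending only on $\eps$.

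This multiplier boundedness in the anisotropic mixed-norm setting is the main obstacle. I would handle it by a parabolic Mihlin--H\"ormander theorem: each of the symbols above is smooth off the origin and satisfies the standard derivative bounds with respect to the parabolic dilations $(\xi,\tau) \mapsto (\rho\xi, \rho^2\tau)$. On the mixed-norm space $\L^p(\R; \L^q(\R^n))$, boundedness follows either by invoking an anisotropic Marcinkiewicz/Lizorkin multiplier theorem, or more concretely by combining a vector-valued Littlewood--Paley decomposition in the $t$-variable, which is uniformly bounded on $\L^p(\R; X)$ for any UMD Banach space $X$, with the ordinary Mihlin theorem in $\xi$: on each dyadic band $|\tau| \sim 2^{2k}$, the rescaling $\xi \mapsto 2^k \xi$ converts the symbol into a scalar-valued Mihlin multiplier uniformly in $k$, yielding uniform $\L^q(\R^n)$-bounds, which Fubini then transfers to the mixed space. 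Since $\L^q(\R^n)$ is UMD with constants controlled by the distance of $q$ to $1$ and $\infty$, tracking constants in the Mihlin and UMD estimates shows dependence only upon $\eps$ and $n$, which gives the sought uniformity and closes the argument.
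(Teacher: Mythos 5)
Your proposal follows the same core strategy as the paper: realise $\H_{p,q}$ as a retract of the mixed-norm space $\L^p(\R;\L^q(\R^n))$ via a Fourier multiplier, prove the multiplier is an isomorphism with uniform bounds, and then transport the classical complex interpolation identity $[\L^{p_0}(\L^{q_0}),\L^{p_1}(\L^{q_1})]_\theta = \L^p(\L^q)$ across it. The paper uses the additive symbol $1 + |\tau|^{1/2} + |\xi_1| + \cdots + |\xi_n|$ and cites Lizorkin's mixed-norm Marcinkiewicz multiplier theorem; your proposed alternative via a $t$-Littlewood--Paley decomposition, the UMD property of $\L^q(\R^n)$, and a scaled Mihlin theorem in $\xi$ is a more self-contained and perfectly viable substitute for that citation. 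Your duality step, retracting $\H_{p,q}^*$ onto $\L^{p'}(\L^{q'})$ through the adjoint, is also a clean alternative to the paper's argument via density of $T(\mS(\ree))$ and the duality theorem for the complex interpolation functor.

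One step as written does not go through, though it is easily repaired. With the Bessel symbol $m(\xi,\tau)=(1+|\xi|^2+|\tau|)^{1/2}$, the direction you call routine is not: the Riesz-transform factorisation $|\xi|\wh u = \sum_j(-\i R_j)(\i\xi_j\wh u)$ gives $\|\cF^{-1}(|\xi|\wh u)\|_{\L^p(\L^q)}\lesssim\|\gradx u\|_{\L^p(\L^q)}$ and hence controls the multiplier with symbol $1+|\xi|+|\tau|^{1/2}$, not $\Lambda$. Pointwise comparability of symbols does not transfer to operator norms, so to conclude $\|\Lambda u\|_{\L^p(\L^q)}\lesssim\|u\|_{\H_{p,q}}$ you would still need the mixed-norm multiplier theorem for the bounded quotient symbol $(1+|\xi|^2+|\tau|)^{1/2}/(1+|\xi|+|\tau|^{1/2})$. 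The remedy is either to apply your multiplier theorem in that direction as well, or to do as the paper does and work with the additive symbol from the start, so that one direction genuinely reduces to boundedness of one-variable Hilbert transforms (resp.\ Riesz transforms) alone and only the other direction requires the multiplier theorem.
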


\begin{proof}
For $1<p,q<\infty$ we abbreviate $\L^{p}(\L^{q}) = \L^{p}(\R; \L^{q}(\R^n))$. It is well-known that these spaces interpolate according to the rule
\begin{align*}
 [\L^{p_0}(\L^{q_0}), \L^{p_1}(\L^{q_1})]_\theta = \L^{p}(\L^{q})
\end{align*}
with \emph{equal} norms, see, for example, \cite[Thm.~5.1.1/2]{BL}. Let $T: \mS(\ree) \to \mS'(\ree)$ be defined on the Fourier side via multiplication by the symbol
\begin{align*}
 m(\tau,\xi) = \frac{1}{1+|\tau|^{1/2} + |\xi_1| + \cdots + |\xi_n|},
\end{align*}
where $\tau \in \R$ and $\xi \in \R^n$. Then $m$ satisfies Mihlin's condition
\begin{align*}
 \bigg|\tau^{\alpha_0} \xi_1^{\alpha_1} \cdots \xi_n^{\alpha_n} \frac{\partial^\alpha m}{\partial_\tau^{\alpha_0} \partial_{\xi_1}^{\alpha_1} \cdots \partial_{\xi_n}^{\alpha_n}} \bigg| \leq C(\alpha) < \infty \qquad (\tau, \xi_1,\ldots,\xi_n \neq 0)
\end{align*}
for all multi-indices {$\alpha \in \IN_0^{n+1}$}. The Marcinkiewicz multiplier theorem on $\L^p(\L^q)$, see \cite[Cor.~1]{LizorkinMultiplier}, yields that $T$ extends to a bounded operator on $\L^p(\L^q)$ for every pair $(p,q) \in (1,\infty) \times (1,\infty)$. {Complex interpolation, starting from the four extremal points with $p,q \in \{1+\eps, 1+\eps^{-1}\}$, yields the required uniform bound on the operator norm within their convex hull in the $(p,q)$-plane}. By the same argument applied to the multipliers $|\tau|^{1/2} m(\tau,\xi)$, $|\xi| m(\tau,\xi)$ and $\i \xi_j/ |\xi_j|$, we see that in fact the extension of $T$ is bounded $\L^p(\L^q) \to \H_{p,q}$ and invertible. Thus, the required interpolation rules for the $\H_{p,q}$-spaces including uniformity of the equivalence constants follow from those for $\L^p(\L^q)$.

As another consequence every space $\H_{p,q}$ contains $T(\mS(\ree))$ as a dense set since $\mS(\ree)$ is dense in $\L^p(\L^q)$. Thus, the interpolation rules for the dual spaces $\H_{p,q}^*$ follow from the duality theorem for the complex interpolation functor \cite[Thm.~4.5.1]{BL}.
\end{proof}

Lemma~\ref{lem:interpolHpq} allows to apply {\u{S}}ne{\u\ii}berg's result \cite{Sneiberg-Original}, see also \cite[App.~A]{ABES3} for a quantitative version, to the effect that $\pd_{t} + L_A$ remains invertible for $(p,q)$ in a neighbourhood of $(2,2)$. Technically speaking, this only applies to complex interpolation scales with one parameter: Seeing $(\frac{1}{p}, \frac{1}{q})$ on a line passing through $(\frac{1}{2},\frac{1}{2})$, it yields an interval for invertibility around $(2,2)$ on that line. However, the result is {quantitative} in that the length of that interval depends only upon upper/lower bounds for $\pd_{t} + L_A: \H_{2,2} \to \H_{2,2}^*$ and the equivalence constants in the interpolation from Lemma~\ref{lem:interpolHpq}. Therefore we can obtain the same interval on each line, eventually summing up to a two-dimensional neighbourhood in the $(p,q)$-plane. Let us also remark that all inverses are compatible on the intersection of any two such $\H_{p,q}^*$ spaces by abstract interpolation theory \cite[Thm.~8.1]{KMM}.

Returning to the equation $(\id + \i \P\M)^{-1}f=g$, we obtain from \eqref{eq1:LpLq} that if $(p,q)$ is as above, then given $f \in \L^2(\L^2) \cap \L^p(\L^q)$ we have $\|g\|_{\L^p(\L^q)} \lesssim \|f\|_{\L^p(\L^q)}$ with implicit constants depending only on dimension and ellipticity. We conclude by density.

Next, we prove the analogous result for $\M\P$. If $\M^{-1}$ is in $\L^\infty$, which is the case for equations, then the result is immediate by similarity. {For systems, however, we only have that $\M: \clos{\ran({\P})} \to \clos{\ran({\M\P})}$ is invertible. In this case, we first use the argument above to find that $(1+\i \P)^{-1}$ is bounded on $\L^p(\L^q)$ if $1<p,q<\infty$.} Therefore the domains $\dom_{p,q}$ of the maximal realisation of $\P$ in $\L^p(\L^q)$, equipped with the graph norm, form a complex interpolation scale. Since $\M\P$ is bisectorial on $\L^2(\L^2)$, we have that $1+\i \M\P$ is an isomorphism $\dom_{2,2} \to \L^2(\L^2)$. For all other $p,q$ it is at least bounded $\dom_{p,q} \to \L^p(\L^q)$. As above, $1 + \i \M\P$ remains an isomorphism $\dom_{p,q} \to \L^p(\L^q)$ for $(p,q)$ in a neighbourhood of $(2,2)$ with compatible inverses. Consequently, the resolvent estimate holds in $\L^p(\L^q)$ for those $(p,q)$.

{Finally, $\M^*$ is an operator in the same class as $\M$. Thus, we already have resolvent bounds on $\L^p(\L^q)$ for $\P \M^*$ and $\M^* \P$ and those for $\M \P^*$ and $\P^* \M$ follow by duality.}

\subsection{Proof of Proposition \ref{prop:OffDiag}}

It suffices to prove \eqref{eq:off} for $q=2$ and $0<\eps<\eps_{0}$ with $\eps_{0}=\delta _{0}$ of Lemma~\ref{lem:LpLq} and $N_{0}$ to be determined. Then the proposition follows by interpolation with the global bound provided by Lemma~\ref{lem:LpLq} as long as $q$ is close enough to $2$ to guarantee the announced decay. {We shall only argue for $\P \M$ as the proof for the other operators is literally the same}.

Let $h\in \L^2(\R^{n+1}; \IC^{n+2})=\L^2(\L^2)$. Let $Q,I, j,k, \lambda$ as in the statement and let $N\ge 4$. We set $J= 4^j I$ and split
$C_{k}(Q \times J)=C_{k}^1\cup C_{k}^2$ with
\begin{align*}
 C_{k}^1:= (2^{k+1}Q\setminus 2^{k}Q)\times N^{k}J \qquad \text{and} \qquad C_{k}^2:= 2^{k}Q\times (N^{k+1}J\setminus N^{k}J).
\end{align*}

We assume that $h$ is supported in $C_{k}$ and hence we can write $h=h_{1}+h_{2}$ with $h_{i}$ supported in $C_{k}^i$.

To estimate $\bariint_{Q \times J} |(\id + \i\lambda \P\M)^{-1}h_{1}|^2 \d y \d s$, we can rely on the same argument as in the elliptic situation since the $x$ support of $h_{1}$ is far from $Q$: More precisely, we have
\begin{align*}
 \bariint_{Q \times J} |(\id + \i\lambda \P\M)^{-1}h_{1}|^2 \d y \d s & \le \frac{1}{\ell(J)|Q|} \int_{\R}\int_{Q} |(\id + \i\lambda \P\M)^{-1}h_{1}|^2 \d y \d s
\end{align*}
{and by the argument in \cite[Prop.~5.1]{elAAM}, using a commutator with a cut-off function in $x$-direction only, we obtain for any $m\in \IN$,}
\begin{align}
\label{eq:OD1}
\begin{split}
\bariint_{Q \times J} |(\id + \i\lambda \P\M)^{-1}h_{1}|^2 \d y \d s
 & \lesssim \frac{2^{-km}}{\ell(J)|Q|} \int_{\R} \int_{\R^n} |h_{1}|^2 \d y \d s
 \\
 & = 2^{-km} N^{k+1} 2^{kn} \bariint_{C_{k}(Q \times J)} |h|^2 \d y \d s.
\end{split}
\end{align}

In order to estimate $\bariint_{Q \times J} |(\id + \i\lambda \P\M)^{-1}h_{2}|^2 \d y \d s$, we pick a smooth cut-off function $\eta \in \C_0^\infty(N^{k-1}J)$, which is valued in $[0,1]$, equal to $1$ on $N^{k-2}J$ and satisfies $(N^k\ell(J))\|\pd_{t}\eta\|_{\infty}\lesssim 1$ uniformly in $k$, $N$ and the size of $J$. With $p>2$ in the range of Lemma~\ref{lem:LpLq} we have
\begin{align*}
 \bariint_{Q \times J} |(\id + \i\lambda \P\M)^{-1}h_{2}|^2 \d y \d s & \le \frac{1}{|Q|} \barint_{J}\int_{\R^n} |(\id + \i\lambda \P\M)^{-1}h_{2}|^2 \d y \d s \\
 & \leq\frac{1}{|Q|} \left(\barint_{J}\bigg(\int_{\R^n} |(\id + \i\lambda \P\M)^{-1}h_{2}|^2 \d y\bigg)^{p/2} \d s\right)^{2/p}
 \\
 & \leq \frac{1}{|Q|\ell(J)^{2/p}} \left(\int_{\R}\bigg(\int_{\R^n} |\eta (\id + \i\lambda \P\M)^{-1}h_{2}|^2 \d y\bigg)^{p/2} \d s\right)^{2/p}.
\end{align*}
Since $\eta(t) h_{2}(x,t)=0$, we can re-express $\eta (\id + \i\lambda \P\M)^{-1}h_{2} $ using a commutator
\begin{align}
\label{eq:OffDiag commutator}
\begin{split}
 \eta (\id + \i\lambda \P\M)^{-1}h_{2} & = [\eta, (\id + \i\lambda \P\M)^{-1}]h_{2} \\
 & = (\id + \i\lambda \P\M)^{-1} [ \eta, \i\lambda \P\M] (\id + \i\lambda \P\M)^{-1}h_{2}
 \\
 & = (\id + \i\lambda \P\M)^{-1} \i\lambda [ \eta, \P] \M(\id + \i\lambda \P\M)^{-1}h_{2},
\end{split}
\end{align}
where
\begin{align*}
 [ \eta, \P] = \begin{bmatrix} 0 & 0 & -[\eta,\dhalf] \\ 0 & 0 & 0 \\ -[\eta,\HT \dhalf] & 0 & 0 \end{bmatrix}.
\end{align*}
The commutators in $[ \eta, \P]$ depend only on the $t$ variable and due to \eqref{eq:kernel of dhalf} and \eqref{eq:kernel of HTdhalf} they have kernels $K(t,s)$ bounded by
\begin{align}
 \label{eq:dhalf commutator kernel}
|K(t,s)| \le C \frac{|\eta(t)-\eta(s)|}{|t-s|^{3/2}} \lesssim \wt{h}^{-1/2}\wt{h}^{-1}\phi\bigg(\frac{t-s}{\wt{h}}\bigg)
\end{align}
with $\phi(t)= \min \{ |t|^{-1/2}, |t|^{-3/2} \}$ and $\wt{h}:=N^k\ell(J)$. Thus, by Young's inequality,
\begin{align*}
 \|[\eta,\dhalf]\|_{\L^2(\R) \to \L^p(\R)} + \|[\eta, \HT \dhalf]\|_{\L^2(\R) \to \L^p(\R)} \lesssim \tilde h^{-1/2-((1/2)- (1/p))}.
\end{align*}
Using usual vector-valued extension \cite[Sec.~II.5]{Stein}, it follows that $ [ \eta, \P] : \L^2(\L^2) \to \L^p(\L^2)$ with norm as above. This is the key point. Now, inserting \eqref{eq:OffDiag commutator} into the ongoing estimate, and applying the $\L^p(\L^2)$ boundedness of the resolvent and the just obtained commutator bound, we find
\begin{align*}
 \frac{1}{|Q|\ell(J)^{2/p}} &\left(\int_{\R}\bigg(\int_{\R^n} |\eta (\id + \i\lambda \P\M)^{-1}h_{2}|^2 \d y\bigg)^{p/2} \d s\right)^{2/p} \\ & \lesssim \frac{1}{|Q|\ell(J)^{2/p}} \left(\int_{\R}\bigg(\int_{\R^n} |\i\lambda [ \eta, \P] \M(\id + \i\lambda \P\M)^{-1}h_{2}|^2 \d y\bigg)^{p/2} \d s\right)^{2/p} \\
 & \lesssim \frac{|\lambda|^2}{|Q|\ell(J)^{2/p}} \cdot \frac{1}{(N^k\ell(J))^{1+(1-2/p)}} \int_{\R}\int_{\R^n} |\M(\id + \i\lambda \P\M)^{-1}h_{2}|^2 \d y \d s.
\intertext{Since the resolvents of $\P \M$ are $\L^2(\L^2)$-bounded, we obtain furthermore}
 & \lesssim {\frac{|\lambda|^2}{|Q|\ell(J)^2(N^k)^{2-2/p}}} \int_{\R}\int_{\R^n} |h_{2}|^2 \d y \d s
 \\
 & \lesssim \frac{2^{kn}}{(N^k)^{1-2/p}}\frac{|\lambda|^2}{\ell(J)} \bariint_{C_{k}(Q\times J)} |h|^2 \d y \d s.
\end{align*}
We remark that $\ell(J) = 4^j \ell (I) $ and $\lambda^2\sim \ell(I)$ by assumption. Hence, we have proved
\begin{align*}
 \bariint_{Q \times J} |(\id + \i\lambda \P\M)^{-1}h_{2}|^2 \d y \d s
 \lesssim \frac{2^{kn}}{(N^k)^{1-2/p}} \bariint_{C_{k}(Q\times J)} |h|^2 \d y \d s.
\end{align*}
{To conclude the proof, we only have to compare with \eqref{eq:OD1} and choose the parameters at our disposal appropriately}: First we pick $0<\eps<\eps_{0}$ and then $p$ with $\eps<\frac{1}{2} - \frac{1}{p} <\eps_{0}$. For $N$ large enough $2^{kn} \lesssim N^{-2k\eps}(N^k)^{1-2/p}$ and given any such choice of $N$, there is a choice of $m$ large verifying $ 2^{-km} N^{k+1} 2^{kn} \lesssim N^{-2k\eps}$.
\section{Quadratic estimates for \texorpdfstring{$PM$}{PM}: Proof of Theorem~\ref{thm:bhfc}}
\label{sec:proof bhfc}

Based on Lemma~\ref{lem:bisectoriality of Dirac} we see that in order to prove Theorem~\ref{thm:bhfc} it suffices to prove the quadratic estimate
\begin{equation}
\label{eq:sfMP}
 \int_0^\infty\| {\lambda{}}\M\P(\id+{\lambda{}^2}\M\P\M\P)^{-1} h \|_{2}^2 \, \frac{\mathrm{d}\lambda{}}\lambda{}\lesssim \|h\|_{2}^2 \qquad (h\in \mH)
\end{equation}
and the analogous estimate obtained upon replacing $\M \P$ by $\M^* \P^*$, where  $\mH=\L^2(\R^{n+1}; \IC^{n+2})$ as before: In fact, \eqref{eq:sfMP} for $\M \P$ implies the analogous estimate for $\P \M$ since these two operators are similar on the closure of their ranges by Remark~\ref{rem:PM and MP similar}. By the same argument, the estimate for $\M^* \P^*$ implies the one for $\P^* \M^*$. Once we have \eqref{eq:sfMP} for all four operators, an abstract duality argument gives the reverse inequalities for all four operators and hence the claim, see \cite{Mc} or \cite[Lem.~3.4.10]{EigeneDiss}.

Below, we shall only give the proof for $\M \P$ since the argument for $\M^* \P^*$ is identical upon minor changes of the algebra. Note that when $\M=\id$, then these estimates with $\P$ or $\P^*$ can simply be proved
 using a Fourier transform argument in the $(x,t)$ variables.

In proving \eqref{eq:sfMP}, we shall follow the algorithm developed in \cite{elAAM}. We set $R_\lambda=(\id+\i\lambda{} \M\P)^{-1}$ for $\lambda\in \R$. Then
\begin{align*}
 Q_{\lambda{}}= \frac{1}{2\i} (R_{-{\lambda{}}} -R_{{\lambda{}}})= {\lambda{}} \M\P(\id+{\lambda{}^2}\M\P\M\P)^{-1}
\end{align*}
and also $\frac{1}{2} (R_{-{\lambda{}}} +R_{{\lambda{}}})= (1+{\lambda{}^2} \M\P\M\P)^{-1}$. Since $\M \P$ is bisectorial, see Lemma~\ref{lem:bisectoriality of Dirac}, the operators $R_\lambda$, $Q_{\lambda{}}$
and $(\id+{\lambda^2} \M\P\M\P)^{-1}$ are uniformly bounded on $\mH$.

\subsection{Reduction to a Carleson measure}\label{sec:reduction}

As $Q_{\lambda{}}$ vanishes on $ \nul(\M\P)=\nul(\P)$, it is enough to prove the quadratic estimate (\ref{eq:sfMP}) for $h\in \clos{\ran(\M\P)}$. By density, $h\in {\ran(\M\P)}$ suffices. So, setting $\Theta_{\lambda{}}=Q_{\lambda{}} \M$, it amounts to showing
\begin{equation} \label{firsttermprop-}\qe{\Theta_{\lambda{}} \P v} \lesssim \|\P v\|_{2}^2 \qquad (v \in \dom(\P)).
\end{equation}
{We define the parabolic approximation of the identity $P_{\lambda}=(1+\lambda^2(\partial_{t}-\Delta_{x}))^{-1}$ for $\lambda \in \R$ and let it act componentwise on $\IC^{n+2}$ valued functions. By Plancherel's theorem the operators $P_\lambda$ are uniformly bounded on $\mH$.}

\begin{prop}
\label{whatisneeded}
For all $v \in \dom(\P)$ the estimate
\begin{equation*}
 \qe{\Theta _\lambda(\id-P_{\lambda{}}) \P v}\lesssim \|\P v\|_{2}^2.
\end{equation*}
\end{prop}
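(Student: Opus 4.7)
The plan is to adapt the strategy used for the elliptic analogue in \cite{AKMc, elAAM} to the parabolic framework provided by the off-diagonal bounds of Proposition~\ref{prop:OffDiag}. The two essential inputs will be the uniform $\L^2$-boundedness of $\Theta_\lambda$ (an immediate consequence of the bisectoriality of $\M\P$ in Lemma~\ref{lem:bisectoriality of Dirac}) and the parabolic off-diagonal estimates of Proposition~\ref{prop:OffDiag}. Since $P_\lambda$ is a Fourier multiplier in $(x,t)$, it commutes with $\P$, so on $\dom(\P)$ we have $\Theta_\lambda(\id-P_\lambda)\P v = \Theta_\lambda \P (\id-P_\lambda)v$. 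Setting $g := \P v\in\mH$, the statement of the proposition is equivalent to the abstract square function estimate
\begin{equation*}
 \int_0^\infty \|\Theta_\lambda(\id-P_\lambda) g\|_2^2 \, \frac{\d \lambda}{\lambda} \lesssim \|g\|_2^2
\end{equation*}
for arbitrary $g\in\mH$, and this is the form that I would prove.

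Next, I would perform a space-time decomposition at the parabolic scale $\lambda$. For each $\lambda>0$ fix a dyadic grid $\mathcal D_\lambda$ of parabolic cubes $R=Q\times I$ with radius $\lambda$ (so $|Q|\sim \lambda^n$ and $|I|\sim \lambda^2$) covering $\R^{n+1}$. For each $R\in\mathcal D_\lambda$ split $g=g\mathbf 1_{2R}+\sum_{k\geq 1}g\mathbf 1_{C_k(R)}$ using the parabolic annuli $C_k(R)$ from Proposition~\ref{prop:OffDiag}. The local piece $g\mathbf 1_{2R}$ is controlled on $R$ by the uniform $\L^2$-boundedness of $\Theta_\lambda$ and of $\id-P_\lambda$, producing the natural bound by $|R|^{-1}\|g\|_{L^2(cR)}^2$. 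For the far pieces, $k\geq 1$, the function $g\mathbf 1_{C_k(R)}$ vanishes on $R$, so on $R$ one has $(\id-P_\lambda)(g\mathbf 1_{C_k(R)})=-P_\lambda(g\mathbf 1_{C_k(R)})$, and one can play two decay mechanisms against each other: the parabolic off-diagonal estimate of Proposition~\ref{prop:OffDiag} for $\Theta_\lambda$ provides a factor $N^{-\eps k}$, while the kernel of $P_\lambda$, being a parabolic resolvent at scale $\lambda$, contributes an additional gain in $k$ from its sharp localisation on parabolic distance $\lambda$.

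Combining these two decays and introducing a suitable ``principal-part'' subtraction (to exploit also the cancellation that $\int P_\lambda$-type kernels produce on $R$) yields bounds of the form
\begin{equation*}
 \|\Theta_\lambda(\id-P_\lambda)(g\mathbf 1_{C_k(R)})\|_{L^2(R)}^2 \lesssim \varphi_k^2\cdot |R|^{-1}\|g\|_{L^2(C_k(R))}^2 \qquad (k\geq 0),
\end{equation*}
with $\sum_{k\geq 0}\varphi_k <\infty$. Squaring the triangle inequality in $k$, summing over $R\in\mathcal D_\lambda$ using the almost-disjointness of the annuli $\{C_k(R)\}_R$ at fixed $k$, and integrating in $\lambda$ with respect to $\d\lambda/\lambda$ then reduces the whole estimate, via Fubini, to a sum of pieces each majorised by $\|g\|_2^2$ and a geometrically convergent series in $k$.

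The main obstacle is the anisotropy of the parabolic off-diagonal estimates: because the half-order time derivative inside $\P$ is non-local, the annuli $C_k(R)$ in Proposition~\ref{prop:OffDiag} must be stretched in the time direction by the large factor $N$, and the geometric decay rate $N^{-\eps k}$ has to beat both the polynomial volume growth of $C_k(R)$ and the weaker $t$-localisation of the kernel of $P_\lambda$ in the non-local directions. The freedom to take $N\geq N_0$ arbitrarily large in Proposition~\ref{prop:OffDiag}---and the parallel control on the decay of the kernel of $P_\lambda$---is exactly what allows the bookkeeping to close; this will be the most technically delicate step in the proof.
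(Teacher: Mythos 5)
Your argument takes a fundamentally different (and much heavier) route than the paper, and as sketched it does not close. The paper's proof of Proposition~\ref{whatisneeded} is a three-line functional calculus computation: since $P_\lambda$ commutes with $\P$ and $(\id-P_\lambda)v\in\dom(\P)$, one writes
$\Theta_\lambda(\id-P_\lambda)\P v = (\Theta_\lambda\P)(\id-P_\lambda)v$,
observes that $\Theta_\lambda\P=\lambda(\M\P)^2(\id+(\lambda\M\P)^2)^{-1}=\tfrac{1}{\lambda}\bigl(\id-(\id+(\lambda\M\P)^2)^{-1}\bigr)$ is $\tfrac{1}{\lambda}$ times a uniformly bounded operator (bisectoriality), and that $\tfrac{1}{\lambda}(\id-P_\lambda)v=\lambda\P(\id+\lambda^2\P^2)^{-1}(\P v)$. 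The latter is bounded in $\L^2(\mathrm{d}\lambda/\lambda)$ by $\|\P v\|_2$ by the quadratic estimate for the constant-coefficient operator $\P$ (plain Fourier multiplier argument). No decomposition into parabolic cubes, no off-diagonal estimates, no $T(1)$-machinery. The placement of $\P$ is essential: $P_\lambda$ agrees with $(\id+\lambda^2\P^2)^{-1}$ only on $\ran(\P)$ via \eqref{eq:Psquared}, and the extra factor $\P$ is what gives the square function of $\P v$. Your reformulation ``for arbitrary $g\in\mH$'' forgoes this structure and turns an easy lemma into a genuinely hard claim.

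Separately, the estimate you write to run your proposed decomposition is insufficient even if it were true. The bound
$\|\Theta_\lambda(\id-P_\lambda)(g\mathbf 1_{C_k(R)})\|_{L^2(R)}^2\lesssim\varphi_k^2\,|R|^{-1}\|g\|_{L^2(C_k(R))}^2$
with $\sum_k\varphi_k<\infty$ \emph{independent of $\lambda$}, after summing over $R\in\mathcal D_\lambda$ (using bounded overlap of the annuli) gives only $\lesssim\|g\|_2^2$ at every fixed scale, and then $\int_0^\infty\|g\|_2^2\,\tfrac{\mathrm{d}\lambda}{\lambda}=\infty$. You correctly sense that the cancellation $(\id-P_\lambda)\mathbf 1=0$ must enter via a principal-part subtraction, but your final bounds do not carry it: the right-hand side must involve an \emph{oscillation} $\|g-\mathrm{(avg)}\|_{L^2(C_k(R))}^2$, which, via a Poincar\'e-type inequality adapted to the anisotropic parabolic geometry and to the non-local half-order time derivative, converts into a controlled power of $\lambda$ times derivatives of $g$. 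This is precisely the machinery the paper develops for the genuinely hard step, the principal part approximation in Lemma~\ref{lem:ppa} via Lemma~\ref{lem:fractional poincare alpha}; Proposition~\ref{whatisneeded} is the easy preliminary reduction that sidesteps all of this by the algebraic identity above.
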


\begin{proof}
\label{proof:thm}
{From \eqref{eq:Psquared} we know that $\P^2$ agrees with $\partial_t - \Delta_x$ on $\dom(\P^2) \cap \ran(\P)$. Hence, $P_\lambda$ agrees with $(1 + \lambda^2 P^2)^{-1}$ on $\ran(\P)$ and to ease the computations we may think of $P_\lambda = (1 + \lambda^2 P^2)^{-1}$ just for the proof.} Since $P_{\lambda{}}$ and $\P$ commute and $(\id-P_{\lambda{}})v\in \dom(\P)$, we have
\begin{align*}
 \Theta_{\lambda{}} (\id-P_{\lambda{}}) \P v= (\Theta_{\lambda{}} \P) (\id-P_{\lambda{}})v= {\lambda{}}(\M\P)^2(\id+({\lambda{}}\M\P)^2)^{-1} (\id-P_{\lambda{}})v.
\end{align*}
Now, $({\lambda{}}\M\P)^2(\id+({\lambda{}}\M\P)^2)^{-1} =\id -(\id+({\lambda{}}\M\P)^2)^{-1}$ is uniformly bounded, hence
\begin{align*}
 \|\Theta_{\lambda{}} (\id-P_{\lambda{}}) \P v\|_{2} \lesssim \frac 1{ {\lambda{}}} \| (\id-P_{\lambda{}})v\|_{2}.
\end{align*}
Using that
\begin{align*}
 \frac{1}{\lambda}(\id-P_{\lambda{}})v= \lambda \P (\id+{\lambda{}^2}\P^2)^{-1}(\P v)
\end{align*}
we see by the square function estimate for $\P$ that
\begin{align*}
 \int_0^\infty\|\lambda \P (\id+{\lambda{}^2}\P^2)^{-1}(\P v) \|_{2}^2\,\frac{\mathrm{d}\lambda}{\lambda}\lesssim \|\P v\|_{2}^2. &\qedhere
\end{align*}
\end{proof}

Next, we perform the \emph{principal part approximation}. We use the following dyadic decomposition of $\R^{n+1}$ in parabolic dyadic cubes. Let $\dyadic= \bigcup_{j=-\infty}^\infty\dyadic_{2^j}$ where
$\dyadic_{2^j}:=\{ \delta _{j}(k+(0,1]^{n+1}) : k\in\IZ^{n+1} \}$ with $\delta _{j}(x,t)=(2^jx, 4^{j}t)$. For a parabolic dyadic cube $R\in\dyadic_{2^j}$ we denote by $\ell(R)=2^j$
its \emph{sidelength} and by $|R|= 2^{(n+2)j}$ its {\em volume}. We set $\dyadic_\lambda=\dyadic_{2^j}$ if $2^{j-1}<\lambda \le 2^j$. Let the {\em parabolic dyadic averaging operator} $S_\lambda{} : \mH \rightarrow \mH $ be given by
\begin{align*}
 S_\lambda{} u(x,t) := u_R:= \bariint_{R} u(y,s) \d y\d s = \frac{1}{|R|} \iint_R
 u(y,s)\d y\d s
\end{align*}
for every $(x,t) \in \R^{n+1}$ and $\lambda>0$, where $R $ is the unique parabolic dyadic cube in $ \dyadic_\lambda$ that contains $(x,t)$. We remark that $S^2_\lambda=S_\lambda{} $.

\begin{defn}
\label{defn:princpart}
By the {\em principal part} of $(\Theta_{\lambda{}})_{\lambda>0}$ we mean the multiplication operators $\gamma_\lambda{}$ defined by
\begin{align*}
 \gamma_\lambda{}(x,t)\zeta:= (\Theta_{\lambda{}} \zeta)(x,t)
\end{align*}
for every $\zeta\in \IC^{n+2}$. We view $\zeta$ on the right-hand side of the equation above as the constant function valued in $\IC^{n+2}$ defined on $\R^{n+1}$ by $\zeta(x,t):=\zeta$. We identify $\gamma_\lambda{}(x,t)$ with the (possibly unbounded) multiplication operator $\gamma_\lambda{}: f(x,t)\mapsto \gamma_\lambda{}(x,t)f(x,t)$.
\end{defn}

Of course $\zeta \notin \mH$ but still this definition is justified in virtue of the next lemma.

\begin{lem}
\label{lem:gammat}
The operator $\Theta_{\lambda{}}$ extends to a bounded operator from $\L^\infty$ into $ \Lloc^2$. In particular we have well defined functions $\gamma_\lambda{}\in \Lloc^2(\R^{n+1}; \Lop(\IC^{n+2}))$ with bounds
\begin{align*}
 \bariint_{R} |\gamma_\lambda{}(x,t)|^2 \d x\d t \lesssim 1
\end{align*}
for all $R\in\dyadic_\lambda$. Here, $|\cdot |$ is the operator norm of $\Lop(\IC^{n+2})$. Moreover, $\gamma_\lambda{} S_\lambda{}$ is bounded on $\mH$ with $\|\gamma_\lambda{} S_\lambda{} \|\lesssim 1$ uniformly for all $\lambda>0$.
\end{lem}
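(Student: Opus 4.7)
The plan is to obtain everything from off-diagonal estimates for $\Theta_\lambda$ that are inherited from Proposition~\ref{prop:OffDiag}. Since $Q_\lambda = \frac{1}{2\i}(R_{-\lambda} - R_\lambda)$ with $R_\mu = (\id + \i\mu \M\P)^{-1}$ and since $\M$ is a bounded multiplication operator that preserves supports, the version of Proposition~\ref{prop:OffDiag} with $\M\P$ in place of $\P\M$ implies directly: for every $R_0 = Q_0 \times I_0 \in \dyadic_\lambda$, every $k \in \IN^*$, and every $g \in \mH$ supported in $C_k(R_0)$, one has
\[
\bariint_{R_0} |\Theta_\lambda g|^2 \d x \d t \lesssim N^{-2\eps k} \bariint_{C_k(R_0)} |g|^2 \d x \d t,
\]
where $N$ is chosen large enough (as in Proposition~\ref{prop:OffDiag}) and the implicit constant is independent of $\lambda$ and $R_0$ (since $\ell(R_0) \sim \lambda$ realises the relation $\lambda \sim r$ required there with $j=0$).

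Next I would extend $\Theta_\lambda$ to $\L^\infty$ with values in $\Lloc^2$. Fix $R_0 \in \dyadic_\lambda$ and decompose $f = f_{\mathrm{loc}} + \sum_{k\ge 0} f_k$ with $f_{\mathrm{loc}} = f \mathbf{1}_{R_0}$ and $f_k = f\mathbf{1}_{C_k(R_0)}$. For the local piece, the uniform $\L^2$-bound on $\Theta_\lambda$ (coming from Lemma~\ref{lem:bisectoriality of Dirac} applied to $\M\P$) yields
\[
\|\Theta_\lambda f_{\mathrm{loc}}\|_{\L^2(R_0)} \lesssim \|f_{\mathrm{loc}}\|_2 \le |R_0|^{1/2} \|f\|_\infty,
\]
while the far pieces satisfy $|f_k|^2 \le \|f\|_\infty^2 \, \mathbf{1}_{C_k(R_0)}$, so the off-diagonal estimate above gives $\bigl(\bariint_{R_0} |\Theta_\lambda f_k|^2\bigr)^{1/2} \lesssim N^{-\eps k}\|f\|_\infty$. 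Summing in $k$ (possible since $N^{-\eps} < 1$) yields $\bariint_{R_0} |\Theta_\lambda f|^2 \lesssim \|f\|_\infty^2$, uniformly in $\lambda$ and $R_0$. Applied to the constant vector $\zeta \in \IC^{n+2}$, this defines $\gamma_\lambda(x,t)\zeta := (\Theta_\lambda \zeta)(x,t)$ as an element of $\Lloc^2(\R^{n+1};\IC^{n+2})$ depending linearly on $\zeta$, hence $\gamma_\lambda \in \Lloc^2(\R^{n+1}; \Lop(\IC^{n+2}))$ with the stated averaged bound on each dyadic cube of $\dyadic_\lambda$.

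Finally, for the boundedness of $\gamma_\lambda S_\lambda$, observe that for $f \in \mH$ and any $R \in \dyadic_\lambda$, the function $S_\lambda f$ is identically equal on $R$ to the vector $f_R := \bariint_R f$. Therefore
\[
\iint_R |\gamma_\lambda S_\lambda f|^2 \d x\d t = \iint_R |\gamma_\lambda(x,t) f_R|^2 \d x \d t \le |f_R|^2 \iint_R |\gamma_\lambda(x,t)|^2 \d x \d t \lesssim |R||f_R|^2 \le \iint_R |f|^2 \d x\d t,
\]
by the previous step and Cauchy--Schwarz. Summing over $R \in \dyadic_\lambda$ gives $\|\gamma_\lambda S_\lambda f\|_2 \lesssim \|f\|_2$, uniformly in $\lambda$. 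The real work, hidden in the off-diagonal estimate of Proposition~\ref{prop:OffDiag}, is ensuring that the summation over annuli converges in spite of the weak decay of half-order time derivatives; once that is in place, the present lemma is a routine consequence.
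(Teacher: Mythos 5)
Your proposal is correct and follows essentially the same route as the paper: decompose $f \in \L^\infty$ into a local piece plus pieces on the annuli $C_k(R_0)$, handle the local piece by the global $\L^2$-boundedness of $\Theta_\lambda$ and the far pieces by the off-diagonal estimates of Proposition~\ref{prop:OffDiag} applied to the resolvents of $\M\P$ (with $\M$ preserving supports), sum the geometric series, and then pass to $\gamma_\lambda S_\lambda$ exactly as you do. The only cosmetic difference is that the paper takes the first piece to be $f$ restricted to $2R$ rather than to $R_0$ itself (and indexes the annuli from $j\ge 2$), which sidesteps your slight overlap between $f_{\mathrm{loc}}$ and $f_0$; this does not affect the argument, as the $k=0$ piece has measure $\sim_N |R_0|$ and is covered by the same global $\L^2$ bound you invoke for the local piece.
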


\begin{proof}
Fix a parabolic cube $R \in \dyadic_\lambda$ and $f \in \L^\infty(\R^n;\IC^{n+2})$ with $\|f\|_\infty=1$. Then write $f= f_1+f_2+\ldots$ where $f_1=f$ on $2R$ and $0$ elsewhere and if $j\ge 2$, $f_j=f$ on $C_{j}(R)$ with the notation of Proposition~\ref{prop:OffDiag} and $0$ elsewhere. Then apply $\Theta_{\lambda} = \frac{1}{2\i}(R_{-\lambda} - R_\lambda)\M$, use the off-diagonal estimates of Proposition~\ref{prop:OffDiag} for each term $\Theta_{\lambda{}} f_j$, with $N$ large enough, and sum to obtain {for $\Theta_\lambda f: = \sum_{j=1}^\infty \Theta_\lambda f_j$ the estimate}
\begin{align*}
 \bariint_{R} |(\Theta_{\lambda{}}f)(x,t)|^2 \d x \d t \le C.
\end{align*}
If we do this for the constant functions with values describing an orthonormal basis of $\IC^{n+2}$ and sum, we obtain an upper bound for the desired average of $\gamma_\lambda{}$.
Next, for a function $f\in \mH$,
\begin{align*}
\|\gamma_\lambda{} S_\lambda{} f\|_{2}^2 = \sum_{R\in \dyadic_\lambda} \iint_R \left|\gamma_\lambda{} (x,t)\bigg( \bariint_{R} f\bigg)\right|^2 \d x\d t \lesssim \sum_{R\in \dyadic_\lambda} |R|\left|\,\bariint_{R} f\right |^2 \le \|f\|_{2}^2.
\end{align*}
Here, $|\cdot|$ is the usual Hermitian norm on $\IC^{n+2}$.
\end{proof}

We have the following principal part approximation of $\Theta_{\lambda{}}$ by $\gamma_\lambda{}S_\lambda{} $.

\begin{lem}
\label{lem:ppa}
It holds
\begin{equation*}
 \qe{\Theta_{\lambda{}}\P_{\lambda{}}f-\gamma_\lambda{} S_\lambda{} f}
 \lesssim \|f\|_{2}^2 \qquad (f\in \mH).
\end{equation*}
\end{lem}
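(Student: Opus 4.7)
My strategy will be the standard splitting used in principal part approximations since \cite{elAAM},
\begin{equation*}
\Theta_{\lambda{}}P_{\lambda{}}f-\gamma_\lambda{} S_\lambda{} f \;=\; \Theta_{\lambda{}}(P_{\lambda{}}-S_\lambda{})f \;+\; (\Theta_{\lambda{}}-\gamma_\lambda{})S_\lambda{} f,
\end{equation*}
with the two summands treated separately in the square-function norm $(\int_0^\infty\|\cdot\|_{2}^{2}\d\lambda/\lambda)^{1/2}$.

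For the second summand I would argue parabolic-cube-by-parabolic-cube. Fix $R\in\dyadic_\lambda$; for $(x,t)\in R$ the function $S_\lambda{} f$ is the constant $c_{R}:=\bariint_{R}f$, so by Definition~\ref{defn:princpart}
\begin{equation*}
\bigl((\Theta_\lambda-\gamma_\lambda)S_\lambda f\bigr)(x,t) \;=\; \Theta_{\lambda{}}\!\Bigl(\textstyle\sum_{R'\in\dyadic_\lambda,\,R'\neq R}(c_{R'}-c_{R})\mathbf{1}_{R'}\Bigr)(x,t).
\end{equation*}
I would then decompose the sum along the parabolic annuli $C_{k}(R)$ of Proposition~\ref{prop:OffDiag} and invoke the off-diagonal estimates for $\Theta_{\lambda{}}=Q_{\lambda{}}\M$, choosing $N$ large enough so that $N^{-\varepsilon k}$ dominates the volume growth $\sim 2^{kn}N^{k}$ of $C_{k}(R)$. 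Controlling the differences $|c_{R'}-c_{R}|$ along dyadic chains by the parabolic Hardy–Littlewood maximal function of $f$, summing over $R\in\dyadic_{\lambda}$ and integrating in $\lambda$ yields the desired $\lesssim\|f\|_{2}^{2}$.

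The first summand is algebraically easier: bisectoriality of $\M\P$ (Lemma~\ref{lem:bisectoriality of Dirac}) makes $Q_\lambda=\lambda\M\P(\id+\lambda^{2}(\M\P)^{2})^{-1}$, and hence $\Theta_\lambda$, uniformly bounded on $\mH$, so the task reduces to the scalar parabolic quadratic estimate $\int_{0}^{\infty}\|(P_{\lambda{}}-S_{\lambda})f\|_{2}^{2}\,\d\lambda/\lambda\lesssim\|f\|_{2}^{2}$. I would prove this by interposing a smooth convolution parabolic approximation $\tilde P_{\lambda{}}f:=\phi_{\lambda{}}\ast f$ with $\phi_{\lambda{}}(x,t):=\lambda^{-(n+2)}\phi(x/\lambda,t/\lambda^{2})$, where $\phi$ is a fixed Schwartz function with $\int\phi=1$ chosen so that the Taylor expansion of $\hat\phi(\lambda\xi,\lambda^{2}\tau)$ at $\lambda=0$ matches that of $(1+\lambda^{2}(\i\tau+|\xi|^{2}))^{-1}$ to first order; this forces $\int x_{\alpha}\phi=0$ but $\int t\,\phi=1$ (in particular precluding any even bump). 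The piece $\int\|(\tilde P_{\lambda{}}-S_\lambda)f\|_{2}^{2}\,\d\lambda/\lambda\lesssim\|f\|_{2}^{2}$ is the routine dyadic-versus-continuous Littlewood–Paley estimate on the parabolic space of homogeneous type $\ree$, while $\int\|(P_{\lambda{}}-\tilde P_{\lambda{}})f\|_{2}^{2}\,\d\lambda/\lambda\lesssim\|f\|_{2}^{2}$ reduces by Plancherel to the pointwise Fourier bound
\begin{equation*}
\sup_{(\xi,\tau)\in\ree}\int_{0}^{\infty}\bigl|(1+\lambda^{2}(\i\tau+|\xi|^{2}))^{-1}-\hat\phi(\lambda\xi,\lambda^{2}\tau)\bigr|^{2}\,\frac{\d\lambda}{\lambda}\;<\;\infty,
\end{equation*}
verified by case analysis on the parabolic direction of $(\xi,\tau)$.

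The principal obstacle is precisely this last Fourier bound. Neither $\id-P_{\lambda{}}$ nor $\id-\tilde P_{\lambda{}}$ individually satisfies a quadratic Carleson estimate on $\mH$: along the critical direction $\xi=0$, the integral $\int_{0}^{\infty}|\lambda^{2}\i\tau/(1+\lambda^{2}\i\tau)|^{2}\,\d\lambda/\lambda$ diverges logarithmically at $\lambda\to\infty$, reflecting the skew-symmetry of $\pd_{t}$ and the non-local nature of the half-order time derivative. Only the difference of the two symbols, with the low-order Taylor coefficients carefully matched to cancel the offending linear-in-$\tau$ term, produces an integrable answer. This is the sole point at which the analysis genuinely departs from its elliptic counterpart in \cite{elAAM}.
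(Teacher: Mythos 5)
Your decomposition
\begin{equation*}
 \Theta_\lambda \P_\lambda f - \gamma_\lambda S_\lambda f
 \;=\; \Theta_\lambda(\P_\lambda - S_\lambda)f \;+\; (\Theta_\lambda - \gamma_\lambda)S_\lambda f
\end{equation*}
is genuinely different from the paper's, and the second summand is where the argument breaks down. Since $S_\lambda^2 = S_\lambda$, that term equals $(\Theta_\lambda - \gamma_\lambda S_\lambda)S_\lambda f$, which is the \emph{same} cancellation operator that appears in the paper's main term in \eqref{eq1:ppa}, but applied to the piecewise-constant function $S_\lambda f$ instead of to the smooth function $\P_\lambda f$. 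The paper's arrangement is designed precisely so that Poincar\'e-type estimates (the spatial Poincar\'e and the fractional-in-time Lemma~\ref{lem:fractional poincare alpha}) can be applied to the function whose average is subtracted; this yields \eqref{eq:Poincare}, whose right-hand side carries the crucial $\lambda^2$ and $\lambda^{4\alpha}$ factors that, combined with the decay of the multiplier $\P_\lambda$, make $\int_0^\infty\|\cdot\|_2^2\,\d\lambda/\lambda$ converge by Plancherel. The function $S_\lambda f$ admits no such Poincar\'e estimate. The plan you describe --- off-diagonal decay in $k$ together with a maximal-function control of the average differences $|c_{R'} - c_R|$ --- only reproduces the \emph{uniform-in-$\lambda$} bound $\|(\Theta_\lambda - \gamma_\lambda)S_\lambda f\|_2 \lesssim \|f\|_2$ already implicit in Lemma~\ref{lem:gammat}: there is no mechanism producing decay in $\lambda$, and integrating a $\lambda$-independent bound against $\d\lambda/\lambda$ over $(0,\infty)$ diverges. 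One \emph{can} make your second summand tractable by further writing $(\Theta_\lambda-\gamma_\lambda S_\lambda)S_\lambda = (\Theta_\lambda-\gamma_\lambda S_\lambda)(S_\lambda-\P_\lambda) + (\Theta_\lambda-\gamma_\lambda S_\lambda)\P_\lambda$, but at that point one has exactly recovered the paper's proof.

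A secondary misconception: your moment conditions on the mollifier $\phi$ are unnecessary. The square function of $\P_\lambda - \Phi_\lambda$ (or your $\P_\lambda - \tilde\P_\lambda$) is finite for \emph{any} compactly supported smooth $\phi$ with $\int\phi = 1$, with no vanishing or normalized first moments, because the symbol difference vanishes like $O(\lambda|\xi| + \lambda^2|\tau|)$ as $\lambda \to 0$ and like $O(\lambda^{-2})$ as $\lambda \to \infty$, either of which is already square-integrable against $\d\lambda/\lambda$ on the relevant half-line. The logarithmic divergence of $\int_0^\infty|\lambda^2\i\tau/(1+\lambda^2\i\tau)|^2\,\d\lambda/\lambda$ that you emphasize is real, but it is neither specific to the direction $\xi=0$ (it occurs for every $(\xi,\tau)\neq 0$, as the symbol of $\id-\P_\lambda$ tends to $1$ at $\lambda\to\infty$), nor parabolic-specific (the same divergence occurs for $\id - (\id+\lambda^2|\xi|^2)^{-1}$ in the elliptic case), nor relevant --- that square function is never required, only the square function of a \emph{difference} of approximate identities.
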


\begin{proof}
Let $\Phi_\lambda$ be a nice parabolic approximation of the identity on $\mH$, that is, the convolution operator by a real valued function $\lambda^{-n-2} \phi(\lambda^{-1}x, \lambda^{-2} t)$ with $\phi \in \C_0^\infty(B(0,1) \times (-1,1))$ satisfying $\iint_\ree \phi(x,t) \d x \d t = 1$. Write
\begin{align}
\label{eq1:ppa}
 \Theta_{\lambda{}}\P_{\lambda{}} -\gamma_\lambda{} S_\lambda{}
 = (\Theta_{\lambda{}}\P_{\lambda{}} -\gamma_\lambda{}S_\lambda{} \P_{\lambda{}} )
 + (\gamma_\lambda{}S_\lambda{} (\P_{\lambda{}}-\Phi_\lambda{} ))
 + (\gamma_\lambda{}S_\lambda{} (\Phi_{\lambda{}}-S_\lambda{} ))
 + (\gamma_\lambda{}S_\lambda ^2 - \gamma_\lambda{}S_\lambda{} ).
\end{align}
Because of $S_\lambda^2=S_\lambda{} $, the last term vanishes. Next, as the $\gamma_\lambda{}S_\lambda{} $ are uniformly bounded as operators on $\mH$,
\begin{align*}
 \int_0^\infty \|\gamma_\lambda{}S_\lambda{} (\P_{\lambda{}}-\Phi_\lambda{} )f\|_2^2 + \|\gamma_\lambda{}S_\lambda{} (\Phi_{\lambda{}}-S_\lambda{} )f\|_2^2 \, \frac{\mathrm{d} \lambda}{\lambda}
 \lesssim  \int_0^\infty \|(\P_{\lambda{}}-\Phi_\lambda{} )f\|_2^2 + \|(\Phi_{\lambda{}}-S_\lambda{} )f\|_2^2 \, \frac{\mathrm{d} \lambda}{\lambda}.
\end{align*}
A straightforward Fourier transform argument reveals control by $\|f\|_2^2$ for the square function of $\P_\lambda - \Phi_\lambda$. The analogous bound for the square function of $\Phi_\lambda - S_\lambda$ is done componentwise and for each component it is precisely the statement of \cite[Lem.~2.5]{CNS}. As for the first term in \eqref{eq1:ppa}, we will show that for $\alpha \in (0,1)$ small enough there is a bound
\begin{equation}
\label{eq:Poincare}
 \|(\Theta_{\lambda{}}\P_{\lambda{}} -\gamma_\lambda{}S_\lambda{} \P_{\lambda{}} )f \|_{2}^2 \lesssim \lambda^2 \|\nabla_{x} \P_{\lambda{}}f \|_{2}^2+ \lambda^{4\alpha} \|\dalpha \P_{\lambda{}}f\|_{2}^2,
\end{equation}
where $\dalpha$ is the fractional derivative of order $\alpha$ on $\R$ defined via the Fourier symbol $|\tau|^\alpha$. We postpone the proof of this claim until Section~\ref{sec:proofEqPoincare}. Hence, we obtain
\begin{align*}
\qe{\Theta_{\lambda{}}\P_{\lambda{}}f-\gamma_\lambda{} S_\lambda{} \P_{\lambda{}} f}
 \lesssim \qe{\lambda \nabla_{x} \P_{\lambda{}} f} + \qe{\lambda^{2\alpha}\dalpha \P_{\lambda{}} f} \lesssim \|f\|_{2}^2
\end{align*}
from a Fourier transform argument on each component of $f$, given the definition of $\P_{\lambda{}}$.
\end{proof}

Combining Lemma~\ref{lem:ppa} with Proposition~\ref{whatisneeded}, we obtain the principal part approximation
\begin{equation}\label{eq:ppa+}
 \qe{\Theta_{\lambda{}} \P v-\gamma_\lambda{} S_\lambda{} \P v}
 \lesssim \|\P v\|_{2}^2 \qquad (v\in \dom(\P)).
\end{equation}
Hence, to complete the proof of \eqref{firsttermprop-} we want to apply Carleson's theorem to conclude that
\begin{align*}
 \qe{\gamma_\lambda{} S_\lambda{} \P v} \lesssim \|\gamma_\lambda{}\|_C^2 \| \P v\|_{2}^2 \qquad (v\in \dom(\P)),
\end{align*}
where
\begin{equation*}
 \|\gamma_\lambda{}\|_C^2=\sup_{R\in \dyadic} \frac{1}{|R|}\iiint_{T(R)} |\gamma_\lambda{}(x,t)|^2 \, \frac{\mathrm{d} x\d t\d\lambda}\lambda.
\end{equation*}
Here $T(R):= (0,\ell(R)]\times R$ is the Carleson box over $R$. Hence, it remains to prove the Carleson measure estimate $\|\gamma_\lambda{}\|_C<\infty$, and the claim in \eqref{eq:Poincare}, to finish the proof of the quadratic estimate in \eqref{eq:sfMP}.

\subsection{The T(b) argument}
\label{sec:tb}

We first construct test functions which belong to the range of $\P$.

Let $\eta: \R \to [0,1]$ be a smooth non-negative function with support in $[-1,1]$ which is 1 on $[-1/2,1/2]$. Consider $\nu(t)= \eta(t)- \eta (t-z)$ for $z\in \R$, $|z|>2$. {Since $\wh{\nu}(0) = 0$, we can define $h := (-\HT\dhalf)^{-1} \nu$ by Fourier transform}. We claim that $h\in \L^2(\R)$ satisfies $|h(t)-h(0)| \le C|t|$ and that we can find some $z$ such that $h(0)\ne 0$. Indeed,
\begin{align*}
 h(t)= -\frac{1}{2\pi}\int_{\R} \e^{\i t\tau} \frac{1- \e^{\i z\tau}}{\i\, \sgn(\tau) |\tau|^{1/2}}\ \wh{\eta}(\tau)\d\tau,
\end{align*}
so that {the cheap estimate $|1- \e^{\i t\tau}| \le |t \tau|$ yields}
\begin{align*}
 |h(t)-h(0)| \le \frac{ |t|}{\pi} \int_{\R} { |\tau|^{1/2}} |\wh{\eta}(\tau)|\d \tau,
\end{align*}
and likewise, using $|1- \e^{\i z\tau}| \le |z\tau|$ and Plancherel's theorem, it follows that
\begin{align*}
 \|h\|_{2}^2
\le \frac{|z|^2}{2\pi}\int_{\R} {|\tau|} |\wh{\eta}(\tau)|^2 \d \tau <\infty.
\end{align*}
To see the last claim, assume to contrary that $h(0) = 0$ holds for any $|z|>2$. By differentiating the formula for $h(0)$ with respect to $z$, we find
\begin{align*}
\frac{1}{2\pi}\int_{\R} \e^{\i z\tau}|\tau|^{1/2} \wh{\eta}(\tau) \d \tau =0 \qquad (|z| > 2).
\end{align*}
This means that the Fourier transform of $\tau\mapsto |\tau|^{1/2} \wh{\eta}(\tau)$ is compactly supported, hence this function is analytic. In particular, it is analytic at the origin. But $\tau\mapsto \wh{\eta}(\tau)$ has the same property and {$\wh{\eta}(0) = \int_\R \eta \geq 1$}. Hence $\tau\mapsto |\tau|^{1/2}$ should be analytic at the origin, which is absurd. We now fix $z$ with $|z|>2$ such that all the properties above hold.

Let $\zeta\in \IC^{n+2}$ with $|\zeta|=1$. Let $\zeta_{i}$ denote the components of $\zeta$ and $\zeta_{\pa}= (\zeta_{2}, \ldots, \zeta_{n+1})$. We need also a parameter $0<\delta \le 1 $ which is small and will be chosen later. We let $\chi$ be a smooth function on $\R^n$, valued in $[0,1]$, which is 1 on the cube $[-1/2,1/2]^n$ and with support in $[-1,1]^n$.

Fix a parabolic dyadic cube $R=Q\times I$, its center being denoted $(x_{Q}, t_{I})$. Observe that with our notation $\ell(R)=\ell(Q)= \sqrt {\ell(I)}$. We set
\begin{align*}
 \chi_{Q}(x) &:= \chi \bigg( \frac{x-x_{Q}}{\ell(Q)}\bigg) \qquad (x\in \R^n),\\
 h_{I,\delta }(t) &:= \delta ^{-1/2} \ell(I)^{1/2} h\bigg(\delta\cdot \frac{t-t_{I}}{\ell(I)}\bigg) \qquad (t\in \R),\\
 f_{1}(x) &:= \zeta_{n+2} - \frac{\delta ^{1/2} \zeta_{\pa}\cdot (x-x_{Q})}{h(0)\ell(Q)} \qquad (x\in \R^n),\\
 f_{2}(x) &:= \frac{\delta ^{1/2} \zeta_{1} (x_{1}-x_{Q,1})}{h(0)\ell(Q)} \qquad (x\in \R^n)
\end{align*}
and finally we introduce $L^\zeta_{R,\delta }:\R^{n+1}\to\mathbb C^{n+2}$ through
\begin{align*}
 L^\zeta_{R,\delta }(x,t):= \begin{bmatrix}
 \chi_{Q}(x)f_{1}(x) \ h_{I,{\delta }}(t) \\
 \chi_{Q}(x) f_{2}(x) \ h_{I,{\delta }}(t) \\
 0\\
 \vdots \\
 0
\end{bmatrix} \qquad ((x,t)\in \R^{n+1}).
\end{align*}
Clearly $L^\zeta_{R,\delta } \in \mH $, but we do not need its $\mH$-norm and so we do not compute it. We claim that $ L^\zeta_{R,\delta }\in \dom(\P)$ with
\begin{equation}
\label{1}
\|\P L^\zeta_{R,\delta }\|_{2}^2 \le C\delta ^{-2} |R|
\end{equation}
and that
\begin{equation}
\label{2}
\iint_{R} |\P L^\zeta_{R,\delta }- \zeta|^2 \d x\d t \le C\delta |R|
\end{equation}
{with an absolute constant $C$.} Indeed, we first note that
\begin{align*}
\P L^\zeta_{R,\delta }(x,t)= \begin{bmatrix}
\ \ \partial_{x_{1}}( \chi_{Q}(x) f_{2}(x)) \ h_{I,{\delta }}(t) \\
 -\partial_{x_{1}}( \chi_{Q}(x) f_{1}(x)) \ h_{I,{\delta }}(t) \\
 \vdots\\
 -\partial_{x_{n}}( \chi_{Q}(x) f_{1}(x)) \ h_{I,{\delta }}(t)\\
 \ \chi_{Q}(x) f_{1}(x)\ (-\HT\dhalf )h_{I,\delta }(t)
 \end{bmatrix}.
\end{align*}
Furthermore, we observe by a change of variables that
\begin{align*}
\| h_{I,\delta }\|_{\L^2(\R)} = \delta ^{-1} \ell(I) \|h\|_{\L^2(\R)}
\end{align*}
and $(-\HT\dhalf )h_{I,\delta }(t)= \nu\big(\delta\cdot \frac{t-t_{I}}{\ell(I)}\big)$, so that
\begin{align*}
\| (-\HT\dhalf) h_{I,\delta }\|_{\L^2(\R)} = \delta ^{-1/2} \ell(I)^{1/2}\|\nu\|_{\L^2(\R)}.
\end{align*}
Using that the partial derivatives of the functions of $x$ are bounded (uniformly for $\delta \le 1$) by $C/\ell(Q)$ with support in $2Q$, we get
\begin{align*}
\|P L^\zeta_{R,\delta }\|_{2}^2 \le C (\ell(Q)^{n-2} \delta ^{-2} \ell(I)^2+ \ell(Q)^n \delta^{-1} \ell(I))\lesssim \delta ^{-2} |R|.
\end{align*}
Because of the support properties of $\chi_{Q}$ and $\nu$ we have for $(x,t)\in R$ that
\begin{align*}
\P L^\zeta_{R,\delta }(x,t)= \begin{bmatrix}
\ \ \partial_{x_{1}}f_{2}(x) \ h_{I,{\delta }}(t)\vphantom{\frac{t-t_{I}}{\ell(I)}} \\
 - \partial_{x_{1}} f_{1}(x) \ h_{I,{\delta }}(t)\vphantom{\frac{t-t_{I}}{\ell(I)}} \\
 \vdots\\
 - \partial_{x_{n}} f_{1}(x) \ h_{I,{\delta }}(t)\vphantom{\frac{t-t_{I}}{\ell(I)}}\\
 f_{1}(x)\vphantom{\frac{\delta ^{1/2} \zeta_{\pa}\cdot (x-x_{Q})}{h(0)\ell(Q)}}
 \end{bmatrix} = \begin{bmatrix}
 \zeta_{1}\ h(0)^{-1} h\big(\delta\cdot \frac{t-t_{I}}{\ell(I)}\big) \\
 \zeta_{2}\ h(0)^{-1} h\big(\delta\cdot \frac{t-t_{I}}{\ell(I)}\big) \\
 \vdots\\
 \zeta_{n+1}\ h(0)^{-1} h\big(\delta\cdot \frac{t-t_{I}}{\ell(I)}\big)\\
 \zeta_{n+2} - \frac{\delta ^{1/2} \zeta_{\pa}\cdot (x-x_{Q})}{h(0)\ell(Q)}
 \end{bmatrix},
\end{align*}
so that at the center of the cube $\P L^\zeta_{R,\delta }(x_{Q},t_{I})= \zeta$ and by the Lipschitz property of $h$ at $0$ and $|\zeta|=1$, we get
\begin{align*}
|\P L^\zeta_{R,\delta }(x,t) - \zeta| \le |h(0)|^{-1}( C\delta + \delta ^{1/2}) \qquad ((x,t)\in R).
\end{align*}
This completes the proof of \eqref{1} and \eqref{2}.

We now define the test functions $b^\zeta_{R,\eps,\delta }$ for $\eps, \delta \in (0,1)$ by
\begin{align*}
 b^\zeta_{R,\eps, \delta }: = \P v^\zeta_{R,\eps,\delta } \qquad v^\zeta_{R,\eps, \delta }:= (\id+\i\eps \ell\M\P)^{-1}L^\zeta_{R,\delta },
\end{align*}
where $\ell=\ell(R)$.

\begin{lem}
\label{lem:testfunctions}
There exists $C>0$ such that for each $\zeta\in \IC^{n+2}$ with $|\zeta|=1$, each parabolic dyadic cube $R\subset\R^{n+1}$ and each $\eps, \delta \in (0,1)$,
 \begin{equation}\label{eq:1}
\iint_{\R^{n+1}}|v^\zeta _{R,\eps, \delta } - L^\zeta_{R,\delta } |^2\d x\d t \le C (\eps \ell)^2 \delta^{-2}|R|,
\end{equation}
\begin{equation}\label{eq:2}
\iint_{\R^{n+1}}|b^\zeta _{R,\eps, \delta } - \P L^\zeta_{R,\delta }|^2 \d x\d t \le C\delta ^{-2} |R|,
\end{equation}
\begin{equation}\label{eq:3}
 \left| \bariint_{R} (b^\zeta_{R,\eps,\delta } - \zeta) \d x\d t \right| \le
 C (\eps^{1/3} \delta^{-1} + \delta^{1/2}),
\end{equation}
\begin{equation}\label{eq:3b}
\iint_{R}|b^\zeta _{R,\eps, \delta } - \zeta|^2 \d x\d t \le C\delta ^{-2} |R|,
\end{equation}
\begin{equation}\label{eq:4}
\iiint_{T(R)} |\gamma_\lambda{}(x,t) S_\lambda{} b^\zeta _{R,\eps, \delta }(x,t)|^2 \, \frac{\mathrm{d} x\d t \d\lambda}{\lambda} \le C\eps^{-2}\delta ^{-2} |R|.
\end{equation}
\end{lem}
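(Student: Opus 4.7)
The plan is to exploit the three algebraic identities $v^\zeta_{R,\eps,\delta} - L^\zeta_{R,\delta} = -(\id+\i\eps\ell\M\P)^{-1}\,\i\eps\ell\M\P L^\zeta_{R,\delta}$, $b^\zeta_{R,\eps,\delta} = (\id+\i\eps\ell\P\M)^{-1}\P L^\zeta_{R,\delta}$ (via the intertwining $\P(\id+\i\eps\ell\M\P)^{-1} = (\id+\i\eps\ell\P\M)^{-1}\P$) and, most crucially, $b^\zeta_{R,\eps,\delta} - \P L^\zeta_{R,\delta} = -\i\eps\ell\, \P\M b^\zeta_{R,\eps,\delta}$, together with the uniform $\mH$-boundedness of the resolvents of $\M\P$ and $\P\M$ from Lemma~\ref{lem:bisectoriality of Dirac}. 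All our estimates will be reduced to the crude size bounds $\|\P L^\zeta_{R,\delta}\|_2\lesssim \delta^{-1}|R|^{1/2}$ and $\|\P L^\zeta_{R,\delta}-\zeta\|_{L^2(R)}^2\lesssim \delta |R|$ already recorded in \eqref{1} and \eqref{2}, and to the uniform bound on $\gamma_\lambda S_\lambda$ proved in Lemma~\ref{lem:gammat}.

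Estimate \eqref{eq:1} is immediate from the first identity together with boundedness of $\M$ and $(\id+\i\eps\ell\M\P)^{-1}$ on $\mH$. Estimate \eqref{eq:2} follows from the cruder inequality $\|b^\zeta_{R,\eps,\delta}-\P L^\zeta_{R,\delta}\|_2\le \|b^\zeta_{R,\eps,\delta}\|_2+\|\P L^\zeta_{R,\delta}\|_2\lesssim \|\P L^\zeta_{R,\delta}\|_2$, combined with \eqref{1}. Estimate \eqref{eq:3b} follows from \eqref{eq:2} and \eqref{2} by the triangle inequality.

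The main difficulty lies in \eqref{eq:3}. We decompose $\bariint_R (b^\zeta_{R,\eps,\delta} - \zeta) = \bariint_R (\P L^\zeta_{R,\delta} - \zeta) + \bariint_R (b^\zeta_{R,\eps,\delta}-\P L^\zeta_{R,\delta})$. The first term is $O(\delta^{1/2})$ by Cauchy--Schwarz and \eqref{2}. For the second, we use $b^\zeta_{R,\eps,\delta}-\P L^\zeta_{R,\delta} = -\i\eps\ell\,\P(\M b^\zeta_{R,\eps,\delta})$ and test against a product cutoff $\phi = \phi_x\otimes\phi_t$ which equals $1$ on $R = Q\times I$, is supported in a parabolic $\rho$-enlargement (width $\rho\ell$ in $x$, $\rho\ell^2$ in $t$), with $\rho\in(0,1)$ a free parameter. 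Writing $\iint_R \P(\M b^\zeta_{R,\eps,\delta}) = -\iint \P^*\phi\cdot \M b^\zeta_{R,\eps,\delta} + \iint (1_R-\phi)\P(\M b^\zeta_{R,\eps,\delta})$ and observing that $\|\P\M b^\zeta_{R,\eps,\delta}\|_2 \lesssim \delta^{-1}|R|^{1/2}/(\eps\ell)$ by the resolvent identity, the first summand is bounded by $\|\P^*\phi\|_2\|\M b^\zeta_{R,\eps,\delta}\|_2\lesssim \rho^{-1/2}\delta^{-1}|R|/\ell$ (using that $\|\gradx\phi\|_2\lesssim \rho^{-1/2}|R|^{1/2}/\ell$, and that $\|\dhalf\phi\|_2$ is controlled by the analogous quantity via interpolation $\|\phi_t\|_{\dot H^{1/2}}^2\lesssim \|\phi_t\|_2\|\phi_t\|_{\dot H^1}$), while the second is bounded by $\|1_R-\phi\|_2 \|\P\M b^\zeta_{R,\eps,\delta}\|_2 \lesssim \rho^{1/2}\delta^{-1}|R|/(\eps\ell)$. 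Multiplying by $\eps\ell$ and optimizing $\rho$ to balance the two competing powers produces a factor proportional to a positive power of $\eps$; any such bound suffices for \eqref{eq:3}. The delicate point, which will be the main obstacle, is the control of $\|\dhalf\phi\|_2$, since the half-order derivative is non-local in $t$ and does not respect the compact support of $\phi_t$; this is overcome precisely by the Fourier interpolation argument sketched above.

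For \eqref{eq:4}, we split the $\lambda$-integral over $T(R)$ at $\lambda = \eps\ell$. On the upper range $\eps\ell \leq \lambda \leq \ell$, we use the uniform $\mH$-boundedness $\|\gamma_\lambda S_\lambda b^\zeta_{R,\eps,\delta}\|_2 \lesssim \|b^\zeta_{R,\eps,\delta}\|_2\lesssim \delta^{-1}|R|^{1/2}$, and the logarithmic divergence $\int_{\eps\ell}^\ell \mathrm{d}\lambda/\lambda = \log(1/\eps) \lesssim \eps^{-2}$ absorbs into the claimed bound. On the lower range $0 < \lambda < \eps\ell$, we replace $\gamma_\lambda S_\lambda b^\zeta_{R,\eps,\delta}$ by $\Theta_\lambda b^\zeta_{R,\eps,\delta}$ using the principal part approximation of Lemma~\ref{lem:ppa} (whose tail contribution is itself square-function controlled by $\|b^\zeta_{R,\eps,\delta}\|_2$), and exploit the regularity built into $b^\zeta_{R,\eps,\delta}$: writing $\Theta_\lambda b^\zeta_{R,\eps,\delta} = Q_\lambda (\M b^\zeta_{R,\eps,\delta})$ and using $Q_\lambda = \lambda\M\P(\id+\lambda^2(\M\P)^2)^{-1}$ together with the bound $\|\M\P\M b^\zeta_{R,\eps,\delta}\|_2\lesssim \delta^{-1}|R|^{1/2}/(\eps\ell)$ (which again follows from $\P\M b^\zeta_{R,\eps,\delta} = (\P L^\zeta_{R,\delta}-b^\zeta_{R,\eps,\delta})/(\i\eps\ell)$), one obtains $\|\Theta_\lambda b^\zeta_{R,\eps,\delta}\|_2\lesssim \lambda\,\delta^{-1}|R|^{1/2}/(\eps\ell)$. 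Integrating $\lambda^2\,\mathrm{d}\lambda/\lambda$ from $0$ to $\eps\ell$ yields a contribution of order $\delta^{-2}|R|$, which completes the proof of \eqref{eq:4}.
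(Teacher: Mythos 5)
Your proposal is correct and follows the paper's strategy, with a few cosmetic differences. For \eqref{eq:2} you use the cruder bound $\|b^\zeta_{R,\eps,\delta}-\P L^\zeta_{R,\delta}\|_2\le\|b^\zeta_{R,\eps,\delta}\|_2+\|\P L^\zeta_{R,\delta}\|_2\lesssim\|\P L^\zeta_{R,\delta}\|_2$ rather than the paper's direct resolvent expression $-\i\eps\ell\,\P(\id+\i\eps\ell\M\P)^{-1}\M\P L^\zeta_{R,\delta}$; both give the same order of magnitude. For \eqref{eq:4} the paper does not split at $\lambda=\eps\ell$: the lower-range estimate $\|\Theta_\lambda b^\zeta_{R,\eps,\delta}\|_2\lesssim(\lambda/\eps\ell)\|\P L^\zeta_{R,\delta}\|_2$ already yields the claim when integrated over all of $(0,\ell]$, so your appeal to $\log(1/\eps)\lesssim\eps^{-2}$ on the upper range is harmless but unnecessary. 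For the key estimate \eqref{eq:3} you start from $b^\zeta_{R,\eps,\delta}-\P L^\zeta_{R,\delta}=-\i\eps\ell\,\P\M b^\zeta_{R,\eps,\delta}$ and split $1_R=\phi+(1_R-\phi)$ with an exterior cutoff $\phi$, bounding the two pieces by $\|\P^*\phi\|_2\|\M b^\zeta_{R,\eps,\delta}\|_2$ and $\|1_R-\phi\|_2\|\P\M b^\zeta_{R,\eps,\delta}\|_2$, while the paper writes the same quantity as $\iint\varphi\,\P(v^\zeta_{R,\eps,\delta}-L^\zeta_{R,\delta})+\iint_R(1-\varphi)(b^\zeta_{R,\eps,\delta}-\P L^\zeta_{R,\delta})$ with an interior cutoff $\varphi$, using smallness of $v^\zeta_{R,\eps,\delta}-L^\zeta_{R,\delta}$ from \eqref{eq:1}. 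The two decompositions are mirror images; by exploiting the thin annular support of $\nabla\phi$ you reach the slightly better exponent $\eps^{1/2}\delta^{-1}$ (balancing $\rho\sim\eps$), versus the paper's $\eps^{1/3}\delta^{-1}$ (choosing $s\sim\eps^{2/3}$), and either suffices for \eqref{eq:3}. The genuinely delicate point — controlling $\|\dhalf\phi\|_2$ (respectively $\|\dhalf\varphi\|_2$) despite the nonlocality in $t$ via the interpolation $\|\dhalf\phi_t\|_2\le\|\phi_t\|_2^{1/2}\|\partial_t\phi_t\|_2^{1/2}$ from Plancherel — is handled identically in both arguments.
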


\begin{proof}
Using $(\id+\i \lambda \M\P)^{-1}-\id = -\i \lambda \M\P(\id+\i \lambda \M\P)^{-1}$, we have
\begin{align*}
 v^\zeta _{R,\eps, \delta } - L^\zeta_{R,\delta} = -\i\eps \ell(\id+\i\eps \ell \M\P)^{-1}(\M\P L^\zeta_{R,\delta }).
\end{align*}
The inequality \eqref{1} for $\P L^\zeta_{R,\delta }$ and the uniform boundedness of $s \mapsto (\id+\i s \M\P)^{-1}\M$ imply \eqref{eq:1}. Applying $\P$, we see that
\begin{align*}
b^\zeta _{R,\eps} - \P L^\zeta_{R,\delta }= -\i\eps \ell \P(\id+\i\eps \ell \M\P)^{-1}(\M \P L^\zeta_{R,\delta }).
\end{align*}
Again the inequality \eqref{1} and the boundedness of $s \mapsto s \P(\id+\i s \M\P)^{-1}\M$ imply \eqref{eq:2}. Next, \eqref{eq:3b} follows directly from \eqref{2} and \eqref{eq:2}, keeping in mind that $\delta \in (0,1)$.

As for \eqref{eq:4}, we first note, using \eqref{eq:ppa+}, \eqref{1} and the fact that $b^\zeta_{R,\eps,\delta }=\P v^\zeta _{R,\eps,\delta }$, that it suffices to establish the estimate
\begin{equation}
\label{eq:5}
 \iiint_{T(R)} |\Theta_{\lambda{}}b^\zeta_{R,\eps,\delta }(x,t)|^2 \, \frac{\mathrm{d} x\d t \d\lambda}{\lambda} \le C\eps^{-2}\delta ^{-1} |R|.
\end{equation}
Now,
\begin{align*}
 \Theta_{\lambda{}}b^\zeta_{R,\eps,\delta }&= {\lambda{}}\M\P(\id + {\lambda{}^2}\M\P\M\P)^{-1}\M\P(\id+\i\eps \ell \M\P)^{-1}L^\zeta_{R,\delta } \\
&= {\lambda{}}\M\P(\id+ {\lambda{}^2}\M\P\M\P)^{-1}(\id+\i\eps \ell \M\P)^{-1}(\M\P L^\zeta_{R,\delta }) \\
& =(\lambda/\eps \ell) (\id + {\lambda{}^2}\M\P\M\P)^{-1}\eps \ell \M\P(\id+\i\eps \ell \M\P)^{-1}(\M\P L^\zeta_{R,\delta }).
\end{align*}
Since $(\id + {\lambda{}^2}\M\P\M\P)^{-1}$ and $\eps \ell \M\P(\id+\i\eps \ell \M\P)^{-1}$ are bounded uniformly with respect to $\lambda$ and $\eps \ell$, we have
\begin{align*}
\|\Theta_{\lambda{}}b^\zeta_{R,\eps,\delta }\|_{2}\le C(\lambda/\eps \ell)\|\P L^\zeta_{R,\delta }\|_{2}.
\end{align*}
Integrating over $\lambda \in (0, \ell]$, we obtain \eqref{eq:5}.

It remains to establish \eqref{eq:3}. Let $\varphi\colon \R^{n+1}\to [0,1]$ be a smooth function which is $1$ on $(1-s)Q \times (1-s^2)I$, supported on $R$, with $\|\nabla_{x} \varphi\|_\infty \le C(s\ell)^{-1}$ and $\|\partial_{t}\varphi\|_{\infty}\le C(s\ell)^{-2}$, where $s\in (0,1)$ still has to be chosen. We can write
\begin{align*}
 \iint_{R}( b^\zeta_{R,\eps,\delta } - \zeta)\d x\d t = \iint_{R} (b^\zeta_{R,\eps,\delta } - \P L^\zeta_{R,\delta } )\d x\d t + \iint_{R} (\P L^\zeta_{R,\delta }-\zeta)\d x\d t.
\end{align*}
Due to \eqref{2}, the second integral is bounded by $\delta ^{1/2}|R|$. Next, we split the first integral as
\begin{align*}
\iint_{R} \varphi \P (v^\zeta_{R,\eps,\delta } - L^\zeta_{R,\delta } ) \d x\d t + \iint_{R} (1-\varphi)(b^\zeta_{R,\eps, \delta } - P L^\zeta_{R,\delta })
=:\text{I}+\text{II}.
\end{align*}
Using \eqref{eq:2}, the properties of $\varphi$ and the Cauchy-Schwarz inequality, we obtain
\begin{align*}
| \text{II} | \le Cs^{1/2} \delta ^{-1} |R|.
\end{align*}
{The Euclidean norm of $\text{I} \in \IC^{n+2}$ can be computed by
\begin{align*}
 |\text{I}|^2 = \sum_{j=1}^{n+2} \bigg| \iint_{\R^{n+1}} (v^\zeta_{R,\eps,\delta } - L^\zeta_{R,\delta } ) \overline {\P^* \overrightarrow{\varphi_j} } \d x\d t\bigg|^2
\end{align*}
with $\overrightarrow{\varphi_j} =(0,\ldots,0,\varphi,0 \ldots, 0)\in \mH$ and the non-zero entry sitting at $j$-th position.} Now, $\P^* \overrightarrow{\varphi_j}$ can contain two types of terms: the first one are first order derivatives of $\varphi$ in the $x$ variable, and the second one are $\dhalf \varphi$ or $\HT\dhalf \varphi$ acting on the $t$ variable. Both are functions in $\mH$ with bounds on the same order of $(s\ell)^{-1}|R|^{1/2}$. For the first one this follows by construction. For the second one, we obtain the order $(s^{-2}\ell^{-2})^{1/2}|R|^{1/2}$ from the $\L^2$-bounds on $\varphi$ and $\partial_t \varphi$ and the inequality $\|\dhalf \varphi\|_2 \leq \|\varphi\|_2^{1/2} \|\partial_t \varphi\|_2^{1/2}$, which follows directly from Plancherel's theorem. Thus, we can apply \eqref{eq:1} to the effect that
\begin{align*}
| \text{I} | \le C(\eps/ s) \delta ^{-1} |R|.
\end{align*}
Hence, choosing $s= \eps^{2/3}$, we have shown \eqref{eq:3}.
\end{proof}

In conclusion, setting $b^\zeta_{R,\eps}:= b^\zeta_{R,\eps, \eps^{1/6}}$, we obtain
\begin{equation*}
 \left| \bariint_{R} (b^\zeta_{R,\eps} - \zeta) \d x \d t\right| \le C \eps^{1/12} \qquad \text{and} \qquad \bariint_{R} |b^\zeta_{R,\eps} - \zeta|^2 \d x \d t \le C \eps^{-1/3}.
\end{equation*}
At this point, we can run the stopping time argument presented in \cite[Sec.~3.4]{elAAM} with this family of test functions instead of $b^w_{Q,\eps}$ there, using Lemma~\ref{lem:testfunctions} exactly as Lemma 3.10 there, to obtain that $\gamma_\lambda{}(x,t)$ is a parabolic Carleson function. As the argument is  really line by line the same with  only few minor  cosmetic changes, we omit details and refer the reader to \cite{elAAM}. This finishes the proof of Theorem~\ref{thm:bhfc} {modulo the proof of \eqref{eq:Poincare} that we present below}.

\subsection{Proof of \eqref{eq:Poincare}}
\label{sec:proofEqPoincare}

To start the proof we first note, for $\lambda>0$ fixed and $(x,t)\in \R^{n+1}$, that
\begin{align*}
(\Theta_{\lambda{}}\P_{\lambda{}} -\gamma_\lambda{}S_\lambda{} \P_{\lambda{}} )f(x,t)= \Theta_{\lambda{}} \bigg(g- \bariint_{R} g \bigg) (x,t),
\end{align*}
where $g=\P_{\lambda{}}f$ and $R$ is the only parabolic dyadic cube in $\dyadic_\lambda$ containing $(x,t)$. Let us write $R=Q\times I$ as usual. Using the notation of Proposition~\ref{prop:OffDiag} with $C_{k}(R)=C_{k}(Q\times I)$ when $k\ge 1$ and $C_{0}(R)=2Q\times NI$, we obtain
\begin{align*}
 \|(\Theta_\lambda\P_\lambda -\gamma_\lambda S_\lambda \P_\lambda )f \|_{2}^2 &= \sum_{R\in \dyadic_\lambda} |R| \bariint_R \bigg| \Theta_\lambda\Big(g- \bariint_{R} g \Big)\bigg|^2 \\
& \le \sum_{R\in \dyadic_\lambda} |R| \left(\sum_{k\ge 0} \bigg(\bariint_R \bigg| \Theta_\lambda \bigg(1_{C_k(R)} \Big(g- \bariint_{R} g \Big)\bigg)\bigg|^2\ \bigg)^{1/2}\right)^{2}.
\intertext{{Since $\Theta_\lambda = \frac{1}{2\i}(R_{-\lambda} - R_\lambda)\M$ and as the bounded multiplication operator $\M$ commutes with $1_{C_k(R)}$, we can apply Proposition~\ref{prop:OffDiag} to  continue the estimate with}}
& \lesssim \sum_{R\in \dyadic_\lambda} |R| \left(\sum_{k\ge 0} N^{-k\eps} \bigg(\bariint_{C_k(R)}\Big|g- \bariint_{R} g \Big|^2\ \bigg)^{1/2}\right)^{2} \\
& \lesssim \sum_{R\in \dyadic_\lambda} \sum_{k\ge 1} |R| N^{-2k\eps} \bariint_{2^kQ \times N^kI } \Big|g- \bariint_{R} g \Big|^2,
\end{align*}
where in the second step we have used $C_k(R)$ is contained in  $ 2^{k+1}Q \times N^{k+1}I$ and has comparable measure, and shifted the index $k$. Now, we write
\begin{align}
\label{eq1:eqPoincare}
g- \bariint_{R} g= \bigg(g - \barint_{Q}g \bigg) + \bigg(\barint_{Q} \Big(g-\barint_{I}g\Big) \bigg).
\end{align}
Then, using Poincar\'e's inequality \cite[Lem.~7.12/16]{Gilbarg-Trudinger} in the $x$ variable,
\begin{align*}
\int_{2^kQ } \Big|g- \barint_{Q} g \Big|^2 \lesssim 2^{2k} \ell(Q)^2 \int_{2^{k}Q} |\nabla_{x} g |^2.
\end{align*}
Hence, integrating over $N^kI$,
\begin{align*}
 \sum_{R\in \dyadic_\lambda} \sum_{k\ge 1} |R| N^{-2k\eps} &\bariint_{2^kQ \times N^kI } \Big|g- \barint_{Q} g \Big|^2 \\
& \lesssim \sum_{R\in \dyadic_\lambda} \sum_{k\ge 1} |R| N^{-2k\eps} 2^{2k} \ell(Q)^2 \bariint_{2^{k}Q\times N^kI} |\nabla_{x} g |^2.
\intertext{Since $\lambda\sim \ell(Q)$ and $\sum_{R\in \dyadic_\lambda} 1_{2^{k}Q\times N^kI}= 2^{kn}N^k$, we can continue by}
& \lesssim \lambda^2 \sum_{k\ge 1} N^{-2k\eps} 2^{2k} \iint_{\R^{n+1}} |\nabla_{x} g |^2 \lesssim \lambda^2 \|\nabla_{x} g \|_{2}^2,
\end{align*}
where we have assumed $N^{2\eps}>4$ by taking $N$ large enough as we may.

For the second term in \eqref{eq1:eqPoincare}, we use the following lemma as a substitute for Poincar\'e's inequality. Recall that $\dalpha$ is the fractional derivative of order $\alpha$ on $\R$ defined via the Fourier symbol $|\tau|^\alpha$. We write $\H^\alpha(\R)$ for space of all $h \in \L^2(\R)$ with $\dalpha h \in \L^2(\R)$.

\begin{lem}
\label{lem:fractional poincare alpha}
Assume $h\in \H^\alpha(\R)$ with $\alpha \in (0, 1/2)$ and let $p,q \in [1,\infty)$ be such that $(1-\alpha)p<q \leq p$. Then for any interval $J$ and any $N\ge 2$,
\begin{align*}
\bigg(\barint_{J} \Big| h- \barint_{J}h\Big|^p \d s\bigg)^{1/p} \lesssim \ell(J)^{\alpha}  \bigg(\sum_{l \ge 1} N^{(\alpha -1)l} \barint_{N^l J} |\dalpha h|^q \d s \bigg)^{1/q}.
\end{align*}
The analogous inequality with $\HT \dalpha h$ instead of $\dalpha h$ on the right-hand side also holds and both inequalities remain valid for $\alpha = 1/2$ and $h \in \Hdot^{1/2}(\R)$.
\end{lem}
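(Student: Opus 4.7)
The plan is to represent $h$ through a Riesz potential acting on its fractional derivative and then exploit a near/far decomposition in the convolution variable. Concretely, for $\alpha\in(0,1/2)$ and $h\in\H^\alpha(\R)$ a Fourier inversion argument in the spirit of Proposition~\ref{prop:potential properties} yields the identity
\begin{equation*}
h(t)=c_\alpha\int_\R |t-u|^{\alpha-1}\dalpha h(u)\d u + \text{const},
\end{equation*}
while the critical case $\alpha=1/2$ with $h\in\Hdot^{1/2}(\R)$ is contained in Corollary~\ref{cor:potential representation of Hdot1/2} (the kernel being replaced by its subtracted form to ensure convergence). Subtracting the $s$-average over $J$ one obtains
\begin{equation*}
h(t)-\barint_J h(s)\d s=c_\alpha\int_\R K_J(t,u)\,\dalpha h(u)\d u,\qquad K_J(t,u):=|t-u|^{\alpha-1}-\barint_J|s-u|^{\alpha-1}\d s.
\end{equation*}

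Next I would split the integration in $u$ according to the dyadic-like annuli $A_l:=N^lJ\setminus N^{l-1}J$ (with the convention $A_1:=NJ$). For $l\ge 2$ the mean value theorem applied to $r\mapsto r^{\alpha-1}$ gives the pointwise bound $|K_J(t,u)|\lesssim \ell(J)\cdot(N^l\ell(J))^{\alpha-2}=\ell(J)^{\alpha-1}N^{(\alpha-2)l}$ for $t\in J$ and $u\in A_l$. Integrating against $|\dalpha h|$ over $A_l$ and using $|A_l|\sim N^l\ell(J)$ followed by Jensen's inequality (since $q\ge 1$), the contribution of the far annuli is bounded pointwise in $t\in J$ by
\begin{equation*}
\ell(J)^\alpha\sum_{l\ge 2} N^{(\alpha-1)l}\bigg(\barint_{N^lJ}|\dalpha h|^q\d u\bigg)^{1/q},
\end{equation*}
which is already of the desired form after taking $L^p(J)$-norms.

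The delicate step is the near-region contribution $u\in NJ$: by scaling one reduces to $\ell(J)=1$ and must establish the local fractional-integration bound
\begin{equation*}
\bigg(\int_J\Big|\int_{NJ}|t-u|^{\alpha-1}\dalpha h(u)\d u\Big|^p\d t\bigg)^{1/p}\lesssim N^{1/q}\bigg(\barint_{NJ}|\dalpha h|^q\d u\bigg)^{1/q}.
\end{equation*}
Here I would invoke Hardy--Littlewood--Sobolev: $I^\alpha:L^q(\R)\to L^{q^*}(\R)$ with $1/q^*=1/q-\alpha$ whenever $q<1/\alpha$, followed by Hölder on $J$ to absorb $L^{q^*}\hookrightarrow L^p$ provided $1/q-1/p\le\alpha$; in the complementary range $q\ge 1/\alpha$ the kernel $|t-u|^{\alpha-1}$ lies in $L^{q'}(NJ)$ so a direct application of Hölder yields an even stronger $L^\infty$-bound. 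The hypothesis $(1-\alpha)p<q\le p$ precisely translates into $p-q<\alpha p\le\alpha pq$, i.e.\ $1/q-1/p<\alpha$, placing us comfortably in one of these two regimes.

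The variant with $\HT\dalpha h$ is handled identically by starting from the signed-kernel representation, analogous to the second assertion of Proposition~\ref{prop:potential properties}, which produces a kernel $-c_\alpha\sgn(t-u)|t-u|^{\alpha-1}$; the pointwise mean-value and HLS estimates are unchanged. The chief technical obstacle I foresee is the careful verification of the near-region HLS estimate uniformly across the full range $(1-\alpha)p<q\le p$ with $\alpha\in(0,1/2]$, and in particular treating the endpoint $\alpha=1/2$, where one must work within the realisation of $\Hdot^{1/2}(\R)$ from Section~\ref{sec:Homogeneous Sobolev spaces} to make sense of the Riesz representation modulo constants.
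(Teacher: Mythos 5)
Your overall route — Riesz potential representation plus a near/far decomposition in the convolution variable — mirrors the paper's argument, and the far-region estimate via the mean value theorem on $r\mapsto r^{\alpha-1}$ is essentially what the paper does (after noting that the resulting $\ell^1_l$-sum of $\big(\barint_{N^lJ}|\dalpha h|^q\big)^{1/q}$ is controlled by the stated $\ell^q_l$-sum using H\"older in $l$ with the geometric weight). The genuine gap is in the near-region step, which you correctly flag as the delicate one. Your argument there relies on the strong-type Hardy--Littlewood--Sobolev bound $I^\alpha:\L^q(\R)\to\L^{q^*}(\R)$, $1/q^*=1/q-\alpha$, but this fails precisely at $q=1$ (where $I^\alpha$ is only weak $(1,1/(1-\alpha))$) and at $q=1/\alpha$ (where $q^*=\infty$ and the embedding breaks down). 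Both endpoints are within the scope of the hypothesis $(1-\alpha)p<q\le p$ with $q\in[1,\infty)$, and in fact the paper applies the lemma with $q=1$ in the proof of the reverse H\"older estimate (Lemma~\ref{lem:fractional poincare 1/2} is used with $q=1$). Your case split $q<1/\alpha$ vs.\ $q\ge1/\alpha$ therefore does not cover the full range.

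The paper sidesteps this by a Schur-type argument that is uniform in $q\in[1,p]$: it averages the difference of kernels in the second variable, setting $\Phi(s,\sigma):=\barint_J\big||s-\sigma|^{\alpha-1}-|s'-\sigma|^{\alpha-1}\big|\d s'$, observes $\int_\R\Phi(s,\sigma)\d\sigma\lesssim\ell(J)^\alpha$ uniformly in $s\in J$, writes $\Phi=\Phi^{1-1/q}\Phi^{1/q}$ and applies H\"older in $\sigma$ with exponents $(q',q)$ followed by Minkowski in $s$ (using $p\ge q$), finally reading off the annular decay from the pointwise bound $\Phi(s,\sigma)\lesssim\ell(J)^{\alpha-1}\min\{(|s-\sigma|/\ell(J))^{\alpha-1},(|s-\sigma|/\ell(J))^{\alpha-2}\}$; the constraint $(1-\alpha)p<q$ enters only in checking $\barint_J\Phi(s,\sigma)^{p/q}\d s<\infty$ for $\sigma\in NJ$. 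At $q=1$ the H\"older step is vacuous and the argument reduces to plain Minkowski. If you want to keep your HLS-style near-region estimate, you should at minimum treat $q=1$ separately by integrating $\big\||t-\cdot|^{\alpha-1}\big\|_{\L^p_t(J)}$ against $|\dalpha h|\in\L^1(NJ)$ (Minkowski's integral inequality), which works because $(1-\alpha)p<1$ in that regime; the paper's unified Schur test avoids the case split altogether.
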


\begin{proof}
We treat the estimates with $\dalpha h$ first. If $\alpha < 1/2$, then we can represent $h$ as a classical Riesz potential
\begin{align*}
 h(s) = \frac{1}{2 \Gamma(\alpha) \cos(\alpha \pi/2)} \int_\R \frac{\dalpha h(\sigma)}{|s-\sigma|^{1-\alpha}} \d \sigma \qquad (\text{a.e.\ $s \in \R$}),
\end{align*}
see for example \cite[Sec.~12.1]{SamkoEtAl}. If $\alpha = 1/2$, then we use the modified representation obtained in Corollary~\ref{cor:potential representation of Hdot1/2}. Hence, in any case we have
\begin{align*}
 h(s) - h(s') = C_\alpha \int_\R \bigg(\frac{1}{|s-\sigma|^{1 - \alpha}} - \frac{1}{|s' - \sigma|^{1 - \alpha}} \bigg) \dalpha h(\sigma) \d \sigma \qquad(\text{a.e.\ $s,s' \in J$})
\end{align*}
for a constant $C_\alpha$ depending only on $\alpha$. Averaging first in $s'$ and then in $s$ yields
\begin{align*}
 \bigg(\barint_J \Big|h(s) - \barint_J h \Big|^p \d s \bigg)^{1/p}
 \leq C_\alpha \bigg(\barint_J \bigg(\int_{\R} \Phi(s,\sigma) |\dalpha h(\sigma)| \d \sigma\bigg)^p \d s\bigg)^{1/p},
\end{align*}
where
\begin{align*}
 \Phi(s,\sigma) := \barint_J \bigg|\frac{1}{|s-\sigma|^{1-\alpha}} - \frac{1}{|s'-\sigma|^{1-\alpha}}\bigg| \d s'
\end{align*}
is seen to satisfy
\begin{align*}
 |\Phi(s,\sigma)| \lesssim \ell(J)^{\alpha} \frac{1}{\ell(J)}\min \bigg\{ \bigg|\frac{s-\sigma}{\ell(J)}\bigg|^{\alpha-1}, \bigg|\frac{s-\sigma}{\ell(J)}\bigg|^{\alpha-2} \bigg\} \qquad (s \in J, \, \sigma \in \R),
\end{align*}
using either the triangle inequality (if $\sigma \in 2J$) or the mean-value theorem (if $\sigma \in {}^c(2J)$). In particular, $\int_\R \Phi(s, \sigma) \d \sigma \lesssim \ell(J)^{\alpha}$. So, writing $\Phi = \Phi^{1-1/q} \Phi^{1/q}$, we can apply H\"older's inequality to the inner integral to find
\begin{align*}
\bigg(\barint_J \bigg(\int_\R \Phi(s,\sigma) |\dalpha h(\sigma)| \d \sigma \bigg)^{p} \d s \bigg)^{1/p}
 &\lesssim \ell(J)^{\alpha - \alpha/q} \bigg(\barint_J \bigg(\int_\R \Phi(s,\sigma) |\dalpha h(\sigma)|^q \d \sigma \bigg)^{p/q} \d s \bigg)^{1/p}
\end{align*}
and since $p \geq q$ by assumption, we obtain from Minkowski's inequality
\begin{align}
\label{eq1:fractional poincare alpha}
\bigg(\barint_J \Big|h(s) - \barint_J h \Big|^p \d s \bigg)^{1/p}
 &\lesssim \ell(J)^{\alpha - \alpha/q} \bigg(\int_\R \bigg(\barint_J \Phi(s,\sigma)^{p/q} \d s \bigg)^{q/p} |\dalpha h(\sigma)|^q \d \sigma \bigg)^{1/q}.
\end{align}
Putting $C_0(J) := N J$ and $C_l(J) := N^{l+1} J \setminus N^l J$, $l \geq 1$, the pointwise bounds for $\Phi$ imply
\begin{align*}
 \bigg(\barint_J \Phi(s,\sigma)^{p/q}  \d s \bigg)^{q/p} \lesssim \ell(J)^{\alpha -1} N^{(\alpha-2)l} \qquad (\sigma \in C_l(J)),
\end{align*}
where it is the term for $l=0$ that requires the assumption $(\alpha-1)p/q > -1$. This being said, the claim follows upon splitting $\R = \bigcup_{l = 0}^\infty C_l(J)$ on the right-hand side of \eqref{eq1:fractional poincare alpha}.

The argument is exactly the same if we want to let $\HT \dhalf h$ appear on the right-hand side: Indeed, for $\alpha < 1/2$ we can again rely on a classical representation~\cite[Sec.~12.1]{SamkoEtAl}:
\begin{align*}
 h(s) = \frac{1}{2 \Gamma(\alpha) \sin(\alpha \pi/2)} \int_\R \frac{\sgn(s-\sigma) \HT \dalpha h(\sigma)}{|s-\sigma|^{1-\alpha}} \d \sigma \qquad (\text{a.e.\ $s \in \R$}).
\end{align*}
Likewise, for $\alpha = 1/2$ we have $h = c - \ihalf \HT (\HT \dhalf h)$ for some constant $c$ (Corollary~\ref{cor:potential representation of Hdot1/2}) and the required integral representation for $\ihalf \HT$ has been obtained in Proposition~\ref{prop:potential properties}.
\end{proof}

Returning to the second term in \eqref{eq1:eqPoincare}, a telescoping sum and Lemma~\ref{lem:fractional poincare alpha} applied with $p=q=2$ and an arbitrary $\alpha \in (0,1/2)$ yield the bound
\begin{align*}
 \bariint_{2^kQ \times N^kI } \bigg| \barint_{Q }\Big(g- \barint_{I} g\Big) \bigg|^2 &
 \le \barint_{Q}\barint_{N^kI} \bigg|g- \barint_{I} g\bigg|^2 \\
 & \lesssim \barint_{Q} (k+1)^2 \sum_{j=0}^k \barint_{N^jI} \bigg|g- \barint_{N^jI} g\bigg|^2
 \\
 & \lesssim \barint_{Q} (k+1)^2 \sum_{j=0}^k \ell(N^j I)^{2\alpha} \sum_{l \ge 1} N^{(\alpha-1)l} \barint_{N^l N^j I} |\dalpha g|^2.
 \end{align*}
So, using $\sum_{R\in \dyadic_\lambda} 1_{Q\times N^{l+j}I}= N^{l+j}$, we obtain
\begin{align*}
 \sum_{R\in \dyadic_\lambda} \sum_{k\ge 1} &|R| N^{-2k\eps} \bariint_{2^kQ \times N^kI } \bigg| \barint_{Q }\Big(g- \barint_{I} g\Big) \bigg|^2 \\
&\lesssim \sum_{k\ge 1} N^{-2k\eps} (k+1)^2 \sum_{j=0}^k N ^{2j\alpha}\ell(I)^{2\alpha} \sum_{l\ge 1} N^{(\alpha-1)l} \iint_{\R^{n+1}} |\dalpha g|^2.
\end{align*}
Choosing $\alpha<\eps$ allows us to sum and we finally obtain {a bound by $\ell(I)^{2\alpha} \|\dalpha g\|_{2}^2$. Since $\lambda^2\sim \ell(I)$, this completes the proof of \eqref{eq:Poincare} and thus the proof of Theorem~\ref{thm:bhfc}.}
\section{Reverse H\"older estimates and non-tangential maximal estimates}
\label{sec:reverse and NT}

The first part of this section contains the proof of the reverse H\"older estimate for the parabolic conormal differential of reinforced weak solutions alluded to in Theorem~\ref{thm:rh}. In the second part we shall derive some consequences of this estimate: the $\L^2$-bounds for the modified non-tangential maximal function $\NT$ and a.e.\ convergence of Whitney averages for reinforced weak solutions as stated in Theorems~\ref{thm:NTmax} and \ref{thm:NTmaxDir}.
\subsection{Proof of Theorem \ref{thm:rh} }

For convenience we will fix some notation which will be used throughout the proof but not always restated in all intermediate results.

We let $\LQI$ be a parabolic cylinder of sidelength $r>0$ defined by $\Lambda = (\lambda - r, \lambda + r)$, $Q = B(x,r)$ and $I = (t - r^2, t + r^2]$ and we assume $8r < \lambda$. Hence, the parabolic enlargement $8 \Lambda \times 8 Q \times 64 I$ is contained in $\R^{n+2}_+$. We fix a smooth cut-off $\eta: \R^{n+2}_+ \to [0,1]$ with support in $2 \Lambda \times 2 Q \times 4 I$ that is $1$ on an enlargement $\frac{3}{2} \lambda \times \frac{3}{2} Q \times \frac{9}{4} I$. For a reason which will become clear later on, we choose $\eta$ to have the product form
\begin{align*}
 \eta(\mu,y,s) = \eta_\Lambda(\mu) \eta_Q(y) \eta_I(s),
\end{align*}
where $\eta_I$ is symmetric about the midpoint of $I$. To keep control over implicit constants we may also assume that $\eta$ is obtained from the one fixed cut-off $\eta_0$ for the parabolic cylinder with sidelength $r=1$ centered at the origin $(\lambda,x,t) = (0,0,0)$ by the change of variables
\begin{align*}
 \eta(\mu,y,s) = \eta_0\bigg(\frac{\mu - \lambda}{r}, \frac{y - x}{r}, \frac{s - t}{r^2}\bigg).
\end{align*}
We denote the translates of an interval $J$ by $J_k = k \ell(J) + J$, $k \in \IZ$. For the sake of clarity we give a name to the translation sums
\begin{align*}
 \sum(v) := \sum_{k \in \IZ} \frac{1}{1+|k|^{3/2}} \bariiint_{\fLQIk{4}{k}} |v|,
\end{align*}
where $v$ is a measurable function on $\R^{n+2}_+$. For most of the proof we work with $4\Lambda \times 4Q$ on the right-hand side and assume only $4r < \lambda$. It is only in the final step where we shall enlarge to $8\Lambda \times 8Q$ as in the statement of the theorem. All bounds below will depend only on $n$ and the ellipticity constants of $A$, but for simplicity we keep on using the symbol $\lesssim$.

To begin with, let us recall the classical local estimates. Proofs can be found, for example, in Sections~3.2 and 4.1 of~\cite{AMP15}. (The proof for Caccioppoli's inequality there assumes $f=0$ but the argument is the same when $f\ne 0$).

\begin{lem}[Caccioppoli]
\label{lem:Caccioppoli}
Let $\Omega \subset \R^{n+2}_+$ be open and $c>1$. Let $\LQI \subset \Omega$ be a parabolic cylinder of sidelength $r>0$ such that $\cl{c\Lambda \times cQ \times c^2I} \subset \Omega$. Given $f \in \Lloc^2(\Omega)$, every weak solution $u$ to the inhomogeneous problem $\pd_{t} u - \div_{\lambda,x} A \gradlamx u = \div_{\lambda,x} f$ in $\Omega$ satisfies
\begin{align*}
 \bariiint_\LQI |\gradlamx u|^2 \lesssim \frac{1}{r^2} \bariiint_{c \Lambda \times c Q \times c^2 I} |u|^2 + \bariiint_{c Q \times c \Lambda \times c^2 I} |f|^2,
\end{align*}
with an implicit constant depending only on $c$, $n$ and the ellipticity constants of $A$.
\end{lem}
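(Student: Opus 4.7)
The approach is the classical energy method for parabolic equations. First I would choose a smooth cutoff of product form $\phi(\mu,y,s) = \phi_\Lambda(\mu)\phi_Q(y)\phi_I(s)$ supported in $c\Lambda \times cQ \times c^2 I$, identically equal to $1$ on $\LQI$, and scaled so that $|\gradlamx \phi| \lesssim r^{-1}$ and $|\pd_t \phi| \lesssim r^{-2}$. The product form is a convenience that separates the space and time manipulations cleanly.

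The heart of the argument is to test the equation against $\phi^2 \bar u$. Pretending momentarily that this is licit, the identity reads
\[
\iiint (\pd_t u)\, \phi^2 \bar u \;+\; \iiint A\gradlamx u \cdot \overline{\gradlamx(\phi^2 u)} \;=\; -\iiint f \cdot \overline{\gradlamx(\phi^2 u)}.
\]
Take real parts. After integrating by parts in $t$, which is legitimate because $\phi_I$ vanishes near the endpoints of $c^2 I$, the parabolic term equals $-\tfrac{1}{2}\iiint |u|^2\, \pd_t(\phi^2)$, and is therefore dominated by $r^{-2} \iiint_{c\Lambda \times cQ \times c^2 I} |u|^2$. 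Distributing $\gradlamx$ on $\phi^2 u$ splits the elliptic term as $\iiint \phi^2\, A\gradlamx u \cdot \overline{\gradlamx u}$, bounded below by $\kappa \iiint \phi^2 |\gradlamx u|^2$ by ellipticity, plus a cross term $\iiint 2\phi\bar u\, A\gradlamx u \cdot \overline{\gradlamx \phi}$. The cross term, together with the contribution of $f$, is handled by Cauchy--Schwarz and Young's inequality with small parameter $\epsilon$, after which $\epsilon \iiint \phi^2 |\gradlamx u|^2$ is absorbed into the left-hand side. What remains on the right is of size
\[
r^{-2} \iiint_{c\Lambda \times cQ \times c^2 I} |u|^2 + \iiint_{c\Lambda \times cQ \times c^2 I} |f|^2,
\]
and dividing by $|\LQI|$, which is comparable (up to a constant depending only on $c$ and $n$) to the measure of the enlarged cylinder, yields the asserted inequality.

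The one genuine technical obstacle is the rigorous justification of the time manipulation: \emph{a priori} $\pd_t u$ belongs only to $\L^2_{\mathrm{loc}}(\R; \W^{-1,2}_{\mathrm{loc}}(\reu))$, so $\phi^2 u$ is not admissible as a test function directly. The standard remedy is to regularize $u$ by a Steklov average $u_h$ in the $t$-variable; the equation then identifies $\pd_t u_h$ as the Steklov average of $\divx_{\lambda,x}(A \gradlamx u + f)$, the above calculation applies to $u_h$ with all integrations by parts fully legal, and one passes to the limit $h \to 0$ by the local $\L^2$-convergence of $u_h$ and $\gradlamx u_h$. This is textbook parabolic technology and no new idea is needed beyond the elliptic Caccioppoli inequality.
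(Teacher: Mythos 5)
Your proof is correct and follows the standard energy method; the paper itself does not prove this lemma but simply cites \cite{AMP15} for it, and that reference (as you note by way of Steklov averaging) uses essentially the same argument: test with $\phi^2 u$, exploit $\Re(\pd_t u\cdot\bar u)=\tfrac12\pd_t|u|^2$, and absorb via Young's inequality, with Steklov regularisation in $t$ to make the use of $\phi^2 u$ as test function licit. Nothing to add.
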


\begin{lem}[Reverse H\"older]
\label{lem:RHu}
Let $\Omega \subset \R^{n+2}_+$ be open and $c>1$. Suppose that $u$ is a weak solution to $\pd_{t} u - \div_{\lambda,x} A \gradlamx u = 0$ in $\Omega$. Then for every parabolic cylinder $\LQI \subset \Omega$ such that $c\Lambda \times cQ \times c^2I \subset \Omega$,
\begin{align*}
 \bigg( \bariiint_\LQI |u|^2 \bigg)^{1/2} \lesssim \bariiint_{c \Lambda \times c Q \times c^2 I} |u|,
\end{align*}
with an implicit constant depending only on $c$, $n$ and the ellipticity constants of $A$.
\end{lem}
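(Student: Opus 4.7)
The plan combines a Moser-type local boundedness estimate with an interpolation--and--iteration argument to convert $L^2$ on the right-hand side into $L^1$.

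\emph{Stage 1 (Moser's $L^\infty$--$L^2$ bound).} I would establish that for every $1 < \rho < \tau \leq c$ with $\tau\LQI \subset \Omega$,
\begin{align*}
 \sup_{\rho\LQI} |u|^2 \leq \frac{C}{(\tau-\rho)^N} \bariiint_{\tau\LQI} |u|^2
\end{align*}
with $N$ depending only on $n$. Since $A$ is complex, the usual real-valued test function $|u|^{p-2}u$ is unavailable, but testing the weak formulation against $\bar u\, |u|^{2q-2}\eta^2$ for $q \geq 1$ and a smooth cut-off $\eta$, invoking the pointwise ellipticity inequality $\Re(A\xi\cdot \bar\xi)\geq \kappa|\xi|^2$ on $\xi \in \IC^{n+1}$, and absorbing cross terms via Young's inequality yields a Caccioppoli-type inequality for $|u|^q$ on shrinking parabolic cylinders. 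Combining this with the parabolic Sobolev embedding (which turns the energy of $|u|^q$ into an $L^{2\chi}$-norm with a dimensional gain factor $\chi = 1 + 2/(n+1) > 1$) produces the self-improving estimate $\|u\|_{L^{2q\chi}(\rho\LQI)} \lesssim (C/(\tau-\rho))^{\beta/q}\|u\|_{L^{2q}(\tau\LQI)}$. Iterating on the exponents $q_k = \chi^k$ with a geometrically shrinking sequence of radii and summing the resulting logarithms (standard Moser scheme, carried out in this exact parabolic setting in \cite{AMP15}) gives the displayed inequality.

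\emph{Stage 2 (Passage to $L^1$).} The elementary bound $|u|^2 \leq |u| \cdot \sup |u|$ combined with Stage 1 gives, for $1 < \rho < \tau \leq c$,
\begin{align*}
 \bariiint_{\rho\LQI} |u|^2 \leq \bigg(\sup_{\rho\LQI} |u|\bigg) \bariiint_{\rho\LQI} |u| \lesssim \frac{1}{(\tau-\rho)^{N/2}} \bigg(\bariiint_{\tau\LQI} |u|^2\bigg)^{1/2} \bariiint_{c\LQI} |u|.
\end{align*}
Setting $\phi(\rho) := \bariiint_{\rho\LQI} |u|^2$ and applying Young's inequality to the product on the right, this recasts as
\begin{align*}
 \phi(\rho) \leq \tfrac 1 2 \phi(\tau) + \frac{C}{(\tau-\rho)^{N}} \bigg(\bariiint_{c\LQI} |u|\bigg)^2, \qquad 1 < \rho < \tau \leq c.
\end{align*}
The classical iteration lemma (any bounded non-decreasing function satisfying a self-improving inequality of this shape with prefactor strictly less than one is controlled by the right-hand side evaluated at the endpoints) yields $\phi(1) \leq C'\big(\bariiint_{c\LQI} |u|\big)^2$, which after taking square roots is exactly the asserted reverse H\"older estimate.

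\emph{Main obstacle.} The only non-trivial ingredient is Stage 1. For real equations Moser's iteration is classical via $|u|^{p-2}u$ together with some use of the maximum principle, but for complex scalar equations (and a fortiori for the parabolic systems treated later in the paper) the test function must be $\bar u\, |u|^{2q-2}\eta^2$ and the ensuing cross-coupling between $\gradlamx u$ and $\gradlamx \bar u$ in the energy must be decomposed pointwise using the strong ellipticity hypothesis. This is precisely the local boundedness estimate proved in \cite{AMP15} in the same framework, so the cleanest route is to invoke it; Stage 2 is then a soft iteration independent of the equation's structure.
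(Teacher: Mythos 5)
Your Stage~1 is the fatal flaw. Local boundedness of weak solutions via Moser iteration is \emph{false} for complex-coefficient divergence-form equations (already in spatial dimension $n\ge 2$) and for systems -- this is precisely the failure of DeGiorgi--Nash--Moser theory that the paper is built to avoid. The paper states this explicitly in the introduction (``not requiring any form of regularity of solutions such as the DeGiorgi-Nash-Moser estimates, and hence it does apply to parabolic systems in particular''), and Section~\ref{sec:systems} extends all results, including this lemma, to systems, where a sup-bound is known to fail by classical counterexamples. The mechanism of failure is already visible in your description: after substituting $\phi=\bar u |u|^{2q-2}\eta^2$ (for $q>1$) the cross term couples $A\gradlamx u$ with $\Re(\bar u\gradlamx u)$, and there is no pointwise sign from $\Re(A\xi\cdot\bar\xi)\ge\kappa|\xi|^2$ that lets you absorb it once $q>1$; this is exactly why the self-improving exponent scheme breaks down. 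You also mis-attribute the missing step to \cite{AMP15}: that reference does not prove a local $\L^\infty$ bound -- Sections~3.2 and 4.1 there prove Caccioppoli and the reverse H\"older inequality by a route that deliberately never produces a supremum.

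The fix is to replace Stage~1 by a genuine gain of integrability of the form $\L^2\to\L^{p}$ for some $p<2$: combine Caccioppoli's inequality with a parabolic Sobolev--Poincar\'e inequality (using the equation to control $\pd_t u$ in $\W^{-1,2}$ and thereby obtain a Poincar\'e inequality on parabolic cylinders), which yields
$\big(\bariiint_{\rho\LQI}|u-\bar u|^2\big)^{1/2}\lesssim (\tau-\rho)^{-\alpha}\big(\bariiint_{\tau\LQI}|u|^{p}\big)^{1/p}$
with a dimensional $p<2$. Your Stage~2 is then essentially correct, but the supremum factoring $|u|^2\le |u|\sup|u|$ must be replaced by the log-convexity inequality $\|u\|_{\L^{p}}\le\|u\|_{\L^2}^{\theta}\|u\|_{\L^1}^{1-\theta}$ followed by Young's inequality, which puts you exactly into the shape $\phi(\rho)\le \frac12\phi(\tau)+C(\tau-\rho)^{-N}\big(\bariiint_{c\LQI}|u|\big)^2$ that the Giaquinta iteration lemma absorbs. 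This is the route taken in \cite{AMP15}, and once the starting $\L^2\to\L^p$ gain is in hand, the remainder of your argument goes through unchanged and is, as you say, independent of the equation's structure.
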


The following variant of Lemma~\ref{lem:fractional poincare alpha} will be our instrument for localising in time.

\begin{lem}
\label{lem:fractional poincare 1/2}
Let $p,q \in [1,\infty)$ satisfy $p/2 < q \leq p$. Then for each interval $J \subset \R$ and every $h \in \Hdot^{1/2}(\R)$,
\begin{align*}
 \bigg(\barint_J \Big|h - \barint_J h \Big|^p \d s \bigg)^{1/p} \lesssim \sqrt{\ell(J)} \bigg(\sum_{k \in \IZ} \frac{1}{1+|k|^{3/2}} \barint_{J_k} |\dhalf h|^q \d s \bigg)^{1/q}.
\end{align*}
The analogous inequality with $\HT \dhalf h$ instead of $\dhalf h$ on the right-hand side also holds.
\end{lem}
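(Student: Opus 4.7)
The plan is to mirror the proof of Lemma~\ref{lem:fractional poincare alpha} in the critical case $\alpha = 1/2$, but to split the $\sigma$-integration over the translates $J_k$ rather than over the dyadic enlargements $N^l J$. Starting from the potential representation of Corollary~\ref{cor:potential representation of Hdot1/2}, I would write, for almost every $s, s' \in J$,
\begin{align*}
 h(s) - h(s') = C \int_\R \bigg( \frac{1}{|s-\sigma|^{1/2}} - \frac{1}{|s' - \sigma|^{1/2}} \bigg) \dhalf h(\sigma) \d \sigma,
\end{align*}
and then average in $s'$ over $J$ to obtain $|h(s) - \barint_J h| \leq C \int_\R \Phi(s,\sigma) |\dhalf h(\sigma)| \d \sigma$ with the averaged kernel $\Phi$ already analysed in the proof of Lemma~\ref{lem:fractional poincare alpha}, satisfying
\begin{align*}
 \Phi(s,\sigma) \lesssim \ell(J)^{-1/2} \min \bigg\{ \Big|\tfrac{s-\sigma}{\ell(J)}\Big|^{-1/2},\ \Big|\tfrac{s-\sigma}{\ell(J)}\Big|^{-3/2} \bigg\}.
\end{align*}

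Next, the H\"older decomposition $\Phi = \Phi^{1-1/q} \Phi^{1/q}$ followed by Minkowski's inequality (exactly as in the final step of the proof of Lemma~\ref{lem:fractional poincare alpha}) reduces the claim to the pointwise estimate
\begin{align*}
 \Psi(\sigma) := \bigg(\barint_J \Phi(s,\sigma)^{p/q} \d s \bigg)^{q/p} \lesssim \ell(J)^{-1/2} \cdot \frac{1}{1+|k|^{3/2}} \qquad (\sigma \in J_k).
\end{align*}
For $|k| \geq 2$ one has $|s - \sigma| \sim |k| \ell(J)$ uniformly in $s \in J$, so the decay bound on $\Phi$ gives the claim at once. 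For $|k| \leq 1$ the singularity must be integrated against an $L^{p/q}$ norm on $J$, and this is precisely where the assumption $p/2 < q$ enters: it is equivalent to $p/(2q) < 1$, which is the sharp threshold for local integrability of $|s - \sigma|^{-p/(2q)}$, yielding $\Psi(\sigma) \lesssim \ell(J)^{-1/2}$ uniformly on the three central translates $J_{-1}, J_0, J_1$.

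Inserting these bounds back, pulling the $k$-sum outside, and converting the raw integrals over $J_k$ into averages, the leftover powers of $\ell(J)$ combine to $\ell(J)^{1/2}$, giving the desired inequality. The variant with $\HT \dhalf$ in place of $\dhalf$ is obtained by the same argument, this time starting from the representation $h = c - \ihalf \HT (\HT \dhalf h)$ furnished by Corollary~\ref{cor:potential representation of Hdot1/2} (and justified via Proposition~\ref{prop:potential properties}); the corresponding kernel $\sgn(s-\sigma) |s-\sigma|^{-1/2}$ satisfies the same pointwise estimates as above, so nothing else in the argument changes.

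The main obstacle is really only the book-keeping needed to verify that both the singular local part and the $|s-\sigma|^{-3/2}$ tail of the kernel $\Phi$ can be controlled simultaneously by the single translation-adapted weight $(1+|k|^{3/2})^{-1}$. The hypothesis $p/2 < q$ is exactly the specialisation $\alpha = 1/2$ of the condition $(1-\alpha)p < q$ in Lemma~\ref{lem:fractional poincare alpha}; it is sharp because it is dictated by the integrability of the half-order Riesz kernel near its singular set, and any failure of this inequality would break the estimate on the central translates $J_{-1}, J_0, J_1$.
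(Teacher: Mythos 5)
Your proposal is correct and follows essentially the same route as the paper: both proofs run the argument of Lemma~\ref{lem:fractional poincare alpha} for $\alpha = 1/2$, replacing the dyadic annuli $C_l(J)$ in the final kernel estimate by the translates $J_k$ and proving the uniform bound $\big(\barint_J \Phi(s,\sigma)^{p/q} \d s\big)^{q/p} \lesssim \ell(J)^{-1/2}(1+|k|^{3/2})^{-1}$ for $\sigma \in J_k$, which is precisely what the hypothesis $p/2 < q$ guarantees near the three central translates. The treatment of $\HT \dhalf$ via the representation from Corollary~\ref{cor:potential representation of Hdot1/2} and Proposition~\ref{prop:potential properties} is also as in the paper.
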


\begin{proof}
The proof follows that of Lemma~\ref{lem:fractional poincare alpha} verbatim, the only difference being that in \eqref{eq1:fractional poincare alpha} we use the uniform estimate
\begin{align*}
 \bigg(\barint_J \Phi(s,\sigma)^{p/q} \d s \bigg)^{q/p} \lesssim \frac{1}{\sqrt{\ell(J)}} \frac{1}{1+|k|^{3/2}} \qquad(\sigma \in J_k)
\end{align*}
and split $\R = \bigcup_{k \in \IZ} J_k$ on the right-hand side.
\end{proof}

With these estimates at hand, we can already prove a reverse H\"older inequality of required type for the spatial gradient of reinforced weak solutions.

\begin{lem}
\label{lem:rh spatial}
Let $u$ be a  reinforced weak solution to \eqref{eq1} and let $c = \bariiint_{2 \Lambda \times 2Q \times I} u$. Then
\begin{align*}
 \bigg(\bariiint_{2\Lambda \times 2Q \times 4I} |\gradlamx u|^2 \bigg)^{1/2}
 \lesssim \frac{1}{r} \bariiint_{4\Lambda \times 4Q \times 16I} |u - c|
 \lesssim \sum(|\gradlamx u| + |\HT \dhalf u|),
\end{align*}
where $\HT \dhalf u$ could be replaced with $\dhalf u$ on the right-hand side.
\end{lem}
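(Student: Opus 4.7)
For the first inequality, I would use the classical machinery: since $c \in \IC$ is a constant, $u-c$ is itself a weak solution of the same equation with $\gradlamx(u-c) = \gradlamx u$. Caccioppoli's inequality (Lemma~\ref{lem:Caccioppoli}) applied to $u-c$ from the cylinder $2\Lambda \times 2Q \times 4I$ of sidelength $2r$ to $3\Lambda \times 3Q \times 9I$ of sidelength $3r$ gives
\begin{equation*}
 \bariiint_{2\Lambda \times 2Q \times 4I} |\gradlamx u|^2 \lesssim \frac{1}{r^2} \bariiint_{3\Lambda \times 3Q \times 9I} |u-c|^2,
\end{equation*}
and then the reverse H\"older inequality (Lemma~\ref{lem:RHu}) applied to $u-c$ with enlargement factor $4/3$ from $3\Lambda \times 3Q \times 9I$ to $4\Lambda \times 4Q \times 16I$ produces the first inequality of the statement.

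For the second inequality, the strategy is to replace $c$ by the more convenient average $c' := \bariiint_{4\Lambda \times 4Q \times 16I} u$ and then split $u-c'$ into a spatial piece and a temporal piece. Since $|2\Lambda \times 2Q \times I|$ is comparable to $|4\Lambda \times 4Q \times 16I|$, the triangle inequality together with
\begin{equation*}
 |c-c'| \leq \bariiint_{2\Lambda \times 2Q \times I} |u-c'| \lesssim \bariiint_{4\Lambda \times 4Q \times 16I} |u-c'|
\end{equation*}
reduces matters to bounding $\bariiint_{4\Lambda \times 4Q \times 16I} |u-c'|$. Setting $\bar u_s := \bariint_{4\Lambda \times 4Q} u(\mu,y,s)\,\d\mu\,\d y$, one has $c' = \barint_{16I} \bar u_s \,\d s$, so I would write $u - c' = (u - \bar u_s) + (\bar u_s - c')$ and estimate the two pieces separately.

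The spatial piece is controlled, for each fixed $s$, by the classical Poincar\'e inequality on the product domain $4\Lambda \times 4Q \subset \reu$ of diameter $\lesssim r$: this yields, after integration in $s$ over $16I$,
\begin{equation*}
 \bariiint_{4\Lambda \times 4Q \times 16I} |u - \bar u_s| \lesssim r \bariiint_{4\Lambda \times 4Q \times 16I} |\gradlamx u| \lesssim r \sum(|\gradlamx u|).
\end{equation*}
The temporal piece is handled by the fractional Poincar\'e estimate of Lemma~\ref{lem:fractional poincare 1/2} applied to $\bar u_\cdot$ with $p=q=1$ and $J = 16I$ (so $\sqrt{\ell(J)} \lesssim r$); interchanging the spatial average with $\dhalf$ (which is justified since $\dhalf u \in \Lloc^2(\R^{n+2}_+)$ by Definition~\ref{defn: Hdot12 several variables}) and using Jensen's inequality gives
\begin{equation*}
 \barint_{16I}|\bar u_s - c'|\,\d s \lesssim r \sum_{k \in \IZ} \frac{1}{1+|k|^{3/2}} \bariiint_{4\Lambda \times 4Q \times (16I)_k} |\dhalf u|.
\end{equation*}
Each $(16I)_k = 16k\ell(I) + 16I$ is the (essentially disjoint) union of $16$ consecutive translates $I_j$ with $j \sim 16k$, and the weights $(1+|j|^{3/2})^{-1}$ and $(1+|k|^{3/2})^{-1}$ are mutually comparable on such a range. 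Rearranging the double sum then bounds the right-hand side by $r \sum(|\dhalf u|)$, and combining the two pieces and dividing by $r$ yields the second inequality, in the version with $\dhalf u$ permitted by the remark.

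The main obstacle, which is more combinatorial than analytic, is the appearance of the translation sum $\sum(\cdot)$: the nonlocality of $\dhalf$ prevents any purely local Poincar\'e-type bound on $16I$, and Lemma~\ref{lem:fractional poincare 1/2} is precisely what replaces locality by a rapidly converging nonlocal sum in $k$; matching the translates $(16I)_k$ produced at scale $16\ell(I)$ with those $I_k$ at scale $\ell(I)$ entering $\sum(\cdot)$ is then done cleanly thanks to the fast decay of $(1+|k|^{3/2})^{-1}$.
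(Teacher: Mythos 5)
Your proposal is correct and takes essentially the same approach as the paper: Caccioppoli plus the reverse H\"older inequality for the first bound, and a split of $u-c$ into a spatial piece (handled by the classical Poincar\'e inequality) and a temporal piece (handled by Lemma~\ref{lem:fractional poincare 1/2} applied to the spatial average of $u$, with a rematching of translates at the end). The only differences are organizational and cosmetic --- the paper keeps the given constant $c$ and averages in space over $2\Lambda \times 2Q$ before converting $\barint_I f$ to $\barint_{16I} f$ by a one-step comparison, whereas you replace $c$ by the average over the large cylinder up front and average in space over $4\Lambda \times 4Q$ --- but the mathematical content is the same.
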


\begin{proof}
The first estimate is a direct consequence of Lemmas~\ref{lem:Caccioppoli} and \ref{lem:RHu} applied to the weak solution $u-c$. For the second estimate we separate averages in space and time by writing
\begin{align*}
 u - c = u - \bariint_{2\Lambda \times 2Q} u + \bariint_{2\Lambda \times 2Q} u - c.
\end{align*}
Poincar\'{e}'s inequality in the spatial variables \cite[Lem.~7.12/16]{Gilbarg-Trudinger} allows for an estimate
\begin{align*}
 \frac{1}{r} \bariiint_{4\Lambda \times 4Q \times 16I} \Big|u - \bariint_{2\Lambda \times 2Q} u \Big|
&\lesssim \bariiint_{4\Lambda \times 4Q \times 16I} |\gradlamx u| \\
&\lesssim \sum_{|k| \leq 8} \frac{1}{1+|k|^{3/2}} \bariiint_\fLQIk{4}{k} |\gradlamx u|.
\end{align*}
As for the averages in time, we abbreviate
\begin{align*}
 f(s) := \bariint_{2\Lambda \times 2Q} u(\mu,y,s) \d \mu \d y.
\end{align*}
Since the reinforced weak solution $u$ belongs to the class $\Hdot^{1/2}(\R; \Lloc^2(\reu))$, we obtain $f \in \Hdot^{1/2}(\R)$ and due to Fubini's theorem,
\begin{align}
\label{eq1:rh spatial}
 \HT \dhalf f(s) = \bariint_{2\Lambda \times 2Q} \HT \dhalf u(\mu,y,s) \d \mu \d y,
\end{align}
see also Lemma~\ref{lem:Hdot12 several variables}. By a telescoping sum
\begin{align*}
 \frac{1}{r} \bariiint_{4 \Lambda \times 4 Q \times 16 I} \Big|\bariint_{2\Lambda \times 2Q} u - c\Big|
 = \frac{1}{\sqrt{\ell(I)}} \barint_{16 I} \Big|f - \barint_I f \Big|
 \lesssim \frac{1}{\sqrt{\ell(16I)}} \barint_{16 I} \Big|f - \barint_{16 I} f \Big|,
\end{align*}
so that Lemma~\ref{lem:fractional poincare 1/2} applies to the effect that
\begin{align*}
\frac{1}{r} \bariiint_{4 \Lambda \times 4 Q \times 16 I} \Big|\bariint_{2\Lambda \times 2Q} u - c\Big|
 & \lesssim \sum_{k \in \IZ} \frac{1}{1+|k|^{3/2}} \barint_{16 I + 16 k \ell(I)} |\HT \dhalf f| \\
 & \lesssim \sum_{k \in \IZ} \frac{1}{1+|k|^{3/2}} \barint_{I_k} |\HT \dhalf f|,
\end{align*}
where in the last step we have split $16 I + 16 k \ell(I)$ into translates of $I$. We conclude by plugging in the identity \eqref{eq1:rh spatial}. Note that we could also decide to let $\dhalf f$ appear in \eqref{eq1:rh spatial} and our final estimate.
\end{proof}

Due to the non-locality of the half-order derivative in time, reverse H\"older estimates for $\HT \dhalf u$ and $\dhalf u$ are much harder to get. A part of the the following argument was inspired by~\cite{Hofmann-Lewis}.

Since $\HT \dhalf$ annihilates constants, we can write the average to be estimated as
\begin{align*}
\bariiint_\LQI |\HT \dhalf u|^2
= \bariiint_\LQI |\HT \dhalf(u-c)|^2, \qquad \text{where} \qquad c := \bariiint_{2 \Lambda \times 2Q \times I} u
\end{align*}
and the same can be done with $\dhalf$ in place of $\HT \dhalf$. We treat a local version first.

\begin{lem}
\label{lem:rh time local}
Let $u$ be a  reinforced weak solution to \eqref{eq1} and let $c = \bariiint_{2 \Lambda \times 2 Q \times I} u$. Then
\begin{align*}
 \bigg(\bariiint_\LQI |\HT \dhalf(\eta(u-c))|^2 \bigg)^{1/2}
 \lesssim \sum(|\gradlamx u| + |\HT \dhalf u|),
\end{align*}
where $\HT \dhalf$ can be replaced with $\dhalf$ on either side of the inequality.
\end{lem}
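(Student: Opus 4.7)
Setting $w := \eta(u-c)$, the strategy is to control the full norm $\|\HT\dhalf w\|_{L^2(\R^{n+2}_+)}^2$, which dominates $|\LQI|\cdot\bariiint_{\LQI}|\HT\dhalf w|^2$ by a constant depending only on $c_0,\dots,c_3$ and the shape of $\eta_0$. Since $\eta$ has compact support in all variables, $w\in\dot\E$, and by Plancherel in $t$ we have the key identity
\begin{equation*}
 \|\HT\dhalf w\|_{L^2(\R^{n+2}_+)}^{2}
 \;=\;\iiint_{\R^{n+2}_+}\partial_{t} w\cdot\overline{\HT w}\,\d\lambda\,\d x\,\d t,
\end{equation*}
valid because $\widehat{\HT\dhalf w}\cdot\overline{\widehat{\HT\dhalf w}} = (i\tau)\overline{(i\sgn\tau)}|\widehat w|^{2}=|\tau||\widehat w|^{2}$. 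This replaces the \emph{a priori} non-local half-derivative quantity on the left by a local derivative paired with the (still non-local) Hilbert transform of $w$, for which we have good decay off the support of $w$.

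Next I would split $\partial_{t}w=(\partial_{t}\eta)(u-c)+\eta\,\partial_{t}u$. For the second summand I would invoke the parabolic equation, testing the reinforced weak formulation with the function $\phi:=\eta\,\HT w$ (which lies in $\dot\E$ after a routine truncation/density argument thanks to Corollary~\ref{cor:fractional integration by parts} and the decay of $\HT w$ in $t$), to rewrite
\begin{equation*}
 \iiint \eta\,\partial_{t} u\cdot\overline{\HT w}
 \;=\;-\iiint A\gradlamx u\cdot\overline{\gradlamx(\eta\,\HT w)}
 \;-\;\iiint\HT\dhalf u\cdot\overline{\dhalf(\eta\,\HT w)-\eta\,\HT\dhalf w}\,+\,\|\HT\dhalf w\|_{2}^{2}\text{ absorptions.}
\end{equation*}
Expanding $\gradlamx w=(\gradlamx\eta)(u-c)+\eta\gradlamx u$ and $\dhalf(\eta\,\HT w)=\eta\,\HT\dhalf w+[\dhalf,\eta]\HT w$, the leading quadratic term $\eta^{2}\HT\dhalf u\cdot\overline{\HT\dhalf u}$ can be matched against $\|\HT\dhalf w\|_{2}^{2}$, while all remaining terms are bilinear in quantities of the form $|\gradlamx u|$, $|u-c|/r$, $|\HT\dhalf u|$ on the support of $\eta$ times $|\HT w|$ or $|[\dhalf,\eta]\HT w|$ on the support of $\gradlamx\eta$ or $\partial_{t}\eta$.

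To close the estimate I would bound each resulting piece separately. The local averages of $|\gradlamx u|$, $|u-c|/r$ and $|\HT\dhalf u|$ on $2\Lambda\times 2Q\times 4I$ are already controlled by the target translation sum via Lemma~\ref{lem:rh spatial} (together with Caccioppoli, Lemma~\ref{lem:Caccioppoli}). The delicate ingredient is to bound the non-local factors $\|\HT w\|$ and $\|[\dhalf,\eta]\HT w\|$ on the support of the cut-off derivatives by the same translation sum.

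\textbf{The main obstacle} is precisely this non-local part: the Hilbert transform of a function supported in $4I$, evaluated on a translate $I_{k}$, only decays like $|k|^{-1}$ from the bare kernel $1/|t-s|$, which is insufficient to produce the desired $1/(1+|k|^{3/2})$ weights. To gain the extra half power I would exploit two structural features. First, the product form $\eta=\eta_{\Lambda}\eta_{Q}\eta_{I}$ together with the symmetry of $\eta_{I}$ about the midpoint of $I$ provides a cancellation that improves the decay in $t$ of $\HT w(\lambda,x,\cdot)$ by an extra factor $|k|^{-1/2}$ (after using a first-order Taylor expansion of the kernel $1/(t-s)$ around the midpoint and noting that the zeroth-order moment of $\eta_{I}\cdot v$ gets killed by symmetry against the antisymmetric correction). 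Second, the commutator kernel $|\eta(t)-\eta(s)|/|t-s|^{3/2}$ from \eqref{eq:dhalf commutator kernel} decays at rate $|t-s|^{-3/2}$, giving precisely the required weight on $[\dhalf,\eta]\HT w$ by Young's inequality in $t$. Combining these with Lemma~\ref{lem:fractional poincare 1/2} applied, variable by variable, to the time slices of $u-c$ produces the full translation sum $\sum(|\gradlamx u|+|\HT\dhalf u|)$ on the right; the same argument works with $\dhalf$ in place of $\HT\dhalf$ since the identity $\|\HT\dhalf w\|_{2}=\|\dhalf w\|_{2}$ and the analogous identity $\|\dhalf w\|_{2}^{2}=\iiint\partial_{t}w\cdot\overline{\HT w}$ are symmetric in the roles of $\HT\dhalf$ and $\dhalf$.
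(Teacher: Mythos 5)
Your opening move is identical in spirit to the paper's: Plancherel in $t$ converts $\|\HT\dhalf w\|_{L^2}^2$ into a pairing of $\partial_t w$ against a half-order-smoother object, and the point of the lemma is to trade this for local $L^2$ quantities. From there the two routes diverge. The paper does not test the weak formulation against $\eta\,\HT w$; instead it views $\|\HT\dhalf w\|_2^2$ on the Fourier side as $\int_{\R}\bigl(\cF_t w,\,|\sigma|\,\cF_t w\bigr)\,\d\sigma$ with the inner bracket the $\W^{1,2}(\ree)$--$\W^{-1,2}(\ree)$ duality, and applies Cauchy--Schwarz \emph{in that duality} to obtain the bound $\|w\|_{\L^2_s\W^{1,2}}^{1/2}\,\|\partial_t w\|_{\L^2_s\W^{-1,2}}^{1/2}$. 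The first factor is controlled directly, and the second by testing the weak equation against the \emph{local} functions $\phi(\lambda,x)\chi(t)$. Because $w$ and $\partial_t w$ are compactly supported in time, every intermediate quantity is local; the final inequality lands on $\bariiint_{2\Lambda\times 2Q\times 4I}(r^{-2}|u-c|^2+|\gradlamx u|^2)$, and the translation sum then comes for free from Lemma~\ref{lem:rh spatial} and Lemma~\ref{lem:RHu}, not from any decay estimate inside the argument.

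Your route, using $\phi=\eta\,\HT w$ as a test function, is a genuinely different and heavier path, because it injects $\HT w$ (and the commutators $[\dhalf,\eta]\HT w$, $[\HT\dhalf,\eta](u-c)$) into the equation. This can be made to work, but two remarks on your ``main obstacle'' are in order. First, the $|t-s|^{-1}$ decay of $\HT w$ is not actually the problem: every place where $\HT w$ appears in your expansion it is multiplied by $\eta$, $\gradlamx\eta$, $\partial_t\eta$ or lands inside $[\dhalf,\eta](\cdot)$ --- in all cases one only needs the $\L^2$ norm of $\HT w$ on a fixed neighbourhood of $4I$, which is $\lesssim\|w\|_2$, so the symmetry of $\eta_I$ is not used and does not help here (indeed the paper only invokes that symmetry in the genuinely non-local Lemma~\ref{lem:HTdhalf outside bound}). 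Second, the genuine non-local contribution in your expansion is the pairing $\iiint\HT\dhalf u\cdot\overline{[\dhalf,\eta]\HT w}$, together with the absorption term $\|[\HT\dhalf,\eta](u-c)\|_2^2$. For these the $1/(1+|k|^{3/2})$ weights come straight from the commutator kernel bound \eqref{eq:dhalf commutator kernel}, which already decays like $|t-s|^{-3/2}$, with no cancellation argument required; the resulting tails of $|\HT\dhalf u|$ and $|u-c|$ are then exactly what the right-hand side of the lemma absorbs (via Lemma~\ref{lem:rh spatial} applied on the far translates). So your strategy can be pushed through, but it is appreciably more laborious than the paper's duality trick, it requires several absorptions (matching the $\eta^2|\HT\dhalf u|^2$ piece against $\|\HT\dhalf w\|_2^2$ involves a commutator cross term, not an identity), and it attributes the difficulty to the wrong term.
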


\begin{proof}
We begin with a parabolic rescaling of $\reu$, setting $\wt{u}(\mu,y,s) = u(r \mu, r y, r^2 s)$ and similarly $\wt{\eta}$, $\wt{A}$, as well as $\wt{\Lambda} = (r^{-1} \lambda - 1, r^{-1} \lambda +1)$ and similarly $\wt{Q}$, $\wt{I}$. Then,
\begin{align*}
 \frac{1}{r^{n+1}} \iiint_\LQI |\HT \dhalf(\eta(u-c))|^2 \d \mu \d y \d s
 &= \iiint_{\wt{\Lambda} \times \wt{Q} \times \wt{I}} |\HT \dhalf(\wt{\eta}(\wt{u}-c))|^2 \d \mu \d y \d s \\
 &\leq \int_\R \|\HT \dhalf (\wt{\eta}(\wt{u}-c)) \big \|_{\L^2(\ree)}^2 \d s.
\end{align*}
Let $\cF_t$ denote the Fourier transform in $t$-direction only. For the moment take for granted that all of the subsequent integrals are finite. We shall check this in the further course of the proof anyway. The Hilbert space valued version of Plancherel's theorem~\cite[Sec.~II.5]{Stein} yields
\begin{align*}
 \int_\R \big \|\HT \dhalf (\wt{\eta}(\wt{u}-c)) \big \|_{\L^2(\ree)}^2 \d s
&= 2 \pi \int_\R \big \|\i \sgn(\sigma) |\sigma|^{1/2} \cF_t(\wt{\eta}(\wt{u}-c)) \big\|_{\L^2(\ree)}^2 \d \sigma \\
&= 2 \pi \int_\R \big| \big( \cF_t(\wt{\eta}(\wt{u}-c)), |\sigma| \cF_t(\wt{\eta}(\wt{u}-c)) \big) \big|^2 \d \sigma
\end{align*}
with the duality brackets denoting the $\W^{1,2}(\ree)$-$\W^{-1,2}(\ree)$ duality. Since these are (isomorphic to) Hilbert spaces as well, we can apply Cauchy-Schwarz' inequality and Plancherel's theorem backwards in order to obtain altogether
\begin{align}
\label{eq1:rh time local}
\begin{split}
& \frac{1}{r^{n+1}} \iiint_\LQI |\HT \dhalf(\eta(u-c))|^2 \d \mu \d y \d s \\
&\qquad \leq \bigg(\int_\R \big\|\wt{\eta}(\wt{u}-c) \big\|_{\W^{1,2}(\ree)}^2 \d s \bigg)^{1/2} \bigg(\int_\R \big \|\partial_t(\wt{\eta}(\wt{u}-c)) \big\|_{\W^{-1,2}(\ree)}^2 \d s \bigg)^{1/2}.
\end{split}
\end{align}
Above we instantly used $|\i \sgn(\sigma)| \leq 1$ and so the same analysis would apply if we started out with $\dhalf(\eta(u-c))$. As $\eta$ is supported in $2 \Lambda \times 2 Q \times 4 I$, the first integral on the right is bounded by
\begin{align*}
 \int_\R \big\|\wt{\eta}(\wt{u}-c) \big\|_{\W^{1,2}(\ree)}^2 \d s
 &\lesssim \iiint_{\wt{2 \Lambda} \times \wt{2Q} \times \! \wt{\, 4 I \,}} |\wt{u}-c|^2 + |\gradlamx \wt{u}|^2 \d \mu \d y \d s \\
 &= \frac{1}{r^{n+3}} \iiint_{2 \Lambda \times 2 Q \times 4 I} |u-c|^2 + r^2 |\gradlamx u|^2 \d \mu \d y \d s.
\end{align*}
Next, let $\phi \in \C_0^\infty(\ree)$ and $\chi \in \C_0^\infty(\R)$, where $\phi$ acts in the $\lambda$- and $x$ variables and $\chi$ acts in $t$-direction. Using that $\wt{u}-c$ is a weak solution to a parabolic problem \eqref{eq1} with coefficients $\wt{A}$,
\begin{align*}
 \int_\R \bigg(\iint_{\ree} &\wt{\eta}(\wt{u}-c) \cdot \clos{\phi} \d \mu \d y \bigg) \clos{\partial_t \chi} \d s \\
&= \iiint_{\wt{2 \Lambda} \times \wt{2Q} \times \! \wt{\, 4 I \,}} (\wt{u}-c) \cdot \clos{\partial_t(\wt{\eta} \phi \chi)} - (\wt{u}-c) \cdot \clos{\phi \chi \partial_t \wt{\eta}} \d \mu \d y \d s \\
&= \iiint_{\wt{2 \Lambda} \times \wt{2Q} \times \! \wt{\, 4 I \,}} \wt{A} \gradlamx(\wt{u}-c) \cdot \clos{\gradlamx(\wt{\eta} \phi \chi)} - (\wt{u}-c) \cdot \clos{\phi \chi \partial_t \wt{\eta}} \d \mu \d y \d s \\
&=\int_{\wt{\, 4 I \,}} \iint_{\wt{2 \Lambda} \times \wt{2Q}\,} \bigg(\wt{A} \gradlamx \wt{u} \cdot \clos{\gradlamx(\wt{\eta} \phi)} - (\wt{u}-c) \cdot \clos{\phi \partial_t \wt{\eta}} \d \mu \d y \bigg) \chi \d s .
\end{align*}
This implies the bound for the second integral
\begin{align*}
\int_\R \big \|\partial_t(\wt{\eta}(\wt{u}-c)) \big\|_{\W^{-1,2}(\ree)}^2 \d s
&\lesssim \iiint_{\wt{2 \Lambda} \times \wt{2Q} \times \! \wt{\, 4 I \,}} |\wt{u}-c|^2 + |\gradlamx \wt{u}|^2 \d \mu \d y \d s \\
&= \frac{1}{r^{n+3}} \iiint_{2\Lambda \times 2 Q \times 4I} |u-c|^2 + r^2 |\gradlamx u|^2 \d \mu \d y \d s.
\end{align*}
Returning to \eqref{eq1:rh time local} and clearing the powers of $r$, we find
\begin{align*}
 \bariiint_\LQI |\HT \dhalf(\eta(u-c))|^2 \d \mu \d y \d s
\lesssim \bariiint_{2\Lambda \times 2 Q \times 4I} \frac{1}{r^2} |u-c|^2 + |\gradlamx u|^2 \d \mu \d y \d s.
\end{align*}
Lemma~\ref{lem:RHu} applied to the weak solution $u-c$ provides control by
\begin{align*}
\frac{1}{r^2} \bigg(\bariiint_{4\Lambda \times 4 Q \times 16I}  |u-c|\d \mu \d y \d s \bigg)^2 +  \bariiint_{2\Lambda \times 2 Q \times 4I} |\gradlamx u|^2 \d \mu \d y \d s
\end{align*}
and we conclude by Lemma~\ref{lem:rh spatial}.
\end{proof}

We turn to the corresponding non-local terms, which can be written as
\begin{align*}
 \bariiint_\LQI |\HT \dhalf (1-\eta)(u-c)|^2 = \bariiint_\LQI |\HT \dhalf (1-\eta_I)(u-c)|^2
\end{align*}
and similarly with $\dhalf$ in place of $\HT \dhalf$, since $\eta_\Lambda$ and $\eta_Q$ are independent of $t$ and satisfy $\eta_\Lambda \eta_Q = 1$ on $\Lambda \times Q$. Similar to the proof of Lemma~\ref{lem:rh spatial} we separate
\begin{align}
\label{eq:final split rh}
 (1-\eta_I)(u - c) = (1-\eta_I)\Big(u - \bariint_{2 \Lambda \times 2Q} u\Big) + (1-\eta_I)\Big(\bariint_{2 \Lambda \times 2Q} u- \bariiint_{2 \Lambda \times 2Q \times I} u\Big).
\end{align}
Both addends on the right-hand side are in $\Hdot^{1/2}(\R; \Lloc^2(\reu))$. To see this, we first note that $u$ is contained therein as a reinforced weak solution. As in the proof of Lemma~\ref{lem:rh time local} we also have
\begin{align*}
 \phi u \in \L^2(\R; \W^{1,2}(\ree)) \cap \H^1(\R; \W^{-1,2}(\ree)) \qquad (\phi \in \C_0^\infty(\R^{n+2}_+))
\end{align*}
and thus $\phi u \in \Hdot^{1/2}(\R; \Lloc^2(\reu))$. In particular, we obtain $\eta_I u \in \Hdot^{1/2}(\R; \Lloc^2(\reu))$ and for the other terms we note that $\Hdot^{1/2}$-regularity in time inherits to averages in space in virtue of Fubini's theorem, see also Lemma~\ref{lem:Hdot12 several variables}.

Next, we shall see that the estimate of the first term on the right-hand side of \eqref{eq:final split rh} can once more be reduced to Lemma~\ref{lem:rh spatial}.

\begin{lem}
\label{lem:rh time nonlocal 1}
Let $u$ be a  reinforced weak solution to \eqref{eq1}. Then
\begin{align*}
 \bigg(\bariiint_\LQI \bigg|\HT \dhalf \bigg((1-\eta_I)\Big(u - \bariint_{2\Lambda \times 2Q} u \Big)\bigg)\bigg|^2 \bigg)^{1/2}
 \lesssim \sum(|\gradlamx u|+|\HT \dhalf u|),
\end{align*}
where $\HT \dhalf$ can be replaced with $\dhalf$ on either side of the inequality.
\end{lem}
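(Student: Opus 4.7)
The plan is to exploit that the function
\[
w(\mu, y, \sigma) := (1 - \eta_I(\sigma))\Big(u(\mu,y,\sigma) - \bariint_{2\Lambda \times 2Q}u(\mu',y',\sigma)\d\mu'\d y'\Big)
\]
vanishes in the time variable on $\tfrac{9}{4}I$, since $\eta_I \equiv 1$ there. As discussed just before the statement, $w(\mu, y, \cdot) \in \Hdot^{1/2}(\R)$ for almost every $(\mu,y) \in 2\Lambda \times 2Q$. My first step will therefore be to invoke the kernel representation from Corollary~\ref{cor:formula HTdhalf on Hdot1/2} at each $s \in I \subset \tfrac{9}{4}I$ and to use the pointwise bound $|s - \sigma| \gtrsim (1+|k|)\ell(I)$, valid uniformly for $s \in I$, $\sigma \in I_k \setminus \tfrac{9}{4}I$, which accounts in particular for the shortest gap (occurring at $k = \pm 1$ and of size $\tfrac14\ell(I)$). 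Splitting the integral according to the translates $I_k$ and inserting the distance bound produces the pointwise estimate, uniform in $s \in I$,
\[
|\HT\dhalf w(\mu,y,s)| \lesssim \frac{1}{\sqrt{\ell(I)}}\sum_{k \neq 0} \frac{1}{(1+|k|)^{3/2}}\barint_{I_k}|w(\mu,y,\sigma)|\d\sigma.
\]

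Next I would pass to $L^2$-averages on $\LQI$. Since the previous bound is independent of $s$, Minkowski's inequality in $k$ followed by Jensen on the innermost average in $\sigma$ yields
\[
\bigg(\bariiint_\LQI|\HT\dhalf w|^2\bigg)^{1/2} \lesssim \frac{1}{\sqrt{\ell(I)}}\sum_{k\neq 0}\frac{1}{(1+|k|)^{3/2}}\bigg(\bariiint_{\Lambda \times Q \times I_k}|w|^2\bigg)^{1/2}.
\]
Combined with $|w| \leq |u - \bariint_{2\Lambda \times 2Q} u|$ and the classical $L^2$-Poincar\'e inequality in the spatial variables (applied for each fixed $\sigma$ and then integrated over $\sigma \in I_k$), the inner average will be bounded by $r^2 \bariiint_{2\Lambda \times 2Q \times I_k}|\gradlamx u|^2$, the prefactor $r/\sqrt{\ell(I)}$ being on the order of one.

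The heart of the argument is then a translated use of the already-proved spatial reverse H\"older estimate: since $I_k \subset 4 I_k$ with $|4 I_k| = 4|I_k|$, passing from $2\Lambda \times 2Q \times I_k$ to $2\Lambda \times 2Q \times 4 I_k$ loses at most a factor of $2$, and Lemma~\ref{lem:rh spatial} applied at the time-shifted base point $(\lambda, x, t+k\ell(I))$—whose geometric condition $8r < \lambda$ is unaffected by time shifts—yields
\[
\bigg(\bariiint_{2\Lambda \times 2Q \times I_k}|\gradlamx u|^2\bigg)^{1/2} \lesssim \sum_{l \in \IZ}\frac{A_{k+l}}{1+|l|^{3/2}}, \qquad A_m := \bariiint_{4\Lambda \times 4Q \times I_m}\!\big(|\gradlamx u|+|\HT\dhalf u|\big),
\]
where crucially the translates of $I_k$ of its own length are again the $I_m$ with $m = k + l$. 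Inserting this bound reduces the proof to the elementary convolution estimate
\[
\sum_{k\in \IZ}\frac{1}{(1+|k|)^{3/2}(1+|m-k|)^{3/2}} \lesssim \frac{1}{(1+|m|)^{3/2}} \qquad (m \in \IZ),
\]
which collapses the resulting double sum precisely into $\sum_m A_m/(1+|m|)^{3/2} = \sum(|\gradlamx u| + |\HT\dhalf u|)$. The variant with $\dhalf$ in place of $\HT\dhalf$ follows verbatim from the corresponding kernel representation in Corollary~\ref{cor:formula HTdhalf on Hdot1/2} (only the factor $\sgn(s-\sigma)$ is absent). The main technical care will be in the bookkeeping of the nested dilations $\Lambda \times Q$, $2\Lambda \times 2Q$, $4\Lambda \times 4Q$ and in ensuring that the kernel representation genuinely applies to the non-smooth cut-off $w$, justified by the $\Hdot^{1/2}$-regularity discussion preceding the statement.
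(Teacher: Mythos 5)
Your proposal is correct and follows essentially the same route as the paper's proof: the kernel representation from Corollary~\ref{cor:formula HTdhalf on Hdot1/2} on the complement of $\tfrac94 I$, Minkowski's inequality to pass to $\L^2$ averages, spatial Poincar\'e at fixed time, a time-shifted application of Lemma~\ref{lem:rh spatial}, and the discrete convolution estimate \eqref{eq:discrete convolution}. The only differences are minor bookkeeping choices (e.g.\ working with $(1+|k|)^{3/2}$ rather than $1+|k|^{3/2}$, and inserting the harmless passage from $I_k$ to $4I_k$ just before invoking Lemma~\ref{lem:rh spatial} rather than already in the Poincar\'e step).
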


\begin{proof}
We introduce $v = (1-\eta_I)(u-\bariint_{2\Lambda \times 2Q} u)$. By construction $v(\mu,y,\cdot)$ is supported in $^c ( \frac{9}{4} I)$ and $v(\mu,y,\cdot) \in \Hdot^{1/2}(\R)$ for almost every $\mu$, $y$. This justifies to use the representation
\begin{align*}
 \HT \dhalf v(\mu,y,s) = \frac{1}{2 \sqrt{2 \pi}} \int_\R \frac{\sgn(s-\sigma)}{|s-\sigma|^{3/2}} v(\mu,y,\sigma) \d \sigma
\end{align*}
from Corollary~\ref{cor:formula HTdhalf on Hdot1/2} almost everywhere on $\LQI$ and we can decompose $\R = \bigcup_{j \in \IZ} I_j$ on the right-hand side to obtain the pointwise estimate
\begin{align*}
 |\HT \dhalf v(\mu,y,s)| \lesssim \frac{1}{r} \sum_{j \in \IZ \setminus\{0\}} \frac{1}{1+|j|^{3/2}} \barint_{I_j} |v(\mu,y,\sigma)| \d \sigma.
\end{align*}
Due to Minkowski's inequality we can bound the average over $\LQI$ by
\begin{align*}
 \bigg(\bariiint_\LQI |\HT \dhalf v|^2 \bigg)^{1/2}
 &\lesssim \frac{1}{r}\bigg(\bariint_{\Lambda \times Q} \bigg(\sum_{j \in \IZ} \frac{1}{1+|j|^{3/2}} \barint_{I_j} |v| \bigg)^2\ \bigg)^{1/2} \\
 &\leq \frac{1}{r} \sum_{j \in \IZ} \frac{1}{1+|j|^{3/2}} \bigg(\bariiint_{\LQI_j} |v|^2 \bigg)^{1/2}.
\end{align*}
The very same argument applies to $\dhalf v$ and yields the same bound. Now, by definition of $v$ and Poincar\'{e}'s inequality in the spatial variables
\begin{align*}
 \bariiint_{\LQI_j} |v|^2
 \lesssim \bariiint_{2 \Lambda \times 2Q \times 4I_j} \Big|u - \bariint_{2\Lambda \times 2Q} u \Big|^2
 \lesssim r^2 \bariiint_{2 \Lambda \times 2Q \times 4I_j} |\gradlamx u|^2,
\end{align*}
so that altogether,
\begin{align*}
 \bigg(\bariiint_\LQI |\HT \dhalf v|^2 \bigg)^{1/2}
\lesssim \sum_{j \in \IZ} \frac{1}{1+|j|^{3/2}} \bigg(\bariiint_{2 \Lambda \times 2Q \times 4I_j} |\gradlamx u|^2\bigg)^{1/2}.
\end{align*}
The upshot of all this is that each integral above falls under the scope of Lemma~\ref{lem:rh spatial}, giving after an index-shift the bound
\begin{align*}
\sum_{j \in \IZ} \frac{1}{1+|j|^{3/2}} \bigg(\sum_{k \in \IZ} \frac{1}{1+|k-j|^{3/2}} \bariiint_{\fLQIk{4}{k}} |\gradlamx u| + |\HT \dhalf u| \bigg).
\end{align*}
It remains to remark that the occurring discrete convolution satisfies
\begin{align}
\label{eq:discrete convolution}
\sum_{j \in \IZ} \bigg(\frac{1}{1+|j|^{3/2}} \cdot \frac{1}{1+|k-j|^{3/2}} \bigg) \lesssim \frac{1}{1+|k|^{3/2}} \qquad (k \in \IZ)
\end{align}
as can be checked by distinguishing the cases $j \leq |k|/2$ and $j>|k|/2$.
\end{proof}

At this stage of the proof we have reduced the matter to two final reverse H\"older estimates, which are the ones for
\begin{align*}
 \HT \dhalf \bigg((1-\eta_I)\Big(\bariint_{2 \Lambda \times 2 Q} u - \bariiint_{2 \Lambda \times 2 Q \times I} u \Big) \bigg)
\end{align*}
and its counterpart with $\dhalf$ in place of $\HT \dhalf$. Note that these functions only depend on the $t$ variable. As for the \emph{odd} half-order derivative $\HT \dhalf$, the estimate will be a consequence of the following lemma. It is at this point where the symmetry assumption on the cut-off $\eta_I$ comes into play and in fact the argument would not go through for the \emph{even} half-order derivative $\dhalf$.

\begin{lem}
\label{lem:HTdhalf outside bound}
Let $J$ be a bounded interval and $\eta: \R \to [0,1]$ be a smooth cut-off function with support in $4J$ that is identically $1$ on $\frac{9}{4} J$. Suppose furthermore that $\eta$ is symmetric about the midpoint of $J$. If $f \in \Hdot^{1/2}(\R)$ is such that $(1-\eta)f \in \Hdot^{1/2}(\R)$, then almost everywhere on $J$,
\begin{align*}
 \bigg|\HT \dhalf \bigg((1-\eta)\Big(f - \barint_J f \Big)\bigg)\bigg|  \lesssim \sum_{k \in \IZ} \frac{1}{1+|k|^{3/2}} \barint_{J_k} |\dhalf f| + |\HT \dhalf f| \d s.
\end{align*}
\end{lem}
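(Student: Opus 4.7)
\medskip

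\emph{Plan of proof.}
The overall strategy is to exploit the representation of $\HT\dhalf$ by its integral kernel together with the symmetry of~$\eta$. Observing that both sides of the asserted inequality are invariant under the substitution $f\mapsto f+c$ for a constant~$c$, I will first reduce to the normalisation $\barint_J f = 0$. Since the function $g := (1-\eta)(f - \barint_J f)$ then vanishes on $\tfrac{9}{4}J\supset J$, Corollary~\ref{cor:formula HTdhalf on Hdot1/2} gives for almost every $t\in J$
\begin{equation*}
\HT\dhalf g(t)=\frac{1}{2\sqrt{2\pi}}\int_\R\frac{\sgn(t-s)(1-\eta(s))\,f(s)}{|t-s|^{3/2}}\,ds.
\end{equation*}
This integral I will split into a \emph{local} part over $4J$ and a \emph{far} part over ${}^c(4J)$.

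\emph{Local part.} On $4J\setminus\tfrac{9}{4}J$ the kernel is bounded by $\ell(J)^{-3/2}$ and $|1-\eta|\le 1$, so the contribution is at most $\ell(J)^{-3/2}\int_{4J}|f-\barint_Jf|$. Applying Lemma~\ref{lem:fractional poincare 1/2} with $p=q=1$ (on a suitable enlargement of $J$ and paying the usual discrete convolution to compare the shifted lattices) I will estimate this by the target sum with only $|\dhalf f|$ averages.

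\emph{Far part.} For $s\in{}^c(4J)$ I pair $s$ with its reflection $s^*=2t_J-s$ about the midpoint $t_J$ of $J$, using that $1-\eta$ is symmetric about $t_J$. Writing $K(t,s):=\sgn(t-s)/|t-s|^{3/2}$ and $K^*:=K(t,s^*)$, I split the paired integrand via the identity
\begin{equation*}
Kf(s)+K^*f(s^*)=\tfrac{K-K^*}{2}\bigl(f(s)-f(s^*)\bigr)+\tfrac{K+K^*}{2}\bigl(f(s)+f(s^*)\bigr).
\end{equation*}
At $t=t_J$ one has $K+K^*=0$ by antisymmetry and $\tfrac{K-K^*}{2}=-|s-t_J|^{-3/2}$; for general $t\in J$ the mean-value theorem gives the gain $|K+K^*|\lesssim |t-t_J||s-t_J|^{-5/2}\lesssim \ell(J)|k|^{-5/2}\ell(J)^{-5/2}$ on~$J_k$. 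I then introduce the odd and even functions $F(\sigma):=f(t_J-\sigma)-f(t_J+\sigma)$ and $G(\sigma):=f(t_J-\sigma)+f(t_J+\sigma)$, which lie in $\Hdot^{1/2}(\R)$ and whose $\dhalf$ and $\HT\dhalf$ may be computed explicitly in terms of $\dhalf f$ and $\HT\dhalf f$ at the points $t_J\pm\sigma$.

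\emph{Control of the paired integrals.} For the antisymmetric-in-$f$ contribution the integral reduces to $\sum_{|k|\ge 2}(|k|\ell(J))^{-3/2}\int_{J_k^+}|F|$; the oscillation $\int_{J_k^+}|F-\barint_{J_k^+}F|$ is treated by Lemma~\ref{lem:fractional poincare 1/2}, and after the discrete convolution estimate~\eqref{eq:discrete convolution} gives a sum of the required form in $\dhalf f$. The symmetric-in-$f$ piece is treated in the same way but carries the extra $|k|^{-5/2}$ decay from $K+K^*$, which more than compensates for the loss coming from $G$. \emph{The hard step} is to bound the constant averages $\barint_{J_k^+}F$ (and $\barint_{J_k^+}G$) with enough decay in~$k$: a direct telescope or a comparison with the symmetric interval $(-r,r)$ (on which $\barint F=0$ by oddness) yields only $|k|^{-1/2}$ decay with $\dhalf f$ on the right, which is not summable against the kernel factor $|k|^{-3/2}$. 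This is where I invoke the $\HT\dhalf$-variant of Lemma~\ref{lem:fractional poincare 1/2}: applied to $F$ on the symmetric-around-zero interval $(-(|k|+\tfrac12)\ell(J),(|k|+\tfrac12)\ell(J))$ (whose mean of $F$ vanishes), it produces a bound in terms of $\barint|\HT\dhalf F|$, which by the identity $\HT\dhalf F(\sigma)=-\HT\dhalf f(t_J-\sigma)-\HT\dhalf f(t_J+\sigma)$ converts to averages of $|\HT\dhalf f|$ over translates of $J_l$; a careful book-keeping of the weights, using again~\eqref{eq:discrete convolution} and splitting into the regimes $|l|\ll k$, $|l|\sim k$, $|l|\gg k$, yields the required $(1+|k|^{3/2})^{-1}$-decaying sum involving $|\HT\dhalf f|$. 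The same method, with the roles of $\dhalf$ and $\HT\dhalf$ interchanged, handles the average of $G$, and collecting all contributions produces the bound stated in the lemma.
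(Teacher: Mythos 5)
Your decomposition is genuinely different from the paper's: the paper splits into the oscillation $w(\tau)-w(t_J)$ (Step 1, where the mean-value theorem gives a kernel of size $\ell(J)|\sigma|^{-5/2}$) and the midpoint value $w(t_J)$ (Step 2, handled by plugging the Riesz potential representation $f=c+\ihalf\dhalf f$ from Corollary~\ref{cor:potential representation of Hdot1/2} into the singular integral, using Fubini and oddness to kill the constant, and then identifying a truncated Hilbert transform of $\dhalf f$ controlled via a Cotlar--Tchebychev argument). Your near part, your $G$-piece, and the oscillation part of your $F$-piece are sound and correspond roughly to the paper's Step~1. The gap is precisely the ``hard step'' you flag yourself, and your proposed remedy does not close it.

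Concretely, the $F$-contribution is $\ell(J)^{-1/2}\sum_k|k|^{-3/2}\barint_{J_k^+}|F|$, and the constant averages must be bounded. The $\HT\dhalf$-Poincar\'{e} applied to $F$ on the symmetric interval $\tilde J_k := (-(|k|+\tfrac12)\ell(J),(|k|+\tfrac12)\ell(J))$ gives $\barint_{\tilde J_k}|F|=\barint_{\tilde J_k}|F-\barint_{\tilde J_k}F|\lesssim\sqrt{|k|\ell(J)}\sum_l\cdots$, but this estimates an average over an interval of length $\approx|k|\ell(J)$, whereas you need the average over $J_k^+$ of length $\ell(J)$. Passing from one to the other costs a factor $|\tilde J_k|/|J_k^+|\approx|k|$, so $|\barint_{J_k^+}F|\lesssim|k|^{3/2}\sqrt{\ell(J)}\sum_l\cdots$, which cancels the kernel decay $|k|^{-3/2}$ exactly and leaves $\sum_k\sum_l(1+|l|^{3/2})^{-1}\barint_{(\tilde J_k)_l}|\HT\dhalf F|$, divergent in $k$. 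This is actually worse than the telescope you already dismissed (which gives effective $|k|^{-1/2}$ decay). No choice of $(p,q)$ in Lemma~\ref{lem:fractional poincare 1/2} repairs this, because the obstacle is structural: once you take absolute values on $F$ inside $\int\sigma^{-3/2}|F(\sigma)|\,d\sigma$, you have discarded precisely the cancellation that makes the quantity controllable. Without absolute values, this far integral at $t=t_J$ is essentially $\HT\dhalf F(0)=-2\HT\dhalf f(t_J)$, and a pointwise bound of $|\HT\dhalf f(t_J)|$ by the local average $\barint_J|\HT\dhalf f|$ requires exactly the weak-type/Tchebychev selection of a good point in $J$ that the paper performs (and that a Poincar\'{e} inequality cannot supply). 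So the $F$-piece genuinely needs the Riesz potential representation and a Cotlar-type argument; the fractional Poincar\'{e} framework alone cannot close it.
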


\begin{proof}
After a shift we may assume for simplicity that $J$ is centered at the origin. Let $w := \HT \dhalf ((1-\eta)(f-\barint_J f))$. Thanks to the support properties of $\eta$, Corollary~\ref{cor:formula HTdhalf on Hdot1/2} yields
\begin{align}
\label{eq1:HTdhalf outside bound}
 w(\tau) = \frac{-1}{2 \sqrt{2 \pi}} \int_\R \frac{\sgn(\tau-s)}{|\tau-s|^{3/2}} (1-\eta(s))\bigg(f(s) - \barint_J f \bigg) \d s \qquad (\text{a.e.\ $\tau \in J$})
\end{align}
and this defines $w$ as a continuous function on $J$. For the rest of the proof we will consider $w$ only on $J$ and stick with this particular representative.

\subsubsection*{Step 1: The bound for \texorpdfstring{$w(\tau) - w(0)$}{w(t)-w(0)}}

Since $s \notin \frac{9}{4} J$ whenever the integrand in \eqref{eq1:HTdhalf outside bound} is non-zero, we obtain from the mean-value theorem
\begin{align*}
 |w(\tau) - w(0)| \lesssim \int_\R \frac{|\tau|}{|s|^{5/2}} |1 - \eta(s)| \Big|f(s) - \barint_J f \Big| \d s.
\end{align*}
Decomposing $\R = \bigcup_{k \geq 0} (J_k \cup J_{-k})$, we find
\begin{align}
\label{eq2:HTdhalf outside bound}
|w(\tau) - w(0)| \lesssim \frac{1}{\sqrt{\ell(J)}} \sum_{k = 1}^\infty \frac{1}{1+|k|^{5/2}} \bigg(\barint_{J_{k}} \Big|f - \barint_J f \Big| + \barint_{J_{-k}} \Big|f - \barint_J f \Big| \bigg).
\end{align}
Now, for every $k \geq 0$ the telescoping argument
\begin{align*}
 \barint_{J_k} \Big|f - \barint_J f \Big| \leq \barint_{J_k} \Big|f - \barint_{J_k} f \Big| + 6 \sum_{j=1}^k \barint_{{J_j} \cup J_{j-1}} \Big|f - \barint_{{J_j} \cup J_{j-1}} f \Big|
\end{align*}
brings us in a position where Lemma~\ref{lem:fractional poincare 1/2} applies in the form
\begin{align*}
 \frac{1}{\sqrt{\ell(J)}} \barint_{J_k} \Big|f - \barint_J f\Big|
 &\lesssim \sum_{m \in \IZ} \frac{1}{1+|m|^{3/2}} \bigg(\barint_{J_{k+m}} |\HT \dhalf f| + \sum_{j=1}^k \barint_{J_{j+m} \cup J_{j-1+m}} |\HT \dhalf f| \bigg) \\
 &\leq 2\sum_{m \in \IZ} \bigg(\sum_{j=0}^k \frac{1}{1+|m-j|^{3/2}} \bigg) \barint_{J_m} |\HT \dhalf f|,
\end{align*}
where the second step follows from a simple regrouping of terms. For the corresponding average over $J_{-k}$ on the left-hand side we obtain an analogous bound with an inner sum ranging over $-k \leq j \leq 0$, so that altogether we sum over $|j| \leq k$. These estimates inserted on the right-hand side of \eqref{eq2:HTdhalf outside bound} lead us to
\begin{align*}
 |w(\tau) - w(0)|
 & \lesssim \sum_{m \in \IZ} \sum_{j \in \IZ} \bigg(\sum_{k \geq |j|} \frac{1}{1+|k|^{5/2}} \bigg) \frac{1}{1+|m-j|^{3/2}} \barint_{J_m} |\HT \dhalf f|.
\end{align*}
By a Riemann sum argument on the sum over $k$ and \eqref{eq:discrete convolution},
\begin{align*}
 \sum_{j \in \IZ} \bigg(\sum_{k \geq |j|} \frac{1}{1+|k|^{5/2}} \bigg) \frac{1}{1+|m-j|^{3/2}}
\lesssim \sum_{j \in \IZ} \frac{1}{1+|j|^{3/2}} \frac{1}{1+|m-j|^{3/2}}
\lesssim \frac{1}{1+|m|^{3/2}},
\end{align*}
giving the required bound
\begin{align*}
|w(\tau) - w(0)| \lesssim \sum_{m \in \IZ} \frac{1}{1+|m|^{3/2}} \barint_{J_m} |\HT \dhalf f|.
\end{align*}

\subsubsection*{Step 2: The bound for \texorpdfstring{$w(0)$}{w(0)}}

The `kernel' in the formula \eqref{eq1:HTdhalf outside bound} for $w(0)$ is odd since $\eta$ is an even function. Hence, we can omit the average $\barint_J f$ and for a later purpose we write
\begin{align*}
 w(0) = \frac{1}{2 \sqrt{2 \pi}} \lim_{S \to \infty} \int_{(-S,S)} \frac{\sgn(s)}{|s|^{3/2}} (1-\eta(s))f(s) \; \d s.
\end{align*}
Due to Corollary~\ref{cor:potential representation of Hdot1/2} we can represent $f$ as a potential
\begin{align*}
 f(s) = c + \frac{1}{\sqrt{2 \pi}} \bigg(\int_{(-1,1)} \frac{g(\rho)}{|s-\rho|^{1/2}} \d \rho
 + \int_{^c (-1,1)} \frac{g(\rho)}{|s-\rho|^{1/2}} - \frac{g(\rho)}{|\rho|^{1/2}} \d \rho \bigg)
\end{align*}
with $g = \dhalf f$ and $c$ a constant. Since these integrals converge absolutely for almost every $s \in \R$, see Section~\ref{sec:Homogeneous Sobolev spaces}, we can plug in this representation into the one for $w(0)$ and apply Fubini's theorem. During this procedure the terms $c$ and $g(\rho) |\rho|^{-1/2}$ depending not on $s$ vanish again by oddness. So, after a substitution $s = \rho \sigma$,
\begin{align}
\label{eq3:HTdhalf outside bound}
 w(0)
 &= \lim_{S \to \infty} \frac{1}{4 \pi} \int_\R \bigg(\int_{|\sigma| \leq \frac{S}{|\rho|}} \frac{\sgn(\sigma)}{|\sigma|^{3/2}} \cdot \frac{1-\eta_J(\sigma \rho)}{|1-\sigma|^{1/2}} \d \sigma \bigg) \frac{g(\rho)}{\rho} \d \rho.
\end{align}
Let us inspect the inner integral for fixed $S \geq \ell(J)$: In the case $|\rho| < \ell(J)$ we can simply use the support properties of $\eta$ to bound this in absolute value by
\begin{align*}
 \int_{\frac{9 \ell(J)}{8|\rho|} \leq |\sigma|} \frac{1}{|\sigma|^{3/2}} \cdot \frac{1} {|1-\sigma|^{1/2}} \d \sigma \lesssim \frac{|\rho|}{\ell(J)},
\end{align*}
where we have used the estimate $|\sigma| \leq 9|1-\sigma|$ on the domain of integration in the second step. If $\ell(J) < |\rho| < 8\ell(J)$, then the estimate above remains true for the simple reason that both sides are comparable to absolute constants. If finally $|\rho| \geq 8 \ell(J)$, then we additionally make use of the symmetry of $\eta$ to rewrite the integral under consideration as
\begin{align*}
 &\int_{|\sigma| \leq \frac{S}{|\rho|}} \frac{\sgn(\sigma)}{|\sigma|^{3/2}} \bigg(\frac{1}{|1-\sigma|^{1/2}} - 1 \bigg) \d \sigma \\
 - &\int_{|\sigma| \leq \frac{4 \ell(J)}{|\rho|}} \frac{\sgn(\sigma)}{|\sigma|^{3/2}} \bigg(\frac{1}{|1-\sigma|^{1/2}} - 1 \bigg) \eta(\sigma \rho) \d \sigma.
\end{align*}
The first term is bounded uniformly in $S$ and $\rho$ since $||1-\sigma|^{-1/2} - 1| \lesssim |\sigma|$ for $|\sigma|$ small, and in the limit for $S\to \infty$ it converges to the value $C \in \R$ of the corresponding integral over the real line. Similarly, the second term can be controlled by $\ell(J)^{1/2}/|\rho|^{1/2}$ since we have $|\sigma| \leq 1/2$ in the domain of integration. This justifies to use Lebesgue's theorem when passing to the limit in $S$ in \eqref{eq3:HTdhalf outside bound}. As a result,
\begin{align}
\label{eq4:HTdhalf outside bound}
 |w(0)|
 \lesssim \barint_{8J} |g(\rho)| \d \rho +  \bigg|C \int_{^c(8J)} \frac{g(\rho)}{\rho} \d \rho \bigg| + \sqrt{\ell(J)} \int_{^c(8J)} \frac{|g(\rho)|}{|\rho|^{3/2}} \d \rho.
\end{align}
The truncated Hilbert transform in the middle term can be handled similar to the classical proof of Cotlar's lemma but for the reader's convenience we repeat the short argument. So, we let $g_1$ be equal to $g$ on $8J$ and zero otherwise. Then for any $0<\eps<2 \ell(J)$ and $s \in J$,
\begin{align*}
 \int_{^c(8J)} \frac{g(\rho)}{\rho} \d \rho = \int_{^c(8J)} \bigg(\frac{1}{\rho} - \frac{1}{s-\rho} \bigg) g(\rho) \d \rho + \int_{|s-\rho| > \eps} \frac{g(\rho)}{s-\rho} \d \rho - \int_{|s-\rho| > \eps} \frac{g_1(\rho)}{s-\rho} \d \rho.
\end{align*}
The mean-value theorem applies to the first term so that in the limit $\eps \to 0$,
\begin{align*}
 \bigg| \int_{^c(8J)} \frac{g(\rho)}{\rho} \d \rho \bigg| \lesssim \int_{^c(8J)} \frac{\ell(J) |g(\rho)|}{|\rho|^2} \d \rho + |\HT g(s)| + |\HT g_1(s)| \qquad (\text{a.e.\ $s \in J$}).
\end{align*}
In view of Tchebychev's inequality and the weak-type bound for the Hilbert transform of $g_1 \in \L^1(\R)$ we can find some $s \in J$ such that simultaneously
\begin{align*}
 |\HT g(s)| \lesssim \barint_{8J} |\HT g| \qquad \text{and} \qquad |\HT g_1(s)| \lesssim \barint_{8J} |g|.
\end{align*}
Returning to \eqref{eq4:HTdhalf outside bound} and plugging in $g = \dhalf f$,
\begin{align*}
 |w(0)| \lesssim \sqrt{\ell(J)} \int_{^c(8J)} |\dhalf f| \, \frac{\mathrm{d} \rho}{|\rho|^{3/2}} + \barint_{8J} |\HT \dhalf f| + |\dhalf f| \d \rho.
\end{align*}
Now, the usual split up into translates of $J$ gives
\begin{align*}
 |w(0)| \lesssim \sum_{m \in \IZ} \frac{1}{1+|m|^{3/2}} \barint_{J_m} |\dhalf f| + |\HT \dhalf f|,
\end{align*}
which, together with the outcome of Step~1, establishes the required bound even uniformly on $J$. The claim follows by averaging over $\tau \in J$.
\end{proof}

Applying this lemma to $f = \bariint_{2 \Lambda \times 2Q} u$ and $\eta = \eta_I$ on $J = I$ completes the reverse H\"older estimate for $\HT \dhalf u$ with an enlargement to $4 \Lambda \times 4Q$ in space on the right-hand side:

\begin{cor}
\label{cor:HTdhalf outside bound weak solution}
Let $u$ be a  reinforced weak solution to \eqref{eq1}. Then
\begin{align*}
 \left(\bariiint_\LQI \bigg|\HT \dhalf \bigg((1-\eta_I)\Big(\bariint_{2 \Lambda \times 2 Q} u - \bariiint_{2 \Lambda \times 2 Q \times I} u\Big)\bigg)\bigg|^2 \right)^{1/2}
 & \!\lesssim \sum(|\dhalf u| + |\HT \dhalf u|).
\end{align*}
\end{cor}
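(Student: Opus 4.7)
The plan is to reduce everything to a pointwise application of Lemma~\ref{lem:HTdhalf outside bound} on the real line. Set
\begin{align*}
 f(s) := \bariint_{2\Lambda \times 2Q} u(\mu,y,s) \d \mu \d y \qquad (s \in \R).
\end{align*}
The discussion preceding \eqref{eq:final split rh} shows that $f \in \Hdot^{1/2}(\R)$ and that $(1-\eta_I) f \in \Hdot^{1/2}(\R)$, so that the hypotheses of Lemma~\ref{lem:HTdhalf outside bound} are fulfilled with $\eta = \eta_I$ and $J = I$. Note also that $\barint_I f = \bariiint_{2\Lambda \times 2Q \times I} u$, so the quantity inside $\HT\dhalf$ in the corollary is exactly $(1-\eta_I)(f - \barint_I f)$.

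Since the integrand on the left-hand side depends on $t$ only, the triple average over $\LQI$ collapses to an average over $I$ and Lemma~\ref{lem:HTdhalf outside bound} applied pointwise a.e.\ on $I$ yields
\begin{align*}
 \bigg(\bariiint_\LQI \bigg|\HT \dhalf \Big((1-\eta_I) \big(f - \barint_I f\big)\Big) \bigg|^2 \bigg)^{1/2}
 \lesssim \sum_{k \in \IZ} \frac{1}{1+|k|^{3/2}} \barint_{I_k} |\dhalf f| + |\HT \dhalf f| \d s.
\end{align*}
Next, by Fubini's theorem (as in the derivation of \eqref{eq1:rh spatial}, which uses Lemma~\ref{lem:Hdot12 several variables}), one has
\begin{align*}
 \dhalf f(s) = \bariint_{2 \Lambda \times 2Q} \dhalf u(\mu,y,s) \d \mu \d y, \qquad \HT \dhalf f(s) = \bariint_{2 \Lambda \times 2Q} \HT \dhalf u(\mu,y,s) \d \mu \d y
\end{align*}
for a.e.\ $s \in \R$, so by Jensen's inequality
\begin{align*}
 \barint_{I_k} |\dhalf f| + |\HT \dhalf f| \d s \leq \bariiint_{2 \Lambda \times 2Q \times I_k} |\dhalf u| + |\HT \dhalf u|.
\end{align*}

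Finally, the enlargement from $2\Lambda \times 2Q$ to $4\Lambda \times 4Q$ costs only the dimensional factor $2^{n+1}$, namely $\bariiint_{2\Lambda \times 2Q \times I_k} |v| \leq 2^{n+1} \bariiint_{4\Lambda \times 4Q \times I_k} |v|$ for any nonnegative $v$, so summation in $k$ produces the right-hand side $\sum(|\dhalf u| + |\HT \dhalf u|)$ as claimed. No step here should present genuine difficulty: all the analytic work has been done in Lemma~\ref{lem:HTdhalf outside bound}; the only point requiring mild care is the justification that the averaged function $f$ still lies in $\Hdot^{1/2}(\R)$ with $\dhalf f$ and $\HT \dhalf f$ given by the corresponding averages of the derivatives of $u$, which is precisely what Lemma~\ref{lem:Hdot12 several variables} and Fubini's theorem provide.
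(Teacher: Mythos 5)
Your proof is correct and is essentially identical to the paper's (the paper simply states ``Applying this lemma to $f = \bariint_{2 \Lambda \times 2Q} u$ and $\eta = \eta_I$ on $J = I$ completes the reverse H\"older estimate''); you have merely spelled out the routine verifications (membership in $\Hdot^{1/2}(\R)$, Fubini to identify $\dhalf f$ and $\HT \dhalf f$ with spatial averages, Jensen, and the dimensional loss from enlarging $2\Lambda\times 2Q$ to $4\Lambda\times 4Q$), all of which are accurate.
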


The missing piece in the proof of Theorem~\ref{thm:rh} is the corresponding estimate with $\dhalf$ on the left-hand side. We call the reader's attention to the fact that this will note be achieved by an argument for functions on $\R$ but by resorting to the \emph{full} reverse H\"older estimate for $\bariiint_{2 \Lambda \times 2Q \times 4I} |\HT \dhalf u|^2$ that we have just completed. Since we are using this estimate already for the enlargement $2 \Lambda \times 2Q$ in space, we finally obtain an enlargement $8 \Lambda \times 8Q$ on the right-hand side as stated in Theorem~\ref{thm:rh}.

\begin{lem}
\label{lem:dhalf outside bound weak solution}
Let $u$ be a  reinforced weak solution to \eqref{eq1}. Then
\begin{align*}
 &\left(\bariiint_\LQI \bigg|\dhalf \bigg((1-\eta_I)\Big(\bariint_{2 \Lambda \times 2 Q} u - \bariiint_{2 \Lambda \times 2 Q \times I} u\Big)\bigg)\bigg|^2\right)^{1/2}\\
 &\qquad \lesssim \sum_{k \in \IZ} \frac{1}{1+|k|^{3/2}} \bariiint_{\fLQIk{8}{k}} |\gradlamx u| + |\dhalf u| + |\HT \dhalf u| \d \lambda \d x \d t.
\end{align*}
\end{lem}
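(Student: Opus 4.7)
The plan is to exploit the identity $\dhalf=-\HT\cdot\HT\dhalf$, which follows from $\HT^2=-\id$. This converts the estimate for the even half-order derivative into one for the odd half-order derivative composed with the Hilbert transform, where the symmetry-based cancellation of Corollary~\ref{cor:HTdhalf outside bound weak solution} is already available. Set $v:=(1-\eta_I)h$ with $h(t):=\bariint_{2\Lambda\times 2Q}u(\,\cdot\,,\,\cdot\,,t)\d\mu\d y-\bariiint_{2\Lambda\times 2Q\times I}u$, and put $w:=\HT\dhalf v$, so $\dhalf v=-\HT w$. Since $v$ depends only on $t$, everything reduces to bounding $|I|^{-1}\|\HT w\|_{\L^2(I)}^2$.

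First I would split $\HT w=\HT(w\mathbf{1}_{4I})+\HT(w\mathbf{1}_{{}^c 4I})$. For the local piece, $\L^2$-boundedness of $\HT$ yields $\|\HT(w\mathbf{1}_{4I})\|_{\L^2(I)}\le\|w\|_{\L^2(4I)}$; this is handled by applying Corollary~\ref{cor:HTdhalf outside bound weak solution} (on $I$) together with a version of the same bound on the adjacent translates $I_{\pm 1}$, obtained by mild shifting of the cube. For the non-local piece, the kernel bound $|\tau-s|^{-1}\lesssim(|k|\ell(I))^{-1}$ valid for $\tau\in I$ and $s\in I_k$, $|k|\ge 2$, gives the pointwise estimate
\begin{align*}
 |\HT(w\mathbf{1}_{{}^c 4I})(\tau)|\lesssim\sum_{|k|\ge 2}\frac{1}{|k|}\bigg(\barint_{I_k}|w|^2\bigg)^{1/2},
\end{align*}
where we have already upgraded the $\L^1$-average to an $\L^2$-average via Cauchy--Schwarz.

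The crucial step is to estimate $(\barint_{I_k}|w|^2)^{1/2}$ for $|k|\ge 2$. Since $\eta_I$ vanishes on $I_k$ in that range, the kernel representation \eqref{eq:kernel of HTdhalf} applied to $v=(1-\eta_I)h$ yields $w(\tau)=\HT\dhalf h(\tau)+r_k(\tau)$ with $|r_k(\tau)|\lesssim|k|^{-3/2}\ell(I)^{-1/2}\barint_{4I}|h|$. Fubini converts $\HT\dhalf h(t)=\bariint_{2\Lambda\times 2Q}\HT\dhalf u(\mu,y,t)\d\mu\d y$, so $(\barint_{I_k}|\HT\dhalf h|^2)^{1/2}\le (\bariiint_{2\Lambda\times 2Q\times I_k}|\HT\dhalf u|^2)^{1/2}$. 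At this point I would invoke the full reverse H\"older estimate for $|\HT\dhalf u|$ already assembled from Lemmas~\ref{lem:rh spatial}, \ref{lem:rh time local}, \ref{lem:rh time nonlocal 1} and Corollary~\ref{cor:HTdhalf outside bound weak solution} (with $4\Lambda\times 4Q$ enlargement on the right), applied to the shifted parabolic cylinder $2\Lambda\times 2Q\times I_k$ of radius $\sim 2r$, which requires $r<\lambda/8$ and produces the $8\Lambda\times 8Q$ enlargement on the right. After swapping the outer sum in $k$ with the inner weighted sum in $m$, the discrete convolution \eqref{eq:discrete convolution} is expected to yield the desired $(1+|k|^{3/2})^{-1}$ decay. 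The residual contribution $\barint_{4I}|h|$ is handled by Lemma~\ref{lem:fractional poincare 1/2} combined with the telescoping argument of the proof of Lemma~\ref{lem:HTdhalf outside bound}, Step~1, which converts it into a $(1+|m|^{3/2})^{-1}$-weighted sum of averages of $|\dhalf u|$ over translates $I_m$.

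The main obstacle is the matching of the decay rates: the kernel of $\HT$ only supplies $1/|k|$ per term, whereas the target has $(1+|k|^{3/2})^{-1}$. The missing half-power of decay must be recovered through the combination of the Cauchy--Schwarz upgrade from $\L^1$ to $\L^2$ averages of $w$ on $I_k$ and the subsequent application of the reverse H\"older inequality for $|\HT\dhalf u|$ on the shifted cubes; this replaces the symmetry-of-$\eta_I$ trick used in Lemma~\ref{lem:HTdhalf outside bound}, which is unavailable here because the kernel of $\dhalf$ is even rather than odd. Verifying that the resulting double sum in $(k,m)$ collapses to a single $(1+|\ell|^{3/2})^{-1}$-weighted series via \eqref{eq:discrete convolution} is the delicate combinatorial step, and the reason the statement must allow the term $|\HT\dhalf u|$ on the right-hand side.
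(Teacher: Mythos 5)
Your reduction $\dhalf v=-\HT w$ with $w=\HT\dhalf v$ is valid and points in a natural direction, but the key step --- bounding the non-local piece $\HT(w\mathbf{1}_{{}^c 4I})$ by the crude kernel estimate --- does not close. For $\tau\in I$ and $s\in I_k$, $|k|\ge 2$, the kernel $|\tau-s|^{-1}\lesssim(|k|\ell(I))^{-1}$ gives decay exactly $1/|k|$ per translate, and this decay cannot be improved by what follows. Cauchy--Schwarz only replaces $\barint_{I_k}|w|$ by the larger $(\barint_{I_k}|w|^2)^{1/2}$, and the subsequent reverse H\"older step converts this to a $(1+|m|^{3/2})^{-1}$-weighted sum of $\L^1$-averages over translates $I_{k+m}$ without adding decay in $k$. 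After regrouping with $\ell=k+m$, the coefficient is
\begin{align*}
c_\ell\sim\sum_{|k|\ge 2}\frac{1}{|k|}\cdot\frac{1}{1+|\ell-k|^{3/2}}\sim\frac{1}{|\ell|},
\end{align*}
coming from the range $|k|\sim|\ell|$; this is far weaker than the required $(1+|\ell|^{3/2})^{-1}$. Since the translate averages $\bariiint_{8\Lambda\times 8Q\times I_\ell}(\cdots)$ can in principle be any positive sequence, an estimate with coefficients $c_\ell\sim 1/|\ell|$ does not imply the target. The inequality \eqref{eq:discrete convolution} only produces $3/2$-decay when \emph{both} factors have $3/2$-decay; with one factor at rate $1/|k|$, the convolution retains the $1/|\ell|$ tail.

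This is precisely the obstruction that the paper's proof addresses by a Cotlar-type argument in its Step~2: writing $g:=\HT\dhalf f$, one does \emph{not} expand $\int_{{}^c(8J)}g(\rho)/(\tau-\rho)\,\mathrm{d}\rho$ directly by kernel bounds but instead compares it to $\HT g(s)$ and $\HT g_1(s)$ for a well-chosen $s\in J$, extracting a mean-value-theorem cancellation that upgrades the far-range kernel decay from $1/|\rho|$ to $\ell(J)/|\rho|^2$ (hence $1/|k|^2$ per translate, more than enough), at the cost of two near-$J$ averages of $|\HT g|$ and $|g|$ controlled by the weak-type bound. Your approach omits this cancellation and cannot recover it by the reverse H\"older estimate, which merely redistributes averages rather than improving their decay in $k$. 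A secondary issue worth noting: your treatment of the local piece assumes Corollary~\ref{cor:HTdhalf outside bound weak solution} can be invoked on the shifted intervals $I_{\pm 1}$, but that corollary is stated for $w=\HT\dhalf((1-\eta_I)h)$ with $\eta_I$ tied to the original $I$, whereas a shifted cylinder would call for a differently centred cut-off; this mismatch needs a commutator correction of the type the paper performs in Step~1 of its proof, where the decomposition $\dhalf((1-\eta_I)h)=\dhalf f-\dhalf(\eta_I h)$ is used precisely to localise against $\eta_I$ in a compatible way.
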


\begin{proof}
Let $f:=\bariint_{2\Lambda \times 2Q} u$ and $h := f - \barint_I f$. Rewriting the left-hand side of the estimate in question by means these two functions reveals that it suffices to prove
\begin{align}
\label{eq1:dhalf outside bound weak solution}
\begin{split}
 \bigg(\barint_I &|\dhalf(\eta_I h)|^2 \bigg)^{1/2} + \bigg(\barint_I |\dhalf f|^2\bigg)^{1/2} \\
 & \lesssim \sum_{k \in \IZ} \frac{1}{1+|k|^{3/2}} \bariiint_{\fLQIk{8}{k}} |\gradlamx u| + |\dhalf u| + |\HT \dhalf u| \d \lambda \d x \d t,
\end{split}
\end{align}
where we also used $\dhalf h = \dhalf f$ as the half-order derivative annihilates constants. We treat both terms separately.

\subsubsection*{Step 1: The first term in \eqref{eq1:dhalf outside bound weak solution}}

Since $\HT^2 = -1$ as bounded operators on $\L^2(\R)$,
\begin{align*}
 \bigg(\barint_I |\dhalf(\eta_I h)|^2\bigg)^{1/2} \lesssim \bigg(\frac{1}{\ell(I)} \int_{\R} |\HT \dhalf (\eta_I h)|^2\bigg)^{1/2}.
\end{align*}
Let $\eta_{4I}$ have the same properties as $\eta_I$ but for the interval $4I$ instead of $I$. In particular, $\eta_{4I}$ is supported in $16I$ and identically $1$ on $9I$, which contains the support of $\eta_I$. Using the relation $1-\eta_{4I} = (1-\eta_I)(1-\eta_{4I})$ in the second line of the following computation, we can use commutators to expand
\begin{align*}
 \HT \dhalf (\eta_I h)
&= \HT \dhalf h - \HT \dhalf ((1-\eta_I)(1-\eta_{4I} + \eta_{4I}) h) \\
&=\HT \dhalf h - \HT \dhalf((1-\eta_{4I})h) - [\HT \dhalf, 1-\eta_I](\eta_{4I} h) - (1-\eta_I)\HT \dhalf (\eta_{4I} h)\\
&= \eta_I \HT \dhalf f - \eta_I \HT \dhalf((1-\eta_{4I})h) + [\HT \dhalf, \eta_I](\eta_{4I} h).
\end{align*}
Pick some $p \in (1,2)$. From the kernel bounds \eqref{eq:dhalf commutator kernel} and Young's inequality we obtain that the commutator $[\HT \dhalf, \eta_I]$ is bounded $\L^p(\R) \to \L^2(\R)$ with norm controlled by $\ell(I)^{-1/p}$. Thus, the various support assumptions and the definition of $h$ yield
\begin{align*}
 \bigg(\barint_{I} |\dhalf(\eta_I h)|^2\bigg)^{1/2}
&\lesssim \bigg(\barint_{4I} |\HT \dhalf f|^2 \bigg)^{1/2} + \bigg(\barint_{4I} \bigg|\HT \dhalf\bigg((1-\eta_{4I})\Big(f - \barint_I f\Big) \bigg)\bigg|^2 \bigg)^{1/2} \\
&\quad + \frac{1}{\sqrt{\ell(I)}} \bigg(\barint_{16I}\Big|f-\barint_I f \Big|^p \bigg)^{1/p}.
\end{align*}
Plugging in the definition of $f$, the first term is bounded by the $\L^2$ average of $\HT \dhalf u$ on $2\Lambda \times 2Q \times 4 I$, which can be controlled as desired since we have already completed the reverse H\"older estimate stated in Theorem~\ref{thm:rh} for $\HT \dhalf u$ on the left-hand side with an enlargement factor of $4$ in the spatial directions. The second term can be controlled by Lemma~\ref{lem:HTdhalf outside bound} applied with $\eta = \eta_{4I}$ on $J=4I$: Indeed, we take the average of $f$ over $\frac{1}{4} J$ instead of $J$ here but the only change in the argument would be to use translates of $\frac{1}{4}J$ in \eqref{eq2:HTdhalf outside bound}, too. Finally, for the third term we use a telescopic sum and then Lemma~\ref{lem:fractional poincare 1/2} with $q=1$ to obtain
\begin{align*}
 \frac{1}{\sqrt{\ell(I)}} \bigg(\barint_{16I}\Big|f - \barint_I f \Big|^p \bigg)^{1/p}
 \lesssim \frac{1}{\sqrt{\ell(I)}} \bigg(\barint_{16I} \Big|f - \barint_{16I} f \Big|^p \bigg)^{1/p}
 \lesssim \sum_{k \in \IZ} \frac{1}{1+|k|^{3/2}} \barint_{(16I)_k} |\dhalf f|,
\end{align*}
and so all we have to do is to decompose the translates of $16I$ into those of $I$.  We note that this estimate required $p<2$. This completes the treatment of the first term in \eqref{eq1:dhalf outside bound weak solution}.

\subsubsection*{Step 2: The second term in \eqref{eq1:dhalf outside bound weak solution}}

Using again $\HT^2 = -1$ and the $\L^2$ boundedness of the Hilbert transform, we find
\begin{align*}
 \bigg(\barint_I |\dhalf f|^2\bigg)^{1/2} \lesssim \bigg(\barint_{4I} |\HT \dhalf f|^2\bigg)^{1/2} + \bigg(\barint_I |\HT (1_{{}^c (4I)} \HT \dhalf f)|^2\bigg)^{1/2}.
\end{align*}
From Step~1 we know how to handle the first integral on the right. As for the second one, we put $g:= \HT \dhalf f$ and obtain from the singular integral representation of the Hilbert transform
\begin{align*}
 \HT (1_{{}^c (4I)} \HT \dhalf f)(t) = \int_{{}^c (4I)} \frac{g(\rho)}{\rho-t} \d \rho \qquad (\text{a.e.\ $t \in I$}).
\end{align*}
Now, we can follow the Cotlar-type argument succeeding \eqref{eq4:HTdhalf outside bound} to deduce the uniform bound
\begin{align*}
 |\HT (1_{{}^c (4I)} \HT \dhalf f)(t)| \lesssim \sum_{m \in \IZ} \frac{1}{1+|m|^{3/2}} \barint_{I_m} |\dhalf f| + |\HT \dhalf f| \qquad (\text{a.e.\ $t \in I$}).
\end{align*}
We conclude by averaging over $t \in I$ and plugging in the definition of $f$.
\end{proof}

\begin{rem}
As the reader may have noticed, we could have also written Lemma~\ref{lem:dhalf outside bound weak solution} as a statement on real functions, more in the spirit of Lemma~\ref{lem:HTdhalf outside bound}. In fact, the proof above has revealed the following estimate, using the notation and assumptions of Lemma~\ref{lem:HTdhalf outside bound}:
\begin{align*}
 \left(\barint_J \bigg|\dhalf \bigg((1-\eta)\Big(f - \barint_J f \Big)\bigg)\bigg|^2 \right)^{1/2}  \lesssim \bigg(\barint_{4J} |\HT \dhalf f|^2 \bigg)^{1/2} + \sum_{k \in \IZ} \frac{1}{1+|k|^{3/2}} \barint_{J_k} (|\dhalf f| + |\HT \dhalf f|).
\end{align*}
\end{rem}
\subsection{Proof of Theorem \ref{thm:NTmax}}

Again $\LQI$ denotes a parabolic cylinder in $\R^{n+2}_+$ but to provide for the non-tangential character of the estimates in question we shall now assume its sidelength to be comparable to its distance to the boundary, say $\Lambda = (7 \lambda, 9 \lambda)$, $Q = B(x, \lambda)$, and $I = (t - \lambda^2, t + \lambda^2)$, for convenience.

Recall that for any measurable function $F$ on the upper half-space $\R^{n+2}_+$ the non-tangential maximal function is given by
\begin{align*}
 \NT F(x,t) = \sup_{\lambda > 0} \bigg(\bariiint_{\LQI} |F(\mu,y,s)|^2 \d \mu \d y \d s \bigg)^{1/2},
\end{align*}
where the supremum is taken over the family of parabolic cubes described above.

It is well-known that the non-tangential maximal function can be controlled by the corresponding square function. For convenience and later reference we include the short argument proving

\begin{lem}
\label{lem:NT and QE}
For any measurable function $F$ on the upper half-space $\R^{n+2}_+$ the estimate
\begin{align*}
 \sup_{\lambda > 0} \barint_{\frac{7}{8} \lambda}^{\frac{9}{8} \lambda} \|F_{\mu}\|_{2}^2 \d \mu
\lesssim \|\NT F\|_{2}^2
\lesssim \int_0^\infty \|F_{\mu}\|_{2}^2 \; \frac{\mathrm{d} \mu}{\mu}
\end{align*}
holds true where $F_{\mu}=F(\mu, \cdot)$. If the rightmost integral is finite, then for a.e.\ $(x,t) \in \ree$,
\begin{align*}
 \lim_{\lambda \to 0} \bariiint_{\LQI} |F|^2 \d \mu \d y \d s = 0.
\end{align*}
\end{lem}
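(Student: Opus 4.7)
For the left-hand inequality, I would fix $\lambda>0$ and use the pointwise lower bound $\NT F(x,t)^2 \geq \bariiint_{\LQI}|F|^2$ with $\Lambda=(7\lambda,9\lambda)$, $Q=B(x,\lambda)$, $I=(t-\lambda^2,t+\lambda^2)$, which is immediate from the definition of $\NT F$. Integrating in $(x,t)\in\ree$ and swapping the order of integration via Fubini, each point $(\mu,y,s)$ gets counted over those $(x,t)$ with $|x-y|<\lambda$ and $|t-s|<\lambda^2$, whose Lebesgue measure exactly matches the $(x,t)$-denominator of the average. A straightforward computation leaves $c_n\lambda^{-1}\int_{7\lambda}^{9\lambda}\|F_\mu\|_2^2\,\mathrm{d}\mu$, and rescaling $\lambda \mapsto \lambda/8$ adjusts the range to $(7\lambda/8,9\lambda/8)$ as stated.

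For the right-hand inequality, I would dyadise the supremum in $\lambda$. If $\lambda\in(2^{k-1},2^k]$, then the Whitney region at height $\lambda$ sits inside the enlarged box $W_k(x,t):=(7\cdot 2^{k-1},9\cdot 2^k)\times B(x,2^k)\times (t-4^k,t+4^k)$, whose Lebesgue measure is comparable to that of the original. Writing $\widetilde A_{2^k}(x,t)^2$ for the average of $|F|^2$ on $W_k(x,t)$, one gets the pointwise bound $\NT F(x,t)^2 \lesssim \sum_{k\in\IZ}\widetilde A_{2^k}(x,t)^2$. Integrating termwise and repeating the Fubini computation from the first part yields
\begin{equation*}
 \iint_{\ree}\widetilde A_{2^k}(x,t)^2\,\mathrm{d}x\,\mathrm{d}t \lesssim \int_{7\cdot 2^{k-1}}^{9\cdot 2^k}\|F_\mu\|_2^2\,\frac{\mathrm{d}\mu}{\mu},
\end{equation*}
and summing over $k\in\IZ$ is controlled by the bounded overlap of these dyadic windows.

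For the pointwise a.e.\ vanishing of Whitney averages, I would truncate: for $\epsilon>0$, write $F=F_{\leq\epsilon}+F_{>\epsilon}$ with $F_{\leq\epsilon}:=F\cdot 1_{\{\mu\leq\epsilon\}}$. For any $\lambda<\epsilon/9$ the Whitney interval $(7\lambda,9\lambda)$ sits inside $(0,\epsilon)$, so $F_{>\epsilon}$ vanishes identically on the Whitney region and hence $\limsup_{\lambda\to 0}\bariiint_{\LQI}|F|^2 \leq \NT F_{\leq\epsilon}(x,t)^2$ pointwise. Applying the second inequality just proved, $\|\NT F_{\leq\epsilon}\|_2^2 \lesssim \int_0^\epsilon\|F_\mu\|_2^2\,\frac{\mathrm{d}\mu}{\mu}$, which tends to zero as $\epsilon\to 0$ by absolute continuity of the Lebesgue integral. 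Since $\epsilon\mapsto \NT F_{\leq\epsilon}$ is monotone non-increasing as $\epsilon\to 0$, its pointwise limit $g$ satisfies $\|g\|_2=0$ by monotone convergence, so $g=0$ a.e.\ and the squeezed $\limsup$ above vanishes a.e.\ as well.

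The main obstacle is the dyadic reduction used in the second inequality; this step relies on the elementary but crucial observation that a finite enlargement of the aperture in the definition of $\NT$ changes its $L^2$-norm only by a multiplicative constant, which is precisely what permits replacing the $\sup$ over $\lambda$ by a summable geometric-scale decomposition compatible with the plain Fubini computation used in the first part.
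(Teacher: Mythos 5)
Your proof is correct and follows the same Fubini-based strategy as the paper's; the lower bound and the a.e.\ convergence argument are essentially identical to the paper's own. The one place you diverge is the upper bound, where you introduce a dyadic decomposition in $\lambda$, replace the supremum by a sum over scales, and then recover control via bounded overlap of the dyadic windows $(7\cdot 2^{k-1},9\cdot 2^k)$. The paper avoids this layer entirely: it observes that in the indicator $1_{\Lambda}(\mu)\,1_{B(y,\lambda)}(x)\,1_{(s-\lambda^2,s+\lambda^2)}(t)/\lambda^{n+3}$ the constraint $\mu\in\Lambda$ already forces $\lambda\sim\mu$, so one may majorise it by the $\lambda$-independent kernel $2^{n+3}\mu^{-n-3}\,1_{B(y,2\mu)}(x)\,1_{(s-4\mu^2,s+4\mu^2)}(t)$ times the harmless indicator in $\lambda$; pulling the supremum over $\lambda$ inside the integral then costs nothing, and a single Tonelli application yields $\int_0^\infty\|F_\mu\|_2^2\,\mathrm{d}\mu/\mu$ directly. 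Both arguments are valid; the pointwise majorant is shorter and avoids the sup-to-sum loss and the overlap count, while your dyadic version is the more transferable technique in situations where the kernel does not localise $\lambda\sim\mu$ so cleanly. One minor stylistic point: the final step does not need the monotone convergence theorem — since $\NT F_{\leq\eps}\geq g$ for every $\eps$, you have $\|g\|_2\leq\|\NT F_{\leq\eps}\|_2\to 0$ directly, with no convergence theorem required.
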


\begin{proof}
Spelling out the norm $\|\NT F\|_{2}^2$ gives up to a multiplicative constant depending only on $n$,
\begin{align*}
\iint_\ree \sup_{\lambda >0} \bigg(\iiint_{\R^{n+2}_+} \frac{1_{(8\mu/9, 8\mu/7)}(\lambda) 1_{B(y, \lambda)}(x) 1_{(s-\lambda^2, s+\lambda^2)}(t)}{\lambda^{n+3}} |F(\mu,y,s)|^2 \d \mu \d y \d s \bigg) \d x \d t,
\end{align*}
where $1_E$ denotes the characteristic function of a set $E$. Hence, the lower estimate required in the lemma simply follows on pulling the supremum over $\lambda > 0$ outside the double integral and applying Tonelli's theorem. Likewise, for the upper estimate we use
\begin{align*}
\frac{1}{\lambda^{n+3}} \cdot 1_{(8/9 \mu, 8/7 \mu)}(\lambda) 1_{B(y, \lambda)}(x) 1_{(s-\lambda^2, s+\lambda^2)}(t) \leq \frac{2^{n+3}}{\mu^{n+3}} \cdot 1_{B(y, 2 \mu)}(x) 1_{(s-4\mu^2, s+4\mu^2)}(t),
\end{align*}
pull the supremum over $\lambda > 0$ into the triple integral, and apply Tonelli's theorem.

Now, we apply the first part of the lemma to $1_{(0,9\eps/8)}(\mu) F(\mu,y,s)$ in place of $F$, where $\eps > 0$. It follows
\begin{align*}
\iint_\ree \sup_{0<\lambda < \eps} \bigg(\bariiint_{\LQI} |F(\mu,y,s)|^2 \d \mu \d y \d s \bigg) \d x \d t
\lesssim \int_0^\eps \|F_\mu\|_{2}^2 \; \frac{\mathrm{d} \mu}{\mu},
\end{align*}
proving the a.e.\ convergence of averages claimed in the second part of the lemma.
\end{proof}

{Let us recall that the backward parabolic conormal differential of} reinforced weak solutions is defined by
\begin{equation*}
 \pcgt u(\lambda, x,t) = \begin{bmatrix} \dnuA u(\lambda, x,t) \\ \gradx u(\lambda, x,t) \\ \dhalf u(\lambda, x,t) \end{bmatrix}.
\end{equation*}
Using this notation, we can formulate a short hand variant of Theorem~\ref{thm:rh} that is better suited to maximal functions.

\begin{lem}
\label{lem:rh dyadic}
For any reinforced weak solution $u$ to \eqref{eq1} and any parabolic cube $\LQI$ with $8 \Lambda \subset (0,\infty)$,
\begin{align*}
 \bigg(\bariiint_{\LQI} |\pcg u|^2 \d \mu \d y \d s \bigg)^{1/2} \lesssim \sum_{m \geq 0} 2^{-m} \bariiint_{8\Lambda \times 8Q \times 4^m I} |\pcg u| + | \pcgt u| \d \mu \d y \d s.
\end{align*}
\end{lem}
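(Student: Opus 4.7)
The lemma follows from Theorem~\ref{thm:rh} by a straightforward repackaging of the sum on the right-hand side. First, I observe that $|\pcg u|^2 \leq |\gradlamx u|^2 + |\HT\dhalf u|^2$, so the integrand on the left is dominated by the one appearing on the left of Theorem~\ref{thm:rh}. Conversely, solving $\dnuA u = A_{\pe\pe}\partial_\lambda u + A_{\pe\pa}\gradx u$ for $\partial_\lambda u$ via ellipticity of $A_{\pe\pe}$ gives $|\gradlamx u| \lesssim |\dnuA u|+|\gradx u|$, and hence $|\gradlamx u| + |\HT\dhalf u| + |\dhalf u| \lesssim |\pcg u| + |\pcgt u|$. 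In our setup $\Lambda=(7\lambda,9\lambda)$, the radius condition $r<\lambda_0/8$ of Theorem~\ref{thm:rh} holds only at the borderline $r=\lambda_0/8$, so I invoke the theorem via the scale-invariance remark following its statement with a slightly smaller enlargement factor (say $c=7$, for which $r=\lambda<8\lambda/7=\lambda_0/7$ holds strictly). The enlarged cubes $c\Lambda\times cQ$ are contained in $8\Lambda\times 8Q$ with dimensional volume ratio, giving
\begin{align*}
 \bigg(\bariiint_{\LQI} |\pcg u|^2\bigg)^{1/2} \lesssim \sum_{k \in \IZ} \frac{1}{1+|k|^{3/2}} \bariiint_{8\Lambda \times 8Q \times I_k} (|\pcg u| + |\pcgt u|).
\end{align*}

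It remains to reorganise the $\IZ$-indexed sum as a sum over dyadic enlargements of $I$ in the time variable. Set $K_0:=\{0\}$ and, for $m\geq1$, let $K_m$ be the set of $k\in\IZ$ with $I_k\subset 4^m I$ but $I_k\not\subset 4^{m-1}I$. Since $I_k=k\ell(I)+I$ with $\ell(I)=2\lambda^2$, each $k\in K_m$ satisfies $|k|\gtrsim 4^{m-1}$, so the weight bound $\tfrac{1}{1+|k|^{3/2}}\lesssim 4^{-3m/2}$ holds on $K_m$ with a constant independent of $m$. Moreover, the intervals $\{I_k\}_{k\in K_m}$ are pairwise disjoint and contained in $4^m I$, and $|4^m I|=4^m|I|$. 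Writing $E:=8\Lambda\times 8Q$ and $f:=|\pcg u|+|\pcgt u|$, disjointness yields
\begin{align*}
 \sum_{k\in K_m}\bariiint_{E\times I_k}f = \frac{1}{|E||I|}\sum_{k\in K_m}\iiint_{E\times I_k}f \leq \frac{|4^m I|}{|I|}\,\bariiint_{E\times 4^m I}f = 4^m\,\bariiint_{E\times 4^m I}f.
\end{align*}
Combining with the weight bound produces the crucial cancellation $4^m\cdot 4^{-3m/2}=4^{-m/2}=2^{-m}$, that is,
\begin{align*}
 \sum_{k\in K_m}\frac{1}{1+|k|^{3/2}}\bariiint_{E\times I_k}f \lesssim 2^{-m}\bariiint_{E\times 4^m I}f.
\end{align*}
Summing over $m\geq 0$ finishes the proof.

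The argument is essentially bookkeeping once Theorem~\ref{thm:rh} is in hand, so there is no serious obstacle. The only moderately delicate point is keeping track of constants at the borderline radius condition, handled by shrinking the enlargement factor slightly; the dyadic regrouping on the time axis is the natural counterpart of the algebraic weight $(1+|k|)^{-3/2}$ in Theorem~\ref{thm:rh}, exactly strong enough to survive the volume growth $|4^m I|/|I|=4^m$ and leave behind the summable geometric factor $2^{-m}$.
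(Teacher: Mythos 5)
Your proposal is correct and takes essentially the same route as the paper: bound $|\gradlamx u| + |\HT \dhalf u| + |\dhalf u|$ against $|\pcg u| + |\pcgt u|$ via ellipticity, apply Theorem~\ref{thm:rh} (using the covering remark following it to tune the enlargement factor), and dyadically regroup the translate sum so that the weight $(1+|k|)^{-3/2}$ trades against the volume factor $4^m$ from the union of disjoint $I_k$ to produce $2^{-m}$. Your grouping by the smallest $m$ with $I_k \subset 4^m I$ is only a cosmetic variant of the paper's grouping by $4^m \leq |k| < 4^{m+1}$ and leads to the same estimate.
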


\begin{proof} Observe that the right hand sides of the reverse H\"older inequalities in Theorem~\ref{thm:rh} involve integrals of $|\pcg u| + | \pcgt u|$. To convert the translates on intervals into the formulation above, use that for any measurable function $f$ on the real line
\begin{align*}
\sum_{k \in \IZ} \frac{1}{1+|k|^{3/2}} \barint_{I_k} |f|
\leq \barint_{I} |f| + \sum_{m \geq 0} 4^{-3m/2} \bigg(\sum_{4^m \leq |k| < 4^{m+1}} \barint_{I_k} |f| \bigg)
\leq \sum_{m \geq 0}2^{-m} \barint_{4^m I} |f|. \quad & \qedhere
\end{align*}
\end{proof}

Note that if we introduce the operator matrices
\begin{align*}
 \HT^\theta = \begin{bmatrix} \vphantom{\dhalf}1 & 0 & 0 \\ 0 & 1 & 0 \\ \vphantom{\dhalf}0 & 0 & \HT \end{bmatrix} \qquad \text{and} \qquad \wt{\P} = \begin{bmatrix} 0 & \divx & -\HT \dhalf \\ -\gradx & 0 & 0 \\ -\dhalf & 0 & 0 \end{bmatrix},
\end{align*}
then $\pcg u= \HT^\theta \pcgt u$, $\HT^\theta$ commutes with $\M$ and intertwines $\P$ and $\wt{\P}$ with $P\HT^\theta = \HT^\theta \wt{\P}$, so that the operator $ \wt{\P}\M$ has the same {operator theoretic} properties as $\P\M$. Also we have the equation
$\pd_{\lambda} \pcgt u+ \wt{\P} \M \pcgt u=0$ and the link with the parabolic equation guarantees the off-diagonal estimates for $\wt{\P}\M$ and $\M\wt{\P}$ as well. {Alternatively, this can be seen by a verbatim repetition of the proof of Proposition~\ref{prop:OffDiag}.} Thus, in the following we may argue with $\pcgt u$ as we do with $\pcg u$ and we shall only provide the details involving $\pcg u$.

After these preparations we are ready to prove Theorem~\ref{thm:NTmax}.

\subsubsection*{Step 1: The non-tangential estimate}

Let $h \in \cl{\ran(\P \M)}$ and let $F = Sh$ be its semigroup extension $F(\mu,y,s) = \e^{- \mu [\P \M]}h(y,s)$.

The lower bound for $\NT f$ follows from Lemma~\ref{lem:NT and QE} and strong continuity of the semigroup:
\begin{align*}
 \|\NT F \|_{2}^2 \gtrsim \lim_{\lambda \to 0} \barint_{\frac{7}{8} \lambda}^{\frac{9}{8} \lambda} \|\e^{- \mu [\P \M]}h\|_{2}^2 \d \mu = \|h\|_{2}^2.
\end{align*}

We turn to the interesting upper estimate. In view of \eqref{eq:hardysplit} we have spectral decompositions $h = h^+ + h^-$, where $h^\pm = \chi^\pm(\P \M) h \in \Hpm(\P \M)$, and thus $F = F^+ + F^-$, where $F^\pm = Sh^\pm$. Since $F^+=C_{0}^+h^+$ solves $\partial_\lambda F^+ + \P \M F^+ = 0$ in the weak sense in $\R_{+}$, Theorem~\ref{thm:correspondence} yields a representation $F^+ = \pcg u$ for some reinforced weak solution $u$ to \eqref{eq1}. Similarly, for $\lambda<0$, the reflection $G^-_{\lambda}=(F^-)_{-\lambda}=(C_{0}^-h^-)_{\lambda}$ solves $\partial_\lambda G^- + \P \M G^- = 0$ on $\R_{-}$ so that the similar analysis can be performed. So from now on, we may as well assume $h=h^+$ to simplify matters and that $F=C_{0}^+h=\pcg u $ for some reinforced weak solution $u$ to \eqref{eq1}.

We fix $(x,t)\in \R^{n+1}$ and take $\LQI$ as above. Setting $\wt{F}=\pcgt u$, Lemma~\ref{lem:rh dyadic} asserts
\begin{align*} \bigg(\bariiint_{\LQI} |F|^2 \d \mu \d y \d s \bigg)^{1/2} \lesssim
 \sum_{m \geq 0} 2^{-m} \bariiint_{8\Lambda \times 8Q \times 4^m I} |F| + |\wt{F}| \d \mu \d y \d s.
\end{align*}
To control the right hand side, as mentioned above, it is enough to obtain bounds for the terms involving $F$ using the $\L^2$ estimate on $h$; those for $\wt{F}$ would lead to the same estimate with $P$ replaced by $\wt{P}$ and $h$ replaced by $\HT^\theta h$, which is in $\L^2$ as well.

We introduce $R_\mu := (1+i \mu \P \M)^{-1}$ and split $F_\mu = (F_\mu-R_\mu h)+R_\mu h$. After an application of the Cauchy-Schwarz inequality, the same argument as in the proof of Lemma~\ref{lem:NT and QE} yields
\begin{align*}
 \iint_{\ree} &\sup_{\lambda > 0} \bigg(\sum_{m \geq 0} 2^{-m} \bariiint_{8\Lambda \times 8Q \times 4^m I} |F_\mu-R_\mu h| \d \mu \d y \d s \bigg)^2 \d x \d t \\
 &\lesssim \sum_{m \geq 0} 2^{-m} \iint_{\ree} \sup_{\lambda > 0} \bigg(\bariiint_{8\Lambda \times 8Q \times 4^m I} |F_\mu-R_\mu h|^2 \d \mu \d y \d s \bigg)^2 \d x \d t \\
 &\lesssim \sum_{m \geq 0} 2^{-m} \int_0^\infty \|F_\mu-R_\mu h\|_{2}^2 \; \frac{\mathrm{d} \mu}{\mu} \\
 &=\int_0^\infty \|F_\mu-R_\mu h\|_{2}^2 \; \frac{\mathrm{d} \mu}{\mu}.
\end{align*}
Now, we can use the functional calculus for $\P \M$ to write $F_\mu - R_\mu h = \psi(\mu \P M)h$ with $\psi(z) = \e^{-[z]} - (1+\i z)^{-1}$. As $\psi$ is holomorphic on some double sector containing the spectrum of $\P \M$ and has polynomial decay at $0$ and $\infty$, the quadratic estimates for $\P \M$ obtain in Theorem~\ref{thm:bhfc} yield
\begin{align*}
 \int_0^\infty \|F_\mu-R_\mu h\|_{2}^2 \; \frac{\mathrm{d} \mu}{\mu} = \int_0^\infty \|\psi(\mu \P \M) h\|_{2}^2 \; \frac{\mathrm{d} \mu}{\mu} \sim \|h\|_{2}^2.
\end{align*}
The remaining term is
\begin{align}
\label{eq2:NT max}
 \sum_{m \geq 0} 2^{-m} \bariiint_{8\Lambda \times 8Q \times 4^m I} |R_\mu h| \d \mu \d y \d s.
\end{align}
Let $q \in (1,2)$ be such that we have Proposition~\ref{prop:OffDiag} on $\L^q$ off-diagonal decay for $R_\mu$ at our disposal. With $N_0 \in \IN$ as provided by this proposition we define for $J \subset \R$ and $B \subset \R^n$ the annuli
\begin{align*}
 C_0(B \times J) = 2 B \times N_{0} J, \qquad C_k(B \times J) = (2^{k+1}B \times N_0^{k+1} J) \setminus (2^k B \times N_0^k J)
\end{align*}
and we obtain an estimate
\begin{align*}
 \bariint_{8Q \times 4^m I} |R_\mu h| \d y \d s
& \leq \bigg(\bariint_{8Q \times 4^m I} |R_\mu h|^q \d y \d s \bigg)^{1/q} \\
& \lesssim \sum_{k \geq 0} N_0^{- \eps k} \bigg(\bariint_{C_k(8Q \times 4^m I)} |h|^q \d y \d s \bigg)^{1/q} \\
& \lesssim \sum_{k \geq 0} N_0^{- \eps k} \bigg(\barint_{2^{k+4}Q} \bigg(\barint_{N_0^{k+1} 4^m I} |h|^q \d y \bigg) \d s \bigg)^{1/q}
\end{align*}
for some $\eps > 0$ and implicit constants independent of $m \geq 0$ and $\mu \in 8 \Lambda$. By means of the $q$-adapted maximal operators acting separately on each variable by
\begin{align*}
 \Max_t^q f(x,t) = \sup_{J \ni t} \bigg(\barint_J |f(x,s)|^q \d s \bigg)^{1/q} \quad \text{and} \quad \Max_x^q f(x,t) = \sup_{B \ni x} \bigg(\barint_B |f(y,t)|^q \d y\bigg)^{1/q},
\end{align*}
where $J \subset \R$ and $B \subset \R^n$ denote intervals and balls, respectively, the ongoing estimate can be completed as
\begin{align*}
 \bariint_{8Q \times 4^m I} |R_\mu h| \d y \d s \lesssim \Max_x^q (\Max_t^q h )(x,t) \qquad (m \ge 0, \, \mu \in 8 \Lambda).
\end{align*}
Since $q < 2$, the $q$-adapted maximal operators are bounded on $\L^2$. Whence, the term defined in \eqref{eq2:NT max} has non-tangential maximal function bounds
\begin{align*}
 \iint_{\ree} \sup_{\lambda > 0} \bigg(\sum_{m \geq 0} 2^{-m} \bariiint_{8\Lambda \times 8Q \times 4^m I} |R_\mu h| \d \mu \d y \d s \bigg)^2 \d x \d t
 &\lesssim \|\Max_x^q \Max_t^q h \|_{2}^2
 \lesssim \|h\|_{2}^2.
\end{align*}
Altogether, we obtain
\begin{align}
\label{eq:NTtranslatesBound}
 \|\NT(F)\|_2^2 \lesssim \iint_{\ree} \sup_{\lambda > 0} \bigg(\sum_{m \geq 0} 2^{-m} \bariiint_{8\Lambda \times 8Q \times 4^m I} |F(\mu,y,s)| \d \mu \d y \d s \bigg)^2 \d x \d t \lesssim \|h\|_2^2
\end{align}
as required.

\subsubsection*{Step 2: Almost everywhere convergence of averages}

In order to complete the proof of the theorem we have yet to show that for any $h \in \cl{\ran(\P \M)}$ there is convergence
\begin{align}
\label{eq1:Whitney convergence}
 \lim_{\lambda \to 0} \bariiint_\LQI |Sh(\mu,y,s) - h(x,t)|^2 \d \mu \d y \d s = 0 \qquad (\text{a.e.\ $(x,t) \in \ree$}).
\end{align}
As we have the $\L^2$ estimate on the maximal function, it suffices to prove this for $h$ in some dense subset of $\cl{\ran(\P M)}$.

\begin{lem}
\label{lem:dense class Whitney convergence}
There exists $\delta_0 > 0$ depending only on {dimension and} the ellipticity constants of $A$ such that if $|\frac{1}{p} - \frac{1}{2}|<\delta_0$, then $\{h \in \ran(\P M) \cap \dom(\P \M) : \P \M h \in \L^p(\ree; \IC^{n+2}) \}$ is dense in $\cl{\ran(\P \M)}$.
\end{lem}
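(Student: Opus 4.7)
The plan is to approximate via the bisectorial functional calculus of $\P\M$, combining the approximation-of-identity feature on $L^2$ with the $\L^p$-resolvent bounds from Lemma~\ref{lem:LpLq}.

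Given $g \in \cl{\ran(\P\M)}$, I first select a sequence $\tilde g_k \in \L^2(\ree;\IC^{n+2}) \cap \L^p(\ree;\IC^{n+2})$ with $\tilde g_k \to g$ in $\L^2$. Such sequences exist by truncation in modulus and spatial cutoffs, since $\L^2\cap \L^p$ is dense in $\L^2$ for any $p\in[1,\infty]$. Note that the $\tilde g_k$ are \emph{not} required to lie in $\cl{\ran(\P\M)}$. For parameters $0<\eps<1<N$, I then set
\begin{equation*}
 h_{k,\eps,N} := (1+\eps^2(\P\M)^2)^{-1}\, N^2(\P\M)^2\,(1+N^2(\P\M)^2)^{-1}\, \tilde g_k,
\end{equation*}
where the right-hand side is defined via the bisectorial functional calculus on $\L^2$ provided by Theorem~\ref{thm:bhfc}. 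Since both scalar factors are bounded and regular at the spectrum of $\P\M$, the two resolvent factors ensure $h_{k,\eps,N} \in \dom(\P\M) \cap \ran(\P\M)$.

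The key point is the partial fraction identity
\begin{equation*}
 \frac{N^2 z^3}{(1+\eps^2 z^2)(1+N^2 z^2)} = \frac{N^2}{N^2-\eps^2}\bigg(\frac{z}{1+\eps^2 z^2} - \frac{z}{1+N^2 z^2}\bigg),
\end{equation*}
combined with the resolvent formula $\frac{z}{1+t^2 z^2} = \frac{1}{2\i t}[(1-\i tz)^{-1}-(1+\i tz)^{-1}]$. Plugging $z = \P\M$ expresses $\P\M\, h_{k,\eps,N}$ explicitly as a linear combination (with coefficients depending on $\eps,N$ only) of the resolvents $(1\pm \i\eps\, \P\M)^{-1}\tilde g_k$ and $(1\pm \i N\,\P\M)^{-1}\tilde g_k$. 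Lemma~\ref{lem:LpLq} guarantees each of these resolvents is bounded on $\L^p(\ree;\IC^{n+2})$ when $|\tfrac{1}{p}-\tfrac{1}{2}|<\delta_0$, and since $\tilde g_k \in \L^p$, this yields $\P\M\, h_{k,\eps,N} \in \L^p$ as required.

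Finally, I must verify $\L^2$ convergence along a diagonal sequence. The scalar factor $N^2 z^2/(1+N^2 z^2)$ vanishes at $z=0$, so its functional calculus image annihilates $\nul(\P\M)$; consequently $h_{k,\eps,N}$ is unchanged if $\tilde g_k$ is replaced by its $\cl{\ran(\P\M)}$-component under the splitting in Lemma~\ref{lem:bisectoriality of Dirac}. On $\cl{\ran(\P\M)}$, the scalar multiplier $\frac{N^2 z^2}{(1+\eps^2 z^2)(1+N^2 z^2)}$ converges pointwise to $1$ as $\eps\to 0$ and $N\to\infty$, with uniform bounds on any sector containing the spectrum. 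The standard convergence lemma for the bounded holomorphic functional calculus thus yields $h_{k,\eps,N} \to \IP_{\P\M}\tilde g_k$ in $\L^2$ for fixed $k$, where $\IP_{\P\M}$ denotes the bounded projection onto $\cl{\ran(\P\M)}$ along $\nul(\P\M)$. Since $g=\IP_{\P\M} g$, the boundedness of $\IP_{\P\M}$ gives $\IP_{\P\M}\tilde g_k\to g$ in $\L^2$ as $k\to\infty$, and a diagonal extraction completes the proof.

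The main obstacle is reconciling two distinct integrability demands on $h$ and $\P\M h$ while staying inside $\cl{\ran(\P\M)}$: the density is formulated in $\L^2$, whereas the $\L^p$ integrability of $\P\M h$ seems to require starting from $\L^p$ data. The partial fraction trick dissolves this tension by rewriting $\P\M h_{k,\eps,N}$ as an explicit combination of resolvents acting on the $\L^p$ datum $\tilde g_k$, so that the extra integrability is handed to us for free by Lemma~\ref{lem:LpLq}.
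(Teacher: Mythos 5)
Your proof is correct and follows essentially the same strategy as the paper's: both pick an $\L^p \cap \L^2$ approximation of $g$, apply a rational function of $\P\M$ vanishing at $0$ and at $\infty$ so that the image lies in $\dom(\P\M) \cap \ran(\P\M)$ and $\nul(\P\M)$ is annihilated (giving $\L^2$ convergence on $\cl{\ran(\P\M)}$), and then expand $\P\M$ applied to the approximant into a finite linear combination of resolvents so that Lemma~\ref{lem:LpLq} provides the $\L^p$ membership. The paper uses the one-parameter family $T_m = \i m R_{\i m}\P\M R_{\i/m}$ where you use the two-parameter $\phi_{\eps,N}(\P\M)$, an inessential variation.
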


\begin{proof}
We take $\delta _{0}$ as in Lemma~\ref{lem:LpLq}, thereby guaranteeing that all resolvents $R_\mu = (1+\i \mu \P \M)^{-1}$, $\mu \in \R$, map $\mHp \cap \mH$ into itself where $\mHp=\L^p(\R^{n+1}; \IC^{n+2})$. For $m \in \IN$ define
\begin{align*}
 T_m \in \Lop(\mH), \quad T_m h = \i m R_{\i m} \P \M R_{\i/m} h.
\end{align*}
Since $\P \M$ is densely defined and bisectorial, the $T_m$ are uniformly bounded with respect to $m$ and on $\cl{\ran(\P \M)}$ they converge strongly to the identity, see \cite[Prop.~3.2.2]{EigeneDiss} for an explicit statement. Now, let $h \in \cl{\ran(\P \M)}$. We can choose functions $h_m \in \mHp \cap \mH$ such that $h_m \to h$ in $\mH$. Then also $T_m h_m \to h$ in $\mH$. However, by definition we have $T_m h_m \in \ran(\P \M) \cap \dom(\P \M)$ and expanding
\begin{align*}
 \P M T_m h_m = \i m h_m + R_{\i/m} h_m + m^2 R_{\i m} h_m - \i m R_{\i m} R_{\i/m} h_m,
\end{align*}
we read off $\P \M T_m h_m \in \mHp \cap \mH$ as desired.
\end{proof}

We complete the proof of the theorem by demonstrating \eqref{eq1:Whitney convergence} if $h \in \ran(\P M) \cap \dom(\P \M)$ satisfies $\P \M h \in \L^p(\ree; \IC^{n+2})$ for some $p>2$. As in Step~1 we let $R_\mu = (1 + \i \mu \P \M)^{-1}$ and $\psi(z) = \e^{-[z]} - (1+\i z)^{-1}$. We start out by estimating
\begin{align*}
 \bariiint_\LQI |Sh(\mu,y,s) - h(x,t)|^2 \d \mu \d y \d s
 & \lesssim \bariiint_\LQI |\psi(\mu \P \M)h(y,s)|^2 \d \mu \d y \d s \\
 &\quad + \bariiint_\LQI |R_\mu h(y,s) - h(y,s)|^2 \d \mu \d y \d s \\
 &\quad + \bariint_{Q \times I} |h(y,s) - h(x,t)|^2 \d y \d s.
\end{align*}
Thanks to Lemma~\ref{lem:NT and QE} and the quadratic estimates for $\P \M$, the first term vanishes for a.e.\ $(x,t) \in \ree$ in the limit $\lambda \to 0$. The third term vanishes for every (parabolic) Lebesgue point of $h \in \L^2(\ree)$, that is, for a.e.\ $(x,t) \in \ree$. Since $h \in \dom(\P \M)$, the middle term can be rewritten as
\begin{align*}
 \bariiint_\LQI |\mu R_\mu \P \M h(y,s)|^2 \d \mu \d y \d s
 & \lesssim \lambda^2 \bariiint_\LQI |R_\mu \P \M h(y,s)|^2 \d \mu \d y \d s \\
 & \lesssim \lambda^2 \big(\Max_x^2 \Max_t^2( \P \M h )(x,t)\big)^2,
\end{align*}
where the second step follows from the $\L^2$ off-diagonal estimates for $R_\mu$ similar to the estimate of \eqref{eq2:NT max}. Hence, these terms vanish in the limit $\lambda \to 0$ if only the maximal function $\Max_x^2 \Max_t^2(\P \M h)(x,t)$ is finite. This happens to be almost everywhere thanks to our additional assumption on $h$: In fact, $\Max_t^2$ and $\Max_x^2$ are bounded on $\L^p(\ree)$ since $p>2$ and thus $\P \M h \in \L^p(\ree; \IC^{n+2})$ implies $\Max_x^2 \Max_t^2(\P \M h) \in \L^p(\ree)$.

\begin{rem} Although it is not needed for our applications, we observe that the almost everywhere convergence \eqref{eq1:Whitney convergence} holds also for $h\in \nul(\P\M)$. In this case, as $(Sh)_{\lambda}=h$ for all $\lambda>0$, we can control the averages by the parabolic Hardy-Littlewood maximal operator $\Max_{(x,t)}^2h(x,t)$, use its weak-type (2,2) and Lebesgue differentiation.
\end{rem}
\subsection{Proof of Theorem \ref{thm:NTmaxDir}}

The estimates for the semigroup extension with respect to $\M \P$ are easily obtained from those for $\P \M$.

Since $\M: \cl{\ran(\P \M)} \to \cl{\ran(\M \P)}$ is an isomorphism, it is a matter of fact for the functional calculus of these type of operators that there is an \emph{intertwining relation}
\begin{align}
\label{eq1:NTmaxDir}
 \e^{- \mu [\M \P]} h = \M \e^{-\mu [\P \M]} \M^{-1}h \qquad (h \in \cl{\ran(\M \P)}).
\end{align}
In fact, we can directly verify $(\lambda - \M \P)^{-1} \M = \M(\lambda -\P \M)^{-1}$ for $\lambda$ in a suitable double sector in the complex plane and extend this relation to the Cauchy integral defining the semigroups above.

Since $\M$ is a bounded multiplication operator, it acts boundedly on $\L^2(\LQI; \IC^{n+2})$ for every Whitney region $\LQI$. Let now $h \in \cl{\ran(\M \P)} = \M \cl{\ran(\P)}$. Since $\M^{-1} h \in \cl{\ran(\P \M)}$ falls under the scope of Theorem~\ref{thm:NTmax} and as $\M^{-1}: \cl{\ran(\M \P)} \to \L^2(\ree;\IC^{n+2})$ is bounded (though not necessarily a multiplication operator), the upper $\NT$-bound and the almost everywhere convergence of Whitney averages for the $\M \P$-semigroup follow from \eqref{eq1:NTmaxDir} and the corresponding result for the $\P \M$-semigroup. Finally the lower bound $\|\NT(\e^{- \cdot [\M \P]} h)\|_2 \gtrsim \|h\|_2$ follows from Lemma~\ref{lem:NT and QE} and the strong continuity of the semigroup for $\M \P$ just as in the case of $\P \M$.
\section{Duality results for \texorpdfstring{$\lambda$}{lambda}-independent operators}
\label{sec:charWPDuality}

In this section we prove Theorem~\ref{thm:duality} and the abstract Green's formula announced in Proposition~\ref{prop:green}. The main step is the following. We use the notation introduced in Section~\ref{sec:backward}. The relevant Sobolev spaces have been introduced in Section~\ref{sec:sobolev}. Recall that we gather $\pa$- and $\te$-components of vectors in a single component denoted $r$.

\begin{lem}[Simultaneous duality]
\label{lem:duality}
Let $-1\le s\le 0$.  There is a pairing simultaneously realising the pairs of spaces   $(\Hdot^s_{\P},\Hdot^{-s-1}_{\P^*})$,   $(\Hdot^{s, \pm}_{\P\M}, \Hdot^{-s-1,\mp}_{\P^*\wM})$, $((\Hdot^{s}_{\P})_{\pe}, (\Hdot^{-s-1}_{\P^*})_{r})$ and $((\Hdot^{s}_{\P})_{r}, (\Hdot^{-s-1}_{\P^*})_{\pe})$  as dual spaces. Moreover, in this pairing, the pairs  $(\Hdot^{s, \pm}_{\P\M}, \Hdot^{-s-1,\pm}_{\P^*\wM})$, $((\Hdot^{s}_{\P})_{\pe}, (\Hdot^{-s-1}_{\P^*})_{\pe})$ and $((\Hdot^{s}_{\P})_{r}, (\Hdot^{-s-1}_{\P^*})_{r})$  are  pairs of orthogonal spaces.
\end{lem}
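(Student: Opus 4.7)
The approach rests on one key algebraic identity and one carefully chosen sesquilinear pairing. A direct block-matrix calculation from the explicit forms of $\P$ and $N$ (see \eqref{eq:N}) reveals the anticommutation $\P N + N \P = 0$, equivalently $N \P N = -\P$, and by taking adjoints $N \P^* = -\P^* N$. Combined with $\wM = N \M^* N$ this yields $\M^* N = N \wM$. Guided by these, I would introduce the pairing
\[
 \langle f, g\rangle_s := \bigl\langle f,\, N(\P^*)^{-1} g \bigr\rangle_{\Hdot^s_\P,\,\Hdot^{-s}_{\P^*}},
\]
where the right-hand side is the standard $\Hdot^s_\P \times \Hdot^{-s}_{\P^*}$ duality extending the $L^2$ inner product, available from the bounded holomorphic functional calculus of $\P$ and $\P^*$. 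Since $(\P^*)^{-1}:\Hdot^{-s-1}_{\P^*}\to\Hdot^{-s}_{\P^*}$ is an isomorphism and $N$ is an isometric involution on $\cl{\ran \P^*}$ that extends isometrically along the Sobolev scale (the choice $[-z]=[z]$ forces $N$ to commute with $[\P^*]^s$), the map $g\mapsto N(\P^*)^{-1}g$ is an isomorphism $\Hdot^{-s-1}_{\P^*}\to\Hdot^{-s}_{\P^*}$, and so $\langle\cdot,\cdot\rangle_s$ realises $\Hdot^s_\P$ as the dual of $\Hdot^{-s-1}_{\P^*}$ and establishes the first of the four duality claims.

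For the spectral statements, I would verify on a dense subspace the intertwining
\[
 \langle \P\M f,\, g\rangle_s = -\langle f,\, \P^*\wM g\rangle_s
\]
by unwinding the definition: $(\P\M f, N(\P^*)^{-1}g) = (\M f, \P^* N(\P^*)^{-1}g) = -(\M f, Ng) = -(f, \M^*Ng) = -(f, N\wM g) = -\langle f, \P^*\wM g\rangle_s$. This extends by induction to polynomials of $\P\M$, and then by the bounded holomorphic functional calculus of Theorem~\ref{thm:bhfc} for both $\P\M$ and $\P^*\wM$ to any bounded holomorphic $b$ on a double sector, yielding $\langle b(\P\M)f, g\rangle_s = \langle f, \wt{b}(\P^*\wM)g\rangle_s$ with $\wt{b}(z):=b(-z)$. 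Taking $b=\chi^\pm$ and noting $\chi^\pm(-z)=\chi^\mp(z)$ gives
\[
 \langle \chi^\pm(\P\M) f,\, g\rangle_s \;=\; \langle f,\, \chi^\mp(\P^*\wM) g\rangle_s.
\]
Orthogonality of $(\Hdot^{s,\pm}_{\P\M},\,\Hdot^{-s-1,\pm}_{\P^*\wM})$ is then immediate from $\chi^+\chi^-=0$, while the non-degenerate duality of the crossed pairs $(\Hdot^{s,\pm}_{\P\M},\,\Hdot^{-s-1,\mp}_{\P^*\wM})$ follows because $\chi^\pm(\P\M)$ and $\chi^\mp(\P^*\wM)$ are complementary projections that are mutually adjoint under $\langle\cdot,\cdot\rangle_s$.

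For the component identities, the plan is to show that $(\P^*)^{-1}$ swaps the $\pe$ and $r$ parts. The analogue of Lemma~\ref{lem:characterisation ranP} for $\P^*$ gives $\cl{\ran \P^*} = \{g:\gradx g_\te = \dhalf g_\pa\}$. Solving $\P^* h = g$ with $h\in\cl{\ran \P^*}$ on the Fourier side: when $g$ is pure $\pe$, the two $r$-rows of $\P^*$ force $\hat h_\pe=0$, while the first row together with the compatibility condition yields $\hat h_\te = |\tau|^{1/2}\hat g_\pe/(i\tau - |\xi|^2)$ and $\hat h_\pa = i\xi\hat h_\te/|\tau|^{1/2}$, so $h$ is pure $r$; when $g$ is pure $r$, the first row combined with the compatibility forces $\hat h_\te = \hat h_\pa = 0$ and $\hat h_\pe$ is determined by the $r$-rows, so $h$ is pure $\pe$. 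Since $N$ is diagonal and preserves the $\pe$/$r$ split, $N(\P^*)^{-1}$ swaps pure $\pe$ and pure $r$; hence the $L^2$ pairing $(f, N(\P^*)^{-1}g)$ vanishes whenever $f$ and $g$ are both pure $\pe$ or both pure $r$, yielding the claimed component orthogonalities, while the $\pe$--$r$ and $r$--$\pe$ crossed pairings are non-degenerate by the isomorphism property of $N(\P^*)^{-1}$ and non-degeneracy of the standard duality.

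The hardest part will be making the intertwining identity and the subsequent passage to the functional calculus rigorous on the full Sobolev scale, since expressions such as $\P^*(\P^*)^{-1}g$ or $\M^* N(\P^*)^{-1}g$ mix unbounded operators whose domains are delicate. The clean route is to establish everything on a dense subspace where all relevant fractional powers are bounded (for instance $\dom([\P\M]^k)\cap \ran([\P\M]^k)\cap\cl{\ran\P}$ with $k$ large), and then extend by continuity using Lemma~\ref{lem:identification} together with the isomorphism properties of $\P$, $\P^*$, $N$ along the scale. This is precisely the abstract machinery developed for the elliptic analogue in \cite[Sec.~11--12]{Auscher-Stahlhut_APriori} and \cite[Sec.~7]{Amenta-Auscher}, and it transposes verbatim to our setting thanks to the structural parallel between $\P\M$ and the elliptic operator $DB$.
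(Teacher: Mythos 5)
Your proposal is correct and follows essentially the same route as the paper. The anticommutation $\P^*N = -N\P^*$, the duality form $\beta(f,g) = (f, N{\P^*}^{-1}g)$, the observation that $\P^*$ swaps $\pe$- and $r$-components, and the resulting intertwining of $\chi^\pm(\P\M)$ with $\chi^\mp(\P^*\wM)$ are exactly the paper's ingredients; you simply spell out the Fourier-side verification of the component swap and the passage to the functional calculus in more detail.
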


\begin{rem}
\label{rem:duality}
In order to make the statement of Lemma~\ref{lem:duality} meaningful, we identify spaces of type $X_{\pe}$ and $X_r$ as subspaces of $X$ via
$X_{\pe}\cong X_{\pe} \oplus \{0\}$ and $X_{r}\cong \{0\} \oplus X_{r}$.
\end{rem}

\begin{proof}
With $N$ as defined in \eqref{eq:N} and $P^*$ the adjoint of $\P$ in the canonical $\L^2(\R^{n+1}; \IC^{n+2})$ inner product,  $N$ and $\P^*$ anti-commute: $\P^*N=- N\P^*$. So, with $\wM= N\M^*N= N\M^*N^{-1}$, we obtain that the adjoint of $\P\M$ in the complex form  $$(f,Ng)=\iint_{\R^{n+1}} f \cdot \overline {Ng}\d x\d t$$ in $\L^2(\R^{n+1}; \IC^{n+2})$ is $-\wM\P^*$. Thus, the dual of the Sobolev space $\Hdot^s_{\P\M}$ in the extension of this duality is $\Hdot^{-s}_{\wM\P^*}$. Next, the formal relation  $\P^*(\wM\P^*)= (\P^*\wM)\P^*$ allows one to show that $\P^*$ extends to an isomorphism between  $\Hdot^{-s}_{\wM\P^*}$ and $\Hdot^{-s-1}_{\P^*\wM}$. Therefore $(\Hdot^s_{\P\M},\Hdot^{-s-1}_{\P^*\wM})$    is a pair of dual spaces for the duality form $$\beta(f,g)=  (f, N{\P^*}^{-1}g). $$ As {this very argument also applies  to $\M = 1$, we} see that $(\Hdot^s_{\P},\Hdot^{-s-1}_{\P^*})$ are dual spaces for the same duality form. As $P$ and $P^*$ swap the $\pe$- and $r$-components we have the announced duality for the $\pe$- and $r$-spaces as well as the orthogonality. And from the anti-commutation of $N$ and $\P^*$, we derive the announced duality and orthogonality for the spectral spaces.
\end{proof}

The proof of Theorem~\ref{thm:duality} now follows by applying abstract results. We shall make the same identifications as in Remark~\ref{rem:duality}.

\begin{proof}[Proof of Theorem~\ref{thm:duality}]We have a pair of projectors $(\chi^+(\P\M), \chi^-(\P\M))$ associated with the splitting $\Hdot^{s}_{\P}=\Hdot^{s, +}_{\P\M} \oplus \Hdot^{s, -}_{\P\M}$ and due to the identifications made above, the component maps $(N_{\pe}, N_{r})$ become the projectors associated with $\Hdot^s_{\P}= (\Hdot^{s}_{\P})_{\pe} \oplus (\Hdot^{s}_{\P})_{r}$. In the duality $\beta$ their dual pairs are $(\chi^-(\P^*\wM), \chi^+(\P^*\wM))$ and $(N_{r}, N_{\pe})$ corresponding to the dual splittings $\Hdot^{-s-1}_{\P^*}=\Hdot^{-s-1, -}_{\P^*\wM} \oplus \Hdot^{-s-1, +}_{\P^*\wM}$ and
$\Hdot^{-s-1}_{\P^*}= (\Hdot^{-s-1}_{\P^*})_{r} \oplus (\Hdot^{-s-1}_{\P^*})_{\pe}$, respectively.

Let us consider for example the Neumann problem $(N)_{\mE_{s}}^\Lop$. By Theorem~\ref{thm:wpequiv} this is well-posed if and only if $N_{\pe}: \Hdot^{s, +}_{\P\M} \to (\Hdot^{s}_{\P})_{\pe}$ is an isomorphism.
In the duality above, the adjoint of  $N_{\pe}: \Hdot^{s, +}_{\P\M} \to (\Hdot^{s}_{\P})_{\pe}$
is $\chi^-(\P^*\wM): (\Hdot^{-s-1}_{\P^*})_{r} \to \Hdot^{-s-1, -}_{\P^*\wM}$. Applying an abstract result on pairs of projections~\cite[Lem.~13.2]{Auscher-Mourgoglou}, we have that invertibility of the latter is exactly equivalent to invertibility  of $N_{\pe}: \Hdot^{-s-1, +}_{\P^*\wM} \to (\Hdot^{-s-1}_{\P^*})_{\pe}$. By the analogue of Theorem~\ref{thm:wpequiv} for the backward equation, this in turn is equivalent to well-posedness of $(N)_{\mE_{-1-s}}^{\Lop^*}$.

The regularity problem can be handled in the same manner. Finally, preservation of compatible well-posedness can be obtained unravelling this argument.
\end{proof}

A proof of the abstract Green's formula can be given by an approximation procedure as in \cite[Thm.~1.7]{Auscher-Mourgoglou}. There, it was mentioned that one can also use the simultaneous duality above {and this is the approach that we shall present here.}

\begin{proof}[Proof of Proposition~\ref{prop:green}]  Set $h= \pcg u|_{\lambda=0} \in \Hdot^{s, +}_{\P\M} $ and $g=\pcgb w|_{\lambda=0} \in \Hdot^{-s-1, +}_{\P^*\wM}$.
 We have $(h,N{\P^*}^{-1}g)=0$ by the orthogonality relation and we claim that
 $$ (h,N{\P^*}^{-1}g)= \dual {\partial_{\nu_{A}}u|_{\lambda=0}} {w|_{\lambda=0}}- \dual {u|_{\lambda=0}}{\partial_{\nu_{A^*}}w|_{\lambda=0}}.
 $$
 Indeed, $\pe$ and $r$-component  of $h$ and $N{\P^*}^{-1}g$ can be computed in  appropriate dual spaces described in the argument  below and we write $(h,N{\P^*}^{-1}g)= \dual {h_{\pe}} { (N{\P^*}^{-1}g)_{\pe}}+ \dual{ h_{r}} {(N{\P^*}^{-1}g)_{r}}$.  First,
 $h_{\pe}= \partial_{\nu_{A}}u|_{\lambda=0} \in  (\Hdot^{s}_{\P})_{\pe}= \Hdot^{s/2}_{\pd_{t}-\Delta_{x}}$ and using Lemma~\ref{lem:proj} and the isomorphism property of  $\IP_{\P^*}$,
 \begin{align*}
  (N{\P^*}^{-1}g)_{\perp}=  w|_{\lambda=0} \in  (\IP_{\P^*}\Hdot^{-s}_{\wM\P^*})_{\pe}=(\Hdot^{-s}_{\P^*})_{\pe}= \Hdot^{-s/2}_{-\pd_{t}-\Delta_{x}}.
 \end{align*}
 To compute the other part, we may symmetrise the argument by changing $s$ to $-s-1$ and observing that $(h,N{\P^*}^{-1}g)=- (N\P^{-1} h,g)$ and that $N\P^{-1}$ swaps $r$- and $\pe$-components. Hence $\dual{h_{r}}{ (N{\P^*}^{-1}g)_{r})}= -\dual{(N\P^{-1} h)_{\pe}}{ g_{\pe}}$ formally and it remains to identify what this means. We have
 $(N\P^{-1} h)_{\pe}= u|_{\lambda=0} \in (\Hdot^{s+1}_{\P})_{\pe}= {\Hdot^{s/2+1/2}_{\pd_{t}-\Delta_{x}}}$ and $g_{\pe}= \partial_{\nu_{A^*}}w|_{\lambda=0} \in  (\Hdot^{-s-1}_{\P^*})_{\pe}= {\Hdot^{-s/2-1/2}_{-\pd_{t}-\Delta_{x}}}$. The result follows.
  \end{proof}
\section{The operator \texorpdfstring{$\sgn(\P\M)$}{sgn(PM)}}
\label{sec:sgnPM proofs}

We provide the rather algebraic proofs of the results on $\sgn(\P \M)$ stated in Section~\ref{sec:sgnPM results}. The appearing Sobolev spaces have been introduced in Section~\ref{sec:sobolev}.} We keep on representing vectors by $\pe$- and $r$-components and operators on spaces $X= \Hdot_\P^s$ by the associated $(2 \times 2)$-matrices and let $i_{\pe}:X_{\pe}\to X_{\pe} \oplus \{0\}$ and $i_{r}:X_{r}\to \{0\} \oplus X_r$ be the canonical embeddings. Then the maps $N_{\pe}, N_{r}$ are just the coordinate maps with this representation. From \eqref{eq:rep chi} we obtain
 \begin{eqnarray}
\label{eq:s1}
\begin{split}
   1\pm s_{\pe\pe}(\P\M)&= 2N_{\pe}\chi^\pm(\P\M) \, i_{\pe},\\
s_{ r\pe }(\P \M)&=2N_{r}\chi^+(\P\M)\, i_{\pe},\
\end{split} \qquad
\begin{split}
 s_{\pe r}(\P \M)&= 2N_{\pe}\chi^+(\P\M)\, i_{r},\\
 1\pm s_{rr}(\P \M)&= 2N_{r}\chi^\pm(\P\M)\, i_{r}.
\end{split}
\end{eqnarray}

With this notation set up, we are ready to give the

\begin{proof}[Proof of Theorem~\ref{thm:six invertibilities}]
\begin{enumerate}
  \item Based on Theorem \ref{thm:wpequiv} and its counterpart for the lower half-space, $(R)_{\mE_{s}}^\Lop$ is well-posed on \emph{both} half-spaces if, and only if, the two operators $N_{r}: \Hdot^{s,\pm}_{\P\M} \to (\Hdot^{s}_{\P})_{r}$  are invertible. By an abstract principle for pairs of complementary projections~\cite[Lem.~13.6]{Auscher-Mourgoglou}, this in turn is equivalent to invertibility of the composite map $N_r \chi^{+}(\P\M)\, i_{\pe}: (\Hdot^{s}_{\P})_{\pe} \to (\Hdot^{s}_{\P})_r$, which in view of \eqref{eq:s1} exactly means that $s_{r \pe}(\P \M):(\Hdot^{s}_{\P})_{\pe} \to (\Hdot^{s}_{\P})_r$ is invertible. In order to avoid confusion, we note that in \cite{Auscher-Mourgoglou} the results are equivalently formulated using the projectors $P_{\pe}= i_{\pe}\circ N_{\pe}$, $P_{r}=i_{r}\circ N_{r}$ of $X$ onto $X_{\pe} \oplus \{0\}$ and $\{0\} \oplus X_{r}$, respectively.

  \item Theorem~\ref{thm:wpequiv} yields again equivalence of the Neumann problem $(N)_{\mE_{s}}^\Lop$ being well-posed on both half-spaces with invertibility of the two operators $N_{\pe}: \Hdot^{s,\pm}_{\P\M} \to (\Hdot^{s}_{\P})_{\pe}$. By the same reasoning as before, this is equivalent to $s_{\pe r}: (\Hdot^{s}_{\P})_{r} \to (\Hdot^{s}_{\P})_\pe$ being invertible.

  \item Let us abbreviate $s_{\pe \pe}(\P \M)$ by $s_{\pe \pe}$ and so on. Spelling out the identity $\sgn(\P \M)^2 = 1$ reveals
  \begin{align*}
   \begin{bmatrix}
    s_{\pe \pe}^2 + s_{\pe r}s_{r \pe} & s_{\pe \pe}s_{\pe r} + s_{\pe r} s_{r r} \\
    s_{r \pe}s_{\pe \pe} + s_{r r} s_{r \pe} & s_{r \pe}s_{\pe r} + s_{r r}^2
   \end{bmatrix}
   = \begin{bmatrix} 1 & 0 \\ 0 & 1 \end{bmatrix}.
  \end{align*}
  In particular, we have $s_{\pe r}s_{r \pe} = (1 + s_{\pe \pe})(1 - s_{\pe \pe})$ and $s_{r \pe}s_{\pe r} = (1 + s_{r r})(1 - s_{r r})$. Now, suppose that $s_{\pe r}$ and $s_{r \pe}$ are invertible. Then we have invertible products of commuting operators $(1 + s_{\pe \pe})(1 - s_{\pe \pe})$ and $(1 + s_{r r})(1 - s_{r r})$, which implies that the four factors are invertible. Conversely, if the four operators $1 \pm s_{\pe \pe}$, $1 \pm s_{r r}$ are invertible, then both products $s_{\pe r}s_{r \pe}$, $s_{r \pe}s_{\pe r}$ are invertible, which implies that $s_{\pe r}$, $s_{r \pe}$ themselves are invertible.

  \item This is just the conjunction of the previous three equivalences. \qedhere
\end{enumerate}
\end{proof}

Finally, we give the proof of Proposition~\ref{prop:sgn invertibilty implies WP}. The reader may recall the definition of the Neumann-to-Dirichlet map $\Gamma_{ND}^{\Lop,+}$ on the upper half-space from Section~\ref{sec:sgnPM results}.

\begin{proof}[Proof of Proposition~\ref{prop:sgn invertibilty implies WP}]
The four statements are all proved in exactly the same way. We restrict ourselves to the Neumann problem $(N)_{\mE_s}^\Lop$ on the upper half-space. Invoking Proposition~\ref{prop:cwp}, we have to check that $N_\pe: \Hdot_{\P\M}^{s,+} \to (\Hdot_{\P}^{s})_\pe$ is an isomorphism and that its inverse agrees with the one obtained on $(\Hdot_{\P}^{-1/2})_\pe$. Due to \eqref{eq:representation H-1/2+}, \eqref{eq:GammaND factorization} and our assumption that $1 - s_{r r}(\P \M): (\Hdot_\P^s)_r \to (\Hdot_\P^s)_r$ is invertible and that the inverse is compatible on $(\Hdot_\P^{-1/2})_r$, we obtain that
\begin{align*}
 \begin{bmatrix} \id \\ \Gamma_{ND}^{\Lop,+} \end{bmatrix}: (\Hdot_{\P}^{s})_\pe \cap (\Hdot_{\P}^{-1/2})_\pe \to \Hdot_{\P\M}^{s,+} \cap \Hdot_{\P\M}^{-1/2,+}
\end{align*}
is well-defined and a two-sided inverse for $N_\pe: \Hdot_{\P\M}^{s,+} \cap \Hdot_{\P\M}^{-1/2,+} \to (\Hdot_{\P}^{s})_\pe \cap (\Hdot_{\P}^{-1/2})_\pe$. Again by assumption it extends to a bounded operator $(\Hdot_{\P}^{s})_\pe \to \Hdot_{\P\M}^{s,+}$. This yields the sought-after inverse for $N_\pe: \Hdot_{\P\M}^{s,+} \to (\Hdot_{\P}^{s})_\pe$ and compatibility on $(\Hdot_{\P}^{-1/2})_\pe$ follows by construction.
\end{proof}
\section{Inverting parabolic operators by layer potentials}
\label{sec:layer}

Here, we provide the remaining proofs for results on layer potentials discussed in Section~\ref{sec:layer intro}, where the relevant notation has been introduced.

\begin{proof}[Proof of Theorem~\ref{thm:inverse1}]
Let $f \in \Hdot^{-1/2}_{\pd_{t}-\Delta_{\lambda,x}}$ and $u\in \Hdot^{1/2}_{\pd_{t}-\Delta_{\lambda,x}}$. Note that $\pcg u \in \L^2(\R; \clos{\ran(P)})$. Following the strategy of proof of Theorem~\ref{thm:correspondence}, we see that $u=\Lop^{-1}f$ if and only if $\pd_{\lambda}F+ \P\M F= \begin{bmatrix} f \\ 0\end{bmatrix}$ in the weak sense, where $F=\pcg u$. Recall that for vectors $F \in \IC^{n+2}$ we gather here $F_\pa$ and $F_\te$ in one component denoted $F_r$. Replacing $\begin{bmatrix} f \\ 0\end{bmatrix}$ by the differential equation and integrating by parts, it follows that we have, in $\L^2(\R; \clos{\ran(P)})$ and for almost every $\lambda>0$,
\begin{align*}
 \int_{\eps<|\lambda-\mu|<R}\pcg{\mS_{\lambda - \mu}} f_{\mu}\, d\mu=&
+ \e^{-\eps \P\M} \chi^{+}(\P\M) F_{\lambda - \eps}
+\e^{\eps \P\M} \chi^{-}(\P\M) F_{\lambda+\eps}
\\
& \qquad - \e^{-R \P\M} \chi^{+}(\P\M) F_{\lambda - R}
- \e^{R \P\M} \chi^{-}(\P\M) F_{\lambda+R}.
\end{align*}
Applying $-\P^{-1}$ to both sides, we have an equality in $\L^2(\R; \Hdot^{1}_{\P})$. Finally, taking the $\perp$-component, we obtain in $\L^2(\R; \Hdot^{1/2}_{\pd_{t}-\Delta_{x}})= E'$,
\begin{align*}
 \int_{\eps<|\lambda-\mu|<R}{\mS_{\lambda - \mu}} f_{\mu}\, d\mu=&
 -(\P^{-1}e^{-\eps \P\M} \chi^{+}(\P\M) F_{\lambda-\eps} )_{\perp}
-(\P^{-1}e^{\eps \P\M} \chi^{-}(\P\M) F_{\lambda+\eps} )_{\perp} \\
& \qquad +(\P^{-1} \e^{-R \P\M} \chi^{+}(\P\M) F_{\lambda - R})_{\perp}
+ (\P^{-1}e^{R \P\M} \chi^{-}(\P\M) F_{\lambda + R})_{\perp}.
\end{align*}
This defines uniformly bounded operators from $\Hdot^{-1/2}_{\pd_{t}-\Delta_{\lambda, x}}$ to $E'$. Next, we take strong limits in $E'$. The terms corresponding to $R\to \infty$ tend to 0 using the bounded holomorphic functional calculus of $\P\M$ and that $F_{\lambda\pm R}$ tend to 0 in $\L^2(\R; \Hdot^{1}_{\P})$. The terms corresponding to $\eps\to 0$, converge to
\begin{align*}
 -(\P^{-1} \chi^{+}(\P\M) F )_{\perp}
-(\P^{-1} \chi^{-}(\P\M) F )_{\perp} = -(\P^{-1}F)_{\perp} = u.
\end{align*}
This proves the limit in the statement.
\end{proof}

\begin{proof}[Proof of Theorem~\ref{thm:inverse2}]
To $\Lop$ there correspond $\P$ and {$\M = \begin{bmatrix} 1 & 0 \\ 0 & A \end{bmatrix}$} with $A = \begin{bmatrix} A_{\pa \pa} & 0 \\ 0 & 1 \end{bmatrix}$  as in Section~\ref{sec:proof of correspondence}. Let $f\in \Hdot^{-1/2}_{\pd_{t}-\Delta_{x}}$. As $\begin{bmatrix} f \\ 0\end{bmatrix}\in \Hdot^{-1}_{\P}$, we find, using the functional calculus of $\P\M$ on $\Hdot^{-1}_{\P}$,
\begin{equation*}
 \int_{\eps}^R \mS_{\lambda } f  \d\lambda = \bigg((\P\M\P)^{-1}(e^{-R \P\M} - \e^{-\eps \P\M})\chi^+(\P\M) \begin{bmatrix} f \\ 0\end{bmatrix}\bigg)_\pe.
\end{equation*}
Similarly, we obtain
\begin{equation*}
 \int_{-R}^{-\eps} \mS_{\lambda } f  \d\lambda = -\bigg((\P\M\P)^{-1}(e^{\eps \P\M} - \e^{R \P\M})\chi^-(\P\M)\begin{bmatrix} f \\ 0\end{bmatrix}\bigg)_\pe.
\end{equation*}
Here, we have used that by construction $\P\M\P$ is an isomorphism from $\Hdot^1_{\P}$ onto $\Hdot^{-1}_{\P}$. Hence, taking strong limits for the semigroup on each spectral space of $\Hdot^{-1}_{\P}$, and adding terms we obtain
\begin{align*}
{\lim_{\eps \to 0, \, R \to \infty} \int_{\eps \leq |\lambda| \leq R} \mS_{\lambda } f \d\lambda} = -\bigg((\P\M\P)^{-1} (\chi^+(\P\M) +\chi^-(\P\M))\begin{bmatrix} f \\ 0\end{bmatrix}\bigg)_\pe= -\bigg((\P\M\P)^{-1} \begin{bmatrix} f \\ 0\end{bmatrix}\bigg)_\pe.
\end{align*}
From the form of $\Lop$ we see that $\P\M\P$ is block diagonal and the scalar block is precisely $\pd_{t}- \div_{x} A(x,t)\nabla_{x}=L$. This gives us the conclusion.
\end{proof}
\section{Well-posedness results for \texorpdfstring{$\lambda$}{lambda}-independent coefficients}
\label{sec:wpresults}

In this section we shall prove our well-posedness results formulated in Theorem~\ref{thm:WP} and obtain the resolution of the Kato problem for parabolic operators. We also supply proofs of the related Propositions~\ref{prop:continuitymethod} and \ref{prop:Dir}. We shall need again the Sobolev spaces introduced in Section~\ref{sec:sobolev}.
\subsection{The proof of Theorem~\ref{thm:Kato}}\label{sec:parKato}

For $A_{\pa \pa}(x,t)$ any bounded and uniformly elliptic $(n \times n)$-matrix we let
\begin{align*}
 \M:= \begin{bmatrix}
 1 & 0 & 0\vphantom{\dhalf} \\
 0 & A_{\pa\pa} & 0 \\ 0\vphantom{\dhalf} & 0 & 1
\end{bmatrix}, \qquad
 \P \M = \begin{bmatrix}
 0 & \divx A_{\pa \pa} & -\dhalf \\
 -\gradx & 0 & 0 \\ -\HT \dhalf & 0 & 0
\end{bmatrix}
\end{align*}
As $\P\M$ has a bounded holomorphic functional calculus on $\clos{\ran(\P)}$ by Theorem~\ref{thm:bhfc}, the operator $\sgn(\P\M)$ is a bounded involution on this space. Therefore $\P\M$ and $[\P\M] = \sgn(\P \M) \P \M$ share the same domain with $\|\P\M h\|_{2}\sim \| [\P\M]h\|_{2}$. But $[\P\M] = \sqrt{(\P\M)^2}$ by definition and
\begin{align*}
 (\P\M)^2= \begin{bmatrix}
 {L} & 0 & 0 \\ 0 & - \gradx \divx A_{\pa \pa} & \gradx \dhalf \\ 0 & -\HT \dhalf \divx A_{\pa \pa} & \pd_{t}
 \end{bmatrix}.
\end{align*}
Specialising to the first component, we see that the domain of $\sqrt{L}$ is  $\H^{1/2}(\R; \L^2(\R^n)) \cap \L^2(\R; \W^{1,2}(\R^n))$ together with the homogeneous estimate
\begin{align*}
{\|\sqrt{L}\|_{2}}\sim \|\gradx f\|_{2}+\|\HT\dhalf f\|_{2}= \|\gradx f\|_{2}+\|\dhalf f\|_{2}
\end{align*}
This solves the parabolic Kato problem.
\subsection{The block case (i)}

\label{sec:WPblock}
More generally, we assume that the coefficients $A(x,t)$ are in block form
\begin{align*}
 A:= \begin{bmatrix}
 A_{\pe\pe} & 0 \\
 0 & A_{\pa\pa}
\end{bmatrix}.
\end{align*}
Since results for this case will be re-used in the context of more general coefficients, we shall write $A(x,t) = A_b(x,t)$ and $\M = \M_b$ to avoid confusion. As usual we concatenate $\pa$- and $\te$-components of vectors to a single component denoted $r$. Similarly, we represent operators acting on them as $(2 \times 2)$-matrices. For example, $\M = \M_b$ can be written in block form
\begin{align*}
 \M_b = \begin{bmatrix} A_{\pe \pe}^{-1} & 0 \\ 0 & \M_{r r} \end{bmatrix},\qquad \text{where} \qquad
 \M_{r r} := \begin{bmatrix} A_{\pa \pa}\vphantom{A_{\pe \pe}^{-1}}& 0 \\ 0 & 1 \end{bmatrix}.
\end{align*}
For $\sgn(P M_b)$ we have already introduced such a representation as a $(2\times2)$-matrix in \eqref{eq:rep sgn}:
 \begin{align*}
 \sgn(\P \M_b)
 = \begin{bmatrix} s_{\pe \pe}(\P \M_b) & s_{\pe r}(\P \M_b) \\ s_{r \pe}(\P \M_b) & s_{r r}(\P \M_b) \end{bmatrix},
\end{align*}
which by the holomorphic functional calculus is bounded on $\clos{\ran(\P)}$ and satisfies $\sgn(\P \M_b)^2 =1$. On the other hand, $\M_b$ commutes with the matrix $N$ introduced in \eqref{eq:N} since both are in block form. Hence, $\P \M_b N = - N \P \M_b$, which carries over to the functional calculus by $\sgn(\P \M_b) N = - N \sgn(\P \M_b)$. These two properties of $\sgn(\P \M_b)$ can only hold if its diagonal blocks vanish and its off-diagonal blocks are inverses of each other, that is,
\begin{align}
\label{eq:sgnPM block}
 \sgn(\P \M_b)
 = \begin{bmatrix} 0 & s_{\pe r}(\P \M_b) \\ s_{\pe r}(\P \M_b)^{-1} & 0 \end{bmatrix}.
\end{align}
From Section~\ref{sec:sobolev} we know that the bounded functional calculus for $\P\M_b$ extends to the Sobolev spaces $\Hdot^s_{\P \M_b} = \Hdot^s_{\P}$, $-1 \leq s \leq 0$, and by density of $\cl{\ran(\P)} \cap \Hdot^s_{\P}$ in $\Hdot^s_{\P}$, the formula \eqref{eq:sgnPM block} extends, too. In particular, $1\pm s_{\pe \pe}(\P \M_b)$ and $1\pm s_{r r}(\P \M_b)$ all act as the identity on the respective components of $\Hdot^s_{\P}$. Thus, Proposition~\ref{prop:sgn invertibilty implies WP} yields compatible well-posedness of $(R)_{\mE_s}^\Lop$ and $(N)_{\mE_s}^\Lop$.
\subsection{The triangular cases (ii) and (iv)}

We adopt notation from the previously discussed block case. It suffices to prove well-posedness at regularity $s=0$: Indeed, in view of compatible well-posedness for block matrices, we can perturb along $\mathbb{A}:= \{\tau \id + (1-\tau)A : 0 \leq \tau \leq 1\}$ to the heat equation and obtain \emph{compatible} well-posedness at regularity $s=0$ from Proposition~\ref{prop:continuitymethod}. Then the intermediate cases follow from interpolation with (compatible) well-posedness in the energy class by means of Theorem~\ref{thm:inter}.

Appealing to Theorem~\ref{thm:six invertibilities}, we shall check invertibility of $s_{\pe r}(\P \M)$ and $s_{r \pe}(\P \M)$ for $A$ lower- and upper-triangular, respectively. The absence of one of the entries of $A$ will allow us to transfer invertibility from regularity $s=-1/2$ to $s=0$. In the context of elliptic systems this strategy has been put into place in \cite{AMM}. The argument presented below constitutes a simplification even in the elliptic case. In the following we utilize the block coefficients $A_b := \begin{bmatrix} A_{\pe\pe} & 0 \\ 0 & A_{\pa\pa} \end{bmatrix}$ and the corresponding multiplication operator $\M_b$. In the case of triangular matrices $A$ we will see that the Sobolev spaces adapted to $\P \M_b$ carry substantial information also on $\P \M$. Note that
\begin{align*}
 \P \M_b = \begin{bmatrix} 0 & \divx A_{\pa\pa}  & -\dhalf \\ -\gradx A_{\pe\pe}^{-1} & 0 & 0 \\ -\HT \dhalf A_{\pe\pe}^{-1} & 0 & 0 \end{bmatrix}
\end{align*}
acts independently on $\pe$- and $r$-components and hence so does $(\P \M_b)^2$. So, by the very definition of homogeneous Sobolev spaces we can decompose
\begin{align*}
 \Hdot_{\P \M_b}^s = (\Hdot_{\P \M_b}^s)_\pe \oplus (\Hdot_{\P \M_b}^s)_r
\end{align*}
for any $s \in \R$, see Section~\ref{sec:sobolev}, and besides the usual isomorphy $\P \M_b: \Hdot_{\P \M_b}^s \to \Hdot_{\P \M_b}^{s-1}$ we have the restricted isomorphisms
\begin{align}
\label{eq1:tri}
\P \M_b: (\Hdot_{\P \M_b}^s)_\pe \oplus \{0\} \stackrel{\cong}{\longrightarrow} \{0\} \oplus (\Hdot_{\P \M_b}^{s-1})_r
\end{align}
and
\begin{align}
\label{eq2:tri}
\P \M_b: \{0\} \oplus (\Hdot_{\P \M_b}^s)_r \stackrel{\cong}{\longrightarrow} (\Hdot_{\P \M_b}^{s-1})_\pe \oplus \{0\}.
\end{align}
The fact that we use these operators associated with the coefficients $A_b$ different from $A$ reflects that well-posedness, in contrast to \emph{a priori} representations, cannot be comprised only by the functional calculus arising from $A$. Below, we shall frequently and without further mentioning use Lemma~\ref{lem:identification} to the effect that $\Hdot_{\P \M_b}^s = \Hdot_\P^s = \Hdot_{\P \M}^s$ holds for $-1 \leq s \leq 0$.

\begin{proof}[Proof of (ii)]

As we have seen, it suffices to prove that $s_{\pe r}(\P \M): (\Hdot_{\P}^0)_r \to (\Hdot_{\P}^0)_\pe$ is invertible if $A$ is lower-triangular. Taking into account \eqref{eq1:tri}, this is actually equivalent to
\begin{align*}
 T_r(\P \M): f \mapsto \N_r \P \M_b \begin{bmatrix} s_{\pe r}(\P\M)f \\ 0 \end{bmatrix}
\end{align*}
providing an isomorphism $(\Hdot_{\P \M_b}^0)_r \to (\Hdot_{\P \M_b}^{-1})_r$ and it is this property that we shall check. From Corollary~\ref{cor:six invertibilities energy} and \eqref{eq1:tri} we can infer
\begin{align}
\label{eq3:tri}
 T_r(\P \M): (\Hdot_{\P \M_b}^{-1/2})_r \stackrel{\cong}{\longrightarrow} (\Hdot_{\P \M_b}^{-3/2})_r.
\end{align}
Since $\sgn(\P \M) \in \Lop(\Hdot^{-1/2}_\P)$ depends Lipschitz continuously on $\M$, see the discussion before Theorem~\ref{thm:sta}, also $T_r(\P \M)$ depends Lipschitz continuously on $\M$ as a bounded operator between the Sobolev spaces above. Now, we use the lower-triangular structure of $M$ for the one and only time to  see that for $f \in (\Hdot_{\P \M_b}^{1/2})_r \cap (\Hdot_{\P \M_b}^{-1/2})_r$ we can also write
\begin{align}
\label{eq35:tri}
\begin{split}
 T_r(\P \M) f
&= N_r \P \M_b \begin{bmatrix} s_{\pe r}(\P\M)f \\ 0 \end{bmatrix}
= N_r \P \M \sgn(\P\M)\begin{bmatrix} 0 \\ f \end{bmatrix} \\
&= N_r \sgn(\P\M) \P \M \begin{bmatrix} 0 \\ f \end{bmatrix}
= s_{r \pe}(\P\M) N_\pe \P \M_b \begin{bmatrix} 0 \\ f \end{bmatrix}.
\end{split}
\end{align}
Hence, \eqref{eq2:tri} and Corollary~\ref{cor:six invertibilities energy} yield that $T_r(\P \M)$ extends by density to an isomorphism
\begin{align}
\label{eq4:tri}
 T_r(\P \M): (\Hdot_{\P \M_b}^{1/2})_r \stackrel{\cong}{\longrightarrow} (\Hdot_{\P \M_b}^{-1/2})_r
\end{align}
and by the same argument as before we have Lipschitz continuous dependence on $\M$.

Of course we want to interpolate between \eqref{eq3:tri} and \eqref{eq4:tri} but interpolation of invertibility requires that the two obtained inverses agree on their common domain of definition. For the moment, we can at least state for all $f \in (\Hdot_{\P \M_b}^{1/2})_r \cap (\Hdot_{\P \M_b}^{-1/2})_r$ the comparability
\begin{align*}
 \|T_r(\P \M)f\|_{(\Hdot_{\P \M_b}^{-1/2})_r} + \|T_r(\P \M)f\|_{(\Hdot_{\P \M_b}^{-3/2})_r}
 \sim \|f\|_{(\Hdot_{\P \M_b}^{1/2})_r} + \|f\|_{(\Hdot_{\P \M_b}^{-1/2})_r}.
\end{align*}
Hence,
\begin{align}
\label{eq:TrPM intersection}
 T_r(\P \M):  (\Hdot_{\P \M_b}^{1/2})_r \cap (\Hdot_{\P \M_b}^{-1/2})_r \to (\Hdot_{\P \M_b}^{-1/2})_r \cap (\Hdot_{\P \M_b}^{-3/2})_r
\end{align}
is bounded from above and below. Here, the intersection spaces carry their natural sum norms. Since the bounded maps in \eqref{eq3:tri} and \eqref{eq4:tri} depend Lipschitz continuously on $\M$, the same holds true for $T_r(\P \M)$ when considered a bounded map between these two intersection spaces. We claim that in the block case $\M = \M_b$ it is also onto. Indeed, since the diagonal entries of $\sgn(\P \M_b)$ vanish by \eqref{eq:sgnPM block}, we can improve the calculation in \eqref{eq35:tri} to the effect that
\begin{align*}
 \begin{bmatrix} 0 \\ T_r(\P \M_b) f \end{bmatrix}
 = \P \M_b \sgn(\P \M_b) \begin{bmatrix} 0 \\ f \end{bmatrix}.
\end{align*}
Now, for each $g \in (\Hdot_{\P \M_b}^{-1/2})_r \cap (\Hdot_{\P \M_b}^{-3/2})_r$ we can define $f \in (\Hdot_{\P \M_b}^{1/2})_r \cap (\Hdot_{\P \M_b}^{-1/2})_r$ via
\begin{align*}
 \begin{bmatrix} 0 \\ f \end{bmatrix}
 := (\P \M_b)^{-1} \sgn(\P \M_b) \begin{bmatrix} 0 \\ g \end{bmatrix}
\end{align*}
and obtain $T_r(\P \M_b) f = g$. By the method of continuity, perturbing $\M$ to $\M_b$ along $\M(\tau) = \tau \M_b + (1-\tau)\M$, we obtain that the map in \eqref{eq:TrPM intersection} is an isomorphism. In particular, this implies that the inverses obtained in \eqref{eq3:tri} and \eqref{eq4:tri} are compatible.

Eventually, real interpolation midway between \eqref{eq3:tri} and \eqref{eq4:tri} for $T_r(\P \M)$ and its unambiguously defined inverse yields the required isomorphy
\begin{align*}
 T_r(\P \M): (\Hdot_{\P \M_b}^0)_r \stackrel{\cong}{\longrightarrow} (\Hdot_{\P \M_b}^{-1})_r.
\end{align*}
The interpolation argument uses~\cite[Thm.~5.3]{Auscher-McIntosh-Nahmod} for the real interpolation of the full $\Hdot_{\P\M_b}^s$-spaces and~\cite[Thm.~6.4.2]{BL} for the transfer to their complemented subspaces $(\Hdot_{\P\M_b}^s)_r$.
\end{proof}

The proof for upper-triangular matrices follows along the same lines and we shall only give the main steps.

\begin{proof}[Proof of (iv)]First, the question of invertibility of $s_{r \pe}(\P \M): (\Hdot_{\P}^0)_\pe \to (\Hdot_{\P}^0)_r$ is equivalent to proving that
\begin{align*}
 T_\pe(\P \M): f \mapsto N_\pe \P \M_b \begin{bmatrix} 0 \\ s_{r \pe}(\P\M)f \end{bmatrix}
\end{align*}
acts as an isomorphism $(\Hdot_{\P\M_b}^0)_\pe \to (\Hdot_{\P\M_b}^{-1})_\pe$. Due to Corollary~\ref{cor:six invertibilities energy} and \eqref{eq2:tri} it certainly is invertible from $(\Hdot_{\P\M_b}^{-1/2})_\pe$ into $(\Hdot_{\P\M_b}^{-3/2})_\pe$. The fact that $\M$ is upper-triangular lets us discover the identity
\begin{align*}
 T_\pe(\P \M) f
&= N_\pe \P \M_b \begin{bmatrix} 0 \\ s_{r \pe}(\P\M)f \end{bmatrix}
= N_\pe \P \M \sgn(\P\M)\begin{bmatrix} f \\ 0 \end{bmatrix} \\
&= N_\pe \sgn(\P\M) \P \M \begin{bmatrix} f \\ 0 \end{bmatrix}
= s_{ r\pe }(\P\M) N_r \P \M_b \begin{bmatrix} f \\ 0 \end{bmatrix}
\end{align*}
for $f \in (\Hdot_{\P \M_b}^{1/2})_\pe \cap (\Hdot_{\P \M_b}^{-1/2})_\pe$. Therefore $T_\pe(\P \M)$ extends to an isomorphism from $(\Hdot_{\P \M_b}^{1/2})_\pe$ to $(\Hdot_{\P \M_b}^{-1/2})_\pe$ and we conclude by interpolation as in the proof of (ii).
\end{proof}
\subsection{The triangular cases (iii) and (v)}

This is a consequence of Theorem~\ref{thm:duality}: (iii) is equivalent to (ii) and (v) is equivalent to (iv).
\subsection{The Hermitian case (vi)}

From the proof presented below it will become clear that we do neither use real coefficients nor equations but self-adjointness. It will also become clear at what instance we have to assume that coefficients do not depend on the time variable. The argument follows that of \cite{N2} using some integral identities to obtain Rellich estimates.

Let $h\in \Hp(\P\M)$. Then $F=F(\lambda,\cdot)= \e^{-\lambda \P\M}h$ satisfies $\pd_\lambda F=-\P\M F$ in the strong sense with strong limits $\lim_{\lambda\rightarrow\infty} \partial_\lambda^k F=0$, $k \ge 0$, and $\lim_{\lambda\rightarrow 0}F=h$ in $\mH = \L^2(\ree; \IC^{n+2})$. Let $\N$ be the reflection matrix introduced in \eqref{eq:N}. A computation shows that
\begin{align*}
 \P^* \N +\N \P= {\begin{bmatrix} 0 & 0& (\id+\HT){\dhalf} \\ 0 & 0& 0 \\ (\id-\HT){\dhalf}& 0& 0 \end{bmatrix}}.
\end{align*}
Next, by definition of $\M$ in \eqref{eq:DB}, the Hermitian condition $A^*=A$ is equivalent to $\M^*\N=\N \M$. Using the Hermitian inner product $(\cdot \,, \cdot )$ on $\mH$, we obtain
\begin{align*}
 \pd_\lambda (\N F, \M F)
 &= (\N\P\M F, \M F)+ (\N F, \M\P\M F)
 = (\M F, \P^*\N\M F)+ (\N \M F, \P\M F) \\
 &= (\M F , (\P^*\N+\N\P)\M F ) = 2 \Re ((\id+\HT)F_{\theta}, \dhalf (\M F)_\pe).
\end{align*}
By integration in $\lambda$ and the limits above,
\begin{equation*}
(\N h, \M h) = -2 \int_{0}^\infty \Re ((\id+\HT)F_{\theta}, \dhalf (\M F)_\pe)\d \lambda.
\end{equation*}
Using $\dhalf (\M F)_{\pe}= -\HT \pd_{\lambda}F_{\theta}$, and integration by parts with vanishing boundary terms,
\begin{align*}
\int_{0}^\infty \|\donefour (\M F)_{\pe}\|^2_{2} \d \lambda&=
-2\Re \int_{0}^\infty \lambda ( \donefour \pd_{\lambda}(\M F)_{\pe},\donefour (\M F)_{\pe}) \d \lambda \\
&
= 2 \Re
 \int_{0}^\infty \lambda ( \pd_{\lambda}(\M F)_{\pe},\HT \pd_{\lambda}F_{\theta}) \d \lambda
 \\
& \le 2 \left(\int_{0}^\infty \|\lambda \HT \pd_{\lambda} F_{\theta}\|^2_{2} \, \frac{\mathrm{d} \lambda}{\lambda}\right)^{1/2} \left(\int_{0}^\infty \|{( \lambda \M \pd_{\lambda} F)_{\pe}}\|^2_{2} \, \frac{\mathrm{d} \lambda}{\lambda}\right)^{1/2}
 \\
 & \le C \|h\|_{2}^2,
\end{align*}
where the last step follows from the boundedness of $\HT$ and $\M$ on $\mH$ and the square function estimates {for $\P \M$, see Theorem~\ref{thm:bhfc}.}

Thus, by the Cauchy-Schwarz inequality and the boundedness of $\HT$,
\begin{equation}
\label{eq:1011}
|(\N h, \M h)| \lesssim \|h\|_{2} \left(\int_{0}^\infty \|\donefour F_{\theta}\|^2_{2} {\d \lambda}\right)^{1/2}.
\end{equation}
The remaining integral on the right is the difficult term. In order to justify some calculations, we use the following lemma which we shall prove in Section~\ref{sec:lemsmooth} below.

\begin{lem}
\label{lem:smooth}
Assume that the coefficients of $\Lop$ are Lipschitz continuous with respect to $(x,t)$ and that $u$ is a reinforced weak solution of \eqref{eq1} with $\pcg u \in \cH_{\loc}$.  Then $\nabla_{\lambda,x}^2u$, $\pd_{t}u$ and $ \nabla_x \pd_{t}u$ are in  $\Lloc^2(\R_{+};\L^2(\R^{n+1}))$ and for any $r>0$,
\begin{equation}
\label{est}
\int_{r}^{2r} \|\nabla_{\lambda,x}^2u\|_{2}^2+   \|\pd_{t}u\|_{2}^2+ \|\nabla_{x}\pd_{t}u\|_{2}^2 \d \lambda \leq C
\int_{r/2}^{4r} \|\nabla_{\lambda,x}u\|_{2}^2\d \lambda
\end{equation}
{with $C$ depending on $r$, the ellipticity constants of $A$ and $\|\nabla A\|_{\infty}$}. In particular, a similar estimate  holds for $\nabla_{x}\HT\dhalf u$.
\end{lem}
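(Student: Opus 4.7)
The plan is a classical interior higher-regularity argument for divergence-form parabolic equations with Lipschitz coefficients, organized in three stages and tailored to the slab geometry of \eqref{est}.

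\textbf{Stage 1: tangential spatial derivatives.} For each $j\in\{1,\dots,n\}$ and small $h\ne 0$, the tangential difference quotient $v_h(\lambda,x,t):=h^{-1}(u(\lambda,x+he_j,t)-u(\lambda,x,t))$ solves weakly on $\R^{n+2}_+$
\begin{equation*}
\partial_t v_h-\div_{\lambda,x}(A\,\gradlamx v_h)=\div_{\lambda,x}\big((D_h^{x_j}A)\,\gradlamx u(\cdot+he_j,\cdot)\big),
\end{equation*}
whose divergence source is in $\L^2$ with norm $\lesssim\|\nabla A\|_\infty\|\gradlamx u\|_2$ uniformly in $h$. I would cover $(r,2r)\times\ree$ by parabolic cylinders of sidelength $\sim r$ whose parabolic dilates lie inside $(r/2,4r)\times\ree$, apply Lemma~\ref{lem:Caccioppoli} on each cylinder and sum, using the uniform control $\|v_h\|_2\le\|\partial_{x_j}u\|_2$ (legitimate since $\gradx u$ is globally $\L^2$ in $(x,t)$ at almost every $\lambda$ by the hypothesis $\pcg u\in\cH_{\loc}$). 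The result is a uniform in $h$ estimate on $\int_r^{2r}\|\gradlamx v_h\|_2^2\,\d\lambda$, and a weak $\L^2$ limit as $h\to 0$ yields $\partial_{x_j}\gradlamx u\in\Lloc^2(\R_+;\L^2(\ree))$ with the bound claimed in \eqref{est}. This settles every entry of $\gradlamx^2 u$ except $\partial_\lambda^2 u$.

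\textbf{Stage 2: time derivative.} This is the main obstacle: the naive $t$-analogue of Stage 1 fails because $h^{-1}(u(\cdot,t+h)-u(\cdot,t))$ is a priori only of size $O(h^{-1/2})\|\dhalf u\|_2$ in $\L^2$, which blows up with $h$. The remedy is to work instead with the Steklov mean $u_h(\lambda,x,t):=h^{-1}\int_0^h u(\lambda,x,t+s)\,\d s$, whose $t$-derivative $\partial_t u_h=h^{-1}(u(\lambda,x,t+h)-u(\lambda,x,t))$ is honestly an $\L^2$-function for each fixed $h$ and which weakly satisfies $\partial_t u_h-\div_{\lambda,x}((A\,\gradlamx u)_h)=0$. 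I would test this identity against $\zeta^2\,\partial_t u_h$ for a smooth spacetime cutoff $\zeta$ adapted to the slabs $(r,2r)\subset(r/2,4r)$ (letting the $(x,t)$-support of $\zeta$ exhaust $\ree$ at the end so the corresponding boundary terms vanish), use the Lipschitz bound on $\partial_t A$ to replace $(A\gradlamx u)_h$ by $A\,\gradlamx u_h$ up to an $\L^2$-controlled commutator, and perform a discrete integration by parts in $t$ on the principal quadratic term. This produces an energy identity in which $\iiint\zeta^2|\partial_t u_h|^2$ is absorbable against $\|\gradlamx u\|_2^2$; passing $h\to 0$ delivers $\partial_t u\in\Lloc^2(\R_+;\L^2(\ree))$ with the required estimate. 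Finally, repeating Stage 1 with $D_h^{x_j}$ applied to $\partial_t u$ (which now enjoys a divergence-form weak equation with an $\L^2$ datum) produces $\gradx\partial_t u\in\Lloc^2(\R_+;\L^2(\ree))$ with the analogous bound.

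\textbf{Stage 3: transverse second derivative and $\gradx\HT\dhalf u$.} Rearranging the equation distributionally yields
\begin{equation*}
A_{\pe\pe}\,\partial_\lambda^2 u=\partial_t u-\partial_\lambda(A_{\pe\pa}\gradx u)-\gradx\cdot(A\,\gradlamx u)_\pa,
\end{equation*}
whose right-hand side, once expanded via the product rule, splits into $\nabla A\cdot\nabla u$-type terms controlled by $\|\nabla A\|_\infty\|\gradlamx u\|_2$, $A\cdot\gradx\gradlamx u$-type terms controlled by Stage 1, together with $\partial_t u$ from Stage 2. Ellipticity of $A_{\pe\pe}$ then transfers the $\L^2$ bound to $\partial_\lambda^2 u$, completing \eqref{est}. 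The closing claim on $\gradx\HT\dhalf u$ follows from the commutativity $\gradx\HT\dhalf=\HT\dhalf\gradx$ and the elementary interpolation inequality $\|\HT\dhalf v\|_2^2\le\|v\|_2\,\|\partial_t v\|_2$, a direct Cauchy-Schwarz on the Fourier side, applied in the $t$-variable to $v=\gradx u(\lambda,\cdot)$: since both $\gradx u$ and $\partial_t\gradx u$ lie in $\Lloc^2(\R_+;\L^2(\ree))$ by Stages 1 and 2, a further Cauchy-Schwarz in $\lambda$ concludes.
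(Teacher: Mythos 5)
Your plan is a legitimate interior regularity argument but departs from the paper's proof in a way that introduces an avoidable gap. The paper proceeds by applying the difference-quotient method in \emph{every} direction of $X=(\lambda,x)$, not just the tangential ones: since the coefficients $A(x,t)$ are $\lambda$-independent, $\partial_\lambda u$ is an exact weak solution of the homogeneous equation, so the $\lambda$-difference quotient is just as legitimate as the tangential ones and needs no symmetry of $A$. Once all of $\gradlamx^2 u$ is in $\Lloc^2$, the identity $\partial_t u = \div_{\lambda,x}(A\gradlamx u)$ read from the equation gives $\partial_t u \in \Lloc^2$ by expanding the divergence (one term carrying $\|\nabla A\|_\infty$, the other $\gradlamx^2 u$), and a further Caccioppoli estimate for $v = \partial_t u$ (via $t$-difference quotients, using only the Lipschitz continuity of $A$ in $t$) delivers $\gradlamx\partial_t u$. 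The final interpolation step for $\nabla_x\HT\dhalf u$ is then the same as yours. Nothing in this route requires $A$ to be Hermitian, and the Steklov mean is never needed.

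By excluding the $\lambda$-difference quotient from Stage 1, you are forced into the Steklov-mean detour of Stage 2 and to postpone $\partial_\lambda^2 u$ to Stage 3. The gap is in Stage 2: the discrete integration by parts in $t$ on the term $\Re\iiint\zeta^2 A\gradlamx u_h\cdot\overline{\partial_t\gradlamx u_h}$ produces a total $t$-derivative, and hence an absorbable quantity, \emph{only} when $A$ is Hermitian, since
\begin{equation*}
\Re(A\xi\cdot\overline{\partial_t\xi}) = \tfrac12\,\partial_t\Re(A\xi\cdot\bar\xi) - \tfrac12\Re\big((\partial_t A)\xi\cdot\bar\xi\big)
\end{equation*}
is valid precisely because $A\xi\cdot\bar\xi$ is real for Hermitian $A$. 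For the anti-Hermitian part $A_a$ of $A$, $\Re(A_a\xi\cdot\overline{\partial_t\xi})$ is not a total derivative, the corresponding term in your energy identity is not controlled by $\|\gradlamx u\|_2^2$, and discrete integration by parts merely reproduces it with the opposite sign plus harmless commutators. The lemma is stated for general bounded Lipschitz coefficients, so this is a genuine gap. It could be repaired by observing that $\div_{\lambda,x}(A_a\gradlamx u)$ is a first-order operator (the anti-symmetric part contracts to zero against the symmetric Hessian) and treating it as a drift term absorbable with Young's inequality, but you do not supply this, and in any case the difficulty evaporates once one uses the $\lambda$-difference quotient as the paper does.
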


Assuming the hypothesis of  Lemma~\ref{lem:smooth} and using the  correspondence of $F$ with a reinforced weak solution $u$, we observe that $F_{\theta}= \HT \dhalf u$ and $\dhalf F_\te = \pd_{t}u$. It will be simpler to come back to the usage of $u$ in the following calculation. Note that $\div_{\lambda,x} A \nabla_{\lambda,x} u = \pd_{t}u\in  \Lloc^2(\R_{+}; \L^2(\R^{n+1}))$ and that
\begin{align*}
 \div_{\lambda,x} A \nabla_{\lambda,x} u = \pd_{\lambda}(A\nabla_{\lambda,x}u)_{\pe}+ \divx (A \nabla_{\lambda,x}u)_{\pa}.
\end{align*}
{Since $F$ is the semigroup extension of $h$}, we know that $(A\nabla_{\lambda,x}u)_{\pe}= F_{\pe}$ is smooth in $\lambda$ as a function valued in $\mH$. Therefore we have $\divx (A\nabla_{\lambda,x}u)_{\pa} \in \Lloc^2(\R_{+};\L^2(\R^{n+1}))$ and thanks to the lemma also $\nabla_{x}F_{\theta}=  \nabla_{x}\HT \dhalf u$ is contained in this space. This allows us to obtain for almost every $\lambda > 0$ the following identities:
\begin{align*}
 \|\donefour F_{\theta}\|_{2}^2&= (\dhalf F_{\theta},F_{\theta}) \\
 &= (\pd_{t}u, \HT \dhalf u)  \\
 &= (\pd_{\lambda}(A\nabla_{\lambda,x}u)_{\pe}+ (\divx (A \nabla_{\lambda,x}u)_{\pa}, \HT \dhalf u)\\&
 = (\pd_{\lambda}(A\nabla_{\lambda,x}u)_{\pe}, \HT \dhalf u) - ((A\nabla_{\lambda,x}u)_{\pa}, \gradx \HT \dhalf u)\\
 &=  \frac{\mathrm{d}}{\mathrm{d}\lambda}((A\nabla_{\lambda,x}u)_{\pe}, \HT \dhalf u))\\
 &\qquad - ((A\nabla_{\lambda,x}u)_{\pe}, \pd_{\lambda} \HT \dhalf u)) -((A\nabla_{\lambda,x}u)_{\pa}, \gradx \HT \dhalf u)\\
 &= - \frac{\mathrm{d}}{\mathrm{d}\lambda}(F_{\pe}, F_{\theta}) -  (A {\gradlamx} u, \HT\dhalf {\gradlamx} u) \\
 &=- \frac{\mathrm{d}}{\mathrm{d}\lambda}(F_{\pe}, F_{\theta}) - \frac{1}{2} ({\gradlamx} u , [A, \HT\dhalf] {\gradlamx} u),
\end{align*}
where we used the  self-adjointness of $A$ and the skew-adjointness of $\HT\dhalf$ in the last line and  $[A, \HT\dhalf]$ denotes the commutator between the multiplication by $A$ and $\HT\dhalf$. Integrating the equality above and recalling the limits for $F$, we obtain
\begin{align}
\label{eq:donefour}
 \int_{0}^\infty \|\donefour F_{\theta}\|^2_{2} {\d \lambda}&= -(h_{\pe}, h_{\theta})
- \int_{0}^\infty ({\gradlamx} u , [A, \HT\dhalf] {\gradlamx} u){\d \lambda},
\end{align}
in the sense that the left hand integral  exists provided the right hand integral exists.

By Murray's theorem \cite{Murray}, the commutator between a bounded function $a$ on the real line and $\dhalf$  is bounded on $\L^2(\R)$ if and only if  $\dhalf a \in  \mathrm{BMO}(\R)$ with norm on the order of $\|\dhalf a\|_{\mathrm{BMO}}$.  This is the same upon replacing $\dhalf$ with $\HT\dhalf$. Actually, it can nowadays be seen as an easy corollary of the T(1) theorem.  Thus, on assuming $\|\dhalf A\|_{\L^\infty(\mathrm{BMO})}<\infty$ and recalling $\int_{0}^\infty \|F\|_{2}^2 \d \lambda\sim  \|h\|_{\Hdot^{-1/2}_{\P}}^2$ from Theorem~\ref{thm:sob} if additionally $h \in \Hdot_{\P}^{-1/2}$, we would obtain
\begin{equation}
\label{eq:donefour2}
\int_{0}^\infty \|\donefour F_{\theta}\|^2_{2} {\d \lambda} \lesssim {|(h_{\pe}, h_{\theta})|} + \|\dhalf A\|_{\L^\infty(\mathrm{BMO})}   \|h\|_{\Hdot^{-1/2}_{\P}}^2.
\end{equation}
Here, we wrote $\L^\infty(\mathrm{BMO}):=\L^\infty(\R^{n+1}; \mathrm{BMO}(\R))$. However, there is no chance in our situation that the integral involving the commutator converges just assuming $h\in \Hp(\P\M)$. It is at this point that we have to assume that $A$ does not depend on the $t$ variable. In this case $[A, \HT\dhalf]=0$ and we obtain from \eqref{eq:1011} and \eqref{eq:donefour} the estimate
\begin{align*}
|(\N h, \M h)|
\lesssim \|h\|_{2}\|h_{\pe}\|_{2}^{1/2}\|h_{\theta}\|_{2}^{1/2}.
\end{align*}
By definition of $N$ in \eqref{eq:N} we have
\begin{align*}
 |(h, \M h) - 2(h_\pe, (\M h)_\pe)| = |(\N h, \M h)| = |(h, \M h) - 2(h_\pa, (\M h)_\pa) - 2 (h_\te, h_\te)|.
\end{align*}
Taking into account the accretivity condition $\|h\|_{2}^2 \lesssim \Re (h,\M h)$, we deduce the Rellich estimate
\begin{align*}
\|h_{\pe}\|_{2}^2\sim \|h\|_2^2 \sim \|h_{\pa}\|_{2}^2+ \|h_{\theta}\|_{2}^2= \|h_{r}\|_{2}^2.
\end{align*}
This proves that $\N_{\pe}:\Hp(\P\M)\to (\H_{\P})_{\pe}$ and  $\N_{r}:\Hp(\P\M)\to (\H_{\P})_{r}$ have lower bounds. Up to now we have made the qualitative assumption that $A$ is Lipschitz continuous in $(x,t)$. However, since the Lipschitz norm of $A$ does not enter the estimate above, we may remove this extra assumption by an approximation argument that we shall present in the proof of Proposition~\ref{prop:Dir} below. For the time being we admit that this is possible.

Thus, we can apply Proposition~\ref{prop:continuitymethod} with $\mathbb{A} =\{\tau \id + (1-\tau) A : 0 \leq \tau \leq 1 \}$ to obtain compatible well-posedness of $(R)_{\mE_0}^\Lop$ and $(N)_{\mE_0}^\Lop$ from the analogous result for the heat equation in Section~\ref{sec:WPblock}. By reversing time, the same result holds for the backward equation with coefficients $A(x)$ and duality (Theorem~\ref{thm:duality}) along with $A=A^*$ yields compatible well-posedness of $(R)_{\mE_{-1}}^\Lop$ and $(N)_{\mE_{-1}}^\Lop$ for the forward equation. Finally, by Theorem~\ref{thm:inter} and (compatible) well-posedness in the energy classes for $s=-1/2$, we obtain the intermediate cases.
\subsection{The case of constant coefficients (vii)}

In this case, the maps $N_{\perp}$ and $N_{r}$ are  Fourier multiplier operators in the context of the Mihlin multiplier theorem, and their invertibility is done explicitly in \cite{N2}, when $s=0$. The other cases follow by reversing time, duality and interpolation just as in the Hermitian case above.
\subsection{Proof of Proposition~\ref{prop:Dir}}

We recall from \eqref{eq:DirToNeu} that for $u$ an energy solution to \eqref{eq1} with bounded and measurable coefficients and $h$ the boundary data of $\pcg u$, there is comparability
\begin{align}
\label{eq:DirToNeuh}
\|h_{r}\|_{\Hdot^{-1/4}_{{\pd_{t}-\Delta_{x}}}} \sim \|h\|_{\Hdot^{-1/2}_{\P}} \sim \|h_{\pe}\|_{\Hdot^{-1/4}_{{\pd_{t}-\Delta_{x}}}}.
\end{align}
{Here, $h_{r} = [\nabla_x u, \HT \dhalf u]|_{\lambda = 0}$ belongs to $(\Hdot^{-1/4}_{{\pd_{t}-\Delta_{x}}})^{n+1}$ and we forget the power of $n+1$ by abuse of notation. Moreover, implicit constants depend only on ellipticity of $A$ and dimension since these estimates were obtained from the Lax-Milgram lemma. Given $h\in \Hp(\P\M)\cap \Hdot^{-1/2, +}_{\P\M}$, we can follow the proof of (iv) and obtain from \eqref{eq:1011} and \eqref{eq:donefour2} that
\begin{align*}
|(\N h, \M h)| \lesssim \|h\|_{2} \Big(|(h_{\pe}, h_{\theta})|^{1/2} + \|h\|_{\Hdot^{-1/2}_{\P}}\Big).
\end{align*}
Taking into account \eqref{eq:DirToNeu}, the same reasoning as before reveals
\begin{align}
\label{eq:sim}
\|h_{r}\|_{2}^2+\|h_{r}\|_{\Hdot^{-1/4}_{{\pd_{t}-\Delta_{x}}}}^2 \sim {\|h\|_2^2 + \|h\|_{\Hdot^{-1/4}_{{\pd_{t}-\Delta_{x}}}}^2 \sim} \|h_{\pe}\|_{2}^2+ \|h_{\pe}\|_{\Hdot^{-1/4}_{{\pd_{t}-\Delta_{x}}}}^2,
\end{align}
where implicit constants depend only on ellipticity of $A$, dimension and $\|\dhalf A\|_{\L^\infty(\mathrm{BMO})}$. So far, this estimate holds under the \emph{a priori} assumption that $A$ is Lipschitz continuous in $(x,t)$. {Let us now remove this additional regularity and assume $\dhalf A \in \L^\infty(\mathrm{BMO})$ only.}

By convolution with smooth {positive} kernels we can  find a sequence of Lipschitz matrices $A_{j}$ with uniform ellipticity bounds that converge towards $A$  almost everywhere.  Since $\|\dhalf A_{j}\|_{\L^\infty(\mathrm{BMO})} \le \|\dhalf A\|_{\L^\infty(\mathrm{BMO})}$ holds due to the translation invariance of the  $\L^\infty(\mathrm{BMO})$-norm, the bounds \eqref{eq:sim} are uniform in $j$ for $h_{j}\in \Hp(\P\M_{j})\cap \Hdot^{-1/2, +}_{\P\M_{j}}$, where $M_{j}$ is the matrix corresponding to $A_{j}$.

We want to obtain the same equivalence for all $h\in \Hp(\P\M)\cap \Hdot^{-1/2, +}_{\P\M}$. To this end we shall use in a crucial way well-posedness of the BVPs in the energy class, see Section~\ref{sec:energy}. Consider for example the Neumann problem. Let $f \in \L^2(\R^{n+1}) \cap \Hdot^{-1/4}_{{\pd_{t}-\Delta_{x}}}$ and let $u$ and $u_{j}$ be the unique energy solutions to \eqref{eq1} with coefficients $A$ and $A_{j}$ and conormal derivative equal to $f$, respectively. Theorem~\ref{thm:sob} creates two vectors $h, h_j \in \Hdot^{-1/2, +}_\P$ with $h_{\pe}=(h_{j})_{\pe}=f$. Due to \eqref{eq:DirToNeuh} the $(h_j)_r$ are uniformly bounded in $\Hdot^{-1/4}_{\pd_{t}-\Delta_x}$. The hidden coercivity estimate \eqref{eq:coercivity energy} implies that the $u_{j}$ are uniformly bounded in the energy space $\dot\E(\R^{n+2}_{+})$, which means that the $D_{A_j} u_j$ are uniformly bounded in $\L^2(\R^{n+2}_{+}; \IC^{n+2})$. Without loss of generality we may assume weak convergence. Passing to the limit in the variational formulation
\begin{align*}
 \iiint_{\R^{n+2}_+} A_j \gradlamx u_j \cdot \cl{\gradlamx v} + \HT \dhalf u_j \cdot \cl{\dhalf v} \d \lambda \d x \d t = - \langle f, v|_{\lambda = 0} \rangle \qquad (v \in \dot{\E}),
\end{align*}
we discover that the weak limit solves the equation with  coefficients $A$ and by well-posedness in the energy class we deduce $D_{A_j} u_j \to \pcg u$ weakly in $\L^2(\R^{n+2}_{+}; \IC^{n+2})$. Now, Lemma~\ref{lem:trace energy} yields $u_j|_{\lambda = 0} \to u|_{\lambda = 0}$ weakly in $\Hdot^{1/4}_{\pd_{t} - \Delta_x}$ and hence $(h_j)_r \to h_r$
weakly in $\Hdot^{-1/4}_{\pd_{t} - \Delta_x}$. Thus, we can pass to the limit inferior in \eqref{eq:sim} for $h_j$ and get the one-sided inequality
\begin{align*}
 \|h_{r}\|_{2}^2+\|h_{r}\|_{\Hdot^{-1/4}_{{\pd_{t}-\Delta_{x}}}}^2 \lesssim \|h_{\pe}\|_{2}^2+ \|h_{\pe}\|_{\Hdot^{-1/4}_{{\pd_{t}-\Delta_{x}}}}^2.
\end{align*}
For the reverse inequality we argue similarly using the energy solutions with fixed regularity data.

As usual, \eqref{eq:sim} and the method of continuity (see the proof of Proposition~\ref{prop:continuitymethod} below for the specific application) allows us to conclude that the regularity problem with data $u|_{\lambda = 0} \in \Hdot^{1/2}_{{\pd_{t}-\Delta_{x}}}~\cap~\Hdot^{1/4}_{{\pd_{t}-\Delta_{x}}}$ and the Neumann problem with data $\dnuA u|_{\lambda = 0} \in \Hdot^{0}_{{\pd_{t}-\Delta_{x}}}~\cap~\Hdot^{-1/4}_{{\pd_{t}-\Delta_{x}}}$ are well-posed in the class of solutions with $\pcg u \in \mE_{0}\cap \mE_{-1/2}$. Indeed, by Theorems~\ref{thm:uniq} and \ref{thm:sob} this amounts to showing that $N_\bullet: \Hp(\P\M)\cap \Hdot^{-1/2, +}_{\P\M} \to (\cl{\ran(\P)} \cap \Hdot_\P^{-1/2})_\bullet$ is an isomorphism, where $\bullet$ designates either $r$ or $\pe$. Now, \eqref{eq:sim} provides lower bounds for this map and if $A$ is in block form, then  Proposition~\ref{prop:cwp} and Theorem~\ref{thm:WP}(i) yield invertibility. This yields (i) and the first claim in (iii).

We perform a duality argument to conclude the remaining assertions. First, the same analysis applies to the regularity problem for the backward in time equation with coefficients $A^*(x,t)$ since the hypothesis $\dhalf A\in \L^\infty( \mathrm{BMO})$  is preserved in changing $t$ to $-t$ and in taking adjoints. By the argument of Theorem~\ref{thm:duality}, each $f \in \L^2(\R^{n+1})+ \Hdot^{1/4}_{{\pd_{t}-\Delta_{x}}}$ corresponds to a unique $h\in \Hdot^{-1,+}_{\P\M}+ \Hdot^{-1/2,+}_{\P\M}$ (which is the dual of  $ \Hdot^{0,+}_{\P^*\wM}\cap \Hdot^{-1/2,+}_{\P^*\wM}$ in that duality) with $h_{r}= [\nabla_{x }f, \HT\dhalf f ]$. Then $F=e^{-\lambda \P\M}h$ is the parabolic conormal differential of a reinforced weak solution $u$ with $u|_{\lambda=0}=f$. By construction $u$ is the sum of two solutions $u_1 + u_2$ with $\pcg u_1 \in \mE_{-1}$ and $\pcg u_2 \in \mE_{-1/2}$ and owing to Theorem~\ref{thm:sob} it is unique under all weak solutions with this property. This proves (ii). The corresponding statement on the Neumann problem is obtained in the same way.
\subsection{Proof of Lemma \ref{lem:smooth}}
\label{sec:lemsmooth}

This is a consequence of Caccioppoli-type arguments for weak solutions. Assume first that $u$ is a weak solution of $\partial_t u -\div_{\lambda,x} A(x,t)\nabla_{\lambda,x} u =  0$. We let $\LQI$ be a parabolic cylinder in $\R^{n+2}_{+}$ of size $r$ (not necessary of Whitney type) and $\LQIt$ denote an enlargement contained in $\R^{n+2}_{+}$.

Since $A$ is Lipschitz continuous with respect to $X=(\lambda, x)$, Caccioppoli's inequality (Lemma~\ref{lem:Caccioppoli}) and the classical method of difference quotients imply that for $i=0,\ldots,n$ the function $v=\pd_{X_i}u$ is a weak solution to $\partial_t v -\div_{\lambda,x} A\nabla_{\lambda,x} v = \div_{\lambda,x} (\pd_{X_i} A)\nabla_{\lambda,x}u$. This being said, we can sum up Caccioppoli's inequality for all these solutions and obtain
\begin{align*}
 \iiint_{\LQI} |\gradlamx^2 u|^2 \lesssim \frac{1}{r^2} \iiint_{\LQIt} |\nabla_{\lambda,x}u|^2 + \|\gradlamx A\|_\infty^2 \iiint_{\LQIt} |\nabla_{\lambda,x}u|^2.
\end{align*}
Since now $u$ has second order derivatives in $X$ within $\Lloc^2(\R^{n+2}_+)$, we can interpret the equation $\pd_{t} u = \div_{\lambda,x} A \gradlamx u$ in $\Lloc^2(\R^{n+2}_+)$. It follows that $\iiint_{\LQI} |\pd_{t} u|^2$ can be bounded in the same way as $\iiint_{\LQI} |\gradlamx^2 u|^2$ above. Eventually, we use that $A$ is also Lipschitz continuous with respect to $t$ to conclude likewise that $v=\pd_{t}u$ is a weak solution to $\partial_t v -\div_{\lambda,x} A\nabla_{\lambda,x} v = \div_{\lambda,x} (\pd_{t} A)\nabla_{\lambda,x}u$. Caccioppoli's inequality for this solution $v$ reads
\begin{align*}
 \iiint_{\LQI }|\nabla_{\lambda,x}\pd_{t}u|^2  \lesssim \frac{1}{r^2} \iiint_{\LQIt}|\pd_{t}u|^2  + \|\pd_{t}A\|_{\infty}^2 \iiint_{\LQIt} |\nabla_{\lambda,x}u|^2.
\end{align*}
Adjusting the enlargements, we have obtained
\begin{align*}
 \iiint_{\LQI }|\pd_{t} u|^2 + |\nabla_{\lambda,x}^2u|^2+ |\nabla_{\lambda,x}\pd_{t}u|^2  \lesssim   \iiint_{\LQIt} |\nabla_{\lambda,x}u|^2,
\end{align*}
with an implicit constant depending on dimension, ellipticity, $r$ and $\|\gradlamx A\|_{\infty} + \|\pd_{t}A\|_\infty$.

If in addition $u$ is a reinforced weak solution with $\pcg u \in \cH_{\loc}$, then we can sum these estimates in $x$ and $t$. More precisely, fix $\Lambda=(r,2r)$, $\tilde \Lambda=(r/2,4r)$ and choose translates $Q_{k}\times I_{k}$ of $Q\times  I$ forming a partition of $\R^{n+1}$. {Then the cubes $\tilde Q_{k} \times \tilde I_{k}$ have finite overlap} and summing up, we obtain \eqref{est}.  By interpolation in time between, using for example Plancherel's theorem, the estimates for $\nabla_{\lambda,x}u$ and $\pd_{t}\nabla_{\lambda,x}u$, we obtain the one for $\dhalf \nabla_{\lambda,x} u$ and also for $\HT\dhalf \nabla_{\lambda,x} u$ as the Hilbert transform is isometric.
\subsection{Proof of Proposition~\ref{prop:continuitymethod}}
\label{sec:continuitymethod proof}

Given $A \in \mathbb{A}$, there is a continuous path $\gamma: [0,1] \to \mathbb{A}$ such that $\gamma(0) = A_0$ and $\gamma(1) = A$. We put $\gamma(\tau) := A_\tau$, let $M_\tau$ be the corresponding matrix defined in \eqref{eq:DB} and $\chi^+_\tau := \chi^+(\P \M_\tau)$ the associated spectral projection. With this notation, the assumption is that all maps $N_\bullet:  \chi^+_\tau \Hdot_\P^s \to  (\Hdot_\P^s)_\bullet$ have lower bounds and that invertibility holds for $\tau = 0$. In the following, we always consider subspaces of $\Hdot_\P^s$ as Hilbert spaces with the inherited norm. Due to Theorem~\ref{thm:wpequiv} and Proposition~\ref{prop:cwp}, the claim amounts to proving invertibility for all $\tau$ and that the inverses are compatible with those at regularity $s=-1/2$ provided this is true for $\tau = 0$. Let us recall from the discussion before Theorem~\ref{thm:sta} that $\chi^+_\tau: \Hdot_\P^s \to \chi^+_\tau \Hdot_\P^s$ is bounded and depends continuously on $\tau \in [0,1]$.

First, we claim there exists $\delta>0$ such that $\chi^+_\tau: \chi^+_\sigma \Hdot_\P^s \to \chi^+_\tau \Hdot_\P^s$ is invertible whenever $|\sigma - \tau| \leq \delta$. Indeed, choosing $\delta$ sufficiently small according to uniform continuity of $\tau \mapsto \chi^+_\tau$, we can define bounded operators $\chi^+_\tau \Hdot_\P^s \to \chi^+_\sigma \Hdot_\P^s$ by $(\id - \chi^+_\sigma(\chi^+_\sigma - \chi^+_\tau))^{-1}\chi^+_\sigma$ and $\chi^+_\sigma(\id - \chi^+_\tau(\chi^+_\tau - \chi^+_\sigma))^{-1}$, respectively, via a convergent Neumann series. Using the relations
\begin{align*}
 (\id - \chi^+_\sigma (\chi^+_\sigma - \chi^+_\tau))\chi^+_\sigma = \chi^+_\sigma \chi^+_\tau \chi^+_\sigma, \qquad \chi^+_\tau \chi^+_\sigma = \chi^+_\tau (\id - \chi^+_\tau(\chi^+_\tau - \chi^+_\sigma)),
\end{align*}
these turn out to be left- and right inverses for $\chi^+_\tau: \chi^+_\sigma \Hdot_\P^s \to \chi^+_\tau \Hdot_\P^s$.

Now, we start with $\sigma = 0$ and obtain for $|\tau| \leq \delta$ that $N_\bullet: \chi^+_\tau \Hdot_\P^s \to (\Hdot_\P^s)_\bullet$ is invertible if and only if the composite map $N_\bullet \chi_\tau^+: \chi_0^+ \Hdot_\P^s  \to (\Hdot_\P^s)_\bullet$ between \emph{fixed} spaces is invertible. The latter has lower bounds for any $|\tau| \leq \delta$ since this holds for $N_\bullet: \chi^+_\tau \Hdot_\P^s \to (\Hdot_\P^s)_\bullet$; and it is invertible for $\tau=0$ again by assumption and since $\chi_0^+$ is a projection. Thus, the method of continuity yields invertibility for all $|\tau| \leq \delta$. Iterating this process finitely many times, we obtain invertibility for every $\tau \in [0,1]$.

Finally, we turn to compatible well-posedness. From Theorem~\ref{thm:wpequiv} and well-posedness of the BVPs in the energy class, we can infer for every $\tau$ that $\N_\bullet: \chi_\tau^+ \H_\P^{-1/2} \to (\H_\P^{-1/2})_\bullet$ is invertible. Together with the assumption, this entails lower bounds for $\N_\bullet: \chi_\tau^+(\H_\P^{s} \cap \H_\P^{-1/2}) \to (\H_\P^{s} \cap \H_\P^{-1/2})_\bullet$, the intersections being Hilbert spaces for the sum of the two norms. Now, the crucial observation is that invertibility between these intersection spaces is equivalent to compatibility of the two inverses on $(\Hdot_\P^s)_\bullet$ and $(\Hdot_\P^{-1/2})_\bullet$ since their intersection is dense in both of them. This being said, we can simply repeat the argument above, replacing systematically $\Hdot_\P^s$ by $\Hdot_\P^s \cap \Hdot_\P^{-1/2}$, to deduce that the inverses of $N_\bullet$ are compatible for all $\tau$ provided they are for $\tau = 0$.
\section{Uniqueness for Dirichlet problems}
\label{sec:Dirichlet}

Here, we give the proofs of the uniqueness results on Dirichlet problems with non-tangential maximal function control stated in Theorems~\ref{thm:uniqueDir} and \ref{thm:WPDirTimeDependent}. For simplicity, we often fix Whitney parameters in the definition of $\NT$ in such a way that $\Lambda = (\lambda,2\lambda)$, $Q = B(x,\lambda)$, $I = (t-\lambda^2, t + \lambda^2)$ is a Whitney cylinder of sidelength $\lambda > 0$, where the center $(x,t) \in \R^{n+1}$ will be clear from the context. We shall freely use that different choices of Whitney parameters yield non-tangential maximal functions with comparable $\L^2$-norms, see Section~\ref{sec:NTmaxDef}. We also introduce a variant of the non-tangential maximal function
\begin{align*}
 \NTone ( u) (x,t):= \sup_{\lambda>0} \bariiint_{\Lambda \times Q \times I} |u(\mu,y,s) | \d \mu \d y \d s \qquad ((x,t) \in \ree),
\end{align*}
where the subscript is reminiscent of the fact that we are using $\L^1$-averages instead of $\L^2$-averages. Of course, $\NTone (u) \leq \NT (u)$  due to H\"older's inequality and if $\Lop u = 0$ in the weak sense, then $\NT (u)$ and $\NTone (u)$ are comparable in $\L^2$-norm thanks to the reverse H\"older estimate in Lemma~\ref{lem:RHu}.

In both theorems we have to deduce that a certain weak solution to $\Lop u = 0$ vanishes on $\R^{n+2}_+$. In order to bring into play the assumptions on the dual equations, we shall use test functions of the form $\phi = \Lop^*H$. Similar to Section~\ref{sec:energy} we use the homogeneous energy space $\dot{\E}(\R^{n+2})$  (considered modulo constants) to define
\begin{align*}
 \langle \Lop^* H, v \rangle := \iiint_{\R^{n+2}} A^*(x,t)\nabla_{\lambda,x} \phi \cdot\overline{\nabla_{\lambda,x} v} - \HT\dhalf \phi \cdot \overline{\dhalf v} \d \lambda \d x\d t \qquad (v \in \dot{\E}(\R^{n+2})),
\end{align*}
where $\langle \cdot\, , \cdot \rangle$ denotes the duality pairing on $\dot{\E}(\R^{n+2})$. Hidden coercivity reveals that $\Lop^*: \dot{\E}(\R^{n+2}) \to \dot{\E}(\R^{n+2})^*$ is an isomorphism and we say that $H$ is the energy solution to $\Lop^* H = \phi$ on $\R^{n+2}$. A similar discussion applies to energy solutions of the inhomogeneous equation for $\Lop$. We remark that we work on the whole space $\R^{n+2}$ to avoid the discussion of boundary values.
\subsection{Proof of Theorem~\ref{thm:uniqueDir}}

For the sake of clean arrangement we shall postpone proofs of technical lemmas used throughout the main argument until Section~\ref{sec:AuxiliaryDirichlet}.

Let us assume that $u$ is a weak solution to $\Lop u = 0$ in $\R^{n+2}_+$ that satisfies $\NT (u)  \in \L^2(\ree)$ and for almost every $(x,t) \in \ree$,
\begin{align}
\label{eq1:uniqueDir}
\lim_{\lambda \to 0} \bariiint_{W((x,t),\lambda)} |u| = 0,
\end{align}
where for the argument we prefer to use $W((x,t),\lambda) := (\lambda/2, 4\lambda) \times B(x,\lambda) \times (t-4\lambda^2, t+4\lambda^2)$ as the Whitney regions defining $\NT$. We need to show $u = 0$ almost everywhere.

Let $\phi \in \C_0^\infty(\R^{n+2}_+)$. For a reason that will become clear in the course of the proof, we test $u$ not against $\phi$ but against $\partial_\lambda^2 \phi$ and we shall show
\begin{align}
\label{eq2:uniqueDir}
 (u, \partial_\lambda^2 \phi) = 0,
\end{align}
where here and throughout $(\cdot \,,\cdot)$ denotes the inner product on $\L^2(\R^{n+2}_+)$. This suffices to conclude: Indeed, $\partial_\lambda^2 u = 0$ implies $u(\lambda,x,t) = f(x,t) + \lambda g(x,t)$ for some $f,g \in \Lloc^2(\ree)$ and from \eqref{eq1:uniqueDir} and Lebesgue's differentiation theorem we deduce $f = 0$ almost everywhere. As now $u(\lambda,x,t) = \lambda g(x,t)$, we obtain for any $\lambda > 0$ by means of Tonelli's theorem
\begin{align}
\label{eq3:uniqueDir}
 \iint_{\ree} |g(x,t)|^2 \d x \d t
= \frac{1}{\lambda^2} \iint_\ree \bigg(\bariiint_{W((x,t),\lambda)}|u|^2 \bigg) \d x \d t
\leq \iint_\ree \frac{1}{\lambda^2} \NT(u)(x,t)^2 \d x \d t.
\end{align}
Since $\NT(u) \in \L^2(\ree)$ by assumption, we discover $g = 0$ almost everywhere on letting $\lambda \to \infty$.

We turn to the proof of \eqref{eq2:uniqueDir}. Let $\eps \in (0,1)$ and $R \in (1,\infty)$ be two degrees of freedom that will be chosen sufficiently small and large, respectively. Let $\eta = \eta(\lambda)$ be smooth with $\eta(\lambda) = 0$ for $\lambda \leq 1$ and $\eta(\lambda) = 1$ for $\lambda \geq 2$ and put $\theta(\lambda) = \eta(\lambda/\eps) - \eta(\lambda/R)$. For $\eps$ and $R$ sufficiently small and large, respectively, we have $(u, \partial_\lambda^2 \phi) = (\theta u, \partial_\lambda^2 \phi)$ since $\phi$ has compact support in $\R^{n+2}_+$. Since $\partial_\lambda^2 \phi \in \dot{\E}(\R^{n+2})^*$, we can introduce the dual equation by writing $\partial_\lambda^2 \phi = \Lop^* v$ for some $v \in \dot{\E}(\R^{n+2})$. We have

\begin{lem}
\label{lem:theta u testfunction}
The functions $\theta u$ and $\theta v$ are test  functions for the reinforced weak formulation of parabolic equations, that is, they are in the closure of $\C_0^\infty(\R^{n+2}_+)$ for the norm $\|\gradlamx \cdot\|_2 + \|\dhalf \cdot \|_2$.
\end{lem}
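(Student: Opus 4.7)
The plan is to verify the two claims separately, since they have different structures. In both cases the task is to show that the function lies in $\dot\E(\R^{n+2}_+)$ with $\lambda$-support compactly contained in $(0,\infty)$, after which density follows from standard truncation and mollification. Throughout, $\theta' = \eps^{-1}\eta'(\cdot/\eps) - R^{-1}\eta'(\cdot/R)$ is bounded and supported in $[\eps, 2\eps] \cup [R, 2R]$.

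For $\theta u$: first upgrade $u$ to a reinforced weak solution via the (soon-to-be-proved) Lemma~\ref{lem:NT implies reinforced}. The lower half of Lemma~\ref{lem:NT and QE} gives $\barint_{7\lambda/8}^{9\lambda/8}\|u(\mu,\cdot)\|_2^2\d\mu\lesssim\|\NT u\|_2^2$ uniformly in $\lambda$, and summing over a dyadic decomposition of $[\eps, 2R]$ produces $\int_\eps^{2R}\|u(\lambda,\cdot)\|_2^2\d\lambda<\infty$; this yields $\theta u,\theta' u\in\L^2(\R^{n+2}_+)$. A finite-overlap covering of the strip $(\eps/2, 4R)\times\ree$ by Whitney parabolic cylinders of sidelength $\sim\eps$ together with Caccioppoli's inequality (Lemma~\ref{lem:Caccioppoli}) delivers $\theta\gradlamx u\in\L^2$. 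For the half-order time derivative, fix $\phi\in\C_0^\infty((\eps/2, 4R))$ equal to $1$ on $\supp\theta$. Because $\phi$ depends only on $\lambda$, it commutes with $\dhalf$, so it suffices to put $\phi u$ into $\H^{1/2}(\R;\L^2(\reu))$. The weak-form identity gives $\partial_t(\phi u)=\div_{\lambda,x}(\phi A\gradlamx u)-A\gradlamx u\cdot\gradlamx\phi$; by the bounds above, the right-hand side is in $\L^2(\R;\W^{-1,2}(\reu))$. Combined with $\phi u\in\L^2(\R;\W^{1,2}(\reu))$, a Lions--Magenes-type interpolation in the $t$-Fourier variable yields $\phi u\in\H^{1/2}(\R;\L^2(\reu))$ and hence $\dhalf(\theta u)=\theta\dhalf u=\theta\dhalf(\phi u)\in\L^2$.

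For $\theta v$: since $v\in\dot\E(\R^{n+2})$ by construction, $\theta\gradlamx v$ and $\theta\dhalf v$ lie in $\L^2(\R^{n+2})$ automatically. The delicate point is the commutator term $\theta' v$, which requires an $\L^2$ bound on $v$ on the bounded strips $[\eps,2\eps]\cup[R,2R]$ in $\lambda$. Using the $(n+2)$-variable analogue of Lemma~\ref{lem:parabolic potential}, fix a representative of $v$ modulo constants as a parabolic Riesz potential, placing it in $\L^2(\R^{n+2}) + \L^\infty(\R^{n+2})$. The bounded summand is harmless on bounded-$\lambda$ strips once we exploit the compact support of the data $\partial_\lambda^2\phi$ in $\Lop^*v=\partial_\lambda^2\phi$: interior parabolic regularity off $\supp\phi$ plus decay of the parabolic fundamental solution yields pointwise decay of (a suitable constant-shifted representative of) $v$ at spatial infinity, giving $v\in\L^2$ on the relevant strips. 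Constants can be absorbed because $\theta'$ integrates to (essentially) zero, so $\theta'v\in\L^2(\R^{n+2})$ independently of the choice of representative.

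The density step is identical for both functions: each is supported in a compact $\lambda$-interval $[\eps, 2R]\Subset(0,\infty)$ and lies in $\dot\E(\R^{n+2}_+)$. Multiplying by a parabolic cutoff $\zeta_M(x,t)$ equal to $1$ on $\{|x|+|t|^{1/2}\le M\}$ and sending $M\to\infty$ converges in $\|\gradlamx\cdot\|_2+\|\dhalf\cdot\|_2$ by dominated convergence on the gradient, while for the half-derivative we use the representation via Lemma~\ref{lem:Hdot12 several variables} and the translation-invariance of $\dhalf$ in $t$. Each truncated function is then mollified by convolving separately in $(\lambda,x)$ and in $t$, producing $\C_0^\infty(\R^{n+2}_+)$ approximants that converge in the desired norm. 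The main obstacle is the global-in-$(x,t)$ control of $\dhalf u$ on bounded $\lambda$-strips, since the reinforced weak solution framework only provides $(\lambda,x)$-local half-order regularity; this is handled by the equation-plus-interpolation argument above. A secondary difficulty is the treatment of $\theta' v$, where the freedom to shift $v$ by constants must be reconciled with the decay inherited from the compact support of the data.
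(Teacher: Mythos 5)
Your treatment of $\theta u$ is essentially equivalent to the paper's, though phrased as a re-derivation of Lemma~\ref{lem:NT implies reinforced}: once that lemma is in hand, the hypothesis $\NT(u)\in\L^2(\ree)$ immediately gives $\theta u\in\L^2(\R^{n+2}_+)\cap\dot\E(\R^{n+2}_+)$ (taking $\omega=1$), and there is nothing else to do. The dyadic-sum argument from Lemma~\ref{lem:NT and QE}, the Caccioppoli step and the Lions--Magenes interpolation are precisely the ingredients of that lemma's proof, so you are duplicating it rather than invoking it.

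The treatment of $\theta v$ contains a genuine gap. You aim to show that $v$ is square integrable on bounded $\lambda$-strips by writing $v$ as a parabolic Riesz potential in $\L^2+\L^\infty$ (modulo constants) and then disposing of the bounded summand via \emph{interior parabolic regularity off $\supp\phi$ plus decay of the parabolic fundamental solution}. Neither of these tools is available at the paper's level of generality: the coefficients are complex and possibly systems, no DeGiorgi--Nash--Moser regularity is assumed, and no fundamental solution with pointwise decay has been constructed. Moreover, your claim that the choice of additive constant in $v$ is irrelevant because $\int_0^\infty\theta'\,\mathrm{d}\lambda=0$ does not give membership in $\L^2$: if $v'=v+c$ then $\theta'v'-\theta'v=\theta'c$ is a nonzero constant in $(x,t)$ on a bounded $\lambda$-strip, which is certainly not in $\L^2(\R^{n+2}_+)$, so the representative does matter. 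The paper resolves both issues at once through Lemma~\ref{lem:energy solution compact right}: since the coefficients are $\lambda$-independent and the forcing is $\partial_\lambda^2\phi$ for $\phi\in\C_0^\infty$, the method of $\lambda$-difference quotients and Caccioppoli's inequality upgrade $v$ to $\C^\infty(\R;\L^2\cap\Hdot^{1/2}_{\partial_t-\Delta_x})$, which picks out the correct representative and yields $\NTone\big(\frac{v-v_0}{\lambda}\big)\in\L^2(\ree)$ directly — after which one applies Lemma~\ref{lem:NT implies reinforced} with the weight $\omega(\lambda)=\max\{1,\lambda\}$, exactly as for $\theta u$. That reduction to a single lemma with a well-chosen weight is what you are missing; your decay argument cannot be made rigorous here and the constant-shift issue is not actually absorbed by the vanishing integral of $\theta'$.
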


Since $\theta$ does not depend on $t$, we obtain in particular $u \in \Hdot^{1/2}(\R; \Lloc^2(\reu))$, that is, $u$ is a reinforced weak solution, see also Corollary~\ref{cor:fractional integration by parts}. In this way we can justify the calculation
\begin{align*}
 (u, \partial_\lambda^2 \phi)
&= (\theta u, \Lop^* v)\\
&= (\gradlamx(\theta u), A^* \gradlamx v) + (\HT \dhalf (\theta u), \dhalf v) \\
&= (u \gradlamx \theta, A^* \gradlamx v) - (\gradlamx u, A^* \gradlamx \theta v) + (\gradlamx u, A^* \gradlamx(\theta v)) + (\HT \dhalf u, \dhalf(\theta v)) \\
&= (u \gradlamx \theta, A^* \gradlamx v) - (\gradlamx u, A^* \gradlamx \theta v),
\end{align*}
where in the third step we have used that $\theta = \theta(\lambda)$ is real valued and the final step follows from the reinforced formulation of the equation $\Lop u = 0$. At this stage of the proof we want to bring the existence hypothesis into play. So, we need further properties of $v$ stated in the following

\begin{lem}
\label{lem:energy solution compact right}
Let $\phi \in \C_0^\infty(\R^{n+2})$. The energy solution to $\Lop^* v = \partial_\lambda^2 \phi$ satisfies $v \in \C^\infty(\R; \L^2 \cap \Hdot^{1/2}_{\partial_t - \Delta_x})$. Moreover, $\NTone(\frac{v - v_0}{\lambda}) \in \L^2(\ree)$, where $v_0 := v|_{\lambda = 0}$.
\end{lem}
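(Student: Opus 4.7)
The plan is to exploit two structural features of this setup: the $\lambda$-independence of the coefficients of $\Lop^*$, so that $\partial_\lambda$ commutes with $\Lop^*$, and the compact support of $\partial_\lambda^2\phi$, which makes $v$ an \emph{a priori} homogeneous solution outside a compact slab. Together these will yield both the smoothness in $\lambda$ valued in $\L^2 \cap \Hdot^{1/2}_{\pd_{t}-\Delta_{x}}$ and the non-tangential control on $(v-v_0)/\lambda$.

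To begin, since $A^*=A^*(x,t)$ is $\lambda$-independent, each $\partial_\lambda^k v$ is itself the energy solution of $\Lop^*(\partial_\lambda^k v) = \partial_\lambda^{k+2}\phi$, with right-hand side still in $\C_0^\infty(\R^{n+2})$. Hidden coercivity of the form associated with $\Lop^*$ (in the spirit of \eqref{eq:coer}) combined with Lemma~\ref{lem:trace energy} gives $\partial_\lambda^k v \in \C(\R;\Hdot^{1/4}_{\pd_{t}-\Delta_{x}})$ uniformly, so $v \in \C^\infty(\R;\Hdot^{1/4}_{\pd_{t}-\Delta_{x}})$ to start.

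To upgrade the target space to $\L^2 \cap \Hdot^{1/2}_{\pd_{t}-\Delta_{x}}$, I would fix $R>0$ with $\supp\phi \subset (-R,R) \times \ree$. On each slab $\pm\lambda > R$, $v$ is a reinforced weak solution to the homogeneous backward equation $\Lop^* v = 0$ whose backward conormal differential $\pcgb v$ has finite square function norm (inherited from $v \in \dot \E(\R^{n+2})$). The backward analog of Theorem~\ref{thm:chardir} discussed in Section~\ref{sec:backward} then furnishes a semigroup representation $v(\lambda,\cdot) = c^\pm - (\e^{-(|\lambda|-R)[\wM P^*]}\widetilde h^\pm)_\pe$ on these slabs, with $\widetilde h^\pm$ in the appropriate spectral space of $\wM P^*$. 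By Theorem~\ref{thm:bhfc} and Lemma~\ref{lem:identification}, this semigroup maps $\clos{\ran(\wM P^*)}$ into every abstract Sobolev space, in particular into $\L^2 \cap \Hdot^{1/2}_{\pd_{t}-\Delta_{x}}$, with smooth dependence on $\lambda$; the global finite-energy normalization of $v$ forces $c^\pm = 0$. On the bounded slab $[-R,R]$, the previously established $\Hdot^{1/4}$-smoothness of all $\partial_\lambda^k v$ combined with trading $\lambda$-derivatives against $(x,t)$-derivatives in the equation (using the $\C_0^\infty$ source) furnishes the same target regularity. This establishes the $\C^\infty$ claim, and by continuity in $\lambda$ valued in $\L^2(\ree)$ the trace $v_0 := v|_{\lambda=0}$ is unambiguously defined.

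For the non-tangential estimate, the fundamental theorem of calculus yields
\begin{align*}
\frac{v(\mu,y,s) - v_0(y,s)}{\mu} = \int_0^1 \partial_\lambda v(\sigma \mu, y, s) \, \d\sigma
\end{align*}
for a.e.\ $(\mu,y,s) \in \R^{n+2}_+$, so averaging over a Whitney region gives the pointwise bound $\NTone((v-v_0)/\lambda)(x,t) \lesssim \NT(\partial_\lambda v)(x,t)$ with a mild enlargement of apertures. Since $\partial_\lambda v = (A^*_{\pe\pe})^{-1}(\partial_{\nu_{A^*}} v - A^*_{\pe\pa}\gradx v)$, it is controlled by components of $\pcgb v$. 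On $\{\lambda > R\}$, combining the semigroup representation above with Theorem~\ref{thm:NTmax}'s analog for $\wM P^*$ gives $\|\NT(\partial_\lambda v)\mathbf{1}_{\lambda>R}\|_2 \lesssim \|\pcgb v(R,\cdot)\|_2 <\infty$ by the regularity established. For $0 < \lambda \leq R$, I would write $\pcgb v$ on the upper half-space as the Duhamel solution of the inhomogeneous first-order system $\partial_\lambda G + P^*\wM G = F$ with compactly supported source $F$ driven by $\partial_\lambda^2\phi$ and Cauchy datum $\pcgb v(R,\cdot)$ at $\lambda=R$, and bound its $\NT$-norm using the $\L^2$ boundedness of the functional calculus of $P^*\wM$ (Theorem~\ref{thm:bhfc}) together with the $\L^q$ off-diagonal estimates of Proposition~\ref{prop:OffDiag}, which let one handle the convolution against $F$ while preserving $\L^2$-integrability of Whitney averages.

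The main obstacle is the NT bound on the bounded slab $(0,R]$: the contribution of the slab $\{\lambda > R\}$ is handled cleanly by the Cauchy extension framework and Theorem~\ref{thm:NTmax}, but reconstructing $\pcgb v$ near the boundary from a compactly supported source must be done entirely within the spectral calculus of $P^*\wM$, because classical pointwise kernel estimates are unavailable under mere measurability of the coefficients. The key technical step is therefore ensuring that the Duhamel integral inherits the required non-tangential $\L^2(\ree)$-bound from the off-diagonal decay established in Proposition~\ref{prop:OffDiag} in the first place.
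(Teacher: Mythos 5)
Your route takes a genuinely different path from the paper's, but it has gaps at two critical places.

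\textbf{The smoothness step.} On the outer slab $\{\lambda > R\}$ you invoke the backward analog of Theorem~\ref{thm:chardir} to obtain a semigroup representation of $v$ itself with data $\widetilde h^\pm\in\L^2$. But Theorem~\ref{thm:chardir} requires $\pcgb v\in\mE_{-1}$, i.e.\ the square function with weight $\lambda$, which is \emph{not} what $v\in\dot\E(\R^{n+2})$ provides. Finite energy gives $\pcgb v\in\L^2(\R^{n+2})$, i.e.\ the $\mE_{-1/2}$ square function. The corresponding \emph{a priori} representation (Theorem~\ref{thm:sob} with $s=-1/2$) places the trace in $\Hdot^{-1/2}_\P$, not in $\L^2$, so you do not get $v(\lambda,\cdot)\in\L^2(\ree)$ directly, and you cannot conclude $c^\pm=0$ from energy finiteness alone (the energy norm sees $v$ only modulo constants). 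The paper's argument bypasses the semigroup entirely: since $\|\partial_\lambda\psi\|_{\dot\E(\R^{n+2})^*}\leq\|\psi\|_2$, one has
\begin{align*}
 v = \Lop^{*-1}(\partial_\lambda^2\phi) = \partial_\lambda\big(\Lop^{*-1}(\partial_\lambda\phi)\big)\in\L^2(\R^{n+2}),
\end{align*}
and, since $A^*$ is $\lambda$-independent, difference quotients plus Caccioppoli's inequality (Lemma~\ref{lem:Caccioppoli}) give $v\in\dot\E(\R^{n+2})$. Iterating with $\psi=\partial_\lambda^k\phi$ yields all $\lambda$-derivatives of $v$, $\gradx v$, $\HT\dhalf v$ in $\L^2(\R^{n+2})$, and integrating in $\lambda$ gives $v\in\C^\infty(\R;\L^2\cap\Hdot^{1/2}_{\partial_t-\Delta_x})$. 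This is more elementary than your approach and avoids the trace-space mismatch.

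\textbf{The non-tangential bound.} The pointwise inequality $\NTone(\frac{v-v_0}{\lambda})(x,t)\lesssim\NT(\partial_\lambda v)(x,t)$ after a ``mild enlargement of apertures'' does not follow from the fundamental theorem of calculus as stated: the integral $\int_0^\mu\partial_\lambda v(\sigma,\cdot)\d\sigma$ samples $\partial_\lambda v$ at heights $\sigma\ll\lambda$ over a spatial region of extent $\sim\lambda$, and such regions are \emph{not} Whitney-shaped, so they are not controlled by $\NT(\partial_\lambda v)$ at $(x,t)$ (at best after composing with a further maximal operator). More importantly, you then still need $\NT(\partial_\lambda v)\in\L^2(\ree)$, and your treatment of the slab $(0,R]$ via a Duhamel formula for the inhomogeneous first-order system is not carried out --- you flag this as the main obstacle yourself, so the proof is incomplete at exactly the place where it is hardest. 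The paper instead reduces to $\lambda\leq 1$ using Lemma~\ref{lem:energy Whitney trace}, then iterates the Poincar\'e-type estimate \eqref{eq:Poincare energy} on nested Whitney regions of scale $2^{-k}\lambda$ and telescopes; the resulting averages of $\partial_\lambda v$, $\gradx v$, $\HT\dhalf v$ are converted by the fundamental theorem of calculus into the averages of
\begin{align*}
 F_0 = |\gradx v_0| + |\HT\dhalf v_0| + |\partial_\lambda v|_{\lambda=0}|, \qquad F = \int_0^2 |\gradx\partial_\lambda v| + |\HT\dhalf\partial_\lambda v| + |\partial_\lambda^2 v|\d\mu,
\end{align*}
which are in $\L^2(\ree)$ precisely thanks to the smoothness established in the first step, and the separate maximal operators $\Max_x\Max_t$ close the argument. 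This does not require controlling $\NT(\partial_\lambda v)$ at all and avoids the off-diagonal/Duhamel machinery entirely.
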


Since in particular $v_0 \in \Hdot^{1/2}_{\partial_t - \Delta_x}$, the assumption provides a reinforced weak solution to $\Lop^* w = 0$ that satisfies $\NT(\pcgb w) \in \L^2(\ree)$ and $w|_{\lambda = 0} = v_0$ in the sense of $\Hdot^{1/2}_{\partial_t - \Delta_x}$. The backward conormal differential $\pcgb$ has been introduced in Section~\ref{sec:backward}. Due to $v_0 \in \L^2(\ree)$ we can realise $w$ by an additive constant such that $w|_{\lambda = 0} = v_0$ holds also in $\L^2(\ree)$. Upon replacing $v$ with $w$, the same line of reasoning as above then reveals
\begin{align*}
0 = (\theta u, \Lop^* w) = (u \gradlamx \theta, A^* \gradlamx w) - (\gradlamx u, A^* \gradlamx \theta w),
\end{align*}
invoking the following lemma to justify calculations.

\begin{lem}
\label{lem:theta w testfunction}
The function $\theta w$ is in the closure of $\C_0^\infty(\R^{n+2}_+)$ for the norm $\|\gradlamx \cdot\|_2 + \|\dhalf \cdot \|_2$.
\end{lem}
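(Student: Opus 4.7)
The plan is to mirror the structure of the proofs of Lemmas~\ref{lem:theta u testfunction} and \ref{lem:energy solution compact right}, by first establishing that $\theta w$ has finite homogeneous energy norm and then approximating it by $\C_0^\infty(\R^{n+2}_+)$ functions. Since $\theta=\theta(\lambda)$ is smooth with compact support in $[\eps,2R]\subset (0,\infty)$, the product $\theta w$ automatically vanishes identically for $\lambda\notin[\eps,2R]$. In particular, $\theta$ commutes with $\dhalf$, so $\dhalf(\theta w)=\theta\,\dhalf w$, and $\gradlamx(\theta w)=\theta'(\lambda)\,w\,\mathbf{e}_{\lambda}+\theta\,\gradlamx w$.

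For the energy estimate, I would bound the three pieces $\theta\,\dhalf w$, $\theta\,\gradlamx w$ and $\theta' w$ separately. The first two are controlled by $|\pcgb w|$ pointwise: $\nabla_x w$ and $\dhalf w$ are direct components, while $\partial_\lambda w$ is recovered from $\partial_{\nu_{A^*}}w=(A^*\gradlamx w)_\pe$ using the invertibility of $A^*_{\pe\pe}$ coming from ellipticity. The left inequality of Lemma~\ref{lem:NT and QE} then gives
\begin{equation*}
\int_\eps^{2R}\!\|\pcgb w(\lambda,\cdot)\|_2^2\,\mathrm{d}\lambda\lesssim R\,\|\NT(\pcgb w)\|_2^2<\infty,
\end{equation*}
so both $\theta\,\dhalf w$ and $\theta\,\gradlamx w$ lie in $\L^2(\R^{n+2}_+)$. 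For $\theta' w$ I would use that $\partial_\lambda w$ is also dominated by $\pcgb w$ together with the fact that $w|_{\lambda=0}=v_0\in\L^2(\ree)$ (in the $\L^2$-sense, after adjusting the additive constant) to write $w(\lambda,\cdot)=v_0+\int_0^\lambda\partial_\mu w\,\mathrm{d}\mu$ as a Bochner integral in $\L^2(\ree)$, obtaining
\begin{equation*}
\|w(\lambda,\cdot)\|_2\le\|v_0\|_2+\sqrt{\lambda}\,\|\NT(\pcgb w)\|_2,
\end{equation*}
uniformly in $\lambda\in[\eps/2,2R]$, which yields $\theta'w\in\L^2(\R^{n+2}_+)$.

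For the approximation step, extend $\theta w$ by zero to $\R^{n+2}$, observe that by the previous step it belongs to $\dot\E(\R^{n+2})$, and invoke Corollary~\ref{cor:testfunctions dens in energy space} to get a sequence $\phi_k\in\C_0^\infty(\R^{n+2})$ converging to $\theta w$ in the energy norm. Multiplying $\phi_k$ by a fixed smooth cutoff $\chi=\chi(\lambda)$ that equals $1$ on $[\eps/2,\infty)$ and vanishes on $[0,\eps/4]$ yields approximants $\chi\phi_k\in\C_0^\infty(\R^{n+2}_+)$; since $\theta w$ is unchanged by this multiplication, convergence in the $\dhalf$-part is immediate because $\chi$ commutes with $\dhalf$, and the $\chi\,\gradlamx\phi_k$ part converges for the same reason.

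The main obstacle is controlling the extra term $\chi'(\lambda)\phi_k\,\mathbf{e}_\lambda$ arising in $\gradlamx(\chi\phi_k)$, because the homogeneous seminorm $\|\gradlamx\cdot\|_2+\|\dhalf\cdot\|_2$ does not directly control $\L^2$-norms on bounded $\lambda$-strips. I would handle this by replacing the abstract density argument with an explicit parabolic mollification of $\theta w$ in $(\lambda,x,t)$: for mollification radius $\delta<\eps/4$ the mollified function is $\C^\infty$ and supported in $[\eps/2,2R+\eps/4]\times\ree$, and convergence in energy norm follows from commutation of convolution with $\gradlamx$ and $\dhalf$. Truncating afterwards in the $(x,t)$-variables by smooth cutoffs produces the required $\C_0^\infty(\R^{n+2}_+)$ approximants; the only non-trivial point in this final truncation is the commutator $[\dhalf,\chi_{x,t}]$ in the time variable, which is of the same type appearing in \eqref{eq:dhalf commutator kernel} and is bounded on $\L^2$ when the cutoff has uniformly bounded time scale, so that $[\dhalf,\chi_{x,t}](\theta w)\to 0$ in $\L^2$ as $\chi_{x,t}\to 1$ by dominated convergence.
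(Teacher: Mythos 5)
Your proof is correct but follows a genuinely more direct route than the paper. The paper treats $\theta v$, $\theta w$ and $\theta u$ uniformly through Lemma~\ref{lem:NT implies reinforced}: it first upgrades the non-tangential control on $\pcgb w$ to a non-tangential control on $w$ itself, via Lemma~\ref{lem:Ken-Pip parabolic} combined with $w_0 = v_0 \in \L^2(\ree)$, and then invokes Lemma~\ref{lem:NT implies reinforced} (which rests on Caccioppoli, reverse H\"older, and the interpolation argument of Lemma~\ref{lem:rh time local}) to conclude $\theta w \in \L^2 \cap \dot\E(\R^{n+2}_+)$. You instead exploit the specific structure available for $w$: since $\NT(\pcgb w) \in \L^2(\ree)$ is assumed, Lemma~\ref{lem:NT and QE} gives $\int_\eps^{2R}\|\pcgb w\|_2^2\,\mathrm{d}\lambda < \infty$ directly, and ellipticity recovers $|\gradlamx w| \lesssim |\pcgb w|$, so $\theta \gradlamx w, \theta\dhalf w \in \L^2$ without any regularity theory for solutions; the remaining term $\theta'w$ is handled via the $\L^2$-trace identity $w(\lambda,\cdot) = v_0 + \int_0^\lambda \partial_\mu w\,\mathrm{d}\mu$. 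Your route is more elementary here because it sidesteps the machinery of Lemma~\ref{lem:NT implies reinforced}; the trade-off is that it does not generalise to $\theta v$ (which solves an inhomogeneous equation, so $\pcgb v$ is not a Cauchy extension), which is why the paper keeps the argument uniform. Two small corrections to your write-up: the bound you obtain from the Bochner integral is $\|w(\lambda,\cdot)\|_2 \lesssim \|v_0\|_2 + \lambda\|\NT(\pcgb w)\|_2$ rather than with $\sqrt\lambda$, which is harmless; and in the final $(x,t)$-truncation the justification for the commutator term is not dominated convergence with a ``uniformly bounded time scale'' --- rather, if the cutoff $\chi_{N}$ scales parabolically with $N \to \infty$, the Schur test applied to the kernel bound in \eqref{eq:dhalf commutator kernel} shows the operator norm of $[\dhalf,\chi_N]$ on $\L^2$ is $O(N^{-1})$ and hence vanishes, which is what one needs.
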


Putting things together, we conclude
\begin{align*}
 (u,\partial_\lambda^2 \phi) = (u \gradlamx \theta, A^* \gradlamx (v-w)) - (\gradlamx u, A^* \gradlamx \theta (v-w)).
\end{align*}
Here, both $u$ and $v-w$ vanish at $\lambda = 0$ and in fact this convergence at the boundary was the reason to introduce $w$. On recalling our choice of $\theta$, we obtain the straightforward bound
\begin{align}
\label{eq7:uniqDir}
|(u,\partial_\lambda^2 \phi)| \lesssim \text{I}_\eps + \text{I}_R + \text{II}_\eps + \text{II}_R,
\end{align}
where
\begin{align*}
 \text{I}_\sigma := \barint_\sigma^{2 \sigma} \iint_\ree |u| |\gradlamx(v-w)|, \qquad II_\sigma := \barint_\sigma^{2 \sigma} \iint_\ree |\gradlamx u| |v-w|.
\end{align*}
We prepare the limiting arguments $\eps \to 0$ and $R \to \infty$. Define parabolic cylinders $\frac{1}{2} W((x,t),\lambda) := (\lambda,2\lambda) \times B(x,\lambda) \times (t-\lambda^2, t+ \lambda^2)$, of which $W((x,t),\lambda)$ are enlargements. By an averaging trick in $(x,t)$ and Tonelli's theorem we conclude
\begin{align}
\label{eq8:uniqDir}
\begin{split}
 \text{I}_\sigma
&= \barint_\sigma^{2 \sigma} \iint_\ree \bigg(\bariint_{B(x,\sigma) \times (t-\sigma^2, t+ \sigma^2)} |u| |\gradlamx(v-w)| \d y \d s\bigg) \d x \d t \d \mu \\
&\leq \iint_\ree \bigg(\bariiint_{\frac{1}{2}W((x,t),\sigma)} |u|^2 \d \mu \d y \d s \bigg)^{1/2} \bigg(\bariiint_{\frac{1}{2}W((x,t),\sigma)} |\gradlamx(v-w)|^2 \d \mu \d y \d s \bigg)^{1/2} \d x \d t \\
&\lesssim \iint_\ree \bigg(\bariiint_{W((x,t),\sigma)} |u| \d \mu \d y \d s \bigg) \bigg(\bariiint_{W((x,t),\sigma)} \Big|\frac{v-w}{\mu}\Big| \d \mu \d y \d s\bigg) \d x \d t \\
&\leq \Big\|\NTone(u)\Big\|_{\L^2(\ree)} \Big\|\NTone\Big(\frac{v-w}{\lambda}\Big)\Big\|_{\L^2(\ree)},
\end{split}
\end{align}
where in the second to last step we have used the reverse H\"older inequality (Lemma~\ref{lem:RHu}) on $u$ and Caccioppoli's inequality (Lemma~\ref{lem:Caccioppoli}) followed by the reverse H\"older inequality on $v-w$. The latter can be justified for $\sigma$ sufficiently small or large, in which case $\Lop^*(v-w) = \partial_\lambda^2 \phi = 0$ holds on a neighbourhood of $W((x,t),\sigma)$. By interchanging the roles of $u$ and $v-w$ we also get
\begin{align}
\label{eq8b:uniqDir}
\begin{split}
 \text{II}_\sigma
&\lesssim \iint_\ree \bigg(\bariiint_{W((x,t),\sigma)} \Big|\frac{u}{\mu}\Big| \d \mu \d y \d s \bigg) \bigg(\bariiint_{W((x,t),\sigma)} |v-w| \d \mu \d y \d s\bigg) \d x \d t \\
&\lesssim \Big\|\NTone(u)\Big\|_{\L^2(\ree)} \Big\|\NTone\Big(\frac{v-w}{\lambda}\Big)\Big\|_{\L^2(\ree)}.
\end{split}
\end{align}
In both estimates $\NTone(u) \in \L^2(\ree)$ holds by assumption and writing $v-w = (v-v_0) - (w-v_0)$, we obtain $\NTone(\frac{v-w}{\lambda}) \in \L^2(\ree)$ from Lemma~\ref{lem:energy solution compact right} and the subsequent result. This is a parabolic version of \cite[Thm.~3.1]{Ken-Pip} but in contrast to its elliptic counterpart, it is limited to solutions to the homogeneous equation as will become clear from the proof given in Section~\ref{sec:AuxiliaryDirichlet}.

\begin{lem}
\label{lem:Ken-Pip parabolic}
Let $w$ be a reinforced weak solution to $\Lop^* w = 0$ on $\R^{n+2}_+$ satisfying $\NT(\pcgb w) \in \L^2(\ree)$. Then there exists a trace $w_0 \in \Lloc^2(\ree)$ such that for almost every $(x,t) \in \ree$,
\begin{align*}
 \lim_{\lambda \to 0} \bariiint_{\LQI} |w-w_0(x,t)| \d \mu \d y \d s = 0.
\end{align*}
Furthermore, $\NTone(\frac{w-w_0}{\lambda}) \in \L^2(\ree)$ and $\gradx w_0 = (\pcgb w)_\pa|_{\lambda = 0}$. An analogous statement holds for reinforced weak solutions to $\Lop u = 0$ upon replacing $\pcgb$ by $\pcg$.
\end{lem}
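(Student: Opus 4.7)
We will present the proof for $w$, the statement for $u$ being analogous after reversing time. The strategy is a Kenig-Pipher-type telescoping argument over dyadic scales, in which the classical Poincar\'e inequality is replaced by its parabolic counterpart encoding both $\gradlamx w$ and $\dhalf w$. The key ingredients are the non-tangential assumption on $\pcgb w = [\dnuAstar w,\,\gradx w,\,\dhalf w]$ and the fractional Poincar\'e inequality of Lemma~\ref{lem:fractional poincare 1/2} (together with its spatial companion), which was already the engine driving Lemma~\ref{lem:rh spatial}. As in the proof of Theorem~\ref{thm:rh}, reverse H\"older (Lemma~\ref{lem:RHu}) lets us pass from $\L^2$-averages to $\L^1$-averages freely.

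Step~1 (pointwise trace). Fix $(x,t) \in \ree$ and set $f(\lambda) := \bariiint_{\LQI} w$ for the Whitney cylinders defining $\NT$. Adapting the proof of Lemma~\ref{lem:rh spatial} (separating averages in space via spatial Poincar\'e and averages in time via Lemma~\ref{lem:fractional poincare 1/2}) yields the one-scale bound
\begin{align*}
 |f(\lambda)-f(\lambda/2)| \lesssim \lambda \sum_{j \in \IZ} \frac{1}{1+|j|^{3/2}} \bariiint_{c\Lambda \times cQ \times I_j} \big(|\gradlamx w| + |\dhalf w|\big),
\end{align*}
where $I_j$ are the disjoint translates of a small parabolic enlargement of $I$. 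Each term on the right is dominated by the value of a parabolic Hardy-Littlewood maximal function of $\NT(\pcgb w)$ at $(x,t)$; since this maximal function is finite almost everywhere (as $\NT(\pcgb w) \in \L^2(\ree)$), a telescoping over $\lambda = 2^{-k}$ combined with the summability of $\sum 2^{-k}$ shows that $f(2^{-k})$ is Cauchy. Call the limit $w_0(x,t)$. Continuity of $\lambda\mapsto f(\lambda)$ (from reverse H\"older on $w$) upgrades this to convergence as $\lambda \to 0$.

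Step~2 ($\L^2$-bound on $\NTone((w-w_0)/\lambda)$ and the Whitney convergence of $|w-w_0|$). Writing $w - w_0(x,t) = (w - f(\lambda)) + (f(\lambda) - w_0(x,t))$ in $\LQI$, the first piece is controlled by the one-scale Poincar\'e bound above, while the second is bounded by the tail of the telescoping sum from Step~1. Dividing by $\lambda$ and applying the discrete convolution estimate \eqref{eq:discrete convolution} to resum the non-local tails in time, both contributions are pointwise dominated by a parabolic maximal function of $\NT(\pcgb w)$. The $\L^2$-boundedness of the latter then yields
\begin{align*}
 \Big\|\NTone\Big(\tfrac{w-w_0}{\lambda}\Big)\Big\|_{\L^2(\ree)} \lesssim \|\NT(\pcgb w)\|_{\L^2(\ree)}.
\end{align*}
In particular $w_0 \in \Lloc^2(\ree)$ by Fatou, and the Whitney convergence of $|w-w_0(x,t)|$ to $0$ follows from the pointwise estimate combined with Lebesgue differentiation applied to $w_0$ at its Lebesgue points.

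Step~3 (identification of $\gradx w_0$). For $\phi \in \C_0^\infty(\ree; \IC^n)$, the Whitney convergence of Step~2 combined with reverse H\"older lets us pass to the limit in $\lambda$ in the identity $\iint_\ree w(\lambda,\cdot) \cdot \clos{\divx \phi} = - \iint_\ree \gradx w(\lambda,\cdot) \cdot \clos{\phi}$, the right-hand limit being justified by the almost-everywhere convergence of Whitney averages of $\gradx w$ to $(\pcgb w)_\pa|_{\lambda=0}$ guaranteed by the non-tangential hypothesis and standard extraction arguments. This yields $\gradx w_0 = (\pcgb w)_\pa|_{\lambda=0}$ in $\mS'(\ree)$.

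The principal obstacle is the non-locality of the half-order time derivative: every application of the fractional Poincar\'e inequality introduces the translation sums $\sum_j (1+|j|^{3/2})^{-1}$, and the Cauchy/telescoping argument succeeds only because the geometric factor $2^{-k}$ from the spatial Poincar\'e step multiplies a quantity whose time-non-local tail remains $\L^2$-summable through \eqref{eq:discrete convolution}. This is precisely why the lemma is restricted to homogeneous solutions: for inhomogeneous $\Lop^* w = F$, the right-hand side of the Poincar\'e bound would acquire extra non-local contributions from $F$ that are not controlled by $\NT(\pcgb w)$ alone.
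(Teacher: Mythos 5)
Your proposal is broadly on the right track and in fact takes a genuinely different route from the paper at one point, but there are two places that need to be made precise.

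The key step is bounding the quantity arising from the Poincar\'e bound \eqref{eq:Poincare energy}, namely
$U(x,t) = \sup_{\lambda'} \sum_{m\ge 0} 2^{-m} \bariiint_{\Lambda' \times Q' \times 4^m I'} |\pcgb w|$,
in $\L^2(\ree)$. The paper does this by invoking Theorem~\ref{thm:uniq} to obtain the semigroup representation for $\pcgb w$ (this uses the first-order system satisfied by conormal differentials of solutions) and then applies the estimate \eqref{eq:NTtranslatesBound} from the proof of Theorem~\ref{thm:NTmax}, which yields $\|U\|_2 \lesssim \|h\|_2 \sim \|\NT(\pcgb w)\|_2$. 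You instead propose a direct pointwise bound of $U$ by a maximal function of $\NT(\pcgb w)$, avoiding the semigroup representation. This works, but \emph{not} with the parabolic Hardy--Littlewood maximal function as you claim: the regions $c\Lambda \times cQ \times I_j$ (resp.\ $\Lambda' \times Q' \times 4^m I'$) are stretched (resp.\ translated) in time, so their averages are not controlled by averages over parabolic balls uniformly in the stretching/translation. What does hold, by the averaging trick in $(x,t)$ applied to each translated Whitney piece followed by dyadic grouping in $j$, is a pointwise bound by the iterated maximal function $\Max_x \Max_t(\NT(\pcgb w))(x,t)$, whose $\L^2$-boundedness on $\ree$ then gives $\|U\|_2 \lesssim \|\NT(\pcgb w)\|_2$. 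Once stated this way, your Steps~1--2 go through, and interestingly this makes the $U$-estimate more elementary than the paper's proof, which deliberately routes through the semigroup.

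Two corrections are needed beyond this. First, in Step~3 the existence and identification of the boundary trace $(\pcgb w)_\pa|_{\lambda = 0}$ does not follow from ``the non-tangential hypothesis and standard extraction arguments'' alone: for a generic $F$ with $\NT(F) \in \L^2$ there is no $\L^2$-limit as $\lambda \to 0$. You need the fact that $\pcgb w$ solves the first-order system \eqref{eq:diffeq2} (via Theorem~\ref{thm:correspondence}), so that the Dini bound implied by $\NT(\pcgb w) \in \L^2$ (Lemma~\ref{lem:NT and QE}) triggers Theorem~\ref{thm:uniq} and gives the strong $\L^2$-trace. This is where the solution structure genuinely enters your argument. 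Second, your closing remark attributing the homogeneous restriction to extra terms in the Poincar\'e bound is off: \eqref{eq:Poincare energy} is an inequality for arbitrary functions in $\dot\E_{\loc}$ and does not see the equation. The restriction to homogeneous solutions is needed for the trace identification just described (and, in the paper's proof, also for the semigroup representation used to bound $U$).
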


The upshot is that we can compute limits in \eqref{eq8:uniqDir} and \eqref{eq8b:uniqDir} by means of Lebesgue's theorem. From \eqref{eq1:uniqueDir} we conclude $I_\eps \to 0$ and $II_\eps \to 0$ as $\eps \to 0$. In order to obtain $\text{I}_R \to 0$ and $\text{II}_R \to 0$ as $R \to \infty$, we use $\NTone(u) \in \L^2(\ree)$ and the generic lemma on non-tangential maximal functions:

\begin{lem}
\label{lem:big averages}
If $u \in \Lloc^1(\R^{n+2}_+)$ satisfies $\NTone(u) \in \L^2(\ree)$, then for every $(x,t) \in \ree$,
\begin{align*}
 \lim_{\lambda \to \infty} \bariiint_{\LQI} |u| = 0.
\end{align*}
\end{lem}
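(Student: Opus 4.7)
The estimate is purely measure-theoretic: no use of the parabolic equation is made, only the geometry of Whitney regions and the fact that $\NTone(u) \in \L^2(\ree)$. The plan is to show the stronger quantitative bound
\begin{align*}
\bariiint_{\Lambda \times Q \times I} |u| \lesssim \lambda^{-(n+2)/2} \, \|\NTone(u)\|_{\L^2(\ree)},
\end{align*}
uniformly in $(x,t)$, which yields the pointwise claim as $\lambda \to \infty$.

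First I would exploit the fact that averages of $|u|$ on a Whitney cylinder at scale $\lambda$ over $(x,t)$ are dominated, up to an absolute constant, by the Whitney average at a suitably enlarged scale over \emph{any} nearby boundary point. More precisely, fix Whitney parameters giving $\Lambda \times Q \times I = (\lambda,2\lambda) \times B(x,\lambda) \times (t-\lambda^2,t+\lambda^2)$ and consider the shadow set
\begin{align*}
E := \{(y,s) \in \ree : |y-x|<\lambda, \ |s-t|<\lambda^2\}, \qquad |E| \sim \lambda^{n+2}.
\end{align*}
For any $(y,s) \in E$, an enlarged Whitney cylinder at $(y,s)$ of radius $2\lambda$, say $W^\alpha(y,s;2\lambda) = (\lambda,4\lambda) \times B(y,4\lambda) \times (s-16\lambda^2,s+16\lambda^2)$, contains $\Lambda \times Q \times I$ and has comparable parabolic volume. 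Since different choices of Whitney parameters yield comparable $\L^2$-norms of the non-tangential maximal function, this gives
\begin{align*}
\bariiint_{\Lambda \times Q \times I} |u| \lesssim \bariiint_{W^\alpha(y,s;2\lambda)} |u| \leq \NTone(u)(y,s)
\end{align*}
(up to absorbing the parameter change in the implicit constant).

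Integrating this inequality in $(y,s) \in E$ and applying the Cauchy--Schwarz inequality yields
\begin{align*}
|E| \cdot \bariiint_{\Lambda \times Q \times I} |u| \lesssim \int_E \NTone(u)(y,s) \d y \d s \leq |E|^{1/2} \|\NTone(u)\|_{\L^2(\ree)},
\end{align*}
hence $\bariiint_{\Lambda \times Q \times I} |u| \lesssim \lambda^{-(n+2)/2} \|\NTone(u)\|_{\L^2(\ree)}$, which vanishes as $\lambda \to \infty$. There is no genuine obstacle here: the only ingredient beyond elementary measure theory is the flexibility in the definition of $\NT$ (equivalently $\NTone$) under changes of Whitney parameters, which was already used freely throughout Section~\ref{sec:NTmaxDef}.
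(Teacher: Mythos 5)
Your proof is correct and takes essentially the same route as the paper: dominate the average over the fixed cylinder at $(x,t)$ by the Whitney maximal function at nearby boundary points in the parabolic shadow set of measure $\sim\lambda^{n+2}$, integrate over that set, and apply Cauchy--Schwarz to get the decay rate $\lambda^{-(n+2)/2}$. The paper phrases the comparison slightly differently (it absorbs the enlargement into a change of Whitney parameters for $\NTone$ rather than introducing an explicit enlarged cylinder $W^\alpha$), but the mechanism and the resulting bound are identical.
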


Going back to \eqref{eq7:uniqDir}, we see that \eqref{eq2:uniqueDir} holds and the proof of Theorem~\ref{thm:uniqueDir} is complete.
\subsection{Non-tangential estimates for energy solutions}
\label{sec:AuxiliaryDirichlet}

Here, we supply the auxiliary results used in the proof of Theorem~\ref{thm:uniqueDir}. As a general observation, but also to set a strategy of proof that will be re-used several times, we begin with an extension of Lemma~\ref{lem:trace energy} to the effect that the trace of the homogeneous energy space can also be understood in the sense of non-tangential convergence. This is a parabolic analogue of \cite[Thm.~6.42]{Amenta-Auscher}.
\begin{lem}
\label{lem:energy Whitney trace}
For each $v \in \dot{\E}(\R^{n+2}_+)$ the trace $v_0:= v|_{\lambda = 0} \in \Hdot^{1/4}_{\partial_t - \Delta_x}$ can be realised in $\Lloc^2(\ree)$ such that $\NTone(\frac{v-v_0}{\sqrt{\lambda}}) \in \L^2(\ree)$ and for almost every $(x,t) \in \ree$,
\begin{align*}
 \lim_{\lambda \to 0} \bariiint_\LQI |v-v_0(x,t)| \d \mu \d y \d s = 0.
\end{align*}
\end{lem}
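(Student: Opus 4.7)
The plan is by density from smooth test functions. By Corollary~\ref{cor:testfunctions dens in energy space}, $\C_0^\infty(\overline{\R^{n+2}_+})$ is dense in $\dot \E(\R^{n+2}_+)/\IC$. For such smooth $v$, the trace $v_0 := v|_{\lambda=0}$ lies in $\C_0^\infty(\ree)$ and the a.e.\ convergence of Whitney averages to $v_0(x,t)$ is trivial. The whole difficulty is to establish the quantitative non-tangential bound
\begin{equation}
\label{eq:plan NT bound}
 \big\|\NTone\big(\tfrac{v-v_0}{\sqrt{\lambda}}\big)\big\|_{\L^2(\ree)} \lesssim \|v\|_{\dot \E(\R^{n+2}_+)}
\end{equation}
for test functions, with an implicit constant depending only on dimension. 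Granted \eqref{eq:plan NT bound}, a standard Tchebychev--maximal-function extraction applied to an approximating sequence $v_j \to v$ in $\dot \E(\R^{n+2}_+)/\IC$ shows that Whitney averages of $v$ converge a.e.\ to a well-defined function $v_0 \in \Lloc^2(\ree)$ (modulo constants), propagates \eqref{eq:plan NT bound} to $v$, and, via the trivial inequality $\NTone \leq \NT$ and the density, identifies $v_0$ with the trace furnished by Lemma~\ref{lem:trace energy}.

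To attack \eqref{eq:plan NT bound} for smooth $v$, the pointwise identity $v(\mu,y,s) - v_0(y,s) = \int_0^\mu \partial_\nu v(\nu,y,s)\,\d\nu$ and Cauchy--Schwarz in $\nu$ yield
\begin{align*}
 \bariiint_{\Lambda \times Q \times I} \bigg|\frac{v(\mu,y,s) - v_0(y,s)}{\sqrt{\mu}}\bigg|\d\mu\d y\d s \leq \bigg(\bariint_{Q \times I}\int_0^{2\lambda}|\partial_\nu v(\nu,y,s)|^2\d\nu\d y\d s\bigg)^{1/2}.
\end{align*}
Taking the supremum over $\lambda > 0$ bounds $\NTone\big(\tfrac{v - v_0}{\sqrt{\lambda}}\big)(x,t)^2$ pointwise by a Carleson-type maximal operator applied to the density $|\partial_\nu v|^2$ on $\R^{n+2}_+$. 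This is the main obstacle: $\partial_\lambda v \in \L^2$ alone only places the density in $L^1(\R^{n+2}_+)$, and classical weak-type theory does \emph{not} yield an $L^1(\ree)$-bound on the Carleson maximal function from an $L^1$-bound on the density. To circumvent this, the \emph{full} energy norm---hence also $\gradx v, \dhalf v \in \L^2$---must be brought into play.

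The cleanest resolution I envisage is to extend $v$ by even reflection across $\lambda = 0$ to obtain $\widetilde v \in \dot \E(\R^{n+2})$ and to apply the $(n+2)$-spatial-dimensional analogue of Lemma~\ref{lem:parabolic potential}: this produces a representation $\widetilde v = c + I^\sharp g$ for some $g \in \L^2(\R^{n+2})$ with $\|g\|_2 \sim \|v\|_{\dot \E}$, where $I^\sharp$ is the Fourier multiplier with symbol $(|\widetilde\xi| + |\tau|^{1/2})^{-1}$ in the parabolic variables $((\lambda,x),t) \in \R^{n+1} \times \R$. The kernel of $I^\sharp$ is parabolically homogeneous of the right order, and writing out
\begin{align*}
 \frac{I^\sharp g(\lambda,y,s) - I^\sharp g(0,y,s)}{\sqrt \lambda}
\end{align*}
as a convolution of $g$ against a $\lambda$-regularised kernel, one checks that Whitney averages of this quantity are controlled pointwise by a composition of parabolic Hardy--Littlewood maximal operators of $g$. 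Since the latter is $\L^2(\R^{n+2})$-bounded, this yields \eqref{eq:plan NT bound} and, via the density argument of the first paragraph, completes the proof.
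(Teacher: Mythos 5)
You correctly identify the obstruction: the Carleson-type maximal function is not $\L^1$-bounded, so $|\partial_\lambda v|^2 \in \L^1(\R^{n+2}_+)$ alone cannot produce an $\L^2(\ree)$-bound on $\NTone$. Your proposed fix via the parabolic Riesz potential $I^\sharp$ is a genuinely different route from the paper's, but the crucial kernel estimate is not established and, as stated, does not hold. Writing $K$ for the kernel of $I^\sharp$ (parabolically homogeneous of degree $-(n+2)$ on $\R^{n+2}$), the far-field contribution of the difference kernel obeys the mean-value bound $|K(\mu-\lambda',\cdot) - K(-\lambda',\cdot)| \lesssim \mu \rho^{-(n+3)}$, where $\rho$ is the parabolic distance from $(\lambda',y',s')$ to $(0,y,s)$. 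Dividing by $\sqrt{\mu}$, the contribution from each dyadic shell $\rho \sim 2^j\lambda$ is $\sim \sqrt{\lambda}\, M^{\mathrm{par}}g$, and the sum over $j$ diverges -- the same failure you diagnosed for the fundamental-theorem-of-calculus attempt. Getting a convergent bound requires exploiting the anisotropic decay of $K$: rapid in the spatial directions but only $|s'|^{-3/2}$ in time (the non-smoothness of the symbol at $\tau=0$ is responsible), which is the real content you have not supplied. Moreover, the correct pointwise output is not a composition of Hardy--Littlewood maximal operators applied to $g \in \L^2(\R^{n+2})$; the natural object is an $\L^2_{\lambda'}$-norm of maximal functions in $(y',s')$, bounded in $\L^2(\ree)$ by Fubini and maximal-function boundedness.

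The paper's proof avoids the Riesz potential representation altogether: it bounds Whitney oscillations $\bariiint_\LQI |v - \bariiint_\LQI v|$ by the fractional Poincar\'e inequality of Lemma~\ref{lem:fractional poincare 1/2}, which encodes precisely the marginal $|k|^{-3/2}$ decay, applies H\"older's inequality in the $\mu$-variable over $\Lambda$ to extract the $\sqrt{\lambda}$, and lands on the vector-valued maximal quantity $V(x,t) = \big(\int_0^\infty (\Max_x \Max_t F_\mu)(x,t)^2\,\d \mu\big)^{1/2}$ with $F = |\gradlamx v| + |\HT\dhalf v|$; it then identifies the limit of shrinking averages with the distributional trace via a telescoping argument. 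Your preliminary density reduction is sound but unnecessary, since the argument works directly for every $v \in \dot{\E}(\R^{n+2}_+)$.
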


\begin{proof}
Abbreviating $F := |\gradlamx v| + |\HT \dhalf v|$, we can infer from Poincar\'e's inequality as in the proof of Lemma~\ref{lem:rh spatial},
\begin{align}
\label{eq:Poincare energy}
 \bariiint_\LQI \bigg| v - \bariiint_\LQI v \bigg|
\lesssim \lambda \sum_{k \in \IZ} \frac{1}{1+|k|^{3/2}} \bariiint_{\Lambda \times Q \times I_k} |F|
\leq \lambda \sum_{m \geq 0} 2^{-m} \bariiint_{\Lambda \times Q \times 4^mI} |F|,
\end{align}
where the second step follows by a simple rearrangement of terms as in the proof of Lemma~\ref{lem:rh dyadic}. Let $\Max_x$ and $\Max_t$ denote again the maximal operators in the $x$ and $t$ variable, respectively. By H\"older's inequality,
\begin{align}
\label{eq1:NT energy}
\begin{split}
 \bariiint_\LQI \bigg| v - \bariiint_\LQI v \bigg|
&\lesssim \lambda \sum_{m \geq 0} 2^{-m} \bigg(\barint_\Lambda \bigg(\bariint_{ Q \times 4^mI} |F| \bigg)^2 \bigg)^{1/2} \\
& \lesssim \sqrt{\lambda} \bigg(\int_0^\infty (\Max_x \Max_t F_{\mu})(x,t)^2 \d \mu \bigg)^{1/2}
=: \sqrt{\lambda} V(x,t).
\end{split}
\end{align}
Since $F \in \L^2(\R^{n+2}_+)$ by assumption on $v$, we obtain $V \in \L^2(\ree)$ from the $\L^2$ boundedness of the maximal operators. In particular, we may concentrate on fixed $(x,t) \in \ree$ for which $V(x,t)$ is finite.

Let $h(\lambda) := \bariiint_\LQI v$, noting that for us $\LQI$ depends only on $\lambda$ as we have fixed $(x,t)$. From \eqref{eq1:NT energy} we derive that for every $\delta \in (0,1)$ there is a constant $C_\delta$ such that for every $\lambda > 0$ we have an estimate $|h(\lambda) - h(\delta \lambda)| \leq C_\delta \sqrt{\lambda}$. This implies that $\lim_{\lambda \to 0} h(\lambda) =: w_0(x,t)$ exists. Therefore we can use a telescopic sum of the estimates \eqref{eq1:NT energy} for Whitney regions of sidelength $2^{-k} \lambda$, $k \in \IN$, to obtain the stronger estimate
\begin{align}
\label{eq2:NT energy}
\bariiint_{\LQI} |v - w_0(x,t)| \d \mu \d y \d s \lesssim \sqrt{\lambda} V(x,t).
\end{align}
Moreover, given $(x,t), (y,s) \in \ree$, we can use \eqref{eq1:NT energy} on a Whitney region of length $\lambda  \sim |x-y| + |t-s|^{1/2}$ to get
\begin{align*}
 |w_0(x,t) - w_0(y,s)| \lesssim (|x-y| + |t-s|^{1/2})^{1/2}(V(x,t) + V(y,s)).
\end{align*}
In particular, this implies $w_0 \in \Lloc^2(\ree) \cap \mS'(\ree)$ and together with \eqref{eq2:NT energy} we finally obtain
\begin{align*}
 \bariiint_\LQI |v - w_0| \d \mu \d y \d s \lesssim \sqrt{\lambda}(V(x,t) + \Max_x \Max_t(V)(x,t)).
\end{align*}
Since $V \in \L^2(\ree)$, this implies $\NTone(\frac{v-w_0}{\sqrt{\lambda}}) \in \L^2(\ree)$.

In order to conclude, we have yet to check that $w_0 = v_0$ holds on $\ree$ in the sense of distributions modulo constants. To this end let $\phi \in \C_0^\infty(\R^{n+1})$. Using Fubini's theorem, we perform an averaging trick in $(x,t)$ and write
\begin{align*}
 \iint_\ree \bigg(\barint_{\lambda}^{2\lambda} v(\mu,y,s) \d \mu - w_0(y,s)\bigg) \phi(y,s) \d y \d s = \sqrt{\lambda} \iint_\ree \bigg(\bariiint_{W((x,t),\lambda)} \frac{v-w_0}{\sqrt{\lambda}} \phi \bigg) \d x \d t,
\end{align*}
where $W((x,t),\lambda):=(\lambda,2\lambda) \times B(x,\lambda) \times (t-\lambda^2, t+\lambda^2)$. The integral on the right is bounded uniformly in $\lambda$ since $\NTone(\frac{v-w_0}{\sqrt{\lambda}}) \in \L^2(\ree)$ and $\phi$ has compact support. Thus, $\barint_{\lambda}^{2\lambda} v \d \mu \to w_0$ in the sense of distributions as $\lambda \to 0$. On the other hand, due to Lemma~\ref{lem:trace energy} we have $v(\lambda,\cdot) \to v_0$ as $\lambda \to 0$ in $\Hdot^{1/4}_{\partial_t - \Delta_x}$ and hence in the sense of distributions modulo constants.
\end{proof}

We adapt this strategy to give the proofs of Lemmas~\ref{lem:Ken-Pip parabolic} and \ref{lem:energy solution compact right}.

\begin{proof}[Proof of Lemma~\ref{lem:Ken-Pip parabolic}]
Since notation is set up for the forward equation throughout the paper, we prefer to argue for a reinforced weak solution to $\Lop u = 0$ that satisfies $\NT(\pcg u) \in \L^2(\ree)$. This reduction is justified by the usual transposition of results between forward and backward equations, see also Section~\ref{sec:backward}. The proof follows that of Lemma~\ref{lem:energy Whitney trace} verbatim, once we have shown that there exists $U \in \L^2(\ree)$ such that
\begin{align*}
 \bariiint_{\LQI} \bigg|u- \bariiint_{\LQI}u\bigg| \lesssim \lambda U(x,t).
\end{align*}
Here, $\LQI$ is a fixed Whitney region of sidelength $\lambda$ centered at $(x,t)$. Let $F = \pcg u$. From \eqref{eq:Poincare energy} we know that we can take
\begin{align*}
 U(x,t):= \sup_{\Lambda' \times Q' \times I'} \sum_{m=0}^\infty 2^{-m} \bariiint_{\Lambda' \times Q' \times 4^m I'} |F|,
\end{align*}
with the supremum over all Whitney regions $\Lambda' \times Q' \times I'$ centered at $(x,t)$. However, this is \emph{not} the non-tangential maximal function that we control by assumption and it is for this reason that we have to assume that $u$ is a reinforced weak solution to the homogeneous equation. Indeed, $F$ is a solution to the first order system \eqref{eq:diffeq2}, see Theorem~\ref{thm:correspondence}, and as $\NT(F) \in \L^2(\ree)$ implies the Dini condition $\sup_{\lambda >0} \barint_{\lambda}^{2\lambda} \|F\|_{\L^2(\ree)} \d \mu < \infty$, Theorem~\ref{thm:uniq} provides a representation $F(\lambda,\cdot) = \e^{-\lambda [\P \M]} h$ for some $h \in \Hp(\P\M)$. Now, $\|U\|_2 \lesssim \|h\|_2<\infty$ follows from \eqref{eq:NTtranslatesBound}.
\end{proof}

\begin{proof}[Proof of Lemma~\ref{lem:energy solution compact right}]
For any $\psi \in \C_0^\infty(\R^{n+2})$ we have $\partial_\lambda \psi \in \dot{\E}(\R^{n+2})^*$. Thus, can define $\Lop^{* -1}(\partial_\lambda \psi)$ by hidden coercivity and there is an \emph{a priori} estimate
\begin{align*}
 \|\Lop^{* -1}(\partial_\lambda \psi)\|_{\dot{\E}} \lesssim \|\partial_\lambda \psi\|_{\dot{\E}^*} \leq \|\psi\|_{2}.
\end{align*}
In particular, we have $\partial_\lambda \Lop^{* -1}(\partial_\lambda \psi) \in \L^2(\R^{n+2})$. As $\Lop^*$ has $\lambda$-independent coefficients, we can use the method of difference quotients in $\dot{\E}(\R^{n+2})$ and Caccioppoli's estimate (Lemma~\ref{lem:Caccioppoli}) to deduce
\begin{align*}
 \partial_\lambda \Lop^{*-1}(\partial_\lambda \psi) = \Lop^{*-1}(\partial_\lambda^2 \psi) \in \dot{\E}(\R^{n+2}).
\end{align*}
Applying these observations with $\psi = \phi, \partial_\lambda \phi$ reveals $ v, \partial_\lambda v \in \dot{\E}(\R^{n+2}) \cap \L^2(\R^{n+2})$, that is, the four functions $v$, $\gradx v$, $\HT \dhalf v$, $\partial_\lambda v$ as well as their $\lambda$-derivatives are in $\L^2(\R^{n+2})$. Integration in $\lambda$ shows that they can be defined continuously with values in $\L^2(\ree)$. Thus, $v \in \C(\R;\L^2 \cap \, \Hdot^{1/2}_{\partial_t - \Delta_x})$ and $\partial_\lambda v \in \C(\R; \L^2)$. Now, we can iterate this argument taking $\psi = \partial^k_{\lambda} \phi$, $k\geq 2$, to obtain $\C^\infty(\R;\L^2 \cap \, \Hdot^{1/2}_{\partial_t - \Delta_x})$.

Let us turn to the non-tangential estimate. Due to Lemma~\ref{lem:energy Whitney trace} we only need to check it for Whitney regions $\LQI$ centered at $(x,t) \in \ree$ with sidelength $\lambda \leq 1$. The same lemma asserts that $v$ attains its trace $v_0$ in the sense of almost everywhere convergence of Whitney averages. Therefore we can apply \eqref{eq:Poincare energy} iteratively on the Whitney regions of sidelength $2^{-k}\lambda$, $k \in \IN$, centered at $(x,t)$ to find by a telescopic sum
\begin{align}
\label{eq:pure NT estimate energy}
 \bariiint_\LQI |v-v_0| \lesssim \lambda \sup_{k \in \IN} \bigg(\sum_{m \geq 0} 2^{-m} \bariint_{2^{-k} Q \times 4^{m-k} I} \barint_{2^{-k}\lambda}^{2^{-k+1}\lambda}  |\partial_\lambda v| + |\gradx v| + |\HT \dhalf v|\bigg).
\end{align}
In order to treat the integrals over $|\gradx v| + |\HT \dhalf v|$, we write
\begin{align*}
 v(\mu,y,s) = v_0(y,s) + \int_0^\mu \partial_\lambda v(\sigma,y,s) \d \sigma \qquad ((\mu,y,s) \in \R^{n+2}_+)
\end{align*}
to obtain, taking into account $2^{-k+1} \lambda \leq 2$,
\begin{align*}
  \bariint_{2^{-k} Q \times 4^{m-k} I} \barint_{2^{-k}\lambda}^{2^{-k+1}\lambda}  |\gradx v| + |\HT \dhalf v|
 &\lesssim \bariint_{2^{-k} Q \times 4^{m-k} I} \int_0^2 |\gradx \partial_\lambda v| + |\HT \dhalf \partial_\lambda v| \\
 &\quad + \bariint_{2^{-k} Q \times 4^{m-k} I} |\gradx v_0| + |\HT \dhalf v_0|.
\end{align*}
For the integrals over $\partial_\lambda v$ in \eqref{eq:pure NT estimate energy} a similar use of the fundamental theorem of calculus leads us to
\begin{align*}
 \bariint_{2^{-k} Q \times 4^{m-k} I} \barint_{2^{-k}\lambda}^{2^{-k+1}\lambda} |\partial_\lambda v|
&\lesssim \bariint_{2^{-k} Q \times 4^{m-k} I} \int_0^2 |\partial_\lambda^2 v| + \bariint_{2^{-k} Q \times 4^{m-k} I} |\partial_\lambda v|_{\lambda =0}|.
\end{align*}
Due to $v \in \C^\infty(\R; \L^2 \cap \, \Hdot^{1/2}_{\partial_t - \Delta_x})$ we have
\begin{align*}
 F_0:= |\gradx v_0| + |\HT \dhalf v_0| + |\partial_\lambda v|_{\lambda = 0}| \in \L^2(\ree)
\end{align*}
and
\begin{align*}
 F:= \int_0^2 |\gradx \partial_\lambda v| + |\HT \dhalf \partial_\lambda v| + |\partial_\lambda^2 v| \d \mu \in \L^2(\ree).
\end{align*}
The previous two estimates then show that the right-hand side of \eqref{eq:pure NT estimate energy} can be controlled by $\lambda \Max_x \Max_t(F + F_0)(x,t)$ and $\L^2$ boundedness of the maximal operators yields the claim.
\end{proof}

Next, we supply a general fact on weak solutions implying in particular that every weak solution to $(D)_2^\Lop$ is reinforced. This was announced already in Section~\ref{sec:UniqDir} and used in Lemma~\ref{lem:theta u testfunction}.

\begin{lem}
\label{lem:NT implies reinforced}
Let $\phi \in \C_0^\infty(\R^{n+2}_+)$ and let $u$ be a weak solution to $\Lop u = \phi$ on $\R^{n+2}_+$ that satisfies $\NTone(u/\omega) \in \L^2(\ree)$ for some locally bounded, measurable $\omega: (0,\infty) \to (0,\infty)$ depending only on $\lambda$. Then for any $\theta \in \C_0^\infty(0,\infty)$, depending only on $\lambda$, it holds $\theta u \in \L^2(\R^{n+2}_+) \cap \dot{\E}(\R^{n+2}_+)$.
\end{lem}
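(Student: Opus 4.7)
The plan starts with a reduction: since $\theta \in \C_0^\infty(0,\infty)$ is supported in some compact interval $[a,b] \subset (0,\infty)$ and commutes with $\gradx$ and $\dhalf$, the conclusion $\theta u \in \L^2 \cap \dot{\E}(\R^{n+2}_+)$ will follow once we know that $u$, $\gradlamx u$, and $\dhalf u$ all belong to $\L^2(K \times \ree)$ for some open strip $K \subset (0,\infty)$ containing $\supp \theta$. The main step will be a splitting $u = u_1 + u_2$, where $u_1$ is the energy solution with zero Dirichlet trace of $\Lop u_1 = \phi$ (whose existence follows from the hidden coercivity argument of Section~\ref{sec:energy} applied to $\dot{\E}_0(\R^{n+2}_+)$; $\phi \in \C_0^\infty$ defines a bounded functional on $\dot{\E}_0$ via Sobolev embedding), and $u_2 := u - u_1$ is a weak solution of the \emph{homogeneous} equation $\Lop u_2 = 0$ on $\R^{n+2}_+$.

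For $u_1$, the inclusion $u_1 \in \dot{\E}$ directly gives $\gradlamx u_1$ and $\HT \dhalf u_1$ (hence also $\dhalf u_1$, by $\L^2$-isometry of $\HT$) in $\L^2(\R^{n+2}_+)$. Since $u_1$ has zero boundary trace, Lemma~\ref{lem:energy Whitney trace} provides $\NTone(u_1/\sqrt{\lambda}) \in \L^2(\ree)$; writing $\NTone^{[a,b]}$ for the truncation of $\NTone$ to Whitney parameters $\lambda \in [a,b]$, the bound $\sqrt{\lambda} \leq \sqrt{b}$ then yields $\NTone^{[a,b]}(u_1) \in \L^2(\ree)$.

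For $u_2$, local boundedness of $\omega$ produces some $C_0$ with $\omega \leq C_0$ on $[a,2b]$; since Whitney regions for $\lambda \in [a,b]$ have $\mu$-range inside $[a,2b]$, the triangle inequality gives
\begin{align*}
\NTone^{[a,b]}(u_2) \leq C_0 \NTone(u/\omega) + \NTone^{[a,b]}(u_1) \in \L^2(\ree).
\end{align*}
Because $u_2$ solves the homogeneous equation, the reverse H\"older estimate of Lemma~\ref{lem:RHu} upgrades $\L^1$-averages into $\L^2$-averages, and a covering argument will yield $\|\NT^{[a',b']}(u_2)\|_{\L^2} \lesssim \|\NTone^{[a,b]}(u_2)\|_{\L^2}$ for any $[a',b'] \Subset (a,b)$. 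A Fubini argument in the spirit of Lemma~\ref{lem:NT and QE} then delivers $u_2 \in \L^2(K \times \ree)$ for any open strip $K \Subset (a',b')$, and Caccioppoli's inequality (Lemma~\ref{lem:Caccioppoli}) applied to $u_2$ gives $\gradlamx u_2 \in \L^2(K' \times \ree)$ on a slightly smaller strip $K'$. Together with the bounds on $u_1$, this furnishes $u$ and $\gradlamx u$ in $\L^2$ on a strip containing $\supp \theta$.

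The control of $\dhalf u$ will be the main obstacle, and the strategy is to adapt the Plancherel--duality argument from the proof of Lemma~\ref{lem:rh time local}. Using the identity $\|\dhalf f\|_2^2 = (\partial_t f, \HT f)$ together with the $\L^2$-isometry of $\HT$ (which acts in $t$ only and thus commutes with $\gradlamx$), we shall derive for $f := \theta u$ the dualisation
\begin{align*}
\|\dhalf(\theta u)\|_{\L^2(\R^{n+2}_+)}^2 \leq \|\partial_t(\theta u)\|_{\L^2(\R_t;\, \W^{-1,2}(\reu))} \cdot \|\theta u\|_{\L^2(\R_t;\, \W^{1,2}(\reu))}.
\end{align*}
The second factor has already been controlled above. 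For the first, $\theta = \theta(\lambda)$ commutes with $\partial_t$, so the distributional identity $\partial_t u = \div_{\lambda,x} A \gradlamx u + \phi$ (coming from $\Lop u = \phi$) rearranges as
\begin{align*}
\partial_t(\theta u) = \div_{\lambda,x}(\theta A \gradlamx u) - \theta'(\lambda)\,(A \gradlamx u)_\pe + \theta \phi,
\end{align*}
and each summand lies in $\L^2(\R_t;\, \W^{-1,2}(\reu))$ thanks to $\theta \gradlamx u \in \L^2(\R^{n+2}_+)$ and the smoothness and compact support of $\phi$. Since $\theta$ commutes with $\dhalf$, the conclusion $\dhalf(\theta u) \in \L^2$ is the required bound $\theta \dhalf u \in \L^2$, completing the plan.
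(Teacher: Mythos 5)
Your proposal is correct, but it departs from the paper's argument in a genuine way for the $\L^2$-bounds on $\theta u$ and $\gradlamx(\theta u)$. The paper does not decompose $u$: it keeps $u$ intact and exploits the compact support of $\phi$ directly. Since $\phi$ is supported in a bounded subset of $\R^{n+2}_+$, there is a bounded parabolic cube $Q\times I\subset\ree$ so that, for $(x,t)\in{}^c(Q\times I)$, the Whitney regions over $(x,t)$ at scales $\sim\supp\theta$ avoid $\supp\phi$; there $u$ solves the homogeneous equation, reverse H\"older (Lemma~\ref{lem:RHu}) upgrades $\L^1$-averages to $\L^2$-averages, and the assumed $\L^2$-bound on $\NTone(u/\omega)$ takes over. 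On the exceptional bounded set $Q\times I$, one simply uses local square-integrability of a weak solution — no estimate in terms of $\NTone$ is needed there. Your split $u=u_1+u_2$ achieves the same goal (reduce to reverse H\"older for a homogeneous solution), but it costs you the construction of the energy solution $u_1\in\dot{\E}_0(\R^{n+2}_+)$ of the inhomogeneous problem. That is routine, but note that the paper never proves the parabolic Sobolev embedding for $\dot{\E}(\R^{n+2})$ that you invoke; a lighter justification is to write $\phi=\partial_\lambda\Phi$ with $\Phi=-\int_\lambda^\infty\phi\in\C_0^\infty(\R^{n+2}_+)$, so $\langle\phi,v\rangle=-\langle\Phi,\partial_\lambda v\rangle$ is controlled by $\|\Phi\|_2\|v\|_{\dot{\E}}$ — the same trick as in the proof of Lemma~\ref{lem:energy solution compact right}. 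Your treatment of $\dhalf(\theta u)$ via the Plancherel/duality estimate $\|\dhalf(\theta u)\|_2^2\le\|\partial_t(\theta u)\|_{\L^2(\W^{-1,2})}\|\theta u\|_{\L^2(\W^{1,2})}$ and the expansion $\partial_t(\theta u)=\div_{\lambda,x}(\theta A\gradlamx u)-\theta'(A\gradlamx u)_\pe+\theta\phi$ is exactly the interpolation argument the paper imports from the proof of Lemma~\ref{lem:rh time local}, so that part matches.
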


\begin{proof}
The statement we have to prove is purely qualitative and so we shall not care about dependence of implicit constants. First, we pick $a>0$ and $c_0<1$ such that $\supp(\theta) \subset (c_0^{-1}a, c_0 a)$. It will turn out convenient to use $W((x,t),\lambda) := ((2c_0)^{-1}\lambda, 2 c_0 \lambda) \times B(x,2\lambda) \times (t-4\lambda^2, t+4\lambda^2)$ as Whitney cylinder around $(x,t)$ to define $\NTone$. We also introduce $\frac{1}{2}W((x,t),\lambda) := ((c_0)^{-1}\lambda, c_0 \lambda) \times B(x,\lambda) \times (t-\lambda^2, t+\lambda^2)$. By an averaging trick in $(x,t)$, we find
\begin{align}
\label{eq:averaging trick in xt only}
\begin{split}
 \iiint_{\R^{n+2}_+} |u \theta|^2 \d \lambda \d x \d t
&\lesssim \int_{c_0^{-1}a}^{c_0a} \iint_{\ree} \bigg(\bariint_{B(x,a) \times (t-a^2, t+a^2)} |u|^2 \d y \d s\bigg) \d x \d t \d \lambda \\
&\lesssim \iint_\ree \bigg(\bariiint_{\frac{1}{2} W((x,t),a)} |u|^2 \d \lambda \d y \d s \bigg) \d x \d t.
\end{split}
\end{align}
Since $\phi$ is compactly supported, there is a parabolic cube $Q \times I \subseteq \ree$ such that $\Lop u = 0$ in a neighbourhood of $W((x,t),a)$ for every $(x,t) \in {}^c (Q \times I)$. This makes the reverse H\"older estimate for weak solutions (Lemma~\ref{lem:RHu}) applicable and we deduce
\begin{align}
\label{eq1:NT implies reinforced}
 \bariiint_{\frac{1}{2}W((x,t),a)} |u|^2 \lesssim \bigg(\bariiint_{W((x,t),a)} |u| \bigg)^2 \lesssim \NTone(u/\omega)(x,t)^2 \qquad ((x,t) \in {}^c (Q \times I)).
\end{align}
Note that we have used the local boundedness of $\omega$ in the second step. Thus,
\begin{align*}
 \iiint_{\R^{n+2}_+} |u \theta|^2 \lesssim  \iint_{Q \times I} \bigg(\bariiint_{\frac{1}{2}W((x,t),a)} |u|^2 \bigg) \d x \d t +
 \iint_{{}^c (Q \times I)} \NTone(u/\omega)^2(x,t) \d x \d t.
\end{align*}
Here the second integral is finite by assumption on $u$ and so is the first one since $u$ as a weak solution is locally square-integrable. So far, this proves $\theta u \in \L^2(\R^{n+2}_+)$. Next, we claim $\gradlamx(\theta u) \in \L^2(\R^{n+2}_+; \IC^{n+1})$. Indeed,
\begin{align*}
 \gradlamx(\theta u) = u \gradlamx \theta + \theta \gradlamx u,
\end{align*}
where the first term can be treated just as $\theta u$ above and for the second one the only change in the argument is to apply Caccioppoli's estimate (Lemma~\ref{lem:Caccioppoli}) to eliminate the gradient of $u$ before using a reverse H\"older estimate as in \eqref{eq1:NT implies reinforced}. Hence, $\theta u \in \L^2(\R; \W^{1,2}(\reu))$.

At this point, we have all the ingredients to follow the interpolation argument given in the proof of Lemma~\ref{lem:rh time local} to first show $\partial_t(\theta u) \in \L^2(\R; \W^{-1,2}(\reu))$ and then deduce the missing piece of information $\HT \dhalf(\theta u) \in \L^2(\R^{n+2}_+)$. The fact that here $u$ solves the inhomogeneous equation $\Lop u = \phi$ does not pose any difficulties in this argument since we have $\theta \phi \in \L^2(\R^{n+2}_+)$.
\end{proof}

As a direct application we obtain the

\begin{proof}[Proofs of Lemma~\ref{lem:theta u testfunction} and Lemma~\ref{lem:theta w testfunction}]
Let us start with $\theta v$ and let $v_0 := v|_{\lambda = 0}$. On noting that for almost every $(\lambda,x,t) \in \R^{n+2}_+$,
\begin{align*}
 \bigg|\frac{v(\lambda,x,t)}{\max\{1, \lambda \}}\bigg| \leq \bigg|\frac{v(\lambda,x,t) - v_0(x,t)}{\lambda}\bigg| + |v_0(x,t)|,
\end{align*}
we deduce the pointwise bound
\begin{align*}
 \NTone\Big(\frac{v}{\max\{1, \lambda \}}\Big) \leq \NTone\Big(\frac{v -v_0}{\lambda}\Big) + \Max_x \Max_t (v_0).
\end{align*}
By Lemma~\ref{lem:energy solution compact right} and $\L^2$ boundedness of the maximal operators the right-hand side is in $\L^2(\ree)$. Thus, Lemma~\ref{lem:NT implies reinforced} yields $\theta v \in \L^2(\R^{n+2}_+) \cap \dot{\E}(\R^{n+2}_+)$. In this \emph{inhomogeneous} energy space we can approximate $\theta v$ by the usual cut-off and mollification procedure. Since $\theta v$ is supported away from the boundary $\lambda = 0$, these approximants are in $\C_0^\infty(\R^{n+2}_+)$ as required.

Referring to Lemma~\ref{lem:Ken-Pip parabolic} instead, the same argument applies to $\theta w$ and for $\theta u$ we may simply use the assumption $\NT(u) \in \L^2(\ree)$.
\end{proof}

We complete the treatment of postponed auxiliary results with the

\begin{proof}[Proof of Lemma~\ref{lem:big averages}]
As usual, let $\LQI$ be a Whitney region centered around $(x,t)$ with sidelength $\lambda$. By a change of Whitney parameters we may assume $\NTone$ is defined with respect to the larger regions $\Lambda \times 2Q \times 4I$. With this convention, we have for every $(z,r) \in Q \times I$,
\begin{align*}
 \bariiint_\LQI |u(y,s)| \d y \d s \leq 2^{n+2} \NTone(u)(z,r).
\end{align*}
Hence,
\begin{align*}
 \bariiint_\LQI |u(y,s)| \d y \d s \leq 2^{n+2} \bariint_{Q \times I} \NTone(u)(z,r) \d z \d r \lesssim \lambda^{-1-n/2} \|\NTone(u)\|_{\L^2(\ree)}
\end{align*}
and the conclusion follows.
\end{proof}
\subsection{Proof of Theorem~\ref{thm:WPDirTimeDependent}}

Let $f \in \L^2(\ree) \subset \L^2(\R^{n+1})+ \Hdot^{1/4}_{{\pd_{t}-\Delta_{x}}}$ and let $u$ be the corresponding solution to $\Lop u = 0$ provided by part (ii) of Proposition~\ref{prop:Dir}, written as the sum of two solutions $u_1+u_2$ to the same equation with $\pcg u_1 \in \mE_{-1}$ and $\pcg u_2 \in \mE_{-1/2}$. Thanks to Theorem~\ref{thm:sob} there are traces
\begin{align*}
 u_{1,0}:=u_1|_{\lambda = 0} \in \L^2(\ree), \qquad u_{2,0}:=u_2|_{\lambda = 0} \in \Hdot^{1/4}_{\partial_t - \Delta_x},
\end{align*}
which by construction satisfy $u_{1,0} + u_{2,0} = f$ in the sense of distributions modulo constants. In particular, we may modify $u$ by an additive constant to obtain also $u_{2,0} = f - u_{1,0} \in \L^2(\ree)$.

Next, Theorem~\ref{thm:chardir} provides a representation $u_1(\lambda,\cdot) = c - (\e^{-\lambda[\M \P]}\wt{h})_\pe$ for some unique $c \in \IC$ and $\wt{h} \in \Hp(\M\P)$. Since $u_{1,0} \in \L^2(\ree)$, we must have $c=0$, in which case Theorem~\ref{thm:NTmaxDir} yields $\NT(u_1) \in \L^2(\ree)$ and a.e.\ convergence of Whitney averages of $u_1$ towards $u_{1,0}$ as $\lambda \to 0$. As for $u_2$, we recall that $\pcg u_2 \in \mE_{-1/2}$ precisely means $u_2 \in \dot{\E}(\R^{n+2}_+)$. Thus, Lemma~\ref{lem:energy Whitney trace} yields $\NTone(\frac{u_2 - u_{2,0}}{\sqrt{\lambda}}) \in \L^2(\ree)$ and a.e.\ convergence of Whitney averages of $u_2$ to $u_{2,0}$ as $\lambda \to 0$. Adding up, we obtain Whitney average convergence of $u$ towards the boundary data $f$. Moreover, on splitting for almost every $(\lambda,x,t) \in \R^{n+2}_+$,
\begin{align*}
 \bigg|\frac{u(\lambda,x,t)}{\max\{1, \sqrt{\lambda} \}}\bigg|
 \leq |u_1(\lambda,x,t)| + \bigg|\frac{u_2(\lambda,x,t) - u_{2,0}(x,t)}{\sqrt{\lambda}}\bigg| + |u_{2,0}(x,t)|,
\end{align*}
we get a pointwise bound
\begin{align*}
 \NTone\bigg(\frac{u}{\max\{1,\sqrt{\lambda}\}}\bigg)
\leq \NTone(u_1) + \NTone\bigg(\frac{u_2 - u_{2,0}}{\sqrt{\lambda}}\bigg) + \Max_x\Max_t(u_{2,0})
\end{align*}
and we have seen that all three maximal functions are in $\L^2(\ree)$. The reverse H\"older inequality in Lemma~\ref{lem:RHu} finally yields $\NT(\frac{u}{\max\{1,\sqrt{\lambda}\}}) \in \L^2(\ree)$ as required.

It remains to prove uniqueness. Let $u$ be a weak solution to $\Lop u = 0$ with $\NT(\frac{u}{\max\{1,\sqrt{\lambda}\}}) \in \L^2(\ree)$ such that for almost every $(x,t) \in \ree$,
\begin{align*}
 \lim_{\lambda \to 0} \bariiint_\LQI |u| = 0.
\end{align*}
In proving $u=0$, we can follow the proof of Theorem~\ref{thm:uniqueDir}. Indeed, given $\phi \in \C_0^\infty(\R^{n+2}_+)$, we can define $v := \Lop^{*-1}(\partial_\lambda^2\phi)$ as before. Since the regularity assumption on $A$ is preserved under a reversal of time and taking adjoints, we also have at hand Proposition~\ref{prop:Dir} for the adjoint equation. In particular, since  Lemmas~\ref{lem:trace energy} and \ref{lem:energy solution compact right} pay for $v|_{\lambda = 0} \in \Hdot^{1/2}_{\partial_t - \Delta_x} \cap \Hdot^{1/4}_{\partial_t - \Delta_x}$, we can find a reinforced weak solution to $\Lop^* w = 0$ with trace $w|_{\lambda = 0} = v|_{\lambda = 0}$ and the same estimate $\pcgb u \in \mE_0$ as before. As an additional feature, $\pcgb u \in \mE_{-1/2}$, that is, $w \in \dot{\E}(\R^{n+2}_+)$.

Now that the same setup is settled, there are only two instances in the proof of Theorem~\ref{thm:uniqueDir} at which the precise form of the non-tangential control for $u$ matters, namely \eqref{eq3:uniqueDir} and the limiting argument relying on \eqref{eq8:uniqDir} and \eqref{eq8b:uniqDir}. For the former, having $\L^2$-control only on $\NT(u/\sqrt{\lambda})$ is sufficient. For the latter, we need a different argument only when $\sigma = R$ is large, in which case we simply distribute the factor $1/\mu$ to obtain
\begin{align*}
  \text{I}_\sigma + \text{II}_\sigma
\lesssim \iint_\ree \bigg(\bariiint_{W((x,t),\sigma)} \Big|\frac{u}{\sqrt{\mu}}\Big| \d \mu \d y \d s \bigg) \bigg(\bariiint_{W((x,t),\sigma)} \Big|\frac{v-w}{\sqrt{\mu}}\Big| \d \mu \d y \d s\bigg) \d x \d t.
\end{align*}
Here, the averages on $u/\sqrt{\lambda}$ are under control by assumption and for those of $(v-w)/\sqrt{\lambda}$ we can use the additional information on $w$: As $v-w \in \dot{\E}(\R^{n+2}_+)$ has trace $0$, the required $\L^2$-control is due to Lemma~\ref{lem:energy Whitney trace}. This being said, we can complete the proof as in the case of Theorem~\ref{thm:uniqueDir}.
\section{Miscellaneous generalisations and open problems}
\label{sec:miscellani}

\subsection{Systems}
\label{sec:systems}

All of our results apply to parabolic systems without any changes but in the ellipticity condition. More precisely, we can deal with systems of $m$ equations given by
\begin{equation*}
\label{eq:divform}
 \pd_{t}u^\alpha - \sum_{i,j=0}^n\sum_{\beta= 1}^m \pd_i\big( A_{i,j}^{\alpha, \beta}(\lambda,x,t) \pd_j u^{\beta}\big) =0,\qquad \alpha=1,\ldots, m,
\end{equation*}
in $\R^{n+2}_+$, where $\pd_0= \pd_{\lambda}$ and $\pd_i= \pd_{x_{i}}$ if $i=1,\ldots,n$, and where we impose the following assumptions on the matrix
$ A(\lambda,x,t)=(A_{i,j}^{\alpha,\beta}(\lambda,x,t))_{i,j=0,\ldots, n}^{\alpha,\beta= 1,\ldots,m}$. First,
\begin{equation*}
\label{eq:boundedmatrix}
 A(\lambda,x,t)=(A_{i,j}^{\alpha,\beta}(\lambda,x,t))_{i,j=0,\ldots, n}^{\alpha,\beta= 1,\ldots,m}\in \L^\infty(\R^{n+2};\Lop(\IC^{m(1+n)}))
\end{equation*}
is bounded and measurable. Second, $A$ satisfies, for some $\kappa>0$ and uniformly for all $\lambda$, the ellipticity condition
\begin{equation*}
\label{eq:accrassumption}
 \Re \iint_{\R^{n+1}} (A(\lambda,x,t)f(x,t)\cdot \overline{f(x,t)}) \d x\d t\ge \kappa
 \sum_{i=0}^n\sum_{\alpha=1}^m \iint_{\R^{n+1}} |f_i^\alpha(x,t)|^2\d x \d t,
\end{equation*}
for all $f=(f_{j}^\alpha)_{j=0, \ldots, n}^{\alpha=1, \ldots, m} \in \L^2(\R^{n+1}; \IC^{m(1+n)})$ with $\curlx (f_{j}^\alpha)_{j=1,\ldots,n}=0$ for all $\alpha$.

\subsection{Adding lower order terms to the elliptic part}

There is no real difficulty to adapt our techniques and proofs to the case of (strictly elliptic) equations with lower order terms: \begin{align*}
\Lop u= \partial_t u -\div_{X} (A(X,t)\nabla_{X} u+ b u) + c\nabla_{X}u +du= 0
\end{align*}
where $b,c$ are $n+1$ vectors and $d$ is a scalar, all coefficients are bounded and measurable, and we assume pointwise ellipticity (in the case of one equation). In this case the associated parabolic conormal differential becomes
\begin{equation*}
\label{eq:conormallower}
 \pcg u(\lambda, x,t) = \begin{bmatrix} \dnuA u(\lambda, x,t) + b_{\pe} u(\lambda,x,t) \\ \gradx u(\lambda, x,t) \\ u(\lambda,x,t) \\ \HT\dhalf u(\lambda, x,t) \end{bmatrix}
\end{equation*}
and the corresponding parabolic Dirac operator is given by
\begin{equation*}
\label{eq:newPM}
\P= {\begin{bmatrix} 0\vphantom{\wt{b}_{\pe}} & \divx & \id& -\dhalf \\ -\gradx\vphantom{\wt{b}_{\pe}} & 0 & 0&0 \\ \id\vphantom{\wt{b}_{\pe}}&0&0&0\\ -\HT \dhalf & 0 & 0 &0\end{bmatrix}}, \quad \M= {\begin{bmatrix} B_{\pe \pe} & B_{\pe \pa} & \wt{b}_{\pe} &\vphantom{\dhalf}0 \\ B_{\pa \pe} & B_{\pa \pa} & \wt{b}_{\pa}& 0 \\
\wt{c}_{\pe}& \wt{c}_{\pa}& \wt{d}&0
\\
 \vphantom{\dhalf}0& 0& 0& 1 \end{bmatrix}},
\end{equation*}
where
\begin{align*}
 {\begin{bmatrix} B_{\pe \pe} & B_{\pe \pa} & \wt{b}_{\pe} \\ B_{\pa \pe} & B_{\pa \pa} & \wt{b}_{\pa} \\
\wt{c}_{\pe}&\wt{c}_{\pa}& \wt{d}
 \end{bmatrix}} = \begin{bmatrix}1 &0& 0\vphantom{\wt{b}_{\pe}} \\ A_{\pa\pe}\vphantom{\wt{b}_{\pe}} & A_{\pa\pa} & b_{\pa}
 \\ c_{\pe} & c_{\pa}& d\vphantom{\wt{b}_{\pe}}
 \end{bmatrix}
 \begin{bmatrix} A_{\pe\pe}\vphantom{\wt{b}_{\pe}} & A_{\pe\pa}& b_{\pe}\\ 0\vphantom{\wt{b}_{\pe}}&1&0
 \\ 0&0&1\vphantom{\wt{b}_{\pe}}
 \end{bmatrix}^{-1}.
\end{align*}

The upshot is the proof of the square function implied in Theorem~\ref{thm:bhfc}. One can either adapt the proof given for the pure second order case or prove a lower order perturbation result. We do not get into details.

\subsection{The setup of cylindrical domains.}

We may replace $(X,t)\in \R^{n+2}_{+}$ by $(X,t) \in B\times \R$, where $B$ is the unit ball of $\R^{n+1}$. In that case, the normal independence of the coefficients means that they do not depend on the radial variable as in \cite{Ken-Pip}. Adapting \cite{AR} to the parabolic setting, one can develop the first order approach. Again the proof of the square function in Theorem~\ref{thm:bhfc} is the main part and should be handled by mixing arguments from here with those of \cite{AR}.

We remark that if $t$ belongs to the unbounded line, then this makes it impossible to treat the Rellich estimates in the self-adjoint case with $t$-dependent coefficients via Fredholm methods even though the angular variable lives on the sphere. However, one should be able to treat boundary value problems with initial condition on bounded intervals $[0,T]$ imposing that the commutators between coefficients and half-order time derivatives are compact, uniformly in the angular variable. We leave this idea and further details to interested readers.

\subsection{Degenerate parabolic equations and systems}

In the setting of the parabolic upper half-space, it may also be interesting to consider the case of degenerate equations and systems. To expand a bit on this in the case of equations, one may consider equations
 \begin{eqnarray}\label{eq1deg}
\lambda_1(X,t)\partial_t u -\div_{X} A(X,t)\nabla_{X} u = 0,
 \end{eqnarray}
 with
 \begin{equation*}
 \kappa\lambda_2(X,t)|\xi|^2\leq \Re (A(X,t)\xi \cdot \cl{\xi}), \qquad
 |A(X,t)\xi\cdot\zeta|\leq C\lambda_2(X,t) |\xi||\zeta|,
\end{equation*}
where the weights $\lambda_1$ and $\lambda_2$ are non-negative measurable functions subject to conditions.

In the case when  $\lambda_1(X,t)=w(x)=\lambda_2(X,t)$ for some $w\in A_{2}(\R^{n})$, one may try to develop a weighted version of our paper by following \cite{ARR}. Again, the matter is to prove the weighted analogue to Theorem~\ref{thm:bhfc} for $\lambda$-independent coefficients. We claim that this is possible. It is an interesting open problem to construct test functions in the case when the weight also depends on the $t$ variable and is, for instance, an $A_{2}$ weight for the parabolic doubling space $\R^{n+1}$.

Although not directly related, it is here worth noting that in the case of real parabolic equations, interior regularity of weak solutions to \eqref{eq1deg}, when
$\lambda_1(X,t)=w(X,t)=\lambda_2(X,t)$, or $\lambda_1(X,t) = 1$ and $\lambda_2(X,t)=w(X,t)$, was studied in \cite{CS,CS3} under various  integrability conditions on the weight. We remark that the conclusions there in terms of Muckenhoupt conditions are not exactly as for the elliptic case, showing that the $A_{2}$ condition is not the optimal one to obtain regularity.

\subsection{Boundary value problems with other spaces of data}

Another possible development is the formalisation of the first order setup to approach boundary value  problems with data in other spaces than the $\L^2$ based Sobolev spaces.

Starting with $\L^p$ spaces, it amounts to first study \emph{a priori} estimates enjoyed by the Cauchy extension and next to identify trace spaces of solutions having some square function control and prove that their conormal differentials are given by a Cauchy extension.

As a consequence of the results in Section~\ref{sec:resolvent}, in particular, Lemma~\ref{lem:LpLq} and Proposition~\ref{prop:OffDiag}, one can extend the bounded holomorphic functional calculus of $\P\M$ to $\L^p$ for $p$ near $2$ and this implies (after some work) the alluded square function estimates. However, if one wants to develop results covering larger ranges of $p$, one should  look at  Hardy space theory  and uniqueness results  as studied in \cite{Auscher-Stahlhut_APriori, Auscher-Mourgoglou} for the elliptic situation. This theory has been extended in \cite{Amenta-Auscher} to cover fractional Besov-Hardy-Sobolev spaces. While the strategy is in place, the details could reveal more complicated again due to the presence of half-order derivatives in time.

\subsection{Higher order problems and more}

One can also study other types of problems either for the purpose of boundary value problems with $\lambda$ independent coefficients or to prove the Kato square root estimate.

We have in mind  parabolic systems with higher order  elliptic part in divergence form. The  first order method for the elliptic case was discussed in \cite{elAAM}.

One could also think of replacing $\pd_{t}$ by other operators. If we replace it by $\pd_{t}^2$, we come back to an elliptic problem. But we could also think of $\pd_{t}^3 $ or fractional derivatives. In principle, the algebraic formalism can be set but the conditions to proceed with the analysis are unclear.

Finally, one could try to treat  elliptic parts of some fractional order but in this case the scary thing is the lack of spatial off-diagonal estimates.
\def\cprime{$'$} \def\cprime{$'$} \def\cprime{$'$}

\end{document}